\theoremstyle{plain}
\newcommand{\id}{\operatorname{id}}
\newcommand{\im}{\operatorname{im}}
\newcommand{\gr}{\operatorname{gr}}
\newcommand{\pr}{\operatorname{pr}}
\newcommand{\Hom}{\operatorname{Hom}}
\newcommand{\End}{\operatorname{End}}
\newcommand{\Ext}{\operatorname{Ext}}
\newcommand{\Fil}{\operatorname{Fil}}
\newcommand{\Mod}{\operatorname{Mod}}
\newcommand{\ind}{\operatorname{ind}}
\newcommand{\Jac}{\operatorname{Jac}}
\newcommand{\Tor}{\operatorname{Tor}}
\newcommand{\supp}{\operatorname{supp}}
\newcommand{\coker}{\operatorname{coker}}
\newcommand{\cX}{\underline{\underline{\mathbf{X}}}}
\def\D{\EuScript{D}}
\def\P{\EuScript{P}}
\def\fhom{\mathfrak h}
\def\ften{\mathfrak t}
\def\X{{\mathbf X}}
\def\sur{{$\mathbf{(sur)\,}$}}
\newtheorem{theorem}{Theorem}[section]
\newtheorem{corollary}[theorem]{Corollary}
\newtheorem{lemma}[theorem]{Lemma}
\newtheorem{proposition}[theorem]{Proposition}
\theoremstyle{definition}
\newtheorem{remark}[theorem]{Remark}
\title{\textbf{A canonical torsion theory for pro-$p$ Iwahori-Hecke modules}}
\author{Rachel Ollivier, Peter Schneider}
\date{}
\newcommand\blfootnote[1]{%
  \begingroup
  \renewcommand\thefootnote{}\footnote{#1}%
  \addtocounter{footnote}{-1}%
  \endgroup
}
\begin{document}

\maketitle
\blfootnote{2010 \emph{Mathematics Subject Classification}. 20C08, 22E50, 16E30, 20J06.}

\begin{abstract}
Let $\mathfrak F$ be a locally compact nonarchimedean field with residue characteristic $p$ and $G$ the group of $\mathfrak{F}$-rational points of a connected split reductive group over $\mathfrak{F}$. We define a torsion pair in the category $\Mod(H)$ of modules over the pro-$p$-Iwahori Hecke $k$-algebra $H$ of $G$, where $k$ is an arbitrary field. We prove that, under a certain hypothesis, the torsionfree class embeds fully faithfully into the category $\Mod^I(G)$ of smooth $k$-representations of $G$ generated by their pro-$p$-Iwahori fixed vectors.

If the characteristic of $k$ is different from $p$ then this hypothesis is always satisfied and the torsionfree class is the whole category $\Mod(H)$.

If $k$ contains the residue field of $\mathfrak F$ then we study the case $G = \mathbf{SL_2}(\mathfrak F)$. We show that our hypothesis is satisfied, and we describe explicitly the torsionfree and the torsion classes. If $\mathfrak F\neq \mathbb Q_p$ and $p\neq 2$, then an $H$-module is in the torsion class if and only if it is a union of supersingular finite length submodules; it lies in the torsionfree class if and only if it does not contain any nonzero supersingular finite length module. If $\mathfrak{F} = \mathbb{Q}_p$, the torsionfree class is the whole category $\Mod(H)$, and we give a  new proof of the fact that $\Mod(H)$ is equivalent to $\Mod^I(G)$. These results are based on the computation of the $H$-module structure of certain natural cohomology spaces for the pro-$p$-Iwahori subgroup $I$ of $G$.
\end{abstract}

\addcontentsline{toc}{section}{Introduction}
\tableofcontents

\section*{Introduction}

\subsection{Motivation and context}

Motivated by the mod $p$ and $p$-adic local Langlands program, the smooth mod $p$ representation theory of $p$-adic reductive groups started taking shape around the year 2000. The case of $\mathbf{GL_2}(\mathbb Q_p)$ is now well understood. In particular, its irreducible smooth mod $p$ representations are classified (including the most subtle ones called supersingular) and the mod $p$/$p$-adic Langlands local correspondence is given by Colmez' functor (\cite{Col}, \cite{Pas}).

Beyond $\mathbf{GL_2}(\mathbb Q_p)$ and $\mathbf{SL_2}(\mathbb Q_p)$ (\cite{Koz}, \cite{Abd}) very little is known about  the supersingular representations of an arbitrary $p$-adic connected reductive group and more generally about the category of its smooth mod $p$ representations.

Let $\mathfrak F$ be a locally compact nonarchimedean field with residue characteristic $p$ and residue field  $\mathbb F_q$ with $q$ elements,   and  let    $G$ be the group of $\mathfrak{F}$-rational points of a connected reductive group  $\mathbf G$ over $\mathfrak{F}$.  For simplicity, we suppose that $\mathbf G$ is $\mathfrak F$-split in this article. Let $k$ be a field of characteristic $p$ and let $\Mod(G)$ denote the category of all smooth $k$-representations of $G$.
In order to understand $\Mod(G)$ one can first study the category $\Mod(H)$ of all modules over the Hecke $k$-algebra $H$ of a fixed pro-$p$ Iwahori subgroup $I$ of $G$. This is helpful since there is a  natural left exact  functor
\begin{align*}
 \fhom: \Mod(G) & \longrightarrow \Mod(H) \\
              V & \longmapsto V^I = \Hom_{k[G]}(\mathbf{X},V) \ ,
\end{align*}
sending a nonzero representation onto a nonzero module. Its left adjoint is
\begin{align*}
\ften_0:\Mod(H) & \longrightarrow  \Mod^I(G)\subseteq \Mod(G) \\
                M & \longmapsto \mathbf{X} \otimes_H M \ .
\end{align*} Here $\mathbf X$ denotes the space of $k$-valued functions with compact support on $G/I$ with the natural left action of $G$.
The functor $\ften_0$ has values in the category  $\Mod^I(G)$ of all smooth $k$-representations of $G$ generated by their $I$-fixed vectors. If $k$ is algebraically closed and  $G = \mathbf{GL_2}(\mathbb Q_p)$ (\cite{Oll1}) or $G = \mathbf{SL_2}(\mathbb Q_p)$ (\cite{Koz}) the functors $\fhom$ and $\ften_0$ are quasi-inverse  to each other,  yielding an equivalence  between $\Mod(H)$ and $\Mod^I(G)$. However, more generally these functors are not well behaved and this approach needs to be refined. This is the main goal of this article.

Here are some general remarks which motivate our results.

{\textbf A)} There is a derived version of the functors $\fhom$ and $\ften_0$ providing  an equivalence between the derived category of smooth representations of $G$ in $k$-vector spaces and the derived category of differential graded modules over a certain Hecke differential graded pro-$p$ Iwahori-Hecke algebra $H^\bullet$.
This theorem  was proved in 2004  in \cite{SDGA} (assuming the extra hypotheses that $\mathfrak F$ is an extension of $\mathbb Q_p$ and $I$ is torsionfree.)

While this theorem is fundamental in our context, it has barely been exploited so far because of a lack of concrete understanding of  the derived objects in question.  For example, the cohomology algebra of $H^\bullet$ is the algebra $\Ext_{\Mod(G)}^*(\mathbf X, \mathbf X)$.
In this article we  give the first explicit calculations in the lowest graded pieces of the algebra $\Ext_{\Mod(G)}^*(\mathbf X, \mathbf X)$ in the case of $G = \mathbf{SL_2}(\mathfrak F)$, for arbitrary $\mathfrak F$ (Prop.\ \ref{propintro}, Thm.\ \ref{maintheosocle}, \S\ref{sec:fullspaces}).

{\textbf B)} Recall that a finite length $H$-module is supersingular if it is annihilated by a positive power of a certain ideal of the center of $H$ (\S\ref{sec:charH}). The difficulty in understanding  the functors $\fhom$ and  $\ften_0$ is related to  their behavior with respect to these supersingular modules. For example, if $k$ is algebraically closed and $G = \mathbf{GL_2}(\mathfrak F)$,  the functor $\fhom$  matches up the simple  nonsupersingular modules of $H$ with the irreducible subquotients of principal series representations of $G$ (\cite{Viggl2}). However, when $\mathfrak F\neq \mathbb Q_p$, there exist irreducible supersingular representations whose image under $\fhom$ are not  simple $H$-modules (\cite{BP});
moreover, the image of a simple supersingular module under the composition $\fhom\circ \ften_0$ is an $H$-module with infinite length (at least if $k$ is algebraically closed). The latter  was obtained in \cite{Oll1} by direct computation. However it would be more meaningful to explain the behavior of $\fhom$ and  $\ften_0$ depending on the choice of $\mathfrak F$ in terms of the derived equivalence discussed above in \textbf{A)}. To do this we would like to connect the notion of supersingularity to certain higher cohomology spaces of $I$. We achieve this when $G = \mathbf{SL_2}(\mathfrak F)$ (see Prop.s \ref{propintro}, \ref{propintro2}, Thm.\ \ref{theointro}).

{\textbf C)} Although the definition of the supersingular $H$-modules is largely combinatorial, they seem to carry some number theoretic information: in the case when $G$ is the general linear group, there is a  numerical coincidence between the set of  irreducible mod $p$ Galois representations (with fixed determinant of a Frobenius)  and a certain family of irreducible supersingular Hecke modules (\cite{Vig}, \cite{Oll2}).  In fact, a natural bijection between these two sets  is now induced by the functor constructed in \cite{GK}. Current developments in the mod $p$/$p$-adic Langlands program as mentioned in \cite{Har} aim towards a Langlands correspondence given by (derived) functors between suitable categories. Our observations suggest that the (derived) category of Hecke modules is likely to play a role in the picture.

Both authors acknowledge support by the Institute for Mathematical Sciences, National University of Singapore during a visit in 2013. The first author is partially supported by NSERC Discovery Grant.

\subsection{Summary of Results}

In Section \ref{sec:1} we allow $k$ to be a field with arbitrary characteristic. In \S\ref{sec:defiTP} we introduce a family $\mathcal Z=(Z_m)_{m\geq 1}$ of right $H$-modules which generates a torsion pair
$(\mathcal  F_G, \mathcal T_G)$ in $\Mod(H)$. In \S\ref{setting} we define a modified version  $\ften$ of the functor $\ften_0$ and prove the following:

\begin{theorem}[Cor.\ \ref{fully-faithful}]
Under the hypothesis \sur, the functor $\ften$ is a fully faithful embedding of the torsionfree class $\mathcal F_G$ into the category $\Mod ^I(G)$.\end{theorem}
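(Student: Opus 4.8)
The plan is to reduce the theorem to a single assertion about a canonical comparison map, and then to verify that assertion by a free-presentation computation in which the family $\mathcal Z=(Z_m)_m$, together with the hypothesis \sur, exactly absorbs the higher $I$-cohomology that obstructs exactness. First I would note that $\ften$ takes values in $\Mod^I(G)$, so each $\ften(M)$ is generated by its space of $I$-fixed vectors, and that the construction of $\ften$ in \S\ref{setting} supplies a canonical natural transformation $\eta\colon\id_{\Mod(H)}\to\fhom\circ\ften$, i.e.\ natural maps $\eta_M\colon M\to\fhom(\ften(M))=\ften(M)^I$. A purely formal argument then reduces full faithfulness to a property of $\eta$: a $G$-morphism $\ften(M)\to\ften(N)$ is determined by its restriction to $I$-invariants, and — by naturality of $\eta$, once $\eta_M,\eta_N$ are isomorphisms — such a restriction is exactly the datum of an $H$-morphism $M\to N$; conversely $\ften$ of such a morphism has the prescribed restriction. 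So it suffices to show:
\[
\text{for every } M\in\mathcal F_G,\quad \eta_M\colon M\to\fhom(\ften(M))\ \text{ is an isomorphism.}
\]
Equivalently, the task is to compute $\fhom\circ\ften$ on the torsionfree class.

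Second, I would take a free presentation $H^{(T)}\to H^{(S)}\to M\to 0$ and set $N=\ker(H^{(S)}\to M)$. The basic inputs are that $\fhom(\ften(H^{(S)}))=(\mathbf X^{(S)})^I=H^{(S)}$ — free $H$-modules being torsionfree, $\ften$ agrees with $\ften_0$ on them, $\mathbf X^I=H$, and $I$-invariants commute with direct sums — and that $(-)^I$ is exact when $\operatorname{char}k\neq p$ but only left exact when $\operatorname{char}k=p$. Running $0\to N\to H^{(S)}\to M\to 0$ through $\ften$ and then $\fhom$, one sees that the failure of $\eta_M$ to be an isomorphism is measured by the higher $I$-cohomology $H^{\ge 1}(I,\ften(-))$ — and, a priori, by $\Tor^H_{\ge 1}(\mathbf X,-)$ — evaluated on $N$ and on $H^{(S)}$, with short exact sequences exhibiting $\ker\eta_M$ and $\coker\eta_M$ as subquotients of these. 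When $\operatorname{char}k\neq p$ all these higher terms vanish, $\eta$ is everywhere an isomorphism, \sur\ holds automatically and $\mathcal F_G=\Mod(H)$: the easy case. In general, the role of \sur\ is to guarantee that the natural base-change maps out of the $Z_m$ — of the shape $Z_m\otimes_H(-)\to H^{\ge 1}(I,\ften(-))$, applied to $N$ and to free modules — are surjective, so that these cohomology groups, hence $\ker\eta_M$ and $\coker\eta_M$, are built functorially from $\mathcal Z$; in particular they lie in the torsion class $\mathcal T_G$ generated by $\mathcal Z$ (cf.\ \S\ref{sec:defiTP}).

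Granting this, injectivity of $\eta_M$ is immediate: $\ker\eta_M$ is a submodule of the torsionfree module $M$, hence lies in $\mathcal F_G$ (torsionfree classes are closed under subobjects), while by the previous step it lies in $\mathcal T_G$; since $\mathcal T_G\cap\mathcal F_G=0$, it is zero. (Most likely $\eta_M$ is injective for arbitrary $M$, but only $M\in\mathcal F_G$ is needed.)

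The surjectivity of $\eta_M$ is the real obstacle, and the point where characteristic $p$ and the hypothesis \sur\ genuinely bite. The previous steps give that $\coker\eta_M$ lies in $\mathcal T_G$ while simultaneously being a quotient of $\fhom(\ften(M))=\ften(M)^I$. To conclude $\coker\eta_M=0$ I would argue that a module of the form $\ften(M)^I$ — the $I$-invariants of a smooth $G$-representation — lies in $\mathcal F_G$, and that, by the structural properties of the torsion pair $(\mathcal F_G,\mathcal T_G)$ recorded in \S\ref{sec:defiTP} (in particular $\mathcal F_G$ being stable under the relevant quotients), its quotient $\coker\eta_M$ lies in $\mathcal F_G$ as well; being also torsion, it vanishes. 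This is exactly where \sur\ cannot be dropped: without it $\fhom(\ften_0(M))$ can be strictly — indeed infinitely — larger than $M$, as happens for a supersingular module over $\mathbf{SL_2}(\mathfrak F)$ with $\mathfrak F\neq\mathbb Q_p$, and it is the interplay of \sur\ with the restriction to $\mathcal F_G$ that forces the excess to disappear. Once $\eta_M$ is bijective for every $M\in\mathcal F_G$, the first paragraph yields the theorem. The hardest step, I expect, is precisely this control of the cokernel — showing that \sur\ really accounts for all the $I$-cohomology occurring in $\fhom(\ften(M))$ — rather than the formal reduction or the injectivity.
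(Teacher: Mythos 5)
Your opening reduction — showing it suffices to prove $\eta_M\colon M\to\fhom(\ften(M))$ is an isomorphism for $M\in\mathcal F_G$ and then invoking faithfulness of $\fhom$ on $\Mod^I(G)$ — is exactly how the paper organizes the proof (Thm.~\ref{F-embeds} plus Remark~\ref{easy}.iii). After that, however, your route diverges from the paper's and runs into a genuine gap.

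The decisive problem is in your treatment of the cokernel. You want $\coker\eta_M=0$ by showing it lies simultaneously in $\mathcal T_G$ and in $\mathcal F_G$, and to put it in $\mathcal F_G$ you appeal to ``$\mathcal F_G$ being stable under the relevant quotients.'' But a torsionfree class is closed under submodules, extensions, and products — never, in general, under quotients (a quotient of a torsionfree module is the prototypical torsion module). So even granting that $\coker\eta_M\in\mathcal T_G$, the claimed membership in $\mathcal F_G$ has no support, and the argument does not close. You flag this as the hard step, but what you propose to do there is a non-theorem. There is also a looser problem upstream: you frame the obstruction via $H^{\geq 1}(I,\ften(-))$ and $\Tor^H_{\geq 1}(\mathbf X,-)$, but $\Tor$ controls $\ften_0=\mathbf X\otimes_H(-)$, not the modified functor $\ften=\im(\tau_{(-)})$; and the role you assign to \sur\ (surjectivity of base-change maps $Z_m\otimes_H(-)\to H^{\geq1}(I,\ften(-))$) is not what \sur\ actually provides. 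What \sur\ gives, via the definition \eqref{defiZm}, is the short exact sequence of $H$-modules
\begin{equation*}
  0\longrightarrow Z_m\longrightarrow (\mathbf Z_m)_I\longrightarrow H\longrightarrow 0.
\end{equation*}

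The paper's actual argument is much more direct and sidesteps all of this. From $\mathbf X=\bigcup_m\mathbf X^{K_m}$ one gets the inclusion $\ften(M)^I\subseteq\bigcup_m\Hom_H((\mathbf Z_m)_I,M)$. Applying $\Hom_H(-,M)$ to the exact sequence above yields
\begin{equation*}
  0\longrightarrow M=\Hom_H(H,M)\longrightarrow\Hom_H((\mathbf Z_m)_I,M)\longrightarrow\Hom_H(Z_m,M),
\end{equation*}
and for $M\in\mathcal F_G$ the last term vanishes by definition of $\mathcal F_G$, so $M\xrightarrow{\cong}\Hom_H((\mathbf Z_m)_I,M)$ for every $m$. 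One then checks (an explicit formula) that this isomorphism is the composite $M\to\ften(M)^I\hookrightarrow\Hom_H((\mathbf Z_m)_I,M)$, which forces $\ften(M)^I=M$. Note in particular that injectivity of $\eta_M$ comes for free from the surjectivity of $(\mathbf Z_m)_I\to H$ — no appeal to $\ker\eta_M\in\mathcal T_G$ is needed, which is fortunate since you never actually establish that either. The lesson is that $\mathcal Z$ enters not as a measure of derived-functor error along a free resolution, but directly through $\Hom_H(Z_m,M)=0$ applied to the dualized filtration of $\mathbf X$; this is the elementary mechanism your proposal reaches for but does not supply.
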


The hypothesis  \sur  says that the natural  image of $H$ in $\mathbf X$ is a direct summand as a $H$-module (Remark \ref{sur}).

Note that  if $k$ has characteristic different from $p$, then \sur is satisfied, $Z_m = 0$ for all $m \geq 1$ and  $\mathcal F_G$ is the whole category $\Mod(H)$ (Lemma \ref{not-p-zero}).  So from now on we suppose that $k$ has characteristic $p$.

In Section \ref{sec:2},  we assume that $\mathbf G$ has semisimple rank $1$. In this case we show that \sur is satisfied (\S\ref{hyposur}). The proof relies on the description  of the graded pieces associated to the natural filtration of  the $H$-module $\mathbf X$ by the successive submodules  $\mathbf X^{K_m}$, where $K_m$ denotes the $m^{\rm th}$ congruence subgroup of $G$ (Thm.\ \ref{level-acyclic}).
Using the fact that $H$ is Gorenstein (\cite{OS}) we then show that any reflexive $H$-module belongs to  the torsionfree class $\mathcal F_G$. This gives the first nontrivial examples of modules contained in $\mathcal F_G$ in the case when  $k$ has characteristic $p$. Moreover, if $\mathbf G$ is semisimple, that is to say $\mathbf G = \mathbf{SL_2}$ or $\mathbf{G} = \mathbf{PGL_2}$, then $H$ is Auslander-Gorenstein with self-injective dimension $1$ and we obtain  the following expression for $Z_m$ (Cor.\ \ref{Zm-dual})
\begin{equation}\label{formulaZm}
  Z_m = \Ext^1_H(H^1(I,\mathbf{X}^{K_m}),H) = \Ext^1_H(H^1(I,\mathbf{X}^{K_m})^1,H)
\end{equation} where $H^1(I,\mathbf{X}^{K_m})^1$ is the largest finite dimensional sub-$H$-module of
$H^1(I,\mathbf{X}^{K_m})$.

In Section \ref{sec:3} we take $\mathbf{G} = \mathbf{SL_2}$ and compute $Z_m$ while describing the torsion class $\mathcal T_G$. First we show the following:

\begin{proposition}[Cor.\ \ref{supersingular}]\label{propintro}
For any $m\geq 1$ the finite dimensional $H$-modules  $H^1(I,\mathbf{X}^{K_m})^1$  and $Z_m$ are    supersingular  (possibly trivial).
\end{proposition}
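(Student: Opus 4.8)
The plan is to deduce both claims from an explicit description, as $H$-modules, of the cohomology spaces $H^1(I,\mathbf X^{K_m})$, reducing the assertion about $Z_m$ to the one about $H^1(I,\mathbf X^{K_m})^1$ by a formal argument. For that reduction, recall from \eqref{formulaZm} that $Z_m=\Ext^1_H\big(H^1(I,\mathbf X^{K_m})^1,H\big)$. Since the center $Z(H)$ acts functorially on $\Ext^*_H(N,H)$ through the first variable $N$ --- multiplication by a central element $c$ on $N$ induces multiplication by $c$ on $\Ext^*_H(N,H)$ --- any power of the ideal of $Z(H)$ defining supersingularity (\S\ref{sec:charH}) that annihilates $N:=H^1(I,\mathbf X^{K_m})^1$ also annihilates $\Ext^1_H(N,H)=Z_m$. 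Moreover $Z_m$ is then automatically finite dimensional over $k$: it is finitely generated over the Noetherian ring $H$ and killed by a power of an ideal of finite codimension in $Z(H)$ over which $H$ is module-finite. Hence it suffices to prove that $N=H^1(I,\mathbf X^{K_m})^1$ is supersingular.

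To that end I would first note that $N$ is finite dimensional by construction, hence of finite length, so that supersingularity of $N$ amounts to supersingularity of each of its composition factors. The essential input --- which I expect to be Theorem \ref{maintheosocle} or a companion computation --- is an explicit description of $H^1(I,\mathbf X^{K_m})$ as an $H$-module, obtained by filtering $\mathbf X^{K_m}$ as in Theorem \ref{level-acyclic} (equivalently, by decomposing $\mathbf X^{K_m}$, as an $I$-module, along the $I$-orbits on $G/I$ and invoking Shapiro's lemma), then computing the cohomology of the relevant pro-$p$ groups $I\cap gIg^{-1}$, and finally following the $H$-action through all of these identifications. Comparing the outcome with the classification of simple $H$-modules for $\mathbf G=\mathbf{SL_2}$ from \S\ref{sec:charH}, by which a nonsupersingular simple module is a character of $H$ or a two-dimensional module induced from the finite torus, one should see that no such module occurs in $N$; equivalently, that the central element cutting out supersingularity acts nilpotently on every finite dimensional submodule of $H^1(I,\mathbf X^{K_m})$. (This is, in essence, also what the socle description in Theorem \ref{maintheosocle} records, so that modules with supersingular socle appearing inside $H^1(I,\mathbf X^{K_m})$ are automatically supersingular.)

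The hard part will be this explicit determination of $H^1(I,\mathbf X^{K_m})$ as an $H$-module. The cohomology of the groups $I\cap gIg^{-1}$ depends delicately on their Frattini quotients, which behave quite differently according to whether $\mathfrak F=\mathbb Q_p$ or $\mathfrak F\neq\mathbb Q_p$; one must isolate the finite dimensional part of the cohomology; and --- the main technical difficulty --- one must keep precise control of the noncommutative $H$-action through every intermediate step in order to recognize exactly which modules appear, since it is the $H$-module structure, not just the dimensions, that decides supersingularity. The degenerate behaviour for $\mathfrak F=\mathbb Q_p$, and for small residue field or small $m$, is what forces $H^1(I,\mathbf X^{K_m})^1$, and hence $Z_m$, to be possibly zero, which is the source of the caveat in the statement.
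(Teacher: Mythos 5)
Your reduction of the statement about $Z_m$ to the one about $H^1(I,\mathbf{X}^{K_m})^1$, via the formal fact that a central element annihilating $N$ also annihilates $\Ext^j_H(N,H)$ together with the formula \eqref{formulaZm}, is exactly the paper's reduction (the paper cites \cite{BW} I Lemma 4.4 and Cor.~\ref{Zm-dual}). From there on, however, your route diverges substantially from the paper's, and the divergence masks a genuine gap.

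You propose to obtain supersingularity of $N=H^1(I,\mathbf{X}^{K_m})^1$ by first carrying out a complete determination of $H^1(I,\mathbf{X}^{K_m})$ as an $H$-module --- Shapiro's lemma, cohomology of the groups $I\cap wIw^{-1}$, careful tracking of the $H$-action --- and then inspecting composition factors (or the socle, using Remark~\ref{P}). This is roughly the program that Theorem~\ref{maintheosocle} and \S\ref{prooftheo} carry out, but it is \emph{far} more information than the paper needs at this point, and logically circular as you present it: Cor.~\ref{supersingular} comes before and is independent of the full socle computation. Worse, the step where you ``compare with the classification of simple $H$-modules from \S\ref{sec:charH}'' quietly imports the hypothesis $\mathbb{F}_q\subseteq k$ (Lemma~\ref{onedim}, Prop.~\ref{charH}), which is not a hypothesis of the statement. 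Cor.~\ref{supersingular} is asserted for arbitrary $k$ of characteristic $p$, and the paper's argument achieves that.

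The idea you are missing is that supersingularity here can be proved \emph{directly}, without identifying any constituent, by showing that the explicit central element $\zeta^{m-1}$ annihilates $N$ (then Lemma~\ref{defi:supersing} applies). The mechanism is a support argument in the $\widetilde{W}$-decomposition $\mathbf{X}^{K_m}=\bigoplus_w \mathbf{X}(w)^{K_m}$: Cor.~\ref{length-support} shows that the finite dimensional submodule $N$ must be supported on those $w$ with $\ell(w)+\sigma(w)<m$; the $\tau_{s_i}$ respect this grading in a controlled way (Lemma~\ref{length-inj}, Lemma~\ref{tau-grading}); a suitably chosen product $\tau_{v(m)}$ of the generators pushes $N$ out of its own support, giving $N\,\tau_{v(m)}=0$ (Prop.~\ref{tau-annihil}); and the algebraic identities for $\zeta$ (Remark~\ref{explicit-identities}, Lemma~\ref{modulo}) then force $N\,\zeta^{m-1}=0$ once one also knows that $\chi_{\mathrm{triv}}$ does not occur (Lemma~\ref{-1}, relying on Cor.~\ref{coro:case0}). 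The only explicit group cohomology needed is the small input to Cor.~\ref{coro:case0}, not the full computation you describe. This graded/support mechanism --- that finite dimensionality of the submodule forces it into finitely many length strata, and $\zeta^{m-1}$ necessarily escapes them --- is the heart of the proof and is absent from your proposal.
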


We recall in \S\ref{sec:charH} that a  finite length $H$-module is supersingular if and only if it is $\zeta$-torsion where $\zeta$ is the central element of $H$ defined in \S\ref{sec:zeta}. It is called nonsupersingular if it is $\zeta$-torsionfree.

The precise decomposition of the socle of $H^1(I,\mathbf{X}^{K_m})^1$ is given in Thm.\ \ref{maintheosocle}.
The case $\mathfrak F=\mathbb Q_p$ stands out since it is the only case where all  $H^1(I,\mathbf{X}^{K_m})^1$ and $Z_m$ are trivial. As a corollary, we obtain:

\begin{proposition}[Prop.\ \ref{Qp-equivalence}]\label{propintro2}
If $\mathfrak{F} = \mathbb{Q}_p$ then $\mathcal{F}_G = \Mod(H)$ and the functors  $\fhom$ and
$\ften_0$ are quasi-inverse equivalences of categories between
$\Mod(H)$ and   $\Mod^I(G)$.
\end{proposition}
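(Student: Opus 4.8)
The plan is to deduce the equivalence from the general machinery of Section \ref{sec:1} together with the vanishing of the modules $Z_m$ in the case $\mathfrak F = \mathbb Q_p$. First I would invoke the fact, recorded just before the statement, that when $\mathfrak F = \mathbb Q_p$ all the finite dimensional modules $H^1(I,\mathbf X^{K_m})^1$ vanish; by formula \eqref{formulaZm} this forces $Z_m = 0$ for every $m \geq 1$. Since the torsion pair $(\mathcal F_G, \mathcal T_G)$ is generated by the family $\mathcal Z = (Z_m)_{m \geq 1}$, the vanishing of all $Z_m$ means the torsion class $\mathcal T_G$ is zero and hence $\mathcal F_G = \Mod(H)$, which is the first assertion. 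Here I am using that $\mathbf G = \mathbf{SL_2}$ is semisimple of semisimple rank $1$, so the results of Section \ref{sec:2} (in particular that \sur\ holds and that formula \eqref{formulaZm} is available) apply.

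Next I would feed $\mathcal F_G = \Mod(H)$ into the fully faithfulness theorem (Cor.\ \ref{fully-faithful}): since \sur\ is satisfied for $\mathbf{SL_2}(\mathbb Q_p)$, the functor $\ften$ is a fully faithful embedding of $\mathcal F_G = \Mod(H)$ into $\Mod^I(G)$. I then need to identify $\ften$ with the original left adjoint $\ften_0$ and to check that the embedding is essentially surjective onto $\Mod^I(G)$. The identification of $\ften$ with $\ften_0$ should come from the definition of the modified functor in \S\ref{setting}: the modification only affects the torsion part, so on the torsionfree class — which is now everything — the two functors agree (this is presumably built into the setup of \S\ref{setting}). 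For essential surjectivity, every object of $\Mod^I(G)$ is by definition a quotient of a direct sum of copies of $\mathbf X = \ften_0(H)$, and fully faithfulness of $\ften_0$ on all of $\Mod(H)$ together with right exactness of $\ften_0$ lets one realize any $V \in \Mod^I(G)$ as $\ften_0$ of the cokernel of the induced map on $H$-modules; one then checks the unit and counit of the adjunction $(\ften_0, \fhom)$ are isomorphisms.

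The main obstacle I anticipate is precisely the last point: upgrading "fully faithful embedding $\mathcal F_G \hookrightarrow \Mod^I(G)$'' to a genuine equivalence $\Mod(H) \simeq \Mod^I(G)$ with quasi-inverse $\fhom$. Fully faithfulness of $\ften$ gives that the unit $M \to \fhom(\ften(M))$ is an isomorphism for all $M$; the remaining work is to show the counit $\ften(\fhom(V)) \to V$ is an isomorphism for all $V \in \Mod^I(G)$, i.e.\ that $\fhom$ is fully faithful on $\Mod^I(G)$ and that $\fhom$ sends $\Mod^I(G)$ into $\Mod(H) = \mathcal F_G$ without losing information. This is where one genuinely uses $\mathfrak F = \mathbb Q_p$ beyond the vanishing of $Z_m$ — presumably through an acyclicity or exactness statement for $\fhom$ on representations generated by their $I$-fixed vectors, which in the $\mathbf{SL_2}(\mathbb Q_p)$ case should follow from the structure results already established (or by citing \cite{Koz}). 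Once the counit is shown to be invertible on generators $\mathbf X$ and the relevant functors are exact enough to propagate this to all of $\Mod^I(G)$, the two functors are mutually quasi-inverse and the proposition follows.
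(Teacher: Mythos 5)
The first half of your argument — deducing $\mathcal F_G = \Mod(H)$ from $Z_m = 0$ for all $m$, and then invoking Cor.\ \ref{fully-faithful} to get that $\ften$ is a fully faithful embedding of $\Mod(H)$ into $\Mod^I(G)$ — matches the paper. The two remaining steps are exactly where you hand-wave, and both hand-waves contain genuine gaps.

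On the identification $\ften \simeq \ften_0$: you assert that "the modification only affects the torsion part, so on the torsionfree class the two functors agree" and that this is "presumably built into the setup of \S\ref{setting}." It is not. By construction $\ften(M) = \im(\tau_M)$ is a \emph{quotient} of $\ften_0(M) = \mathbf X \otimes_H M$, and membership of $M$ in $\mathcal F_G$ is a $\Hom$-vanishing condition that says nothing directly about injectivity of $\tau_M$ — the map $\tau_M$ can fail to be injective even on torsionfree modules in general. The paper instead proves that, when $\mathfrak F = \mathbb Q_p$, the right $H$-module $\mathbf X$ is \emph{projective}: it cites a variant of \cite{OSe} Prop.\ 3.2 to get that $\mathbf X_{x_i}$ and $\mathbf X_{x_i}/H_{x_i}$ are projective $H_{x_i}$-modules for $i = 0,1$, and then uses Prop.\ \ref{firstlevel} and the filtration in Prop.\ \ref{level-subquotients} to deduce projectivity of $\mathbf X$ over $H$. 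Projectivity of $\mathbf X$ forces $\tau_M$ to be an isomorphism for \emph{every} $M$, hence $\ften_0 \simeq \ften$. This step is genuinely $\mathbb Q_p$-specific; for $\mathfrak F \neq \mathbb Q_p$ the module $\mathbf X$ is not projective, and the comparable statement (Thm.\ \ref{theo:localcoincide}) only holds after localizing at $\zeta$.

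On the counit: you acknowledge you don't see how to finish and suggest citing \cite{Koz}, which would be circular since the stated purpose of the proposition is to \emph{reprove} \cite{Koz}'s result. The paper's argument is direct and does not rely on a new exactness input beyond what you already have. For $V \in \Mod^I(G)$ the adjunction map $\ften_0(V^I) = \mathbf X \otimes_H V^I \to V$ is surjective by definition of $\Mod^I(G)$; let $V_0$ be its kernel. Applying the left-exact $\fhom$ and using that the unit $V^I \to \fhom\big(\ften_0(V^I)\big)$ is an isomorphism (Thm.\ \ref{F-embeds} applied to $M = V^I \in \mathcal F_G = \Mod(H)$, now that $\ften = \ften_0$), one reads off $V_0^I = 0$. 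Since $V_0$ is a smooth representation and $I$ is pro-$p$, any nonzero $V_0$ would have nonzero $I$-invariants; hence $V_0 = 0$ and the counit is an isomorphism. Your plan of "realize $V$ as $\ften_0$ of a cokernel and chase unit/counit" is in the right spirit but does not substitute for this: you still need the injectivity part of the counit, and that requires precisely the diagram-chase with the vanishing-of-invariants argument.
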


This result provides a different proof of one of the main results in \cite{Koz} (where $k$ is assumed algebraically closed and $p\neq 2$). When $\mathfrak F\neq \mathbb Q_p$, $p\neq 2$ every simple supersingular module appears with multiplicity $\geq 1$ in the  socle of   $H^1(I,\mathbf{X}^{K_m})^1$ for some $m\geq 2$. We deduce:

\begin{theorem}[Thm.s \ref{FnotQp} and \ref{theo:torfree}]\label{theointro}
Suppose that  $p \neq 2$, $\mathbb{F}_q \subseteq k$, and $\mathfrak{F} \neq \mathbb{Q}_p$.  Then an $H$-module $M$ lies in $\mathcal{T}_G$ if and only if  $M$ is a union of supersingular finite length submodules, or equivalently if it is $\zeta$-torsion.  It lies in $\mathcal{F}_G$ if and only if it does not contain any nonzero supersingular finite length module, or equivalently  if it is $\zeta$-torsionfree.
\end{theorem}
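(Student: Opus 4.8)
The plan is to leverage the structural results already accumulated: that $\mathcal{T}_G$ is the torsion class generated by the family $\mathcal{Z} = (Z_m)_{m \geq 1}$, that by Proposition \ref{propintro} each $Z_m$ is a supersingular (hence $\zeta$-torsion) finite-dimensional $H$-module, and — crucially for the reverse inclusion — that under the hypothesis $p \neq 2$, $\mathbb{F}_q \subseteq k$, $\mathfrak{F} \neq \mathbb{Q}_p$, every simple supersingular $H$-module occurs in the socle of $H^1(I, \mathbf{X}^{K_m})^1$ for some $m \geq 2$, and therefore (via the Ext-duality formula \eqref{formulaZm} together with the Auslander–Gorenstein property) every simple supersingular module is a quotient, hence a subquotient, of some $Z_m$. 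First I would record the "easy" direction: since each $Z_m$ is $\zeta$-torsion and the class of $\zeta$-torsion modules is closed under submodules, quotients, extensions, and arbitrary direct sums (the last because $\zeta$ is a single central element, so $M$ is $\zeta$-torsion iff it is a union of its finitely generated, hence finite-dimensional-over-the-relevant-quotient, $\zeta$-torsion submodules), the torsion class $\mathcal{T}_G$ generated by $\mathcal{Z}$ is contained in the class of $\zeta$-torsion modules; and a $\zeta$-torsion finitely generated $H$-module has finite length and is supersingular, so every $\zeta$-torsion module is a union of supersingular finite length submodules.

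For the converse inclusion — that every $\zeta$-torsion $H$-module lies in $\mathcal{T}_G$ — the key step is to reduce to simple supersingular modules. A $\zeta$-torsion module $M$ is the union (directed colimit) of its finite length submodules, and $\mathcal{T}_G$, being a torsion class, is closed under such colimits and under extensions; so it suffices to show every simple supersingular $H$-module lies in $\mathcal{T}_G$. By the socle statement cited above and formula \eqref{formulaZm}, each such simple module $\pi$ is a quotient of $Z_m = \Ext^1_H(H^1(I,\mathbf{X}^{K_m})^1, H)$ for suitable $m$; since a torsion class is closed under quotients, $\pi \in \mathcal{T}_G$. Here I would need to be a little careful about left-versus-right modules (the $Z_m$ are a priori right $H$-modules generating a torsion pair in $\Mod(H)$ of left modules — one must check the conventions in \S\ref{sec:defiTP} and use that, for the relevant quotient of $H$ cut out by supersingularity, an anti-involution or the Gorenstein duality identifies the two sides appropriately) and about the passage from "appears in the socle of $H^1(I,\mathbf{X}^{K_m})^1$" to "is a quotient of $Z_m$", which is exactly what the Ext-dualization in \eqref{formulaZm} and the Auslander–Gorenstein self-injective-dimension-$1$ property deliver. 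This gives $\mathcal{T}_G = \{\zeta\text{-torsion modules}\} = \{\text{unions of supersingular finite length submodules}\}$.

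The statement about $\mathcal{F}_G$ then follows formally. In any torsion pair, $M \in \mathcal{F}_G$ iff $\Hom_H(T, M) = 0$ for all $T \in \mathcal{T}_G$. If $M$ contains a nonzero supersingular finite length submodule $N$, then $N \in \mathcal{T}_G$ by the above and the inclusion $N \hookrightarrow M$ is a nonzero map from a torsion module, so $M \notin \mathcal{F}_G$. Conversely, if $M$ contains no nonzero supersingular finite length submodule, then for any $T \in \mathcal{T}_G$ and any $f \colon T \to M$, the image $f(T)$ is a quotient of a $\zeta$-torsion module, hence $\zeta$-torsion, hence (being also a submodule of $M$, and a $\zeta$-torsion finitely generated — indeed any — submodule of $M$ has a nonzero finite length supersingular submodule unless it is zero) we get $f(T) = 0$; thus $M \in \mathcal{F}_G$. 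The equivalence with "$\zeta$-torsionfree" is immediate since "contains a nonzero supersingular finite length submodule" is equivalent to "contains a nonzero $\zeta$-torsion element" (a nonzero $\zeta$-torsion element generates a finite length $\zeta$-torsion, hence supersingular, submodule).

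The main obstacle I expect is the converse inclusion $\{\zeta\text{-torsion}\} \subseteq \mathcal{T}_G$, and within it the verification that the socle computation of $H^1(I, \mathbf{X}^{K_m})^1$ genuinely produces \emph{every} simple supersingular module as $m$ varies (not merely some of them) and that \eqref{formulaZm} converts this socle information into the required surjections $Z_m \twoheadrightarrow \pi$ — this is where the hypotheses $p \neq 2$, $\mathbb{F}_q \subseteq k$, $\mathfrak{F} \neq \mathbb{Q}_p$ are all consumed, and where the bulk of Section \ref{sec:3}'s cohomological computations (Thm.\ \ref{maintheosocle}) are doing the real work; the rest is soft torsion-theoretic formalism.
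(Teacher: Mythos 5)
Your proposal is correct and follows essentially the same route as the paper's proof of Theorems \ref{FnotQp} and \ref{theo:torfree}: the easy inclusion $\mathcal{T}_G\subseteq\{\zeta\text{-torsion}\}$ via Cor.\ \ref{zeta-torsion}, the reduction to simple supersingular modules via closure of $\mathcal{T}_G$ under colimits and extensions, the identification of each simple supersingular module as a quotient of $Z_2$ or $Z_3$ via the socle computation of Thm.\ \ref{maintheosocle} together with the $\upiota$-twisted $k$-duality in Cor.\ \ref{Zm-dual}/\eqref{f:dual1} (which is precisely the left/right bookkeeping you flag), and then the formal torsion-pair dual statement for $\mathcal{F}_G$. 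The subtleties you identify (the $\upiota$-twist in passing from socle constituents to quotients of $Z_m$, the need to hit \emph{every} simple supersingular module, and the use of $H$ being module-finite over $k[\zeta]$ to get a nonzero supersingular submodule from a nonzero $\zeta$-torsion element) are exactly the points the paper handles in Cor.\ \ref{Zm-zero}.2 and via Cor.\ \ref{isobimo}.
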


We denote by $H_\zeta$ the localization in $\zeta$ of $H$. The category $\Mod(H_\zeta)$ identifies with the subcategory of all modules in $\Mod(H)$
on which $\zeta$ acts bijectively. We prove the following theorem which, in particular, explains why the functor  $\ften_0$ behaves well on nonsupersingular modules (compare with \textbf {B}) above).

\begin{theorem}[Thm.\ \ref{theo:localcoincide}]\label{theointroloc}
 Suppose that $\mathbb F_q\subseteq k$ and that either $p \neq 2$ or $\mathfrak{F} \neq \mathbb{Q}_p$. The functor $\ften$ coincides with $\ften_0$  on  $\Mod(H_\zeta)$. It induces an exact fully faithful functor $\Mod(H_\zeta)\rightarrow \Mod(G)$ and we have
$\fhom\circ \ften_0\vert_{\Mod(H_\zeta)}=\fhom\circ \ften\vert_{\Mod(H_\zeta)}={\rm id} _{\Mod(H_\zeta)}.$
\end{theorem}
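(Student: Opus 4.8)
The plan is to deduce the three assertions from three facts, proved in order: $\Mod(H_\zeta)\subseteq\mathcal F_G$; $\ften$ agrees with $\ften_0$ on $\Mod(H_\zeta)$; and the adjunction unit $\eta_M\colon M\to\fhom\ften_0(M)=(\mathbf X\otimes_H M)^I$ is an isomorphism for $M\in\Mod(H_\zeta)$. The engine is a single observation: $\zeta$ is a \emph{central} element of $H$, and every $Z_m$ — as well as the cohomology module $H^1(I,\mathbf X^{K_m})^1$ from which it is built via \eqref{formulaZm} — is finite dimensional and supersingular, i.e.\ annihilated by a power of $\zeta$, by Proposition \ref{propintro}. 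Hence, for any such module $N$ and any $M\in\Mod(H_\zeta)$, the groups $\Hom_H(N,M)$, $\Tor^H_i(N,M)$ and $\Ext^i_H(N,M)$ all vanish ($i\geq 0$): a power $\zeta^n$ acts as $0$ on each of them through the zero map $\zeta^n\colon N\to N$, while $\zeta$ itself acts bijectively through the isomorphism $\zeta\colon M\to M$.

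For $M\in\Mod(H_\zeta)$, the vanishing of $\Hom_H(Z_m,M)$ for all $m$ places $M$ in the torsionfree class of the torsion pair generated by $\mathcal Z$, so $M\in\mathcal F_G$; recall also that \sur holds here, since $\mathbf G=\mathbf{SL_2}$ has semisimple rank $1$ (\S\ref{hyposur}). Next, recall from \S\ref{setting} that $\ften$ is obtained from $\ften_0$ by a modification whose effect on a module $M$ is governed functorially by the family $\mathcal Z$ (through the $\Hom$, $\Tor$ and $\Ext$ of its members against $M$). By the vanishing just recorded, this modification is trivial on $\Mod(H_\zeta)$, so the canonical comparison morphism between $\ften_0(M)$ and $\ften(M)$ is an isomorphism for every $M\in\Mod(H_\zeta)$; in particular $\fhom\circ\ften_0$ and $\fhom\circ\ften$ coincide there.

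It remains to prove $\eta_M$ is an isomorphism for $M\in\Mod(H_\zeta)$. Writing $\mathbf X=\varinjlim_m\mathbf X^{K_m}$ as the increasing union of its finitely filtered submodules $\mathbf X^{K_m}$, whose graded pieces are described by Theorem \ref{level-acyclic}, one obtains a functorial description of $\ker\eta_M$ and $\coker\eta_M$ in terms of the $I$-cohomology modules $H^1(I,\mathbf X^{K_m})$, whose only contribution surviving over $\Mod(H_\zeta)$ comes from the $\zeta$-torsion submodules $H^1(I,\mathbf X^{K_m})^1$ (equivalently from the $Z_m$) and hence vanishes by the first paragraph. So $\eta_M$ is an isomorphism on $\Mod(H_\zeta)$, giving $\fhom\circ\ften_0|_{\Mod(H_\zeta)}=\fhom\circ\ften|_{\Mod(H_\zeta)}=\id_{\Mod(H_\zeta)}$. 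Since a left adjoint is fully faithful on any full subcategory on which its unit is invertible — explicitly $\Hom_G(\ften_0M,\ften_0M')\cong\Hom_H(M,\fhom\ften_0(M'))\cong\Hom_H(M,M')$ — the functor $\ften_0$, hence $\ften$, is fully faithful on $\Mod(H_\zeta)$, in line with Corollary \ref{fully-faithful}; and as $\Mod^I(G)$ is a full subcategory of $\Mod(G)$, this is a fully faithful functor into $\Mod(G)$. Finally, $\ften_0=\mathbf X\otimes_H-$ is right exact, and left exactness on $\Mod(H_\zeta)$ comes down to $\Tor^H_1(\mathbf X,M)=\varinjlim_m\Tor^H_1(\mathbf X^{K_m},M)=0$ for $M\in\Mod(H_\zeta)$; by Theorem \ref{level-acyclic} the graded pieces of the $K_m$-filtration are extensions of induced (hence flat) $H$-modules by $\zeta$-torsion modules, so $\Tor^H_1(\mathbf X^{K_m},M)=0$ by the first paragraph, and $\ften=\ften_0$ is exact on $\Mod(H_\zeta)$.

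The formal skeleton above rests on two genuinely substantial inputs, where the real work lies: that the discrepancy between $\ften$ and $\ften_0$, and the defect of $\eta_M$, are both controlled \emph{precisely} by the family $\mathcal Z$ — equivalently by the supersingular modules $H^1(I,\mathbf X^{K_m})^1$; and that those modules are $\zeta$-torsion. The first is the content of \S\ref{setting} together with the structural results of Sections \ref{sec:2} and \ref{sec:3} — the Auslander-Gorenstein property of $H$, the identification \eqref{formulaZm}, and the level-acyclicity Theorem \ref{level-acyclic} for the $K_m$-filtration of $\mathbf X$; the second is Proposition \ref{propintro}. I expect the main obstacle to be the bookkeeping: verifying that every obstruction term that can occur — in comparing $\ften$ with $\ften_0$, in $\ker\eta_M$ and $\coker\eta_M$, and in $\Tor^H_1(\mathbf X,-)$ — lies, after inverting $\zeta$, in the $\zeta$-torsion part, so that the central-versus-torsion dichotomy of the first paragraph forces it to vanish on $\Mod(H_\zeta)$.
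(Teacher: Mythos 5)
Your proposal correctly identifies the soft parts of the argument — that $\Mod(H_\zeta)\subseteq\mathcal F_G$ because the $Z_m$ are $\zeta$-torsion while $\zeta$ acts invertibly on any $M\in\Mod(H_\zeta)$, and that once $\ften=\ften_0$ on $\Mod(H_\zeta)$ the identity $\fhom\circ\ften|_{\Mod(H_\zeta)}=\id$ follows from Thm.~\ref{F-embeds} and full faithfulness follows from the adjunction. But it elides the genuine content of the theorem, and the two places where it tries to fill the gap are not correct as written.

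The first and central gap: the claim that the ``modification'' turning $\ften_0(M)$ into $\ften(M)=\im(\tau_M)$ is ``governed by $\Hom$, $\Tor$ and $\Ext$ of the members of $\mathcal Z$ against $M$'' is not established anywhere in the paper, and is not obvious. What is needed is injectivity of $\tau_M\colon\mathbf X\otimes_H M\to\Hom_H(\mathbf X^*,M)$; the $Z_m$ measure the failure of $((\mathbf X^{K_m})^*)_I\to H$ to be an isomorphism, which is a \emph{different} question, and there is no formal implication from $\Hom_H(Z_m,M)=0$ to injectivity of $\tau_M$. The paper's actual argument (\S\ref{local}) replaces this hand-wave with real structure theory over $H_\zeta$: Prop.~\ref{baseext} shows base extension of $H_{x_\epsilon}$-modules to $H_\zeta$ is projective (using that $H_{x_0}$ is generalized uniserial), Cor.~\ref{coro:projlocal} deduces that $(\mathbf X^{K_m})_\zeta$ is finitely generated, reflexive and projective over $H_\zeta$ and that $\mathbf X_\zeta$ is projective, and Lemma~\ref{lemma:duallocal} identifies $((\mathbf X^{K_m})^*)_\zeta\cong\Hom_{H_\zeta}((\mathbf X^{K_m})_\zeta,H_\zeta)$. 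It is these three facts that make the lower horizontal arrow in the diagram~\eqref{gradedtau} an isomorphism after localizing, and hence $\tau_M$ injective. None of that appears in your sketch.

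The second gap is the exactness argument. You cite Thm.~\ref{level-acyclic} as giving that ``the graded pieces of the $K_m$-filtration are extensions of induced (hence flat) $H$-modules by $\zeta$-torsion modules.'' That is not what the theorem says: $\mathbf X^{K_m}$ and its graded pieces are shown to be $^*$-acyclic, i.e.\ $\Ext^i_H(-,H)=0$ for $i\geq 1$, which is a Gorenstein/maximal-Cohen-Macaulay condition and carries no flatness information on the nose, nor any decomposition into flat-plus-torsion. The modules $\mathbf X^{I_{x_i}}/\mathbf X^I$ that show up in Prop.~\ref{level-subquotients} are not flat $H$-modules in general; they become projective only \emph{after} inverting $\zeta$, and proving that is exactly the content of Prop.~\ref{baseext} and Cor.~\ref{coro:projlocal}. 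The paper instead gets exactness essentially for free once $\ften=\ften_0$ on $\Mod(H_\zeta)$: $\ften_0$ is always right exact, $\ften$ always preserves injections (Remark~\ref{inj-surj}), so their common restriction to $\Mod(H_\zeta)$ is exact — a cleaner route than computing $\Tor^H_1(\mathbf X,M)$. In short: your scaffolding is aligned with the paper's, but the load-bearing beam — projectivity of $\mathbf X^{K_m}$ after localization, and the reflexivity argument it feeds into — is missing, and the substitute you propose (a direct appeal to $^*$-acyclicity) does not carry the weight.
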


Lastly in \S\ref{sec:fullspaces}, we   are interested in the $H$-modules  $H^j(I, \X^{K_m})$ for $j,m\geq 1$. In particular, we compute the full  $H$-module structure when $m=1$ and $j=1$.   More in-depth  calculations in the cohomology spaces $H^j(I, \X^{K_m})$ and $H^j(I, \X)$, for $j\geq 1$ and $m\geq 1$, will follow in subsequent work of the authors.

\section{\label{sec:1}The formalism}

\subsection{A brief reminder of torsion pairs}

Let $R$ be any ring and denote by $\Mod(R)$ the category of left $R$-modules. We fix a class $\mathcal{Z}$ of modules in $\Mod(R)$ and we introduce the following subcategories of $\Mod(R)$.
\begin{itemize}
  \item[--] $\mathcal{F}$ is the full subcategory of all modules $M$ such that $\Hom_R(Z,M) = 0$ for any $Z \in \mathcal{Z}$.
  \item[--] $\mathcal{T}$ is the full subcategory of all modules $N$ such that $\Hom_R(N,M) = 0$ for any $M$ in $\mathcal{F}$.
\end{itemize}
The pair $(\mathcal{F}, \mathcal{T})$ is called the torsion pair in $\Mod(R)$ generated by $\mathcal{Z}$ with $\mathcal{F}$ being the torsionfree class and $\mathcal{T}$ being the torsion class. One easily checks the following facts.
\begin{enumerate}
  \item $\mathcal{F}$ is closed under the formation of submodules, extensions, and arbitrary direct products.
  \item $\mathcal{T}$ is closed under the formation of factor modules, extensions, and arbitrary direct sums. Of course, $\mathcal{Z}$ is contained in $\mathcal{T}$.
  \item Every module $M$ in $\Mod(R)$ has a unique largest submodule $t(M)$ which is  contained in $\mathcal{T}$. Moreover, the factor module $M/t(M)$ lies in $\mathcal{F}$.
  \item A simple $R$-module lies in $\mathcal{T}$ if and only if it is isomorphic to a quotient of some module in $\mathcal{Z}$ (cf.\ \cite{Ste} VI Prop.\ 2.5).
\end{enumerate}

\subsection{The setting\label{setting}}

We fix a locally compact nonarchimedean field $\mathfrak{F}$ (of any characteristic) with ring of integers $\mathfrak{O}$, the maximal ideal $\mathfrak{M}$,  and a prime element $\pi$, and we let $G := \mathbf{G}(\mathfrak{F})$ be the group of $\mathfrak{F}$-rational points of a connected reductive group $\mathbf{G}$ over $\mathfrak{F}$ which is $\mathfrak{F}$-split. The residue field $\mathfrak{O}/\pi \mathfrak{O}$ of $\mathfrak{F}$ is $\mathbb{F}_q$ for some power $q = p^f$ of the residue characteristic $p$.

We fix a chamber $C$ in the (semisimple) building $\mathscr{X}$ of $G$ as well as a hyperspecial vertex $x_0$ of $C$. The stabilizer of $x_0$ in $G$ contains a good maximal compact subgroup $K$ of $G$. The pointwise stabilizer $J \subseteq K$ of $C$ is an Iwahori subgroup. In fact, let $\mathbf{G}_{x_0}$ and $\mathbf{G}_C$ denote the Bruhat-Tits group schemes over $\mathfrak{O}$ whose $\mathfrak{O}$-valued points are $K$ and $J$, respectively. Their reductions over the residue field of $\mathbb{F}_q$ are denoted by $\overline{\mathbf{G}}_{x_0}$ and $\overline{\mathbf{G}}_C$. By \cite{Tit} 3.4.2, 3.7, and 3.8 we have:
\begin{itemize}
  \item[--] $\overline{\mathbf{G}}_{x_0}$ is connected reductive and $\mathbb{F}_q$-split.
  \item[--] The natural homomorphism $\overline{\mathbf{G}}_C \longrightarrow \overline{\mathbf{G}}_{x_0}$ has a connected unipotent kernel and maps $\overline{\mathbf{G}}_C^\circ$ onto a Borel subgroup $\overline{\mathbf{B}}$ of $\overline{\mathbf{G}}_{x_0}$. Hence $\overline{\mathbf{G}}_C / \overline{\mathbf{G}}_C^\circ$ is isomorphic to a subgroup of $\mathrm{Norm}_{\overline{\mathbf{G}}_{x_0}}(\overline{\mathbf{B}})
      /\overline{\mathbf{B}}$. But $\mathrm{Norm}_{\overline{\mathbf{G}}_{x_0}}(\overline{\mathbf{B}}) = \overline{\mathbf{B}}$ and therefore $\overline{\mathbf{G}}_C = \overline{\mathbf{G}}_C^\circ$.
\end{itemize}
Let $\overline{\mathbf{N}}$ denote the unipotent radical of $\overline{\mathbf{B}}$. We put
\begin{equation*}
    K_1 := \ker \big(\mathbf{G}_{x_0}(\mathfrak{O}) \xrightarrow{\; \pr \;} \overline{\mathbf{G}}_{x_0} (\mathbb{F}_q) \big) \quad\textrm{and}\quad I := \{g \in K : \pr(g) \in \overline{\mathbf{N}}(\mathbb{F}_q) \}
\end{equation*}
and obtain the chain
\begin{equation*}
    K_1 \subseteq I \subseteq J \subseteq K
\end{equation*}
of compact open subgroups in $G$ such that
\begin{equation*}
    K/K_1 = \overline{G} := \overline{\mathbf{G}}_{x_0} (\mathbb{F}_q) \supseteq I/K_1 = \overline{I} := \overline{\mathbf{N}}(\mathbb{F}_q) \ .
\end{equation*}
The subgroup $I$ is pro-$p$ and is called the pro-$p$-Iwahori subgroup. We also will need the fundamental system of open normal subgroups
\begin{equation*}
    K_m := \ker \big(\mathbf{G}_{x_0}(\mathfrak{O}) \xrightarrow{\; \pr \;} \mathbf{G}_{x_0} (\mathfrak{O}/\pi^m \mathfrak{O}) \big) \qquad\textrm{for $m \geq 1$}
\end{equation*}
of $K$.

Let $k$ be an arbitrary but fixed field and let $\Mod(G)$ denote the abelian category of smooth representations of $G$ in $k$-vector spaces. The compact induction $\mathbf{X} := \ind_I^G(1)$ of the trivial $I$-representation over $k$ lies in $\Mod(G)$, and we put
\begin{equation*}
    H := \End_{k[G]}(\mathbf{X})^{\mathrm{op}}
\end{equation*}
so that $\mathbf{X}$ becomes a $(G,H)$-bimodule. We often will identify $H$, as a right $H$-module, via the map
\begin{align*}
    H & \xrightarrow{\; \cong \;} \ind_I^G(1)^I \subseteq \mathbf{X} \\
    h & \longmapsto (\mathrm{char}_I) h
\end{align*}
(where $\mathrm{char}_A$, for any compact open subset $A \subseteq G$, denotes the characteristic function of $A$) with the submodule $\ind_I^G(1)^I$ of $I$-fixed vectors in $\ind_I^G(1)$.

We have
\begin{itemize}
  \item[--] the left exact functor
\begin{align*}
    \fhom : \Mod(G) & \longrightarrow \Mod(H) \\
              V & \longmapsto V^I = \Hom_{k[G]}(\mathbf{X},V) \ ,
\end{align*}

  \item[--] and the right exact functor
\begin{align*}
    \ften_0 : \Mod(H) & \longrightarrow \Mod(G) \\
                M & \longmapsto \mathbf{X} \otimes_H M \ .
\end{align*}
\end{itemize}
Let $\Mod^I(G)$ denote the full subcategory of $\Mod(G)$ of all smooth $G$-representations $V$ which are generated by $V^I$.

\begin{remark}\phantomsection\label{easy}
\begin{itemize}
  \item[i.] $\ften_0$ is left adjoint to $\fhom$.
  \item[ii.] The image of $\ften_0$ is contained in $\Mod^I(G)$.
  \item[iii.] The restriction $\fhom | \Mod^I(G)$ is faithful.
\end{itemize}
\end{remark}

For any left, resp.\ right, $H$-module $M$ we introduce the right, resp.\ left, $H$-module
\begin{equation*}
    M^\ast := \Hom_H(M,H) \ .
\end{equation*}
As usual, $M$ is called reflexive if the canonical map $M \longrightarrow M^{\ast\ast}$ is an isomorphism.
In particular, we have the $(H,G)$-bimodule
\begin{equation*}
    \mathbf{X}^\ast = \Hom_H(\mathbf{X},H) \ ,
\end{equation*}
but on which the $G$-action is no longer smooth. We consider the left exact functor
\begin{align*}
    \ften_1 : \Mod(H) & \longrightarrow \Mod(k[G]) \\
                M & \longmapsto \Hom_H(\mathbf{X}^\ast,M)
\end{align*}
together with the natural transformation
\begin{align}\label{defitauM}
    \tau_M :\ften_0(M) = \mathbf{X} \otimes_H M & \longrightarrow \Hom_H(\mathbf{X}^\ast,M) = \ften_1(M) \\
             x \otimes m & \longmapsto [\lambda \mapsto \lambda(x)m] \ .    \nonumber
\end{align}
This allows us to introduce the functor
\begin{align}\label{modiftens}
    \ften : \Mod(H) & \longrightarrow \Mod^I(G) \\
                M & \longmapsto \im(\tau_M) \ .   \nonumber
\end{align}
The main goal of this paper is to investigate its properties.

\begin{remark}\label{inj-surj}
The functor $\ften$ preserves injective as well as surjective maps.
\end{remark}

\subsection{On the $H$-module structure of $\mathbf{X}$ \label{sec:sur}}
Bruhat-Tits associate with each facet $F$ of the semisimple building $\mathscr{X}$, in a $G$-equivariant way, a smooth affine $\mathfrak{O}$-group scheme $\mathbf{G}_F$ whose general fiber is $\mathbf{G}$ and such that $\mathbf{G}_F(\mathfrak{O})$ is the pointwise stabilizer in $G$ of the preimage of $F$ in the extended building. The  connected component of $\mathbf{G}_F$ is denoted by $\mathbf{G}_F^\circ$ so that the reduction $\overline{\mathbf{G}}_F^\circ$ over $\mathbb{F}_q$ is a connected smooth algebraic group. The subgroup $\mathbf{G}_F^\circ(\mathfrak{O})$ of $G$ is compact open. Let
\begin{equation*}
    I_F := \{ g \in \mathbf{G}_F^\circ(\mathfrak{O}) :( g \bmod \pi) \in\ \textrm{unipotent radical of $\overline{\mathbf{G}}_F^\circ$} \}.
\end{equation*}
The $I_F$ are compact open pro-$p$ subgroups in $G$ which satisfy $I_C = I$, $I_{x_0} = K_1$, as well as
\begin{equation}\label{f:pr1}
    gI_F g^{-1} = I_{gF} \qquad\textrm{for any $g \in G$},
\end{equation}
and
\begin{equation}\label{f:pr2}
    I_{F'} \subseteq I_F \qquad\textrm{whenever $F' \subseteq \overline{F}$}.
\end{equation}
Let $\P_F^\dagger$ denote the stabilizer in $G$ of the facet $F$. If $F$ is contained in the closure $\overline{C}$ of $C$, then $I_F \subseteq I \subseteq J \subseteq \P_F^\dagger$ with $I_F$ being normal in $\P_F^\dagger$.

Let $F$ be a facet contained in $\overline{C}$. Extending functions by zero induces embeddings
\begin{equation*}
    \mathbf{X}_F := \ind^{{\mathbf G}_F^\circ(\mathfrak O)}_I(1) \hookrightarrow \mathbf{X}_F^\dagger := \ind^{\P_F^\dagger}_I(1) \hookrightarrow \mathbf{X} \ .
\end{equation*}
Correspondingly we have the $k$-subalgebras
\begin{equation*}
    H_F := \End_{k[{\mathbf G}_F^\circ(\mathfrak O)]}(\mathbf{X}_F)^{\mathrm{op}} = [\ind^{{\mathbf G}_F^\circ(\mathfrak O)}_I(1)]^I \quad\text{and}\quad H_F^\dagger := \End_{k[\P_F^\dagger]}(\mathbf{X}_F^\dagger)^{\mathrm{op}} = \ind^{\P_F^\dagger}_I(1)^I
\end{equation*}
of $H$.

\begin{proposition}\label{firstlevel}
For any facet $F \subseteq \overline{C}$ the maps
\begin{equation*}
    \mathbf{X}_F ^\dagger\otimes_{H_F^\dagger} H  \xrightarrow{\cong} \mathbf{X}^{I_F} , \
    \mathbf{X}_F \otimes_{H_F} H_F^\dagger \xrightarrow{\cong}  \mathbf{X}_F ^\dagger , \ \text{and} \
    \mathbf{X}_F \otimes_{H_F} H  \xrightarrow{\cong} \mathbf{X}^{I_F}
\end{equation*} given by $f \otimes h \longmapsto h(f)$
 are isomorphisms of $(\P_F^\dagger, H)$-, $({\mathbf G}_F^\circ(\mathfrak O), H_F^\dagger)$-, and $({\mathbf G}_F^\circ(\mathfrak O), H)$-bimodules, respectively.
\end{proposition}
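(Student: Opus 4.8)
The plan is to establish the three isomorphisms in sequence, since the third will follow by composing the first two, and to identify throughout the relevant Hecke-type modules with spaces of invariants inside $\mathbf{X} = \ind_I^G(1)$. First I would set up the general mechanism: for any compact open subgroup $I \subseteq P$ of $G$ with $I$ pro-$p$ and any $I$-fixed vector situation, the natural map $\ind_I^P(1) \otimes_{(\ind_I^P(1)^I)} N \to \ind_I^G(1)$-type constructions are governed by the fact that $\ind_I^P(1)$ is a free module over the Hecke algebra of the pair in a way compatible with transitivity of induction. Concretely, the key structural input is that $I_F$ is normal in $\P_F^\dagger$ (stated in the excerpt just before the proposition) and that $\mathbf{X}^{I_F} = \ind_I^G(1)^{I_F}$ decomposes, via a choice of coset representatives for $\P_F^\dagger \backslash G$, as a direct sum of translates of $\ind_I^{\P_F^\dagger}(1)^{I_F} = \mathbf{X}_F^\dagger$. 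Because $I_F$ is normal in $\P_F^\dagger$ and $I/I_F$ is a $p$-group acting on the $k$-vector space $\ind_I^{\P_F^\dagger}(1)$ (with $k$ possibly of characteristic $p$), one must be careful: the point is rather that $\ind_I^{\P_F^\dagger}(1)^{I_F}$ is spanned by characteristic functions of $I_F$-cosets, i.e.\ of $\P_F^\dagger$, and this is literally $\mathbf{X}_F^\dagger$ by the definition recalled in the excerpt.

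Next I would prove the first isomorphism $\mathbf{X}_F^\dagger \otimes_{H_F^\dagger} H \xrightarrow{\cong} \mathbf{X}^{I_F}$. The map $f \otimes h \mapsto h(f)$ makes sense because $\mathbf{X}_F^\dagger = \mathbf{X}^{\P_F^\dagger\text{-part}} \subseteq \mathbf{X}$ and $H$ acts on $\mathbf{X}$ on the right; one checks $H_F^\dagger$-balancedness using that $H_F^\dagger = \mathbf{X}_F^{\dagger,I} = (\ind_I^{\P_F^\dagger}(1))^I$ acts as endomorphisms. For surjectivity: $\mathbf{X}^{I_F}$ is generated as a right $H$-module by $\ind_I^G(1)^I$ translated appropriately, but more efficiently, $\mathbf{X}^{I_F}$ is spanned by characteristic functions $\mathrm{char}_{gI}$ with $gIg^{-1} \supseteq I_F$, equivalently (by \eqref{f:pr1}) with $I_{g^{-1}F}$... — the cleanest route is: decompose $G = \bigsqcup \P_F^\dagger g_i I$ and observe $\mathbf{X}^{I_F} = \bigoplus_i (\text{span of } \mathrm{char}_{\P_F^\dagger g_i I / I})$, each summand being the image of $\mathbf{X}_F^\dagger \otimes (\text{the Hecke operator } [I g_i I])$; and injectivity follows by a dimension/basis count on each double coset, using that $\mathbf{X}_F^\dagger$ is free over $k$ with basis the cosets of $\P_F^\dagger$ and $H$ has basis indexed by $I\backslash G/I$ (the Iwahori–Matsumoto-type basis), matched up with $\P_F^\dagger$-cosets times double cosets. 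The $(\P_F^\dagger, H)$-bimodule structure is immediate from the construction.

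The second isomorphism $\mathbf{X}_F \otimes_{H_F} H_F^\dagger \xrightarrow{\cong} \mathbf{X}_F^\dagger$ is the same statement applied inside the finite-index situation $\mathbf{G}_F^\circ(\mathfrak{O}) \subseteq \P_F^\dagger$: here $\P_F^\dagger / \mathbf{G}_F^\circ(\mathfrak{O})$ is a finite (abelian) group, so $\mathbf{X}_F^\dagger = \ind_{\mathbf{G}_F^\circ(\mathfrak{O})}^{\P_F^\dagger}(\mathbf{X}_F)$ as $\mathbf{G}_F^\circ(\mathfrak{O})$-representations and the Hecke algebra $H_F^\dagger$ is a crossed-product-type extension of $H_F$ by that finite group; the isomorphism is then transitivity of compact induction rewritten module-theoretically, and one verifies balancedness and bijectivity by the same coset bookkeeping, now over $\mathbf{G}_F^\circ(\mathfrak{O}) \backslash \P_F^\dagger$. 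Finally, composing, $\mathbf{X}_F \otimes_{H_F} H \cong \mathbf{X}_F \otimes_{H_F} H_F^\dagger \otimes_{H_F^\dagger} H \cong \mathbf{X}_F^\dagger \otimes_{H_F^\dagger} H \cong \mathbf{X}^{I_F}$, and one checks the composite is $f \otimes h \mapsto h(f)$ and is $(\mathbf{G}_F^\circ(\mathfrak{O}), H)$-linear. The main obstacle I anticipate is the injectivity in the first isomorphism when $\mathrm{char}(k) = p$: one cannot invoke semisimplicity of $k[I/I_F]$, so the argument must be genuinely basis-theoretic, pinning down explicitly that the $k$-basis of $\mathbf{X}^{I_F}$ given by $I_F$-orbits of $I$-cosets is in natural bijection with pairs (coset of $\P_F^\dagger$ in its double coset decomposition, basis element of $H$), and that the tensor product $\mathbf{X}_F^\dagger \otimes_{H_F^\dagger} H$ has exactly this basis — which in turn requires knowing that $\mathbf{X}_F^\dagger$ is free as a right $H_F^\dagger$-module, or at least flat enough, a point that should be extractable from the structure of these Hecke algebras but needs to be stated carefully.
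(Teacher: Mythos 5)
The paper does not actually prove this proposition in the text; the ``proof'' is a bare citation to \cite{OS} Lemma~3.8, Lemma~4.24, and Prop.~4.25, so you are in effect reconstructing the argument from that earlier paper. Your outline is in the right spirit: decompose $G$ into $\P_F^\dagger$--$I$ double cosets, use normality of $I_F$ in $\P_F^\dagger$ so that $I_F$-invariance of $\mathrm{char}_{gI}$ depends only on the double coset of $g$, and then match bases; this is indeed how such statements are verified, and the factorization of the third isomorphism through the first two by transitivity of tensor products is also the right move.

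The gap is the freeness input you invoke at the end. You say the argument ``requires knowing that $\mathbf{X}_F^\dagger$ is free as a right $H_F^\dagger$-module,'' but that statement is false in general: for $F = x_0$ this is essentially projectivity of the universal module of a finite reductive group over its $0$-Hecke algebra in natural characteristic, which fails once $q > p$ (the paper itself, in the proof of Prop.~\ref{Qp-equivalence}, quotes a variant of \cite{OSe} Prop.~3.2 to get projectivity of $\mathbf{X}_{x_i}$ over $H_{x_i}$ \emph{only} for $\mathfrak{F} = \mathbb{Q}_p$, i.e.\ $q=p$). What the argument actually needs is that $H$ is free as a left $H_F^\dagger$-module (and likewise over $H_F$), which is Prop.~\ref{H-free}.i here and \cite{OS} Prop.~4.21. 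That statement is \emph{not} routine ``coset bookkeeping'': it rests on choosing a distinguished system of representatives $d$ for the cosets $\widetilde{W}_F^\dagger \backslash \widetilde{W}$ with the length-additivity property $\ell(wd) = \ell(w) + \ell(d)$ for $w \in \widetilde{W}_F^\dagger$, so that $\{\tau_d\}$ is a left $H_F^\dagger$-basis of $H$ via the braid relations. With that in hand, $\mathbf{X}_F^\dagger \otimes_{H_F^\dagger} H \cong \bigoplus_d \mathbf{X}_F^\dagger \tau_d$, and the support analysis you gesture at (the subsets $\P_F^\dagger d I$ are pairwise disjoint and the right action of $\tau_d$ is injective on $\mathbf{X}_F^\dagger$) gives the isomorphism onto $\mathbf{X}^{I_F}$. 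The same combinatorics (freeness of $H_F^\dagger$ over $H_F$) is needed in your second step and is likewise not supplied by ``transitivity of induction'' alone. So the skeleton is sound, but it leans on the wrong flatness assertion, and the correct one is a nontrivial Coxeter-theoretic theorem that your sketch does not reproduce.
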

\begin{proof}
See \cite{OS} Lemma 3.8, Lemma 4.24, Prop.\ 4.25.
\end{proof}

\begin{proposition}\label{H-free}
For any facet $F \subseteq \overline{C}$ we have:
\begin{itemize}
  \item[i.] The algebra $H$ is free as a left or right $H_F^\dagger$-module; it also is free as a left or right $H_F$-module.
  \item[ii.] $H_F$ is a Frobenius algebra.
\end{itemize}
\end{proposition}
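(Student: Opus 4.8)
The plan is to deduce both statements from the structural results of Proposition \ref{firstlevel} together with known facts about the finite Hecke algebras attached to the reductions $\overline{\mathbf{G}}_F^\circ$. For part (i), the key point is that the three tensor-product isomorphisms of Proposition \ref{firstlevel} realize $\mathbf{X}^{I_F}$ as an induced/extended module from $H_F$ (resp.\ $H_F^\dagger$) up to $H$, but to get freeness one wants a direct statement at the level of the algebras. First I would recall that, since $I_F$ is normal in $\P_F^\dagger$, the algebra $H_F^\dagger$ is the Hecke algebra of the finite group $\P_F^\dagger/I_F$ with respect to its subgroup $I/I_F$, and similarly $H_F$ is the Hecke algebra of $\overline{\mathbf{G}}_F^\circ(\mathbb{F}_q)$ (a finite group with a $BN$-pair, or rather the relevant reductive quotient) with respect to the image $\overline{I}$ of $I$. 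For such finite Hecke algebras one has an explicit $k$-basis indexed by double cosets $I\backslash \P_F^\dagger/I$, and the Bruhat–Tits/Iwahori–Matsumoto combinatorics gives a basis of $H$ indexed by $I\backslash G/I$; the decomposition of $G$ into $\P_F^\dagger$-cosets (or the corresponding decomposition of the building) then exhibits the $I\backslash G/I$-basis of $H$ as a disjoint union of translates of the $I\backslash \P_F^\dagger/I$-basis. I would make this precise by choosing coset representatives and checking, via the braid/length relations for the basis elements $T_w$, that the corresponding family is a free basis of $H$ over $H_F^\dagger$ on both sides; the analogous argument, now using $I\backslash \mathbf{G}_F^\circ(\mathfrak{O})/I$ inside $I\backslash \P_F^\dagger/I$, handles freeness over $H_F$, and freeness over $H$ as an $H_F^\dagger$- and $H_F$-module follows by composing the two (or directly).

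For part (ii), that $H_F$ is a Frobenius algebra: I would invoke the fact that $H_F$ is the Hecke algebra $\End_{k[\overline{G}_F]}(k[\overline{G}_F/\overline{I}])^{\mathrm{op}}$ of a finite group $\overline{G}_F = \overline{\mathbf{G}}_F^\circ(\mathbb{F}_q)$ with respect to the unipotent radical $\overline{I}$ of a Borel, and such algebras are known to be Frobenius. Concretely, $k[\overline{G}_F/\overline{I}] = \ind_{\overline{I}}^{\overline{G}_F}(1)$ is a projective $k[\overline{G}_F]$-module (as $\overline{I}$ is a $p$-group and we are in the situation where one controls the relevant cohomology — here one uses that $\ind_{\overline{I}}^{\overline{G}_F}(1)$ is self-dual and that its endomorphism ring inherits a symmetrizing, or at least a Frobenius, form from the canonical pairing on the group algebra). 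A clean way is to cite the general result on Hecke algebras of finite groups, e.g.\ that $\End_{k[\overline{G}_F]}(\ind_{\overline{I}}^{\overline{G}_F}(1))$ is symmetric when the permutation module is self-dual with the standard basis self-dual up to the antipode; if a published reference is available (the companion paper \cite{OS} establishes Gorenstein/Frobenius properties for $H$ itself), I would simply cite it.

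The main obstacle I anticipate is part (i): turning the ``morally obvious'' statement that $H$ is glued together from copies of $H_F^\dagger$ along the building into an honest proof that the gluing is \emph{free} (as opposed to merely projective) on \emph{both} sides simultaneously. The two-sidedness is the delicate point, because a naive set of coset representatives for $I\backslash G / \P_F^\dagger$ gives a left basis but need not give a right basis without a compatibility check on lengths; one must choose representatives adapted to the Bruhat order so that $T_{w} T_{w'} = T_{ww'}$ whenever $w'$ lies in the finite parabolic piece and $w$ in the chosen transversal, on both sides. I would resolve this by the standard trick of choosing, in each double coset, the minimal-length representative and verifying the length-additivity $\ell(w w') = \ell(w) + \ell(w')$ for $w$ a distinguished $(I,\P_F^\dagger)$-transversal element and $w'$ ranging over the finite Weyl group of $\P_F^\dagger/I_F$; this makes the multiplication map $H_F^\dagger \otimes_k (\text{span of }T_w) \to H$ an isomorphism of right $H_F^\dagger$-modules, and symmetrically on the left. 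Everything here is contained in \cite{OS}, so in the write-up I expect to be able to reduce both (i) and (ii) to citations of Lemma/Prop.\ numbers there, as is done for Proposition \ref{firstlevel}; the proof proper would then be a one-line pointer, with the argument above recorded only as motivation.
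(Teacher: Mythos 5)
Your proposal and the paper's proof ultimately land in the same place: the paper disposes of the proposition entirely by citation, giving \cite{OS} Prop.\ 4.21.i for freeness in part~i.\ and \cite{Tin} Prop.\ 3.7 together with \cite{Saw} Thm.\ 2.4 for the Frobenius property in part~ii.\ — exactly the kind of reduction you anticipate at the end. Your sketched argument for part~i.\ (choosing minimal-length representatives in the double cosets $I\backslash G/\mathbf{G}_F^\circ(\mathfrak{O})$ or $I\backslash G/\P_F^\dagger$ and verifying length-additivity $\ell(ww')=\ell(w)+\ell(w')$ so that the family $\{\tau_w\}$ over a distinguished transversal is a two-sided basis over $H_F$ resp.\ $H_F^\dagger$) is indeed the content of the argument in \cite{OS}.

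There is, however, a genuine error in the motivating paragraph for part~ii. You assert that $\ind_{\overline{I}}^{\overline{G}_F}(1)$ is a projective $k[\overline{G}_F]$-module ``as $\overline{I}$ is a $p$-group''; this has it backwards. Over a field of characteristic $p$, induction preserves projectivity and restriction detects it, so $\ind_{\overline{I}}^{\overline{G}_F}(1)$ is projective if and only if the trivial module $k$ is projective over $k[\overline{I}]$, i.e.\ if and only if $p\nmid |\overline{I}|$. Since $\overline{I}$ is (the $\mathbb{F}_q$-points of) the unipotent radical of a Borel, it is a nontrivial $p$-group whenever $F$ is not the chamber $C$ itself (e.g.\ $F=x_0$, where $\overline{I}=\overline{\mathbf{N}}(\mathbb{F}_q)$), and the permutation module is then \emph{not} projective. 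The Frobenius/symmetry property of $\End_{k[\overline{G}_F]}(\ind_{\overline{I}}^{\overline{G}_F}(1))$ is a nontrivial fact about Hecke algebras of finite split $(B,N)$-pairs at natural characteristic, and is what Tinberg and Sawada prove; it does not come for free from projectivity or from a general symmetrizing form on the group algebra. Also, your suggestion to cite \cite{OS} for this point is a mismatch: \cite{OS} establishes that the full pro-$p$ Iwahori-Hecke algebra $H$ is Gorenstein, not that the finite subalgebras $H_F$ are Frobenius — the latter is prior input to \cite{OS}, not a consequence of it.
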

\begin{proof}
For i., see  \cite{OS} Prop.\ 4.21.i.; for  ii. see  \cite{Tin} Prop.\ 3.7 and \cite{Saw} Thm.\ 2.4.
\end{proof}

\subsubsection{$^\ast$-acyclicity of $H$-modules}

A left or right $H$-module $X$ is called $^\ast$-acyclic if
\begin{equation*}
    \Ext_H^i(X,H) = 0 \qquad\textrm{for any $i \geq 1$}
\end{equation*}
holds true. The functors $\Ext_H^i(.,H)$ transform arbitrary direct sums into direct products. Hence arbitrary direct sums of $^\ast$-acyclic modules are $^\ast$-acyclic.

Note that,  $H_F$ being a Frobenius algebra, it is self-injective and we have $\Ext_{H_F}^i(Y,H_F) = 0$ for any $i \geq 1$ and any $H_F$-module $Y$.

\begin{corollary}\label{basechange-acyclic}
Let $Y$ be a right $H_F$-module which is a possibly infinite direct sum of finitely generated $H_F$-modules; then the $H$-module $Y \otimes_{H_F} H$ is $^\ast$-acyclic.
\end{corollary}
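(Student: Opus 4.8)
The plan is to reduce the statement to the Frobenius (self-injective) property of $H_F$ via a base-change spectral sequence, using the freeness of $H$ over $H_F$ established in Proposition~\ref{H-free}. First I would treat the finitely generated case, that is, assume $Y$ is a finitely generated right $H_F$-module, and deduce the general case afterward by the direct-sum remark recorded just above: since each $\Ext_H^i(-,H)$ converts the direct sum $Y = \bigoplus_j Y_j$ into the product $\prod_j \Ext_H^i(Y_j\otimes_{H_F}H,\,H)$ (the tensor product commutes with direct sums), $^\ast$-acyclicity of each summand $Y_j\otimes_{H_F}H$ forces $^\ast$-acyclicity of $Y\otimes_{H_F}H$.

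For a finitely generated $Y$, the key input is that $H$ is free, hence flat, as a left $H_F$-module (Proposition~\ref{H-free}.i). Therefore $-\otimes_{H_F}H$ is exact, and one has the standard base-change isomorphism in $\Ext$: for any right $H_F$-module $Y$ that admits a resolution by finitely generated free $H_F$-modules (which exists here since $H_F$ is Frobenius, hence Noetherian, and $Y$ is finitely generated), there is a natural isomorphism
\begin{equation*}
    \Ext_H^i(Y\otimes_{H_F}H,\,H)\;\cong\;\Ext_{H_F}^i(Y,\,H)\ .
\end{equation*}
Concretely: pick a resolution $P_\bullet\to Y$ by finitely generated free right $H_F$-modules; then $P_\bullet\otimes_{H_F}H\to Y\otimes_{H_F}H$ is a free resolution over $H$ (free because flatness of $H$ keeps it exact and tensoring $H_F$ with $H$ gives $H$); applying $\Hom_H(-,H)$ and using the adjunction $\Hom_H(P\otimes_{H_F}H,\,H)\cong\Hom_{H_F}(P,\,H)$ for finitely generated free $P$ identifies the two cochain complexes, hence their cohomology. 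So $\Ext_H^i(Y\otimes_{H_F}H,H)\cong\Ext_{H_F}^i(Y,H)$.

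Now invoke the observation recorded in the paragraph preceding Corollary~\ref{basechange-acyclic}: $H_F$ is a Frobenius algebra, hence self-injective, so $\Ext_{H_F}^i(Y,H_F)=0$ for every $i\geq 1$ and every $H_F$-module $Y$. Here the coefficient module on the right-hand side is $H$ viewed as an $H_F$-module, not $H_F$ itself; but $H$ is free as a left $H_F$-module, say $H\cong H_F^{(\Lambda)}$ for some index set $\Lambda$, and $\Ext_{H_F}^i(Y,-)$ commutes with direct sums when $Y$ is finitely generated (so that it is built from finitely generated projectives in each degree of a resolution, making the relevant $\Hom$'s commute with coproducts). Hence $\Ext_{H_F}^i(Y,H)\cong\Ext_{H_F}^i(Y,H_F)^{(\Lambda)}=0$ for $i\geq 1$. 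Combining this with the base-change isomorphism gives $\Ext_H^i(Y\otimes_{H_F}H,H)=0$ for all $i\geq 1$, i.e.\ $Y\otimes_{H_F}H$ is $^\ast$-acyclic, which is the finitely generated case; the general direct-sum case then follows as explained above.

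The main obstacle, and the place that needs the most care, is the base-change isomorphism for $\Ext$: it requires a resolution of $Y$ by \emph{finitely generated} free (or at least finitely generated projective) $H_F$-modules so that the natural map $\Hom_H(P\otimes_{H_F}H,H)\to\Hom_{H_F}(P,H)$ is an isomorphism and so that, in the last step, $\Ext_{H_F}^i(Y,-)$ commutes with the (possibly infinite) direct sum $H\cong H_F^{(\Lambda)}$. This is exactly why the hypothesis on $Y$ is phrased as a direct sum of \emph{finitely generated} $H_F$-modules rather than an arbitrary module: one first disassembles $Y$ into its finitely generated pieces, runs the finite argument on each, and reassembles using that $\Ext_H^i(-,H)$ sends direct sums to products. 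All the ingredients---exactness of $-\otimes_{H_F}H$, the adjunction, Noetherianity of the Frobenius algebra $H_F$, and self-injectivity of $H_F$---are available from Proposition~\ref{H-free} and the remark immediately above, so no further input is needed.
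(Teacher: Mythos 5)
Your argument is correct and follows essentially the same route as the paper: reduce to finitely generated $Y$, base-change a finitely generated projective (free) $H_F$-resolution along the free extension $H_F\subseteq H$ to identify $\Ext_H^i(Y\otimes_{H_F}H,H)$ with $\Ext_{H_F}^i(Y,H)$, then use freeness of $H$ over $H_F$ and self-injectivity of $H_F$ to conclude this vanishes for $i\geq 1$. One small imprecision: the adjunction $\Hom_H(P\otimes_{H_F}H,H)\cong\Hom_{H_F}(P,H)$ is the general tensor--hom adjunction and does not require $P$ to be finitely generated; the finiteness of the $P_i$ is needed only at the later step where $\Ext_{H_F}^i(Y,-)$ must commute with the (possibly infinite) direct sum decomposition of $H$.
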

\begin{proof}
We may assume that $Y$ is finitely generated. The ring $H_F$ being noetherian, we then find a projective resolution $P_\bullet \longrightarrow Y$ by finitely generated projective $H_F$-modules. Then $P_\bullet \otimes_{H_F} H \longrightarrow Y \otimes_{H_F} H$, because of Prop.\ \ref{H-free}.i, is a projective resolution of $H$-modules, and we compute
\begin{equation*}
    \Ext_H^i(Y \otimes_{H_F} H,H) \\
    = h^i(\Hom_H(P_\bullet \otimes_{H_F} H,H))
    = h^i(\Hom_{H_F}(P_\bullet,H)) \\
    = \Ext_{H_F}^i(Y,H) \ .
\end{equation*}
Since the $P_\bullet$ are finitely generated modules the last identity in the above computation shows that the functors $\Ext_{H_F}^i(Y,.)$ commute with arbitrary direct sums. Hence using Prop.\ \ref{H-free}.i again we obtain
\begin{equation*}
    \Ext_H^i(Y \otimes_{H_F} H,H) = \Ext_{H_F}^i(Y,H) = H \otimes_{H_F} \Ext_{H_F}^i(Y,H_F) \ .
\end{equation*}
By the self-injectivity of the algebra $H_F$ the right hand side vanishes for $i \geq 1$ which establishes the $^\ast$-acyclicity of $Y \otimes_{H_F} H$.
\end{proof}

\begin{proposition}\label{level1-acyclic}
The $H$-modules $\mathbf{X}^{K_1}$ and $\mathbf{X}^{K_1}/H$ are $^\ast$-acyclic.
\end{proposition}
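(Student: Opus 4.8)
The plan is to exhibit $\mathbf{X}^{K_1}$ as a direct sum of modules of the form $Y\otimes_{H_F}H$ with $Y$ a finitely generated $H_F$-module, so that Corollary \ref{basechange-acyclic} applies directly; then to deal with the quotient $\mathbf{X}^{K_1}/H$ via a short exact sequence argument. Recall that $K_1 = I_{x_0}$, so $\mathbf{X}^{K_1}$ is the space of $K_1$-fixed functions in $\mathbf{X} = \ind_I^G(1)$. Decomposing $G$ into $K$-cosets and using that $K_1$ is normal in $K$, the $G$-translates of the "base" piece supported on $K$ give a decomposition of $\mathbf{X}^{K_1}$ into pieces each isomorphic (as an $H$-module, by the right $H$-action) to $\mathbf{X}_{x_0}\otimes_{H_{x_0}}H$, where $\mathbf{X}_{x_0} = \ind_I^{\mathbf{G}_{x_0}(\mathfrak{O})}(1) = \ind_I^K(1)$ and $H_{x_0} = [\ind_I^K(1)]^I$. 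More precisely, Proposition \ref{firstlevel} applied to $F = x_0$ gives $\mathbf{X}_{x_0}\otimes_{H_{x_0}}H \xrightarrow{\cong}\mathbf{X}^{K_1}$, so in fact a single application of that proposition already identifies $\mathbf{X}^{K_1}$ with a base-changed module. Since $H_{x_0}$ is a finite-dimensional $k$-algebra (being $[\ind_I^K(1)]^I$ with $K/I$ finite) and $\mathbf{X}_{x_0}$ is finitely generated over it (indeed finite-dimensional), the module $\mathbf{X}_{x_0}$ is a fortiori a direct sum of finitely generated $H_{x_0}$-modules, and Corollary \ref{basechange-acyclic} yields that $\mathbf{X}^{K_1} = \mathbf{X}_{x_0}\otimes_{H_{x_0}}H$ is $^\ast$-acyclic.

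For the second assertion, consider the short exact sequence of $H$-modules
\begin{equation*}
    0 \longrightarrow H \longrightarrow \mathbf{X}^{K_1} \longrightarrow \mathbf{X}^{K_1}/H \longrightarrow 0 \ ,
\end{equation*}
where $H \cong \ind_I^G(1)^I = \mathbf{X}^I$ sits inside $\mathbf{X}^{K_1}$ because $K_1 \subseteq I$. The associated long exact sequence for $\Ext_H^\bullet(-,H)$ reads
\begin{equation*}
    \cdots \longrightarrow \Ext_H^i(\mathbf{X}^{K_1},H) \longrightarrow \Ext_H^i(H,H) \longrightarrow \Ext_H^{i+1}(\mathbf{X}^{K_1}/H,H) \longrightarrow \Ext_H^{i+1}(\mathbf{X}^{K_1},H) \longrightarrow \cdots
\end{equation*}
Since $H$ is free, hence projective, as a left $H$-module, $\Ext_H^i(H,H) = 0$ for all $i \geq 1$; combined with the $^\ast$-acyclicity of $\mathbf{X}^{K_1}$ just established, the long exact sequence forces $\Ext_H^{i+1}(\mathbf{X}^{K_1}/H,H) = 0$ for all $i \geq 1$, i.e. $\Ext_H^j(\mathbf{X}^{K_1}/H,H) = 0$ for $j \geq 2$. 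To handle the remaining group $\Ext_H^1(\mathbf{X}^{K_1}/H,H)$, I would examine the map $\Ext_H^0(\mathbf{X}^{K_1},H) = \Hom_H(\mathbf{X}^{K_1},H) \to \Hom_H(H,H) = H$ induced by the inclusion $H \hookrightarrow \mathbf{X}^{K_1}$: one needs this restriction map to be surjective. This should follow from the existence of an $H$-linear retraction $\mathbf{X}^{K_1} \to H$ — for instance, the map dual to the decomposition of $\mathbf{X}^{K_1}$ coming from Proposition \ref{firstlevel}, or concretely an averaging/projection onto the summand $\mathbf{X}^I = H$; equivalently, $H$ is a direct summand of $\mathbf{X}^{K_1}$ as an $H$-module (this is essentially the hypothesis \sur\ in the rank-one setting, and at level $K_1$ it holds for the structural reason that the "identity" summand splits off). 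Granting the retraction, the restriction map on $\Hom_H(-,H)$ is split surjective, the connecting map $\Hom_H(H,H) \to \Ext_H^1(\mathbf{X}^{K_1}/H,H)$ is zero, and the exact sequence then gives $\Ext_H^1(\mathbf{X}^{K_1}/H,H) \hookrightarrow \Ext_H^1(\mathbf{X}^{K_1},H) = 0$.

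The main obstacle is the surjectivity of the restriction $\Hom_H(\mathbf{X}^{K_1},H) \to \Hom_H(H,H)$, equivalently the existence of an $H$-module splitting of $H \hookrightarrow \mathbf{X}^{K_1}$: the higher $\Ext$-vanishing is formal once $\mathbf{X}^{K_1}$ is known to be $^\ast$-acyclic, but the $\Ext^1$ part genuinely requires producing (or citing) such a retraction. I expect this to come from the explicit structure of $\mathbf{X}_{x_0}$ over the Frobenius algebra $H_{x_0}$ — the trivial/identity component splits off $H_{x_0}$-linearly, and base-changing along $H_{x_0}\to H$ (which is flat, even free, by Proposition \ref{H-free}.i) preserves the splitting — so the argument ultimately reduces to a finite-dimensional statement about $H_{x_0}$ and $\mathbf{X}_{x_0}$ that is checked directly.
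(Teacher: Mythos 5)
Your first part is exactly the paper's argument. For the quotient, though, you take a different and more roundabout route than the paper, which simply observes that by flatness of $H$ over $H_{x_0}$ (Prop.\ \ref{H-free}.i) one has
\begin{equation*}
  \mathbf{X}^{K_1}/H \;=\; \big(\mathbf{X}_{x_0}\otimes_{H_{x_0}}H\big)\big/\big(H_{x_0}\otimes_{H_{x_0}}H\big) \;\cong\; (\mathbf{X}_{x_0}/H_{x_0})\otimes_{H_{x_0}}H\ ,
\end{equation*}
so that Corollary \ref{basechange-acyclic} applies directly a second time with $Y = \mathbf{X}_{x_0}/H_{x_0}$. No long exact sequence and no retraction is needed.

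Your route works, but it leaves a real gap in the one place you identify as the ``main obstacle'': you need a splitting of $H\hookrightarrow\mathbf{X}^{K_1}$ and you only gesture at its existence. The clean way to supply it is the same ingredient the paper uses elsewhere, namely self-injectivity of $H_{x_0}$: since $H_{x_0}$ is a Frobenius algebra (Prop.\ \ref{H-free}.ii), it is injective as a module over itself, so the inclusion of right $H_{x_0}$-modules $H_{x_0}\hookrightarrow\mathbf{X}_{x_0}$ splits; applying $-\otimes_{H_{x_0}}H$ transports the splitting to $H\hookrightarrow\mathbf{X}^{K_1}$ via Proposition \ref{firstlevel}. Once you add that sentence your argument is complete. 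Note that what you are proving along the way is strictly weaker than hypothesis \sur\ (which concerns $H\subseteq\mathbf{X}$, not just $H\subseteq\mathbf{X}^{K_1}$ and is only established later, in Cor.\ \ref{split}), so there is no circularity — but the paper's direct tensor-quotient identification avoids the whole issue and is the shorter path.
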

\begin{proof}
Since $\mathbf{X}_{x_0}$ is a finitely generated $H_{x_0}$-module the $^\ast$-acyclicity of $\mathbf{X}^{K_1}$ follows immediately from Prop.\ \ref{firstlevel} and Cor.\ \ref{basechange-acyclic}.
By the same argument $\mathbf{X}^{K_1}/H = (\mathbf{X}_{x_0}/H_{x_0}) \otimes_{H_{x_0}} H$ is $^\ast$-acyclic.
\end{proof}

\subsubsection{The hypothesis \sur}

As will become apparent the key object to understand is the homomorphism of left $H$-modules
\begin{equation*}
  \rho : \mathbf{X}^* \longrightarrow H^* = H
\end{equation*}
which is dual to the inclusion $H \subseteq \mathbf{X}$. Since we will achieve a complete understanding only in special cases we will usually work under the following assumption.\\

\noindent\textbf{Hypothesis \sur:} The map $\rho : \mathbf{X}^* \longrightarrow H^* = H$ is surjective.

\begin{remark}\label{sur}
Equivalent are:
\begin{itemize}
  \item[i.] \sur holds true.
  \item[ii.] $H \subseteq \mathbf{X}$ is a direct factor as a (right) $H$-module.
\end{itemize}
\end{remark}

\begin{lemma}\label{not-p-sur}
If the characteristic of $k$ is different from $p$, then \sur holds true.
\end{lemma}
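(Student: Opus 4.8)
The plan is to show that when $\operatorname{char}(k) \neq p$, the inclusion $H = \mathbf{X}^I \hookrightarrow \mathbf{X}$ splits as a map of right $H$-modules, which by Remark \ref{sur} is equivalent to \sur. The natural candidate for a splitting is an averaging (idempotent-projection) operator: since $I$ is a pro-$p$ group and $p$ is invertible in $k$, every smooth $k[I]$-module $V$ decomposes canonically as $V = V^I \oplus V_I$, where $V_I$ is spanned by vectors of the form $v - iv$; concretely, for any open subgroup $I' \trianglelefteq I$ with $v$ fixed by $I'$, the element $\frac{1}{[I:I']}\sum_{i \in I/I'} iv$ defines an $I$-equivariant projector $e_I : V \twoheadrightarrow V^I$ independent of the choice of $I'$. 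Applying this with $V = \mathbf{X}$ gives a $k$-linear retraction $e_I : \mathbf{X} \to \mathbf{X}^I = H$ of the inclusion.

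The key point to check is that $e_I$ is a homomorphism of right $H$-modules, i.e. that it commutes with the $G$-endomorphisms of $\mathbf{X}$ that constitute $H$. First I would recall that $H$ acts on $\mathbf{X} = \operatorname{ind}_I^G(1)$ on the right through $k[G]$-linear endomorphisms, so each $h \in H$ commutes with the $G$-action and in particular with the $I$-action on $\mathbf{X}$. Hence $h$ maps $\mathbf{X}^I$ into $\mathbf{X}^I$ and, more to the point, commutes with the averaging operator $e_I$: for a vector $v$ fixed by an open normal subgroup $I' \trianglelefteq I$, the vector $hv$ is fixed by $I'$ as well (as $h$ is $I$-equivariant), and $h\big(\frac{1}{[I:I']}\sum_{i} iv\big) = \frac{1}{[I:I']}\sum_i i(hv) = e_I(hv)$. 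Thus $e_I \circ h = h \circ e_I$ on $\mathbf{X}$, so $e_I$ is right $H$-linear. Since $e_I$ restricts to the identity on $\mathbf{X}^I = H$, it is the desired retraction, and $H$ is a direct summand of $\mathbf{X}$ as a right $H$-module.

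I do not anticipate a serious obstacle here: the only subtlety is purely bookkeeping — verifying that the local averages $\frac{1}{[I:I']}\sum_{i\in I/I'} iv$ are compatible as $I'$ shrinks, so that $e_I$ is well defined on all of the smooth representation $\mathbf{X}$, and that the normalizing factor $[I:I']$ is indeed a power of $p$ and hence invertible in $k$ (this uses that $I$ is pro-$p$). Once $e_I$ is in hand, the $H$-linearity is the formal computation above, and the lemma follows from Remark \ref{sur}. Alternatively, one could phrase the same argument via the idempotent $\varepsilon \in k[I']\!\setminus$ — viewing $e_I$ as right convolution by a suitably normalized idempotent measure supported on $I$ — but the elementary averaging description is the most transparent.
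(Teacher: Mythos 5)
Your proposal is correct and follows essentially the same route as the paper: the paper's splitting $f \mapsto \frac{1}{[I:I_f]}\sum_{g \in I/I_f} f(g^{-1}\cdot)$ is exactly your averaging projector $e_I$ (since $(g\cdot f)(x)=f(g^{-1}x)$), and in both cases the $H$-linearity comes from the fact that $H$ acts by $G$-equivariant, hence $I$-equivariant, endomorphisms of $\mathbf{X}$. The paper merely leaves the $H$-linearity and well-definedness checks implicit, which you have spelled out.
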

\begin{proof}
Under our assumption the map ``convolution from the left with the unit element $\mathrm{char}_I \in H$'' explicitly given by
\begin{align*}
  \mathbf{X} & \longrightarrow H \\
  f & \longmapsto \frac{1}{[I:I_f]} \sum_{g \in I/I_f} f(g^{-1}.) \ ,
\end{align*}
where $I_f \subseteq I$ is any open subgroup fixing $f$, is well defined and provides a splitting of the inclusion $H \subseteq \mathbf{X}$.
\end{proof}

\subsection{\label{sec:defiTP}The relevant torsion pair}

We recall the descending sequence $K_1 \supseteq \ldots \supseteq K_m \supseteq \ldots$ of open normal subgroups in $I$ as defined in \S\ref{setting}. For any $m \geq 1$ there is the obvious commutative diagram
\begin{equation}\label{gradedtau}
    \xymatrix{
      \mathbf{X} \otimes_H M  \ar[r]^-{\tau_M} &
      \Hom_H(\mathbf{X}^\ast,M)  \\
      \mathbf{X}^{K_m} \otimes_H M \ar[u] \ar[r] &
      \Hom_H((\mathbf{X}^{K_m})^\ast,M). \ar[u]  }
\end{equation}
The inclusion $\mathbf{X}^{K_m} \subseteq \mathbf{X}$ dualizes to a map of $(I,H)$-bimodules $\mathbf{X}^\ast \longrightarrow (\mathbf{X}^{K_m})^\ast$. We define
\begin{equation*}
  \mathbf{Z}_m := \im (\mathbf{X}^\ast \longrightarrow (\mathbf{X}^{K_m})^\ast) \ .
\end{equation*}
which is an $(I,H)$-bimodule. The above commutative diagram  can now be rewritten as
\begin{equation*}
    \xymatrix{
      \mathbf{X} \otimes_H M  \ar[r]^-{\tau_M} &
      \Hom_H(\mathbf{X}^\ast,M)  \\
      \mathbf{X}^{K_m} \otimes_H M \ar[u] \ar[r] &
      \Hom_H(\mathbf{Z}_m,M). \ar[u]  }
\end{equation*}
such that the right perpendicular arrow has become injective. Since $\mathbf{X} = \bigcup_m \mathbf{X}^{K_m}$ we see that
\begin{equation*}
    \ften (M) \subseteq \bigcup_m \Hom_H(\mathbf{Z}_m,M) \subseteq \Hom_H(\mathbf{X}^\ast,M) \ .
\end{equation*}
We now introduce the family $\mathcal{Z} = \{Z_m\}_{m \geq 1}$ of $H$-modules
\begin{equation}\label{defiZm}
    Z_m := \ker \big( (\mathbf{Z}_m)_I \longrightarrow ((\mathbf{X}^{K_m})^\ast)_I \xrightarrow{\; (\subseteq^\ast)_I \;} (\mathbf{X}^I)^\ast = H \big)
\end{equation}
and let $(\mathcal{F}_G, \mathcal{T}_G)$ denote the torsion pair in $\Mod(H)$ generated by $\mathcal{Z}$.

\begin{theorem}\label{F-embeds}
Assuming \sur we have $\fhom \circ \ften | \mathcal{F}_G = \id_{\mathcal{F}_G}$.
\end{theorem}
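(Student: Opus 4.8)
The plan is to compute $\fhom(\ften(M)) = \ften(M)^I$ and identify it with $M$ using the factorization of $\tau_M$ through $\ften(M)$ together with the description of $Z_m$ in \eqref{defiZm}. Recall that $\ften(M) = \im(\tau_M)$ sits inside $\bigcup_m \Hom_H(\mathbf{Z}_m,M)$ by the discussion preceding the theorem. First I would note that $\tau_M$ itself factors as $\mathbf{X}\otimes_H M \twoheadrightarrow \ften(M) \hookrightarrow \ften_1(M)$, and that the unit $M \to \fhom(\ften_0(M)) = (\mathbf{X}\otimes_H M)^I$ of the adjunction of Remark~\ref{easy}.i composed with $(\tau_M)^I$ gives a natural map $M \to \ften(M)^I$. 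Since $\ften$ preserves surjections (Remark~\ref{inj-surj}) and $\fhom$ is left exact, it will suffice to show this composite $M \to \ften(M)^I$ is an isomorphism.

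The key computation is to understand $\ften(M)^I$ via the level-$m$ pieces. Taking $I$-invariants commutes with the filtered union $\ften(M) = \bigcup_m \big(\ften(M)\cap \Hom_H(\mathbf{Z}_m,M)\big)$, and $I$-invariants of $\Hom_H(\mathbf{Z}_m,M)$ — where $I$ acts through its action on the $(I,H)$-bimodule $\mathbf{Z}_m$ — should be computable as $\Hom_H((\mathbf{Z}_m)_I, M)$, the $I$-coinvariants appearing because we are mapping out of $\mathbf{Z}_m$. Here I would use that $I$ is pro-$p$ and $k$ has the relevant characteristic so that over the finite quotients $I/K_m$ invariants and coinvariants of the relevant finite-dimensional pieces are linked; more robustly, $\mathbf{Z}_m$ is generated as an $H$-module in a way compatible with the $I$-action so that $\Hom_H(\mathbf{Z}_m,M)^I = \Hom_H((\mathbf{Z}_m)_I,M)$ holds on the nose. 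Then the bottom row of the rewritten commutative diagram identifies the image of $\mathbf{X}^{K_m}\otimes_H M$ in $\Hom_H(\mathbf{Z}_m,M)$, and passing to $I$-invariants I get a map $(\mathbf{X}^{K_m}\otimes_H M)^I = (\mathbf{X}^I)\otimes_H M \to \Hom_H((\mathbf{Z}_m)_I,M)$ whose further composition with $(\subseteq^\ast)_I$ lands in $\Hom_H(H,M)=M$.

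Now \sur enters: by Remark~\ref{sur} it says $H\subseteq \mathbf{X}$ is a direct summand of right $H$-modules, equivalently $\rho:\mathbf{X}^\ast\to H$ is surjective, equivalently each $(\subseteq^\ast)_I : ((\mathbf{X}^{K_m})^\ast)_I \to H$ restricted to $(\mathbf{Z}_m)_I$ is surjective with kernel exactly $Z_m$ by definition \eqref{defiZm}. Thus we get a short exact sequence $0 \to Z_m \to (\mathbf{Z}_m)_I \to H \to 0$, and applying $\Hom_H(-,M)$ yields $0 \to \Hom_H(H,M) \to \Hom_H((\mathbf{Z}_m)_I,M) \to \Hom_H(Z_m,M)$. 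The first term is $M$ and the last term is $0$ precisely because $M\in\mathcal{F}_G$ means $\Hom_H(Z_m,M)=0$ for all $m$. Hence $\Hom_H((\mathbf{Z}_m)_I,M)\cong M$ for every $m$, compatibly in $m$, and chasing through the diagram this identification is exactly the map induced by $\tau_M$ on $I$-invariants at level $m$. Taking the union over $m$ gives $\ften(M)^I \cong M$ naturally, which is the assertion. The main obstacle I anticipate is the bookkeeping in the second paragraph: carefully justifying that $\Hom_H(\mathbf{Z}_m,M)^I$ computes $\Hom_H((\mathbf{Z}_m)_I,M)$ and that the $I$-invariants of $\im(\tau_M)$ really do coincide with $\bigcup_m M$ under these identifications — i.e.\ that no extra $I$-invariant vectors appear from the ambient $\Hom_H(\mathbf{X}^\ast,M)$ — rather than the final homological argument, which is short once the setup is in place.
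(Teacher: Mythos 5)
Your proposal follows essentially the same route as the paper's proof: factor $\tau_M$ through $\ften(M)$, chase the unit $m \mapsto [\lambda \mapsto \lambda(\mathrm{char}_I)m]$ through the inclusion $\ften(M)^I \subseteq \bigcup_m \Hom_H((\mathbf{Z}_m)_I, M)$, use \sur\ to produce the split short exact sequence $0 \to Z_m \to (\mathbf{Z}_m)_I \to H \to 0$, and apply $\Hom_H(-,M)$ with $\Hom_H(Z_m,M)=0$ to force $\Hom_H((\mathbf{Z}_m)_I,M)=M$. That is exactly the paper's argument.

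Two small comments, both on things you flag as worries but which in fact need no work. First, the identity $\Hom_H(\mathbf{Z}_m,M)^I = \Hom_H((\mathbf{Z}_m)_I,M)$ is purely formal: an $H$-linear map $\mathbf{Z}_m \to M$ is $I$-invariant for the contragredient action if and only if it factors through the $I$-coinvariants of $\mathbf{Z}_m$. No characteristic hypothesis and no pro-$p$ structure enter. Second, the intermediate claim $(\mathbf{X}^{K_m}\otimes_H M)^I = \mathbf{X}^I \otimes_H M$ is false in general — taking $I$-invariants does not commute with $\otimes_H M$ — but you never actually need it: the unit map $M = \mathbf{X}^I \otimes_H M \to (\mathbf{X}\otimes_H M)^I$ together with the inclusion $\ften(M)^I \hookrightarrow \Hom_H((\mathbf{Z}_m)_I,M)$ is all the diagram requires, and deleting that equality leaves your argument intact. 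The one genuine verification you do gloss over (and the paper too calls it "easily checked") is that the composite $M \to \ften(M)^I \hookrightarrow \Hom_H((\mathbf{Z}_m)_I,M)$ agrees with the map $\Hom_H(H,M) \to \Hom_H((\mathbf{Z}_m)_I,M)$ coming from $(\mathbf{Z}_m)_I \twoheadrightarrow H$; this is a direct unwinding of the definition of $\tau_M$ at $\mathrm{char}_I$, and once you have it, the injectivity of $\ften(M)^I \hookrightarrow \Hom_H((\mathbf{Z}_m)_I,M)$ squeezes $M \to \ften(M)^I$ into being an isomorphism — exactly your "no extra invariants" concern, resolved.
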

\begin{proof}
For any $M$ in $\mathcal{F}_G$ we have the chain of maps
\begin{multline*}
     M \longrightarrow \Hom_G(\mathbf{X},\mathbf{X} \otimes_H M) = (\mathbf{X} \otimes_H M)^I \longrightarrow \ften(M)^I \\
     \xrightarrow{\; \subseteq \;}
    \bigcup_m \Hom_H(\mathbf{Z}_m,M)^I = \bigcup_m \Hom_H((\mathbf{Z}_m)_I,M)
\end{multline*}
which sends an element $m$ to $\lambda \mapsto \lambda(\mathrm{char}_I)m$. As a consequence of our assumption \sur we have, on the other hand,  the exact sequence of $H$-modules
\begin{equation}\label{defiZm-sur}
  0 \longrightarrow Z_m \longrightarrow (\mathbf{Z}_m)_I \longrightarrow H \longrightarrow 0.
\end{equation}
It induces a map $M = \Hom_H(H,M) \longrightarrow \Hom_H((\mathbf{Z}_m)_I,M)$ which is easily checked to coincide with the above one. But by the definition of $\mathcal{F}_G$ this latter map must be an isomorphism. It follows that $\ften(M)^I = M$.
\end{proof}

\begin{corollary}\label{fully-faithful}
Assuming \sur we have:
\begin{itemize}
  \item[i.] The functor $\ften|\mathcal{F}_G : \mathcal{F}_G \longrightarrow \Mod^I(G)$ is fully faithful.
  \item[ii.] If $f : V_1 \longrightarrow V_2$ is a surjective homomorphism in the essential image of the functor $\ften |\mathcal{F}_G$ then the map $h(f) : V_1^I \longrightarrow V_2^I$ is surjective as well.
\end{itemize}
\end{corollary}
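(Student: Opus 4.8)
The plan is to deduce Corollary~\ref{fully-faithful} directly from Theorem~\ref{F-embeds}, which supplies the crucial identity $\fhom \circ \ften\vert_{\mathcal F_G} = \id_{\mathcal F_G}$, together with the adjunction between $\ften_0$ and $\fhom$ and the formal properties of $\ften$ recorded in Remarks~\ref{easy} and~\ref{inj-surj}.

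\medskip

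\emph{Proof of i.} Let $M, N$ be objects of $\mathcal F_G$. I must show that $\ften$ induces a bijection $\Hom_H(M,N) \xrightarrow{\ \cong\ } \Hom_G(\ften(M), \ften(N))$. The natural transformation $\tau$ together with the adjunction unit gives, for each $H$-module map $\varphi\colon M \to N$, a $G$-equivariant map $\ften(\varphi)\colon \ften(M)\to \ften(N)$; this is functorial, so we have a map of Hom-sets, and it remains to exhibit a two-sided inverse. Applying the functor $\fhom = (-)^I$ to a $G$-map $\psi\colon \ften(M)\to \ften(N)$ produces an $H$-module map $\psi^I\colon \ften(M)^I \to \ften(N)^I$. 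By Theorem~\ref{F-embeds} the canonical maps $M \to \ften(M)^I$ and $N\to \ften(N)^I$ are isomorphisms (this is precisely the statement $\fhom\circ\ften\vert_{\mathcal F_G} = \id_{\mathcal F_G}$, unwound via the natural transformation constructed in the proof of that theorem), so $\psi^I$ transports to an $H$-map $M \to N$. This defines the candidate inverse $\Hom_G(\ften(M),\ften(N)) \to \Hom_H(M,N)$. That the composite $\Hom_H(M,N)\to \Hom_H(M,N)$ is the identity is immediate from the same theorem. For the reverse composite I would argue as follows: a $G$-map $\psi\colon \ften(M)\to\ften(N)$ is determined by its restriction to $\ften(M)^I$, because $\ften(M)$ lies in $\Mod^I(G)$ (Remark~\ref{easy}.ii, and $\ften(M)$ is a quotient of $\ften_0(M) = \mathbf X\otimes_H M$, which is generated by its image of $\mathrm{char}_I\otimes M$), hence is generated as a $G$-representation by $\ften(M)^I$; thus two $G$-maps agreeing on $I$-fixed vectors coincide. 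This is exactly the faithfulness in Remark~\ref{easy}.iii applied to $\Mod^I(G)$. Combining these, $\ften\vert_{\mathcal F_G}$ is fully faithful.

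\medskip

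\emph{Proof of ii.} Suppose $f\colon V_1 \to V_2$ is a surjection in the essential image of $\ften\vert_{\mathcal F_G}$, so $V_i \cong \ften(M_i)$ for $M_i\in\mathcal F_G$ and $f = \ften(\varphi)$ for some $\varphi\colon M_1\to M_2$ (using full faithfulness from part i.). I claim $\varphi$ is surjective. Indeed, $\ften$ preserves surjections (Remark~\ref{inj-surj}), but we need the converse implication; instead argue that $M_2' := \varphi(M_1)\subseteq M_2$ gives a factorization $M_1\twoheadrightarrow M_2'\hookrightarrow M_2$, and since $\mathcal F_G$ is closed under submodules (fact~1 in \S\ref{sec:1}), $M_2'\in\mathcal F_G$; applying $\ften$ and using that it preserves both injections and surjections yields $\ften(M_1)\twoheadrightarrow\ften(M_2')\hookrightarrow\ften(M_2)$ whose composite is the surjection $f$, forcing $\ften(M_2')\hookrightarrow\ften(M_2)$ to be onto, hence an isomorphism; by full faithfulness $M_2'=M_2$, i.e.\ $\varphi$ is surjective. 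Now apply $\fhom$: by Theorem~\ref{F-embeds}, $h(f) = h(\ften(\varphi))$ is identified with $\varphi\colon M_1 = \ften(M_1)^I \to \ften(M_2)^I = M_2$ under the isomorphisms of that theorem, and $\varphi$ is surjective. Hence $h(f)$ is surjective.

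\medskip

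The main obstacle is part i., specifically verifying that $\ften(M)$ is generated over $G$ by its $I$-fixed subspace so that a $G$-morphism out of it is determined by its effect on $\fhom$; this is where Remark~\ref{easy}.ii--iii and the surjection $\ften_0(M)\twoheadrightarrow\ften(M)$ do the work. Everything else is a formal consequence of Theorem~\ref{F-embeds} and the closure properties of the torsionfree class.
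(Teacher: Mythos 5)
Your proof is correct and is essentially the paper's argument. Part i. is exactly the formal deduction from Theorem \ref{F-embeds} and Remark \ref{easy}.iii that the paper cites (your paragraph re-deriving the faithfulness of $\fhom|\Mod^I(G)$ just re-proves Remark \ref{easy}.iii), and part ii. is the paper's factorization argument transported from $\im(h(f)) \subseteq V_2^I$ to $\varphi(M_1) \subseteq M_2$ via the isomorphisms $M_i \cong V_i^I$ of Theorem \ref{F-embeds}, so the two are the same reasoning in different coordinates.
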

\begin{proof}
i. This follows from Thm.\ \ref{F-embeds} and Remark \ref{easy}.iii. ii. We factorize $f$ as
\begin{equation*}
  \xymatrix{
  V_1^I \ar[rr]^{h(f)} \ar[dr]_-{\overline{h(f)}}
                &  &    V_2^I     \\
                & \im(h(f)). \ar[ur]_-{\subseteq}                }
\end{equation*}
With $V_2^I$ also $\im(h(f))$ lies in $\mathcal{F}_G$. Therefore, in order to see that $\im(h(f)) = V_2^I$ holds true it suffices to check that $\ften(\subseteq) : \ften(\im(h(f))) \longrightarrow \ften(V_2^I) \cong V_2$ is bijective. But applying $\ften$ to the above diagram, by Remark \ref{inj-surj}, leads to a commutative diagram
\begin{equation*}
  \xymatrix{
  V_1 \ar[rr]^{f} \ar[dr]
                &  &  V_2 \cong \ften(V_2^I)     \\
                & \ften(\im(h(f))) \ar[ur]_-{\ften(\subseteq)}                }
\end{equation*}
in which the map $\ften(\subseteq)$ is injective. Hence the surjectivity of $f$ implies that $\ften(\subseteq)$ is bijective.
\end{proof}

\begin{proposition}\label{Z1}
$Z_1 = 0$.
\end{proposition}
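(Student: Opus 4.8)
The plan is to show that $Z_1 = 0$ by identifying what the maps defining it actually are when $m = 1$, and using the $^\ast$-acyclicity results already established for level-one cohomology. Recall that $Z_1 = \ker\big((\mathbf{Z}_1)_I \to (\mathbf{X}^I)^\ast = H\big)$, where $\mathbf{Z}_1 = \im(\mathbf{X}^\ast \to (\mathbf{X}^{K_1})^\ast)$. The key observation is that $K_1 = I_{x_0}$, so $\mathbf{X}^{K_1}$ is exactly the module studied in Prop.\ \ref{level1-acyclic} and its associated $^\ast$-acyclicity statement. So the first step is to analyze the dual sequence: apply $\Hom_H(-,H)$ to the inclusion $H \subseteq \mathbf{X}^{K_1} \subseteq \mathbf{X}$ of right $H$-modules. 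Since $\mathbf{X}^{K_1}/H$ is $^\ast$-acyclic by Prop.\ \ref{level1-acyclic}, the functor $\Hom_H(-,H)$ applied to $0 \to H \to \mathbf{X}^{K_1} \to \mathbf{X}^{K_1}/H \to 0$ yields that $(\mathbf{X}^{K_1})^\ast \to H^\ast = H$ is surjective, and more relevantly, gives control over $\mathbf{Z}_1 = \im(\mathbf{X}^\ast \to (\mathbf{X}^{K_1})^\ast)$.

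Next I would compare the two candidate maps $(\mathbf{Z}_1)_I \to H$ appearing in \eqref{defiZm} and show the composite $(\mathbf{X}^{K_1})^\ast \to (\mathbf{X}^I)^\ast = H$ is essentially an isomorphism after restricting to $\mathbf{Z}_1$ and passing to $I$-coinvariants. The heart of the matter: since $\mathbf{X}^{K_1}$ is $^\ast$-acyclic and is a base change $\mathbf{X}_{x_0} \otimes_{H_{x_0}} H$ by Prop.\ \ref{firstlevel}, one has $(\mathbf{X}^{K_1})^\ast = \Hom_H(\mathbf{X}_{x_0} \otimes_{H_{x_0}} H, H) = \Hom_{H_{x_0}}(\mathbf{X}_{x_0}, H) = \mathbf{X}_{x_0}^\ast \otimes_{H_{x_0}} H$ (using freeness and finite generation from Prop.\ \ref{H-free}), and similarly for the computation of $(\mathbf{X}^\ast)$ one tracks the image $\mathbf{Z}_1$. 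Then take $I$-coinvariants: since $K_1 = I_{x_0}$ is normal in $\mathbf{G}_{x_0}(\mathfrak{O})$ and $\overline{I} = I/K_1$ acts, the coinvariants $(\mathbf{Z}_1)_I$ should be computed via the reduction to the finite-group / Frobenius-algebra situation over $H_{x_0}$, where self-injectivity forces the relevant kernel to vanish.

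I expect the main obstacle to be the bookkeeping around the two functors "$(-)_I$" (coinvariants) and "$(-)^\ast = \Hom_H(-,H)$" and the order in which they interact — in particular, verifying that the natural map $(\mathbf{Z}_1)_I \to (\mathbf{X}^I)^\ast$ is injective, which is what actually makes $Z_1 = 0$ rather than merely finitely generated. Concretely, one must check that $\mathbf{X}^\ast \to (\mathbf{X}^{K_1})^\ast$ has image $\mathbf{Z}_1$ for which the composite to $H$ is injective on coinvariants; this amounts to showing that the "level $\leq 1$" part of $\mathbf{X}^\ast$ already sees all of $H$ faithfully, which is where the $^\ast$-acyclicity of both $\mathbf{X}^{K_1}$ and $\mathbf{X}^{K_1}/H$ is used in tandem. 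Once that injectivity is in place, $Z_1 = \ker$ of an injection is automatically $0$. An alternative, possibly cleaner route is to observe directly from Prop.\ \ref{firstlevel} that $\mathbf{X}^{K_1} = \mathbf{X}_{x_0} \otimes_{H_{x_0}} H$ with $\mathbf{X}_{x_0}$ a finitely generated module over the Frobenius algebra $H_{x_0}$, so that the whole diagram \eqref{gradedtau} at $m=1$ is obtained by base change along $H_{x_0} \hookrightarrow H$ from the analogous (and trivially exact) situation inside the self-injective ring $H_{x_0}$, forcing $Z_1 = 0$ without assuming \sur.
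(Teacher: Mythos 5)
Your final sentence (``an alternative, possibly cleaner route is to observe directly from Prop.~\ref{firstlevel} that $\mathbf{X}^{K_1} = \mathbf{X}_{x_0} \otimes_{H_{x_0}} H$ \ldots forcing $Z_1 = 0$ without assuming \sur'') is in fact the route the paper takes, and it is the only route of the two you sketch that works. Your main argument, which leans on the $^\ast$-acyclicity of $\mathbf{X}^{K_1}$ and of $\mathbf{X}^{K_1}/H$, has a gap. $^\ast$-acyclicity controls the functors $\Ext^i_H(-,H)$ and therefore yields statements such as the \emph{surjectivity} of $(\mathbf{X}^{K_1})^\ast \to H^\ast = H$. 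But the vanishing of $Z_1$ requires the \emph{injectivity} of the map $((\mathbf{X}^{K_1})^\ast)_I \to (\mathbf{X}^I)^\ast = H$ obtained after passing to $I$-coinvariants, and this is a statement about group homology, not about $\Ext_H$. Applying $(-)_I$ to the dual short exact sequence $0 \to (\mathbf{X}^{K_1}/H)^\ast \to (\mathbf{X}^{K_1})^\ast \to H \to 0$ produces an $H_1(I,-)$ term on the left, so $^\ast$-acyclicity on its own gives you surjectivity of $((\mathbf{X}^{K_1})^\ast)_I \to H$ but no control over its kernel. In other words, the phrase ``self-injectivity forces the relevant kernel to vanish'' is exactly the step that is not justified by the results you cite.

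What actually closes the gap, and what the paper does, is a self-duality argument over the Frobenius algebra $H_{x_0}$ (a different use of the Frobenius property than $^\ast$-acyclicity). One first uses Prop.~\ref{firstlevel} and the freeness of $H$ over $H_{x_0}$ (Prop.~\ref{H-free}.i) to identify $((\mathbf{X}^{K_1})^\ast)_I$ with $H \otimes_{H_{x_0}} \Hom_{H_{x_0}}(\mathbf{X}_{x_0},H_{x_0})_{\overline{\mathbf{N}}(\mathbb{F}_q)}$, noting that the $I$-action factors through $\overline{\mathbf{N}}(\mathbb{F}_q)$. The claim then reduces to showing that the natural (coinvariants-to-invariants) map
\begin{equation*}
    \Hom_{H_{x_0}}(\mathbf{X}_{x_0}, H_{x_0})_{\overline{\mathbf{N}}(\mathbb{F}_q)}  \longrightarrow \Hom_{H_{x_0}}(\mathbf{X}_{x_0}^{\overline{\mathbf{N}}(\mathbb{F}_q)} , H_{x_0}) = H_{x_0}
\end{equation*}
is bijective. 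This is where the Frobenius property enters: since $\Hom_{H_{x_0}}(-,H_{x_0}) \cong \Hom_k(-,k)$ as functors, the bijectivity of this map between finite-dimensional vector spaces can be tested by applying $\Hom_{H_{x_0}}(-,H_{x_0})$, under which both sides become $\mathbf{X}_{x_0}^{\overline{\mathbf{N}}(\mathbb{F}_q)}$. This gives $((\mathbf{X}^{K_1})^\ast)_I = H$ and hence $Z_1 = 0$. So you identified the right ingredients (Prop.~\ref{firstlevel}, freeness, Frobenius), and your alternative route points correctly at the mechanism, but the $^\ast$-acyclicity route cannot replace the $k$-linear duality step.
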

\begin{proof}
By Prop.\ \ref{firstlevel} we have $\mathbf{X}_{x_0} \otimes_{H_{x_0}} H \cong \mathbf{X}^{K_1}$ as $(K,H)$-bimodules. It follows that $(\mathbf{X}^{K_1})^\ast = \Hom_{H_{x_0}}(\mathbf{X}_{x_0},H)$. Since $H$ is free as a right $H_{x_0}$-module by Prop.\ \ref{H-free}.i, we further have $\Hom_{H_{x_0}}(\mathbf{X}_{x_0},H) = H \otimes_{H_{x_0}} \Hom_{H_{x_0}}(\mathbf{X}_{x_0},H_{x_0})$. The $I$-action on the right hand term is through the projection of $I$ onto $\overline{\mathbf{N}}(\mathbb{F}_q)$ which acts on $\mathbf{X}_{x_0}$. Altogether we obtain
\begin{equation*}
    ((\mathbf{X}^{K_1})^\ast)_I = H \otimes_{H_{x_0}} \Hom_{H_{x_0}}(\mathbf{X}_{x_0},H_{x_0})_{\overline{\mathbf{N}}(\mathbb{F}_q)} \ .
\end{equation*}
We claim that the natural map
\begin{equation*}
    \Hom_{H_{x_0}}(\mathbf{X}_{x_0}, H_{x_0})_{\overline{\mathbf{N}}(\mathbb{F}_q)}  \longrightarrow \Hom_{H_{x_0}}(\mathbf{X}_{x_0}^{\overline{\mathbf{N}}(\mathbb{F}_q)} , H_{x_0}) = \Hom_{H_{x_0}}(H_{x_0},H_{x_0}) = H_{x_0}
\end{equation*}
is bijective, which implies that $((\mathbf{X}^{K_1})^\ast)_I = H$ and hence that $Z_1 = 0$. Since $H_{x_0}$ is Frobenius (Prop.\ \ref{H-free}.ii) we have a natural isomorphism of functors $\Hom_{H_{x_0}}(., H_{x_0}) \cong \Hom_k(.,k)$ on the category of all $H_{x_0}$-modules. Our map being a map between finite dimensional $k$-vector spaces its bijectivity therefore can be tested by dualizing. But when applying the functor $\Hom_{H_{x_0}}(., H_{x_0})$ both sides become equal to $\mathbf{X}_{x_0}^{\overline{\mathbf{N}}(\mathbb{F}_q)}$.
\end{proof}

\begin{remark}\label{adjoint-condition}
For any $V$ in $\Mod(G)$ we have $\fhom(V) \in \mathcal{F}_G$ if and only if
\begin{equation*}
    \Hom_{k[G]}(\mathbf{X} \otimes_H Z_m,V) = 0 \qquad \text{for any $m \geq 2$}.
\end{equation*}
\end{remark}
\begin{proof}
This is immediate from the adjointness of $\fhom$ and $\ften_0$.
\end{proof}

\begin{lemma}\label{not-p-zero}
Suppose that the characteristic of $k$ is different from $p$. Then $Z_m = 0$ for any $m \geq 1$, and the functor $\ften : \Mod(H) \longrightarrow \Mod^I(G)$ is fully faithful.
\end{lemma}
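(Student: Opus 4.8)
The plan is to reduce everything to Lemma~\ref{not-p-sur} together with the $^\ast$-acyclicity results and a direct-summand argument. First I would recall that under our running identification $H = \ind_I^G(1)^I \subseteq \mathbf{X}$. When $\mathrm{char}(k) \neq p$, Lemma~\ref{not-p-sur} tells us that the inclusion $H \subseteq \mathbf{X}$ splits as a map of right $H$-modules; more generally, the same left-convolution averaging operator $f \mapsto [I:I_f]^{-1}\sum_{g \in I/I_f} f(g^{-1}.)$ restricts, for each $m \geq 1$, to a projector of $\mathbf{X}^{K_m}$ onto $(\mathbf{X}^{K_m})^I = H$ which splits the inclusion $H \hookrightarrow \mathbf{X}^{K_m}$ as right $H$-modules. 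Hence $\mathbf{X}^{K_m} = H \oplus (\mathbf{X}^{K_m}/H)$ as right $H$-modules.

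Next I would dualize. Applying $\Hom_H(-,H)$ to the split inclusion $H \hookrightarrow \mathbf{X}^{K_m}$ gives a split surjection $(\mathbf{X}^{K_m})^\ast \twoheadrightarrow H^\ast = H$, and in particular the composite $\mathbf{X}^\ast \to (\mathbf{X}^{K_m})^\ast \to H$ is still surjective (it equals $\rho$). By definition $\mathbf{Z}_m = \im(\mathbf{X}^\ast \to (\mathbf{X}^{K_m})^\ast)$, and since the left-splitting projector $\mathbf{X}^{K_m} \to H$ is itself in the image of $\mathbf{X}^\ast$ (it is $\rho$ post-composed with the inclusion $H \hookrightarrow (\mathbf{X}^{K_m})^\ast$, which is the dual of the projector $\mathbf{X}^{K_m} \to H$), the surjection $\mathbf{Z}_m \twoheadrightarrow H$ splits. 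Thus $\mathbf{Z}_m = H \oplus \mathbf{Z}_m'$ for some $(I,H)$-bimodule $\mathbf{Z}_m'$, compatibly with the map to $(\mathbf{X}^{K_m})^\ast$. Taking $I$-coinvariants is right exact and commutes with direct sums, so $(\mathbf{Z}_m)_I = H_I \oplus (\mathbf{Z}_m')_I$; but $I$ acts trivially on the copy of $H = \mathbf{X}^I$, so $H_I = H$ and the composite $(\mathbf{Z}_m)_I \to H$ in \eqref{defiZm} restricts to the identity on this summand. Therefore $Z_m = \ker((\mathbf{Z}_m)_I \to H)$ is the complement, and it remains only to show $(\mathbf{Z}_m')_I = 0$, i.e.\ that the extra summand disappears after coinvariants.

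For this, the cleanest route I expect is to argue that $\mathbf{X}^{K_m}/H$ has no nonzero $I$-coinvariants when $\mathrm{char}(k) \neq p$ — or better, to bypass $\mathbf{Z}_m'$ entirely by showing directly that $((\mathbf{X}^{K_m})^\ast)_I = H$, exactly as in the proof of Proposition~\ref{Z1} but now at level $K_m$ in place of $K_1$. Concretely: $\mathbf{X}^{K_m} = \ind_I^{K_m'}(1) \otimes_{?} H$-type descriptions are not available in general, so instead I would use that $\mathbf{X}^{K_m}/H$, being (by the averaging projector) a direct summand of $\mathbf{X}^{K_m}$ and a smooth $k[I]$-module on which $I$ — a pro-$p$ group — acts, has the property that over a field of characteristic prime to $p$ the coinvariants functor $(-)_I$ on smooth $k[I]$-modules is exact and, on any module generated by $\mathbf{X}^{K_m}$, one can play $I/K_m$-invariants against coinvariants: averaging over the finite $p$-group $I/K_m$ is invertible. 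Then $(\mathbf{X}^{K_m})_I \cong (\mathbf{X}^{K_m})^{I}$ via the averaging idempotent, which is $H$, and dualizing (all modules here are direct sums of finitely generated ones, so $\Hom_H(-,H)$ commutes with the relevant limits) gives $((\mathbf{X}^{K_m})^\ast)_I = H$, whence $Z_m = 0$. Finally, with all $Z_m = 0$ the class $\mathcal{Z}$ consists of zero modules, so $\mathcal{F}_G = \Mod(H)$, and full faithfulness of $\ften$ on all of $\Mod(H)$ follows from Corollary~\ref{fully-faithful}.i (whose hypothesis \sur{} holds by Lemma~\ref{not-p-sur}).

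The main obstacle is the last step: making precise why the extra coinvariants vanish at level $m$ rather than just at level $1$. The $m=1$ case in Proposition~\ref{Z1} used the Frobenius structure of $H_{x_0}$ and the clean identification $\mathbf{X}^{K_1} = \mathbf{X}_{x_0} \otimes_{H_{x_0}} H$; there is no analogous Frobenius algebra sitting at level $m$. I expect the right fix is not to look for such an algebra but to exploit that when $\mathrm{char}(k)\neq p$ every smooth $k[I]$-module is a union of its $K_n$-fixed parts and that finite $p$-group averaging makes invariants a direct summand, so that $(\mathbf{X}^{K_m}/H)_I = 0$ because $\mathbf{X}^{K_m}/H$ is a direct summand of $\mathbf{X}^{K_m}$ with no $I$-invariants beyond those of $H$ — and invariants being a summand forces coinvariants to match invariants. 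Care will be needed to check the map in \eqref{defiZm} is exactly the splitting's projection and not merely compatible with it up to the identifications in Proposition~\ref{firstlevel}; writing out that the splitting projector coincides with left-convolution by $\mathrm{char}_I$ closes that gap.
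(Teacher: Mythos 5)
Your proposal is correct and rests on the same key mechanism as the paper's proof: for the finite $p$-group $I/K_m$, in characteristic different from $p$ the canonical map from invariants to coinvariants is an isomorphism, and dualizing the resulting identification $(\mathbf{X}^{K_m})^{I/K_m}\xrightarrow{\cong}(\mathbf{X}^{K_m})_{I/K_m}$ yields $((\mathbf{X}^{K_m})^*)_I=H$, forcing $Z_m=0$. Your opening detour through a direct-sum decomposition of $\mathbf{Z}_m$ is unnecessary — you in fact recognize this and end up sketching the paper's direct route — so the approaches are essentially the same.
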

\begin{proof}
Under our assumption the natural projection map $U^{I/K_m} \xrightarrow{\cong} U_{I/K_m}$ from the invariants to the coinvariants is an isomorphism for any $k[I/K_m]$-module $U$. We apply this to the commutative diagram
\begin{equation*}
  \xymatrix{
    (\mathbf{Z}_m)_I = (\mathbf{Z}_m)_{I/K_m} \ar[r] & ((\mathbf{X}^{K_m})^*)_{I/K_m}  \ar@{->>}[r] & H \phantom{.} \\
    \mathbf{Z}_m^{I/K_m} \ar[u]^{\cong} \ar@{^{(}->}[r] & ((\mathbf{X}^{K_m})^*)^{I/K_m} \ar[u]^{\cong} \ar@{->>}[r] & H . \ar[u]^{=}  }
\end{equation*}
The composed map in the upper row is surjective by Lemma \ref{not-p-sur}. The left hand map in the lower row is injective for trivial reasons. It therefore remains to be seen that the right hand map in the lower row is an isomorphism. But this map
\begin{equation*}
  ((\mathbf{X}^{K_m})^*)^{I/K_m} = ((\mathbf{X}^{K_m})_{I/K_m})^* \longrightarrow (\mathbf{X}^I)^* = H
\end{equation*}
is the dual of the isomorphism $\mathbf{X}^I = (\mathbf{X}^{K_m})^{I/K_m} \xrightarrow{\cong} (\mathbf{X}^{K_m})_{I/K_m}$.

The second part of the assertion follows from the first part by Cor.\ \ref{fully-faithful}.i.
\end{proof}


\section{The rank 1 case \label{sec:2}}
\subsection{\label{hyposur}A filtration of $\mathbf X$ as an $H$-module and the hypothesis \sur}
Throughout this section we \textbf{assume} that $k$ has characteristic $p$  and that $\mathbf{G}$ is one of the groups $\mathbf{SL_2}$, $\mathbf{PGL_2}$, or $\mathbf{GL_2}$. The building $\mathscr{X}$ then is a tree. Let $\mathscr{X}_0$, resp.\ $\mathscr{X}_1$, resp.\ $\mathscr{X}_{(1)}$, denote the set of vertices, resp.\ edges, resp.\ oriented edges, of $\mathscr{X}$. The two vertices in the closure of any oriented edge $\vec{e}$ can be distinguished as the origin $o(\vec{e})$ and the target $t(\vec{e})$ of $\vec{e}$. Moreover, $\sigma(\vec{e})$ denotes the oriented edge with the same underlying edge as $\vec{e}$ but the reversed orientation. By a slight abuse of notation $e$ sometimes is understood to denote the edge which underlies an oriented edge $\vec{e}$. For any vertex $x \in \mathscr{X}_0$ we abbreviate $K_x := \mathbf{G}_x(\mathfrak{O})$.

As a consequence of \eqref{f:pr2} the family $\{\mathbf{X}^{I_F}\}_F$ of subspaces of $\mathbf{X}$ forms a $G$-equivariant coefficient system $\cX$ of right $H$-modules on $\mathscr{X}$. The associated augmented oriented chain complex
\begin{equation}\label{f:complex}
    0 \longrightarrow C_c^{or} (\mathscr{X}_{(1)}, \cX) \xrightarrow{\;\partial\;}  C_c (\mathscr{X}_0, \cX) \xrightarrow{\;\epsilon\;} \mathbf{X} \longrightarrow 0
\end{equation}
is an exact (cf.\ \cite{OS} Remark 3.2.1) sequence of $(G,H)$-bimodules. We recall that
\begin{align*}
    C_c (\mathscr{X}_0, \cX) & := \bigoplus_{x \in \mathscr{X}_0} \mathbf{X}^{I_x} , \quad \epsilon((v_x)_{x \in \mathscr{X}_0}) := \sum_{x \in \mathscr{X}_0} v_x \; , \\
    C_c^{or} (\mathscr{X}_{(1)}, \cX) & := \{(v_{\vec{e}})_{\vec{e} \in \mathscr{X}_{(1)}} \in \bigoplus_{\vec{e} \in \mathscr{X}_{(1)}} \mathbf{X}^{I_e} : v_{\sigma(\vec{e})} = - v_{\vec{e}} \ \text{for any $\vec{e} \in \mathscr{X}_{(1)}$} \} , \; \text{and} \\
    \partial((v_{\vec{e}})_{\vec{e} \in \mathscr{X}_{(1)}}) & := (\sum_{t(\vec{e}) = x} v_{\vec{e}})_{x \in \mathscr{X}_0} \ .
\end{align*}

Let $\mathscr{X}^{[m]}$, for any $m \geq 0$, denote the subtree of $\mathscr{X}$ with vertices of distance $\leq m$ from $x_0$.

\begin{remark}\label{fix}
For any $m \geq 0$ we have:
\begin{itemize}
  \item[i.] $\mathscr{X}^{[m]}$ is the fixed point set of $K_m$ in $\mathscr{X}$ (with $K_0 := K_{x_0}$);
 \item[ii.] for any vertex $x$ of $\mathscr{X}^{[m]}$ we have $I_x \supseteq K_{m+1}$. \end{itemize}
\end{remark}
\begin{proof}
i. This is easily seen by using the interpretation of the set $\mathscr{X}_0$ in terms of $\mathfrak{O}$-lattices in $\mathfrak{F}^2$. ii. Since $K_{m+1}$, by i., fixes $x$ we have $K_x \supseteq K_{m+1}$. But, in fact, $K_{m+1}$ fixes all neighbouring vertices of $x$. Hence $K_{m+1}$ must project into the center of $K_x/I_x$. But $K_{m+1}$ is a pro-$p$ group, and the order of this center is prime to $p$.
\end{proof}

We now restrict the coefficient system $\cX$ to $\mathscr{X}^{[m]}$. Since $\mathscr{X}^{[m]}$ is a finite tree we omit in the following the subscript `c' in the notation for spaces of chains on $\mathscr{X}^{[m]}$.

\begin{lemma}\label{shortexact}
We have, for any $m \geq 0$, the short exact sequence
\begin{equation}\label{f:m-complex}
    0 \longrightarrow C^{or} (\mathscr{X}^{[m]}_{(1)}, \cX) \xrightarrow{\;\partial\;} C (\mathscr{X}^{[m]}_0, \cX) \xrightarrow{\;\epsilon\;} \mathbf{X}^{K_{m+1}} \longrightarrow 0
\end{equation}
of $(K,H)$-bimodules.
\end{lemma}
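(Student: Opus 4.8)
The plan is to deduce the short exact sequence \eqref{f:m-complex} over the finite subtree $\mathscr{X}^{[m]}$ from the exactness of the full augmented complex \eqref{f:complex} together with the combinatorial structure of $\mathscr{X}^{[m]}$. First I would observe that, because $\mathscr{X}^{[m]}$ is a finite tree (in particular has no cycles and is connected), the augmented oriented chain complex
\begin{equation*}
    0 \longrightarrow C^{or} (\mathscr{X}^{[m]}_{(1)}, \cX) \xrightarrow{\;\partial\;} C (\mathscr{X}^{[m]}_0, \cX) \xrightarrow{\;\epsilon\;} \mathbf{X}^{?} \longrightarrow 0
\end{equation*}
is exact on the nose with augmentation term equal to $\im(\epsilon)$; the point is then to identify that image with $\mathbf{X}^{K_{m+1}}$. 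Injectivity of $\partial$ on a tree is the standard acyclicity of a tree (no $1$-cycles); exactness at the middle follows because any $0$-chain summing to zero is a boundary, which for a finite tree one proves by an induction stripping off leaves, using that the coefficient system restricts along the edge maps $\mathbf{X}^{I_e}\hookrightarrow\mathbf{X}^{I_x}$ coming from \eqref{f:pr2}.

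The core of the argument is the identification $\im(\epsilon) = \mathbf{X}^{K_{m+1}}$. For the inclusion $\im(\epsilon)\subseteq\mathbf{X}^{K_{m+1}}$ I would use Remark \ref{fix}.ii: every vertex $x$ of $\mathscr{X}^{[m]}$ satisfies $I_x\supseteq K_{m+1}$, hence $\mathbf{X}^{I_x}\subseteq\mathbf{X}^{K_{m+1}}$, and $C(\mathscr{X}^{[m]}_0,\cX)$ is the sum of these, so its image under $\epsilon$ lands in $\mathbf{X}^{K_{m+1}}$. For the reverse inclusion I would invoke the exactness of the full complex \eqref{f:complex}: given $v\in\mathbf{X}^{K_{m+1}}$, write $v=\epsilon((v_x)_x)$ with $v_x\in\mathbf{X}^{I_x}$ for finitely many vertices $x$ of the full tree $\mathscr{X}$, and then push the chain ``inward'' towards $x_0$. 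Concretely, by Remark \ref{fix}.i the group $K_{m+1}$ fixes $\mathscr{X}^{[m+1]}$ pointwise and acts on the rest of the tree; averaging the $0$-chain $(v_x)_x$ over $K_{m+1}$ (equivalently, over the finite quotient $K_{m+1}/(K_{m+1}\cap\text{stabilizers})$, which is a $p$-group but this causes no problem since we only average a chain, not split a map) does not change $\epsilon$ of it because $v$ is $K_{m+1}$-fixed, and makes the chain $K_{m+1}$-invariant; a $K_{m+1}$-invariant finitely supported $0$-chain is supported on $\mathscr{X}^{[m+1]}$-vertices only if I am careful — more robustly, one uses the contracting retraction of $\mathscr{X}$ onto $\mathscr{X}^{[m]}$ that moves each vertex one step closer to $x_0$ along the unique geodesic, and the boundary relation to trade a summand $v_y$ at distance $>m$ for a summand at distance one less, staying inside the image of $\epsilon$ throughout. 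Since the edge containing $y$ and its neighbour toward $x_0$ lies beyond $\mathscr{X}^{[m]}$, one checks $v_y\in\mathbf{X}^{I_y}\subseteq\mathbf{X}^{I_e}$ for the relevant edge $e$, using again \eqref{f:pr2}, so the correction term is a genuine chain. Iterating, every element of $\mathbf{X}^{K_{m+1}}$ is realized as $\epsilon$ of a chain supported on $\mathscr{X}^{[m]}_0$.

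With exactness of \eqref{f:m-complex} established, it remains to note that all three terms carry compatible left $K$-actions: $K=K_{x_0}$ preserves $\mathscr{X}^{[m]}$ (it fixes $x_0$, hence preserves distance from $x_0$), hence permutes the vertices and oriented edges of $\mathscr{X}^{[m]}$ and acts on the chain groups, and it acts on $\mathbf{X}^{K_{m+1}}$ because $K_{m+1}$ is normal in $K$. The maps $\partial$ and $\epsilon$ are $K$-equivariant by construction (they come by restriction from the $G$-equivariant maps in \eqref{f:complex}), and they are right $H$-linear because the coefficient system $\cX$ is a coefficient system of right $H$-modules; so \eqref{f:m-complex} is a short exact sequence of $(K,H)$-bimodules as claimed. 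The main obstacle I anticipate is making the ``push the chain toward $x_0$'' step fully rigorous — i.e.\ proving surjectivity of $\epsilon$ onto $\mathbf{X}^{K_{m+1}}$ rather than merely onto some subspace — and in particular verifying that the boundary-correction terms one introduces at each step genuinely lie in the edge-coefficient spaces $\mathbf{X}^{I_e}$ for edges $e\subseteq\overline{\mathscr{X}^{[m]}}$; this is where the precise inclusions \eqref{f:pr2} and Remark \ref{fix} must be used carefully, and where a clean alternative is simply to restrict the already-known exact sequence \eqref{f:complex}, intersect everything with the $K_{m+1}$-fixed part, and check that taking $K_{m+1}$-invariants is exact here because the relevant index sets of vertices and edges outside $\mathscr{X}^{[m]}$ contribute nothing $K_{m+1}$-fixed except through cancellation that the tree structure already accounts for.
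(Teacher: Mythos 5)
Your argument for exactness at the left and middle terms, and for the inclusion $\im(\epsilon)\subseteq\mathbf{X}^{K_{m+1}}$ via Remark \ref{fix}.ii, matches the paper's; the $(K,H)$-bimodule bookkeeping at the end is also fine. But your proof of the surjectivity of $\epsilon$ has a genuine gap, and it is exactly at the spot you flag as the anticipated obstacle.

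The ``push the chain toward $x_0$'' step rests on the claim that a summand $v_y\in\mathbf{X}^{I_y}$ sitting at a vertex $y$ outside $\mathscr{X}^{[m]}$ lies in $\mathbf{X}^{I_e}$ for the edge $e$ from $y$ towards $x_0$, which you justify by writing $\mathbf{X}^{I_y}\subseteq\mathbf{X}^{I_e}$ ``using again \eqref{f:pr2}.'' This inclusion is backwards. By \eqref{f:pr2}, $I_y\subseteq I_e$ (the vertex is a face of the edge), and therefore $\mathbf{X}^{I_e}\subseteq\mathbf{X}^{I_y}$: edge-level coefficients are \emph{smaller} than vertex-level ones, which is what makes $\cX$ a coefficient system in the first place. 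An arbitrary $v_y\in\mathbf{X}^{I_y}$ need not be $I_e$-fixed, so the boundary correction you want to add is not a legal $1$-chain and the retraction argument collapses. The averaging idea is also unavailable: $K_{m+1}$ is a pro-$p$ group and $k$ has characteristic $p$ (after Lemma \ref{not-p-zero} this is the only case left), so you cannot divide by the order of the relevant finite quotient. The closing alternative (take $K_{m+1}$-invariants of \eqref{f:complex} and hope that exactness is preserved) is not an argument either, since invariants under a pro-$p$ group in characteristic $p$ are only left exact.

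The paper avoids this by not attempting to move an arbitrary $0$-chain inward. It invokes a structural fact (\cite{Oll1} Prop.\ 2.3): the $H$-module $\mathbf{X}^{K_{m+1}}$ is generated by the $K_{m+1}$-fixed characteristic functions $\mathrm{char}_{gI}$, and each such generator already lies in a single edge-level coefficient space $\mathbf{X}^{I_{gC}}$, since $K_{m+1}\subseteq gIg^{-1}=I_{gC}$. Both vertices of $gC$ then lie in $\mathscr{X}^{[m+1]}$ by Remark \ref{fix}.i, hence at least one lies in $\mathscr{X}^{[m]}$, and $\mathrm{char}_{gI}$ is the augmentation of a $0$-chain concentrated at that vertex. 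Since $\epsilon$ is right $H$-linear, this already gives surjectivity. So the correct route does not push chains inward at all; it identifies a generating set whose members are supported on single vertices of $\mathscr{X}^{[m]}$ to begin with. To repair your write-up you would need to supply that generating-set result or an equivalent substitute.
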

\begin{proof}
Recall the short exact sequence \eqref{f:complex}. Due to the tree structure of $\mathscr{X}$ the exactness of \eqref{f:m-complex} at the left and the middle term is an immediate consequence. Remark \ref{fix}.ii implies
\begin{equation*}
    \epsilon \big( C^{or} (\mathscr{X}^{[m]}_{(0)}, \cX) \big) \subseteq \mathbf{X}^{K_{m+1}} \ .
\end{equation*}
It remains to be checked that this last inclusion indeed is an equality. It is shown in \cite{Oll1} Prop.\ 2.3 that $\mathbf{X}^{K_{m+1}}$, as an $H$-module, is generated by the characteristic functions $\mathrm{char}_{gI}$, for $g \in G$, which are fixed by $K_{m+1}$.
The latter condition is equivalent to $K_{m+1} \subseteq gIg^{-1} = I_{gC}$ (recall that $C$ denotes the standard edge). In particular, by Remark \ref{fix}.i, the two vertices of $gC$ lie in $\mathscr{X}^{[m+1]}$. One of them, call it $x$, already has to lie in $\mathscr{X}^{[m]}$, and we have $\mathrm{char}_{gI} \in \mathbf{X}^{gIg^{-1}} = \mathbf{X}^{I_{gC}} \subseteq \mathbf{X}^{I_x}$. Hence $\mathrm{char}_{gI}$ is the augmentation of a $0$-chain on $\mathscr{X}^{[m]}$ (supported on $x$).
\end{proof}

The following result is due to Ollivier (\cite{Oll1} Prop.\ 2.7). By using the formalism developed above we will reprove it in a slightly less computational way. Let $x_0$ and $x_1$ denote the two vertices in the closure of $C$.

\begin{proposition}\label{level-subquotients}
For any $m \geq 1$ there is an $i \in \{0,1\}$ and an integer $n_m \geq 1$ such that
\begin{equation*}
    \mathbf{X}^{K_{m+1}}/\mathbf{X}^{K_m} \cong (\mathbf{X}^{I_{x_i}}/\mathbf{X}^I)^{n_m}
\end{equation*}
as $H$-modules.
\end{proposition}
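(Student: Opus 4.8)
The plan is to analyze the structure of the finite subtree $\mathscr{X}^{[m]}$ relative to $\mathscr{X}^{[m-1]}$ and feed the resulting combinatorial decomposition into the short exact sequences from Lemma~\ref{shortexact}. First I would observe that, since $\mathscr{X}$ is a $(q+1)$-regular tree and $\mathbf{G}$ acts with two vertex orbits (the two vertices $x_0,x_1$ of $C$), all vertices at distance exactly $m$ from $x_0$ form a single $K$-orbit, and they all have the same type, namely the type of $x_i$ where $i \equiv m \bmod 2$; likewise all edges joining $\mathscr{X}^{[m-1]}$ to $\mathscr{X}^{[m]}\setminus\mathscr{X}^{[m-1]}$ form a single $K$-orbit. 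Passing from $\mathscr{X}^{[m-1]}$ to $\mathscr{X}^{[m]}$ adjoins $n_m$ new vertices (all of type $x_i$) and $n_m$ new edges (one leading into each new vertex), where $n_m$ is the number of vertices at distance $m$; concretely $n_1 = q+1$ and $n_m = (q+1)q^{m-1}$ for $m\geq 1$, but only the fact that $n_m\geq 1$ is needed.

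Next I would compare the two short exact sequences of $(K,H)$-bimodules given by Lemma~\ref{shortexact} for the indices $m$ and $m-1$:
\begin{equation*}
    0 \longrightarrow C^{or}(\mathscr{X}^{[m]}_{(1)},\cX) \xrightarrow{\;\partial\;} C(\mathscr{X}^{[m]}_0,\cX) \xrightarrow{\;\epsilon\;} \mathbf{X}^{K_{m+1}} \longrightarrow 0
\end{equation*}
and the analogous one with $m$ replaced by $m-1$. The inclusion $\mathscr{X}^{[m-1]}\subseteq\mathscr{X}^{[m]}$ gives inclusions of the chain complexes which are split (as bimodules) by the natural projections onto the summands indexed by the simplices of $\mathscr{X}^{[m-1]}$; hence the quotient complex is
\begin{equation*}
    0 \longrightarrow \bigoplus_{\vec{e}\,\text{new}} \mathbf{X}^{I_e} \big/ (v_{\sigma\vec e}=-v_{\vec e}) \longrightarrow \bigoplus_{x\,\text{new}} \mathbf{X}^{I_x} \longrightarrow \mathbf{X}^{K_{m+1}}/\mathbf{X}^{K_m} \longrightarrow 0 .
\end{equation*}
Here each new oriented edge $\vec e$ has its target $x$ a new vertex and its origin in $\mathscr{X}^{[m-1]}$; since $e\subseteq\overline{\{x\}}$ in the relevant incidence sense we have $I_x\subseteq I_e$ by \eqref{f:pr2}, so $\mathbf{X}^{I_e}\supseteq\mathbf{X}^{I_x}$. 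I would then identify the boundary map on each new summand: it sends $v_{\vec e}\mapsto v_{\vec e}$ into the summand indexed by the new target vertex $x$, which is exactly the inclusion $\mathbf{X}^{I_e}\hookrightarrow\mathbf{X}^{I_x}$ composed with... wait, the direction is $\mathbf{X}^{I_x}\subseteq\mathbf{X}^{I_e}$, so $\partial$ identifies the edge-summand $\mathbf{X}^{I_e}$ with a subspace of the vertex-summand $\mathbf{X}^{I_x}$, and the cokernel of that single inclusion is $\mathbf{X}^{I_x}/\mathbf{X}^{I_e}$.

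Finally I would note that $I_e$, for an edge $e$ incident to $x$, is $G$-conjugate to $I=I_C$ (all edges are in one orbit up to the standard edge $C$, and $I_{gC}=gIg^{-1}$), while $x$ is $K$-conjugate, in fact $G$-conjugate, to $x_i$; choosing a $g$ carrying the standard incident pair $(x_i\subseteq\overline C)$ onto $(x\subseteq\overline e)$ identifies $\mathbf{X}^{I_x}/\mathbf{X}^{I_e}$ with $\mathbf{X}^{I_{x_i}}/\mathbf{X}^{I_C}=\mathbf{X}^{I_{x_i}}/\mathbf{X}^I$ as $H$-modules (the $H$-action commutes with the left $G$-translation). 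Summing over the $n_m$ new vertices, and checking that the boundary maps on distinct new edge-summands land in distinct vertex-summands (each new edge has a distinct new target), the quotient complex splits as a direct sum of $n_m$ copies of $(\mathbf{X}^{I_{x_i}}\hookrightarrow\mathbf{X}^{I_{x_i}})$... rather, of the two-term complex $\mathbf{X}^I\hookrightarrow\mathbf{X}^{I_{x_i}}$, whose homology in degree $0$ is $\mathbf{X}^{I_{x_i}}/\mathbf{X}^I$ and which is exact in degree $1$. This yields $\mathbf{X}^{K_{m+1}}/\mathbf{X}^{K_m}\cong(\mathbf{X}^{I_{x_i}}/\mathbf{X}^I)^{n_m}$ as $H$-modules, with $i$ the type of a vertex at distance $m$ from $x_0$, i.e. $i\equiv m\bmod 2$.

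The main obstacle I anticipate is the careful bookkeeping of the quotient chain complex: one must verify that the new oriented-edge summands, after imposing the antisymmetry relation $v_{\sigma\vec e}=-v_{\vec e}$, contribute exactly one free copy of $\mathbf{X}^{I_e}$ per new (unoriented) edge, that the boundary map restricted to these is injective with image precisely the subspace $\mathbf{X}^{I_e}\subseteq\mathbf{X}^{I_x}$ inside the corresponding new vertex summand (so that the degree-$1$ homology of the quotient complex vanishes and the degree-$0$ homology is the asserted direct sum), and that the $G$-conjugation used to trivialize the pair $(I_x, I_e)$ can be chosen $H$-linearly, which it can because $H=\End_{k[G]}(\mathbf X)^{\mathrm{op}}$ acts by $G$-endomorphisms and hence commutes with left translation by any $g\in G$. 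Everything else is the standard dictionary between the tree combinatorics and $\mathfrak{O}$-lattices already invoked in Remark~\ref{fix}.
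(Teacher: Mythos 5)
Your proof is correct and takes essentially the same approach as the paper: both compare the two short exact sequences of Lemma~\ref{shortexact} (for $\mathbf{X}^{K_{m+1}}$ and $\mathbf{X}^{K_m}$), reduce the quotient complex via the tree structure to the local contributions $\mathbf{X}^{I_x}/\mathbf{X}^{I_{C(x)}}$ at the $n_m$ vertices of distance $m$, and identify each with $\mathbf{X}^{I_{x_i}}/\mathbf{X}^I$ by translating with a suitable $g\in G$, noting that left $G$-translation commutes with the right $H$-action. (Minor slips in the middle paragraph --- you write $e\subseteq\overline{\{x\}}$ and $\mathbf{X}^{I_e}\supseteq\mathbf{X}^{I_x}$ where you mean $\{x\}\subseteq\overline{e}$ and $\mathbf{X}^{I_e}\subseteq\mathbf{X}^{I_x}$ --- but you reach the correct cokernel $\mathbf{X}^{I_x}/\mathbf{X}^{I_e}$ in the next sentence, so the argument stands.)
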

\begin{proof}
The complement $\mathscr{X}^{[m]} \setminus \mathscr{X}^{[m-1]}$ consists of the vertices $x$ of exact distance $d(x_0,x) = m$ from $x_0$ and, for each such vertex $x$, of the unique edge $C(x)$ emanating from $x$ in direction $x_0$. Therefore, if we divide the exact sequence \eqref{f:m-complex} for $m+1$ by the corresponding sequence for $m$ then we obtain an isomorphism
\begin{equation*}
    \bigoplus_{d(x,x_0) = m} \mathbf{X}^{I_x}/\mathbf{X}^{I_{C(x)}} = \mathbf{X}^{K_{m+1}}/\mathbf{X}^{K_m} \ .
\end{equation*}
of $(K,H)$-bimodules. The group $K$ acts transitively on the set of all vertices of distance $m$ from $x_0$. Let $n_m$ be the cardinality of this set and fix one vertex $y_m$ in this set. For any $g \in K$ the map
\begin{equation*}
  \mathbf{X}^{I_{y_m}}/\mathbf{X}^{I_{C(y_m)}} \xrightarrow{\; g \;} \mathbf{X}^{I_{gy_m}}/\mathbf{X}^{I_{C(gy_m)}}
\end{equation*}
is an isomorphism of $H$-modules. Hence $\mathbf{X}^{K_{m+1}}/\mathbf{X}^{K_m} \cong (\mathbf{X}^{I_{y_m}}/\mathbf{X}^{I_{C(y_m)}})^{n_m}$ as $H$-modules. Furthermore, since $\mathbf{SL_2}(\mathfrak{F})$ acts transitively on the set of all edges of $\mathscr{X}$ we find an element $g_m \in \mathbf{SL_2}(\mathfrak{F})$ such that $g_m C = C(y_m)$. Then $y_m = g_mx_i$ for $i = 0$ or $1$, and $\mathbf{X}^{I_{x_i}}/\mathbf{X}^{I_C} \xrightarrow{\; g_m \;} \mathbf{X}^{I_{y_m}}/\mathbf{X}^{I_{C(y_m)}}$ is an isomorphism of $H$-modules.
\end{proof}

\begin{theorem}\label{level-acyclic}
The $H$-modules $\mathbf{X}^{K_m}$, $\mathbf{X}^{K_{m+1}}/\mathbf{X}^{K_m}$, and $\mathbf{X}^{K_m}/H$, for any $m \geq 1$, are $^\ast$-acyclic.
\end{theorem}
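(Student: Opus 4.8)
The strategy is an induction on $m$, bootstrapping from the $m=1$ case together with the graded pieces computed in Proposition \ref{level-subquotients}. For $m=1$, Proposition \ref{level1-acyclic} already gives that $\mathbf{X}^{K_1}$ and $\mathbf{X}^{K_1}/H$ are $^\ast$-acyclic; and $\mathbf{X}^{K_2}/\mathbf{X}^{K_1}$ is, by Proposition \ref{level-subquotients}, a finite direct sum of copies of $\mathbf{X}^{I_{x_i}}/\mathbf{X}^I$, so the base of the induction for the quotients $\mathbf{X}^{K_{m+1}}/\mathbf{X}^{K_m}$ reduces to showing that $\mathbf{X}^{I_{x_i}}/\mathbf{X}^I$ is $^\ast$-acyclic for $i \in \{0,1\}$. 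By Proposition \ref{firstlevel} we have $\mathbf{X}^{I_{x_i}} \cong \mathbf{X}_{x_i}\otimes_{H_{x_i}} H$ and, since $\mathbf{X}_{x_i}$ is a finitely generated $H_{x_i}$-module, Corollary \ref{basechange-acyclic} shows $\mathbf{X}^{I_{x_i}}$ is $^\ast$-acyclic; to handle the quotient by $\mathbf{X}^I = H$ I would first identify $\mathbf{X}^I$ inside $\mathbf{X}^{I_{x_i}}$ as $H_{x_i}^\flat \otimes_{H_{x_i}} H$ for the appropriate finitely generated $H_{x_i}$-submodule of $\mathbf{X}_{x_i}$ corresponding to the $I$-fixed vectors (this is again an instance of Proposition \ref{firstlevel}, using $I_C = I$), so that $\mathbf{X}^{I_{x_i}}/\mathbf{X}^I \cong (\mathbf{X}_{x_i}/H_{x_i}^\flat)\otimes_{H_{x_i}} H$ and Corollary \ref{basechange-acyclic} applies once more. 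Since $^\ast$-acyclicity is preserved under finite direct sums, this gives the result for every $\mathbf{X}^{K_{m+1}}/\mathbf{X}^{K_m}$.

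Next I would run the induction for $\mathbf{X}^{K_m}$ itself. Suppose $\mathbf{X}^{K_m}$ is $^\ast$-acyclic. The short exact sequence of $H$-modules
\begin{equation*}
  0 \longrightarrow \mathbf{X}^{K_m} \longrightarrow \mathbf{X}^{K_{m+1}} \longrightarrow \mathbf{X}^{K_{m+1}}/\mathbf{X}^{K_m} \longrightarrow 0
\end{equation*}
yields a long exact sequence in $\Ext_H^\bullet(-,H)$; since the outer two modules are $^\ast$-acyclic (the quotient by the previous paragraph, $\mathbf{X}^{K_m}$ by the inductive hypothesis), $\Ext_H^i(\mathbf{X}^{K_{m+1}},H) = 0$ for $i \geq 1$ and $\mathbf{X}^{K_{m+1}}$ is $^\ast$-acyclic. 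Starting from Proposition \ref{level1-acyclic} this closes the induction for all $m \geq 1$. Finally, for $\mathbf{X}^{K_m}/H$: apply $\Ext_H^\bullet(-,H)$ to
\begin{equation*}
  0 \longrightarrow H \longrightarrow \mathbf{X}^{K_m} \longrightarrow \mathbf{X}^{K_m}/H \longrightarrow 0 \ ;
\end{equation*}
because $H$ is (trivially) $^\ast$-acyclic and $\mathbf{X}^{K_m}$ is $^\ast$-acyclic by what we just proved, we get $\Ext_H^i(\mathbf{X}^{K_m}/H,H) = 0$ for $i \geq 2$ at once, and the vanishing of $\Ext_H^1(\mathbf{X}^{K_m}/H,H)$ follows from the surjectivity of $\Hom_H(\mathbf{X}^{K_m},H)\to \Hom_H(H,H) = H$, i.e. the fact that the inclusion $H\subseteq \mathbf{X}^{K_m}$ admits an $H$-linear retraction — alternatively one deduces this directly from Proposition \ref{level1-acyclic} for $m = 1$ and from the already established $^\ast$-acyclicity of the graded pieces by the same dévissage. (One may also simply quote Proposition \ref{level1-acyclic} together with the filtration argument below.)

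I expect the only real subtlety to be the identification, inside $\mathbf{X}_{x_i}$, of the $H_{x_i}$-submodule whose extension of scalars to $H$ recovers $\mathbf{X}^I = H$; one must check that this submodule is finitely generated over $H_{x_i}$ so that Corollary \ref{basechange-acyclic} is applicable, which is clear since $H_{x_i}$ is noetherian (Proposition \ref{H-free}.ii gives $H_{x_i}$ Frobenius, hence in particular the relevant finiteness) and $\mathbf{X}_{x_i}$ itself is finitely generated. Everything else is a formal dévissage using the long exact $\Ext$-sequence and the fact, recorded before Corollary \ref{basechange-acyclic}, that $\Ext_H^i(-,H)$ turns direct sums into direct products so that finite (indeed arbitrary) direct sums of $^\ast$-acyclic modules are $^\ast$-acyclic. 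An alternative, cleaner packaging of the induction for $\mathbf{X}^{K_m}$ is to use the finite filtration $H = \mathbf{X}^{K_1}\subseteq \cdots$ — wait, rather $\mathbf{X}^{K_1}\subseteq \mathbf{X}^{K_2}\subseteq\cdots\subseteq \mathbf{X}^{K_m}$ with $^\ast$-acyclic graded pieces and $^\ast$-acyclic bottom term $\mathbf{X}^{K_1}$, and then conclude by the standard fact that a module with a finite filtration by $^\ast$-acyclic subquotients is $^\ast$-acyclic.
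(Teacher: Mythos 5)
Your proof is correct and takes essentially the same route as the paper: you use Prop.~\ref{level1-acyclic} (applied to the vertex facets $x_i$) together with Prop.~\ref{level-subquotients} and Cor.~\ref{basechange-acyclic} to establish $^\ast$-acyclicity of the graded pieces $\mathbf{X}^{K_{m+1}}/\mathbf{X}^{K_m}$, and close with an inductive long-exact-sequence d\'evissage -- exactly the paper's argument, just spelled out. Two small remarks. First, the submodule you call $H_{x_i}^\flat$ is simply $H_{x_i} = \mathbf{X}_{x_i}^I$ by definition, so the ``subtlety'' you anticipate evaporates; the paper writes $\mathbf{X}^{K_1}/H = (\mathbf{X}_{x_0}/H_{x_0})\otimes_{H_{x_0}} H$ directly in the proof of Prop.~\ref{level1-acyclic}. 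Second, your first route for $\Ext^1_H(\mathbf{X}^{K_m}/H,H)=0$ -- invoking an $H$-linear retraction of $H\subseteq\mathbf{X}^{K_m}$ -- is circular for $m>1$: the existence of that retraction is what Cor.~\ref{split} (i.e.\ hypothesis \sur) will later deduce \emph{from} this theorem, and it is not available here. Your parenthetical alternative is the right fix: d\'evissage along $H\subseteq\mathbf{X}^{K_1}\subseteq\cdots\subseteq\mathbf{X}^{K_m}$, where $\mathbf{X}^{K_1}/H$ is $^\ast$-acyclic by Prop.~\ref{level1-acyclic} and the higher graded pieces are $^\ast$-acyclic by the preceding paragraph.
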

\begin{proof}
The $^\ast$-acyclicity of $\mathbf{X}^{I_{x_i}}$ and $\mathbf{X}^{I_{x_i}}/H$ was shown, in general, in Prop.\ \ref{level1-acyclic}. This together with Prop.\ \ref{level-subquotients} implies the $^\ast$-acyclicity of $\mathbf{X}^{K_{m+1}}/\mathbf{X}^{K_m}$ for any $m \geq 1$. The remaining cases follow inductively by a long exact sequence argument.
\end{proof}

\begin{corollary}\label{restriction}
The maps $\mathbf{X}^* \longrightarrow (\mathbf{X}^{K_m})^\ast$, $(\mathbf{X}^{K_{m+1}})^\ast \longrightarrow (\mathbf{X}^{K_m})^\ast$ and $(\mathbf{X}^{K_{m+1}}/H)^\ast \longrightarrow (\mathbf{X}^{K_m}/H)^\ast$ dual to the obvious inclusions, for any $m \geq 1$, are surjective. In particular, $\mathbf{Z}_m = (\mathbf{X}^{K_m})^\ast$ for any $m \geq 1$.\end{corollary}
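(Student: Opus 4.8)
The plan is to feed the relevant short exact sequences into the long exact $\Ext_H(-,H)$-sequence and to invoke the $^\ast$-acyclicity established in Thm.\ \ref{level-acyclic}.

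First I would treat the maps between consecutive levels. Applying the functor $\Hom_H(-,H) = (-)^\ast$ to the short exact sequence $0 \to \mathbf{X}^{K_m} \to \mathbf{X}^{K_{m+1}} \to \mathbf{X}^{K_{m+1}}/\mathbf{X}^{K_m} \to 0$ of left $H$-modules yields an exact sequence containing the portion
\begin{equation*}
  (\mathbf{X}^{K_{m+1}})^\ast \longrightarrow (\mathbf{X}^{K_m})^\ast \longrightarrow \Ext^1_H(\mathbf{X}^{K_{m+1}}/\mathbf{X}^{K_m}, H) ,
\end{equation*}
and the last term vanishes because $\mathbf{X}^{K_{m+1}}/\mathbf{X}^{K_m}$ is $^\ast$-acyclic by Thm.\ \ref{level-acyclic}; hence $(\mathbf{X}^{K_{m+1}})^\ast \to (\mathbf{X}^{K_m})^\ast$ is surjective. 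The same argument applied to $0 \to \mathbf{X}^{K_m}/H \to \mathbf{X}^{K_{m+1}}/H \to \mathbf{X}^{K_{m+1}}/\mathbf{X}^{K_m} \to 0$ (the cokernel here is again $\mathbf{X}^{K_{m+1}}/\mathbf{X}^{K_m}$, by the third isomorphism theorem) gives the surjectivity of $(\mathbf{X}^{K_{m+1}}/H)^\ast \to (\mathbf{X}^{K_m}/H)^\ast$.

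For the map out of $\mathbf{X}^\ast$ itself I would pass to the limit. Since every vector in $\mathbf{X}$ is fixed by some $K_n$ we have $\mathbf{X} = \bigcup_{n\geq 1} \mathbf{X}^{K_n} = \varinjlim_n \mathbf{X}^{K_n}$ as $H$-modules, hence $\mathbf{X}^\ast = \Hom_H(\mathbf{X},H) = \varprojlim_n (\mathbf{X}^{K_n})^\ast$ compatibly with the dual-restriction maps, the map $\mathbf{X}^\ast \to (\mathbf{X}^{K_m})^\ast$ being the canonical projection of this inverse system onto its $m$-th term. By the previous paragraph all transition maps $(\mathbf{X}^{K_{n+1}})^\ast \to (\mathbf{X}^{K_n})^\ast$ are surjective, and an inverse system of abelian groups with surjective transition maps has surjective projections onto each term; therefore $\mathbf{X}^\ast \to (\mathbf{X}^{K_m})^\ast$ is surjective. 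Finally, surjectivity of this map means precisely that its image $\mathbf{Z}_m$ equals $(\mathbf{X}^{K_m})^\ast$.

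The only mildly delicate point is the passage from finite level to $\mathbf{X}$: one should resist the temptation to argue that $\mathbf{X}/\mathbf{X}^{K_m}$ is $^\ast$-acyclic and dualize $0 \to \mathbf{X}^{K_m} \to \mathbf{X} \to \mathbf{X}/\mathbf{X}^{K_m} \to 0$ directly, since $\Ext^\bullet_H(-,H)$ need not commute with the directed union $\bigcup_{n>m}\mathbf{X}^{K_n}/\mathbf{X}^{K_m}$ and a $\varprojlim^1$-term could in principle obstruct this. Staying at finite level and then invoking the elementary surjectivity-of-projections fact circumvents this, and the rest is a formal diagram chase.
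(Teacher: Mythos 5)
Your proof is correct and is the natural argument suggested by the paper's presentation of this statement as an immediate corollary of Theorem \ref{level-acyclic}: the two ``consecutive-level'' surjectivities follow from the vanishing of $\Ext^1_H(\mathbf{X}^{K_{m+1}}/\mathbf{X}^{K_m},H)$, and the surjectivity of $\mathbf{X}^\ast\to(\mathbf{X}^{K_m})^\ast$ follows by writing $\mathbf{X}^\ast=\varprojlim_n(\mathbf{X}^{K_n})^\ast$ and using that a countable inverse system with surjective transition maps has surjective projections. Your remark about why one should not try to dualize $0\to\mathbf{X}^{K_m}\to\mathbf{X}\to\mathbf{X}/\mathbf{X}^{K_m}\to0$ directly is also apt --- indeed the paper itself handles $\Ext^\ast_H(\mathbf{X}/H,H)$ via a spectral sequence with $\varprojlim^{(1)}$-terms in Cor.~\ref{X-acyclic}, which in turn \emph{relies} on the present corollary to kill those terms, so your bypass keeps the logic non-circular.
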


\begin{corollary}\label{X-acyclic}
The $H$-modules $\mathbf{X}$ and $\mathbf{X}/H$ are $^\ast$-acyclic.
\end{corollary}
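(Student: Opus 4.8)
Corollary \ref{X-acyclic}: The $H$-modules $\mathbf{X}$ and $\mathbf{X}/H$ are $^\ast$-acyclic.

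The plan is to present $\mathbf{X}$ and $\mathbf{X}/H$ as cokernels of injective endomorphisms of $^\ast$-acyclic modules, and then read off the vanishing of $\Ext^i_H(-,H)$ from a single long exact sequence.

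First I would record that $\mathbf{X} = \bigcup_{m \geq 1} \mathbf{X}^{K_m}$ is an increasing union of $H$-submodules, and likewise $\mathbf{X}/H = \bigcup_{m \geq 1} (\mathbf{X}^{K_m}/H)$ (here $H = \mathbf{X}^I \subseteq \mathbf{X}^{K_1}$). For any increasing union $M = \bigcup_m M_m$ there is the standard ``telescope'' short exact sequence of $H$-modules
\[
    0 \longrightarrow \bigoplus_{m \geq 1} M_m \xrightarrow{\ \mathrm{id} - \mathrm{incl}\ } \bigoplus_{m \geq 1} M_m \longrightarrow M \longrightarrow 0 \ ,
\]
whose first map is the identity on the $m$-th summand together with the negative of the inclusion $M_m \hookrightarrow M_{m+1}$ into the $(m{+}1)$-st summand; injectivity and the identification of the cokernel with $M$ are immediate. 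I would apply this with $M_m = \mathbf{X}^{K_m}$ and with $M_m = \mathbf{X}^{K_m}/H$.

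By Theorem \ref{level-acyclic} each $\mathbf{X}^{K_m}$ and each $\mathbf{X}^{K_m}/H$ is $^\ast$-acyclic, and since $\Ext^i_H(-,H)$ turns direct sums into direct products, arbitrary direct sums of $^\ast$-acyclic modules are again $^\ast$-acyclic; thus $\bigoplus_m \mathbf{X}^{K_m}$ and $\bigoplus_m (\mathbf{X}^{K_m}/H)$ are $^\ast$-acyclic. Applying $\Hom_H(-,H)$ to the telescope sequence and using $\Hom_H(\bigoplus_m M_m, H) = \prod_m M_m^\ast$, the long exact $\Ext$-sequence reduces to
\[
    0 \longrightarrow M^\ast \longrightarrow \prod_{m} M_m^\ast \xrightarrow{\ \phi^\ast\ } \prod_m M_m^\ast \longrightarrow \Ext^1_H(M,H) \longrightarrow 0
\]
and $\Ext^i_H(M,H) = 0$ for all $i \geq 2$. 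Here $\phi^\ast$ sends $(\psi_m)_m$ to $(\psi_m - r_m(\psi_{m+1}))_m$, where $r_m \colon M_{m+1}^\ast \to M_m^\ast$ is the restriction map dual to $M_m \hookrightarrow M_{m+1}$, so that $\Ext^1_H(M,H) = \coker(\phi^\ast) = {\varprojlim}^{1}\, M_m^{\ast}$, the derived inverse limit of the tower $(M_m^\ast)_m$ with transition maps $r_m$.

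The one point carrying real content is the vanishing of this ${\varprojlim}^{1}$-term. By Corollary \ref{restriction} the restriction maps $(\mathbf{X}^{K_{m+1}})^\ast \to (\mathbf{X}^{K_m})^\ast$ and $(\mathbf{X}^{K_{m+1}}/H)^\ast \to (\mathbf{X}^{K_m}/H)^\ast$ are surjective, so both towers $((\mathbf{X}^{K_m})^\ast)_m$ and $((\mathbf{X}^{K_m}/H)^\ast)_m$ have surjective transition maps, in particular satisfy the Mittag-Leffler condition, whence ${\varprojlim}^{1} = 0$. Therefore $\Ext^1_H(\mathbf{X},H) = \Ext^1_H(\mathbf{X}/H,H) = 0$ as well, and $\mathbf{X}$ and $\mathbf{X}/H$ are $^\ast$-acyclic. (Once $\mathbf{X}$ is known to be $^\ast$-acyclic, $\mathbf{X}/H$ could also be handled via the exact sequence $0 \to H \to \mathbf{X} \to \mathbf{X}/H \to 0$, but this would require the surjectivity of $\rho \colon \mathbf{X}^\ast \to H$, i.e.\ hypothesis \sur, which I would rather not presuppose here; the telescope argument avoids it.)
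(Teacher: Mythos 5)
Your proof is correct and rests on exactly the same computation as the paper's: the key point in both is that $\Ext^{\geq 1}_H(\varinjlim_m M_m, H)$ is controlled by $\varprojlim^{(i)}_m M_m^\ast$, which vanishes because Corollary \ref{restriction} makes the transition maps in the tower surjective (Mittag-Leffler). Your telescope short exact sequence is simply the explicit, resolution-level form of the $\varprojlim^{(i)}$-spectral sequence the paper invokes, and treating $\mathbf X$ and $\mathbf X/H$ in parallel rather than deducing one from the other via $0\to H\to\mathbf X\to\mathbf X/H\to 0$ is a minor and equally valid rearrangement (your observation that the deduction $\mathbf X \Rightarrow \mathbf X/H$ would need \sur, while the paper's direction $\mathbf X/H \Rightarrow \mathbf X$ does not, is accurate).
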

\begin{proof}
We consider the spectral sequence
\begin{equation*}
    E_2^{i,j} = {\varprojlim_m}^{(i)} \Ext_{H}^j(\mathbf{X}^{K_m}/{H},{H}) \Longrightarrow \Ext_{H}^{i+j}(\varinjlim_m \mathbf{X}^{K_m}/H,H) = \Ext_H^{i+j}(\mathbf{X}/H,H) \ .
\end{equation*}
Because of Thm.\ \ref{level-acyclic} it degenerates into the isomorphisms
\begin{equation*}
    {\varprojlim_m}^{(i)} \Hom_H(\mathbf{X}^{K_m}/H,H) \cong \Ext_H^i(\mathbf{X}/H,H) \ .
\end{equation*}
But as a consequence of Cor.\ \ref{restriction} the left hand side vanishes for $i \geq 1$. Hence $\mathbf{X}/H$ is $^\ast$-acyclic. But with $\mathbf{X}/H$ and $H$ also $\mathbf{X}$ is $^\ast$-acyclic.
\end{proof}

\begin{corollary}\label{split}
$H \subseteq \mathbf{X}$ is a direct factor as an $H$-module. In particular, \sur holds true and  $Z_m$ sits in the following exact sequence  of $H$-modules\begin{equation*}
    0 \longrightarrow Z_m \longrightarrow ((\mathbf{X}^{K_m})^\ast)_I \longrightarrow H \longrightarrow 0 \ .
\end{equation*}
\end{corollary}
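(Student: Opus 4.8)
*The inclusion $H \subseteq \mathbf{X}$ is a direct factor as an $H$-module; in particular \sur\ holds and $Z_m$ sits in an exact sequence $0 \to Z_m \to ((\mathbf{X}^{K_m})^\ast)_I \to H \to 0$.*

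The plan is to read the splitting off directly from the $^\ast$-acyclicity proved in Corollary \ref{X-acyclic}. Regard $H = \ind_I^G(1)^I$ as a submodule of $\mathbf{X}$ and consider the short exact sequence of right $H$-modules $0 \to H \to \mathbf{X} \to \mathbf{X}/H \to 0$. Applying $\Hom_H(-,H)$ produces the exact sequence
\[
  \Hom_H(\mathbf{X},H) \longrightarrow \Hom_H(H,H) \longrightarrow \Ext^1_H(\mathbf{X}/H,H) ,
\]
in which the last term vanishes by Corollary \ref{X-acyclic}. Hence the restriction map $\Hom_H(\mathbf{X},H) \to \Hom_H(H,H)$ is surjective, so $\id_H$ has a preimage $r \colon \mathbf{X} \to H$; this $r$ is an $H$-linear retraction of the inclusion and exhibits $H$ as a direct factor of $\mathbf{X}$ as a right $H$-module. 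By Remark \ref{sur} this is precisely the assertion that \sur\ holds.

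For the displayed exact sequence for $Z_m$ I would then invoke what has already been set up. Once \sur\ is available, the sequence \eqref{defiZm-sur}, namely $0 \to Z_m \to (\mathbf{Z}_m)_I \to H \to 0$, holds (this is exactly the implication exploited in the proof of Theorem \ref{F-embeds}). By Corollary \ref{restriction} we have $\mathbf{Z}_m = (\mathbf{X}^{K_m})^\ast$ for every $m \geq 1$, and substituting this identification into \eqref{defiZm-sur} yields
\[
  0 \longrightarrow Z_m \longrightarrow ((\mathbf{X}^{K_m})^\ast)_I \longrightarrow H \longrightarrow 0 ,
\]
as claimed.

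I do not expect a real obstacle here: the statement is a formal corollary of the two preceding ones. The only care needed is bookkeeping — tracking that everything in sight is a \emph{right} $H$-module, so that the Ext-vanishing we use is indeed the $^\ast$-acyclicity of $\mathbf{X}/H$ rather than a left-module variant, and checking that the composite $(\mathbf{Z}_m)_I \to ((\mathbf{X}^{K_m})^\ast)_I \to H$ occurring in the definition \eqref{defiZm} of $Z_m$ coincides with the surjection in \eqref{defiZm-sur} once $\mathbf{Z}_m$ is identified with $(\mathbf{X}^{K_m})^\ast$; the latter is immediate, since under that identification the first arrow in \eqref{defiZm} is the identity.
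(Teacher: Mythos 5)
Your proof is correct and follows exactly the paper's route: deduce the splitting of $H \subseteq \mathbf{X}$ from $\Ext_H^1(\mathbf{X}/H,H) = 0$ (Corollary \ref{X-acyclic}), then obtain the exact sequence from \eqref{defiZm-sur} together with the identification $\mathbf{Z}_m = (\mathbf{X}^{K_m})^\ast$ of Corollary \ref{restriction}. You simply spell out the long exact $\Ext$-sequence step that the paper leaves implicit.
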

\begin{proof}
By Cor.\ \ref{X-acyclic} we have $\Ext_H^1(\mathbf{X}/H,H) = 0$. For the exact sequence, use \eqref{defiZm-sur} and Cor.\ \ref{restriction}.
\end{proof}

\begin{lemma}\label{univ-reflexive}
For any $m \geq 1$, the right $H$-module $\mathbf{X}^{K_m}$ is finitely generated and reflexive.
\end{lemma}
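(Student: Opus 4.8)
The plan is to exploit the $^\ast$-acyclicity results already established, together with the Frobenius property of the algebras $H_F$, to reduce the claim to a base-change argument. First I would recall from Lemma \ref{shortexact} that $\mathbf{X}^{K_{m+1}}$ sits in a short exact sequence whose outer terms are finite direct sums of the modules $\mathbf{X}^{I_x}$ for vertices $x$ of the finite tree $\mathscr{X}^{[m]}$ and $\mathbf{X}^{I_e}$ for edges $e$; by Prop.\ \ref{firstlevel} each such module is of the form $\mathbf{X}_F \otimes_{H_F} H$ (after translating $F$ into $\overline{C}$ by an element of $G$, using \eqref{f:pr1}), where $\mathbf{X}_F$ is finitely generated over $H_F$. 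Since $H$ is free, hence faithfully flat, as a right $H_F$-module (Prop.\ \ref{H-free}.i), and $\mathbf{X}_F$ is a finitely generated module over the noetherian ring $H_F$, the module $\mathbf{X}_F \otimes_{H_F} H$ is finitely generated over $H$; finite generation of $\mathbf{X}^{K_m}$ then follows from the short exact sequences by induction on $m$ (the case $m=1$ being $\mathbf{X}^{K_1} = \mathbf{X}_{x_0}\otimes_{H_{x_0}} H$, or alternatively $\mathbf{X}^{K_1}/H$ finitely generated plus $H$ finitely generated).

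For reflexivity I would argue as follows. Each $\mathbf{X}_F$, being a finitely generated module over the Frobenius (in particular self-injective, and noetherian) algebra $H_F$, is reflexive as an $H_F$-module: indeed over a self-injective noetherian ring every finitely generated module $Y$ satisfies $Y \cong Y^{\ast\ast}$ because $\Hom_{H_F}(-,H_F)$ is exact and faithful on finitely generated modules, so applying it to the finite presentation of $Y$ and again to that of $Y^\ast$ yields the biduality isomorphism. Now I claim base change along the free extension $H_F \hookrightarrow H$ preserves reflexivity for finitely generated modules: using a finite free presentation $H_F^a \to H_F^b \to Y \to 0$ and the fact that $H$ is flat over $H_F$ one gets $\Hom_H(Y\otimes_{H_F} H, H) \cong \Hom_{H_F}(Y, H) \cong \Hom_{H_F}(Y,H_F)\otimes_{H_F} H = Y^\ast \otimes_{H_F} H$ (the middle isomorphism again because $Y$ is finitely presented and $H$ is $H_F$-flat), and iterating gives $(Y\otimes_{H_F}H)^{\ast\ast} \cong Y^{\ast\ast}\otimes_{H_F}H \cong Y\otimes_{H_F}H$, compatibly with the canonical map. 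Hence each $\mathbf{X}^{I_x}$ and $\mathbf{X}^{I_e}$ is a reflexive $H$-module.

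It remains to pass from the outer terms of the complex \eqref{f:m-complex} to $\mathbf{X}^{K_{m+1}}$ itself. Here the key point is that the modules involved are $^\ast$-acyclic (Prop.\ \ref{level1-acyclic}, Thm.\ \ref{level-acyclic}): from the short exact sequence
\begin{equation*}
    0 \longrightarrow C^{or}(\mathscr{X}^{[m]}_{(1)},\cX) \longrightarrow C(\mathscr{X}^{[m]}_0,\cX) \longrightarrow \mathbf{X}^{K_{m+1}} \longrightarrow 0
\end{equation*}
and the vanishing of $\Ext^1_H(\mathbf{X}^{K_{m+1}},H)$ we obtain a short exact sequence $0 \to (\mathbf{X}^{K_{m+1}})^\ast \to C(\mathscr{X}^{[m]}_0,\cX)^\ast \to C^{or}(\mathscr{X}^{[m]}_{(1)},\cX)^\ast \to 0$ of the dual modules; applying $\Hom_H(-,H)$ once more, and using that the two outer chain modules are reflexive while their duals are again $^\ast$-acyclic (being finite direct sums of modules $\mathbf{X}_F^\ast\otimes_{H_F}H$, which are of the same base-changed form), a diagram chase with the canonical biduality maps shows that $\mathbf{X}^{K_{m+1}} \to (\mathbf{X}^{K_{m+1}})^{\ast\ast}$ is an isomorphism by the five lemma. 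The main obstacle I anticipate is precisely the bookkeeping in this last step — one must check that the duals of the chain modules are themselves reflexive and $^\ast$-acyclic, which requires knowing that $\Hom_{H_F}(\mathbf{X}_F,H_F)$ is again finitely generated over $H_F$ (true, by noetherianity) and handling the orientation condition defining $C^{or}$; none of this is deep, but it is where care is needed, and an induction on $m$ keeps the argument clean.
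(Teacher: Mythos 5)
Your proof is correct, but it follows a genuinely different route from the paper's. For reflexivity the paper is extremely short: it cites \cite{OS} Thm.\ 0.1 for the Gorenstein property of $H$ and then invokes the general fact (\cite{Buc} Lemma 4.2.2(iii)) that over a Gorenstein ring a finitely generated $^\ast$-acyclic (i.e.\ maximal Cohen--Macaulay) module is reflexive; combined with the $^\ast$-acyclicity from Thm.\ \ref{level-acyclic} this finishes the argument in two lines. You instead avoid the Gorenstein black box entirely: you establish reflexivity over the Frobenius (QF) algebras $H_F$, show the base change $Y \mapsto Y\otimes_{H_F}H$ preserves reflexivity for finitely generated $Y$ (the isomorphism $\Hom_H(Y\otimes_{H_F}H,H)\cong H\otimes_{H_F}\Hom_{H_F}(Y,H_F)$ — note the correct side of the tensor — is exactly the one used in the paper's Prop.\ \ref{Z1} and Cor.\ \ref{basechange-acyclic}), and then descend to $\mathbf{X}^{K_{m+1}}$ through the short exact sequence of Lemma \ref{shortexact} using $^\ast$-acyclicity and the naturality of the biduality map. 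The diagram chase you allude to (two outer vertical maps iso, rows exact, so the third vertical map is iso) does go through, provided one knows that the dual of the edge chain module is again $^\ast$-acyclic; this follows from the same base-change argument applied to $\Hom_{H_F}(\mathbf{X}_F,H_F)$, so your parenthetical justification is valid, if terse. The trade-off is clear: the paper's proof is shorter and leans on cited homological machinery, while yours is essentially self-contained, resting only on the Frobenius property of $H_F$, freeness of $H$ over $H_F$, and the chain complex — facts the section has already developed. One could argue you secretly re-prove a special case of the Gorenstein/Buchweitz result, but since the general statement is not needed elsewhere in the paper, your version is a legitimate and arguably more transparent alternative.
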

\begin{proof}
The finite generation follows immediately from Prop.s \ref{firstlevel} and \ref{level-subquotients}. On the other hand we know from \cite{OS} Thm.\ 0.1 that the algebra $H$ is Gorenstein. Over such a ring any finitely generated $\ast$-acyclic ($=$ maximal Cohen-Macaulay), by Prop.\ \ref{level-acyclic}, module is reflexive (\cite{Buc} Lemma 4.2.2(iii)).
\end{proof}

For later use  we also record the following technical consequence. Let $k[[I]]$ be the completed group ring of $I$ over $k$.

\begin{corollary}\label{proj-res}
For any finitely generated projective $k[[I]]$-module $P$ and any $m \geq 1$ the $H$-module $\Hom_{k[[I]]}(P,\mathbf{X}^{K_m})$ is finitely generated and $^\ast$-acyclic.
\end{corollary}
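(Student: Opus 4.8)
\textbf{Proof proposal for Corollary \ref{proj-res}.}

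The plan is to reduce the statement about $\Hom_{k[[I]]}(P,\mathbf{X}^{K_m})$ to the already established $^\ast$-acyclicity and finite generation of $\mathbf{X}^{K_m}$ as an $H$-module (Thm.\ \ref{level-acyclic} and Lemma \ref{univ-reflexive}). Since a finitely generated projective $k[[I]]$-module $P$ is a direct summand of a finitely generated free module $k[[I]]^n$, and both $\Hom_{k[[I]]}(-,\mathbf{X}^{K_m})$ and the properties ``finitely generated'' and ``$^\ast$-acyclic'' are compatible with passing to direct summands and finite direct sums, it suffices to treat the case $P = k[[I]]$. So the first step is to identify $\Hom_{k[[I]]}(k[[I]],\mathbf{X}^{K_m})$. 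This is not simply $\mathbf{X}^{K_m}$, because $\Hom_{k[[I]]}(k[[I]], -)$ computes the $k[[I]]$-module maps from the regular module, which for a smooth (i.e.\ discrete) module like $\mathbf{X}^{K_m}$ gives the full linear dual-type object; the correct identification is with $\mathbf{X}^{K_m}$ again only after one notes that $I$ acts on $\mathbf{X}^{K_m}$ through the finite quotient $I/K_m$ (since $K_m$ fixes $\mathbf{X}^{K_m}$ pointwise and $I$ acts on $\mathbf{X}$ through $\mathbf{X}^{K_m} = \mathbf{X}^{K_m}$), so $\Hom_{k[[I]]}(k[[I]],\mathbf{X}^{K_m}) = \Hom_{k[I/K_m]}(k[I/K_m], \mathbf{X}^{K_m})$ up to the twist by the standard involution, and this is $\mathbf{X}^{K_m}$ as an $H$-module up to an automorphism coming from the antipode of the finite group $I/K_m$.

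More precisely, the key step is: for a finitely generated projective $k[[I]]$-module $P$, the $I$-action on $\Hom_{k[[I]]}(P,\mathbf{X}^{K_m})$ (by $(g\cdot\phi)(x) = \phi(xg)$ or the appropriate convention) factors through $I/K_m$, and there is an $H$-linear isomorphism $\Hom_{k[[I]]}(P,\mathbf{X}^{K_m}) \cong \Hom_{k[I/K_m]}(P_{K_m}, \mathbf{X}^{K_m})$ where $P_{K_m} = k[I/K_m]\otimes_{k[[I]]} P$ is a finitely generated projective $k[I/K_m]$-module. Since the finite group algebra $k[I/K_m]$ is self-injective (it is a Frobenius algebra), $P_{K_m}$ is also injective, hence a direct summand of a finite free module, so $\Hom_{k[I/K_m]}(P_{K_m}, \mathbf{X}^{K_m})$ is a direct summand of $(\mathbf{X}^{K_m})^n = \Hom_{k[I/K_m]}(k[I/K_m]^n, \mathbf{X}^{K_m})$ for some $n$. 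A direct summand of a finitely generated module is finitely generated, and a direct summand of a $^\ast$-acyclic module is $^\ast$-acyclic (the functors $\Ext_H^i(-,H)$ are additive), so both conclusions follow from Lemma \ref{univ-reflexive} and Thm.\ \ref{level-acyclic}.

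The main obstacle I expect is the careful bookkeeping of the $I$-action in the identification $\Hom_{k[[I]]}(P,\mathbf{X}^{K_m}) \cong \Hom_{k[I/K_m]}(P_{K_m},\mathbf{X}^{K_m})$ as $H$-modules: one must check that the residual $H$-action (coming from the $(G,H)$-bimodule, hence $(I,H)$-bimodule, structure on $\mathbf{X}^{K_m}$) commutes correctly with the $k[[I]]$-linearity, that the smoothness of $\mathbf{X}^{K_m}$ as an $I$-module is exactly what lets one replace $k[[I]]$ by $k[I/K_m]$ in the Hom (a module over the profinite group algebra $k[[I]]$ maps to a smooth module through a finite quotient precisely because the image is finitely generated over a noetherian ring and the action on each generator is smooth), and that the involution/antipode twist does not affect finite generation or $^\ast$-acyclicity as $H$-modules. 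None of these points is deep, but stating them cleanly — especially the passage from $k[[I]]$ to the finite group algebra — requires a short lemma on smooth versus completed-group-ring modules, which is presumably why the corollary is recorded separately ``for later use.''
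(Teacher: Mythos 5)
Your reduction to $P = k[[I]]$ is the same as the paper's, but the next sentence contains the central error: you assert that $\Hom_{k[[I]]}(k[[I]],\mathbf{X}^{K_m})$ ``is not simply $\mathbf{X}^{K_m}$'' and needs to be unwound through the finite quotient with an antipode twist. This is false. For any unital ring $R$ and any left $R$-module $M$ one has $\Hom_R(R,M)\cong M$ by evaluation at $1$, and if $M$ carries a commuting right action by another ring $H$, this isomorphism is automatically $H$-linear, since $(\phi\cdot h)(1)=\phi(1)\cdot h$. The profiniteness of $k[[I]]$ and the discreteness (smoothness) of $\mathbf{X}^{K_m}$ play no role here, and there is no twist. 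This is exactly what the paper's proof uses: after reducing to $P = k[[I]]$, identify $\Hom_{k[[I]]}(k[[I]],\mathbf{X}^{K_m}) = \mathbf{X}^{K_m}$ as right $H$-modules, and invoke Thm.\ \ref{level-acyclic} and Lemma \ref{univ-reflexive}. Done.

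Your second paragraph then abandons the reduction you had already made and reproves the general projective case via the restriction/extension adjunction $\Hom_{k[[I]]}(P,\mathbf{X}^{K_m}) \cong \Hom_{k[I/K_m]}(k[I/K_m]\otimes_{k[[I]]}P,\,\mathbf{X}^{K_m})$. That adjunction is correct, and it does land you in the right place (a direct summand of a finite direct sum of copies of $\mathbf{X}^{K_m}$), so the conclusion is salvageable. But the step ``since $k[I/K_m]$ is self-injective, $P_{K_m}$ is injective, hence a direct summand of a finite free module'' is the wrong reason: a finitely generated projective module over any ring is a direct summand of a finite free module simply because that is what projectivity means for a finitely generated module; no Frobenius/self-injectivity is needed, and invoking injectivity here is a red herring. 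So: the final destination is right, but the route is longer than necessary and passes through a false claim (the non-triviality of $\Hom_R(R,-)$) and a misattributed step (injectivity where projectivity suffices). Replace the first paragraph's claim with the standard identification and the proof collapses to the paper's two lines.
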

\begin{proof}
By finite direct sum arguments it suffices to consider the case $P = k[[I]]$. Then $\Hom_{k[[I]]}(P,\mathbf{X}^{K_m}) = \mathbf{X}^{K_m}$ and the assertion follows from Thm.\ \ref{level-acyclic} and Lemma \ref{univ-reflexive}.
\end{proof}

\begin{proposition}\label{reflexive}
Any reflexive left $H$-module $M$ belongs to $\mathcal{F}_G$.
\end{proposition}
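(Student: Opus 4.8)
The plan is to show that for a reflexive left $H$-module $M$ one has $\Hom_H(Z_m, M) = 0$ for all $m \geq 1$, which is exactly the condition for $M \in \mathcal{F}_G$. By Proposition \ref{Z1} we have $Z_1 = 0$, so only $m \geq 2$ matters, but we can treat all $m$ uniformly. The key structural input is the exact sequence from Corollary \ref{split},
\begin{equation*}
  0 \longrightarrow Z_m \longrightarrow ((\mathbf{X}^{K_m})^\ast)_I \longrightarrow H \longrightarrow 0 \ ,
\end{equation*}
together with the fact (Corollary \ref{restriction}) that $\mathbf{Z}_m = (\mathbf{X}^{K_m})^\ast$.

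First I would reinterpret $((\mathbf{X}^{K_m})^\ast)_I$ homologically. Since $\mathbf{X}^{K_m}$ is finitely generated and $^\ast$-acyclic (Theorem \ref{level-acyclic}, Lemma \ref{univ-reflexive}), and since the $I$-coinvariants functor is the $H_0(I,-)$, one should express $((\mathbf{X}^{K_m})^\ast)_I = \Hom_H(\mathbf{X}^{K_m},H)_I$ and relate $\Hom_H(Z_m,M)$ and $\Hom_H(((\mathbf{X}^{K_m})^\ast)_I, M)$ to each other via the long exact $\Hom_H(-,M)$-sequence attached to the displayed short exact sequence. That long exact sequence reads
\begin{equation*}
  0 \to \Hom_H(H,M) \to \Hom_H(((\mathbf{X}^{K_m})^\ast)_I, M) \to \Hom_H(Z_m,M) \to \Ext^1_H(H,M) = 0 \ ,
\end{equation*}
so it suffices to prove that the first map $M = \Hom_H(H,M) \to \Hom_H(((\mathbf{X}^{K_m})^\ast)_I, M)$ is an isomorphism. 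Unwinding the definition of coinvariants, $\Hom_H(((\mathbf{X}^{K_m})^\ast)_I, M) = \Hom_H((\mathbf{X}^{K_m})^\ast, M)^{I}$, so the claim is that the natural map $M \to \Hom_H((\mathbf{X}^{K_m})^\ast, M)^I$ coming from $(\subseteq)^\ast : (\mathbf{X}^{K_m})^\ast \to (\mathbf{X}^I)^\ast = H$ is bijective.

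Now reflexivity enters. Because $\mathbf{X}^{K_m}$ is finitely generated projective-like in the relevant sense — more precisely, finitely generated and $^\ast$-acyclic over the Gorenstein ring $H$, hence reflexive by Lemma \ref{univ-reflexive} — the double-dual evaluation gives a natural identification $\Hom_H((\mathbf{X}^{K_m})^\ast, H) = (\mathbf{X}^{K_m})^{\ast\ast} = \mathbf{X}^{K_m}$, and more generally, for $M$ reflexive, $\Hom_H((\mathbf{X}^{K_m})^\ast, M) \cong \Hom_H(M^\ast, \mathbf{X}^{K_m})$ (this is the standard adjunction between $\Hom_H(-,H)$-duals for finitely generated modules, valid because both $\mathbf{X}^{K_m}$ and $M$ are reflexive; one checks it first for $M = H$, then for $M$ a finitely generated reflexive module by a finite presentation argument via $\ast$-acyclicity of $\mathbf{X}^{K_m}$). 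Under this identification the map in question becomes the natural map
\begin{equation*}
  M = \Hom_H(M^\ast, H) \longrightarrow \Hom_H(M^\ast, \mathbf{X}^{K_m})^I \ ,
\end{equation*}
induced by $H = (\mathbf{X}^{K_m})^I \hookrightarrow \mathbf{X}^{K_m}$, i.e. it is $\Hom_H(M^\ast, -)$ applied to the inclusion $H = (\mathbf{X}^{K_m})^I \subseteq \mathbf{X}^{K_m}$ after taking $I$-invariants. But $(\mathbf{X}^{K_m})^I = \mathbf{X}^I = H$ sits inside $\mathbf{X}^{K_m}$ precisely as its $I$-invariants, so $\Hom_H(M^\ast,\mathbf{X}^{K_m})^I = \Hom_H(M^\ast, (\mathbf{X}^{K_m})^I) = \Hom_H(M^\ast, H) = M$ — here one uses that $M^\ast$ is a fixed $H$-module with no residual $I$-action, so taking $I$-invariants commutes past $\Hom_H(M^\ast, -)$. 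Chasing through, the composite is the identity, which finishes the proof.

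The step I expect to be the main obstacle is the clean identification $\Hom_H((\mathbf{X}^{K_m})^\ast, M) \cong \Hom_H(M^\ast, \mathbf{X}^{K_m})$ as $I$-modules, functorially in $M$, for finitely generated reflexive $M$: making the $I$-equivariance precise requires care, since the $I$-action lives on $\mathbf{X}^{K_m}$ (equivalently on $(\mathbf{X}^{K_m})^\ast = \mathbf{Z}_m$) and one must verify the duality isomorphism transports it correctly to the coinvariants/invariants side. I would handle this by reducing, via $^\ast$-acyclicity of $\mathbf{X}^{K_m}$ and a finite presentation $H^a \to H^b \to M^\ast \to 0$, to the already-understood case $M = H$, where the statement is the reflexivity identity $\mathbf{X}^{K_m} = (\mathbf{X}^{K_m})^{\ast\ast}$ of Lemma \ref{univ-reflexive} and the computation $((\mathbf{X}^{K_m})^\ast)_I = H$ can be read off exactly as in the proof of Proposition \ref{Z1} (that proof did the case $m=1$; the general case uses Corollary \ref{restriction} to compare with it, or redoes it with $K_m$ in place of $K_1$). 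Everything else is bookkeeping with the long exact $\Ext$-sequence and the vanishing $\Ext^1_H(H,M) = 0$.
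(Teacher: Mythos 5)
Your proof is correct and is essentially the paper's own argument: the split exact sequence
$0 \to Z_m \to ((\mathbf{X}^{K_m})^\ast)_I \to H \to 0$
(you invoke $\Ext^1_H(H,M)=0$ instead of the splitting, which is an equivalent formal point), then the coinvariant--invariant adjunction $\Hom_H(((\mathbf{X}^{K_m})^\ast)_I,M) = \Hom_H((\mathbf{X}^{K_m})^\ast,M)^I$, the duality identity $\Hom_H((\mathbf{X}^{K_m})^\ast,M) \cong \Hom_H(M^\ast,\mathbf{X}^{K_m})$ from reflexivity of $\mathbf{X}^{K_m}$ (Lemma \ref{univ-reflexive}) and of $M$, and finally $\Hom_H(M^\ast,\mathbf{X}^{K_m})^I = \Hom_H(M^\ast,(\mathbf{X}^{K_m})^I) = \Hom_H(M^\ast,H) = M$. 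One remark on the step you flag as the potential obstacle: the detour via a finite presentation of $M^\ast$ is unnecessary and would tacitly impose finiteness on $M$, which the proposition does not assume; the identification $\Hom_H(Y^\ast,M)\cong\Hom_H(M^\ast,Y)$ for reflexive $Y$ and reflexive $M$ follows from a direct check with the evaluation isomorphisms and is $I$-equivariant by naturality, with no finite generation hypothesis on $M$.
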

\begin{proof}
Since the defining sequences
\begin{equation*}
    0 \longrightarrow Z_m \longrightarrow ((\mathbf{X}^{K_m})^\ast)_I \longrightarrow H \longrightarrow 0
\end{equation*}
split we have, for any left $H$-module $M$, the exact sequences
\begin{equation*}
    0 \longrightarrow \Hom_H(H,M) = M  \longrightarrow \Hom_H(((\mathbf{X}^{K_m})^\ast)_I,M) \longrightarrow \Hom_H(Z_m,M) \longrightarrow 0 \ .
\end{equation*}
Using Lemma \ref{univ-reflexive} we compute
\begin{align*}
    \Hom_H(((\mathbf{X}^{K_m})^\ast)_I,M) & = \Hom_H((\mathbf{X}^{K_m})^\ast,M)^I \\
    & = \Hom_H(M^\ast,\mathbf{X}^{K_m})^I = \Hom_H(M^\ast,\mathbf{X}^I) = \Hom_H(M^\ast,H) = M
\end{align*}
for any reflexive $M$. Hence the above exact sequences become
\begin{equation*}
  0 \longrightarrow M \xrightarrow{\;=\;} M \longrightarrow \Hom_H(Z_m,M) \longrightarrow 0 \ .
\end{equation*}
It follows that $\Hom_H(Z_m,M) = 0$.
\end{proof}

We know that $H$ is a Gorenstein ring. Therefore (\cite{Buc} Thm.\ 5.1.4) any finitely generated $H$-module $M$ sits in a short exact sequence of finitely generated $H$-modules
\begin{equation*}
    0 \longrightarrow M \longrightarrow Q \longrightarrow N \longrightarrow 0
\end{equation*}
where $N$ is maximal Cohen-Macaulay (hence reflexive) and $Q$ has finite projective dimension. In particular, we have $\Hom_H(Z_m,M) = \Hom_H(Z_m,Q)$.
Hence it is a basic problem to determine which finitely generated modules of finite projective dimension belong to $\mathcal{F}_G$.

\begin{remark}\label{Zm}
The $H$-modules $Z_m$, for any $m \geq 1$, are finitely generated and satisfy $Z_m^* = 0$.
\end{remark}
\begin{proof}
The second half of the assertion is immediate from the fact that $H$ belongs to $\mathcal{F}_G$ by Prop.\ \ref{reflexive}. For the first half we note that for any finitely generated module $M$ over the noetherian ring $H$ the module $M^*$ is finitely generated as well. Hence $((\mathbf{X}^{K_m})^\ast)_I$ is finitely generated by Lemma \ref{univ-reflexive} and then also its submodule $Z_m$.
\end{proof}

\subsection{A group cohomological formula for  the derived dual of $Z_m$}{\label{cohoformu}

For the rest of this section we \textbf{assume}  that $\mathbf{G}$ is $\mathbf{SL_2}$ or $\mathbf{PGL_2}$. Then we know from \cite{OS} Thm.\ 0.2 that $H$ is Auslander-Gorenstein of self-injective dimension equal to $1$.

\begin{lemma}\label{mod-AG}
For any finitely generated $H$-module $M$ we have:
\begin{itemize}
  \item[i.] $M$ has a (unique) submodule $M^1$ with the property that an arbitrary submodule $N$ of $M$ is contained in $M^1$ if and only if $N^* = 0$;
  \item[ii.] The canonical map $M \longrightarrow M^{**}$ is surjective and has kernel $M^1$;
  \item[iii.] $M^1$ is the largest submodule of $M$ which is finite dimensional over $k$.
  \item[iv.] $M/M^1$ is $^\ast$-acyclic.
\end{itemize}
\end{lemma}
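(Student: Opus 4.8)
\textbf{Proof proposal for Lemma \ref{mod-AG}.}

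The plan is to exploit the Auslander-Gorenstein hypothesis via the standard grade/codimension machinery, together with the fact that $H$ has self-injective dimension $1$, so that finitely generated $H$-modules come in only two ``grades'': grade $0$ (modules with $M^* \neq 0$) and grade $\geq 1$ (modules $N$ with $N^* = \Hom_H(N,H) = 0$, which by the dimension-$1$ hypothesis means $\Ext^i_H(N,H) = 0$ for $i \neq 1$, i.e. $N$ is a torsion module concentrated in codimension $1$). First I would recall, or cite from \cite{OS} or a reference on Auslander regularity, that over such an $H$ the full subcategory of finitely generated modules $N$ with $N^* = 0$ is closed under submodules, quotients, and extensions (a Serre subcategory), because the grade of a module is $\geq 1$ exactly when it vanishes after localizing at the grade-$0$ part, and this is visibly inherited by subquotients and extensions. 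Granting this, part i.\ is immediate: set $M^1$ to be the sum of all submodules $N \subseteq M$ with $N^* = 0$; since $M$ is noetherian this sum is a finite sum, hence itself has trivial dual by the extension-closure, and it is by construction the largest such submodule, so any $N$ with $N^*=0$ lies in $M^1$, and conversely every submodule of $M^1$ has trivial dual by submodule-closure.

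For part ii., I would look at the canonical evaluation map $\mathrm{ev}_M : M \to M^{**}$. Applying $(-)^*$ to $\mathrm{ev}_M$ and using the general identity that $\mathrm{ev}_M^* \circ \mathrm{ev}_{M^*} = \id_{M^*}$, one sees $\mathrm{ev}_{M}^{*}$ is a split surjection, so $(\ker \mathrm{ev}_M)^* $ and $(\coker \mathrm{ev}_M)^*$ are controlled; in fact $\ker(\mathrm{ev}_M)$ always satisfies $\ker(\mathrm{ev}_M)^* = 0$, so $\ker(\mathrm{ev}_M) \subseteq M^1$. The reverse inclusion $M^1 \subseteq \ker(\mathrm{ev}_M)$ holds because $M^{**}$ has trivial $M^1$-part — more precisely $(M^{**})$ is reflexive hence torsionfree in the appropriate sense, so any map from the grade-$\geq 1$ module $M^1$ into $M^{**}$ vanishes; concretely, the composite $M^1 \hookrightarrow M \to M^{**}$ factors through $\Hom$ out of a module with trivial dual into a double-dual, and such $\Hom$ vanishes because $M^{**} \hookrightarrow (\text{free})$ would force it, or one argues via $\Hom_H(M^1, M^{**}) = \Hom_H(M^1, \Hom_H(M^*,H)) = \Hom_H(M^* , (M^1)^*) = 0$ using that $(M^1)^* = 0$ and a tensor-hom/adjunction — here one must be slightly careful about left versus right modules and finite generation, which is where I expect the bookkeeping to be most delicate. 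Surjectivity of $\mathrm{ev}_M$ is then the assertion that $\coker(\mathrm{ev}_M)^* = 0$ can be upgraded to $\coker(\mathrm{ev}_M) = 0$; this uses the dimension-$1$ hypothesis crucially: $\coker(\mathrm{ev}_M)$ has grade $\geq 1$, but a short exact sequence and the long exact $\Ext^\bullet_H(-,H)$ sequence, combined with $\mathrm{injdim}_H H = 1$ and $M/M^1 \cong \im(\mathrm{ev}_M)$ being reflexive, force the cokernel to be zero by a grade count. This degenerate-spectral-sequence or direct diagram chase is, I think, the main obstacle — it is the only place where ``self-injective dimension exactly $1$'', as opposed to merely Gorenstein, is really used.

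For part iii., I would show the two characterizations of $M^1$ coincide by a grade/dimension argument: a finitely generated $H$-module $N$ is finite dimensional over $k$ if and only if it is supported at finitely many maximal ideals in the relevant sense, which (since $H$ is finite over its center, or by the explicit structure recalled later in the paper) is equivalent to $\dim N < \dim H$, equivalently $\mathrm{grade}(N) \geq 1$, equivalently $N^* = 0$ given $\mathrm{injdim}\, H = 1$. Thus the largest finite-dimensional submodule is exactly the largest submodule with trivial dual, namely $M^1$. Finally, part iv.\ follows by applying $\Ext^\bullet_H(-,H)$ to $0 \to M^1 \to M \to M/M^1 \to 0$: we have $\Ext^0_H(M^1,H) = (M^1)^* = 0$ and $\Ext^i_H(M^1,H) = 0$ for $i \geq 2$ since the self-injective dimension is $1$, while by part ii.\ $M/M^1 \cong M^{**}$ is reflexive; reflexive modules over an Auslander-Gorenstein ring of dimension $1$ are $^\ast$-acyclic (this is the dimension-$1$ case of the Cohen-Macaulay/reflexive correspondence already invoked in Lemma \ref{univ-reflexive} via \cite{Buc}), so $\Ext^i_H(M/M^1, H) = 0$ for $i \geq 1$, and there is nothing to glue. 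If one prefers not to quote reflexive $\Rightarrow$ $^\ast$-acyclic directly, the long exact sequence gives $\Ext^i_H(M,H) \cong \Ext^i_H(M/M^1,H)$ for $i \geq 2$ and an exact piece in low degrees that, together with $\Ext^{\geq 2} = 0$, pins down $^\ast$-acyclicity of $M/M^1$ from that of nothing — I would phrase whichever of these is cleanest once the left/right-module conventions are fixed.
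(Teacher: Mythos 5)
The paper proves this lemma by citation alone (i.\ and ii.\ to Levasseur Cor.\ 4.3, iii.\ to \cite{OS} Lemma 6.9 and Cor.\ 6.17, iv.\ to the proof of ASZ Prop.\ 2.5.2), and your Auslander--Gorenstein grade-theoretic framework is the one those references use, so the overall route is the right one. Two spots in your sketch are weaker than they should be, though. In part i., closure of the class $\{N : N^* = 0\}$ under quotients is just left-exactness of $(\cdot)^*$ and extensions are easy, but closure under \emph{submodules} is precisely where the Auslander condition is doing real work; it is not ``visibly inherited,'' and the localization heuristic you offer is a commutative picture that does not transfer to this one-sided noncommutative setting. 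In part ii., the adjunction-like identity $\Hom_H(M^1,\Hom_H(M^*,H)) \cong \Hom_H(M^*,(M^1)^*)$ is not valid here; the clean way to get $M^1 \subseteq \ker(M \to M^{**})$ is to observe that $M^{**}$, being a dual, embeds in a finite free module, and any map from a module $N$ with $N^*=0$ into such a torsionless module must vanish, since a nonzero image would be a nonzero submodule of a free module with trivial dual, which is impossible.

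The genuine gap is part iii. The equivalence ``$N$ finite-dimensional over $k$ $\iff$ $N^* = 0$'' is \emph{not} a formal consequence of Auslander--Gorenstein with self-injective dimension $1$: over the first Weyl algebra $A_1(k)$ in characteristic $0$ (Auslander-regular of global and self-injective dimension $1$), simple holonomic modules have grade $1$ yet are infinite-dimensional over $k$. Making the equivalence work needs exactly what the cited results of \cite{OS} supply: that $H$ is module-finite over a noetherian affine center whose Krull dimension matches the self-injective dimension, that grade-$\geq 1$ finitely generated $H$-modules are of finite length, and that finite-length $H$-modules are finite-dimensional over $k$. Your parenthetical ``since $H$ is finite over its center, or by the explicit structure'' gestures at this, but it is a citation in disguise rather than an argument, which is why the paper points to \cite{OS} Lemma 6.9 and Cor.\ 6.17 here. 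Parts ii.\ (surjectivity of the evaluation map from self-injective dimension $1$) and iv.\ ($M/M^1 \cong M^{**}$ reflexive, hence $^\ast$-acyclic) are otherwise sound in outline and do use the dimension-$1$ hypothesis in the way you indicate.
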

\begin{proof}
i. and ii. \cite{Lev} Cor.\ 4.3. iii. \cite{OS} Lemma 6.9 and Cor.\ 6.17. iv. \cite{ASZ} proof of Prop.\ 2.5.2.
\end{proof}

\begin{proposition}\phantomsection\label{semireflexive}
\begin{itemize}
  \item[i.] $Z_m$, for any $m \geq 1$, is finite dimensional over $k$.
  \item[ii.] For any finitely generated $H$-module $M$ the quotient $M/M^1$ belongs to $\mathcal{F}_G$.
\end{itemize}
\end{proposition}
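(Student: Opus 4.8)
The plan is to prove the two assertions in order, with part~i.\ feeding directly into part~ii. For part~i., I would compute the derived dual $\Ext^*_H(Z_m, H)$ and deduce finite-dimensionality from the Auslander--Gorenstein property. Recall from Corollary~\ref{split} the split exact sequence
\begin{equation*}
    0 \longrightarrow Z_m \longrightarrow ((\mathbf{X}^{K_m})^\ast)_I \longrightarrow H \longrightarrow 0 \ .
\end{equation*}
Applying $\Hom_H(-,H)$ and using that $H$ is a direct summand gives $Z_m^* = 0$ (already noted in Remark~\ref{Zm}) together with an identification $\Ext^1_H(Z_m,H) \cong \Ext^1_H(((\mathbf{X}^{K_m})^\ast)_I, H)$. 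To control the right-hand side I would realize the coinvariants $((\mathbf{X}^{K_m})^\ast)_I$ as the tail of a complex computing $I$-homology: take a finitely generated projective resolution $P_\bullet \to k$ of the trivial $k[[I]]$-module and note that $\Hom_{k[[I]]}(P_\bullet, \mathbf{X}^{K_m})$ computes $H^*(I,\mathbf{X}^{K_m})$, with $H^0 = (\mathbf{X}^{K_m})^{I} = H$ and the relevant dualized/coinvariant object being governed by $H^1(I,\mathbf{X}^{K_m})$ (this is exactly formula~\eqref{formulaZm} from the introduction, $Z_m = \Ext^1_H(H^1(I,\mathbf{X}^{K_m}),H)$, which I would establish here). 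Since each $\Hom_{k[[I]]}(P_j, \mathbf{X}^{K_m})$ is finitely generated and $^\ast$-acyclic by Corollary~\ref{proj-res}, a spectral sequence or direct dimension-shift argument shows $\Ext^i_H(Z_m, H) = 0$ for $i \neq 1$ and $\Ext^1_H(Z_m,H)$ is finitely generated. Now invoke the Auslander--Gorenstein condition: a finitely generated $H$-module whose grade is maximal (here $= 1 = $ self-injective dimension, since $Z_m^* = 0$) is annihilated in the appropriate sense, and by Lemma~\ref{mod-AG}.iii an $H$-module $N$ with $N^* = 0$ that is moreover a submodule of a finitely generated module is finite dimensional over $k$ precisely when it is its own $N^1$; since $Z_m \subseteq ((\mathbf{X}^{K_m})^\ast)_I$ and $Z_m^* = 0$, Lemma~\ref{mod-AG}.i applied to the ambient module forces $Z_m \subseteq (((\mathbf{X}^{K_m})^\ast)_I)^1$, which is finite dimensional by Lemma~\ref{mod-AG}.iii. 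Hence $Z_m$ is finite dimensional over $k$.

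For part~ii., let $M$ be finitely generated and write $M^1$ for the submodule from Lemma~\ref{mod-AG}. By Lemma~\ref{mod-AG}.iv the quotient $M/M^1$ is $^\ast$-acyclic, and by Lemma~\ref{mod-AG}.ii it is isomorphic to $M^{**}$, hence reflexive. But Proposition~\ref{reflexive} says every reflexive left $H$-module lies in $\mathcal{F}_G$. Therefore $M/M^1 \in \mathcal{F}_G$, which is the claim. Concretely one checks $\Hom_H(Z_m, M/M^1) = 0$ for all $m$: using the split sequence of Corollary~\ref{split} and reflexivity of $M/M^1$ exactly as in the proof of Proposition~\ref{reflexive}, the natural map $M/M^1 = \Hom_H(H, M/M^1) \to \Hom_H(((\mathbf{X}^{K_m})^\ast)_I, M/M^1)$ is an isomorphism (one computes $\Hom_H(((\mathbf{X}^{K_m})^\ast)_I, M/M^1) = \Hom_H((M/M^1)^\ast, \mathbf{X}^{K_m})^I = \Hom_H((M/M^1)^\ast, \mathbf{X}^I) = M/M^1$ via Lemma~\ref{univ-reflexive}), and the cokernel $\Hom_H(Z_m, M/M^1)$ vanishes.

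The main obstacle is part~i.: getting a clean handle on $((\mathbf{X}^{K_m})^\ast)_I$ and its derived dual. The subtlety is that passing to $I$-coinvariants does not commute with $\Hom_H(-,H)$ on the nose, so one must be careful to use a \emph{finitely generated projective} $k[[I]]$-resolution of $k$ (which exists because $k[[I]]$ is a complete local noetherian ring when $I$ is a compact $p$-adic group, or more simply because one only needs finitely many terms in low degree) and then transport the $^\ast$-acyclicity of $\Hom_{k[[I]]}(P_j, \mathbf{X}^{K_m})$ from Corollary~\ref{proj-res} through the resulting double complex. Once the vanishing $\Ext^i_H(Z_m,H) = 0$ for $i \geq 2$ is in place, finite-dimensionality is formal from Auslander--Gorenstein-ness (self-injective dimension $1$ forces a module with $Z_m^* = 0$ and $\Ext^1$ finitely generated to be finite length, equivalently finite dimensional over $k$); alternatively, and more in the spirit of the paper, one simply notes $Z_m$ is a submodule of $((\mathbf{X}^{K_m})^\ast)_I$ with $Z_m^* = 0$, so $Z_m$ is contained in the largest finite-dimensional submodule $(((\mathbf{X}^{K_m})^\ast)_I)^1$ by Lemma~\ref{mod-AG}.i and~iii, giving the conclusion without any Ext computation at all.
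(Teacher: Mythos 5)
Your proof is correct and ultimately follows the paper's route: for part~i.\ the paper simply notes from Remark~\ref{Zm} that $Z_m$ is finitely generated with $Z_m^* = 0$, so $Z_m^1 = Z_m$ by Lemma~\ref{mod-AG}.i and Lemma~\ref{mod-AG}.iii gives finite-dimensionality (your variant, placing $Z_m$ inside $(((\mathbf{X}^{K_m})^\ast)_I)^1$ via Lemma~\ref{mod-AG}.i/iii applied to the ambient module, is interchangeable), and for part~ii.\ it cites $M/M^1 \cong M^{**}$ from Lemma~\ref{mod-AG}.ii together with Prop.~\ref{reflexive}. The Ext-computation detour you sketch first is unnecessary for this proposition (it would essentially re-prove Cor.~\ref{Zm-dual}, which comes later in the paper), and its conclusion via ``grade'' and Auslander conditions is left vague --- but you correctly pivot to the clean Lemma~\ref{mod-AG} argument, so the content is sound.
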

\begin{proof}
It follows from Remark \ref{Zm} that $Z_m^1 = Z_m$. Hence the assertion i. is implied by Lemma \ref{mod-AG}.iii. The assertion ii. follows from Prop.\ \ref{reflexive} and Lemma \ref{mod-AG}.ii.
\end{proof}

We establish in this section a group cohomological formula for the module $\Ext^1_H(Z_m,H)$. In the following all $I$-cohomology is the usual cohomology of profinite groups with discrete coefficients (cf.\ \cite{Se1}). In fact, all $I$-representations of which we will take the cohomology are such that they naturally are (discrete) modules over the completed group ring $k[[I]]$.

\begin{lemma}\label{resolution}
Let $M$ be an arbitrary discrete $k[[I]]$-module, and view $k$ as a trivial $k[[I]]$-module; we have:
\begin{itemize}
  \item[i.] If $\mathfrak{F}$ has characteristic zero then there exists an infinite exact sequence of $k[[I]]$-modules
  \begin{equation*}
    \ldots \longrightarrow P_i \longrightarrow \ldots \longrightarrow P_1 \longrightarrow P_0 \longrightarrow k \longrightarrow 0
  \end{equation*}
  where the $P_i$, for any $i \geq 0$, are finitely generated projective; moreover,
  \begin{equation*}
    H^i(I,M) = \Ext_{k[[I]]}^i(k,M) \qquad\textrm{for any $i \geq 0$};
  \end{equation*}
  \item[ii.] if $\mathfrak{F}$ has characteristic $p$ then there exists an exact sequence of $k[[I]]$-modules
  \begin{equation*}
    P_2 \longrightarrow P_1 \longrightarrow P_0 \longrightarrow k \longrightarrow 0
  \end{equation*}
  where the $P_i$, for $0 \leq i \leq 2$, are finitely generated projective; moreover,
  \begin{equation*}
    H^i(I,M) = \Ext_{k[[I]]}^i(k,M) \qquad\textrm{for $0 \leq i \leq 2$}.
  \end{equation*}
\end{itemize}
\end{lemma}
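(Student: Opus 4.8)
Write $\Lambda := k[[I]]$ for the completed group algebra. It is a pseudocompact local ring whose Jacobson radical is the augmentation ideal $J := \ker(\Lambda \to k)$ and whose residue field is $k$; hence every finitely generated projective $\Lambda$-module is free. For a finitely generated free $\Lambda$-module $P \cong \Lambda^n$ and a discrete $k[[I]]$-module $M$ one has $\Hom_\Lambda(P,M) = M^n$, and moreover every $\Lambda$-linear map $P \to M$ is automatically continuous, since it factors through $\Lambda^n \twoheadrightarrow k[I/U]^n$ for a suitable open normal subgroup $U \subseteq I$. Combining this with the classical identification of the continuous cohomology of a profinite group with the $\Ext$-groups over its completed group algebra (cf.\ \cite{Se1}), we see: once we have a resolution, resp.\ a partial resolution $P_j \to \cdots \to P_0 \to k \to 0$, of the trivial module $k$ by finitely generated projective (equivalently, free) $\Lambda$-modules, the complex $\Hom_\Lambda(P_\bullet,M)$ computes $H^\bullet(I,M)$ in the available range and yields $H^i(I,M) = \Ext^i_\Lambda(k,M)$ there. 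So both assertions reduce to constructing the stated (partial) free resolution of $k$.

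For i., the assumption that $\mathfrak{F}$ has characteristic zero means exactly that $I$ is a compact $p$-adic analytic group; consequently $\Lambda$ is left and right noetherian (one has a filtration of $\Lambda$ whose associated graded ring is a polynomial ring over $k$). Since $\Lambda$ is noetherian and local and $k$ is a cyclic $\Lambda$-module, I would build a free resolution $\cdots \to P_1 \to P_0 \to k \to 0$ recursively: take a finitely generated free cover $P_0$ of $k$, and for each $i$ a finitely generated free cover of the $i$-th syzygy, which is again finitely generated by noetherianity. This gives the required (in general infinite) resolution, and the comparison above delivers $H^i(I,M) = \Ext^i_\Lambda(k,M)$ for all $i \geq 0$.

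For ii., with $\mathfrak{F}$ of characteristic $p$ the group $I$ is no longer $p$-adic analytic, $\Lambda$ is not noetherian, and $k$ has infinite projective dimension over $\Lambda$ --- which is why one can only reach degree $2$. However $I$ is still topologically finitely generated, so we may take $P_0 := \Lambda \twoheadrightarrow k$ with kernel $J$, and $P_1 := \Lambda^{d} \twoheadrightarrow J$ with $d := \dim_{\mathbb{F}_p} I/\Phi(I)$. If in addition $I$ is topologically finitely presented, i.e.\ $\Omega := \ker(P_1 \to J)$ is finitely generated, then choosing $P_2$ a finitely generated free $\Lambda$-module mapping onto $\Omega$ yields the exact sequence $P_2 \to P_1 \to P_0 \to k \to 0$. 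The comparison of the first paragraph then gives $H^i(I,M) = \Ext^i_\Lambda(k,M)$ for $0 \leq i \leq 2$; here one uses that a length-two partial free resolution already computes $\Ext^i$ for $i \leq 2$, since $\Ext^2_\Lambda(k,M) = \Ext^1_\Lambda(J,M)$ is determined by the presentation $0 \to \Omega \to P_1 \to J \to 0$, and finite generation of $\Omega$ is precisely what makes $\Hom_\Lambda(\Omega,-)$ coincide with continuous homomorphisms.

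The main obstacle is therefore to establish, in the characteristic $p$ case, that $I$ is topologically finitely generated and finitely presented --- equivalently, that $\dim_k H^1(I,k)$ and $\dim_k H^2(I,k)$ are finite, in contrast to $H^i(I,k)$ for large $i$, which is infinite-dimensional. The delicate point is that via the Iwahori factorization $I$ is an iterated semidirect product of root subgroups isomorphic to $\mathfrak{O}$ or $\mathfrak{M}$ together with a torus factor $\cong 1+\mathfrak{M}$, none of which is topologically finitely generated, yet the group as a whole is finitely presented. I would prove this using the congruence filtration $I \supseteq I\cap K_2 \supseteq \cdots$, so that $I = \varprojlim_m I/(I\cap K_m)$ with each quotient a finite $p$-group, together with the Chevalley commutator relations among the root subgroups, which reduce a presentation to a bounded amount of data; alternatively one may invoke the known finite presentation of congruence subgroups of split reductive groups over $\mathbb{F}_q[[t]]$. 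Finally, the passage to an arbitrary coefficient field $k$ of characteristic $p$ is harmless: a finitely generated free resolution of $\mathbb{F}_p$ over $\mathbb{F}_p[[I]]$ yields one of $k$ over $k[[I]] = k\,\widehat{\otimes}_{\mathbb{F}_p}\mathbb{F}_p[[I]]$ on applying the exact, finiteness-preserving functor $k\,\widehat{\otimes}_{\mathbb{F}_p}(-)$.
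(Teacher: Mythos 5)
Your proposal is correct and follows essentially the same route as the paper: identify continuous cohomology with $\Ext$ over $k[[I]]$ via (partial) resolutions by finitely generated projectives, use noetherianity of $k[[I]]$ (Lazard) in the mixed-characteristic case, and use topological finite generation and finite presentation of $I$ (Lubotzky) in the equal-characteristic case to produce $P_0, P_1, P_2$. The only step you assert rather than argue is that finite presentation of $I$ forces the second syzygy $\ker(P_1 \to P_0)$ to be finitely generated as a $k[[I]]$-module; the paper derives this from the finiteness of $H^2(I,k)$ together with the fact that $k$ is the unique simple $k[[I]]$-module, but this is the standard equivalence you are implicitly invoking.
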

\begin{proof}
We first recall that the category $PC(k[[I]])$ of pseudocompact $k[[I]]$-modules is abelian and has enough projective objects. If we pick a projective resolution
\begin{equation*}
    \ldots \longrightarrow P'_i \longrightarrow \ldots \longrightarrow P'_1 \longrightarrow P'_0 \longrightarrow k \longrightarrow 0
\end{equation*}
of the trivial module in this category then
\begin{equation*}
  H^i(I,M) = h^i(\Hom_{PC(k[[I]])}(P'_\bullet,M)) \qquad\textrm{for any $i \geq 0$}
\end{equation*}
(\cite{Bru} Lemma 4.2(i)).
We also note that any finitely generated projective $k[[I]]$-module is a projective object in the category $PC(k[[I]])$ (\cite{Bru} Cor.\ 1.3).

i. In this situation $I$ is a $p$-adic analytic group, so that the ring $k[[I]]$ is noetherian (\cite{Laz} V.2.2.4). This immediately implies the existence of the asserted resolution, which also can be viewed as a projective resolution in the category of pseudocompact $k[[I]]$-modules. Hence $H^i(I,M) = h^i(\Hom_{k[[I]]}(P_\bullet,M)) = \Ext_{k[[I]]}^i(k,M)$.

ii. It is known from \cite{Lub} Thm.\ 1 that $I$ is a finitely presented pro-$p$ group, which implies (cf.\ \cite{Se1} I\S4.2+3) that the cohomology groups $H^i(I,\mathbb{F}_p)$ are finite for $0 \leq i \leq 2$. That this latter fact then implies the assertion is basically contained in \cite{Ple} Thm.\ 1.6. But, since we work with slightly more general coefficients, we give a simplified version of the argument for the convenience of the reader. We begin our partial resolution with the augmentation map $P_0 := k[[I]] \longrightarrow k \longrightarrow 0$. As a consequence of the finite generation of $I$ its kernel is a finitely generated left ideal in $k[[I]]$ (cf.\ \cite{pLG} Prop.\ 19.5.i). Hence we have an exact sequence of $k[[I]]$-modules
\begin{equation*}
  0 \longrightarrow \Omega_2 \longrightarrow P_1 \longrightarrow P_0 \longrightarrow k \longrightarrow 0
\end{equation*}
with $P_0$ and $P_1$ being finitely generated and projective. Correspondingly we may extend this partial resolution to a full projective resolution
\begin{equation*}
  \ldots \longrightarrow P'_i \xrightarrow{\;\partial_i\;} \ldots \xrightarrow{\;\partial_3\;} P'_2 \xrightarrow{\;\partial_2\;} P'_1 = P_1 \xrightarrow{\;\partial_1\;} P'_0 = P_0 \longrightarrow k \longrightarrow 0 \ .
\end{equation*}
of $k$ in the category $PC(k[[I]])$. We claim that the $k[[I]]$-module $\Omega_2$ necessarily is finitely generated. Since $k$ is the only simple $k[[I]]$-module (cf.\ \cite{pLG} Prop.\ 19.7) it suffices for this, by \cite{vdB} Lemma 4.17, to check that the $k$-vector space $\Hom_{PC(k[[I]])}(\Omega_2,k)$ is finite dimensional. But from the above full resolution we deduce the exact sequence
\begin{multline*}
  \Hom_{PC(k[[I]])}(P'_1,k) = \Hom_{k[[I]]}(P_1,k) \xrightarrow{\Hom_{PC(k[[I]])}(\partial_2,k)} \ker ( \Hom_{PC(k[[I]])}(\partial_3,k))  \\
  = \Hom_{PC(k[[I]])}(\Omega_2,k) \longrightarrow H^2(I,k) \longrightarrow 0 \ .
\end{multline*}
The first term is finite dimensional by the finite generation of $P_1$. Since cohomology commutes with arbitrary filtered direct limits of discrete modules (cf.\ \cite{Se1} I.2.2 Prop.\ 8) we see that also the last term $H^2(I,k) = H^2(I,\mathbb{F}_p) \otimes_{\mathbb{F}_p} k$ is finite dimensional. This establishes our claim, and we may choose the above full resolution in such a way that $P_2 := P'_2$ is finitely generated projective. Then $H^i(I,M) = h^i(\Hom_{PC(k[[I]])}(P'_\bullet,M)) = h^i(\Hom_{k[[I]]}(P_\bullet,M)) = \Ext_{k[[I]]}^i(k,M)$ for $0 \leq i \leq 2$.
\end{proof}

\begin{remark}\label{submod-acyc}
Any submodule of a $^\ast$-acyclic $H$-module is $^\ast$-acyclic.
\end{remark}
\begin{proof}
This is immediate from the long exact $\Ext$-sequence and the vanishing of $\Ext^2_H(.,H)$.
\end{proof}

\begin{proposition}\label{coh-formula}
For any $m \geq 1$ we have:
\begin{itemize}
  \item[a)] Suppose that $\mathfrak{F}$ has characteristic zero; for any $j \geq 0$, the $H$-module $H^j(I,\mathbf{X}^{K_m})$ is finitely generated and there is an exact sequence of $H$-modules
\begin{equation*}
  0 \longrightarrow \Ext^1_H(H^j(I,\mathbf{X}^{K_m}),H) \longrightarrow \Tor^{k[[I]]}_{j-1}(k,(\mathbf{X}^{K_m})^*) \longrightarrow H^{j-1}(I,\mathbf{X}^{K_m})^* \longrightarrow 0 \ ;
\end{equation*}
  \item[b)] suppose that $\mathfrak{F}$ has characteristic $p$; the $H$-module $H^j(I,\mathbf{X}^{K_m})$ is finitely generated for any $0 \leq j \leq 2$,  and there is an exact sequence of $H$-modules
\begin{equation*}
  0 \longrightarrow \Ext^1_H(H^1(I,\mathbf{X}^{K_m}),H) \longrightarrow ((\mathbf{X}^{K_m})^*)_I \longrightarrow H \longrightarrow 0 \ .
\end{equation*}
\end{itemize}
\end{proposition}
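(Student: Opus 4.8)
The statement is really a clean corollary of the machinery already assembled. The plan is to compute the $H$-modules $H^j(I,\mathbf{X}^{K_m})$ by means of the finite partial (resp.\ full) projective resolution $P_\bullet \longrightarrow k$ of the trivial $k[[I]]$-module provided by Lemma \ref{resolution}, and then to identify the cohomology of $\Hom_{k[[I]]}(P_\bullet,\mathbf{X}^{K_m})$ with a two-term complex whose terms are finitely generated $\ast$-acyclic $H$-modules by Corollary \ref{proj-res}. First I would fix $m$ and consider the complex $C^\bullet := \Hom_{k[[I]]}(P_\bullet,\mathbf{X}^{K_m})$, which by Lemma \ref{resolution} computes $H^j(I,\mathbf{X}^{K_m})$ (for all $j$ in case a), for $0\le j\le 2$ in case b). Each $C^j = \Hom_{k[[I]]}(P_j,\mathbf{X}^{K_m})$ is finitely generated and $\ast$-acyclic by Corollary \ref{proj-res}, and in particular each $C^j$ lies in $\mathcal{F}_G$-adjacent good behaviour: its $H$-submodules are again $\ast$-acyclic by Remark \ref{submod-acyc}. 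The cocycles $Z^j := \ker(C^j \to C^{j+1})$ and coboundaries $B^j := \im(C^{j-1}\to C^j)$ are thus submodules of $\ast$-acyclic modules, hence themselves $\ast$-acyclic, and finitely generated since $H$ is noetherian.

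Next, to get the exact sequences I would dualize. For case b), $H^1(I,\mathbf{X}^{K_m})$ sits in $0 \to B^1 \to Z^1 \to H^1(I,\mathbf{X}^{K_m}) \to 0$. Applying $\Hom_H(-,H)$ and using that $B^1$ and $Z^1$ are $\ast$-acyclic, the long exact $\Ext$-sequence collapses to
\begin{equation*}
  0 \longrightarrow H^1(I,\mathbf{X}^{K_m})^* \longrightarrow (Z^1)^* \longrightarrow (B^1)^* \longrightarrow \Ext^1_H(H^1(I,\mathbf{X}^{K_m}),H) \longrightarrow 0 \ .
\end{equation*}
The point is then to recognize the middle terms $(Z^1)^*$ and $(B^1)^*$. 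Using the dual resolution and the self-duality furnished by $\Hom_{k[[I]]}(P_j,-)^* \cong P_j \otimes_{k[[I]]} (-)^*$ on the relevant finitely generated projectives (the standard "Nakayama/duality" isomorphism for finitely generated projective $k[[I]]$-modules), one identifies the dual of the cochain complex $C^\bullet$ with the chain complex $P_\bullet \otimes_{k[[I]]} (\mathbf{X}^{K_m})^*$ computing $\Tor^{k[[I]]}_j(k,(\mathbf{X}^{K_m})^*)$; tracking cocycles and coboundaries through this identification yields, in case a), exactly the asserted sequence
\begin{equation*}
  0 \longrightarrow \Ext^1_H(H^j(I,\mathbf{X}^{K_m}),H) \longrightarrow \Tor^{k[[I]]}_{j-1}(k,(\mathbf{X}^{K_m})^*) \longrightarrow H^{j-1}(I,\mathbf{X}^{K_m})^* \longrightarrow 0 \ .
\end{equation*}
In case b) the relevant $\Tor_0^{k[[I]]}(k,(\mathbf{X}^{K_m})^*)$ is just the coinvariants $((\mathbf{X}^{K_m})^*)_I$, and $H^0(I,\mathbf{X}^{K_m})^* = (\mathbf{X}^I)^* = H$ by Proposition \ref{firstlevel} (together with the identification of $\mathbf{X}^I$ with $H$), so the $j=1$ sequence becomes the displayed one; the finiteness of $H^2(I,\mathbf{X}^{K_m})$ in case b) comes along for free since it is a subquotient of the finitely generated $C^2$.

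The finite generation of all $H^j(I,\mathbf{X}^{K_m})$ (for all $j$ in characteristic zero, for $j\le 2$ in characteristic $p$) is immediate once one knows the $C^j$ are finitely generated over the noetherian ring $H$. The main obstacle I anticipate is purely bookkeeping: carefully setting up the duality isomorphism between $\Hom_H(\Hom_{k[[I]]}(P_\bullet,\mathbf{X}^{K_m}),H)$ and $P_\bullet\otimes_{k[[I]]}(\mathbf{X}^{K_m})^*$ in a way that is functorial enough to match differentials, and checking that the connecting maps in the $\Ext$ long exact sequence line up with the natural maps relating $\Tor_{j-1}$, $H^{j-1}(I,\mathbf{X}^{K_m})^*$, and $\Ext^1_H(H^j(I,\mathbf{X}^{K_m}),H)$. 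The homological input — vanishing of $\Ext^{\ge 2}_H(-,H)$ (self-injective dimension $1$), $\ast$-acyclicity of cocycles and coboundaries via Remark \ref{submod-acyc} and Corollary \ref{proj-res} — is already in hand, so no new ideas are needed beyond this identification.
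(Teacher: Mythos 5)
Your proposal is correct and follows essentially the same route as the paper's proof: take the partial or full finitely generated projective resolution of $k$ from Lemma \ref{resolution}, observe via Cor.\ \ref{proj-res} and Remark \ref{submod-acyc} that the terms, cocycles and coboundaries of $\Hom_{k[[I]]}(P_\bullet,\mathbf{X}^{K_m})$ are finitely generated $\ast$-acyclic $H$-modules, and dualize to compare with the complex $(\mathbf{X}^{K_m})^*\otimes_{k[[I]]}P_\bullet$ computing $\Tor^{k[[I]]}_\bullet(k,(\mathbf{X}^{K_m})^*)$; the only place you are looser than the paper is the "tracking cocycles and coboundaries" step, where the paper explicitly assembles the two commuting diagrams from the dualized short exact sequences $0\to B_j^*\to\Hom_{k[[I]]}(P_{j-1},\mathbf{X}^{K_m})^*\to C_{j-1}^*\to 0$ and $0\to H^{j-1}(I,\mathbf{X}^{K_m})^*\to C_{j-1}^*\to B_{j-1}^*\to\Ext^1_H(H^{j-1}(I,\mathbf{X}^{K_m}),H)\to 0$, but this is exactly the bookkeeping you already flag as remaining and requires no new input.
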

\begin{proof}
We fix a resolution
\begin{equation*}
  \ldots \xrightarrow{\;\partial_2\;} P_1 \xrightarrow{\;\partial_1\;} P_0 \longrightarrow k \longrightarrow 0
\end{equation*}
of the trivial $k[[I]]$-module $k$ by projective $k[[I]]$-modules $P_i$. According to Lemma \ref{resolution} we may assume that $P_i$ is finitely generated for any $i \geq 0$ in case a), resp.\ for any $0 \leq i \leq 2$ in case b). For any left $k[[I]]$-module $M$ we have the natural homomorphism of $H$-modules
\begin{align*}
    \Hom_H(\mathbf{X}^{K_m},H) \otimes_{k[[I]]} M & \longrightarrow \Hom_H(\Hom_{k[[I]]}(M, \mathbf{X}^{K_m}),H) \\
    \alpha \otimes x & \longmapsto [\beta \mapsto \alpha (\beta (x))] \ .
\end{align*}
It is an isomorphism if $M$ is finitely generated projective. Hence we obtain
\begin{equation*}
  \Tor^{k[[I]]}_j(k,(\mathbf{X}^{K_m})^*) = h_j((\mathbf{X}^{K_m})^* \otimes_{k[[I]]} P_\bullet) = h_j(\Hom_{k[[I]]}(P_\bullet, \mathbf{X}^{K_m})^*)
\end{equation*}
for any $j \geq 0$, resp.\ for $j = 0,1$. On the other hand the complex $\Hom_{k[[I]]}(P_\bullet, \mathbf{X}^{K_m})$ computes the groups $\Ext^{\bullet}_{k[[I]])}(k,\mathbf{X}^{K_m})$. Let
\begin{align*}
  B_j & := \im \big(\Hom_{k[[I]]}(P_{j-1}, \mathbf{X}^{K_m}) \xrightarrow{\; d_j \;} \Hom_{k[[I]]}(P_j, \mathbf{X}^{K_m}) \big),   \\
  C_j & := \ker \big(\Hom_{k[[I]]}(P_j, \mathbf{X}^{K_m}) \xrightarrow{\; d_{j+1} \;} \Hom_{k[[I]]}(P_{j+1}, \mathbf{X}^{K_m}) \big), \text{and}  \\
  d_j & := \Hom_{k[[I]]}(\partial_j, \mathbf{X}^{K_m}).
\end{align*}
Using Lemma \ref{resolution} we see that we have short exact sequences of $H$-modules
\begin{equation*}
  0 \longrightarrow B_j \longrightarrow C_j \longrightarrow H^j(I,\mathbf{X}^{K_m}) \longrightarrow 0
\end{equation*}
for any $j \geq 0$, resp.\ for any $0 \leq j \leq 2$. By Cor.\ \ref{proj-res} and Remark \ref{submod-acyc} all $H$-modules
\begin{equation*}
  B_j \subseteq C_j \subseteq \Hom_{k[[I]]}(P_j, \mathbf{X}^{K_m})
\end{equation*}
are finitely generated and $^\ast$-acyclic. It follows that the $H$-module $H^j(I,\mathbf{X}^{K_m})$ is finitely generated and that we have the exact sequence of $H$-modules
\begin{equation}\label{f:cohom1}
  0 \longrightarrow H^j(I,\mathbf{X}^{K_m})^* \longrightarrow C_j^* \longrightarrow B_j^* \longrightarrow \Ext^1_H(H^j(I,\mathbf{X}^{K_m}),H) \longrightarrow 0 \ .
\end{equation}
Moreover, the short exact sequences
\begin{equation*}
  0 \longrightarrow C_{j-1} \longrightarrow \Hom_{k[[I]]}(P_{j-1}, \mathbf{X}^{K_m}) \longrightarrow B_j \longrightarrow 0
\end{equation*}
dualize into short exact sequences
\begin{equation}\label{f:cohom2}
  0 \longrightarrow B_j^* \longrightarrow \Hom_{k[[I]]}(P_{j-1}, \mathbf{X}^{K_m})^* \longrightarrow C_{j-1}^* \longrightarrow 0 \ .
\end{equation}
Combining \eqref{f:cohom1} for $j-1$ and \eqref{f:cohom2} for $j$ gives the commutative exact diagram
\begin{equation*}
  \xymatrix{
    &  & & 0 \ar[d]  &   \\
    0  \ar[r] & B_j^* \ar[d]^{=} \ar[r] & \ker(d_{j-1}^*) \ar[d]^{\subseteq} \ar[r] & H^{j-1}(I,\mathbf{X}^{K_m})^* \ar[d] \ar[r] & 0 \\
    0 \ar[r] & B_j^* \ar[r] & \Hom_{k[[I]]}(P_{j-1}, \mathbf{X}^{K_m})^* \ar[d]^{d_{j-1}^*} \ar[r] & C_{j-1}^* \ar[d] \ar[r] & 0  \\
    & & \Hom_{k[[I]]}(P_{j-2}, \mathbf{X}^{K_m})^* & B_{j-1}^*. \ar@{_{(}->}[l] &  }
\end{equation*}
So far this reasoning holds for any $j \geq 0$, resp.\ for any $0 \leq j \leq 2$. In addition, by combining \eqref{f:cohom1} for $j$ and \eqref{f:cohom2} for $j$ and $j+1$, we have the commutative diagram
\begin{equation*}
   \xymatrix{
    & C_j^* \ar[d] & \Hom_{k[[I]]}(P_j, \mathbf{X}^{K_m})^* \ar[d]^{d_j^*} \ar@{->>}[l] & &   \\
    0 \ar[r] & B_j^* \ar[d] \ar[r] & \Hom_{k[[I]]}(P_{j-1}, \mathbf{X}^{K_m})^* \ar[r] & C_{j-1}^* \ar[r] & 0  \\
    & \Ext^1_H(H^j(I,\mathbf{X}^{K_m}),H) \ar[d] & & & \\
    & 0 & & &
     }
\end{equation*}
for any $j \geq 0$, resp.\ for $j = 0,1$. This second diagram leads to the short exact sequence
\begin{equation*}
  0 \longrightarrow \im(d_j^*) \longrightarrow B_j^* \longrightarrow \Ext^1_H(H^j(I,\mathbf{X}^{K_m}),H) \longrightarrow 0
\end{equation*}
which together with the top row of the first diagram imply the asserted exact sequences since $\Tor^{k[[I]]}_{j-1}(k,(\mathbf{X}^{K_m})^*) = \ker(d_{j-1}^*)/\im(d_j^*)$.
\end{proof}

\begin{corollary}\label{Zm-dual}
For any $m \geq 1$ we have
\begin{equation*}
  Z_m = \Ext^1_H(H^1(I,\mathbf{X}^{K_m}),H) = \Ext^1_H(H^1(I,\mathbf{X}^{K_m})^1,H)
\end{equation*}
as $H$-modules.
\end{corollary}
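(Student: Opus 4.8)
The goal is to identify $Z_m$ with $\Ext^1_H(H^1(I,\mathbf{X}^{K_m}),H)$, and further with $\Ext^1_H(H^1(I,\mathbf{X}^{K_m})^1,H)$. The plan is to combine the defining exact sequence for $Z_m$ with the exact sequence produced in Proposition \ref{coh-formula}.b).

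First I would recall that, by Corollary \ref{split} (or equivalently the validity of \sur established in \S\ref{hyposur}), the module $Z_m$ sits in the short exact sequence
\begin{equation*}
  0 \longrightarrow Z_m \longrightarrow ((\mathbf{X}^{K_m})^\ast)_I \longrightarrow H \longrightarrow 0 \ .
\end{equation*}
On the other hand, case b) of Proposition \ref{coh-formula} (which applies since $\mathbf{G}$ is $\mathbf{SL_2}$ or $\mathbf{PGL_2}$, and covers both $\mathfrak{F}$ of characteristic $p$ and, via part a) with $j=1$ together with $H^0(I,\mathbf{X}^{K_m})^*=(\mathbf{X}^I)^*=H$ and $\Tor_0^{k[[I]]}(k,(\mathbf{X}^{K_m})^*)=((\mathbf{X}^{K_m})^*)_I$, also $\mathfrak{F}$ of characteristic zero) gives the exact sequence
\begin{equation*}
  0 \longrightarrow \Ext^1_H(H^1(I,\mathbf{X}^{K_m}),H) \longrightarrow ((\mathbf{X}^{K_m})^*)_I \longrightarrow H \longrightarrow 0 \ .
\end{equation*}
The main point is then to check that the surjection $((\mathbf{X}^{K_m})^*)_I \longrightarrow H$ appearing here is the same map as the one in the defining sequence for $Z_m$, namely the one induced by $(\subseteq^\ast)_I : ((\mathbf{X}^{K_m})^\ast)_I \to (\mathbf{X}^I)^\ast = H$ dual to the inclusion $\mathbf{X}^I \subseteq \mathbf{X}^{K_m}$. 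Unwinding the construction in the proof of Proposition \ref{coh-formula}: the map $((\mathbf{X}^{K_m})^*)_I = \Tor_0^{k[[I]]}(k,(\mathbf{X}^{K_m})^*) \to H^0(I,\mathbf{X}^{K_m})^* = (\mathbf{X}^{K_m})^{I\,*}$ is exactly the canonical projection to coinvariants dualized against the inclusion of invariants, and $(\mathbf{X}^{K_m})^I = \mathbf{X}^I$, so both maps coincide. Hence, by the kernel characterization, $Z_m \cong \Ext^1_H(H^1(I,\mathbf{X}^{K_m}),H)$ canonically. I expect this identification of the two surjections to be the only genuinely delicate step; it is essentially a diagram chase through the definitions in \eqref{defiZm} and the proof of Proposition \ref{coh-formula}.

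For the second equality, I would invoke Lemma \ref{mod-AG}. Write $N := H^1(I,\mathbf{X}^{K_m})$, which is finitely generated by Proposition \ref{coh-formula}. By Lemma \ref{mod-AG}.iv the quotient $N/N^1$ is $^\ast$-acyclic, so $\Ext^1_H(N/N^1,H)=0$, and the short exact sequence $0 \to N^1 \to N \to N/N^1 \to 0$ yields, via the long exact $\Ext$-sequence and the vanishing of $\Ext^2_H(-,H)$ (self-injective dimension $1$, \cite{OS} Thm.\ 0.2), an exact sequence
\begin{equation*}
  (N/N^1)^* \longrightarrow N^* \longrightarrow (N^1)^* \longrightarrow \Ext^1_H(N/N^1,H) = 0
\end{equation*}
together with $\Ext^1_H(N,H) \xrightarrow{\ \cong\ } \Ext^1_H(N^1,H)$. (Here $(N^1)^* = 0$ in fact, since $N^1$ is finite dimensional over $k$ by Lemma \ref{mod-AG}.iii, but only the isomorphism on $\Ext^1$ is needed.) This gives $Z_m = \Ext^1_H(N,H) = \Ext^1_H(N^1,H)$, completing the proof.

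Minor cross-check: both equalities are natural in the sense that they are isomorphisms of $H$-modules — this is automatic since every arrow in sight is $H$-linear, but it is worth remarking since $Z_m$ carries its $H$-module structure from $((\mathbf{X}^{K_m})^\ast)_I$, which is the same structure entering Proposition \ref{coh-formula}. No serious obstacle remains once the two surjections to $H$ are matched up.
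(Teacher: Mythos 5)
Your proof is correct and follows the same route as the paper: both equalities come from Proposition \ref{coh-formula} (case $j=1$) combined with the defining sequence for $Z_m$ from Corollary \ref{split}, and the second equality from Lemma \ref{mod-AG}.iv via the long exact $\Ext$-sequence. The paper's own proof is more terse (leaving implicit the matching of the two surjections onto $H$, which you correctly identify as the one point requiring a check), but there is no substantive difference in approach.
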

\begin{proof}
The left equality is Prop.\ \ref{coh-formula} for $j=1$. The right equality follows from Prop.\ \ref{coh-formula} and Lemma \ref{mod-AG}.iv.
\end{proof}

Note that since $\mathbf G$ is semisimple of rank $1$ we know, by \cite{OS} Thm.\ 0.2 that
for any left (resp. right) $H$-module of finite length $\mathfrak m$, there is a natural isomorphism of right (resp. left) $H$-modules $$\Ext_H^1(\mathfrak{m},H) \cong \Hom_k(\upiota^\ast\mathfrak{m},k)$$  where  $\upiota^\ast\mathfrak{m}$ denotes the $H$-module $\mathfrak m$  with the action on $H$ twisted by a certain  involutive automophism $\upiota$ of $H$. The explicit formula for $\upiota$ is recalled in the current article in \S\ref{involution} in the case when $\mathbf{G} = \mathbf{SL_2}$. The corollary above can  therefore be formulated as follows: \begin{equation}\label{f:dual1}
  Z_m \cong  \Hom_k(\upiota ^*H^1(I,\mathbf{X}^{K_m})^1,k)
\end{equation}
or, dually:
\begin{equation}\label{f:dual2}
\Ext^1_H(Z_m,H)\cong  \Hom_k(\upiota ^*Z_m,k)\cong   H^1(I,\mathbf{X}^{K_m})^1.
\end{equation}

\section{\label{sec:3}The case $\mathbf{SL_2}(\mathfrak{F})$}

We keep the notations of the previous section as well as the assumption that $k$ has characteristic $p$, but we restrict to the case $G = \mathbf{SL_2}(\mathfrak{F})$.

\subsection{\label{rootdatum}Root datum}

To fix ideas we consider $K = \mathbf{SL_2}(\mathfrak{O})$ and $I = \left(
\begin{smallmatrix}
1+\mathfrak{M} & \mathfrak{O} \\ \mathfrak{M} & 1+\mathfrak{M}
\end{smallmatrix}
\right)$ (by abuse of notation all matrices are understood to have determinant one). We let $T \subseteq G$ be the torus of diagonal matrices, $T^0$ its maximal compact subgroup, $T^1$ its maximal pro-$p$ subgroup, and $N(T)$ the normalizer of $T$ in $G$. We choose the positive root with respect to $T$ to be $\alpha( \left(
\begin{smallmatrix}
t & 0 \\ 0 & t^{-1}
\end{smallmatrix}
\right) ) := t^2$, which corresponds to the Borel subgroup of upper triangular matrices. The affine Weyl group $W$ sits in the short exact sequence
\begin{equation*}
  0 \longrightarrow \Omega := T^0 / T^1 \longrightarrow \widetilde{W} := N(T)/T^1 \longrightarrow W := N(T)/T^0 \longrightarrow 0 \ .
\end{equation*}
Let $s_0 := s_\alpha := \left(
\begin{smallmatrix}
0 & 1 \\ -1 & 0
\end{smallmatrix}
\right)$, $s_1 := \left(
\begin{smallmatrix}
0 & -\pi^{-1} \\ \pi & 0
\end{smallmatrix}
\right)$, and $\theta := \left(
\begin{smallmatrix}
\pi & 0 \\ 0 & \pi^{-1}
\end{smallmatrix}
\right)$, such that $s_0 s_1 = \theta$. The images of $s_0$ and $s_1$ in $W$ are the two reflections corresponding to the two vertices of $C$ which generate $W$, i.e., we have $W = \langle s_0,s_1 \rangle = \theta^{\mathbb{Z}} \dot\cup s_0 \theta^{\mathbb{Z}}$ (by abuse of notation we do not distinguish in the notation between a matrix and its image in $W$ or $\widetilde{W}$). We let $\ell$ denote the length function on $W$ corresponding to these generators as well as its pull-back to $\widetilde{W}$. One has
\begin{equation*}
  \ell(\theta^i) = |2i| \qquad\text{and}\qquad \ell(s_0\theta^i) = |1 - 2i|. 
 \end{equation*}

\begin{remark}\label{remark:N}
Consider $\mathbf{SL_2}(\mathfrak F)$ as a subgroup of $\mathbf{GL_2}(\mathfrak F)$. Then
the matrix $N:=
\left(\begin{smallmatrix}
       0 & 1 \\
       \pi & 0
\end{smallmatrix}\right)$ normalizes $I$; furthermore, $s_1=N  s_0  N^{-1}.$
\end{remark}

\subsection{The pro-$p$ Iwahori-Hecke algebra $H$}\label{sec:relations}
\subsubsection{\label{gene-rel}Generators and relations}
The characteristic functions $\tau_w := \mathrm{char}_{IwI}$ of the double cosets $IwI$ form a $k$-basis of $H$.
Let $e_1 := - \sum_{\omega \in \Omega} \tau_\omega$. The relations in $H$ are
\begin{equation}\label{f:braid}
\tau_v\tau _w=\tau_{vw} \qquad\textrm{whenever}\ \ell(v)+\ell(w)=\ell(vw)
\end{equation}
and
\begin{equation}
\label{f:quad} \tau_{s_i}^2= -e_1\tau_{s_i} \qquad\textrm{for $i=0,1$.}
\end{equation}The elements $\tau_\omega \tau_{s_i}$, for $\omega \in \Omega$ and $i = 0,1$, generate $H$ as a $k$-algebra.  Note that the $k$-algebra $k[\Omega]$ identifies naturally with a subalgebra of  $H$ via $\omega\mapsto \tau_\omega$.

\begin{proof}[Proof of \eqref{f:braid} and \eqref{f:quad}]
Needing later on some of its details, we recall the proof of these relations. The braid relation \eqref{f:braid} is clear when $\ell(v)=0$. So, by induction, it is enough to verify it when $v = s \in \{s_0, s_1\}$ and $\ell(s)+\ell(w)=\ell(sw)$. By \cite{Vig} Lemma 5(i), the support of $\tau_s\tau_w$ is $I sw I$ and, by definition of the convolution product in $H$,  its value at $sw$ is $|I s I \cap swI w^{-1}I/I|\cdot 1_k$. By \cite{IM} \S 3.1, we have $I' s I' \cap swI' w^{-1} I' = sI'$ for the Iwahori subgroup $I \subseteq I' \subseteq K$. Therefore, for $x\in I$ such that $xs\in sw I w^{-1} I \subset sw I' w^{-1} I'$ we have $x\in sI' s^{-1} \cap I$. But the latter is a pro-$p$ subgroup of $s I' s^{-1}$ and therefore $x \in sIs^{-1} \cap I$. We proved that $I s I \cap swI w^{-1}I = sI$ and hence that $\tau_s\tau_w = \tau_{sw}$. Now we turn to the quadratic relations \eqref{f:quad}.

For any $z \in \mathfrak{O}/\mathfrak{M}$ let $[z] \in \mathfrak{O}$ denote its Teichm\"uller representative (cf.\ \cite{Se0} II.4 Prop.\ 8). We parameterize $\Omega$ by the isomorphism
\begin{align*}
  (\mathfrak{O}/\mathfrak{M})^\times & \xrightarrow{\;\cong\;} \Omega \\
  z & \longmapsto \omega_z := \left(
\begin{smallmatrix}
  [z]^{-1} & 0 \\
  0 & [z]
\end{smallmatrix}
\right) T^1 \ ,
\end{align*}
and we put
\begin{equation}\label{f:u}
  u^+_\omega := \left(
\begin{smallmatrix}
  1 & [z] \\
  0 & 1
\end{smallmatrix}
\right) \ , \
u^-_\omega := \left(
\begin{smallmatrix}
  1 & 0 \\
  \pi [z]^{-1} & 1
\end{smallmatrix}
\right)
  \qquad\text{if $\omega = \omega_z$}.
\end{equation}
We compute
\begin{align*}
  s_0Is_0I & = \left(
  \begin{smallmatrix}
  1 & 0 \\
  \mathfrak{O} & 1
  \end{smallmatrix}
  \right)
  \left(
  \begin{smallmatrix}
  -1 & 0 \\
  0 & -1
  \end{smallmatrix}
  \right) I = \dot\bigcup_{z \in \mathfrak{O}/\mathfrak{M}} \left(
  \begin{smallmatrix}
  1 & 0 \\
  [z] & 1
  \end{smallmatrix}
  \right)
  \left(
  \begin{smallmatrix}
  -1 & 0 \\
  0 & -1
  \end{smallmatrix}
  \right) I \\
  & = I
  \left(
  \begin{smallmatrix}
  -1 & 0 \\
  0 & -1
  \end{smallmatrix}
  \right) \, \dot\cup \; \dot\bigcup_{z \in (\mathfrak{O}/\mathfrak{M})^\times} \left(
  \begin{smallmatrix}
  1 & 0 \\
  [z]^{-1} & 1
  \end{smallmatrix}
  \right)
  \left(
  \begin{smallmatrix}
  -1 & 0 \\
  0 & -1
  \end{smallmatrix}
  \right) I
   \\
   & = I
   \left(
  \begin{smallmatrix}
  -1 & 0 \\
  0 & -1
  \end{smallmatrix}
  \right)
   \, \dot\cup \; \dot\bigcup_{z \in (\mathfrak{O}/\mathfrak{M})^\times} \left(
  \begin{smallmatrix}
  1 & [z] \\
  0 & 1
  \end{smallmatrix}
  \right)
  \left(\begin{smallmatrix}
  [z] & 0 \\
  0 & [z]^{-1}
  \end{smallmatrix}
  \right) s_0
   I \\
   & = I s_0^2 \; \dot\cup \; \dot\bigcup_{\omega \in \Omega} u^+_\omega s_0 \omega I
\end{align*}
and hence obtain
\begin{equation}\label{f:supports0}
  s_0Is_0I \subseteq Is_0^2 I \; \dot\cup \; \dot\bigcup_{\omega \in \Omega} I s_0 \omega I \ .
\end{equation}
The support of $\tau_{s_0}^2$ is contained in $I s_0 Is_0I$. Its value at $h \in G$ is equal to $|I s_0 I \cap hIs_0^{-1}I| \cdot 1_k$. For $h = s_0^2$ this value is equal to $|Is_0I/I| \cdot 1_k = [I : s_0 I s_0^{-1}] \cdot 1_k = q \cdot 1_k = 0$.  The calculation above shows that we have $I s_0 I \cap s_0 \omega Is_0^{-1}I = u^+_{s_0^2 \omega} s_0 I$ and hence that the value for $h = s_0 \omega$ is equal to $1_k$. This proves \eqref{f:quad} for $s = s_0$. After  conjugating by the matrix $N$ of Remark \ref{remark:N}, we get from the previous identities that
\begin{equation*}
s_1I s_1I= I s_1^2 \; \dot\cup \; \dot\bigcup_{\omega \in \Omega} u^-_\omega s_1 \omega I
\end{equation*}
and
\begin{equation}\label{f:supports1}
  s_1Is_1I \subseteq Is_1^2 I \; \dot\cup \; \dot\bigcup_{\omega \in \Omega} I s_1 \omega I
\end{equation}
which yields the quadratic relation for $s_1$.
\end{proof}

\subsubsection{The central element $\zeta$}\label{sec:zeta}

Consider the element  $\zeta := (\tau_{s_0} + e_1)(\tau_{s_1} + e_1) + \tau_{s_1}\tau_{s_0}$. It is central in $H$.
\begin{remark}\label{explicit-identities}
Note that $\zeta= (\tau_{s_1} + e_1)(\tau_{s_0} + e_1) + \tau_{s_0}\tau_{s_1}$. The following identities will be used in the proof of Lemmas \ref{zeta-basis} and \ref{modulo}. For any $i \geq 0$ we have:
\begin{itemize}
\item[i.]   $\zeta^i= ((\tau_{s_0} + e_1)(\tau_{s_1} + e_1))^i + (\tau_{s_1}\tau_{s_0}) ^i =  ((\tau_{s_1} + e_1)(\tau_{s_0} + e_1))^i + (\tau_{s_0}\tau_{s_1})^i$ if $i \geq 1$.
  \item[ii.] $\zeta^i \tau_{s_0} = \tau_{s_0} (\tau_{s_1} \tau_{s_0})^i$ and $\zeta^i \tau_{s_1} = \tau_{s_1} (\tau_{s_0} \tau_{s_1})^i$;
  \item[iii.] $\zeta^{i+1} = \zeta^i (\tau_{s_0} + 1)e_1 + (\zeta^i\tau_{s_1})(\tau_{s_0} + e_1) + \tau_{s_0}(\zeta^i \tau_{s_1}) = \zeta^i (\tau_{s_1} + 1)e_1 + (\zeta^i\tau_{s_0})(\tau_{s_1} + e_1) + \tau_{s_1} (\zeta^i \tau_{s_0})$.
\end{itemize}
\end{remark}
\begin{proof}
i. and ii. By the quadratic relations \eqref{f:quad} we have $\tau_{s_0}(\tau_{s_0} + e_1)(\tau_{s_1} + e_1) = (\tau_{s_0} + e_1)(\tau_{s_1} + e_1)\tau_{s_1} = 0$. This implies, for $i\geq 1$, that $\zeta^i= ((\tau_{s_0} + e_1)(\tau_{s_1} + e_1))^i + (\tau_{s_1}\tau_{s_0}) ^i$ and $\zeta^i  \tau_{s_1}= (\tau_{s_1}\tau_{s_0}) ^i\tau_{s_1}=\tau_{s_1}(\tau_{s_0}\tau_{s_1})^i$. Likewise, since  $\zeta= (\tau_{s_1} + e_1)(\tau_{s_0} + e_1) + \tau_{s_0}\tau_{s_1}$, we obtain $\zeta^i= ((\tau_{s_1} + e_1)(\tau_{s_0} + e_1))^i + (\tau_{s_0}\tau_{s_1})^i$ and $\zeta^i \tau_{s_0}=(\tau_{s_0}\tau_{s_1})^i\tau_{s_0}=\tau_{s_0}(\tau_{s_1}\tau_{s_0})^i$
for $i\geq 1$.

iii. We compute $\zeta^{i+1} = \zeta^i \zeta = \zeta^i(\tau_{s_0}\tau_{s_1} + \tau_{s_1}\tau_{s_0} + (\tau_{s_0} + \tau_{s_1} + 1)e_1) = \zeta^i\tau_{s_1}(\tau_{s_0} + e_1) + \zeta^i \tau_{s_0}\tau_{s_1} + \zeta^i (\tau_{s_0} + 1)e_1 = \zeta^i\tau_{s_1}(\tau_{s_0} + e_1) + \tau_{s_0} \zeta^i \tau_{s_1} + \zeta^i (\tau_{s_0} + 1)e_1$ and correspondingly $\zeta^{i+1} = \zeta^i\tau_{s_0}(\tau_{s_1} + e_1) + \zeta^i \tau_{s_1}\tau_{s_0} + \zeta^i (\tau_{s_1} + 1)e_1 = \zeta^i\tau_{s_0}(\tau_{s_1} + e_1) + \tau_{s_1} \zeta^i \tau_{s_0} + \zeta^i (\tau_{s_1} + 1)e_1$.
\end{proof}

By Prop.\ \ref{H-free}.i, $H$ is a free left $H_{x_0}$-module  as well as a free left $H_{x_1}$-module. Explicitly:

\begin{lemma}\label{zeta-basis}
 Let $\epsilon = 0$ or $1$; as a left $H_{x_\epsilon}$-module, $H$ is free with basis $\{\zeta^i\tau_{s_{1-\epsilon}}, \zeta^i, \: i\geq 0\}$.
\end{lemma}
\begin{proof}
The other case being completely analogous we only give the argument for $\epsilon = 0$. An explicit basis is given in \cite{OS} Prop.\ 4.21 and its proof. It is the set $ \{\tau_{d}, d\in \D\}$ where $\D=\{(s_1 s_0)^i, (s_1s_0)^is_1, i\geq 0 \}$. First recall that $\zeta^i\tau_{s_1}=\tau_{(s_1s_0)^i s_1}$ for $i\geq 0$ by Remark \ref{explicit-identities}.ii. Then the lemma follows from the following fact: For $i\geq 1$,  the element $\zeta ^i$ decomposes into the sum of $\tau_{(s_1s_0)^i}$ and of an element in $\oplus_{d\in \D, \ell(d)< 2i }\:\: H_{x_0} \tau_{d}$.   We prove this by induction. For $i=1$, compute $\zeta\in \tau_{s_1s_0}+ H_{x_0}+ H_{x_0} \tau_{s_1}$.
Suppose the fact holds true for some $i \geq 1$. Then $\zeta^{i+1}\in \tau_{(s_1s_0)^i} \zeta+\sum_{\ell(d)< 2i } H_{x_0} \tau_d \zeta=\tau_{(s_1s_0)^{i+1}}+\sum_{\ell(d)< 2i} H_{x_0} \tau_d \zeta$. For $d$ such that $\ell(d)<2i$, we have either $d= (s_1s_0)^j$ with $j< i$ in which case $\tau_d\zeta= \tau_{(s_1s_0)^{j+1}}$ and $\ell((s_1s_0)^{j+1}) = 2(j+1)< 2(i+1)$, or  $d= (s_1s_0)^j s_1$ with $2j+1< 2i$ in which case $\tau_d\zeta= \tau_{(s_1s_0)^{j+1}}\tau_{s_1}$ and $\ell((s_1s_0)^{j+1}s_1) = 2(j+1)+1< 2(k+1)$. So the fact is true for $i+1$.
\end{proof}

The lemma implies in particular that the subalgebra $k[\zeta]$ of $H$ generated by $\zeta$ is the algebra of polynomials in the variable $\zeta$. In general, the center of $H$ contains strictly the subalgebra $k[\zeta]$. We describe the full center of $H$ in Remark \ref{rema-center}.
The lemma also implies that $\zeta$ is not a zero divisor in $H$ and that the $k$-algebra $H/H\zeta$ is finite dimensional.

\begin{corollary}\label{isobimo}
Let $\epsilon = 0$ or $1$; the morphism of  $(H_{x_\epsilon},k[\zeta])$-bimodules
\begin{equation*}
\begin{array}{ccccc}
H_{x_\epsilon} \otimes_k k[\zeta] & \oplus & H_{x_\epsilon} \otimes_k k[\zeta] & \longrightarrow & H \cr 1 \otimes 1 &&& \longmapsto & 1\cr & & 1\otimes 1 & \longmapsto & \tau_{s_{1-\epsilon}}
\end{array}
\end{equation*}
is an isomorphism.
\end{corollary}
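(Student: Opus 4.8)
\emph{Proof proposal.} The plan is to recognize this statement as essentially a reformulation of Lemma \ref{zeta-basis}. Write $\varphi$ for the asserted map. Extending the prescribed values by the bimodule structure, $\varphi$ is given on the first summand by $h \otimes \zeta^i \longmapsto h\zeta^i$ and on the second summand by $h \otimes \zeta^i \longmapsto h\tau_{s_{1-\epsilon}}\zeta^i$, for $h \in H_{x_\epsilon}$ and $i \geq 0$.

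First I would check that $\varphi$ is a homomorphism of $(H_{x_\epsilon},k[\zeta])$-bimodules. Left $H_{x_\epsilon}$-linearity is immediate, since the left action on each summand only affects the tensor factor $H_{x_\epsilon}$ and $\varphi$ is additive. Right $k[\zeta]$-linearity uses that $\zeta$ is central in $H$, so that any polynomial in $\zeta$ commutes past $\tau_{s_{1-\epsilon}}$: for $f,g \in k[\zeta]$ one computes on the second summand $\varphi(h \otimes fg) = h\tau_{s_{1-\epsilon}}fg = (h\tau_{s_{1-\epsilon}}f)g = \varphi(h \otimes f)\,g$, and likewise, more simply, on the first summand.

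Next, bijectivity. Since $k[\zeta]$ is a polynomial ring in $\zeta$ (Lemma \ref{zeta-basis} and the remark following it), each $H_{x_\epsilon} \otimes_k k[\zeta]$ is a free left $H_{x_\epsilon}$-module on $\{1 \otimes \zeta^i : i \geq 0\}$; hence the source of $\varphi$ is a free left $H_{x_\epsilon}$-module on the union of these two bases. Using $\tau_{s_{1-\epsilon}}\zeta^i = \zeta^i\tau_{s_{1-\epsilon}}$, the map $\varphi$ carries this basis onto the set $\{\zeta^i,\ \zeta^i\tau_{s_{1-\epsilon}} : i \geq 0\}$, which by Lemma \ref{zeta-basis} is a basis of $H$ as a left $H_{x_\epsilon}$-module. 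Therefore $\varphi$ is a bijective left $H_{x_\epsilon}$-linear map, and being also right $k[\zeta]$-linear it is an isomorphism of $(H_{x_\epsilon},k[\zeta])$-bimodules.

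There is no serious obstacle here; the only points requiring a little care are the verification of right $k[\zeta]$-linearity, which is exactly where the centrality of $\zeta$ is used, and the observation that the source is free over $H_{x_\epsilon}$ because $k[\zeta]$ is a polynomial algebra, so that Lemma \ref{zeta-basis} applies verbatim.
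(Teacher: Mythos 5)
Your proposal is correct and follows exactly the route the paper intends: Corollary \ref{isobimo} is stated without a separate proof, as an immediate consequence of Lemma \ref{zeta-basis}, and your argument fills in precisely the standard details (bimodule-linearity via centrality of $\zeta$, and bijectivity by matching the $H_{x_\epsilon}$-basis $\{1\otimes\zeta^i\}$ of the source with the basis $\{\zeta^i,\zeta^i\tau_{s_{1-\epsilon}}\}$ of $H$ from the lemma).
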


\subsubsection{Involution}\label{involution}

Using \eqref{f:braid} and \eqref{f:quad}, one checks that here is a unique well-defined involutive automorphism of  the $k$-algebra $H$ defined by
$\upiota(\tau_\omega) = \tau_\omega$ and $\upiota(\tau_{s_i}) = -e_1 - \tau_{s_i}$ for $i\in\{0,1\}$. This is the involution denoted by $\iota_C$ in \cite{OS}.

\subsubsection{Idempotents}\label{sec:idempo}

The element  $e_1$ is a central idempotent in $H$. More generally, to any $k$-character $\lambda\in \Omega \rightarrow k^\times$ of $\Omega$, we  associate the following idempotent in $H$:
\begin{equation}\label{defel}
\ e_\lambda := - \sum_{\omega \in \Omega} \lambda(\omega^{-1}) \tau_\omega \ .
\end{equation}
Note that $e_\lambda\tau_\omega = \tau_\omega e_\lambda=\lambda(\omega)e_\lambda$ for any $\omega\in \Omega$. Suppose  for a moment that  $\mathbb{F}_q \subseteq k$.
Then all simple modules of $k[\Omega]$ are one dimensional.  The set $\widehat{\Omega}$ of all $k$-characters of $\Omega$ has cardinality $q-1$ which is prime to $p$. This implies that the family  $\{e_\lambda\}_\lambda\in \widehat\Omega$ is a family of orthogonal idempotents with sum equal to $1$. It  gives the following ring decomposition of   $k[\Omega] $:
\begin{equation}\label{f:idempotentsOmega}
  k[\Omega] = \prod_{\lambda \in \widehat{\Omega}} k e_\lambda
\end{equation}
 Let $\Gamma := \{ \{\lambda,\lambda^{-1}\} : \lambda \in \widehat{\Omega} \}$ denote the set of $s_0$-orbits in $\widehat{\Omega}$. To
 $\gamma\in \Gamma$ we attach the element $e_\gamma:=e_\lambda+e_{\lambda^{-1}}$ (resp.  $e_\gamma:=e_\lambda$) if $\gamma=\{\lambda,\lambda^{-1}\}$ with $\lambda\neq\lambda^{-1}$ (resp. $\gamma=\{\lambda\}$). Using the braid relations, one sees that $e_\gamma$ is a central idempotent in $H$ and we have the ring  decomposition
\begin{equation} \label{f:idempotents}
  H = \prod_{\gamma \in \Gamma} H e_\gamma.
\end{equation}

\begin{remark}\label{rema-center}
Consider the $k$-linear map $E:k[\mathbb Z\times \Omega]\longrightarrow H$  defined by
\begin{equation*}
  (i,\omega) \mapsto
  \begin{cases}
  \tau_\omega (\tau_{s_0}\tau_{s_1})^i & \textrm{if $i \geq 0$}, \\
    \tau_\omega((\tau_{s_1}+e_1)(\tau_{s_0}+e_1))^{ -i} & \textrm{if $i < 0$}.
  \end{cases}
\end{equation*}
 $$$$
By \cite{Vig} Thm. 2, it is injective and its image is a commutative subalgebra $A$ of $H$. The multiplication in $A$ is given by the following rule, for $i,j \in \mathbb Z$ and $\omega, \omega' \in \Omega$:
\begin{equation}\label{product}
   E(i,\omega)E(j, \omega' )=
\begin{cases}
   0 & \textrm{if $ij < 0$}, \\
   E(i+j, \omega \omega') & \textrm{if $ij \geq 0$}.
\end{cases}
\end{equation}
This shows that the $k$-linear automorphism of $A$ sending $E(i, \omega)$ to $E(-i, \omega^{-1})$ respects the product. By \cite{Vig} Thm. 4, the center $Z(H)$ of $H$ is the subring of those elements in $A$ which are invariant under this automorphism. The following family is a system of generators (but not a basis) for the $k$-vector space $Z(H)$:
\begin{equation}\label{elements-center}
  E(i, \omega) + E(-i, \omega^{-1})  \quad\textrm{for $\omega \in \Omega$ and $i \geq 0$ (and $\tau_1$ if $p=2$)}.
\end{equation}

In the case when $\mathbb F_q \subseteq k$ we describe the ring structure of $Z(H)$. It is the direct product of the centers $Z(e_\gamma H) = e_\gamma Z(H)$ of $e_\gamma H$, where $\gamma$ ranges over $\Gamma$. For any $\lambda \in \widehat\Omega$ with orbit $\gamma$, we put
\begin{equation*}
  X_\lambda :=  e_\lambda E(1, 1)+e_{\lambda^{-1}}E(-1, 1)
\end{equation*}
in $e_\gamma H$. Using \eqref{product} we see that
\begin{equation*}
  X_\lambda^i = e_\lambda E(i, 1) + e_{\lambda^{-1}}E(-i, 1)  \qquad\text{for any $i\geq 1$}
\end{equation*}
and that
\begin{equation*}
  e_\gamma(  E(i, \omega)+E(-i, \omega^{-1})) =
  \begin{cases}
  \lambda(\omega) \zeta ^i e_\gamma & \textrm{if $i \geq 1$ and $\gamma=\{\lambda\}$}, \\
   2 \lambda(\omega)  e_\gamma & \textrm{if $i=0$ and $\gamma = \{\lambda\}$}, \\
   \lambda(\omega) X_{\lambda}^i+ \lambda(\omega^{-1}) X_{\lambda^{-1}}^i & \textrm{if $i\geq 1$ and  $\gamma = \{\lambda \neq \lambda^{-1}\}$}, \\
   (\lambda(\omega) + \lambda(\omega^{-1})) e_\gamma & \textrm{if $i=0$ and $\gamma = \{\lambda \neq \lambda^{-1}\}$},
   \end{cases}
\end{equation*}
for any $\omega \in \Omega$.

Suppose that $\gamma = \{\lambda\}$ has cardinality $1$. Then $X_\lambda = e_\gamma \zeta$ and, by the above identity, $Z(e_\gamma H)$ is the $k$-algebra with unit $e_\gamma$ generated by $e_\gamma \zeta$. As a consequence of Lemma \ref{zeta-basis}, it is the algebra of polynomials in the variable $e_\gamma \zeta$. Now suppose that $\gamma = \{\lambda, \lambda^{-1}\}$ has cardinality $2$. Then
\begin{align*}
   X_\lambda & = - \sum_{\omega \in \Omega} \lambda(\omega^{-1}) \tau_\omega E(1,1) - \sum_{\omega \in \Omega} \lambda^{-1}(\omega^{-1}) \tau_\omega E(-1,1) \\
             & = - \sum_{\omega \in \Omega} \lambda(\omega^{-1}) \tau_\omega E(1,1) - \sum_{\omega \in \Omega} \lambda(\omega^{-1}) \tau_{\omega^{-1}} E(-1,1)  \\
             & = - \sum_{\omega \in \Omega} \lambda(\omega^{-1}) ( E(1,\omega) +   E(-1,\omega^{-1})) \in Z(H) \cap e_\gamma H = Z(e_\gamma H) \ .
\end{align*}
This shows that $Z(e_\gamma H)$ is the $k$-algebra with unit $e_\gamma$ generated by  $X_{\lambda}$ and $X_{\lambda^{-1}}$. We claim that it is isomorphic to the quotient algebra $k[X_\lambda,  X_{\lambda^{-1}}]/(X_\lambda X_{\lambda^{-1}})$. That $X_\lambda X_{\lambda^{-1}} = 0$ in $e_\gamma H$ is clear.  It is easy to see that it is the only relation because the map $E$ is injective (for fixed $i$, the vector space with basis $(E(i, \omega))_{\omega \in \Omega}$ is the same as  the vector space generated by the family  $(e_\lambda E(i,1))_{\lambda\in \widehat\Omega}$, therefore the family $(e_\lambda E(i, 1))_{\lambda \in \widehat\Omega, i\in \mathbb{Z}}$ is linearly independent in $H$).
\end{remark}

\begin{remark}\label{modulesS1}
Suppose that  $\mathbb{F}_q \subseteq k$. Let $\mathfrak{s}_1$ denote the two-sided ideal generated by $\tau_{s_1}$. It easily follows from the braid and quadratic relations that $\mathfrak{s}_1$ is $k$-linearly generated by all $\tau_w$ such that $s_1$ occurs in a reduced decomposition of $w$. It follows that
\begin{equation*}
  H/\mathfrak{s}_1 = k[\Omega] \oplus k[\Omega] \tau_{s_0} \quad\text{with}\ \tau_{s_0} \tau_\omega = \tau_{\omega^{-1}} \tau_{s_0} \ \text{and}\ \tau_{s_0}^2 = \tau_{s_0} (\sum_{\omega \in \Omega} \tau_\omega ) = (\sum_{\omega \in \Omega} \tau_\omega ) \tau_{s_0} \ .
\end{equation*} We have the ring decomposition
\begin{equation*}
  H/\mathfrak{s}_1 = \prod_{\gamma \in \Gamma} A_\gamma
\end{equation*}
where
\begin{equation*}
  A_\gamma :=
  \begin{cases}
  k e_\lambda \oplus k e_{\lambda^{-1}} \oplus k e_\lambda \tau_{s_0} \oplus k e_{\lambda^{-1}} \tau_{s_0} & \text{if $\gamma = \{\lambda,\lambda^{-1}\}$ with $\lambda \neq \lambda^{-1}$},\\
  k e_\lambda \oplus k e_\lambda \tau_{s_0} & \text{if $\gamma = \{\lambda\}$}.
  \end{cases}
\end{equation*} It is clear that
\begin{equation*}
  A_\gamma \cong
  \begin{cases}
  k[\tau_{s_0}]/\langle \tau_{s_0}^2 \rangle & \text{if $\gamma = \{\lambda\}$, but $\lambda \neq 1$}, \\
  k[\tau_{s_0}]/\langle \tau_{s_0}^2 + \tau_{s_0} \rangle = k \times k[\tau_{s_0}]/\langle \tau_{s_0} + 1 \rangle & \text{if $\gamma = \{1\}$}.
  \end{cases}
\end{equation*}
If $\gamma = \{\lambda,\lambda^{-1}\}$ with $\lambda \neq \lambda^{-1}$ then $A_\gamma \tau_{s_0}$ is an ideal of square zero in $A_\gamma$ and $A_\gamma / A_\gamma \tau_{s_0} \cong k \times k$. We see that in all cases the simple modules of $A_\gamma$ are one dimensional. It follows that the simple constituents of any finite length $H/\mathfrak{s}_1$-module are one dimensional $H$-modules.  This remark will be used in \S\ref{sec:smallq}.

\end{remark}

\subsubsection{Supersingular modules \label{sec:charH}}

A finite length $H$-module $M$ will be called supersingular if its base extension $\bar{k} \otimes_k M$ to an algebraic closure $\bar{k}$ of $k$ is a supersingular $\bar{k} \otimes_k H$-module in the sense of \cite{Oll3} Prop.-Def.\ 5.10.

\begin{lemma}\label{defi:supersing}
A finite length $H$-module is supersingular if and only if it is annihilated by a power of $\zeta$.
\end{lemma}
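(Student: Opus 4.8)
The plan is to reduce, by extension of scalars, to the case $k = \bar{k}$ — where in particular $\mathbb{F}_q \subseteq k$, so that the block decomposition \eqref{f:idempotents} and the explicit description of the center in Remark \ref{rema-center} become available — and then to check block by block that the supersingularity ideal of \cite{Oll3} Prop.-Def.\ 5.10 and the ideal $(\zeta)$ have the same radical.

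First I would observe that neither condition changes under base change to $\bar{k}$: for supersingularity this is the definition, and for annihilation by a power of $\zeta$ one uses that $M \hookrightarrow \bar{k} \otimes_k M$ as $H$-modules and that $\zeta$ acts on $\bar{k} \otimes_k M$ as $\mathrm{id} \otimes \zeta$. So assume $k = \bar{k}$. Then $H = \prod_{\gamma \in \Gamma} H e_\gamma$ and $Z(H) = \prod_{\gamma} Z(H e_\gamma)$, a finite length module decomposes as $M = \bigoplus_\gamma e_\gamma M$ with finitely many nonzero summands, and both conditions on $M$ translate into the corresponding conditions on each $e_\gamma M$ over $H e_\gamma$: annihilation by a power of $\zeta$ becomes annihilation by a power of $e_\gamma \zeta$, and supersingularity becomes annihilation by a power of the ideal $\mathcal{J}_\gamma := e_\gamma \mathcal{J}$ of $Z(H e_\gamma)$, where $\mathcal{J}$ is the supersingularity ideal; over $\bar{k}$, $\mathcal{J}_\gamma$ is generated (up to radical, which is all that matters here) by the central elements $E(i,\omega) + E(-i,\omega^{-1})$, $i \geq 1$, $\omega \in \Omega$. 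It therefore suffices to prove, for each $\gamma$, that $\mathcal{J}_\gamma$ and $(e_\gamma \zeta)$ have the same radical in the commutative ring $Z(H e_\gamma)$.

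This last step I would carry out using the formulas of Remark \ref{rema-center}. When $\gamma = \{\lambda\}$ has cardinality one, $e_\gamma(E(i,\omega) + E(-i,\omega^{-1})) = \lambda(\omega)\, \zeta^i e_\gamma$ for $i \geq 1$, so $\mathcal{J}_\gamma = (e_\gamma \zeta)$ on the nose. When $\gamma = \{\lambda, \lambda^{-1}\}$ has cardinality two, $e_\gamma(E(i,\omega) + E(-i,\omega^{-1})) = \lambda(\omega) X_\lambda^i + \lambda(\omega^{-1}) X_{\lambda^{-1}}^i$; running $\omega$ over $\Omega$ and using $\lambda \neq \lambda^{-1}$ to separate the two characters yields $X_\lambda^i, X_{\lambda^{-1}}^i \in \mathcal{J}_\gamma$, so $\mathcal{J}_\gamma = (X_\lambda, X_{\lambda^{-1}})$; but in $Z(H e_\gamma) \cong k[X_\lambda, X_{\lambda^{-1}}]/(X_\lambda X_{\lambda^{-1}})$ one has $e_\gamma \zeta = X_\lambda + X_{\lambda^{-1}}$ and $X_\lambda X_{\lambda^{-1}} = 0$, whence $X_\lambda^2 = X_\lambda \cdot e_\gamma \zeta$ and $X_{\lambda^{-1}}^2 = X_{\lambda^{-1}} \cdot e_\gamma \zeta$, giving $\mathcal{J}_\gamma^2 = (X_\lambda^2, X_{\lambda^{-1}}^2) \subseteq (e_\gamma \zeta) \subseteq \mathcal{J}_\gamma$. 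In both cases $\mathcal{J}_\gamma$ and $(e_\gamma \zeta)$ share a radical, so a finite length $H e_\gamma$-module is killed by a power of one exactly when it is killed by a power of the other; reassembling the blocks and descending from $\bar{k}$ to $k$ finishes the proof.

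The one genuinely non-formal ingredient is the identification, over $\bar{k}$ and block by block, of the supersingularity ideal of \cite{Oll3} Prop.-Def.\ 5.10 with the ideal generated by the $E(i,\omega) + E(-i,\omega^{-1})$, $i \geq 1$; once that is in hand, everything else is the elementary radical computation above.
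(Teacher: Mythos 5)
Your proposal is correct in its radical computations and reaches the right conclusion, but it takes a genuinely more indirect route than the paper, and the indirection is concentrated precisely in the step you flag as the ``one genuinely non-formal ingredient.'' The paper's proof does not go through the block decomposition or a radical computation at all. It cites \cite{Oll3} for the precise statement that the supersingularity ideal $\mathfrak{J}$ is principal, generated by the single Bernstein element $z_\mu$ attached to the minimal dominant cocharacter $\mu$, and then unwinds the notation to compute directly $z_\mu = \tau_{s_1 s_0} + \tau^*_{s_1 s_0} = \zeta$ as an equality of elements of $H$. This gives $\mathfrak{J} = (\zeta)$ on the nose, not merely up to radical, and works without assuming $\mathbb{F}_q \subseteq k$ beyond the initial reduction to $\bar{k}$. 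Your approach, by contrast, replaces the input ``$\mathfrak{J} = (z_\mu)$'' by the claim that $\mathfrak{J}$ is generated (up to radical) by all the central elements $E(i,\omega)+E(-i,\omega^{-1})$ with $i \geq 1$; this is not literally what \cite{Oll3}~\S5.2 says, and verifying it would require its own unwinding of loc.\ cit.\ into the $E(i,\omega)$ notation of Remark~\ref{rema-center} --- at which point you would likely find the single generator $z_\mu$ anyway. Once that identification is granted, your block-by-block radical computation is correct: for $\gamma=\{\lambda\}$ you get $\mathcal{J}_\gamma=(e_\gamma\zeta)$ exactly; for $\gamma=\{\lambda,\lambda^{-1}\}$ linear independence of the distinct characters $\lambda,\lambda^{-1}$ (valid since $|\Omega|=q-1$ is prime to $p$) gives $\mathcal{J}_\gamma=(X_\lambda,X_{\lambda^{-1}})$, and the relations $e_\gamma\zeta=X_\lambda+X_{\lambda^{-1}}$, $X_\lambda X_{\lambda^{-1}}=0$ give $\mathcal{J}_\gamma^2 \subseteq (e_\gamma\zeta) \subseteq \mathcal{J}_\gamma$. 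So your argument is sound modulo the flagged ingredient; the paper's is shorter because it picks a better entry point into \cite{Oll3}.
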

\begin{proof}
We may assume that $k$ is algebraically closed. By \cite{Oll3} Prop.-Def.\ 5.10,
a   finite length $H$-module $M$ is  supersingular if and only if it is
annihilated by a power of the ideal $\mathfrak{J}$ defined in loc.\ cit.\ \S5.2. By construction this ideal $\mathfrak{J}$ is generated by the element $z_\mu$ defined in \cite{Oll3} (2.3), where $\mu\in X_*(T)$ denotes the unique minimal dominant cocharacter. In the identification $T/T^0 \xrightarrow{\cong} X_*(T)$ sending $\theta$ to the antidominant cocharacter $t \mapsto
\left(\begin{smallmatrix}
t^{-1} & 0 \\
0 & t
\end{smallmatrix}\right)$, the element  $s_0(\theta)$ corresponds to $\mu$. By definition, and using \cite{Oll3} Remark 2.1, we have  $z_\mu = E^+_\mu + E^+_{s_0(\mu)} = \tau_{s_0(\theta)} + \tau^*_{-\theta} = \tau_{s_1s_0} + \tau^*_{s_1s_0}$ in the notation of \cite{Vig} Remark 7 and paragraph after Thm.\ 2.
On the other hand we compute $\zeta = \tau_{s_1}\tau_{s_0} +(\tau_{s_0} + e_1)(\tau_{s_1} + e_1) = \tau_{s_1s_0} + \tau^*_{s_0}\tau^*_{s_1} = \tau_{s_1s_0} + \tau^*_{s_1s_0}$. Hence $z_\mu = \zeta$.

\end{proof}

We fix a character $\chi$ of $H$, and we define the character $\lambda$ of $\Omega$ by $\lambda(\omega) := \chi(\tau_\omega)$. The quadratic relation \eqref{f:quad} for $\tau_{s_i}$ implies that $\chi(\tau_{s_i}) = 0$ or $-1$. Moreover, if $\chi(\tau_{s_i}) = -1$ for at least one $i$ then $\lambda$ is necessarily  the trivial character. In particular, we have precisely four characters with corresponding $\lambda = 1$:
\begin{align*}
  & \text{the trivial character $\chi_{triv}$ such that $\chi_{triv}(\tau_{s_0}) = \chi_{triv}(\tau_{s_1}) = 0$}, \\
  & \text{the sign character $\chi_{sign}$ such that $\chi_{sign}(\tau_{s_0}) = \chi_{sign}(\tau_{s_1}) = -1$}, \\
  & \text{the two characters $\chi_i$, for $i = 0,1$, such that $\chi_i(\tau_{s_i}) = -1$ and $\chi_i(\tau_{s_{1-i}}) = 0$}.
\end{align*}
One computes
\begin{equation*}
  \chi(\zeta) =
  \begin{cases}
  0 & \text{if $\chi \neq \chi_{triv}, \chi_{sign}$}, \\
  1 & \text{otherwise}.
  \end{cases}
\end{equation*}
Hence (cf.\ proof of Lemma.\ \ref{defi:supersing}) $\chi_{triv}$ and $\chi_{sign}$ are the only characters which are not supersingular.

\begin{lemma} \label{onedim}
If $\mathbb F_q \subseteq k$, then a simple supersingular module is one dimensional.
\end{lemma}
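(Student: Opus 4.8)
The plan is to leverage the structure of $H$ over the group algebra $k[\Omega]$, using the idempotent decomposition from \S\ref{sec:idempo}. Since $\mathbb{F}_q \subseteq k$, every simple $k[\Omega]$-module is one dimensional, corresponding to a character $\lambda \in \widehat{\Omega}$, and we have $H = \prod_{\gamma \in \Gamma} He_\gamma$. Let $M$ be a simple supersingular $H$-module. Because the $e_\gamma$ are central orthogonal idempotents summing to $1$, exactly one of them acts as the identity on $M$, so $M$ is a simple $He_\gamma$-module for a single $s_0$-orbit $\gamma = \{\lambda, \lambda^{-1}\}$ (possibly $\lambda = \lambda^{-1}$). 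By Lemma \ref{defi:supersing}, $M$ is annihilated by a power of $\zeta$, hence (being simple) by $\zeta$ itself; thus $M$ is a simple module over $H/H\zeta$, restricted further to the block $e_\gamma(H/H\zeta)$.

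First I would analyze the finite-dimensional algebra $He_\gamma/He_\gamma\zeta$ and show its simple modules are one dimensional. The key structural input is Corollary \ref{isobimo}: picking $\epsilon = 0$, we have $H = H_{x_0} \otimes_k k[\zeta] \,\oplus\, H_{x_0}\tau_{s_1} \otimes_k k[\zeta]$ as $(H_{x_0}, k[\zeta])$-bimodules, so modulo $\zeta$ we get $H/H\zeta \cong H_{x_0} \oplus H_{x_0}\tau_{s_1}$ as a left $H_{x_0}$-module, i.e. $H/H\zeta$ is free of rank $2$ over $H_{x_0}$. Now $H_{x_0} = H_C^\dagger$ for the vertex $x_0$ is the Iwahori-Hecke algebra of the finite reductive quotient $\overline{\mathbf{G}}_{x_0}(\mathbb{F}_q) = \mathbf{SL_2}(\mathbb{F}_q)$ (more precisely $H_{x_0} = k[\Omega] \oplus k[\Omega]\tau_{s_0}$ with the quadratic relation $\tau_{s_0}^2 = -e_1\tau_{s_0}$), and this is precisely the algebra $H/\mathfrak{s}_1$ studied in Remark \ref{modulesS1}. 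That remark already records that all simple $H_{x_0}$-modules (equivalently, all simple constituents of finite length $H/\mathfrak{s}_1$-modules) are one dimensional.

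So the remaining point is: a simple module over $H/H\zeta$ is already a simple module over the quotient $H_{x_0} = H/\mathfrak{s}_1$. Equivalently, I want to show that on a simple $\zeta$-torsion $H$-module $M$, the element $\tau_{s_1}$ acts by zero (so that $M$ factors through $H/\mathfrak{s}_1$), or more symmetrically, that either $\tau_{s_0}$ or $\tau_{s_1}$ annihilates $M$. Here I would use the identities of Remark \ref{explicit-identities}.ii: $\zeta^i\tau_{s_0} = \tau_{s_0}(\tau_{s_1}\tau_{s_0})^i$ and $\zeta^i\tau_{s_1} = \tau_{s_1}(\tau_{s_0}\tau_{s_1})^i$. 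On $M$ we have $\zeta M = 0$; hence $\tau_{s_0}\tau_{s_1}\tau_{s_0} M = \tau_{s_0}\zeta M = 0$ and likewise $\tau_{s_1}\tau_{s_0}\tau_{s_1} M = 0$. Consider the submodule $H\tau_{s_0}M$: since $\tau_{s_1}\tau_{s_0}\cdot\tau_{s_0}M$ lands (via the quadratic relation $\tau_{s_0}^2 = -e_1\tau_{s_0}$, so $\tau_{s_0}M$ spans the same as the image) inside $\ker$... more cleanly, from $\tau_{s_0}\tau_{s_1}\tau_{s_0}M = 0$ we get that $\tau_{s_1}\tau_{s_0}M$ is killed by $\tau_{s_0}$, and one checks the $k[\Omega]$-span of $\tau_{s_0}M \cup \tau_{s_1}\tau_{s_0}M \cup M$-pieces is $H$-stable; simplicity of $M$ then forces either $\tau_{s_0}M = 0$ or $\tau_{s_1}M = 0$. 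In the first case $M$ is a module over $H/\mathfrak{s}_0$, in the second over $H/\mathfrak{s}_1$; either way Remark \ref{modulesS1} (applied to $\mathfrak{s}_1$, and its evident $s_0 \leftrightarrow s_1$ analogue for $\mathfrak{s}_0$, which holds by the symmetry of the presentation in \S\ref{gene-rel} together with Remark \ref{remark:N}) gives that $M$ is one dimensional.

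The main obstacle I expect is the bookkeeping in the last step: verifying that a simple $\zeta$-torsion module $M$ really is annihilated by $\tau_{s_0}$ or by $\tau_{s_1}$. One must rule out that $\tau_{s_0}$ and $\tau_{s_1}$ both act nontrivially while their "triple products'' vanish — this requires a careful look at the submodule generated by $\tau_{s_0}M$, using that $H$ is spanned over $k[\Omega]$ by $1, \tau_{s_0}, \tau_{s_1}, \tau_{s_0}\tau_{s_1}, \tau_{s_1}\tau_{s_0}, \ldots$ and that multiplying a vanishing triple product stays vanishing. An alternative, possibly cleaner, route for this obstacle: pass to $\bar{k}$, invoke the classification of simple supersingular $\bar{k}\otimes_k H$-modules from \cite{Oll3} or \cite{Vig} directly (where the standard supersingular modules for semisimple rank one are explicitly the one- or two-dimensional ones built from a character of $k[\Omega]$), and note that when $\mathbb{F}_q\subseteq k$ the relevant orbit $\gamma$ has the two characters $\lambda, \lambda^{-1}$ already defined over $k$, so the two-dimensional standard module splits — leaving only one-dimensional simple supersingular modules. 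I would present the first argument as the main proof and mention the classification as the conceptual reason.
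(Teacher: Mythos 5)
Your route is correct and genuinely different from the paper's. The paper's proof works directly inside $M$: splitting into the cases $\gamma = \{1\}$ and $\gamma \neq \{1\}$, it picks an eigenvector $x$ of $\tau_{s_0}$ with eigenvalue $0$ (or $-1$) and then shows by hand, using $\zeta x = 0$ together with the quadratic relations, that $kx$ or $k\tau_{s_1}x$ (or $k(\tau_{s_1}x + x)$) is $H$-stable, so equals $M$ by simplicity. You instead first establish that $\tau_{s_0}M = 0$ or $\tau_{s_1}M = 0$, so that $M$ factors through $H/\mathfrak{s}_0$ or $H/\mathfrak{s}_1$, and then quote Remark \ref{modulesS1} (which already records that simple constituents of finite length $H/\mathfrak{s}_1$-modules are one dimensional; the analogue for $\mathfrak{s}_0$ follows by symmetry of the presentation, or by conjugating with the matrix $N$ of Remark \ref{remark:N}). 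This is a bit more structural; it identifies at the outset which quotient algebra $M$ actually lives over, and it separates the generic Hecke-algebra bookkeeping (Remark \ref{modulesS1}) from the one piece of input that uses supersingularity. What it costs is exactly the step you flagged.

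On that step your formulation does not quite close: including $M$ in the proposed $k[\Omega]$-span makes the candidate submodule all of $M$, so simplicity gives no information. The clean version is: assume $\tau_{s_0}M \neq 0$. By Remark \ref{explicit-identities}.ii and $\zeta M = 0$ one has $\tau_{s_0}\tau_{s_1}\tau_{s_0}M = \zeta\tau_{s_0}M = 0$. Therefore $N' := k[\Omega]\tau_{s_1}\tau_{s_0}M$ satisfies $\tau_{s_0}N' = k[\Omega]\tau_{s_0}\tau_{s_1}\tau_{s_0}M = 0$ and $\tau_{s_1}N' = k[\Omega]\tau_{s_1}^2\tau_{s_0}M = -k[\Omega]e_1\tau_{s_1}\tau_{s_0}M \subseteq N'$, so $N'$ is an $H$-submodule. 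If $N' \neq 0$ then $N' = M$ by simplicity, forcing $\tau_{s_0}M = \tau_{s_0}N' = 0$, a contradiction; hence $\tau_{s_1}\tau_{s_0}M = 0$. Now $N := k[\Omega]\tau_{s_0}M$ is $H$-stable (since $\tau_{s_0}N \subseteq -k[\Omega]e_1\tau_{s_0}M \subseteq N$ and $\tau_{s_1}N = k[\Omega]\tau_{s_1}\tau_{s_0}M = 0$) and nonzero, so $N = M$, and therefore $\tau_{s_1}M = \tau_{s_1}N = 0$. With this patch the proof is complete. Your alternative route via the classification in \cite{Oll3} is also valid and is in fact acknowledged in the paper right after Prop.\ \ref{charH}; but note that the passage from $\bar k$ back to $k$ is a descent step, and what one really uses from \cite{Oll3} is that the simple supersingular $\bar k \otimes_k H$-modules are already one dimensional (Prop.\ 5.11 there), which together with $\mathbb{F}_q \subseteq k$ gives the statement over $k$.
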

\begin{proof}
Since $\zeta$ is central in $H$ and by the previous lemma, a simple supersingular module is annihilated by $\zeta$.
Using the decomposition \eqref{f:idempotents}, we  check below  that such a module $M$ is one dimensional.

There is a unique $\gamma\in\Gamma$ such that $e_\gamma M \neq \{0\}$. Suppose that  $\gamma=\{1\}$. Then by \eqref{f:quad} the action of $\tau_{s_0}$ (resp. $\tau_{s_1}$) on $M$ is diagonalizable with eigenvalues $0$ and $-1$.   If $\tau_{s_0}$ acts by $-1$, then $\tau_{s_1}$ acts by zero because $\zeta$ acts by zero and since $M$ is simple, it is the character $\chi_{0}$. Otherwise, there is  $x\neq 0$ in the kernel of $\tau_{s_0}$.  If $\tau_{s_1}x=-x$, then $M$ is isomorphic to the character $\chi_{-1}$; otherwise $\tau_{s_1}x+x$ supports the character $\chi_{0}$.

Now suppose that  $\gamma$ contains a $\lambda\in \widehat\Omega$ with $\lambda\neq 1$. Then by \eqref{f:quad} the action of $\tau_{s_0}^2$ (resp. $\tau_{s_1}^2$) is trivial on $M$, so there is $x\neq 0$ in the kernel of $\tau_{s_0}$ and one can assume that $e_\lambda x=x$. If $\tau_{s_1}x=0$,  then $M$ is one dimensional spanned by $x$; otherwise $y:=\tau_{s_1}x$ spans $M$ since $\tau_{s_1}y=0$, $e_{\lambda^{-1}}y=y$ and
$\tau_{s_0}y= -\tau_{s_1}\tau_{s_0}x=0$ (because $\zeta$ acts by zero).
\end{proof}

It follows immediately:

\begin{proposition}\label{charH}
Suppose  $\mathbb{F}_q \subseteq k$. A simple $H$-module is supersingular if and only if it is a character of $H$ distinct from $\chi_{triv}$ and $\chi_{sign}$.

\end{proposition}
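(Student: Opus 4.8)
The plan is to derive Proposition \ref{charH} as a near-immediate consequence of Lemma \ref{onedim} together with the preceding character computation. First I would note that one direction is trivial: a character distinct from $\chi_{triv}$ and $\chi_{sign}$ is a one dimensional, hence simple, $H$-module, and the displayed computation of $\chi(\zeta)$ shows it is annihilated by $\zeta$, so by Lemma \ref{defi:supersing} it is supersingular.

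For the converse, suppose $M$ is a simple supersingular $H$-module. By Lemma \ref{onedim} (using the hypothesis $\mathbb{F}_q \subseteq k$) the module $M$ is one dimensional, i.e.\ it is given by a character $\chi$ of $H$. Since $M$ is supersingular, Lemma \ref{defi:supersing} (or directly the fact that $\zeta$ is central and acts nilpotently on a finite length supersingular module, hence by $0$ on a simple one) shows that $\chi(\zeta) = 0$. The character computation just before the statement lists all characters $\chi$ of $H$ and shows that $\chi(\zeta) = 1$ precisely for $\chi \in \{\chi_{triv}, \chi_{sign}\}$ and $\chi(\zeta) = 0$ otherwise; hence $\chi \neq \chi_{triv}, \chi_{sign}$. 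This gives the reverse inclusion and completes the proof.

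There is essentially no obstacle here: all the substantive work has been done in Lemmas \ref{defi:supersing} and \ref{onedim} and in the explicit enumeration of characters. The only point requiring a word of care is the passage from ``supersingular'' to ``annihilated by $\zeta$'' for a \emph{simple} module, which is why I would invoke Lemma \ref{defi:supersing} (a power of $\zeta$ kills $M$, and simplicity forces $\zeta M = 0$ since $\zeta$ is central so $\zeta M$ is a submodule). Thus the proof is simply: \textbf{simple supersingular} $\Rightarrow$ (Lemma \ref{onedim}) \textbf{one dimensional} $\Rightarrow$ \textbf{a character $\chi$ with $\chi(\zeta)=0$} $\Rightarrow$ (character list) $\chi \notin \{\chi_{triv},\chi_{sign}\}$, and conversely each such character is simple and supersingular by the $\chi(\zeta)$ computation and Lemma \ref{defi:supersing}.
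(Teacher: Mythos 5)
Your proof is correct and is essentially the argument the paper has in mind: the paper states Proposition \ref{charH} with the single phrase ``It follows immediately'' after Lemma \ref{onedim}, and your write-up simply fills in the routine details (the $\chi(\zeta)$ computation and the passage from ``annihilated by a power of $\zeta$'' to ``annihilated by $\zeta$'' via centrality and simplicity) in exactly the intended way.
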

In the case when $k$ is algebraically closed,
Prop.\ \ref{charH} can  also be obtained as a direct corollary of \cite{Oll3} Prop.\ 5.11 and Lemma 5.12. Note that the proofs therein are actually valid under the weaker hypothesis  $\mathbb F_q\subseteq k$.

\subsubsection{Nonsupersingular modules}\label{sec:nonsupersing}

In view of Lemma \ref{defi:supersing}, we call a  finite length $H$-module   nonsupersingular if it is $\zeta$-torsionfree. Note that the zero module is both supersingular and nonsupersingular.

\begin{remark}\label{rema:nonsupersing}
For $M$ a finite length $H$-module, the following are equivalent:
\begin{itemize}
\item[i.]  $M$ is nonsupersingular,
\item[ii.]  none of the subquotients  of $M$ are supersingular,
\item[iii.] none of the submodules of $M$ are supersingular.
\end{itemize}
\end{remark}
\begin{proof}
A finite length $H$-module  $M$ is finite dimensional (this is independent of the field $k$, see for example \cite[Lemma 6.9]{OS}).
Therefore, $M$ is nonsupersingular if and only if  the operator $\zeta$ yields an  automorphism of the $H$-module $M$.
The equivalences follow, using the fact that the kernel of the operator $\zeta$ on a finite length  $H$-module is  supersingular, possibly trivial.
\end{proof}

\begin{remark}\label{P}
A finite length indecomposable $H$-module is $P(\zeta)$-torsion for some irreducible polynomial $P(X) \in k[X]$. Therefore,  a finite length $H$-module is the direct sum of a supersingular and of a nonsupersingular module. Compare with \cite{Oll3} \S 5.3.
\end{remark}

\subsection{An $I$-equivariant decomposition of $\mathbf{X}$}\label{sec:I-decomp-X}

As an $I$-representation $\mathbf{X}$ decomposes equivariantly into
\begin{equation*}
  \mathbf{X} = \oplus_{w \in \widetilde{W}} \; \mathbf{X}(w)
\end{equation*}
where
\begin{equation*}
  \mathbf{X}(w) := \{ f \in \ind_I^G(1) : \supp(f) \subseteq IwI \}.
\end{equation*}
Moreover, we have
\begin{equation*}
  \mathbf{X}(w) \cong \ind_{I \cap wIw^{-1}}^I (1) \ .
\end{equation*}
On the other hand, the characteristic functions $\tau_w := \mathrm{char}_{IwI}$ of the double cosets $IwI$ form a $k$-basis of $H$. We note that the elements $\tau_\omega \tau_{s_i}$, for $\omega \in \Omega$ and $i = 0,1$, generate $H$ as a $k$-algebra.

\begin{lemma}\label{length-inj}
For any $w, v \in \widetilde{W}$ we have:
\begin{itemize}
  \item[i.] If $\ell(wv) = \ell(w) + \ell(v)$ then the endomorphism $\tau_v$ of $\mathbf{X}$ restricts to an $I$-equivariant injective map $\tau_v : \mathbf{X}(w) \longrightarrow \mathbf{X}(wv)$;
  \item[ii.] if $s \in \{s_0,s_1\}$ is such that $\ell(ws) = \ell(w) - 1$ then $\mathbf{X}(w) \tau_s \subseteq \mathbf{X}(ws) \oplus \bigoplus_{\omega \in \Omega} \mathbf{X}(w\omega)$;
  \item[iii.] if $\ell(wv) < \ell(w) + \ell(v)$ then $\mathbf{X}(w) \tau_v \subseteq \sum_{\ell(u) < \ell(w) + \ell(v)} \mathbf{X}(u)$.
\end{itemize}
\end{lemma}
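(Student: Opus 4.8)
The plan is to exploit the $I$-equivariant decomposition $\mathbf{X} = \bigoplus_{w \in \widetilde{W}} \mathbf{X}(w)$ together with the combinatorics of the length function, reducing everything to the case of a single generator $\tau_s$ ($s \in \{s_0, s_1\}$) or $\tau_\omega$ ($\omega \in \Omega$) and then invoking the braid and quadratic relations \eqref{f:braid}, \eqref{f:quad} established above.

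For part i., first I would reduce to the case $v \in \{s_0, s_1, \omega\}$: if $\ell(wv) = \ell(w) + \ell(v)$ and $v = v_1 v_2$ with $\ell(v) = \ell(v_1) + \ell(v_2)$, then $\ell(wv_1) = \ell(w) + \ell(v_1)$ and $\ell(wv_1 v_2) = \ell(wv_1) + \ell(v_2)$, so $\tau_v = \tau_{v_1}\tau_{v_2}$ factors the map and one composes injections. The case $v = \omega \in \Omega$ is trivial since $\tau_\omega$ is invertible in $H$ and restricts to an $I$-isomorphism $\mathbf{X}(w) \xrightarrow{\cong} \mathbf{X}(w\omega)$. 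For $v = s$, the key point is that $\tau_s$ sends $\mathbf{X}(w)$ into $\mathbf{X}(ws)$ when $\ell(ws) = \ell(w) + 1$: this is exactly the geometric content of the computation $I s I \cap swIw^{-1}I = sI$ (the calculation reproduced in the proof of \eqref{f:braid}), which shows the convolution $\mathrm{char}_{IwI} * \mathrm{char}_{IsI}$ — or rather the action on functions supported on $IwI$ — lands in functions supported on $IwsI$; injectivity follows because $\tau_{s^{-1}} \tau_s$ acts on $\mathbf{X}(w)$ as a nonzero multiple (one must check it is $\tau_w^{-1}$-like, or directly that the double coset count gives an invertible operator), or more cleanly from the fact that $\tau_s : \mathbf{X}(w) \to \mathbf{X}(ws)$ is given by $f \mapsto \sum f(\cdot\, u^{\pm}_\omega s^{-1})$ over the cosets computed in the proof of \eqref{f:quad}, which is visibly injective on $\ind_{I \cap wIw^{-1}}^I(1)$.

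For part ii., with $\ell(ws) = \ell(w) - 1$, write $w = w' s$ with $\ell(w') = \ell(w) - 1$, so by part i. every element of $\mathbf{X}(w)$ is in $\tau_s(\mathbf{X}(w'))$; then $\mathbf{X}(w)\tau_s \subseteq \mathbf{X}(w')\tau_s^2 = \mathbf{X}(w')(-e_1\tau_s)$ by the quadratic relation \eqref{f:quad}. Since $-e_1 = \sum_{\omega \in \Omega}\tau_\omega$, this is $\sum_{\omega}\mathbf{X}(w')\tau_\omega \tau_s = \sum_\omega \mathbf{X}(w'\omega)\tau_s$, and each $\mathbf{X}(w'\omega)\tau_s \subseteq \mathbf{X}(w'\omega s) \oplus (\text{lower})$; one identifies $w'\omega s$ up to an element of $\Omega$ with $w\omega$ or $ws$ by tracking lengths. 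I would be careful here to extract the precise statement $\mathbf{X}(w)\tau_s \subseteq \mathbf{X}(ws) \oplus \bigoplus_\omega \mathbf{X}(w\omega)$ rather than a weaker one — this likely needs the explicit support computation \eqref{f:supports0}, \eqref{f:supports1}, namely $s_iIs_iI \subseteq Is_i^2 I \,\dot\cup\, \dot\bigcup_\omega Is_i\omega I$, translated by $w'$. Part iii. then follows by induction on $\ell(v)$: take a reduced word for $v$, apply i. to the initial segment that stays length-additive with $w$, and at the first drop invoke ii., after which every resulting term has total length strictly less than $\ell(w) + \ell(v)$, and continue.

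The main obstacle I anticipate is part ii.: getting the decomposition into exactly $\mathbf{X}(ws)$ plus the $\Omega$-translates $\mathbf{X}(w\omega)$, with no spurious lower terms in that particular clause, requires the sharp coset decomposition of $s_iIs_iI$ rather than just a length bound, and one must carefully match indices $w'\omega s$ against $w\omega$. The injectivity claim in i. for the generator $\tau_s$ also deserves care: it should be deduced from the explicit formula for the action on $\ind_{I\cap wIw^{-1}}^I(1)$, using that the relevant coset representatives are distinct modulo $I \cap wsI(ws)^{-1}$.
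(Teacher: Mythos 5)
Your outline for part i.\ matches the paper in its reduction to $v\in\{s_0,s_1\}\cup\Omega$ and the invocation of the braid computation, but your injectivity argument needs repair. The claim that ``$\tau_{s^{-1}}\tau_s$ acts on $\mathbf{X}(w)$ as a nonzero multiple'' is not available: the quadratic relation gives $\tau_s^2=-e_1\tau_s$, which is nilpotent modulo $e_1$, not a scalar, and $s^{-1}$ differs from $s$ by an element of $\Omega$, so one is always back to $\tau_s^2$ up to a unit. The clean route, which is what the paper does, is to note that the braid relation $\tau_w\tau_s=\tau_{ws}$ expands as $\sum_{g\in I/I\cap wIw^{-1}}\mathrm{char}_{gwIsI}=\mathrm{char}_{IwsI}$, which forces the sets $gwIsI$ to be pairwise disjoint; hence $\tau_s$ carries the standard basis $\{\mathrm{char}_{gwI}\}$ of $\mathbf{X}(w)$ to linearly independent elements of $\mathbf{X}(ws)$. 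Your alternative ``visibly injective from the explicit formula'' gestures at this but does not supply the disjointness, which is the actual content.

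Part ii.\ has a genuine gap. You write $w=w's$ with $\ell(w')=\ell(w)-1$ and assert that ``by part i.\ every element of $\mathbf{X}(w)$ is in $\tau_s(\mathbf{X}(w'))$''. This is false: part i.\ gives an \emph{injection} $\tau_s:\mathbf{X}(w')\hookrightarrow\mathbf{X}(w)$, and this injection is never surjective, since $\dim_k\mathbf{X}(w')=[I:I\cap w'Iw'^{-1}]=q^{\ell(w')}$ is strictly smaller than $\dim_k\mathbf{X}(w)=q^{\ell(w)}$ (cf.\ \eqref{f:cap}). So $\tau_s(\mathbf{X}(w'))$ is a proper subspace of $\mathbf{X}(w)$, and the factorization $\mathbf{X}(w)\tau_s\subseteq\mathbf{X}(w')\tau_s^2$ does not hold; it only controls the image of that proper subspace. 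The paper's proof of ii.\ does not factor through $\mathbf{X}(w')$ at all: it directly computes the support of $\mathrm{char}_{gwI}\tau_s$, namely $gwIsI$, and then applies the set-theoretic inclusion $wIsI=ws^{-1}\cdot sIsI\subseteq ws^{-1}Is^2I\,\dot\cup\,\dot\bigcup_\omega ws^{-1}Is\omega I\subseteq IwsI\,\dot\cup\,\dot\bigcup_\omega Iw\omega I$, using \eqref{f:supports0}/\eqref{f:supports1} together with the braid relation for the pair $(ws^{-1},s)$. So you need the sharp coset decomposition of $sIsI$ \emph{and} you need to translate it by $ws^{-1}$ on the left directly, rather than deducing ii.\ from i.\ and the quadratic relation. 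Part iii.\ by induction from i.\ and ii.\ is fine and is what the paper does.
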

\begin{proof}
i. The assertion is straightforward for $v = \omega \in \Omega$. Hence we may assume, by induction, that $v = s \in \{s_0, s_1\}$. The braid relation \eqref{f:braid} implies
\begin{align*}
  \mathrm{char}_{IwsI} & = \tau_{ws} = \tau_w \tau_s = (\sum_{g \in I/I \cap wIw^{-1}} \mathrm{char}_{gwI})\tau_s \\
  & = \sum_{g \in I/I \cap wIw^{-1}} \mathrm{char}_{gwI} \tau_s = \sum_{g \in I/I \cap wIw^{-1}} \mathrm{char}_{gwIsI} \ ,
\end{align*}
which shows that the sets $gwIsI$, for $g \in I/I \cap wIw^{-1}$, are pairwise disjoint. It follows that the $k$-basis $\{\mathrm{char}_{gwI} : g \in I/I \cap wIw^{-1}\}$ of $\mathbf{X}(w)$ is mapped by $\tau_s$ to the $k$-linearly independent elements $\mathrm{char}_{gwIsI}$ in $\mathbf{X}(ws)$.

ii. Using \eqref{f:supports0} and \eqref{f:supports1} as well as the braid relation \eqref{f:braid} for $ws^{-1}$ and $s$ we see that
\begin{equation*}
  wIsI = ws^{-1}sIsI \subseteq ws^{-1}Is^2 I \; \dot\cup \; \dot\bigcup_{\omega \in \Omega} ws^{-1}I s \omega I \subseteq Iws I \; \dot\cup \; \dot\bigcup_{\omega \in \Omega} I w \omega I \ .
\end{equation*}
It follows that $\mathrm{char}_{gwI} \tau_s = \mathrm{char}_{gwIsI} \in \mathbf{X}(ws) \oplus \bigoplus_{\omega \in \Omega} \mathbf{X}(w\omega)$ for any $g \in I$.

iii. This follows by a standard induction from i. and ii.
\end{proof}

For any $i \geq 0$ we define the subgroups
\begin{equation}\label{defiIn}
  I_i^+ := \left(
\begin{smallmatrix}
1+\mathfrak{M} & \mathfrak{O} \\ \mathfrak{M}^{i+1} & 1+\mathfrak{M}
\end{smallmatrix}
\right) \qquad\text{and}\qquad I_i^- := \left(
\begin{smallmatrix}
1+\mathfrak{M} & \mathfrak{M}^i \\ \mathfrak{M} & 1+\mathfrak{M}
\end{smallmatrix}
\right)
\end{equation}
of $I$.

\begin{remark}\label{remark:N2}
In $\mathbf{GL_2}(\mathfrak{F})$,  we have $I^+_i =
N^{-1} I^-_i N$ for any $i \geq 0$ with $N$ as in Remark \ref{remark:N}.
\end{remark}

Let $w\in W$. With our choice of the positive root $\alpha$ in \S\ref{rootdatum}, the chamber $wC$ lies in the dominant Weyl chamber if and only if $\ell(s_0w)=1+\ell(w)$. One checks that
\begin{equation}\label{f:cap}
  I \cap wIw^{-1} =
  \begin{cases}
  I_{\ell(w)}^+ & \text{if $wC$ lies in the dominant Weyl chamber}, \\
  I_{\ell(w)}^- & \text{if $wC$ lies in the antidominant Weyl chamber}.
  \end{cases}
\end{equation}
Since $K_m = \left(
\begin{smallmatrix}
1+\mathfrak{M}^m & \mathfrak{M}^m \\ \mathfrak{M}^m & 1+\mathfrak{M}^m
\end{smallmatrix}
\right)$ we deduce that
\begin{equation*}
  K_m(I \cap wIw^{-1}) =
  \begin{cases}
  I_{m-1}^+ & \text{if $wC$ lies in the dominant Weyl chamber and $\ell(w)+1 \geq m$}, \\
  I_m^- & \text{if $wC$ lies in the antidominant Weyl chamber and $\ell(w) \geq m$}.
  \end{cases}
\end{equation*}
It follows that
\begin{equation*}
  \mathbf{X}(w)^{K_m} =
  \begin{cases}
  \ind_{I_{m-1}^+}^I(1) & \text{if $wC$ lies in the dominant Weyl chamber and $\ell(w)+1 \geq m$}, \\
  \ind_{I_m^-}^I(1) & \text{if $wC$ lies in the antidominant Weyl chamber and $\ell(w) \geq m$}.
  \end{cases}
\end{equation*}
For purposes of abbreviation we put $\sigma(w) := 1$, resp.\ $:= 0$, if $wC$ lies in the dominant, resp.\ antidominant, Weyl chamber.

\begin{corollary}\label{length-iso}
Fix an $m \geq 1$ and let $w, v \in \widetilde{W}$ such that $\ell(w) + \sigma(w) \geq m$ and $\ell(wv) = \ell(w) + \ell(v)$; if $m = 1$ and $w \in \Omega$ we assume that $vC$ lies in the dominant Weyl chamber; then the endomorphism $\tau_v$ of $\mathbf{X}$ restricts to an $I$-equivariant isomorphism $\tau_v : \mathbf{X}(w)^{K_m} \xrightarrow{\; \cong \;} \mathbf{X}(wv)^{K_m}$.
\end{corollary}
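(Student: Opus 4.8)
The plan is to combine Lemma~\ref{length-inj}.i with the explicit description of the fixed spaces $\mathbf{X}(u)^{K_m}$ recorded immediately before the statement. The hypothesis $\ell(wv)=\ell(w)+\ell(v)$ is precisely what Lemma~\ref{length-inj}.i needs, so it at once gives that right multiplication by $\tau_v$ is an $I$-equivariant injection $\mathbf{X}(w)\hookrightarrow\mathbf{X}(wv)$. Since $K_m\subseteq K_1\subseteq I$, this restricts to an $I$-equivariant injection $\tau_v\colon\mathbf{X}(w)^{K_m}\hookrightarrow\mathbf{X}(wv)^{K_m}$, and it therefore suffices to show that these two $k$-vector spaces have the same finite dimension, since a linear injection between finite dimensional spaces of equal dimension is automatically bijective.

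To compare the dimensions I would use the displayed formula preceding the corollary: for $\ell(u)+\sigma(u)\geq m$ the space $\mathbf{X}(u)^{K_m}$ equals $\ind_{I_{m-1}^+}^I(1)$ or $\ind_{I_m^-}^I(1)$ according to whether $uC$ is dominant or antidominant. So two points must be checked, namely $\sigma(wv)=\sigma(w)$ and $\ell(wv)+\sigma(wv)\geq m$. The second is immediate once the first is known, since $\ell(wv)+\sigma(wv)=\ell(w)+\ell(v)+\sigma(w)\geq\ell(w)+\sigma(w)\geq m$ by hypothesis. For the first, pass to the images $\bar w,\bar v$ in $W=\langle s_0,s_1\rangle$; as the length on $\widetilde W$ is pulled back from $W$, the hypothesis means that a reduced word for $\bar w$ followed by a reduced word for $\bar v$ is again reduced, so in the infinite dihedral group $W$ the elements $\bar w$ and $\bar w\bar v$ begin with the same generator whenever $\bar w\neq 1$; and $\sigma$ is governed by that first letter ($\sigma=1$ for $s_1$ and for the identity, $\sigma=0$ for $s_0$), so $\sigma(wv)=\sigma(w)$ in this case. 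The remaining case $\bar w=1$, i.e.\ $w\in\Omega$, is forced to have $m=1$ by the hypothesis $\ell(w)+\sigma(w)\geq m$, and is exactly where the extra assumption that $vC$ lies in the dominant Weyl chamber is used: it guarantees that $\bar v$ is trivial or begins with $s_1$, so again $\sigma(wv)=1=\sigma(w)$ (without it $\mathbf{X}(wv)^{K_1}$ could be strictly larger than $\mathbf{X}(w)^{K_1}=\ind_I^I(1)$ and $\tau_v$ would fail to be surjective).

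The argument is short and I do not expect a genuinely hard step. The only thing requiring care is the bookkeeping of the case distinctions in the description of $\mathbf{X}(u)^{K_m}$, so as to ensure the same alternative ($I_{m-1}^+$ versus $I_m^-$) appears at both ends; this is exactly what the combinatorial identity $\sigma(wv)=\sigma(w)$ provides, after which both fixed spaces have the common dimension $[I:I_{m-1}^+]$ or $[I:I_m^-]$ and the injection $\tau_v$ between equidimensional spaces must be an isomorphism.
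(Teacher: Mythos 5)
Your proof is correct and follows essentially the same route as the paper's: injectivity from Lemma~\ref{length-inj}.i, the combinatorial identity $\sigma(wv)=\sigma(w)$ (which is exactly what the extra hypothesis for $w\in\Omega$ secures), and then equality of finite dimensions via the explicit description of $\mathbf{X}(u)^{K_m}$. The small parenthetical explaining what goes wrong without the extra hypothesis is a useful addition the paper leaves implicit, but the underlying argument is the same.
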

\begin{proof}
Suppose that $w \not\in \Omega$. We see from the diagram below that $wC$ lies in the dominant, resp.\ antidominant, Weyl chamber if and only if a reduced decomposition of $w$ starts with $s_1$, resp.\ with $s_0$. The assumption that $\ell(wv) = \ell(w) + \ell(v)$ then implies that $wv$ has the same property. It therefore follows from our assumptions that $\sigma(w) = \sigma(wv)$ for any $w$ and, in particular, that $\ell(wv) + \sigma(wv) \geq m$. We then conclude from the above computation that the source and the target of the map in question have the same finite $k$-dimension. Hence the assertion is a consequence of Lemma \ref{length-inj}.
\end{proof}

\subsection{Supersingularity and the $H$-modules $Z_m$}\label{sec:supersing}

In this section, we establish results about the $H$-module structure of $Z_m$ for $m\geq 1$ that are independent of the choice of the field $\mathfrak F$.

\subsubsection{\label{sec:defiDn}The $H$-modules $H^1(I,\mathbf{X}^{K_m})^1$ and $Z_m$ are supersingular}

The cohomology $H^j(I,.)$ commutes with arbitrary direct sums of discrete $I$-modules (cf.\ \cite{Se1} I.2.2 Prop.\ 8). Hence we have
\begin{equation*}
  H^j(I,\mathbf{X}^{K_m}) = \oplus_{w \in \widetilde{W}} \; H^j(I,\mathbf{X}(w)^{K_m}) \ .
\end{equation*}
The following observation will be crucial in this section and in later computations:

\begin{corollary}\label{length-support}
$H^j(I,\mathbf{X}^{K_m})^1 \subseteq \oplus_{w \in \widetilde{W}, \ell(w) + \sigma(w) < m} \; H^j(I,\mathbf{X}(w)^{K_m})$.
\end{corollary}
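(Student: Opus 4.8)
The plan is to unwind the definition of $M^1$ for $M := H^j(I,\mathbf{X}^{K_m})$. By Proposition \ref{coh-formula} this $M$ is a finitely generated $H$-module, so by Lemma \ref{mod-AG}.iii the submodule $M^1$ is its largest $k$-finite-dimensional submodule; in particular, if $x\in M^1$ then the submodule $xH\subseteq M^1$ is finite dimensional over $k$. Using that $H^j(I,-)$ commutes with direct sums I would write $M=\bigoplus_{w\in\widetilde{W}}M_w$ with $M_w:=H^j(I,\mathbf{X}(w)^{K_m})$, and for $x\in M$ denote by $x_w$ its $M_w$-component and by $\supp(x)$ the (finite) set of $w$ with $x_w\neq 0$. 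The statement to prove then becomes: if $x\in M^1$ and $x_{w_0}\neq 0$ for some $w_0$ with $\ell(w_0)+\sigma(w_0)\geq m$, one derives a contradiction by showing $\dim_k xH=\infty$.

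A preliminary reduction: one may assume such a $w_0$ is chosen of maximal length $L:=\max_{w\in\supp(x)}\ell(w)$. Indeed $\ell(w_0)+\sigma(w_0)\geq m$ forces $\ell(w_0)\geq m-1$, so if $\supp(x)$ contains an element of length $L>\ell(w_0)$ then $L\geq m$ and that element already satisfies the inequality, while otherwise $L=\ell(w_0)$. The heart of the argument is then a ``pushing-up'' step: pick $v\in\widetilde{W}$ with $\ell(w_0v)=\ell(w_0)+\ell(v)$ (and, in the exceptional case $m=1$ and $w_0\in\Omega$, with $vC$ dominant, which is possible because one can let $v$ run through the successive initial segments of a reduced ray emanating from $w_0$ in the length-increasing direction). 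Corollary \ref{length-iso} then says $\tau_v$ restricts to an $I$-equivariant isomorphism $\mathbf{X}(w_0)^{K_m}\xrightarrow{\cong}\mathbf{X}(w_0v)^{K_m}$, hence induces an isomorphism $M_{w_0}\xrightarrow{\cong}M_{w_0v}$; consequently $x_{w_0}\tau_v$ is nonzero and, by Lemma \ref{length-inj}.i, lies entirely in the $M_{w_0v}$-summand.

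The point is that the remaining components do not interfere. For $w\in\supp(x)\setminus\{w_0\}$ we have $\ell(w)\leq L$, and Lemma \ref{length-inj} (parts i and iii) gives $\mathbf{X}(w)\tau_v\subseteq\sum_{\ell(u)\leq\ell(w)+\ell(v)}\mathbf{X}(u)$, with the only component of top length $\ell(w)+\ell(v)$ lying in $\mathbf{X}(wv)$ (and present only when $\ell(wv)=\ell(w)+\ell(v)$); since either $\ell(w)<L$, whence $\ell(w)+\ell(v)<\ell(w_0v)$, or $\ell(w)=L$ with $w\neq w_0$, whence $wv\neq w_0v$, the $M_{w_0v}$-component of $x_w\tau_v$ vanishes. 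Therefore $(x\tau_v)_{w_0v}=x_{w_0}\tau_v\neq 0$, i.e.\ $xH$ has nonzero image in $M_{w_0v}$. Running $v$ through the infinitely many initial segments of the ray above, the $w_0v$ are pairwise distinct, so $xH$ has nonzero image in $M_u$ for infinitely many $u\in\widetilde{W}$; since a subspace of $\bigoplus_u M_u$ with nonzero image in infinitely many factors is infinite dimensional, this forces $\dim_k xH=\infty$, the desired contradiction. I expect the only genuine obstacle to be the interference control in this last paragraph — ensuring that after multiplying by $\tau_v$ the distinguished component $x_{w_0}\tau_v$ is not cancelled by the others — which is exactly what the passage to maximal length $L$ and the length bookkeeping of Lemma \ref{length-inj} are designed to resolve; the $m=1$, $w_0\in\Omega$ caveat of Corollary \ref{length-iso} is harmless because then $\sigma(w_0)=1$, $\ell(w_0)=0$, and $v$ can always be chosen dominant.
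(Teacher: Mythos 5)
Your proof is correct and fills in, step by step, exactly the argument the paper compresses into ``it easily follows from Cor.\ \ref{length-iso}'': both rest on the finite-dimensionality of $H^j(I,\mathbf{X}^{K_m})^1$, the reduction to a $w_0$ of maximal length in the support, the isomorphism $\tau_v:\mathbf{X}(w_0)^{K_m}\xrightarrow{\cong}\mathbf{X}(w_0v)^{K_m}$ from Cor.\ \ref{length-iso}, and the non-interference coming from Lemma \ref{length-inj}. The only (immaterial) organizational difference is that you argue per element $x$ that $xH$ would be infinite-dimensional, whereas the paper phrases it as the minimal finite support $S$ of $M^1$ not being able to contain any $w$ with $\ell(w)+\sigma(w)\geq m$.
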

\begin{proof}
Since $H^j(X,\mathbf{X}^{K_m})^1$ is finite dimensional over $k$ by Prop.\ \ref{coh-formula}.i and Lemma \ref{mod-AG}.iii we certainly have $H^j(X,\mathbf{X}^{K_m})^1 \subseteq \oplus_{w \in S} \; H^j(I,\mathbf{X}(w)^{K_m})$ for some finite subset $S \subseteq \widetilde{W}$. Choose the unique minimal such $S$. It easily follows from Cor.\ \ref{length-iso} that any $w \in S$ must satisfy $\ell(w) + \sigma(w) < m$.
\end{proof}

It immediately follows that $H^j(I,\mathbf{X}^{K_1})^1 = 0$. In particular, together with Cor.\ \ref{Zm-dual} this confirms in the present setting that $Z_1 = 0$.

The action of $W$ on the apartment corresponding to $T$ is as follows:
\begin{equation*}
    \xymatrix{
      \ldots\ldots \ar@{-}@<1ex>[r] &  *=0{\underset{}{\bullet}} \ar@{-}@<1ex>[rr]^{s_0 s_1 C} & & *=0{\underset{}{\bullet}} \ar@{-}@<1ex>[rr]^{s_0 C} & & *=0{\underset{x_0}{\bullet}} \ar@{-}@<1ex>[rr]^{C} & & *=0{\underset{x_1}{\bullet}} \ar@{-}@<1ex>[rr]^{s_1 C} & & *=0{\underset{}{\bullet}} \ar@{-}@<1ex>[rr]^{s_1 s_0 C} & & *=0{\underset{}{\bullet}} \ar@{-}@<1ex>[rr]^{s_1 s_0 s_1 C} & &
      *=0{\underset{}{\bullet}} \ar@{-}@<1ex>[r] & \ldots\ldots  }
\end{equation*}

In the following we fix $j \geq 0$ and $m \geq 1$ and introduce the abbreviations
\begin{equation*}
  D_n := \oplus_{\ell(w) + \sigma(w) = n} H^j(I,\mathbf{X}(w)^{K_m}) \quad\text{and}\quad C_n := D_1 \oplus \ldots \oplus D_n
\end{equation*}
for any $n \geq 1$. If
\begin{equation}\label{f:wn}
  w_n :=
  \begin{cases}
  (s_1 s_0)^{\frac{n-1}{2}} & \text{if $n$ is odd}, \\
  s_1(s_0 s_1)^{\frac{n-2}{2}} & \text{if $n$ is even}
  \end{cases}
\end{equation}
then
\begin{equation*}
  D_n = [\oplus_{\omega \in \Omega}\; H^j(I,\mathbf{X}(s_0 w_n\omega)^{K_m})] \oplus [\oplus_{\omega \in \Omega}\; H^j(I,\mathbf{X}(w_n\omega)^{K_m})] \ .
\end{equation*}
We observe that for any $\omega \in \Omega$ and $n \geq 1$ we have
\begin{equation}\label{f:length1}
  \ell(w_n\omega  s_{\epsilon(n)}) = \ell(w_n\omega ) + 1 \quad\text{and}\quad \ell(s_0 w_n\omega s_{\epsilon(n)}) = \ell(s_0 w_n\omega) + 1 \ ,
\end{equation}
whereas
\begin{equation}\label{f:length2}
  \ell(w_n\omega s_{\epsilon(n+1)}) = \ell(w_n\omega) - 1 \quad\text{and}\quad \ell(s_0 w_n\omega  s_{\epsilon(n+1)}) = \ell(s_0 w_n\omega ) - 1 \ .
\end{equation}
Here $\epsilon(n)$ denotes the parity of $n$, i.e., $\epsilon(n) \in \{0,1\}$ is such that $\epsilon(n) \equiv n \bmod 2$.

\begin{lemma}\label{tau-grading}
For any $n \geq 1$ we have:
\begin{itemize}
  \item[i.] $D_n \tau_{s_{\epsilon(n)}} \subseteq D_{n+1}$;
  \item[ii.] $C_n \tau_{s_{\epsilon(n+1)}} \subseteq C_n$;
  \item[iii.] $C_n \tau_{s_{\epsilon(n)}} \subseteq C_{n-1} \oplus D_{n+1}$ for $n \geq 2$;
  \item[iv.] $C_n \tau_{s_{\epsilon(n)}} \tau_{s_{\epsilon(n-1)}} \ldots \tau_{s_{\epsilon(2)}} \subseteq D_1 \oplus D_3 \oplus \ldots \oplus D_{2n-3} \oplus D_{2n-1}$;
  \item[v.] $C_n \tau_{s_{\epsilon(n)}} \ldots \tau_{s_{\epsilon(2)}} \tau_{s_{\epsilon(1)}} \tau_{s_{\epsilon(2)}} \ldots \tau_{s_{\epsilon(n)}} \subseteq D_{n+1} \oplus D_{n+3} \oplus \ldots \oplus D_{3n-3} \oplus D_{3n-1}$.
\end{itemize}
\end{lemma}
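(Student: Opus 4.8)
The plan is to reduce all five assertions to two elementary facts about the right action of a single $\tau_{s_i}$ on the graded pieces, and then to obtain (i)--(v) by bookkeeping with parities. Recall that $D_r$ is spanned by the spaces $H^j(I,\mathbf{X}(w)^{K_m})$ with $w$ running over the generators $w_r\omega$ and $s_0w_r\omega$, $\omega\in\Omega$, of $D_r$, and that for such a $w$ one has $\ell(w\omega')=\ell(w)$ and $\sigma(w\omega')=\sigma(w)$ for all $\omega'\in\Omega$, so that $\mathbf{X}(w\omega')$ again contributes to $D_r$. I would first establish two building blocks. First, assertion (i) itself, $D_r\tau_{s_{\epsilon(r)}}\subseteq D_{r+1}$: by \eqref{f:length1} we have $\ell(ws_{\epsilon(r)})=\ell(w)+1$ for a generator $w$ of $D_r$, and right multiplication by $s_{\epsilon(r)}$ carries a generator of $D_r$ to a generator of $D_{r+1}$ (for instance $w_rs_{\epsilon(r)}=w_{r+1}$ and $s_0w_rs_{\epsilon(r)}=s_0w_{r+1}$, using that $\Omega$ is normal in $\widetilde W$ to move the $\omega$), in particular leaving $\sigma$ unchanged, so $\ell(ws_{\epsilon(r)})+\sigma(ws_{\epsilon(r)})=r+1$; now Lemma \ref{length-inj}.i maps $\mathbf{X}(w)$ into $\mathbf{X}(ws_{\epsilon(r)})$. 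Second, $D_r\tau_{s_{\epsilon(r+1)}}\subseteq C_r$: by \eqref{f:length2} we have $\ell(ws_{\epsilon(r+1)})=\ell(w)-1$, so Lemma \ref{length-inj}.ii gives $\mathbf{X}(w)\tau_{s_{\epsilon(r+1)}}\subseteq\mathbf{X}(ws_{\epsilon(r+1)})\oplus\bigoplus_{\omega'\in\Omega}\mathbf{X}(w\omega')$; the $\mathbf{X}(w\omega')$ lie in $D_r$, and $\ell(ws_{\epsilon(r+1)})+\sigma(ws_{\epsilon(r+1)})\leq(\ell(w)-1)+1=\ell(w)\leq r$ puts $\mathbf{X}(ws_{\epsilon(r+1)})$ in $C_r$. (The case $r=1$, where a length-$0$ element cannot actually shrink, is checked by hand: the offending generator $w=\omega$ satisfies $\ell(\omega s_0)=\ell(\omega)+1$, and Lemma \ref{length-inj}.i puts $\mathbf{X}(\omega)\tau_{s_0}$ into $D_1=C_1$.)

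Granting these, (ii) and (iii) follow by a parity case distinction applied term by term to $C_n=D_1\oplus\cdots\oplus D_n$. For (ii): if $\epsilon(n+1)=\epsilon(r+1)$, i.e.\ $n\equiv r\bmod 2$, then $D_r\tau_{s_{\epsilon(n+1)}}\subseteq C_r\subseteq C_n$ by the second building block; if $\epsilon(n+1)=\epsilon(r)$, i.e.\ $n\not\equiv r$, then necessarily $r\leq n-1$ and $D_r\tau_{s_{\epsilon(n+1)}}\subseteq D_{r+1}\subseteq C_n$ by the first. For (iii), with $n\geq 2$: the same dichotomy for $\epsilon(n)$ versus $\epsilon(r),\epsilon(r+1)$ gives $D_r\tau_{s_{\epsilon(n)}}\subseteq C_{n-1}$ for every $r\leq n$ except $r=n$, where $D_n\tau_{s_{\epsilon(n)}}\subseteq D_{n+1}$ by (i); summing yields $C_n\tau_{s_{\epsilon(n)}}\subseteq C_{n-1}\oplus D_{n+1}$.

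For (iv) I would prove by induction on $k$ (for $1\leq k\leq n-1$) that
\begin{equation*}
C_n\tau_{s_{\epsilon(n)}}\tau_{s_{\epsilon(n-1)}}\cdots\tau_{s_{\epsilon(n-k+1)}}\ \subseteq\ C_{n-k}\oplus D_{n-k+2}\oplus D_{n-k+4}\oplus\cdots\oplus D_{n+k},
\end{equation*}
the base case $k=1$ being (iii). For the inductive step one applies $\tau_{s_{\epsilon(n-k)}}$ to the right-hand side: on $C_{n-k}$ use (iii) (valid since $n-k\geq 2$ whenever the step is performed), producing $C_{n-k-1}\oplus D_{n-k+1}$; on each $D_{n-k+2j}$ note $\epsilon(n-k+2j)=\epsilon(n-k)$ and apply (i) to land in $D_{n-k+2j+1}$; reassembling gives the statement for $k+1$. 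Setting $k=n-1$ and using $C_1=D_1$ gives (iv). Finally (v) is obtained from (iv) by applying the remaining $n$ factors $\tau_{s_{\epsilon(1)}},\tau_{s_{\epsilon(2)}},\dots,\tau_{s_{\epsilon(n)}}$ one at a time: if $E=D_1\oplus D_3\oplus\cdots\oplus D_{2n-1}$ denotes the right-hand side of (iv), an easy induction shows that after the first $j$ of these factors every surviving summand $D_r$ satisfies $r\equiv j\bmod 2$, so $\epsilon(r)=\epsilon(j)$ and (i) shifts it up by exactly one; after all $n$ factors $E$ has been shifted to $D_{n+1}\oplus D_{n+3}\oplus\cdots\oplus D_{3n-1}$, which is (v).

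The main obstacle is the proof of the two building blocks, and within it the bookkeeping of the ``side'' invariant $\sigma$ under right multiplication by a simple reflection --- notably the degenerate cases in which $w$ or $ws_{\epsilon(\cdot)}$ falls into $\Omega$ and the length identities \eqref{f:length1}--\eqref{f:length2} must be read with care. Everything downstream of these two facts is purely combinatorial.
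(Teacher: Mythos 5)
Your proof is correct and takes essentially the same route as the paper's: establish (i) and (ii) from \eqref{f:length1}, \eqref{f:length2}, and Lemma \ref{length-inj}, obtain (iii) by combining them, and iterate (iii) and (i) to get (iv) and (v). You are a little more careful than the published proof in flagging the degenerate case $r=1$ of \eqref{f:length2}, where $\ell(\omega s_0)=\ell(\omega)+1$ rather than $\ell(\omega)-1$, and handling it separately via Lemma \ref{length-inj}.i.
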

\begin{proof}
i. and ii. follow from \eqref{f:length1}, \eqref{f:length2}, and Lemma \ref{length-inj}.

iii. is obtained by combining i. and ii.

iv. Starting from iii. and and applying ii. and i. to the first and second summand, respectively, of the middle term we compute
\begin{equation*}
  C_n \tau_{s_{\epsilon(n)}} \tau_{s_{\epsilon(n-1)}} \subseteq C_{n-1} \tau_{s_{\epsilon(n-1)}} \oplus D_{n+1} \tau_{s_{\epsilon(n+1)}} \subseteq C_{n-2} \oplus D_n \oplus D_{n+2} \ .
\end{equation*}
Repeating this argument finitely many times gives the assertion.

v. Apply i. repeatedly to iv.
\end{proof}

Until the end of this section,  we  assume $m\geq 2$. We define
\begin{equation*}
  v(m) :=
  \begin{cases}
  s_1 (s_0 s_1)^{m-2} & \text{if $m$ is even}, \\
  s_0 (s_1 s_0)^{m-2} & \text{if $m$ is odd}.
  \end{cases}
\end{equation*}
One easily checks that $\tau_{v(m)} = \tau_{s_{\epsilon(m-1)}} \ldots \tau_{s_{\epsilon(2)}} \tau_{s_{\epsilon(1)}} \tau_{s_{\epsilon(2)}} \ldots \tau_{s_{\epsilon(m-1)}}$.

\begin{proposition}\label{tau-annihil}
$H^j(I,\mathbf{X}^{K_m})^1 \tau_{v(m)} = 0$.
\end{proposition}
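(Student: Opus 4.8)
The plan is to pit two facts against one another. On the one hand, Corollary~\ref{length-support} tells us that the finite dimensional submodule $H^j(I,\mathbf{X}^{K_m})^1$ sits inside $C_{m-1} = D_1 \oplus \ldots \oplus D_{m-1}$, the sum of the graded pieces of ``degree'' $\ell(w) + \sigma(w) \leq m-1$. On the other hand, the element $\tau_{v(m)}$ has been arranged so that $\tau_{v(m)} = \tau_{s_{\epsilon(m-1)}} \ldots \tau_{s_{\epsilon(2)}} \tau_{s_{\epsilon(1)}} \tau_{s_{\epsilon(2)}} \ldots \tau_{s_{\epsilon(m-1)}}$; this is precisely the iterated product appearing in Lemma~\ref{tau-grading}.v for $n = m-1$ (and $n = m-1 \geq 1$ since $m \geq 2$), so that lemma yields
\[
  C_{m-1}\,\tau_{v(m)} \subseteq D_m \oplus D_{m+2} \oplus \ldots \oplus D_{3m-4} \ ,
\]
a sum of graded pieces of degree $\geq m$.

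Next I would bring in Lemma~\ref{mod-AG}.i, which says that $H^j(I,\mathbf{X}^{K_m})^1$ is a submodule of $H^j(I,\mathbf{X}^{K_m})$ and hence is stable under right multiplication by $\tau_{v(m)}$. Combining the two observations, $H^j(I,\mathbf{X}^{K_m})^1\,\tau_{v(m)}$ is contained both in $H^j(I,\mathbf{X}^{K_m})^1 \subseteq D_1 \oplus \ldots \oplus D_{m-1}$ and in $C_{m-1}\,\tau_{v(m)} \subseteq D_m \oplus D_{m+2} \oplus \ldots$. Since $H^j(I,\mathbf{X}^{K_m}) = \bigoplus_{n \geq 1} D_n$ is a genuine direct sum decomposition, these two subspaces intersect trivially, whence $H^j(I,\mathbf{X}^{K_m})^1\,\tau_{v(m)} = 0$.

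I do not expect a real obstacle here: all the substantive work has already been done in Lemma~\ref{tau-grading} and Corollary~\ref{length-support}, and what remains is the purely formal point that passing to the largest finite dimensional submodule is compatible with the $H$-action, which forces the product to lie simultaneously in degrees $\leq m-1$ and in degrees $\geq m$. When writing this out, the only things worth double-checking are that the iterated product in Lemma~\ref{tau-grading}.v specialized at $n = m-1$ is literally $\tau_{v(m)}$ (the identity recorded just before the statement), and that $D_n = 0$ for $n \leq 0$, so that $C_{m-1}$ really does exhaust the support bound furnished by Corollary~\ref{length-support}.
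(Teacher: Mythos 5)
Your proof is correct and follows the paper's argument exactly: bound $H^j(I,\mathbf{X}^{K_m})^1$ inside $C_{m-1}$ via Corollary~\ref{length-support}, apply Lemma~\ref{tau-grading}.v (with $n = m-1$) to push the product into $D_m \oplus D_{m+2} \oplus \ldots$, and use stability of $H^j(I,\mathbf{X}^{K_m})^1$ under the $H$-action to force the image into an intersection that vanishes by the directness of the decomposition. You make the submodule-stability step explicit (citing Lemma~\ref{mod-AG}.i), which the paper leaves implicit, and you have the correct upper index $D_{3m-4}$ where the paper has a harmless slip ($D_{3m-2}$), but the substance is identical.
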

\begin{proof}
We recall from Cor.\ \ref{length-support} that $H^j(I,\mathbf{X}^{K_m})^1 \subseteq C_{m-1}$. Hence Lemma \ref{tau-grading}.v implies that
\begin{equation*}
  H^j(I,\mathbf{X}^{K_m})^1 \tau_{v(m)} \subseteq H^j(I,\mathbf{X}^{K_m})^1 \cap (D_m \oplus D_{m+2} \oplus \ldots \oplus D_{3m-2}) = 0 \ .
\end{equation*}
\end{proof}

Recall that the  elements $e_1$ and $\zeta = (\tau_{s_0} + e_1)(\tau_{s_1} + e_1) + \tau_{s_1}\tau_{s_0}$ defined respectively in \S\ref{gene-rel}  and   \S\ref{sec:charH} are central in $H$, and $e_1$ is an idempotent.

\begin{lemma}\phantomsection\label{modulo}
\begin{itemize}
  \item[i.] $H^j(I,\mathbf{X}^{K_m})^1 \zeta^{m-2} \tau_{s_{\epsilon(m-1)}} = 0$.
  \item[ii.] $\zeta^{m - 1} \equiv (\tau_{s_{\epsilon(m)}} + 1) \zeta^{m - 2} e_1 \mod H \tau_{v(m)} H$.
\end{itemize}
\end{lemma}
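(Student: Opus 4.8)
The plan is to deduce both assertions from the single identity
\[
  \zeta^{m-2}\tau_{s_{\epsilon(m-1)}} = \tau_{v(m)} ,
\]
which I would prove first. By the very definition of $v(m)$ one has, as a reduced alternating word in $s_0,s_1$, $v(m)=s_{\epsilon(m-1)}\bigl(s_{\epsilon(m)}s_{\epsilon(m-1)}\bigr)^{m-2}$: for $m$ even this is $s_1(s_0s_1)^{m-2}$ and for $m$ odd it is $s_0(s_1s_0)^{m-2}$, using $\epsilon(m)=1-\epsilon(m-1)$. Hence the braid relations \eqref{f:braid} give $\tau_{v(m)}=\tau_{s_{\epsilon(m-1)}}\bigl(\tau_{s_{\epsilon(m)}}\tau_{s_{\epsilon(m-1)}}\bigr)^{m-2}$. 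On the other hand Remark \ref{explicit-identities}.ii, applied with $i=m-2$ to the generator $s_{\epsilon(m-1)}$, states exactly $\zeta^{m-2}\tau_{s_{\epsilon(m-1)}}=\tau_{s_{\epsilon(m-1)}}\bigl(\tau_{s_{\epsilon(m)}}\tau_{s_{\epsilon(m-1)}}\bigr)^{m-2}$. Comparing the two expressions yields the identity.

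Part i is then immediate: since $\zeta$ is central, $H^j(I,\mathbf{X}^{K_m})^1\zeta^{m-2}\tau_{s_{\epsilon(m-1)}}=H^j(I,\mathbf{X}^{K_m})^1\tau_{v(m)}$, and this is zero by Prop.\ \ref{tau-annihil}.

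For part ii I would expand $\zeta^{m-1}=\zeta^{m-2}\cdot\zeta$ by means of the one of the two symmetric formulas in Remark \ref{explicit-identities}.iii with $i=m-2$ in which the factor $\zeta^{m-2}\tau_{s_{\epsilon(m-1)}}$ actually occurs — that is, the first formula when $m$ is even (where $s_{\epsilon(m-1)}=s_1$) and the second when $m$ is odd (where $s_{\epsilon(m-1)}=s_0$). In either case two of the three resulting summands have the shape $(\zeta^{m-2}\tau_{s_{\epsilon(m-1)}})(\tau_{s_{\epsilon(m)}}+e_1)$ and $\tau_{s_{\epsilon(m)}}(\zeta^{m-2}\tau_{s_{\epsilon(m-1)}})$; by the identity above these equal $\tau_{v(m)}(\tau_{s_{\epsilon(m)}}+e_1)$ and $\tau_{s_{\epsilon(m)}}\tau_{v(m)}$ respectively, so they lie in the two-sided ideal $H\tau_{v(m)}H$. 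The remaining summand is $\zeta^{m-2}(\tau_{s_{\epsilon(m)}}+1)e_1$, which equals $(\tau_{s_{\epsilon(m)}}+1)\zeta^{m-2}e_1$ because $\zeta$ and $e_1$ are central. Assembling these gives $\zeta^{m-1}\equiv(\tau_{s_{\epsilon(m)}}+1)\zeta^{m-2}e_1\bmod H\tau_{v(m)}H$, as claimed.

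I do not anticipate a genuine obstacle: once the identity $\zeta^{m-2}\tau_{s_{\epsilon(m-1)}}=\tau_{v(m)}$ is in place the argument is purely formal. The only delicate point is parity bookkeeping — one must pick the form of Remark \ref{explicit-identities}.iii that yields $\zeta^{m-2}\tau_{s_{\epsilon(m-1)}}$ (not $\zeta^{m-2}\tau_{s_{\epsilon(m)}}$) in the two summands destined for $H\tau_{v(m)}H$, with the leftover term carrying the factor $\tau_{s_{\epsilon(m)}}$; this is controlled throughout by $\epsilon(m)=1-\epsilon(m-1)$, and the base case $m=2$ (where $v(2)=s_1$, $\zeta^{0}=1$) can be verified directly as a consistency check.
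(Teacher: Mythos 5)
Your proof is correct and is precisely the unpacking of the paper's one-line proof ("This follows from Prop.\ \ref{tau-annihil} and Remark \ref{explicit-identities}."). The pivotal observation $\zeta^{m-2}\tau_{s_{\epsilon(m-1)}}=\tau_{v(m)}$ (via Remark \ref{explicit-identities}.ii and the braid relations), followed by choosing the form of Remark \ref{explicit-identities}.iii whose two absorbing summands carry the factor $\zeta^{m-2}\tau_{s_{\epsilon(m-1)}}$, is exactly the intended argument, and your parity bookkeeping is accurate.
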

\begin{proof}
This follows from Prop.\ \ref{tau-annihil} and Remark \ref{explicit-identities}.
\end{proof}

It easily follows from the braid and quadratic relations that the two-sided ideal in $H$ generated by $\tau_{s_i}$ is $k$-linearly generated by all $\tau_w$ such that $s_i$ occurs in a reduced decomposition of $w$. We see that
\begin{equation*}
  H e_1 / H\tau_{s_i} e_1 H = k e_1 \oplus k\tau_{s_{1-i}} e_1 \quad\text{with}\ (\tau_{s_{1-i}} e_1)^2 = - \tau_{s_{1-i}} e_1
\end{equation*}
and therefore that $H e_1 / H\tau_{s_i} e_1 H \cong k \times k[\tau_{s_{1-i}} e_1]/ \langle \tau_{s_{1-i}} e_1 + 1 \rangle$ is a semisimple $k$-algebra.

\begin{lemma}\label{-1}
$\tau_{s_{\epsilon(m)}}$ acts as $-1$ on $H^1(I,\mathbf{X}^{K_m})^1 \zeta^{m-2} e_1$.
\end{lemma}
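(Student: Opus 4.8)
The plan is to combine the annihilation statement from Proposition~\ref{tau-annihil} (in the form of Lemma~\ref{modulo}.i) with the algebra structure of $He_1$ modulo the two-sided ideal generated by $\tau_{v(m)}$, which was described just before the statement. First I would set $N := H^1(I,\mathbf{X}^{K_m})^1 \zeta^{m-2} e_1$; this is a submodule of $H^1(I,\mathbf{X}^{K_m})^1$, hence finite dimensional over $k$, and it is a module over the quotient algebra $He_1/H\tau_{v(m)}H$ because Proposition~\ref{tau-annihil} gives $H^1(I,\mathbf{X}^{K_m})^1 \tau_{v(m)} = 0$ and $\tau_{v(m)}$ lies in the two-sided ideal generated by $\tau_{s_{\epsilon(m-1)}}$. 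I would then use Lemma~\ref{modulo}.i, $N\tau_{s_{\epsilon(m-1)}} = 0$: reading the indices, $v(m)$ is a reduced word in which the only ``outermost'' letter is $s_{\epsilon(m-1)}$, but more importantly the key identity is that modulo $H\tau_{v(m)}H$ the element $\zeta^{m-1}$ is congruent to $(\tau_{s_{\epsilon(m)}}+1)\zeta^{m-2}e_1$ by Lemma~\ref{modulo}.ii, so that on $N$ one has $\zeta^{m-1}$ acting as $(\tau_{s_{\epsilon(m)}}+1)$ times the defining twist.

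Next I would bring in the quadratic relation. From the displayed computation preceding the lemma, $He_1/H\tau_{s_{\epsilon(m-1)}}e_1H \cong k \times k[\tau_{s_{\epsilon(m)}}e_1]/\langle \tau_{s_{\epsilon(m)}}e_1+1\rangle$, a semisimple $k$-algebra on which $\tau_{s_{\epsilon(m)}}$ acts by the two eigenvalues $0$ and $-1$ (since $\epsilon(m)$ and $\epsilon(m-1)$ are the two distinct parities). So $N$ decomposes as $N = N_0 \oplus N_{-1}$ according to these eigenvalues of $\tau_{s_{\epsilon(m)}}$, and I must show $N_0 = 0$. On the summand $N_0$, where $\tau_{s_{\epsilon(m)}}$ acts by $0$, the congruence of Lemma~\ref{modulo}.ii forces $\zeta^{m-1}$ to act as $1\cdot\zeta^{m-2}e_1$ restricted appropriately — i.e.\ $\zeta$ acts invertibly on $N_0$ in a way incompatible with supersingularity. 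More precisely, $N_0 \subseteq H^1(I,\mathbf{X}^{K_m})^1 \zeta^{m-2}$ but Proposition~\ref{propintro}/Corollary~\ref{supersingular} (equivalently Lemma~\ref{modulo}.i applied iteratively) tells us $H^1(I,\mathbf{X}^{K_m})^1$ is $\zeta$-power-torsion; on $N_0$ the relation $\zeta^{m-1} = \zeta^{m-2}e_1$ would propagate to $\zeta^{m-1}=\zeta^{m-2}e_1 = \zeta^{m-3}e_1 = \cdots$, contradicting nilpotence of $\zeta$ unless the relevant vectors are zero. I would phrase this cleanly by noting that $e_1$ acts as the identity on everything in sight (we multiplied by $e_1$), so the relation reads $\zeta^{m-1}x = \zeta^{m-2}x$ on $N_0$, hence $\zeta^{m-2}(\zeta-1)x=0$; combined with $\zeta$ nilpotent on $x$ this gives $\zeta^{m-2}x=0$, i.e.\ $x\in H^1(I,\mathbf{X}^{K_m})^1$ already killed by $\zeta^{m-2}$, and by downward induction on the $\zeta$-exponent one reaches $x=0$.

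Concretely I would run the argument as a clean induction: set $L_r := H^1(I,\mathbf{X}^{K_m})^1\zeta^{r}e_1$ for $r\ge 0$ and show, using Lemma~\ref{modulo} together with the eigenvalue decomposition of $\tau_{s_{\epsilon(m)}}$ on $L_{m-2}$, that the ``$\tau_{s_{\epsilon(m)}}=0$'' part of $L_{m-2}$ lies in $L_{m-1}$; but $L_{m-1}=L_{m-2}\cdot\zeta$ and iterating gives that this part lies in $\bigcap_r L_r = 0$ since $\zeta$ is nilpotent on the finite-dimensional module $H^1(I,\mathbf{X}^{K_m})^1$. Therefore $\tau_{s_{\epsilon(m)}}$ has only the eigenvalue $-1$ on $L_{m-2} = H^1(I,\mathbf{X}^{K_m})^1\zeta^{m-2}e_1$, which is the assertion.

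The main obstacle I anticipate is bookkeeping the parities $\epsilon(m)$ versus $\epsilon(m-1)$ and checking carefully that the relation in Lemma~\ref{modulo}.ii, which holds only modulo $H\tau_{v(m)}H$, may legitimately be applied to vectors in $H^1(I,\mathbf{X}^{K_m})^1$ — this is exactly what Proposition~\ref{tau-annihil} guarantees, so the logical dependency is fine, but one must be scrupulous that the element $\zeta^{m-2}\tau_{s_{\epsilon(m-1)}}$ appearing in Lemma~\ref{modulo}.i is literally the piece of $v(m)$ being used and that $e_1$ is idempotent so that passing between ``$\cdot\,e_1$'' and ``$e_1$ acts as identity'' is harmless. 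The rest is the standard ``nilpotent operator has no nonzero fixed vector'' trick dressed up via the two-sided ideal.
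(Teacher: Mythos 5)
Your identification of the algebraic structure is correct: $N := H^1(I,\mathbf{X}^{K_m})^1\zeta^{m-2}e_1$ is a module over the semisimple quotient $He_1/H\tau_{s_{\epsilon(m-1)}}e_1H \cong k \times k[\tau_{s_{\epsilon(m)}}e_1]/\langle\tau_{s_{\epsilon(m)}}e_1+1\rangle$, so $\tau_{s_{\epsilon(m)}}$ acts semisimply with eigenvalues $0$ and $-1$, and the lemma reduces to showing the $0$-eigenspace $N_0$ vanishes. Your computation $z\zeta = z$ for $z \in N_0$ (via Lemma~\ref{modulo}.ii) is also correct. But the final step is circular. You kill $N_0$ by invoking nilpotence of $\zeta$ on $H^1(I,\mathbf{X}^{K_m})^1$, citing Proposition~\ref{propintro}/Corollary~\ref{supersingular}; but Corollary~\ref{supersingular} is a restatement of Proposition~\ref{zeta}, and the paper's proof of Proposition~\ref{zeta} uses Lemma~\ref{-1}, the very statement you are proving. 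The parenthetical claim that nilpotence follows ``equivalently from Lemma~\ref{modulo}.i applied iteratively'' is also not right: Lemma~\ref{modulo}.i only gives $N\tau_{s_{\epsilon(m-1)}}=0$, which is the input to the semisimple-quotient observation, not $\zeta$-nilpotence.

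Moreover, the identity $z\zeta = z$ is not inherently contradictory: any $\chi_{triv}$-eigenvector satisfies it, since $\chi_{triv}(\zeta)=1$. What you have actually shown is that a nonzero $z \in N_0$ satisfies $z\tau_{s_{\epsilon(m)}}=z\tau_{s_{\epsilon(m-1)}}=0$ and $ze_1 = z$ (hence $z\tau_\omega = z$ for all $\omega$, using $e_1\tau_\omega = e_1$), so $z$ spans a copy of $\chi_{triv}$ inside $H^1(I,\mathbf{X}^{K_m})^1$. The non-circular way to finish — and the route the paper takes — is to invoke Corollary~\ref{coro:case0}, which proves $H^1(I,\mathbf{X}^{K_m})^1(\chi_{triv}) = 0$ by direct group-cohomology computations (via Lemma~\ref{quadratic}), entirely independently of the supersingularity results. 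Replace your appeal to $\zeta$-nilpotence with a forward reference to Corollary~\ref{coro:case0}, and the argument closes correctly.
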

\begin{proof}
By Lemma \ref{modulo}.i the $H$-module $H^1(I,\mathbf{X}^{K_m})^1 \zeta^{m-2} e_1$ in fact is an $H e_1 / H\tau_{s_{\epsilon(m-1)}} e_1 H$-module. Hence, as we  discussed above, $\tau_{s_{\epsilon(m)}}$ acts semisimply with possible eigenvalues $0$ and $-1$ on $H^1(I,\mathbf{X}^{K_m})^1 \zeta^{m-2} e_1$. But the eigenvalue $0$ does not occur, otherwise $H^1(I,\mathbf{X}^{K_m})^1 e_1$ would contain the trivial character $\chi_{triv}$ defined in \S\ref{sec:charH}: it is not the case as proved  in Cor.\ \ref{coro:case0} below.
\end{proof}

\begin{proposition}\label{zeta}
The central element $\zeta^{m-1}$ annihilates $H^1(I,\mathbf{X}^{K_m})^1$ as well as $Z_m$.
\end{proposition}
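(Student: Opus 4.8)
The plan is to combine the pieces assembled in this subsection. Write $M := H^1(I,\mathbf{X}^{K_m})^1$; it is a right $H$-submodule of $H^1(I,\mathbf{X}^{K_m})$, finite dimensional over $k$ by Prop.\ \ref{coh-formula} and Lemma \ref{mod-AG}.iii. If $m = 1$ then $M = 0$ (as observed after Cor.\ \ref{length-support}) and $\zeta^{m-1} = 1$, so there is nothing to prove; hence assume $m \geq 2$.

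The first observation I would make is that $M$ is annihilated by the two-sided ideal $H\tau_{v(m)}H$: for $h_1,h_2 \in H$ one has $M h_1 \subseteq M$ since $M$ is a submodule, hence $(M h_1)\tau_{v(m)} \subseteq M\tau_{v(m)} = 0$ by Prop.\ \ref{tau-annihil} applied with $j=1$, and therefore $M\, h_1\tau_{v(m)}h_2 = 0$; as the products $h_1\tau_{v(m)}h_2$ span $H\tau_{v(m)}H$, this gives $M\cdot H\tau_{v(m)}H = 0$. Next, Lemma \ref{modulo}.ii lets me write $\zeta^{m-1} = (\tau_{s_{\epsilon(m)}}+1)\zeta^{m-2}e_1 + c$ with $c \in H\tau_{v(m)}H$, so, using that $\zeta$ and $e_1$ are central,
\begin{equation*}
  M\zeta^{m-1} = M(\tau_{s_{\epsilon(m)}}+1)\zeta^{m-2}e_1 = (M\zeta^{m-2}e_1)(\tau_{s_{\epsilon(m)}}+1).
\end{equation*}
By Lemma \ref{-1} the operator $\tau_{s_{\epsilon(m)}}$ acts as $-1$ on $M\zeta^{m-2}e_1$, so the right hand side vanishes, proving $M\zeta^{m-1} = 0$.

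Finally, to pass to $Z_m$ I would invoke Cor.\ \ref{Zm-dual}, which gives $Z_m = \Ext^1_H(H^1(I,\mathbf{X}^{K_m})^1,H) = \Ext^1_H(M,H)$. Multiplication by the central element $\zeta^{m-1}$ is an endomorphism of the right $H$-module $M$, equal to $0$ by what we just showed; applying the contravariant functor $\Ext^1_H(-,H)$ and using the standard fact that for a central element the $\Ext$-group is computed with either argument — so that $\Ext^1_H(\zeta^{m-1}\!\cdot\!\mathrm{id}_M,H)$ is exactly multiplication by $\zeta^{m-1}$ on $Z_m$ — we conclude $\zeta^{m-1}Z_m = 0$. (Alternatively one may use the explicit formula \eqref{f:dual1} together with $\upiota(\zeta) = \zeta$.) No step here is a genuine obstacle: all the real work sits in Prop.\ \ref{tau-annihil}, Lemma \ref{modulo} and Lemma \ref{-1}, and the passage from $M$ to $Z_m$ is purely formal once one notes the centrality of $\zeta$.
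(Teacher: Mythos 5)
Your proof is correct and follows the same route as the paper's: Prop.\ \ref{tau-annihil} plus Lemma \ref{modulo}.ii plus Lemma \ref{-1} for $H^1(I,\mathbf{X}^{K_m})^1$, then the formal transfer through $\Ext^1_H(-,H)$ via Cor.\ \ref{Zm-dual} for $Z_m$ (the paper cites \cite{BW} I Lemma 4.4 for precisely the observation that a central element annihilating $M$ also annihilates $\Ext^j_H(M,H)$). Your version simply spells out the details the paper leaves to the reader — in particular the verification that $M$ kills the whole two-sided ideal $H\tau_{v(m)}H$, which is a worthwhile sanity check.
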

\begin{proof}
For $H^1(I,\mathbf{X}^{K_m})^1$ this follows from Prop.\ \ref{tau-annihil}, Lemma \ref{modulo}.ii, and Lemma \ref{-1}. On the other hand it is a formal fact (for any ring) that if a central element in $H$ annihilates a (right) $H$-module $M$ then it also annihilates the (left)$H$-modules $\Ext^j(M,H)$ (cf. \cite{BW} I Lemma 4.4). Hence for $Z_m$ it remains to recall Cor.\ \ref{Zm-dual}.
\end{proof}

An $H$-module $M$ will be called $\zeta$-torsion if every element of $M$ is annihilated by a power of $\zeta$.

\begin{corollary}\phantomsection\label{zeta-torsion}
\begin{itemize}
  \item[i.] Any $\zeta$-torsionfree $H$-module lies in $\mathcal F_G$.
  \item[ii.] Every $H$-module in $\mathcal{T}_G$ is $\zeta$-torsion.
\end{itemize}
\end{corollary}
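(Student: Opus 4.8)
The plan is to deduce both statements directly from Prop.\ \ref{zeta}, using only that $\zeta$ is central in $H$ and that, by the construction of the torsion pair in \S\ref{sec:defiTP}, a module $M$ lies in $\mathcal{F}_G$ if and only if $\Hom_H(Z_m,M) = 0$ for all $m \geq 1$, while $N$ lies in $\mathcal{T}_G$ if and only if $\Hom_H(N,M) = 0$ for all $M \in \mathcal{F}_G$. Centrality guarantees that multiplication by $\zeta$ is an $H$-linear endomorphism of every $H$-module and commutes with every $H$-linear map, so it interacts well with $\Hom$.

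For i., I would take a $\zeta$-torsionfree module $M$ (equivalently, one on which multiplication by $\zeta$ is injective) and, for each $m \geq 1$ and each $\varphi \in \Hom_H(Z_m,M)$, observe that $\zeta^{m-1}\varphi(z) = \varphi(\zeta^{m-1}z) = 0$ for every $z \in Z_m$ by Prop.\ \ref{zeta} (for $m = 1$ one simply uses $Z_1 = 0$, Prop.\ \ref{Z1}, or Cor.\ \ref{length-support}). Hence $\varphi(z)$ is a $\zeta$-torsion element of $M$, so $\varphi(z) = 0$; thus $\Hom_H(Z_m,M) = 0$ for all $m$ and $M \in \mathcal{F}_G$.

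For ii., given $N \in \mathcal{T}_G$ I would let $N_0 \subseteq N$ be the submodule of $\zeta$-torsion elements (a submodule because $\zeta$ is central) and check that $N/N_0$ is $\zeta$-torsionfree: if $\zeta^k(n + N_0) = 0$ then $\zeta^k n \in N_0$, hence $\zeta^{k+j} n = 0$ for some $j$ and $n \in N_0$. Part i.\ then gives $N/N_0 \in \mathcal{F}_G$, so the canonical surjection $N \twoheadrightarrow N/N_0$ lies in $\Hom_H(N, N/N_0) = 0$; therefore $N = N_0$ is $\zeta$-torsion.

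There is no genuine obstacle in the Corollary itself: all the real work sits in Prop.\ \ref{zeta} (and behind it in Cor.\ \ref{length-support}, Lemma \ref{tau-grading}, and the nonvanishing input of Cor.\ \ref{coro:case0}). The only point I would treat with a word of care is the degenerate case $m = 1$ in part i., where $m - 1 = 0$ and one should invoke $Z_1 = 0$ rather than a positive power of $\zeta$.
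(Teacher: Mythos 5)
Your proof is correct and follows essentially the same route as the paper: both arguments reduce to Prop.\ \ref{zeta}, pass through the $\zeta$-torsion submodule $t_\zeta(N)$, and use the projection $N \twoheadrightarrow N/t_\zeta(N)$ to deduce ii.\ from i. Your added detail on why $\Hom_H(Z_m,M)=0$ (using centrality and $\zeta^{m-1}Z_m=0$) and your care about $m=1$ are harmless elaborations of what the paper leaves implicit.
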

\begin{proof}
Let $t_\zeta(M) := \{m \in M : \zeta^j m = 0\ \text{for some $j \geq 0$}\}$ denote the $\zeta$-torsion submodule of an $H$-module $M$. Because of Prop.\ \ref{zeta} a $\zeta$-torsionfree module cannot receive a nonzero map from some $Z_m$. Hence any $\zeta$-torsionfree module lies in $\mathcal{F}_G$. It follows that for any $M$ in $\mathcal{T}_G$ the projection map $M \longrightarrow M/t_\zeta (M)$ must be the zero map and consequently that $M = t_\zeta (M)$.
\end{proof}

\begin{corollary}\label{supersingular}
The $H$-modules $H^1(I,\mathbf{X}^{K_m})^1$ and $Z_m$ are supersingular.
\end{corollary}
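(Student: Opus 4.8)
The plan is to read this off from Proposition \ref{zeta} together with the characterization of supersingular modules in Lemma \ref{defi:supersing}. Recall that the latter says that a \emph{finite length} $H$-module is supersingular if and only if it is annihilated by some power of the central element $\zeta$. So two ingredients are needed: first, that $H^1(I,\mathbf{X}^{K_m})^1$ and $Z_m$ have finite length; second, that each of them is killed by a power of $\zeta$.

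The second ingredient requires no further work: Proposition \ref{zeta} already states that $\zeta^{m-1}$ annihilates both $H^1(I,\mathbf{X}^{K_m})^1$ and $Z_m$. For the first ingredient, recall that over the noetherian ring $H$ a finitely generated module has finite length exactly when it is finite dimensional over $k$ (cf.\ \cite{OS} Lemma 6.9). Now $H^1(I,\mathbf{X}^{K_m})$ is finitely generated by Proposition \ref{coh-formula}, and its submodule $H^1(I,\mathbf{X}^{K_m})^1$ is finite dimensional over $k$ by Lemma \ref{mod-AG}.iii; likewise $Z_m$ is finite dimensional over $k$ by Proposition \ref{semireflexive}.i. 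Feeding these facts into Lemma \ref{defi:supersing} yields the claim, with the convention that the zero module (the relevant case when $m=1$, as noted after Cor.\ \ref{length-support}) counts as supersingular.

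There is no real obstacle here: the corollary is pure bookkeeping once Proposition \ref{zeta} is available. The only point requiring a little care is that Lemma \ref{defi:supersing} is phrased for finite length modules, so one must first invoke the finite dimensionality recorded above before applying it.
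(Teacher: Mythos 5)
Your proof is correct and takes essentially the same route as the paper: invoke Lemma \ref{defi:supersing}, feed in Prop.\ \ref{zeta}, and note finite dimensionality. The paper merely mentions finite dimensionality parenthetically, whereas you spell out the citations (Lemma \ref{mod-AG}.iii, Prop.\ \ref{semireflexive}.i); that extra care is welcome but not a difference in substance.
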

\begin{proof} By Lemma \ref{defi:supersing}, it suffices to check that
 the (finite dimensional) $H$-modules in question are annihilated by  a power of $\zeta$ which is the claim of Prop.\ \ref{zeta}.
\end{proof}

\subsubsection{Main results on the fine structure of $Z_m$}

The following theorem will be proved in Section \ref{prooftheo}.

\begin{theorem}\phantomsection\label{maintheosocle}
\begin{enumerate}
\item Suppose  $\mathfrak{F} = \mathbb{Q}_p$. Then $H^1(I,\mathbf{X}^{K_m})^1 = 0$ for any $m \geq 1$.
\item Suppose $\mathbb{F}_q \subseteq k$, $\mathfrak{F} \neq \mathbb{Q}_p$, and  $p\neq 2$. Then  $H^1(I,\mathbf{X}^{K_1})^1 = 0$ and every supersingular character appears with nonzero multiplicity in the socle of $H^1(I,\mathbf{X}^{K_2})^1$ or of
$H^1(I,\mathbf{X}^{K_3})^1$, namely:\phantomsection
\begin{enumerate}
\item  For any $m\geq 1$ and $\epsilon(m) = i$ we have
\begin{equation*}
\dim_k H^1(I,\mathbf{X}^{K_m})^1(\chi_i) = \dim_k H^1(I,\mathbf{X}^{K_{m+1}})^1(\chi_i) =(m-1)f \ .
\end{equation*}
\item   For any supersingular character $\chi$ of $H$ with $\chi \neq \chi_0, \chi_1$ we have:
\begin{itemize}
  \item[i.] if $q \neq 3$, then
  \begin{equation*}
  \dim_k H^1(I,\mathbf{X}^{K_2})^1(\chi) =
  \begin{cases}
  f-1 & \text{if $z \mapsto \chi(\tau_{\omega_z})^{-1}$ is an automorphism of
         $\mathbb{F}_q$}, \\
  f & \text{otherwise};
  \end{cases}
\end{equation*}
  \item[ii.] if $q = p \neq 3$ and $\chi$ is the unique supersingular character
  such that  $z \mapsto \chi(\tau_{\omega_z})^{-1}$ is the identity map on
  $\mathbb{F}_p ^\times$, then
\begin{equation*}
  \dim_k H^1(I,\mathbf{X}^{K_3})^1(\chi) = 2f = 2 \ ;
\end{equation*}
  \item[iii.] if $q = p = 3$, then $z \mapsto \chi(\tau_{\omega_z})^{-1}$ is the identity map on
  $\mathbb{F}_p ^\times$ and we have
\begin{equation*}
  \dim_k H^1(I,\mathbf{X}^{K_2})^1(\chi) = 0 \qquad\text{and}\qquad  \dim_k H^1(I,\mathbf{X}^{K_3})^1(\chi) = f = 1 \ .
\end{equation*}
\end{itemize}
\end{enumerate}
\end{enumerate}
\end{theorem}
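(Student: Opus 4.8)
The plan is to deduce Theorem \ref{maintheosocle} from an explicit computation in the cohomology of congruence subgroups of $\mathbf{SL_2}(\mathfrak{O})$, the combinatorial backbone being Corollary \ref{length-support} together with Lemmas \ref{length-inj} and \ref{tau-grading}. First I would reduce to group cohomology. By Corollary \ref{length-support} the module $H^1(I,\mathbf{X}^{K_m})^1$ is contained in $C_{m-1}=\bigoplus_{\ell(w)+\sigma(w)<m}H^1(I,\mathbf{X}(w)^{K_m})$. For $w$ in this range one checks, by comparing \eqref{defiIn} with the matrix description of $K_m$, that $K_m\subseteq I\cap wIw^{-1}$, hence $\mathbf{X}(w)^{K_m}=\mathbf{X}(w)\cong\ind_{I\cap wIw^{-1}}^I(1)$ and, by Shapiro's lemma, $H^1(I,\mathbf{X}(w)^{K_m})\cong H^1(I\cap wIw^{-1},k)$. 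By \eqref{f:cap} the groups that occur are the $I^+_i$ and $I^-_i$ of \eqref{defiIn} with $0\le i\le m-1$; each is a finitely generated pro-$p$ group, so its $H^1$ with coefficients in $k$ is the $k$-dual of its Frattini quotient, which is read off from the matrix description. Assembling these into the graded pieces $D_n$ of \S\ref{sec:defiDn} yields a completely explicit (though, summed over all $w$, infinite-dimensional) picture of the underlying $k$-vector space.

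Next I would determine the $H$-action and extract the socle. By Lemma \ref{length-inj} the operator $\tau_{s_i}\colon\mathbf{X}(w)\to\mathbf{X}(ws_i)\oplus\bigoplus_\omega\mathbf{X}(w\omega)$ becomes, after passing to $\ind$-presentations and applying Shapiro, a sum of a corestriction (transfer) map between two of the groups $I^\pm_\bullet$ and of conjugation isomorphisms; Lemma \ref{tau-grading} records exactly how these operators shift the grading, and $\tau_\omega$ acts through the finite quotient $\overline I$. Since $H^1(I,\mathbf{X}^{K_m})^1$ is supersingular (Corollary \ref{supersingular}) and, by Proposition \ref{charH}, its socle is a direct sum of supersingular characters, the multiplicity of a character $\chi$ with $\lambda:=\chi|_\Omega$ equals $\dim_k\big(e_\lambda H^1(I,\mathbf{X}^{K_m})^1\cap\ker(\tau_{s_0}-\chi(\tau_{s_0}))\cap\ker(\tau_{s_1}-\chi(\tau_{s_1}))\big)$, which one evaluates block by block via the idempotent decomposition \eqref{f:idempotents}; the two families $\chi=\chi_i$ (where $\lambda=1$) and $\chi\neq\chi_0,\chi_1$ (where $\chi(\tau_{s_0})=\chi(\tau_{s_1})=0$) are treated separately, the latter also using the small-$q$ refinements of Remark \ref{modulesS1}. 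An equivalent route to the same numerical data is the duality \eqref{f:dual1}--\eqref{f:dual2}, which turns the socle of $H^1(I,\mathbf{X}^{K_m})^1$ into the cosocle of $\upiota^*Z_m$, and turns the vanishing statements into $Z_m=0$.

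For part (1), $\mathfrak{F}=\mathbb{Q}_p$ gives $\mathfrak{M}=p\mathfrak{O}$ and $f=1$, and the outcome of the Step 1--2 analysis is that for $0\le i\le m-1$ the transfer maps among the spaces $H^1(I^\pm_i,k)$ implementing the $\tau_{s_i}$-action are isomorphisms; combined with Lemma \ref{tau-grading} this shows that any nonzero submodule supported in $C_{m-1}$ is carried isomorphically onto a subspace of a strictly higher graded piece by some $\tau_{s_i}$, so it cannot be finite-dimensional, and since $H^1(I,\mathbf{X}^{K_m})^1\subseteq C_{m-1}$ it must vanish (equivalently $H^1(I,\mathbf{X}^{K_m})$ is $\ast$-acyclic, cf.\ Lemma \ref{mod-AG}; equivalently $((\mathbf{X}^{K_m})^*)_I=H$, extending Proposition \ref{Z1}). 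For part (2), with $\mathfrak{F}\neq\mathbb{Q}_p$ and $p\neq2$, the same transfer maps now have kernel and cokernel of a size controlled by quotients $\mathfrak{M}^a/\mathfrak{M}^b$ -- hence by $ef$ and by the number of admissible levels $n<m$ of a given parity -- and a careful count yields the stated dimensions $(m-1)f$ for $\chi_{\epsilon(m)}$ and $f$, $f-1$, $2f$ or $f$ in the exceptional cases $q=3$, $q=p$; the claim that every supersingular character already occurs in the socle for $m=2$ or $m=3$ then follows by inspection of these formulas.

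I expect the main obstacle to be Step 2 carried out with enough precision to feed Step 3: pinning down the corestriction maps between the congruence subgroups $I^\pm_i$ of $\mathbf{SL_2}(\mathfrak{O})$, computing their kernels and cokernels, and then extracting the genuinely finite-dimensional socle from an a priori infinite-dimensional $H$-module while keeping track of the $\Omega$-isotypic components. It is exactly here that the arithmetic of $\mathfrak{F}$ -- its uniformizer, its residue degree $f$, and the anomalies at $q=3$ and $q=p$ -- enters, and where the sharp dichotomy between $\mathbb{Q}_p$ and every other $\mathfrak{F}$ originates; the companion computation of the full $H$-module structure of $H^1(I,\mathbf{X}^{K_1})$ in \S\ref{sec:fullspaces} should serve as a useful model and base case.
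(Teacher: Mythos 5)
Your overall strategy matches the paper's: use Corollary \ref{length-support} to confine $H^1(I,\mathbf{X}^{K_m})^1$ to the low graded pieces, pass to the Shapiro picture where the summands become $H^1(I^\pm_i,k)$, translate the Hecke operators $\tau_\omega,\tau_{s_0},\tau_{s_1}$ into conjugation, restriction, and corestriction maps (the paper records this in \S\ref{sec:explicit} and \S\ref{sec:prelilemm}), reduce to characters via Corollary \ref{supersingular} and Proposition \ref{charH}, and feed in the explicit computations of Frattini quotients and transfer maps from the Appendix \S\ref{sec:App}. The paper also adds one subtlety you do not flag: Corollary \ref{coro:case0} (vanishing of the $\chi_{triv}$-eigenspace) must be proved first, since it was invoked in the proof of Lemma \ref{-1} to establish supersingularity of $H^1(I,\mathbf{X}^{K_m})^1$ in the first place.

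However, the mechanism you propose for part (1) is not correct as stated. You assert that for $\mathfrak{F}=\mathbb{Q}_p$ the transfer maps among the $H^1(I^\pm_i,k)$ are isomorphisms, and that any finite-dimensional submodule is therefore ``carried isomorphically'' to a higher graded piece by some $\tau_{s_i}$. Lemma \ref{transfer} shows that the transfer maps $(I^\pm_i)_\Phi\to(I^\pm_{i+1})_\Phi$ are not injective (e.g.\ they kill $\overline{u_+(1)}$ and $\overline{d_p}$); so the corestriction maps are not isomorphisms. Moreover, the ``carried isomorphically'' argument cannot apply to the socle: a $\chi$-eigenvector with $\chi(\tau_{s_i})=0$ is annihilated by $\tau_{s_i}$, not transported. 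The actual ingredient is Remark \ref{kernels-intersection}: for $\mathbb{Q}_p$ one has $\ker(\mathrm{cores})\cap\ker(\mathrm{res})=0$ on $H^1(I^\pm_i,k)$, and Corollary \ref{images-directsum} gives similar transversality at level $1$. The eigenvalue equations in Lemma \ref{quadratic} (and Lemma \ref{quadratic-cores}) rewrite the conditions $x\tau_{s_0}=\chi(\tau_{s_0})x$, $x\tau_{s_1}=\chi(\tau_{s_1})x$ as simultaneous membership in a cores-kernel and a res-kernel (or a cores-image plus a res-kernel), which these transversality results force to be zero; propagating this level by level (Propositions \ref{case0Qp}, \ref{case-0Qp}, \ref{case-0-1}) kills all characters, and then the reduction to characters gives $H^1(I,\mathbf{X}^{K_m})^1=0$ in Corollary \ref{Zm-zero'}. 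You would need to replace the ``transfer is an isomorphism'' claim with this kernel-intersection argument for the proof of part (1) to go through; the quantitative bookkeeping you sketch for part (2) is otherwise consistent with what the paper does in Propositions \ref{case-0}, \ref{nosmall-q} and \ref{nosmall-q-chi1}.
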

The case $\mathfrak F\neq \mathbb Q_p$ and $q=p=2$ or $3$ is studied  in \S\ref{sec:smallq}.

Recall that, by Prop.\ \ref{Z1}, we know that $Z_1=0$ for any $\mathfrak F$.

\begin{corollary}\phantomsection\label{Zm-zero}
\begin{enumerate}
\item Suppose  $\mathfrak{F} = \mathbb{Q}_p$. Then  $Z_m = 0$ for any $m \geq 1$.
\item Suppose  $\mathfrak{F} \neq \mathbb{Q}_p$,  $p \neq 2$ and that $\mathbb{F}_q \subseteq k$; then any simple supersingular $H$-module is isomorphic to a quotient of $Z_2$ or $Z_3$.
\end{enumerate}
\end{corollary}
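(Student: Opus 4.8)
The plan is to read off both statements from Theorem~\ref{maintheosocle}, combined with the identification $Z_m \cong \Hom_k(\upiota^\ast H^1(I,\mathbf{X}^{K_m})^1,k)$ from \eqref{f:dual1} and the classification of simple supersingular $H$-modules in Proposition~\ref{charH}. Part~(1) is immediate: for $\mathfrak{F} = \mathbb{Q}_p$ Theorem~\ref{maintheosocle}(1) gives $H^1(I,\mathbf{X}^{K_m})^1 = 0$ for every $m \geq 1$, hence $Z_m = \Ext^1_H(H^1(I,\mathbf{X}^{K_m})^1,H) = 0$ by Corollary~\ref{Zm-dual}.

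For part~(2) the engine is the following duality principle: if a character $\chi$ of $H$ occurs in the socle of the right $H$-module $H^1(I,\mathbf{X}^{K_m})^1$, i.e.\ embeds into it, then applying the exact twist $\upiota^\ast$ and then the exact contravariant functor $\Hom_k(-,k)$ to this embedding produces, through \eqref{f:dual1}, a surjection of left $H$-modules $Z_m \twoheadrightarrow \Hom_k(\upiota^\ast\chi,k) \cong \chi\circ\upiota$. Thus it is enough to exhibit every simple supersingular module, after twisting by $\upiota$, in the socle of $H^1(I,\mathbf{X}^{K_2})^1$ or $H^1(I,\mathbf{X}^{K_3})^1$. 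By Proposition~\ref{charH} such a module is a character $\chi \neq \chi_{triv},\chi_{sign}$; a short computation with the formula for $\upiota$ in \S\ref{involution} shows that $\upiota$ interchanges $\chi_0$ and $\chi_1$ and fixes every supersingular character $\chi$ with $\chi|_\Omega \neq 1$ (for which $\chi(e_1) = 0$, hence $\chi(\tau_{s_i}) = 0$ by the quadratic relation). Theorem~\ref{maintheosocle}(2) then supplies precisely the occurrences we need: $\chi_0$ lies in the socle of $H^1(I,\mathbf{X}^{K_2})^1$ (with multiplicity $f$), $\chi_1$ lies in the socle of $H^1(I,\mathbf{X}^{K_3})^1$ (with multiplicity $2f$), and any other supersingular character lies in the socle of $H^1(I,\mathbf{X}^{K_2})^1$ or of $H^1(I,\mathbf{X}^{K_3})^1$ — running through the cases in part~(2)(b) and using $p \neq 2$ one checks that at least one of the two tabulated multiplicities is positive. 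Feeding these embeddings into the duality principle yields $Z_2 \twoheadrightarrow \chi_1$, $Z_3 \twoheadrightarrow \chi_0$, and $Z_m \twoheadrightarrow \chi$ for a suitable $m \in \{2,3\}$ and every supersingular $\chi \neq \chi_0,\chi_1$, which is exactly the assertion.

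The only delicate point is the bookkeeping inside the duality principle: $H^1(I,\mathbf{X}^{K_m})^1$ is a right $H$-module while $Z_m$ is a left $H$-module, and it is the twist $\chi \mapsto \chi\circ\upiota$ that forces the apparently asymmetric pairing $Z_2 \leftrightarrow \chi_1$, $Z_3 \leftrightarrow \chi_0$ for the two exceptional supersingular characters; one has to keep the module sides and this twist aligned throughout. Everything else is a direct consequence of Theorem~\ref{maintheosocle}, which is the substantive input and is established separately.
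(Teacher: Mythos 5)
Your proof is correct and follows essentially the same route as the paper's: identify the simple supersingular modules via Prop.~\ref{charH}, read off their occurrence in the socles of $H^1(I,\mathbf{X}^{K_2})^1$ and $H^1(I,\mathbf{X}^{K_3})^1$ from Thm.~\ref{maintheosocle}, and pass to quotients of $Z_2$ or $Z_3$ by dualizing with Cor.~\ref{Zm-dual} and \eqref{f:dual1}. The paper states the final step tersely (``with $\chi$ also $\upiota^*(\chi)$ runs over all supersingular characters''), where you spell out explicitly that $\upiota$ swaps $\chi_0$ and $\chi_1$ and fixes the remaining supersingular characters; that extra bookkeeping is a welcome clarification but not a different argument.
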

\begin{proof} 1. Follows from point 1 of the Thm.\ by Cor.\ \ref{Zm-dual}.

2. Because of the assumption that $\mathbb{F}_q \subseteq k$ the simple supersingular $H$-modules are the characters $\chi \neq \chi_{triv}, \chi_{sign}$ (Prop.\ \ref{charH}). By the theorem, they all occur in the socles of $H^1(I,\mathbf{X}^{K_2})^1$ or $H^1(I,\mathbf{X}^{K_3})^1$. Therefore, by duality using Cor.\ \ref{Zm-dual} and \eqref{f:dual1}, the characters $\Hom_k(\upiota^*(\chi),k) \cong \upiota^*(\chi)$, where $\upiota$ is the involutive automorphism of $H$ defined in \S\ref{involution}, occur as quotients of $Z_2$ or $Z_3$. It is easy to see that with $\chi$ also $\upiota^*(\chi)$ runs over all supersingular characters.
\end{proof}

\begin{proposition}\label{Qp-equivalence}
If $\mathfrak{F} = \mathbb{Q}_p$ then $\mathcal{F}_G = \Mod(H)$ and the functor $\ften_0 : \Mod(H) \xrightarrow{\sim} \Mod^I(G)$ is an equivalence of categories with quasi-inverse $\fhom$.
\end{proposition}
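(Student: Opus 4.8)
The plan is to deduce everything from the vanishing $Z_m = 0$ for all $m \geq 1$, which is Corollary \ref{Zm-zero}.1. Since the torsion pair $(\mathcal F_G,\mathcal T_G)$ is by construction generated by the family $\mathcal Z = \{Z_m\}_{m\geq 1}$, as soon as all $Z_m = 0$ the torsionfree class consists of \emph{all} $H$-modules, i.e.\ $\mathcal F_G = \Mod(H)$. Moreover $\mathbf G = \mathbf{SL_2}$ has semisimple rank $1$, so hypothesis \sur holds by Corollary \ref{split}. Hence Theorem \ref{F-embeds} gives $\fhom\circ\ften = \id_{\Mod(H)}$ and Corollary \ref{fully-faithful}.i gives that $\ften\colon\Mod(H)\to\Mod^I(G)$ is fully faithful; in particular $\ften$ restricts to an equivalence between $\Mod(H)$ and its essential image inside $\Mod^I(G)$.

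The key remaining point is to identify $\ften$ with $\ften_0$, i.e.\ to show that the natural epimorphism $\tau_M\colon\ften_0(M) = \mathbf X\otimes_H M \twoheadrightarrow \ften(M)$ is injective for every $M$. I would argue as follows. Put $K_M := \ker(\tau_M)$; it is a smooth $k$-representation of the \emph{pro-$p$} group $I$, and since the characteristic of $k$ is $p$, a nonzero such representation always has nonzero $I$-fixed vectors, so it suffices to prove $K_M^I = \fhom(K_M) = 0$. Applying the left exact functor $\fhom$ to $0\to K_M\to\ften_0(M)\xrightarrow{\tau_M}\ften(M)\to 0$ and recalling (from the chain of maps in the proof of Theorem \ref{F-embeds}, together with $\mathcal F_G = \Mod(H)$) that the adjunction unit $M\to\fhom(\ften_0(M))$ composed with $\fhom(\ften_0(M))\to\fhom(\ften(M)) = M$ is the identity, one gets that $\fhom(\tau_M)$ is a split epimorphism, hence $\fhom(\ften_0(M))\cong M\oplus K_M^I$ as $H$-modules. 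It therefore suffices to see that the unit $M\to\fhom(\ften_0(M)) = (\mathbf X\otimes_H M)^I$ is surjective. Since $\mathbf X = \bigcup_m\mathbf X^{K_m}$, one reduces, using that each $\mathbf X^{K_m}$ is finitely generated and reflexive (Lemma \ref{univ-reflexive}) and that $\mathbf X^\ast\to(\mathbf X^{K_m})^\ast = \mathbf Z_m$ is surjective (Corollary \ref{restriction}), to controlling the kernel of each canonical map $\mathbf X^{K_m}\otimes_H M = ((\mathbf X^{K_m})^\ast)^\ast\otimes_H M\to\Hom_H((\mathbf X^{K_m})^\ast,M)$ and of the transition maps at finite level, and it is here that the hypothesis $Z_m = 0$ must enter, through the collapse of the defining short exact sequences of Corollary \ref{split} to isomorphisms $((\mathbf X^{K_m})^\ast)_I\xrightarrow{\;\cong\;}H$. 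Passing to the filtered colimit over $m$ then yields $K_M = 0$, i.e.\ $\ften = \ften_0$. This identification is the main obstacle: it is the only place where the vanishing of all $Z_m$ is used beyond the purely formal consequence $\mathcal F_G = \Mod(H)$.

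Once $\ften = \ften_0$ is known, we have a fully faithful functor $\ften_0\colon\Mod(H)\to\Mod^I(G)$ with $\fhom\circ\ften_0 = \id_{\Mod(H)}$, so the adjunction unit $\eta\colon\id\Rightarrow\fhom\ften_0$ is a natural isomorphism. It remains to prove essential surjectivity. Given $V\in\Mod^I(G)$, the counit $\varepsilon_V\colon\ften_0(\fhom(V)) = \mathbf X\otimes_H V^I\to V$ is surjective because its image is the $G$-subrepresentation of $V$ generated by $V^I$, which is all of $V$ by the very definition of $\Mod^I(G)$. By the triangle identity $\fhom(\varepsilon_V)\circ\eta_{\fhom(V)} = \id_{\fhom(V)}$, and since $\eta_{\fhom(V)}$ is an isomorphism, $\fhom(\varepsilon_V)$ is an isomorphism as well. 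Applying the left exact functor $\fhom$ to $0\to\ker(\varepsilon_V)\to\ften_0(\fhom(V))\to V$ then forces $\ker(\varepsilon_V)^I = 0$, and since $\ker(\varepsilon_V)$ is a smooth representation of the pro-$p$ group $I$ over a field of characteristic $p$, we conclude $\ker(\varepsilon_V) = 0$, i.e.\ $\varepsilon_V$ is an isomorphism. Hence $\ften_0$ and $\fhom$ are quasi-inverse equivalences of categories between $\Mod(H)$ and $\Mod^I(G)$, and $\mathcal F_G = \Mod(H)$ as already observed.
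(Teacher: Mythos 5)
The first paragraph (torsionfree class is everything, \sur holds by Corollary~\ref{split}, full faithfulness of $\ften$ via Corollary~\ref{fully-faithful}.i, $\fhom\circ\ften=\id$ via Theorem~\ref{F-embeds}) is correct and matches the paper, and so does the final paragraph on essential surjectivity: your argument with the counit $\varepsilon_V$, the triangle identity, left exactness of $\fhom$, and the pro-$p$ trick $W\neq 0 \Rightarrow W^I\neq 0$ is exactly the paper's second step, phrased slightly more categorically.

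The middle paragraph, however, has a genuine gap. You correctly reduce the identification $\ften = \ften_0$ to showing $K_M^I = 0$, but then assert without proof that this follows from $Z_m = 0$, i.e.\ from the isomorphisms $(\mathbf Z_m)_I \cong H$. That input only controls $I$-\emph{coinvariants} of $(\mathbf X^{K_m})^\ast$, which, upon applying the left exact functor $\fhom$ to $0\to K_{M,m}\to\mathbf X^{K_m}\otimes_H M\to\Hom_H(\mathbf Z_m,M)$, yields
\[
(\mathbf X^{K_m}\otimes_H M)^I \;\cong\; M \oplus K_{M,m}^I
\]
where $K_{M,m}$ is the kernel at finite level; but it offers no handle on the complementary summand $K_{M,m}^I$. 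Passing to the colimit just restates the problem: surjectivity of the unit $M\to(\mathbf X\otimes_H M)^I$ is \emph{equivalent} to $K_M^I=0$, so your reduction is circular at precisely the point where a new input must enter. The paper supplies exactly that input: for $\mathfrak F=\mathbb Q_p$ the $H_{x_i}$-modules $\mathbf X_{x_i}$ and $\mathbf X_{x_i}/H_{x_i}$ are projective (a variant of \cite{OSe} Prop.~3.2), so by Propositions~\ref{firstlevel} and~\ref{level-subquotients} the $H$-module $\mathbf X$ is projective, and therefore $\tau_M$ is an isomorphism. The vanishing of $Z_m$ and the projectivity of $\mathbf X$ over $H$ are logically independent facts, and your proposal uses only the first where the paper crucially needs the second. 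To complete your argument you would need to either import the projectivity statement or give an independent argument that $K_{M,m}=0$ (or at least $K_{M,m}^I=0$) at each finite level, which your appeal to reflexivity of $\mathbf X^{K_m}$ does not accomplish, since the canonical map $N^\ast\otimes_H M\to\Hom_H(N,M)$ need not be injective for a merely reflexive $N$.
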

\begin{proof}
By combining Cor.\ \ref{fully-faithful}.i with Cor.\ \ref{Zm-zero}.1 we obtain the first part of the assertion and the fact that $\ften$ is a fully faithful embedding. Thm.\ \ref{F-embeds} then tell us that $\fhom$ is a left quasi-inverse of $\ften$. By a straightforward variant of \cite{OSe} Prop.\ 3.2 we have that the $H_{x_i}$-modules $\mathbf{X}_{x_i}$ and $\mathbf{X}_{x_i}/H_{x_i}$, for $i \in \{0,1\}$, are projective. It then follows from Prop.\ \ref{firstlevel} and Prop.\ \ref{level-subquotients} that $\mathbf{X}$ is a projective $H$-module. This implies that the natural transformation $\tau : \ften_0 \xrightarrow{\;\simeq\;} \ften$ is an isomorphism. For any $V$ in $\Mod^I(G)$ we now consider the adjunction map
\begin{equation*}
  \ften_0(V^I) = \mathbf{X} \otimes_H V^I \longrightarrow V \ .
\end{equation*}
It is surjective by assumption. Let $V_0$ denote its kernel. By applying the functor $\fhom$ we obtain the commutative exact diagram
\begin{equation*}
  \xymatrix{
     0 \ar[r] & V^I_0 \ar[r] & (\mathbf{X} \otimes_H V^I)^I  \ar[r] & V^I \\
     &  & V^I , \ar[ur]_{=} \ar[u]^{\cong} &    }
\end{equation*}
where the perpendicular arrow is the adjunction homomorphism $V^I \longrightarrow \fhom \circ \ften_0(V^I)$ for the $H$-module $V^I$. As we have shown above it is an isomorphism. Hence $V_0^I$ and therefore also $V_0$ vanish. We conclude that $\fhom | \Mod^I(G)$ is a quasi-inverse of $\ften_0 \simeq \ften$.
\end{proof}

The above result, for $p > 2$ and $k = \overline{\mathbb{F}}_p$, reproves, by a completely different method, a main result in \cite{Koz}.

\begin{theorem}\label{FnotQp}
Let $p \neq 2$, $\mathbb{F}_q \subseteq k$, and $\mathfrak{F} \neq \mathbb{Q}_p$; for any $H$-module $M$ the following assertions are equivalent:
\begin{itemize}
  \item[i.]  $M$ lies in $\mathcal{T}_G$;
  \item[ii.]  $M$ is $\zeta$-torsion;
  \item[iii.] $M$ is a union of supersingular finite length submodules.
\end{itemize}
\end{theorem}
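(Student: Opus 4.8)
The plan is to establish the cycle $\text{(i)}\Rightarrow\text{(ii)}\Rightarrow\text{(iii)}\Rightarrow\text{(ii)}\Rightarrow\text{(i)}$: the equivalence of (ii) and (iii) is elementary and uses none of the standing hypotheses, the implication (i)$\Rightarrow$(ii) is already in hand, and all the substance sits in (ii)$\Rightarrow$(i), which will be reduced --- via the closure properties of the torsion class $\mathcal{T}_G$ --- to the assertion that every simple supersingular $H$-module belongs to $\mathcal{T}_G$, and that assertion is exactly what Cor.\ \ref{Zm-zero}.2 provides.

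First, (i)$\Rightarrow$(ii) is immediate from Cor.\ \ref{zeta-torsion}.ii. Next, for (iii)$\Rightarrow$(ii), if $M=\bigcup_\alpha M_\alpha$ with each $M_\alpha$ a supersingular finite length submodule, then any $x\in M$ lies in some $M_\alpha$, which by Lemma \ref{defi:supersing} is annihilated by a power of $\zeta$; hence $M$ is $\zeta$-torsion. For (ii)$\Rightarrow$(iii), given $x\in M$ with $\zeta^jx=0$, the cyclic submodule $Hx$ is a module over $H/H\zeta^j$; since $\zeta$ is not a zero divisor in $H$, multiplication by $\zeta^i$ gives isomorphisms $H/H\zeta\xrightarrow{\cong}H\zeta^i/H\zeta^{i+1}$, and as $H/H\zeta$ is finite dimensional over $k$ (both facts recorded after Lemma \ref{zeta-basis}), so is $H/H\zeta^j$. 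Thus $Hx$ is finite dimensional, \emph{a fortiori} of finite length, and annihilated by $\zeta^j$, hence supersingular by Lemma \ref{defi:supersing}; so $M=\bigcup_{x\in M}Hx$ is the desired union.

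The crux is (ii)$\Rightarrow$(i). Using the equivalence of (ii) and (iii) I would write $M$ as a union of supersingular finite length submodules, which may be taken directed by replacing the family with the set of its finite sums (a finite sum of supersingular finite length submodules is a finite dimensional, hence finite length, quotient of their direct sum, annihilated by a common power of $\zeta$, hence again supersingular). The canonical epimorphism $\bigoplus_\alpha M_\alpha\twoheadrightarrow M$, together with the closure of $\mathcal{T}_G$ under arbitrary direct sums and under factor modules, reduces us to showing $M_\alpha\in\mathcal{T}_G$ for each $\alpha$. Each $M_\alpha$ has a finite composition series whose factors are simple and, being subquotients of a module killed by a power of $\zeta$, are simple supersingular modules; by closure of $\mathcal{T}_G$ under extensions it therefore suffices to show that every simple supersingular $H$-module lies in $\mathcal{T}_G$. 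By Prop.\ \ref{charH} (here $\mathbb{F}_q\subseteq k$) these are exactly the characters $\chi\neq\chi_{triv},\chi_{sign}$, and by Cor.\ \ref{Zm-zero}.2 (here $\mathfrak{F}\neq\mathbb{Q}_p$ and $p\neq2$) each such $\chi$ is a quotient of $Z_2$ or $Z_3$; since $Z_2,Z_3\in\mathcal{Z}\subseteq\mathcal{T}_G$ and $\mathcal{T}_G$ is closed under factor modules --- equivalently, by the fourth of the general facts on torsion pairs recalled above --- we get $\chi\in\mathcal{T}_G$, completing the argument.

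The genuine difficulty in this statement lies upstream, in the socle computation Thm.\ \ref{maintheosocle} underlying Cor.\ \ref{Zm-zero}.2; within the proof above the only points demanding mild care are the finite dimensionality of $H/H\zeta^j$ and the passage to a directed union so that the closure of $\mathcal{T}_G$ under direct sums and quotients can be applied to reduce first to the finite length, and then to the simple, case.
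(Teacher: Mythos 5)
Your proof is correct and follows essentially the same route as the paper: (i) $\Rightarrow$ (ii) by Cor.\ \ref{zeta-torsion}.ii, (ii) $\Leftrightarrow$ (iii) via the finite dimensionality of $H/H\zeta^j$ and Lemma \ref{defi:supersing}, and (iii) $\Rightarrow$ (i) by reducing through closure of $\mathcal{T}_G$ under direct sums, quotients, and extensions to the simple supersingular case, which Cor.\ \ref{Zm-zero}.2 settles. The only deviations are cosmetic --- the detour through a directed family of finite sums is unnecessary since the surjection $\bigoplus_\alpha M_\alpha \twoheadrightarrow M$ already suffices, and the explicit appeal to Prop.\ \ref{charH} merely restates what Cor.\ \ref{Zm-zero}.2 already covers.
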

\begin{proof}
If $M$ lies in $\mathcal{T}_G$ then $M$ is $\zeta$-torsion by Cor.\ \ref{zeta-torsion}.ii. Next we assume that $M$ is $\zeta$-torsion, and we let $N \subseteq M$ be any finitely generated submodule. We have $\zeta^j N = 0$ for some $j \geq 0$. Since the factor algebra $H/\zeta^j H$ is finite dimensional over $k$ the module $N$ must have finite length and then is supersingular by the argument in the proof of Cor.\ \ref{supersingular}. Finally we assume that $M$ satisfies iii. Since $\mathcal{T}_G$ is closed under the formation of arbitrary direct sums and factor modules we may, in fact, assume that $M$ is of finite length and supersingular. By Cor.\ \ref{Zm-zero}.2 all its simple Jordan-Holder constituents lie in $\mathcal{T}_G$. Since $\mathcal{T}_G$ also is closed under the formation of extensions we obtain that $M$ lies in $\mathcal{T}_G$.
\end{proof}

\begin{theorem}\label{theo:torfree}
Let $p \neq 2$, $\mathbb{F}_q \subseteq k$, and $\mathfrak{F} \neq \mathbb{Q}_p$
; for any $H$-module $M$ the following assertions are equivalent:
\begin{itemize}
  \item[i.]  $M$ lies in $\mathcal{F}_G$;
  \item[ii.]  $M$ is $\zeta$-torsionfree;
  \item[iii.] $M$  does not contain any nonzero supersingular (finite length) submodule.
\end{itemize}
\end{theorem}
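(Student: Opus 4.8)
The plan is to reduce the statement to the companion Theorem \ref{FnotQp} together with the elementary formalism of torsion pairs from \S\ref{sec:1}, so that no new computation is needed. The main point will be to identify the torsion radical $t(-)$ attached to the torsion pair $(\mathcal{F}_G,\mathcal{T}_G)$ with the $\zeta$-torsion radical $t_\zeta(-)$; granting this, $M \in \mathcal{F}_G$ is equivalent to $t(M)=0$, i.e.\ to $M$ being $\zeta$-torsionfree, which is exactly the equivalence (i) $\Leftrightarrow$ (ii).

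To prove $t(M) = t_\zeta(M)$ I would argue as follows. Recall from fact 3 of \S\ref{sec:1} that $t(M)$ is the largest submodule of $M$ lying in $\mathcal{T}_G$. Since $t_\zeta(M)$ is a $\zeta$-torsion $H$-module, Theorem \ref{FnotQp} (implication ii $\Rightarrow$ i) shows $t_\zeta(M) \in \mathcal{T}_G$, hence $t_\zeta(M) \subseteq t(M)$. Conversely, any submodule of $M$ lying in $\mathcal{T}_G$ is $\zeta$-torsion by Theorem \ref{FnotQp} (implication i $\Rightarrow$ ii), hence is contained in $t_\zeta(M)$; applying this to $t(M)$ gives $t(M) \subseteq t_\zeta(M)$. (One could instead bypass the radical and run the cycle (ii) $\Rightarrow$ (i) $\Rightarrow$ (iii) $\Rightarrow$ (ii): the first implication is Cor.\ \ref{zeta-torsion}.i; the second uses Theorem \ref{FnotQp} (iii $\Rightarrow$ i) to place a nonzero finite length supersingular submodule of $M$ in $\mathcal{T}_G$, closure of $\mathcal{F}_G$ under submodules to keep it in $\mathcal{F}_G$, and the torsion-pair axiom $\Hom_H(\mathcal{T}_G,\mathcal{F}_G) = 0$ to force it to vanish.)

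It remains to handle (ii) $\Leftrightarrow$ (iii). If $M$ is $\zeta$-torsionfree and $N \subseteq M$ is a finite length supersingular submodule, then $N$ is $\zeta$-torsion by Lemma \ref{defi:supersing}, so $N \subseteq t_\zeta(M) = 0$; this is (ii) $\Rightarrow$ (iii). For the converse, suppose $M$ is not $\zeta$-torsionfree and pick $0 \neq m \in M$ with $\zeta^j m = 0$ for some $j \geq 1$. Then the cyclic submodule $Hm$ is a quotient of $H/H\zeta^j$. Since $\zeta$ is a central non-zero-divisor in $H$ and $H/H\zeta$ is finite dimensional over $k$ (Lemma \ref{zeta-basis} and the discussion following it), the filtration of $H/H\zeta^j$ by the submodules $H\zeta^i/H\zeta^j$, whose successive quotients are isomorphic to $H/H\zeta$, shows that $H/H\zeta^j$, and hence $Hm$, is finite dimensional. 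Thus $Hm$ is a nonzero $H$-module of finite length annihilated by $\zeta^j$, hence supersingular by Lemma \ref{defi:supersing}, contradicting (iii).

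The only subtlety — and the point I would watch most closely — is that (ii) and (iii) are stated for arbitrary $H$-modules, not just modules of finite length, so Remark \ref{rema:nonsupersing} cannot be quoted directly; the passage from ``annihilated by a power of $\zeta$'' to ``contains a nonzero finite length supersingular submodule'' must go through the finite dimensionality of $H/H\zeta^j$, as above. Beyond this, the proof is a formal consequence of Theorem \ref{FnotQp} and the torsion-pair axioms.
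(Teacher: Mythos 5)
Your proof is correct, and it uses the same underlying ingredients as the paper's, but it organizes them more formally. The paper runs a direct cycle iii $\Rightarrow$ ii $\Rightarrow$ i $\Rightarrow$ iii, invoking Cor.\ \ref{isobimo} (finite generation of $H$ over $k[\zeta]$) for iii $\Rightarrow$ ii and Cor.\ \ref{Zm-zero}.2 for i $\Rightarrow$ iii. You instead observe that the torsion radical $t(-)$ of the pair $(\mathcal{F}_G,\mathcal{T}_G)$ coincides with the $\zeta$-torsion radical $t_\zeta(-)$, which is an immediate consequence of Thm.\ \ref{FnotQp} together with the maximality characterizations of the two radicals; since $M\in\mathcal{F}_G$ iff $t(M)=0$, this gives (i) $\Leftrightarrow$ (ii) at once, and you then handle (ii) $\Leftrightarrow$ (iii) by the finite dimensionality of $H/H\zeta^j$, which is the same device the paper uses. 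Your route makes the theorem a nearly formal corollary of Thm.\ \ref{FnotQp} plus the torsion-pair axioms and is arguably cleaner conceptually, at the cost of being slightly less self-contained (your (i) $\Rightarrow$ (iii) passes through Thm.\ \ref{FnotQp}.iii $\Rightarrow$ i rather than citing Cor.\ \ref{Zm-zero}.2 directly, though of course that is exactly what the latter implication rests on). One small simplification: in your bracketed alternative argument for (i) $\Rightarrow$ (iii) you do not actually need closure of $\mathcal{F}_G$ under submodules; once $N\in\mathcal{T}_G$ and $M\in\mathcal{F}_G$, the defining vanishing $\Hom_H(N,M)=0$ already kills the inclusion $N\hookrightarrow M$.
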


\begin{corollary}
 Under the assumptions of Thm.\ \ref{theo:torfree} a finite length $H$-module $M$ is in $\mathcal{F}_G$ if and only if it is nonsupersingular.
\end{corollary}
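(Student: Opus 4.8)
The plan is to read this off directly from Theorem~\ref{theo:torfree} together with the definition of nonsupersingularity for modules of finite length. First I would recall from \S\ref{sec:nonsupersing} that, in view of Lemma~\ref{defi:supersing}, a finite length $H$-module is called \emph{nonsupersingular} precisely when it is $\zeta$-torsionfree. Hence the asserted equivalence ``$M \in \mathcal{F}_G$ $\iff$ $M$ nonsupersingular'' is nothing but the restriction to finite length modules of the equivalence (i)$\Leftrightarrow$(ii) in Theorem~\ref{theo:torfree}, and there is nothing further to do.

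For a reader who prefers to pass through condition (iii) of that theorem, the only additional point is the elementary observation already recorded as Remark~\ref{rema:nonsupersing}: for a finite length $H$-module $M$, being $\zeta$-torsionfree is equivalent to containing no nonzero supersingular submodule. Indeed, a finite length module is finite dimensional over $k$, so $\zeta$ acts on $M$ either bijectively or with nonzero kernel; in the latter case $\ker(\zeta|M)$ is a nonzero finite length $\zeta$-torsion, hence supersingular (Lemma~\ref{defi:supersing}), submodule of $M$, while conversely any supersingular submodule is $\zeta$-torsion and its presence prevents $M$ from being $\zeta$-torsionfree. Combining this with (i)$\Leftrightarrow$(iii) of Theorem~\ref{theo:torfree} again yields the claim.

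There is no real obstacle here: all the substance sits in Theorem~\ref{theo:torfree} (which in turn rests on Cor.~\ref{Zm-zero}.2 and the structure theory of \S\ref{sec:charH}), and the only thing left is the bookkeeping identifying ``nonsupersingular'' with ``$\zeta$-torsionfree'' in the finite length case. So the corollary can be stated with a one-line proof ``Immediate from Theorem~\ref{theo:torfree} and the definition of nonsupersingularity (\S\ref{sec:nonsupersing}), using Remark~\ref{rema:nonsupersing}.''
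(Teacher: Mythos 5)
Your proposal is correct and matches the paper's (implicit) reasoning: the paper states this corollary without proof because it is exactly the restriction of the equivalence (i)$\Leftrightarrow$(ii) in Theorem~\ref{theo:torfree} to finite length modules, together with the definition of nonsupersingularity as $\zeta$-torsionfreeness from \S\ref{sec:nonsupersing}. Your detour via (iii) and Remark~\ref{rema:nonsupersing} is accurate bookkeeping but not needed.
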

\begin{proof}[Proof of Thm.\ \ref{theo:torfree}] By Cor.\ \ref{isobimo}, the $k[\zeta]$-module $H$ is finitely generated. Therefore, the kernel of the operator $\zeta$ on $M$ is either zero or it contains  a  nonzero finite dimensional supersingular module. It proves that iii. implies ii. That ii. implies iii. is trivial. By Cor.\ \ref{zeta-torsion}.i we know that  ii. implies i. Now let $M$ in  $\mathcal{F}_G$. By definition,  $M$ does not contain any quotient of $Z_2$ or $Z_3$. By Cor.\ \ref{Zm-zero}, it does not contain any nonzero simple supersingular module and therefore i. implies iii.
\end{proof}

\subsection{Localization in $\zeta$ \label{local}}

 We denote by $H_\zeta$ the localization in $\zeta$ of $H$. As $\zeta$ is not a zero divisor in $H$ we have $H \subseteq H_\zeta$. Let $\epsilon = 0$ or $1$. By Cor.\ \ref{isobimo}, $H_\zeta$ is isomorphic, as a $(H_{x_\epsilon}, k[\zeta^{\pm 1}])$-bimodule, to a direct sum of two copies of $H_{x_\epsilon} \otimes_k k[\zeta^{\pm 1}]$. In particular, it is a free left $H_{x_\epsilon}$-module.

We are first going to prove the following result.

\begin{proposition}\label{baseext}
Suppose that $\mathbb F_q\subseteq k$. The base extension $M \otimes _{H_{x_\epsilon}} H_\zeta$ of any right $H_{x_\epsilon}$-module $M$ is a projective $H_\zeta$-module.
\end{proposition}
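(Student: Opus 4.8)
The strategy is to realize $M \otimes_{H_{x_\epsilon}} H_\zeta$ as a submodule of a projective $H_\zeta$-module and then to invoke that $H_\zeta$ is hereditary. First, since $H_{x_\epsilon}$ is a finite-dimensional self-injective $k$-algebra (Prop.\ \ref{H-free}.ii), the injective hull of an arbitrary right $H_{x_\epsilon}$-module $M$ is a direct sum of indecomposable injective modules, and these are exactly the indecomposable projective $H_{x_\epsilon}$-modules; hence $M$ embeds into a free $H_{x_\epsilon}$-module $F$. By Cor.\ \ref{isobimo} (recalled in the paragraph preceding the proposition) $H_\zeta$ is free, in particular flat, as a left $H_{x_\epsilon}$-module, so the functor $-\otimes_{H_{x_\epsilon}}H_\zeta$ is exact and carries the inclusion $M\hookrightarrow F$ to an inclusion $M\otimes_{H_{x_\epsilon}}H_\zeta\hookrightarrow F\otimes_{H_{x_\epsilon}}H_\zeta$ with free target. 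It therefore suffices to know that every submodule of a projective $H_\zeta$-module is again projective, i.e.\ that $H_\zeta$ is left hereditary; for a left hereditary ring this holds for submodules of arbitrary (not necessarily finitely generated) projective modules, so the statement needs no finiteness hypothesis on $M$.

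The remaining point, that $H_\zeta$ has global dimension at most $1$, is where the real work lies, and I would argue as follows. By Cor.\ \ref{isobimo} the algebra $H$ is free of finite rank over its central polynomial subalgebra $k[\zeta]$, so $H_\zeta$ is module-finite over the principal ideal domain $k[\zeta^{\pm 1}]$; its global dimension may be computed locally, after completion at the maximal ideals of $k[\zeta^{\pm 1}]$, where $H_\zeta$ becomes a semiperfect order whose global dimension equals the supremum of the projective dimensions of its simple modules. By Lemma \ref{defi:supersing} (together with Prop.\ \ref{charH}) these simple modules are precisely the nonsupersingular simple $H$-modules; and since $H$ is Auslander--Gorenstein of self-injective dimension $1$ (\cite{OS} Thm.\ 0.2), it can be shown that a nonsupersingular finitely generated $H$-module has finite, hence $\le 1$, projective dimension over $H$, and therefore also over the localization $H_\zeta$. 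Hence $H_\zeta$ is hereditary, which completes the argument.

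Alternatively — and perhaps more transparently — one can split $H_\zeta$ along the central idempotents of \eqref{f:idempotents} into its blocks $e_\gamma H_\zeta$ and identify each of them, using Remark \ref{rema-center} and Cor.\ \ref{isobimo}, as a hereditary order over $k[\zeta^{\pm 1}]$ (resp.\ over $k[\zeta^{\pm 1}]\times k[\zeta^{\pm 1}]$ when $|\gamma|=2$): the passage from $H$ to $H_\zeta$ deletes precisely the closed point $\zeta=0$, on which all supersingular characters are supported (Lemma \ref{defi:supersing}), and this is exactly the locus responsible for infinite projective dimension. The main obstacle is thus establishing the hereditariness of $H_\zeta$; the reduction to it is formal, using only that $H_{x_\epsilon}$ is self-injective and that $H_\zeta$ is flat over $H_{x_\epsilon}$.
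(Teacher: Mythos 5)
Your reduction is tidy but it relocates, rather than resolves, the substance of the proposition. The two soft steps are fine: $M$ embeds in a free $H_{x_\epsilon}$-module because $H_{x_\epsilon}$ is Frobenius, and flatness of $H_\zeta$ over $H_{x_\epsilon}$ (Cor.\ \ref{isobimo}) preserves the inclusion with free target. But everything then hinges on $H_\zeta$ being hereditary, and that is precisely where your argument is not a proof. The Auslander--Gorenstein property of $H$ gives self-injective dimension $1$, which passes to the localization $H_\zeta$; it does \emph{not} give finite global dimension. Over an Iwanaga--Gorenstein ring of self-injective dimension $1$, a finitely generated module has projective dimension $\le 1$ \emph{provided} it already has finite projective dimension, and you offer no argument for that finiteness; the phrase ``it can be shown that a nonsupersingular finitely generated $H$-module has finite $\ldots$ projective dimension'' is exactly the nontrivial claim that needs a proof, and none of the results quoted up to this point in the paper yields it. Your second route, through the block decomposition \eqref{f:idempotents} and the description of $Z(H)$ in Remark \ref{rema-center}, is the more promising one (inverting $\zeta$ does separate the two branches $k[X_\lambda]$, $k[X_{\lambda^{-1}}]$ into a product of Laurent rings and removes the singular closed fibre), but identifying each block of $H_\zeta$ as a hereditary order over a Dedekind ring is itself a computation of roughly the same weight as the proposition; you have not carried it out.

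The paper sidesteps the question of gl.\,dim $H_\zeta$ entirely. It first checks by hand (Lemma \ref{simples}) that each simple $H_{x_\epsilon}$-module base-changes to a projective $H_\zeta$-module, the key trick being the identity $\tau_{s_0} = \zeta^{-1}\tau_{s_0}\tau_{s_1}\tau_{s_0}$ in $H_\zeta$, which splits $\tau_{s_0}e_\lambda H_\zeta \hookrightarrow H_\zeta$. It then observes that $H_{x_\epsilon}$ is a Nakayama (generalized uniserial) algebra, so by \cite{Nak} every $H_{x_\epsilon}$-module is a direct sum of modules each of which is simple or projective; for the projectives the statement is trivial by flat base change, and for the simples it is Lemma \ref{simples}. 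That argument is self-contained, elementary, and requires no homological input beyond flatness. Hereditariness of $H_\zeta$ is, if anything, a \emph{consequence} of the results of this section (e.g.\ via Cor.\ \ref{coro:projlocal}), not an available hypothesis; assuming it here is close to circular. If you want to salvage your approach, the honest version is: prove first, from the explicit presentation and Remark \ref{rema-center}, that each block $e_\gamma H_\zeta$ is a hereditary order over $k[\zeta^{\pm1}]$ (resp.\ over $k[\zeta^{\pm1}]\times k[\zeta^{\pm1}]$), and only then invoke the embedding and Kaplansky's theorem that over a hereditary ring submodules of arbitrary projectives are projective.
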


By symmetry we only need to prove the proposition for $H_{x_0}$. We first consider the case of simple modules.

\begin{lemma}\label{simples}
 Suppose that $\mathbb F_q\subseteq k$. If  $M$ is a  simple $H_{x_0}$-module, then  $M\otimes _{H_{x_0}} H_\zeta$   is a projective $H_\zeta$-module.
\end{lemma}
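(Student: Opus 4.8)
The plan is to reduce, by a change of rings, the projectivity of $M\otimes_{H_{x_0}}H_\zeta$ over $H_\zeta$ to the vanishing of a single $\Ext^1$-group over the finite dimensional algebra $H_{x_0}$, and then to establish that vanishing from the explicit module structure of $H_{x_0}$ together with the invertibility of the central element $\zeta$. First I would record the change-of-rings isomorphism. Since $H$ is free — in particular flat — as a left $H_{x_0}$-module (Prop.\ \ref{H-free}.i) and $H_\zeta$ is flat as a left $H$-module (being a central localization), any projective resolution $P_\bullet\to M$ of right $H_{x_0}$-modules yields, after applying $-\otimes_{H_{x_0}}H_\zeta$, a projective resolution of the right $H_\zeta$-module $M\otimes_{H_{x_0}}H_\zeta$; combined with the adjunction $\Hom_{H_\zeta}(P\otimes_{H_{x_0}}H_\zeta,W)=\Hom_{H_{x_0}}(P,W)$ this gives $\Ext^i_{H_\zeta}(M\otimes_{H_{x_0}}H_\zeta,W)\cong\Ext^i_{H_{x_0}}(M,W)$ for every right $H_\zeta$-module $W$ (viewed as an $H_{x_0}$-module by restriction). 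Hence it suffices to show that $\Ext^1_{H_{x_0}}(M,W)=0$ for every right $H_\zeta$-module $W$.

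Next I would use the structure of $H_{x_0}$. Since $\mathbb{F}_q\subseteq k$, the idempotents $e_\lambda$ are available and, by the same analysis as in Remark \ref{modulesS1} (which in fact identifies $H_{x_0}$ with $H/\mathfrak{s}_1$), the algebra $H_{x_0}$ splits into blocks along the $s_0$-orbits of $\widehat\Omega$ and all its simple modules are one dimensional, i.e.\ characters. If the character $M=\lambda$ has $\lambda|_\Omega$ trivial, then $M$ lies in the block $H_{x_0}e_1\cong k\times k$, which is semisimple, so $M$ is a projective $H_{x_0}$-module and $\Ext^1_{H_{x_0}}(M,-)=0$. Otherwise $\lambda|_\Omega\neq 1$; the quadratic relation \eqref{f:quad} then forces $\lambda(\tau_{s_0})=0$, and in the block containing $M=S_\lambda$ the indecomposable projective right module $P_\mu:=e_\mu H_{x_0}$ is uniserial of length two with top the character $S_\mu=\mu$ and socle $S_{\mu^{-1}}$, since $e_\mu\tau_{s_0}^2=-e_\mu e_1\tau_{s_0}=0$. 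Thus $S_\lambda$ has the $2$-periodic minimal projective resolution $\cdots\to P_\lambda\to P_{\lambda^{-1}}\to P_\lambda\to S_\lambda\to 0$ (or $\cdots\to P_\lambda\to P_\lambda\to P_\lambda\to S_\lambda\to 0$ when $\lambda=\lambda^{-1}$), all of whose differentials are left multiplication by $\tau_{s_0}$. Applying $\Hom_{H_{x_0}}(-,W)$ and identifying $\Hom_{H_{x_0}}(e_\mu H_{x_0},W)\cong We_\mu$, the transposed differentials become right multiplication by $\tau_{s_0}$, so
\begin{equation*}
  \Ext^1_{H_{x_0}}(S_\lambda,W)=\{v\in We_{\lambda^{-1}}:v\tau_{s_0}=0\}\big/\{v'\tau_{s_0}:v'\in We_\lambda\}.
\end{equation*}

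The core of the argument — and the one place where localizing in $\zeta$ is used — is to show this quotient vanishes. Take $v\in We_{\lambda^{-1}}$ with $v\tau_{s_0}=0$. Since $\lambda\neq 1$ we have $ve_1=0$; and since $s_1$ also acts on $\Omega$ by inversion (cf.\ Remark \ref{remark:N} and \S\ref{rootdatum}), so that $e_{\lambda^{-1}}\tau_{s_1}=\tau_{s_1}e_\lambda$, we get $v\tau_{s_1}\in We_\lambda$ and hence $v\tau_{s_1}e_1=0$. Using the expression $\zeta=\tau_{s_0}\tau_{s_1}+(\tau_{s_1}+e_1)(\tau_{s_0}+e_1)$ from Remark \ref{explicit-identities} together with $v\tau_{s_0}=0$ and $ve_1=0$ one computes
\begin{equation*}
  v\zeta = v\tau_{s_0}\tau_{s_1}+v\tau_{s_1}(\tau_{s_0}+e_1) = v\tau_{s_1}\tau_{s_0} = (v\tau_{s_1})\tau_{s_0}.
\end{equation*}
As $\zeta$ is central and acts invertibly on the $H_\zeta$-module $W$, this gives $v=(v\tau_{s_1}\zeta^{-1})\tau_{s_0}$ with $v\tau_{s_1}\zeta^{-1}\in We_\lambda$, so $v$ is a boundary. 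The case $\lambda=\lambda^{-1}$ is identical with $e_{\lambda^{-1}}$ replaced by $e_\lambda$ throughout. Hence $\Ext^1_{H_{x_0}}(M,W)=0$ for all right $H_\zeta$-modules $W$, and therefore $M\otimes_{H_{x_0}}H_\zeta$ is projective over $H_\zeta$.

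The main obstacle is precisely this last step together with the structural input it rests on. Over $H_{x_0}$ itself the simple modules with $\lambda|_\Omega\neq 1$ have infinite projective dimension, so one genuinely needs the invertibility of $\zeta$ — an element that does not lie in $H_{x_0}$ — to split off the syzygies of $M$ after base change; setting up the $2$-periodic resolution of $S_\lambda$ over $H_{x_0}$ and tracking what its differentials become after applying $\Hom_{H_{x_0}}(-,W)$, so that the short computation with $\zeta$ can be inserted, is the technical heart of the proof. Everything else (the two change-of-rings identities, the block decomposition of $H_{x_0}$, the semisimplicity of $H_{x_0}e_1$) is routine.
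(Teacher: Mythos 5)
Your proof is correct, but it takes a genuinely different route from the paper's. The paper proceeds directly: it identifies $\chi_\lambda^0 \cong \tau_{s_0}e_\lambda H_{x_0}$, base changes via flatness to $\tau_{s_0}e_\lambda H_\zeta \subseteq H_\zeta$, and then exhibits the explicit retraction $h \mapsto \zeta^{-1}\tau_{s_0}e_\lambda\tau_{s_1}h$, whose splitting property follows from $\zeta\tau_{s_0}=\tau_{s_0}\tau_{s_1}\tau_{s_0}$ (Remark \ref{explicit-identities}.ii). You instead reduce, by a standard change-of-rings identity, to the vanishing of $\Ext^1_{H_{x_0}}(S_\lambda,W)$ for $W$ any $H_\zeta$-module, build the minimal $2$-periodic projective resolution of $S_\lambda$ over the Nakayama algebra $H_{x_0}$, and then verify the vanishing by the computation $v\zeta = (v\tau_{s_1})\tau_{s_0}$ for any $v\in We_{\lambda^{-1}}$ killed by $\tau_{s_0}$ and $e_1$. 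Both arguments ultimately rest on the same braid-relation identity linking $\zeta$, $\tau_{s_0}$, $\tau_{s_1}$; the paper's version is shorter because it bypasses the homological apparatus by writing down the splitting directly, while yours makes the mechanism more visible (the periodic syzygies of $S_\lambda$ over $H_{x_0}$ become split once $\zeta$ is inverted) and the change-of-rings reduction is a reusable template. One caveat for the write-up: when you invoke "the same analysis as in Remark \ref{modulesS1}" for the structure of $H_{x_0}$, it is cleaner to cite directly the discussion of $H_{x_0}$'s principal indecomposables given just after Lemma \ref{simples} in the paper (or redo the short computation) rather than rely on the identification $H_{x_0}\cong H/\mathfrak{s}_1$, which, while true, the paper never states; similarly, the two sub-cases $\lambda\neq\lambda^{-1}$ and $\lambda=\lambda^{-1}$ really should both be written out, even though, as you note, the $\Ext^1$ computation is formally identical.
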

\begin{proof}
Recall that  $H_{x_0}$ is the finite dimensional  subalgebra of $H$ generated by $\tau_{s_0}$ and all $\tau_\omega$ for $\omega \in \Omega$. Since   $\mathbb F_q$ is contained in $k$,  the simple $H_{x_0}$-modules are one dimensional. When $k$ contains a primitive root of unity of order $(q-1)(q+1)$, this result is proved in \cite{CE} Thm.\ 6.10 (iii). We reproduce their argument here to check that it is valid under the hypothesis $\mathbb F_q\subseteq k$. Consider the $q-1$-idempotents $\{e_\lambda\}_{\lambda\in \hat\Omega}$  with sum $1$ in $k[\Omega]$ as defined in \ref{sec:idempo}. Let $M$ be a simple nontrivial $H_{x_0}$-module. There is a character $\lambda: \Omega \rightarrow k^\times $ such that $e_\lambda M\neq \{0\}$. If $\tau_{s_0}$ acts trivially on $e_\lambda M$ then, by irreducibility of $M$, we have $M = e_\lambda M$ and it is one dimensional. If there is $v \in M$ such that $v' := \tau_{s_0}e_\lambda v \neq 0$, then by \eqref{f:braid} and \eqref{f:quad} one easily checks that $kv'$ is stable under $H_{x_0}$ so $M=kv'$ is one dimensional.

A simple $H_{x_0}$-module may therefore be viewed as an algebra homomorphism $H_{x_0} \rightarrow k$. It is entirely determined by its restriction $\lambda$  to $\Omega$ and its value at $\tau_{s_0}$.

For $\lambda=1$, there are two corresponding characters  of $H_{x_0}$:
the restriction $\chi^0_{triv}$ of the trivial character $\chi_{triv}$ of $H$, and
the restriction $\chi^0_{sign}$ of the sign  character $\chi_{sign}$ of $H$ (see \S\ref{sec:charH}). Both these characters are projective $H_{x_0}$-modules since $e_1H_{x_0}$ decomposes as a right  $H_{x_0}$-module into $e_1\tau_{s_0}H_{x_0}\oplus e_1(\tau_{s_0}+1)H_{x_0}\cong \chi_{sign}^0\oplus \chi_{triv}^0$. Therefore if  $\chi\in\{\chi_{triv}^0, \chi_{sign}^0\}$, then $\chi\otimes _{H_{x_0}} H_\zeta$  is a projective $H_{\zeta}$-module.

If  $\lambda\neq 1$, then by  \eqref{f:quad}, there is only one corresponding character of $H_{x_0}$ and we denote it by $\chi_{\lambda}^0$.  It maps $\tau_{s_0}$ onto $0$. Let $e_\lambda$ be the idempotent of $H_{x_0}$ as defined in \ref{sec:idempo}. It is easy to check that $\chi_{\lambda}^0\cong\tau_{s_0}e_\lambda H_{x_0}$. Since $H_{\zeta}$ is a free hence flat $H_{x_0}$-module, $\chi_\lambda^0\otimes_{H_{x_0}} H_\zeta$  is isomorphic to $\tau_{s_0}e_\lambda H_\zeta$. By Remark \ref{explicit-identities}.ii, we have $\zeta \tau_{s_0}= \tau_{s_0} \tau_{s_1} \tau_{s_0}$ and hence $\tau_{s_0} = \zeta ^{-1} \tau_{s_0} \tau_{s_1}  \tau_{s_0}$ and $\tau_{s_0}e_\lambda = \zeta ^{-1} \tau_{s_0} e_\lambda \tau_{s_1} \tau_{s_0}$  in $H_\zeta$. The morphism of right $H_\zeta$-modules
\begin{align*}
H_\zeta & \longrightarrow  \tau_{s_0} e_\lambda H_\zeta \\
h       &\longmapsto \zeta^{-1}  \tau_{s_0} e_\lambda \tau_{s_1} h
\end{align*}
therefore  splits the inclusion $\tau_{s_0} e_\lambda H_\zeta \subseteq H_\zeta$, which proves that $\chi_\lambda^0\otimes_{H_{x_0}} H_\zeta\cong \tau_{s_0}  e_\lambda H_\zeta$ is a projective $H_\zeta$-module.

\end{proof}

Next we describe  the principal indecomposable modules of $H_{x_0}$ (provided that $\mathbb F_q\subseteq k$). As a right module over itself, $H_{x_0}$ decomposes into the direct sum of all $e_\lambda H_{x_0}$ for $\lambda\in \hat\Omega$ where $e_\lambda$ is the idempotent  defined in \ref{sec:idempo}. If $\lambda=1$, then $e_1 H_{x_0}$ is the sum of two projective simple modules as recalled in the proof of Lemma \ref{simples}. If $\lambda\neq 1$, one easily checks that $e_\lambda H_{x_0}$ is indecomposable with  length $2$. Its only simple submodule is $e_\lambda \tau_{s_0}H_{x_0}\cong \chi_{\lambda^{-1}}^0$ and
its only nontrivial quotient   is isomorphic to  $\chi_{\lambda}^0$ via the map $e_\lambda H_{x_0}\rightarrow \tau_{s_0}e_\lambda H_{x_0}$ induced by left multiplication by $\tau_{s_0}$. This shows that  $e_\lambda H_{x_0}$ is  the projective cover of   $\chi_{\lambda}^0$. This discussion  shows in particular that any principal indecomposable  right $H_{x_0}$-module has a unique composition series (of length $2$). The statement is true likewise  for principal indecomposable  left $H_{x_0}$-modules. Therefore, $H_{x_0}$ is a generalized uniserial ring in the sense of \cite{Nak} Appendix p.\ 19.
By \cite{Nak} Thm.\ 17, this implies that every $H_{x_0}$-module is a direct sum of quotients of the principal indecomposable modules  and hence is a direct sum of modules which are either simple or projective: Using Lemma \ref{simples}, this concludes the proof of Prop.\ \ref{baseext}.

For any   right  (resp. left) $H$-module $Y$, we let  $Y_\zeta$ denote its base extension $ Y\otimes_H H_\zeta$ (resp.  $H_\zeta\otimes_H Y$)  to a $H_\zeta$-module.
Let  $m\geq 1$.
Since $H_\zeta$ is flat over $H$, the $H_\zeta$-module $(\X^{K_m})_\zeta$ injects in $(\X^{K_{m+1}})_\zeta$ and in $\X_\zeta$. Furthermore, as a right $H_\zeta$-module we have   $\X_{\zeta}\cong \underset{m}{\varinjlim}(\mathbf{X}^{K_m})_\zeta$.

\begin{corollary}\label{coro:projlocal}
If $\mathbb F_q\subseteq k$ then we have:
\begin{itemize}
\item[i.] The $H_\zeta$-modules  $(\X^{K_m})_\zeta$ and   $(\X^{K_{m+1}})_\zeta/ (\X^{K_m})_\zeta$  are finitely generated, reflexive and projective for any $m\geq 1$.
\item[ii.] The $ H_\zeta$-module $\X_\zeta$ is projective.
\end{itemize}
\end{corollary}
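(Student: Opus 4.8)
The plan is to deduce everything from Proposition \ref{baseext} by exploiting the filtration $H = \mathbf{X}^I \subseteq \mathbf{X}^{K_1} \subseteq \mathbf{X}^{K_2} \subseteq \cdots$ of $\mathbf{X}$ together with the identification of its graded pieces provided by Propositions \ref{firstlevel} and \ref{level-subquotients}. Since $\zeta$ is not a zero divisor, $H_\zeta$ is flat over $H$, so localization $(-)_\zeta = (-)\otimes_H H_\zeta$ is exact and preserves finite generation; moreover a finitely generated projective module over any ring is reflexive, so for (i) it will suffice to establish finite generation and projectivity.

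First I would settle finite generation: $\mathbf{X}^{K_m}$ is a finitely generated right $H$-module by Lemma \ref{univ-reflexive}, hence $(\mathbf{X}^{K_m})_\zeta$ is finitely generated over $H_\zeta$, and then so is its quotient $(\mathbf{X}^{K_{m+1}}/\mathbf{X}^{K_m})_\zeta = (\mathbf{X}^{K_{m+1}})_\zeta/(\mathbf{X}^{K_m})_\zeta$. For projectivity of the graded pieces: by Proposition \ref{firstlevel} one has $\mathbf{X}^{I_{x_i}} \cong \mathbf{X}_{x_i} \otimes_{H_{x_i}} H$ with $\mathbf{X}^I = H$ corresponding to $H_{x_i} \otimes_{H_{x_i}} H$, and since $H$ is free, hence flat, as a left $H_{x_i}$-module by Proposition \ref{H-free}.i this yields $\mathbf{X}^{I_{x_i}}/\mathbf{X}^I \cong (\mathbf{X}_{x_i}/H_{x_i}) \otimes_{H_{x_i}} H$ (exactly as in the proof of Proposition \ref{level1-acyclic}). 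Combining this with Proposition \ref{level-subquotients} gives $(\mathbf{X}^{K_{m+1}}/\mathbf{X}^{K_m})_\zeta \cong \big((\mathbf{X}_{x_i}/H_{x_i}) \otimes_{H_{x_i}} H_\zeta\big)^{n_m}$, which is a projective $H_\zeta$-module by Proposition \ref{baseext}; likewise $(\mathbf{X}^{K_1})_\zeta \cong \mathbf{X}_{x_0} \otimes_{H_{x_0}} H_\zeta$ is projective. An induction on $m$ then completes (i): the short exact sequence $0 \to (\mathbf{X}^{K_m})_\zeta \to (\mathbf{X}^{K_{m+1}})_\zeta \to (\mathbf{X}^{K_{m+1}}/\mathbf{X}^{K_m})_\zeta \to 0$ splits since its right hand term is projective, so $(\mathbf{X}^{K_{m+1}})_\zeta$ is a finitely generated projective (hence reflexive) $H_\zeta$-module.

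For (ii) I would bootstrap these filtration-level splittings into a single global decomposition: choose inductively a complement $P_{m+1} \subseteq (\mathbf{X}^{K_{m+1}})_\zeta$ of the submodule $(\mathbf{X}^{K_m})_\zeta$, set $P_1 := (\mathbf{X}^{K_1})_\zeta$, so that $(\mathbf{X}^{K_m})_\zeta = P_1 \oplus \cdots \oplus P_m$ as internal direct sums compatibly with the inclusions $(\mathbf{X}^{K_m})_\zeta \subseteq (\mathbf{X}^{K_{m+1}})_\zeta$; passing to the union and using $\mathbf{X}_\zeta = \varinjlim_m (\mathbf{X}^{K_m})_\zeta$ then gives $\mathbf{X}_\zeta = \bigoplus_{m \geq 1} P_m$, a direct sum of projective $H_\zeta$-modules, hence projective. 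The only points requiring any real care are the bookkeeping that makes the filtration-level splittings compatible so that the colimit genuinely becomes a direct sum, and the identification $\mathbf{X}^{I_{x_i}}/\mathbf{X}^I \cong (\mathbf{X}_{x_i}/H_{x_i}) \otimes_{H_{x_i}} H$ for $i=1$ as well as $i=0$ — but the latter is literally the argument already used for $x_0$ in the proof of Proposition \ref{level1-acyclic}, so I do not anticipate a genuine obstacle here.
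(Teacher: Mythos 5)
Part (i) of your proof is essentially the paper's: you use the same identifications from Propositions \ref{firstlevel} and \ref{level-subquotients}, invoke Proposition \ref{baseext} for projectivity of the bottom piece and the graded pieces, and then climb the finite filtration by splitting the short exact sequences, exactly as the authors do.

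Part (ii), however, takes a genuinely different route. The paper argues cohomologically: it fixes an arbitrary $H_\zeta$-module $M$ and runs the $\varprojlim$-spectral sequence $E_2^{i,j} = {\varprojlim_m}^{(i)} \Ext_{H_\zeta}^j((\mathbf{X}^{K_m})_\zeta,M) \Rightarrow \Ext_{H_\zeta}^{i+j}(\mathbf{X}_\zeta,M)$, noting that (i) collapses it to $\varprojlim^{(i)}\Hom_{H_\zeta}((\mathbf{X}^{K_m})_\zeta,M)$, and that the projectivity of the graded pieces makes the $\Hom$-transition maps surjective, so all $\varprojlim^{(i)}$-terms with $i\geq 1$ vanish. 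You instead build an explicit internal direct sum: by (i) each step of the filtration splits, so one can choose complements $P_{m+1}$ compatibly and obtain $\mathbf{X}_\zeta = \bigoplus_{m\geq 1} P_m$ as an internal direct sum of projective $H_\zeta$-modules, whence projectivity. Your argument is correct and more elementary — it avoids the spectral sequence and derived inverse limits entirely and produces a concrete decomposition of $\mathbf{X}_\zeta$, which is strictly more information than the $\Ext$-vanishing the paper records. The paper's approach has the virtue of being the same template it uses for Corollary \ref{X-acyclic} (where $\mathbf{X}/H$ is only $^\ast$-acyclic, not projective, so a splitting argument like yours is unavailable), so it unifies the two proofs; but for this particular statement your method is cleaner.
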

\begin{proof}
We first verify that the $ H_\zeta$-modules $(\X^{K_1})_\zeta$ and $(\X^{K_{m+1}})_\zeta/ (\X^{K_m})_\zeta$  are projective.
This is because,  by Prop.s \ref{firstlevel} and \ref{level-subquotients}, we have isomorphisms of right $H_\zeta$-modules $(\X^{K_1})_\zeta\cong \X_{x_0}\otimes _{H_{x_0}}H_\zeta$ and  $(\X^{K_{m+1}}/\X^{K_m})_\zeta \cong (\X^{I_{x_\epsilon}}/ \X^I)_\zeta^{n_m} \cong (\X_{x_\epsilon}/H_{x_\epsilon})^{n_m} \otimes _{H_{x_\epsilon}} H_\zeta$,  for $\epsilon =0$ or $\epsilon =1$. Prop.\ \ref{baseext} then ensures that these are  projective $H_{\zeta}$-modules. The $H$-module $H_\zeta$ being flat, $(\X^{K_{m+1}})_\zeta/ (\X^{K_m})_\zeta \cong (\X^{K_{m+1}}/\X^{K_m})_\zeta$ as right $H_\zeta$-modules.  It implies that $(\X^{K_m})_\zeta$ is a projective right $H_{\zeta}$-module for any $m\geq 1$. It is  easy to see that a finitely generated projective module is also reflexive (use the fact that it is a direct summand of a free module of finite rank). This proves i.

For ii. let $M$ be any right $H_\zeta$-module. We consider the spectral sequence
\begin{equation*}
    E_2^{i,j} = {\varprojlim_m}^{(i)} \Ext_{H_\zeta}^j((\mathbf{X}^{K_m})_\zeta,M) \Longrightarrow \Ext_{H_\zeta}^{i+j}(\varinjlim_m (\mathbf{X}^{K_m})_\zeta,M) = \Ext_{H_\zeta}^{i+j}(\mathbf{X}_\zeta,M) \ .
\end{equation*}
By i., it degenerates into the isomorphisms
\begin{equation*}
    {\varprojlim_m}^{(i)} \Hom_{H_\zeta}((\mathbf{X}^{K_m})_\zeta,M) \cong \Ext_{H_\zeta}^i(\mathbf{X}_\zeta,M) \ .
\end{equation*}
But since  $(\X^{K_{m+1}})_\zeta/ (\X^{K_m})_\zeta$ is projective, the natural maps
\begin{equation*}
  \Hom_{H_\zeta}((\mathbf{X}^{K_{m+1}})_\zeta,M)\rightarrow  \Hom_{H_\zeta}((\mathbf{X}^{K_m})_\zeta,M)
\end{equation*}
are surjective. Consequently the above $\varprojlim^{(i)}$-terms vanish for $i \geq 1$, and $\mathbf{X}_\zeta$ is a projective ${H_\zeta}$-module.
\end{proof}

 We remark that, by a descent argument with respect to the finite Galois extension $k \mathbb{F}_q /k$, Prop.\ \ref{baseext} and Cor.\ \ref{coro:projlocal} remain true even without the assumption that $\mathbb{F}_q \subseteq k$.

The category $\Mod(H_\zeta)$ embeds into $ \Mod(H)$ as it identifies with the abelian subcategory of all $H$-modules on which the action of $\zeta$ is invertible. By  Prop.\ \ref{Qp-equivalence} and Thm.\ \ref{theo:torfree} it is contained in the torsionfree class and it contains all finite length modules of the torsionfree class provided the respective assumptions hold. We want to study the restriction of $\ften$ to  $\Mod(H_\zeta)$.

\begin{lemma} \label{lemma:duallocal}
Let $m\geq 1$. The natural morphism of  right $H_\zeta$-modules
\begin{equation*}
  ((\X^{K_m})^*)_\zeta \xrightarrow{\;\cong\;} \Hom_ {H_\zeta}((\X^{K_m})_\zeta, H_\zeta)
\end{equation*}
is an isomorphism.
\end{lemma}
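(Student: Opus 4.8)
The statement to establish is that the natural base-change map
\[
  ((\X^{K_m})^*)_\zeta = \Hom_H(\X^{K_m},H) \otimes_H H_\zeta \longrightarrow \Hom_{H_\zeta}((\X^{K_m})_\zeta,H_\zeta)
\]
is an isomorphism. The plan is to reduce this to a standard flat-base-change statement for $\Hom$, which holds in great generality once the source module is finitely presented over the original ring and the base-change ring is flat. Concretely, for a ring map $H \to H_\zeta$ with $H_\zeta$ flat as a right (and left) $H$-module, and any finitely presented left $H$-module $N$ and any left $H$-module $L$, the natural map $\Hom_H(N,L)\otimes_H H_\zeta \to \Hom_{H_\zeta}(N\otimes_H H_\zeta, L\otimes_H H_\zeta)$ is an isomorphism; taking $N = \X^{K_m}$ and $L = H$ (so $L\otimes_H H_\zeta = H_\zeta$) gives exactly the claim.

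First I would record the two ingredients needed to invoke this. The flatness of $H_\zeta$ over $H$ on both sides is immediate: $H_\zeta$ is a localization of $H$ at the central non-zero-divisor $\zeta$ (the centrality of $\zeta$ is \S\ref{sec:zeta} and the non-zero-divisor property follows from Lemma \ref{zeta-basis}), and localizations at central multiplicative sets are flat. Alternatively one may cite Cor.\ \ref{isobimo}, which exhibits $H_\zeta$ as a direct sum of two copies of $H_{x_\epsilon}\otimes_k k[\zeta^{\pm 1}]$ as an $(H_{x_\epsilon},k[\zeta^{\pm1}])$-bimodule, hence free over $H_{x_\epsilon}$, which together with Prop.\ \ref{H-free}.i gives flatness over $H$. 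Second, $\X^{K_m}$ is a finitely generated — indeed finitely presented, since $H$ is noetherian (it is Gorenstein by \cite{OS}) — left $H$-module; finite generation is Lemma \ref{univ-reflexive} (or Prop.s \ref{firstlevel} and \ref{level-subquotients}).

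The actual argument is then a short homological computation. Choose a presentation $H^a \to H^b \to \X^{K_m} \to 0$ of the left $H$-module $\X^{K_m}$ by finite free modules. Apply $\Hom_H(-,H)$ to get a left-exact sequence $0 \to (\X^{K_m})^* \to H^b \to H^a$, and then apply the exact functor $-\otimes_H H_\zeta$ to obtain $0 \to ((\X^{K_m})^*)_\zeta \to H_\zeta^{\,b} \to H_\zeta^{\,a}$. On the other hand, applying $-\otimes_H H_\zeta$ to the presentation (right exactness of the tensor product) yields a presentation $H_\zeta^{\,a} \to H_\zeta^{\,b} \to (\X^{K_m})_\zeta \to 0$, and applying $\Hom_{H_\zeta}(-,H_\zeta)$ gives $0 \to \Hom_{H_\zeta}((\X^{K_m})_\zeta,H_\zeta) \to H_\zeta^{\,b} \to H_\zeta^{\,a}$. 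The natural base-change map fits into a commutative ladder between these two left-exact sequences, and it is the identity on the free terms $H_\zeta^{\,b}$ and $H_\zeta^{\,a}$ (both arising from $\Hom$ out of the same finite free modules, compatibly with the identification $H^* = H$); hence by the five lemma it is an isomorphism on kernels, which is the assertion.

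There is essentially no genuine obstacle here: the lemma is a formal consequence of flat base change for $\Hom$ of a finitely presented module. The only mild care points are (i) making sure the relevant $\Hom$'s are taken between the correct one-sided module categories so that the $(I,H)$- or $(H,G)$-bimodule bookkeeping produces the stated right $H_\zeta$-module structure on both sides — this is bookkeeping, not mathematics — and (ii) checking that $\X^{K_m}$ is finitely presented, which we get for free from the noetherianity of $H$. I would therefore expect the written-out proof to be no more than a few lines invoking flatness of $H_\zeta$ over $H$, finite presentation of $\X^{K_m}$, and the five-lemma ladder above.
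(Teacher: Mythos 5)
Your proposal is correct and uses a genuinely different route from the paper's own proof. You invoke flat base change for $\Hom$ of a finitely presented module: choose a finite presentation $H^a\to H^b\to \X^{K_m}\to 0$ (possible since $H$ is noetherian by \cite{OS} and $\X^{K_m}$ is finitely generated by Lemma \ref{univ-reflexive}), apply $\Hom_H(-,H)$ and the exact functor $H_\zeta\otimes_H(-)$ on one side, $(-)\otimes_H H_\zeta$ and $\Hom_{H_\zeta}(-,H_\zeta)$ on the other, and compare the two resulting left-exact sequences via the five lemma. This is the standard textbook argument and it works; the noncommutativity is harmless here because $\zeta$ is central and both sides of $H_\zeta$ are flat over $H$, so the only care needed is the bookkeeping of left versus right modules ($\X^{K_m}$ is in fact a right $H$-module and $(\X^{K_m})^*$ a left $H$-module; your write-up says ``left'' where the paper has ``right,'' but the argument is symmetric and this does not affect correctness). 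The paper instead proves the statement by an explicit, elementary two-step argument: it first exhibits a map $(Y^*)_\zeta\to \Hom_H(Y,H_\zeta)$ for an arbitrary right $H$-module $Y$, proves injectivity directly by writing elements of the localization as $\zeta^{-i}\otimes\lambda$ and using that $\zeta$ is a non-zero-divisor in $H$ (this part needs no finiteness hypothesis), then proves surjectivity by clearing denominators on the finitely many generators of $Y$, and finally identifies $\Hom_H(Y,H_\zeta)$ with $\Hom_{H_\zeta}(Y_\zeta,H_\zeta)$ by adjunction. Your approach is more abstract and slightly stronger in the sense that it packages the argument into a reusable general principle; the paper's approach is more hands-on, requires only finite generation (not finite presentation), and isolates the intermediate object $\Hom_H(Y,H_\zeta)$ explicitly. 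Both are valid and complete.
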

\begin{proof}
Let $Y$ be any right $H$-module and consider the morphism of left $H$-modules
\begin{align}\label{duallocal}
(Y^*)_\zeta = H_\zeta\otimes_H Y^* & \longrightarrow \Hom_ {H}(Y, H_\zeta) \\
h \otimes \lambda & \longmapsto [y\mapsto h \lambda(y)].   \nonumber
\end{align}
It is easily checked to be injective since an element in $(Y^*)_\zeta$ can be written in the form $\zeta^{-i} \otimes \lambda$ for $i\geq 0$ and $\lambda\in  Y^*$. Now suppose that $Y$ is  finitely generated over $H$. Then, for any $\varphi\in \Hom_ {H}(Y, H_\zeta)$, there is  $i\geq 1$ such that $\zeta^i\varphi$ has values in $H$ and lies in $Y^*$. One checks that $\varphi$ is the image of $\zeta^{-i}\otimes  \zeta^i\varphi$.  So \eqref{duallocal} is an isomorphism. It induces an isomorphism of left $H_\zeta$-modules
\begin{equation*}
    (Y^*)_\zeta\cong \Hom_ {H_\zeta}(Y_\zeta, H_\zeta) \ .
\end{equation*}
By Lemma  \ref{univ-reflexive} we may apply this to $Y=\X^{K_m}$, which proves the lemma.
\end{proof}

\begin{theorem}\label{theo:localcoincide}
 Suppose that $\mathbb F_q\subseteq k$ and that either $p \neq 2$ or $\mathfrak{F} = \mathbb{Q}_p$. The functor $\ften$ coincides with $\ften_0$  on  $\Mod(H_\zeta)$.  It induces
an exact fully faithful functor $\Mod(H_\zeta)\rightarrow \Mod(G)$, and we have
$\fhom\circ \ften_0\vert_{\Mod(H_\zeta)}=\fhom\circ \ften\vert_{\Mod(H_\zeta)}={\rm id} _{\Mod(H_\zeta)}$.
\end{theorem}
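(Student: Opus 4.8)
The plan is to verify each of the three assertions in Theorem~\ref{theo:localcoincide} in turn, using the projectivity of $\mathbf{X}_\zeta$ over $H_\zeta$ established in Cor.~\ref{coro:projlocal}.ii together with Lemma~\ref{lemma:duallocal}.

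\textbf{Step 1: $\ften$ and $\ften_0$ agree on $\Mod(H_\zeta)$.} Let $M$ be an $H_\zeta$-module, viewed as an $H$-module. The natural transformation $\tau_M$ from \eqref{defitauM} factors through the localization: since $\zeta$ acts invertibly on $M$, the map $\mathbf{X} \otimes_H M = \mathbf{X} \otimes_H (H_\zeta \otimes_{H_\zeta} M) = \mathbf{X}_\zeta \otimes_{H_\zeta} M$, and similarly $\Hom_H(\mathbf{X}^*, M) = \Hom_{H_\zeta}((\mathbf{X}^*)_\zeta, M)$. By Lemma~\ref{lemma:duallocal} applied to each $\mathbf{X}^{K_m}$ and passed to the colimit, $(\mathbf{X}^*)_\zeta = \varprojlim_m ((\mathbf{X}^{K_m})^*)_\zeta \cong \varprojlim_m \Hom_{H_\zeta}((\mathbf{X}^{K_m})_\zeta, H_\zeta) = \Hom_{H_\zeta}(\mathbf{X}_\zeta, H_\zeta) = (\mathbf{X}_\zeta)^\ast$. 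So $\tau_M$ becomes the evaluation map $\mathbf{X}_\zeta \otimes_{H_\zeta} M \longrightarrow \Hom_{H_\zeta}((\mathbf{X}_\zeta)^\ast, M)$ for the $H_\zeta$-module $\mathbf{X}_\zeta$. Since $\mathbf{X}_\zeta$ is a finitely generated — rather, a filtered colimit of finitely generated — projective $H_\zeta$-module (Cor.~\ref{coro:projlocal}), this evaluation map is injective: for a projective module $P$ the map $P \otimes_R M \to \Hom_R(\Hom_R(P,R),M)$ is injective (it is an isomorphism for $P$ finitely generated projective, and injectivity passes to filtered colimits since $\mathbf{X}^{K_{m+1}}_\zeta/\mathbf{X}^{K_m}_\zeta$ is projective so the transition maps behave well). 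Hence $\tau_M$ is injective, so $\ften(M) = \im(\tau_M) = \ften_0(M)$.

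\textbf{Step 2: exactness and full faithfulness of $\ften_0|_{\Mod(H_\zeta)}$.} Exactness of $\ften_0 = \mathbf{X} \otimes_H (-)$ restricted to $\Mod(H_\zeta)$ is immediate once we rewrite it as $\mathbf{X}_\zeta \otimes_{H_\zeta} (-)$: since $\mathbf{X}_\zeta$ is projective, hence flat, over $H_\zeta$, the functor is exact. For full faithfulness it suffices, by the standard adjunction argument, to show the unit $M \longrightarrow \fhom(\ften_0(M)) = (\mathbf{X}_\zeta \otimes_{H_\zeta} M)^I$ is an isomorphism for every $H_\zeta$-module $M$; equivalently $\fhom \circ \ften_0|_{\Mod(H_\zeta)} = \id$, which is exactly the last displayed formula. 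This we prove in Step~3.

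\textbf{Step 3: $\fhom \circ \ften_0|_{\Mod(H_\zeta)} = \id_{\Mod(H_\zeta)}$.} Note first that either hypothesis ($p \neq 2$ or $\mathfrak{F} = \mathbb{Q}_p$) guarantees, via Thm.~\ref{theo:torfree} or Prop.~\ref{Qp-equivalence}, that $\Mod(H_\zeta) \subseteq \mathcal{F}_G$. Hence by Thm.~\ref{F-embeds} (whose hypothesis \sur\ holds here by Cor.~\ref{split}) we already have $\fhom \circ \ften |_{\mathcal{F}_G} = \id_{\mathcal{F}_G}$, and in particular $\fhom \circ \ften|_{\Mod(H_\zeta)} = \id$. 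Combined with Step~1, which gives $\ften = \ften_0$ on $\Mod(H_\zeta)$, we get $\fhom \circ \ften_0|_{\Mod(H_\zeta)} = \id_{\Mod(H_\zeta)}$ as well. This also completes Step~2: the unit of the adjunction is an isomorphism, and since $\fhom$ is faithful on $\Mod^I(G)$ (Remark~\ref{easy}.iii) and the image of $\ften_0$ lies in $\Mod^I(G)$, the counit is automatically an isomorphism on the essential image, so $\ften_0|_{\Mod(H_\zeta)}$ is fully faithful. Finally, the induced functor $\Mod(H_\zeta) \to \Mod(G)$ is exact by Step~2 and fully faithful by the above.

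The main obstacle I anticipate is Step~1: one must be careful that the isomorphism $(\mathbf{X}^*)_\zeta \cong (\mathbf{X}_\zeta)^\ast$ really does identify $\tau_M$ (after localization) with the evaluation map for the $H_\zeta$-module $\mathbf{X}_\zeta$, and that injectivity of that evaluation map survives passage to the filtered colimit $\mathbf{X}_\zeta = \varinjlim_m (\mathbf{X}^{K_m})_\zeta$ — this is where the projectivity (not just flatness) of the graded pieces $(\mathbf{X}^{K_{m+1}})_\zeta/(\mathbf{X}^{K_m})_\zeta$ from Cor.~\ref{coro:projlocal}.i is used, to ensure the inverse system $((\mathbf{X}^{K_m})^*)_\zeta$ has surjective transition maps and the relevant $\varprojlim^{(1)}$ vanishes. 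Everything else is formal manipulation of adjunctions and already-established lemmas.
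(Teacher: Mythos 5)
Your Steps 2 and 3 are correct and match the paper's argument. The gap is in Step 1, specifically the assertion
\begin{equation*}
  (\mathbf{X}^*)_\zeta \;=\; \varprojlim_m\, ((\mathbf{X}^{K_m})^*)_\zeta \;\cong\; (\mathbf{X}_\zeta)^* \ .
\end{equation*}
The first equality does not follow: $H_\zeta \otimes_H (-)$ is a filtered colimit and does not commute with the inverse limit $\mathbf{X}^* = \varprojlim_m (\mathbf{X}^{K_m})^*$. The natural map $(\mathbf{X}^*)_\zeta \to \varprojlim_m ((\mathbf{X}^{K_m})^*)_\zeta = (\mathbf{X}_\zeta)^*$ is injective (because $H$ is $\zeta$-torsionfree), but surjectivity is precisely the issue: a compatible family $(\zeta^{-j_m}\lambda_m)_m \in \varprojlim ((\mathbf{X}^{K_m})^*)_\zeta$ can have unbounded denominators $j_m$, in which case it has no preimage in $H_\zeta \otimes_H \varprojlim (\mathbf{X}^{K_m})^*$. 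Your appeal to the projectivity of the graded pieces ensuring surjective transition maps and vanishing $\varprojlim^{(1)}$ does not touch this; those facts control exactness of $\varprojlim$ in the target system, not commutation of localization with the limit. Without the surjectivity of $\pi\colon (\mathbf{X}^*)_\zeta \to (\mathbf{X}_\zeta)^*$, the factorization $\tau_M = \pi^* \circ \mathrm{ev}_{\mathbf{X}_\zeta}$ does not transfer injectivity of the evaluation map to $\tau_M$, because $\pi^*$ need not be injective.

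The paper avoids this entirely by never trying to identify the global dual: it uses diagram \eqref{gradedtau'}, where the right vertical arrow $\Hom_H((\mathbf{X}^{K_m})^\ast,M) \to \Hom_H(\mathbf{X}^\ast,M)$ is injective by Cor.\ \ref{restriction}, and shows the bottom horizontal map is an isomorphism for each $m$ by combining Lemma \ref{lemma:duallocal} with the reflexivity result for finitely generated projective $H_\zeta$-modules applied to $P = (\mathbf{X}^{K_m})_\zeta$ (Cor.\ \ref{coro:projlocal}.i). Taking the union over $m$, since $\mathbf{X} = \bigcup_m \mathbf{X}^{K_m}$, forces $\tau_M$ injective. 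Notice that this uses only part i.\ of Cor.\ \ref{coro:projlocal}; your invocation of part ii.\ (projectivity of $\mathbf{X}_\zeta$ itself) is a detour that creates the inverse-limit problem rather than solving it. Your proof can be repaired by replacing Step 1 with this finite-level argument.
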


\begin{remark}
 In the situation of Thm.\ \ref{theo:localcoincide}, if $V$ is in the essential image of $\ften_0\vert_{\Mod(H_\zeta)}=\ften\vert_{\Mod(H_\zeta)}$ then it lies in $\Mod^I(G)$ and the natural map  $\ften_0(V^I)\xrightarrow{\cong} V$ is an isomorphism.
\end{remark}

\begin{proof}[Proof of  Thm.\  \ref{theo:localcoincide}]
Let $P\in \Mod(H_\zeta)$ be a finitely generated projective (hence reflexive) $H_\zeta$-module. Then it is classical to establish that
the natural map
\begin{equation*}
   P \otimes_{H_\zeta} N \longrightarrow
\Hom_{H_\zeta}(\Hom_ {H_\zeta}(P, H_\zeta),N)
\end{equation*}
is an isomorphism for any $N\in \Mod(H_\zeta)$. If $P$ is free (of finite rank), it follows from its reflexivity. The claim follows  in general  since $P$ is a direct summand of  a  free module  of finite rank,  and since the functors $- \otimes_{H_\zeta} N$ and $\Hom_{H_\zeta}(\Hom_ {H_\zeta}(-, H_\zeta),N)$ commute with finite direct sums.

Now let $M$ in $\Mod(H)$ be any module with invertible action of $\zeta$. We need to prove that the map $\tau_M: \X\otimes_H M \longrightarrow \Hom_H(\X^*, M)$ defined in \eqref{defitauM} is injective. For this we  consider  again the diagram
\begin{equation}\label{gradedtau'}
    \xymatrix{
      \mathbf{X} \otimes_H M  \ar[r]^-{\tau_M} &
      \Hom_H(\mathbf{X}^\ast,M)  \\
      \mathbf{X}^{K_m} \otimes_H M \ar[u] \ar[r] &
      \Hom_H((\mathbf{X}^{K_m})^\ast,M), \ar[u]  }
\end{equation}
for any $m \geq 1$. As a consequence of Cor.\ \ref{restriction} the right perpendicular arrow is injective. Therefore, since $\X = \bigcup_m \X^{K_m}$,  it suffices to show that the lower horizontal map  is an isomorphism, for any $m \geq 1$. By Lemma \ref{lemma:duallocal},
we have isomorphisms
\begin{equation*}
  \Hom_{H_\zeta}( \Hom_ {H_\zeta}((\X^{K_m})_\zeta, H_\zeta),M) \cong \Hom_{H_\zeta}( ((\X^{K_m})^*)_\zeta,M_\zeta) \cong \Hom_H((\X^{K_m})^\ast,M)
\end{equation*}
and a commutative diagram
\begin{equation} \label{mloc}
  \xymatrix{
      \mathbf{X}^{K_m} \otimes_H M  \ar[r] &
      \Hom_H((\X^{K_m})^\ast,M)  \\
      (\mathbf{X}^{K_m})_\zeta \otimes_{H_\zeta} M \ar[u]^{\cong} \ar[r] &
      \Hom_{H_\zeta}(\Hom_ {H_\zeta}((\X^{K_m})_\zeta, H_\zeta),M), \ar[u]^{\cong}  }
\end{equation}
the lower horizontal map of which is an isomorphism by the first remark of this proof applied to $P=(\X^{K_m})_\zeta$ and using Cor.\ \ref{coro:projlocal}.i. Therefore, the top horizontal map of \eqref{mloc} is also an isomorphism. It is the lower horizontal map of \eqref{gradedtau'}. We have proved that $\ften_0$ and $\ften$ coincide on $\Mod(H_\zeta)$. Since $\ften_0$ is right exact on $\Mod(H)$ and $\ften$ preserves injective maps,  they are exact on $\Mod(H_\zeta)$. The last statement of the theorem is a direct consequence of Thm.\ \ref{F-embeds},  Prop.\ \ref{Qp-equivalence}, and Thm.\ \ref{theo:torfree}.
\end{proof}

\begin{remark}
If $V$ is a principal series representation, then it lies in $\Mod^I(G)$ (see, for example, \cite{Omjm} Prop.\ 4.3, which is a generalization of \cite{SS} p.\ 80, Prop.\ 11). The classification of the nonsupercuspidal representations of $G = \mathbf{SL_2}(\mathfrak{F})$ (\cite{Her}) shows that if $V$ is a subquotient of a principal series representation of $G$, then it is either irreducible or isomorphic to the principal series representation. In any case, it lies in $\Mod^I(G)$. Furthermore, $V^I$ is a finite dimensional nonsupersingular $H$-module (the proof is similar to the case of $G = \mathbf{GL_2}(\mathfrak{F})$ done in \cite{Viggl2} \S4.4 and \S4.5, see also \cite{Abd} Thm.\ 1.4). This is an example of a module in $\Mod(H_\zeta)$ and $V\cong\ften(V^I)\cong \ften_0(V^I) $ is in the essential image
 of $\ften\vert_{\Mod(H_\zeta)}= \ften_0\vert_{\Mod(H_\zeta)}$.

If $k$ is algebraically closed, the functor $\fhom$ induces a bijection  between  the isomorphism classes  of irreducible subquotients of  principal series representations on one side, and of simple nonsupersingular modules on the other side (\cite{Abd} Thm.\ 1.3). By Thm.\ \ref{theo:localcoincide}, the inverse bijection is induced by $\ften_0$ or $\ften$.
\end{remark}

\subsection{\label{prooftheo}Proof of Theorem \ref{maintheosocle}}

In this section we prove Thm.\  \ref{maintheosocle}: the statements will be verified in Cor.\ \ref{Zm-zero'} for the case $\mathfrak F=\mathbb Q_p$ and in Prop.s \ref{nosmall-q} and \ref{nosmall-q-chi1} for the case $\mathfrak F\neq \mathbb Q_p$.
Note that we first have to prove Cor.\ \ref{coro:case0}, the claim of which was used  above to show that
$H^1(I,\mathbf{X}^{K_m})^1$ and  $Z_m$ are supersingular modules (see Lemma \ref{-1} and subsequent corollaries).

Various technical group theoretical computations, which will be needed in the course of our proofs, are collected in the Appendix \S\ref{sec:App}.

\subsubsection{First results on the $H$-socle of $H^j(I,\mathbf{X}^{K_m})^1$}

For any $H$-module $M$ and any character $\chi$ of $H$  we let $M(\chi)$ denote the $\chi$-eigenspace in $M$.

\begin{remark}\phantomsection\label{case-sign}
\begin{itemize}
  \item[i.] $H^1(I,\mathbf{X}^{K_m})^1(\chi_{sign}) = 0$ for any $m \geq 2$.
  \item[ii.] $H^1(I,\mathbf{X}^{K_2})^1(\chi_1) = 0$.
\end{itemize}
\end{remark}
\begin{proof}
This is an immediate consequence of Prop.\ \ref{tau-annihil}.
\end{proof}

The following observation will later on allow an inductive reasoning.

\begin{lemma}\phantomsection\label{inductive}
\begin{itemize}
  \item[i.] The natural map $H^j(I,\mathbf{X}^{K_m})^1 \longrightarrow H^j(I,\mathbf{X}^{K_{m+1}})^1$ is injective.
  \item[ii.] Any $H$-submodule of $H^j(I,\mathbf{X}^{K_{m+1}})^1$ which is contained in $\bigoplus\limits_{\ell(w) + \sigma(w) < m} H^j(I,\mathbf{X}(w)^{K_{m+1}})$ is the image of an $H$-submodule in $H^j(I,\mathbf{X}^{K_m})^1$.
\end{itemize}
\end{lemma}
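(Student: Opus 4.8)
The plan is to exploit the $I$-equivariant decomposition $\mathbf{X}^{K_m}=\bigoplus_{w\in\widetilde W}\mathbf{X}(w)^{K_m}$, which is compatible with the inclusion $\mathbf{X}^{K_m}\hookrightarrow\mathbf{X}^{K_{m+1}}$. Since $I$-cohomology commutes with direct sums of discrete modules, the induced map $\phi\colon H^j(I,\mathbf{X}^{K_m})\to H^j(I,\mathbf{X}^{K_{m+1}})$ decomposes, \emph{as a map of $k$-vector spaces}, into $\bigoplus_w\phi_w$ with $\phi_w\colon H^j(I,\mathbf{X}(w)^{K_m})\to H^j(I,\mathbf{X}(w)^{K_{m+1}})$ (this is not an $H$-module decomposition, since the $H$-action mixes the $w$'s). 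The first key observation is that, by \eqref{defiIn} and \eqref{f:cap}, one has $K_m\subseteq I\cap wIw^{-1}$ as soon as $\ell(w)+\sigma(w)\le m$; hence for such $w$ the inclusion $\mathbf{X}(w)^{K_m}=\mathbf{X}(w)=\mathbf{X}(w)^{K_{m+1}}$ is the identity, and $\phi_w=\id$. For \textbf{i.}, Cor.\ \ref{length-support} places $H^j(I,\mathbf{X}^{K_m})^1$ inside $\bigoplus_{\ell(w)+\sigma(w)<m}H^j(I,\mathbf{X}(w)^{K_m})$, on which $\phi$ is the identity by the previous remark; thus $\phi|_{H^j(I,\mathbf{X}^{K_m})^1}$ is injective, and its image, being a finite-dimensional $H$-submodule, lies in $H^j(I,\mathbf{X}^{K_{m+1}})^1$ by Lemma \ref{mod-AG}.iii.

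For \textbf{ii.}, write $V_\bullet:=\bigoplus_{\ell(w)+\sigma(w)<m}H^j(I,\mathbf{X}(w)^{K_\bullet})$ (so $V_m=C_{m-1}$ in the notation $C_n=D_1\oplus\cdots\oplus D_n$), so that $\phi$ restricts to an isomorphism $\psi\colon V_m\xrightarrow{\;\cong\;}V_{m+1}$. Given $N'$ as in the statement, I would simply set $N'':=\psi^{-1}(N')\subseteq V_m$; this is a finite-dimensional $k$-subspace with $\phi(N'')=N'$, and the whole content is to verify that $N''$ is an $H$-submodule of $H^j(I,\mathbf{X}^{K_m})$. Since $H$ is generated as a $k$-algebra by the $\tau_\omega$ ($\omega\in\Omega$) together with $\tau_{s_0}$ and $\tau_{s_1}$, and $\{\epsilon(m),\epsilon(m-1)\}=\{0,1\}$, it suffices to show that $N''$ is stable under right multiplication by $\tau_\omega$, by $\tau_{s_{\epsilon(m)}}$, and by $\tau_{s_{\epsilon(m-1)}}$.

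Stability under $\tau_\omega$ is immediate, because $\tau_\omega$ preserves the grading by $\ell(w)+\sigma(w)$ (Lemma \ref{length-inj}.i) so $N''\tau_\omega\subseteq V_m$, and $\psi(N''\tau_\omega)=\psi(N'')\tau_\omega=N'\tau_\omega\subseteq N'$; likewise, stability under $\tau_{s_{\epsilon(m)}}$ follows from the fact that $V_m=C_{m-1}$ is carried into itself by $\tau_{s_{\epsilon(m)}}$ (Lemma \ref{tau-grading}.ii). The crux is stability under $\tau_{s_{\epsilon(m-1)}}$: by Lemma \ref{tau-grading}.iii (with the convention $C_0:=0$, the case $m=2$ being Lemma \ref{tau-grading}.i) one has $C_{m-1}\tau_{s_{\epsilon(m-1)}}\subseteq C_{m-2}\oplus D_m$, so for $x\in N''$ we may write $x\tau_{s_{\epsilon(m-1)}}=u+v$ with $u\in C_{m-2}$ and $v\in D_m$. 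Since $\phi$ is the \emph{identity} (not merely injective) on all graded pieces of degree $\le m$, we get $\phi(x\tau_{s_{\epsilon(m-1)}})=u+v$; but $\phi(x\tau_{s_{\epsilon(m-1)}})=\psi(x)\tau_{s_{\epsilon(m-1)}}$ lies in $N'\subseteq V_{m+1}$, which is concentrated in degrees $<m$, forcing $v=0$. Hence $x\tau_{s_{\epsilon(m-1)}}=u\in V_m$ with $\psi(u)=u=\psi(x)\tau_{s_{\epsilon(m-1)}}\in N'$, i.e.\ $x\tau_{s_{\epsilon(m-1)}}\in N''$. This shows $N''$ is a finite-dimensional $H$-submodule of $H^j(I,\mathbf{X}^{K_m})$, hence $N''\subseteq H^j(I,\mathbf{X}^{K_m})^1$ by Lemma \ref{mod-AG}.iii, and $\phi(N'')=N'$. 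I do not expect a serious obstacle here: once one has isolated that on low-degree graded pieces $\phi$ is literally the identity — which is exactly what makes the $D_m$-component of $x\tau_{s_{\epsilon(m-1)}}$ vanish — both parts become a short exercise in the grading bookkeeping already set up in Lemma \ref{tau-grading}.
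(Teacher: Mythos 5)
Your proof is correct and in essence the same as the paper's: both rest on the observation that for $\ell(w)+\sigma(w)\le m$ one has $\mathbf{X}(w)^{K_m}=\mathbf{X}(w)^{K_{m+1}}$, so the natural $H$-module map $\phi$ is the identity on the graded pieces of degree $\le m$, and Cor.~\ref{length-support} places $H^j(I,\mathbf{X}^{K_m})^1$ inside $C_{m-1}$. The paper's proof of ii.\ merely asserts that $M_2$ ``must be the image of an $H$-submodule $M_1$'' without explaining why $\psi^{-1}(M_2)$ is $H$-stable (which is not automatic, since $V_m=C_{m-1}$ need not be an $H$-submodule); your generator-by-generator check, hinging on the $D_m$-component of $x\tau_{s_{\epsilon(m-1)}}$ vanishing because $\phi$ is the \emph{identity} rather than merely injective on $D_m$, supplies precisely this omitted detail.
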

\begin{proof}
For $\ell(w) + \sigma(w) < m$ the natural map $H^j(I,\mathbf{X}(w)^{K_m}) \xrightarrow{\cong} H^j(I,\mathbf{X}(w)^{K_{m+1}})$ is an isomorphism. In view of Cor.\ \ref{length-support} this implies i. For ii. let $M_2 \subseteq H^j(I,\mathbf{X}^{K_{m+1}})^1$ be an $H$-submodule. If $M_2$ is contained in $\oplus_{\ell(w) + \sigma(w) < m} H^j(I,\mathbf{X}(w)^{K_{m+1}})$ Then it must be the image of an $H$-submodule $M_1 \subseteq \oplus_{\ell(w) + \sigma(w) < m} H^j(I,\mathbf{X}(w)^{K_m})$. With $M_2$ also $M_1$ is finite dimensional. Hence $M_1$ must be contained in $H^j(I,\mathbf{X}^{K_m})^1$.
\end{proof}

Here is a first application of this lemma:

\begin{lemma}\label{eigenspace-iso}
For $\epsilon(m) = i$ the natural map $H^j(I,\mathbf{X}^{K_m})^1(\chi_i) \xrightarrow{\cong} H^j(I,\mathbf{X}^{K_{m+1}})^1(\chi_i)$ is an isomorphism.
\end{lemma}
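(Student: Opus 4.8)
The plan is to get injectivity for free from Lemma \ref{inductive}.i, and to deduce surjectivity from Lemma \ref{inductive}.ii once we know that the $\chi_i$-eigenspace of $H^j(I,\mathbf{X}^{K_{m+1}})^1$ is concentrated in length degrees $<m$. Throughout I will use the grading $D_n$, $C_n=D_1\oplus\dots\oplus D_n$ of \S\ref{sec:defiDn} formed for the fixed $j$ and for the \emph{level} $m+1$, so that $H^j(I,\mathbf{X}^{K_{m+1}})=\bigoplus_n D_n$ as $k$-vector spaces with the $D_n$ independent direct summands. First I record that for any $H$-module $M$ and any character $\chi$ the eigenspace $M(\chi)$ is an $H$-submodule (if $x\in M(\chi)$ then $xH=kx\subseteq M(\chi)$); in particular $N:=H^j(I,\mathbf{X}^{K_{m+1}})^1(\chi_i)$ is an $H$-submodule of $H^j(I,\mathbf{X}^{K_{m+1}})^1$, and since the natural map $H^j(I,\mathbf{X}^{K_m})^1\to H^j(I,\mathbf{X}^{K_{m+1}})^1$ of Lemma \ref{inductive}.i is $H$-linear and injective, it restricts to an injection $H^j(I,\mathbf{X}^{K_m})^1(\chi_i)\hookrightarrow N$.

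The heart of the argument is the support estimate $N\subseteq C_{m-1}=\bigoplus_{\ell(w)+\sigma(w)<m}H^j(I,\mathbf{X}(w)^{K_{m+1}})$. By Cor.\ \ref{length-support} we have $N\subseteq C_m=C_{m-1}\oplus D_m$. On $N$ the element $\tau_{s_i}$ acts by the scalar $\chi_i(\tau_{s_i})=-1$, and by hypothesis $\epsilon(m)=i$. If $m\geq 2$, then Lemma \ref{tau-grading}.iii (applied with grading index $m$) gives $C_m\tau_{s_{\epsilon(m)}}\subseteq C_{m-1}\oplus D_{m+1}$; if $m=1$, then $C_1=D_1$ and Lemma \ref{tau-grading}.i gives $C_1\tau_{s_{\epsilon(1)}}\subseteq D_2\subseteq C_0\oplus D_2$. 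In either case, for $x\in N$ we obtain $-x=x\tau_{s_{\epsilon(m)}}\in C_{m-1}\oplus D_{m+1}$ while also $x\in C_{m-1}\oplus D_m$; since the $D_n$ are independent direct summands, $(C_{m-1}\oplus D_m)\cap(C_{m-1}\oplus D_{m+1})=C_{m-1}$, whence $x\in C_{m-1}$. This proves $N\subseteq C_{m-1}$.

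It then only remains to apply Lemma \ref{inductive}.ii to the $H$-submodule $N$: it exhibits $N$ as the image of an $H$-submodule $M_1\subseteq H^j(I,\mathbf{X}^{K_m})^1$. Since the natural map is injective, $M_1\xrightarrow{\ \cong\ }N$ as $H$-modules, so $M_1$ is $\chi_i$-isotypic, i.e.\ $M_1\subseteq H^j(I,\mathbf{X}^{K_m})^1(\chi_i)$; therefore the natural map $H^j(I,\mathbf{X}^{K_m})^1(\chi_i)\to N$ is onto, and being injective it is an isomorphism. The only point requiring a little care is the support estimate of the middle paragraph, but this is pure bookkeeping with the length grading together with Lemma \ref{tau-grading}; in particular the fact that the $D_n$-grading at level $m+1$ still satisfies Lemma \ref{tau-grading} is because that lemma only invokes the level-free identities \eqref{f:length1}, \eqref{f:length2} and Lemma \ref{length-inj}, so no new computation is needed.
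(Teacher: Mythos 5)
Your proof is correct and follows essentially the same route as the paper's: reduce via Lemma \ref{inductive} to showing the $\chi_i$-eigenspace at level $m+1$ sits inside $C_{m-1}$, then use the $\tau_{s_{\epsilon(m)}}$-eigenvalue $-1$ together with the length grading to rule out a $D_m$-component. The only cosmetic differences are that you invoke Lemma \ref{tau-grading}.iii (an intersection argument on $C_m\tau_{s_{\epsilon(m)}}\subseteq C_{m-1}\oplus D_{m+1}$) where the paper writes $x=y+z$ and compares $D_m$-components using parts i.\ and ii.\ of that lemma, and that you spell out the $m=1$ case and the two applications of Lemma \ref{inductive} explicitly.
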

\begin{proof}
By Lemma \ref{inductive} it suffices to show that the projection of $H^j(I,\mathbf{X}^{K_{m+1}})^1(\chi_i)$ to $D_m$ is zero. We write $x \in H^j(I,\mathbf{X}^{K_{m+1}})^1(\chi_i)$ as $x = y + z$ with $y \in C_{m-1}$ and $z \in D_m$. It follows from Lemma \ref{tau-grading}.i that $z\tau_{s_i} = z\tau_{s_{\epsilon(m)}} = 0$. On the other hand, by Lemma \ref{tau-grading}.ii we have $C_{m-1}\tau_{s_i} \subseteq C_{m-1}$. Hence the condition $x\tau_{s_i} = -x$ forces $z$ to vanish.
\end{proof}

\subsubsection{Action of  the standard generators of $H$   on  $H^1(I,\mathbf{X}^{K_m})$ via Shapiro's lemma}\label{sec:explicit}

We fix an $m \geq 1$, and we consider a fixed but arbitrary $w \in \widetilde{W}$. As noted before in \S\ref{sec:I-decomp-X}, we have the $I$-equivariant isomorphisms $\mathbf{X}(w) \cong \ind _{I \cap wIw^{-1}}^I(1)$ and $\mathbf{X}(w)^{K_m} \cong \ind _{K_m(I \cap wIw^{-1})}^I(1)$. This leads to the isomorphisms
\begin{equation*}
  H^1(I,\mathbf{X}(w)^{K_m}) \xrightarrow{\; \cong \;} H^1(I,\ind _{K_m(I \cap wIw^{-1})}^I(1)) \xrightarrow{\; \cong \;} H^1(K_m(I \cap wIw^{-1}),k)
\end{equation*}
where the second one is given by Shapiro's lemma. In the following we denote the composed isomorphism by ``Sh''; it is induced by the map $\mathrm{ev}_w : \mathbf{X}(w) \longrightarrow k$ which sends $f$ to $f(w)$ (cf.\ \cite{Se1} p.11). Our goal is to identify the operators $\tau_\omega$ for $\omega \in \Omega$, $\tau_{s_0}$, and $\tau_{s_1}$ on its target.\\

\noindent\textbf{A) Action of $\tau_\omega$ on $H^1(I,\mathbf{X}(w)^{K_m})$:}\\

The easy case is $\tau_\omega$. By Lemma \ref{length-inj}.i it maps $H^1(I,\mathbf{X}(w)^{K_m})$ to $H^1(I,\mathbf{X}(w\omega)^{K_m})$. We obviously have $I \cap wIw^{-1} = I \cap w\omega I(w\omega)^{-1}$. One easily checks that the diagram
\begin{equation}\label{d:omega}
  \xymatrix{
  H^1(I,\mathbf{X}(w)^{K_m}) \ar[rr]^{\tau_\omega}_{\cong} \ar[dr]_{\cong}^{Sh}
                &  &    H^1(I,\mathbf{X}(w\omega)^{K_m})  \ar[dl]^{\cong}_{Sh}  \\
                & H^1(K_m(I \cap wIw^{-1}),k)               }
\end{equation}
is commutative.\\

\noindent\textbf{B) Action of $\tau_s$ on $H^1(I,\mathbf{X}(w)^{K_m})$ where $\ell(w s) = \ell(w) + 1$:}\\

For the next case we assume that $s \in \{s_0, s_1\}$ is such that $\ell(w s) = \ell(w) + 1$. Again Lemma \ref{length-inj}.i says that $\tau_s$ maps $H^1(I,\mathbf{X}(w)^{K_m})$ to $H^1(I,\mathbf{X}(ws)^{K_m})$. From \eqref{f:cap} we know that $I \cap wIw^{-1} \supset I \cap wsI(ws)^{-1}$. We claim that the diagram
\begin{equation}\label{d:s+}
  \xymatrix{
    H^1(I,\mathbf{X}(w)^{K_m}) \ar[d]_{Sh}^{\cong} \ar[r]^-{\tau_s} & H^1(I,\mathbf{X}(ws)^{K_m}) \ar[d]^{Sh}_{\cong} \\
    H^1(K_m(I \cap wIw^{-1}),k) \ar[r]^-{\mathrm{res}} & H^1(K_m(I \cap wsI(ws)^{-1}),k)   }
\end{equation}
is commutative where the bottom horizontal arrow is the cohomological restriction map. In the proof of Lemma \ref{length-inj}.i we have seen that the sets $gwIsI$, for $g \in I/I \cap wIw^{-1}$, are pairwise disjoint. It follows that
\begin{equation*}
  (\mathrm{char}_{gwI} \tau_s)(ws) = \mathrm{char}_{gwIsI}(ws) =
  \begin{cases}
  1 & \text{if $\mathrm{char}_{gwI}(w) = 1$}, \\
  0 & \text{if $\mathrm{char}_{gwI}(w) = 0$}
  \end{cases}
\end{equation*}
for any $g \in I$ and hence that $\mathrm{ev}_{ws} (f\tau_s) = \mathrm{ev}_w (f)$ for any $f \in \mathbf{X}(w)$.\\

\noindent\textbf{C) Action of $\tau_s$ on $H^1(I,\mathbf{X}(w)^{K_m})$ where $\ell(w s) = \ell(w) - 1$:}\\

The case where $s \in \{s_0, s_1\}$ is such that $\ell(w s) = \ell(w) - 1$ is the most complicated one.
By assumption we can write $w = vs$ with $\ell(w) = \ell(v) +1$. From Lemma \ref{length-inj}.ii we know that $\tau_s$ restricts to a map
\begin{equation*}
  \mathbf{X}(w) \longrightarrow \mathbf{X}(ws) \oplus \bigoplus_{\omega \in \Omega} \mathbf{X}(w\omega) \ ,
\end{equation*}
whose components we denote by
\begin{equation*}
  \tau_{w,s,0} : \mathbf{X}(w) \longrightarrow \mathbf{X}(ws) \qquad\text{and}\qquad \tau_{w,s,\omega} : \mathbf{X}(w) \longrightarrow \mathbf{X}(w \omega) \ .
\end{equation*}
By the formulas before \eqref{f:supports0} and \eqref{f:supports1}, respectively, they are given by
\begin{subequations}
\begin{align}
  (\mathrm{char}_{gwI})\tau_{w,s,0} & = \mathrm{char}_{gwsI},  \label{f:components-a} \\
  (\mathrm{char}_{gwI})\tau_{w,s,\omega} & =
  \begin{cases}
      \mathrm{char}_{g (vu^+_\omega v^{-1}) w \omega I} & \text{if $s = s_0$}, \\
      \mathrm{char}_{g (vu^-_\omega v^{-1}) w \omega I} & \text{if $s = s_1$}
  \end{cases} \label{f:components-b}
\end{align}
\end{subequations}
for any $g \in I$. From \eqref{f:cap} we know that $I \cap wIw^{-1} \subset I \cap wsI(ws)^{-1}$. The identity \eqref{f:components-a} implies that
\begin{equation*}
  (f \tau_{w,s,0})(hws) = \sum_{g \in I \cap wsI(ws)^{-1} / I \cap wIw^{-1}} f(hgw) \qquad\text{for any $f \in \mathbf{X}(w)$ and $h \in I$}.
\end{equation*}
It follows:
\begin{itemize}
  \item[--] If $K_m \cap wIw^{-1} \subsetneqq K_m \cap wsI(ws)^{-1}$ then $\tau_{w,s,0} : \mathbf{X}(w)^{K_m} \longrightarrow \mathbf{X}(ws)^{K_m}$ is the zero map.
  \item[--] If $K_m \cap wIw^{-1} = K_m \cap wsI(ws)^{-1}$ then the diagram
\begin{equation}\label{d:s-0}
  \xymatrix{
    H^1(I,\mathbf{X}(w)^{K_m}) \ar[d]_{Sh}^{\cong} \ar[r]^-{\tau_{w,s,0}} & H^1(I,\mathbf{X}(ws)^{K_m}) \ar[d]^{Sh}_{\cong} \\
    H^1(K_m(I \cap wIw^{-1}),k) \ar[r]^-{\mathrm{cores}} & H^1(K_m(I \cap wsI(ws)^{-1}),k)   }
\end{equation}
is commutative where the bottom horizontal arrow is the cohomological corestriction map (cf.\ \cite{Se1} p.11/12).
We point out that, by \eqref{f:cap}, the equality $K_m \cap wIw^{-1} = K_m \cap wsI(ws)^{-1}$ holds if and only if $K_m \subseteq I \cap wIw^{-1}$ if and only if $\ell(w) + \sigma(w) \leq m$.
\end{itemize}
Finally we contemplate the identity \eqref{f:components-b}.

\begin{remark}\label{normal}
Let $s \in \{s_0,s_1\}$ such that $\ell(ws) = \ell(w) - 1$ and $v := ws^{-1}$; we have:
\begin{itemize}
  \item[i.] $I \cap wIw^{-1}$ is normal in $I \cap vIv^{-1}$.
  \item[ii.] The map
  \begin{align*}
    u_{w,s} : \Omega & \xrightarrow{\; \sim \;} (I \cap vIv^{-1} / I \cap wIw^{-1}) \setminus \{1\} \\
    \omega & \longmapsto
    \begin{cases}
    v u^+_\omega v^{-1} & \text{if $s = s_0$}, \\
    v u^-_\omega v^{-1} & \text{if $s = s_1$}
    \end{cases}
  \end{align*}
  is a bijection.
  \item[iii.] $u_{w\omega_1,s}(\omega_2) = u_{w,s}(\omega_1^2 \omega_2)$ for any $\omega_1, \omega_2 \in \Omega$.
\end{itemize}
\end{remark}

The identity \eqref{f:components-b} says that the map $\tau_{w,s,\omega} : \mathbf{X}(w) \longrightarrow \mathbf{X}(w \omega)$ under the identifications $\mathbf{X}(w) \cong \ind_{I \cap wIw^{-1}}^I(1) \cong \mathbf{X}(w \omega)$ becomes the automorphism
\begin{align*}
  \ind_{I \cap wIw^{-1}}^I(1) & \longrightarrow \ind_{I \cap wIw^{-1}}^I(1) \\
  f & \longmapsto [g \mapsto f(gu_{w,s}(\omega)^{-1})] \ .
\end{align*}
It is standard that, after passing to cohomology and applying the Shapiro isomorphism, this latter map induces the conjugation map $\alpha \longmapsto u_{w,s}(\omega)^*(\alpha)(g) := \alpha(u_{w,s}(\omega) g u_{w,s}(\omega)^{-1})$ on $H^1(I \cap wIw^{-1},k)$.
We therefore have shown the commutativity of the diagram
\begin{equation}\label{d:s-omega}
  \xymatrix{
    H^1(I,\mathbf{X}(w)^{K_m}) \ar[d]_{Sh}^{\cong} \ar[rr]^-{\tau_{w,s,\omega}} & & H^1(I,\mathbf{X}(w\omega)^{K_m}) \ar[d]^{Sh}_{\cong} \\
    H^1(K_m(I \cap wIw^{-1}),k) \ar[rr]^-{u_{w,s}(\omega)^*} & & H^1(K_m(I \cap wIw^{-1}),k).   }
\end{equation}
Again there are two cases to distinguish:
\begin{itemize}
  \item[--] If $\ell(w) + \sigma(w) > m$ then $K_m(I \cap wIw^{-1}) = K_m(I \cap vIv^{-1})$ and all the $u_{w,s}(\omega)^*$ are the identity map. By \eqref{d:omega} this means that $\tau_{w,s,\omega} = \tau_\omega$.
  \item[--] If $\ell(w) + \sigma(w) \leq m$ then $K_m \subseteq I \cap wIw^{-1}$ and the $u_{w,s}(\omega)^*$ (together with the identity map) describe the natural action of the finite group $(I \cap vIv^{-1})/(I \cap wIw^{-1})$ on $H^1(K_m(I \cap wIw^{-1}),k) = H^1(I \cap wIw^{-1},k)$.\\
\end{itemize}
\begin{proof}[Proof of Remark \ref{normal}]
i. $I \cap s_0 I s_0^{-1} = I^-_1 = K_1$ and $I \cap s_1 I s_1^{-1} = I^+_1 =
\left(\begin{smallmatrix}
0 & 1 \\
\pi & 0
\end{smallmatrix}\right) K_1
\left(\begin{smallmatrix}
0 & 1 \\
\pi & 0
\end{smallmatrix}\right)^{-1}$ both are normal in $I$. It follows that $vIv^{-1}$ normalizes $vIv^{-1} \cap wIw^{-1}$ and hence that $I \cap vIv^{-1}$ normalizes $I \cap vIv^{-1} \cap wIw^{-1} = I \cap wIw^{-1}$.

ii. Let $s = s_0$. We may assume (cf.\ iii.) that $v = (s_0s_1)^j$ ($vC$ is antidominant if $j \neq 0$) or $v = s_1(s_0s_1)^j$ ($vC$ is dominant) for some $j \geq 0$. Suppose that $\omega = \omega_z$; then $u^+_\omega =
\left(\begin{smallmatrix}
1 & [z] \\
0 & 1
\end{smallmatrix}\right)$. A straightforward computation gives
\begin{equation*}
  v u^+_\omega v^{-1} =
  \begin{pmatrix}
  1 & \pi^{\ell(v)} [z] \\
  0 & 1
  \end{pmatrix}
  \in I^-_{\ell(v)}  \quad\text{and}\quad
  v u^+_\omega v^{-1} =
  \begin{pmatrix}
  1 & 0 \\
  -\pi^{\ell(v)+1} [z] & 1
  \end{pmatrix}
  \in I^+_{\ell(v)}, \text{respectively}.
\end{equation*}
Using \eqref{f:cap} we see that $v u^+_\omega v^{-1} \in I \cap vIv^{-1}$ in both cases. Moreover, due to the Iwahori factorization of all groups involved the asserted bijectivity is directly visible. We leave the analogous case $s = s_1$ to the reader.

iii. We have $ws = vs\omega_1 = (v\omega_1^{-1})s$. Hence $u_{w\omega_1,s}(\omega_2) = v\omega_1^{-1} u^+_{\omega_2} (v\omega_1^{-1})^{-1} = v\omega_1^{-1} u^+_{\omega_2} \omega_1 v^{-1}$ whereas $u_{w,s}(\omega_1^2 \omega_2) = v u^+_{\omega_1^2 \omega_2} v^{-1}$. Our claim therefore reduces to the identity $\omega_1^{-1} u^+_{\omega_2} \omega_1 = u^+_{\omega_1^2 \omega_2}$. This is verified by a simple computation.
\end{proof}
\subsubsection{\label{sec:prelilemm}Applications to the $H$-module structure of $H^1(I, {\mathbf X}^{K_m})^1$: preliminary lemmas}

As before we fix $m\geq 1$. The subspaces $D_n \subseteq H^1(I, {\mathbf X}^{K_m})$, for $n\geq 1$, were introduced in \S\ref{sec:defiDn}. In the following it is convenient to work with the decompositions
$$  D_n = D^-_n \oplus D^+_n  \quad\text{with} $$
\begin{equation*}
\quad D^-_n := \oplus_{\omega \in \Omega}\; H^1(I,\mathbf{X}(s_0 w_n\omega)^{K_m}), D^+_n := \oplus_{\omega \in \Omega}\; H^1(I,\mathbf{X}(w_n\omega)^{K_m})
\end{equation*}
for $n \geq 1$. We recall that:
\begin{itemize}
  \item[--] $D^\pm_n \tau_\omega = D^\pm_n$ for any $\omega \in \Omega$.
  \item[--] $D^\pm_n \tau_{s_{\epsilon(n)}} \subseteq D^\pm_{n+1}$.
  \item[--] $(D^\pm_{n-1} \oplus D^\pm_n) \tau_{s_{\epsilon(n+1)}} \subseteq D^\pm_{n-1} \oplus D^\pm_n$, for $n \geq 2$, and $D_1\tau_{s_0} \subseteq D_1$ (by Lemma \ref{length-inj}.ii).
\end{itemize}
Recall that $w_n$ was defined by \eqref{f:wn} and $I^{\pm}_n$ in \eqref{defiIn}. We observe that
\begin{equation}\label{f:wnIn}
I \cap w_n I w_n^{-1} = I_{n-1}^+ \qquad\textrm{and}\qquad I \cap (s_0 w_n) I(s_0 w_n)^{-1} = I_{n}^- \ .
\end{equation}
For $J\subseteq L$ two open subgroups of $I$, we will denote respectively by $\mathrm{res}^L_J$ and $\mathrm{cores}^J_L$ the restriction and corestriction maps
\begin{equation*}
  H^1(L,k) \xrightarrow{\;\mathrm{res}\;} H^1(J,k) \qquad\textrm{and}\qquad H^1(J,k) \xrightarrow{\;\mathrm{cores}\;} H^1(L,k) \ .
\end{equation*}
The discussion in the previous section \ref{sec:explicit} leads to the following explicit formulas. Assume that $2 \leq n < m$ and let $(\alpha, \beta)\in D_{n-1}^+\oplus D_n^+$. Using the Shapiro isomorphisms
\begin{align*}
  H^1(I,\mathbf{X}(w_{n-1}\omega)^{K_m}) & \cong H^1(K_m(I \cap w_{n-1} I w_{n-1}^{-1}),k) = H^1(I^+_{n-2},k) \ \text{and} \\
  H^1(I,\mathbf{X}(w_n\omega)^{K_m}) & \cong H^1(K_m(I \cap w_n I w_n^{-1}),k) = H^1(I^+_{n-1},k)
\end{align*}
we view $(\alpha,\beta)$ as elements
\begin{equation*}
  \alpha = (\alpha_\omega)_\omega \in \oplus_{\omega \in \Omega} \; H^1(I^+_{n-2},k) \quad\text{and}\quad  \beta = (\beta_\omega)_\omega \in \oplus_{\omega \in \Omega} \; H^1(I^+_{n-1},k) \ .
\end{equation*}
The commutativity of \eqref{d:omega} then implies that  for $\omega\in \Omega$, we have $(\alpha, \beta)\tau_{\omega}=(\alpha_{\tilde\omega\omega},\beta_{\tilde\omega\omega} )_{\tilde\omega}$. The commutativity of \eqref{d:s+}, \eqref{d:s-0}, and \eqref{d:s-omega}  implies that
\begin{equation*}
  (\alpha,\beta)\tau_{s_{\epsilon(n+1)}} = (\mathrm{cores}^{I^+_{n-1}}_{I^+_{n-2}} (\beta_{\omega^{-1}}), \mathrm{res}^{I^+_{n-2}}_{I^+_{n-1}}(\alpha_{\omega^{-1}}) + \sum_{\tilde\omega \in \Omega} u_{w_n \tilde\omega,s_{\epsilon(n+1)}}(\tilde\omega^{-1} \omega)^*(\beta_{\tilde\omega}) )_\omega \ .
\end{equation*}
By Remark \ref{normal}.iii we have $u_{w_n \tilde\omega,s_{\epsilon(n+1)}}(\tilde\omega^{-1} \omega) = u_{w_n,s_{\epsilon(n+1)}}(\tilde\omega \omega)$.  Therefore
\begin{equation}\label{f:explicit}
  (\alpha,\beta)\tau_{s_{\epsilon(n+1)}} = (\mathrm{cores}^{I^+_{n-1}}_{I^+_{n-2}} (\beta_{\omega^{-1}}), \mathrm{res}^{I^+_{n-2}}_{I^+_{n-1}}(\alpha_{\omega^{-1}}) + \sum_{\tilde\omega \in \Omega} u_{w_n ,s_{\epsilon(n+1)}}(\tilde\omega\omega)^*(\beta_{\tilde\omega}) )_\omega \ .
\end{equation}
There is a corresponding formula for $(\alpha, \beta)\in D_{n-1}^- \oplus D_n^-$ which we leave to the reader  to work out.

Finally, let $m \geq 1$ and $(\beta, \alpha)\in D_{1}^-\oplus D_1^+$. Using the Shapiro isomorphisms
\begin{align*}
  H^1(I,\mathbf{X}(s_0\omega)^{K_m}) & \cong H^1(K_m(I \cap s_0 I s_0^{-1}),k) = H^1(K_1,k) \ \text{and} \\
  H^1(I,\mathbf{X}(\omega)^{K_m}) & \cong H^1(K_mI),k) = H^1(I,k)
\end{align*}
we view $(\beta,\alpha)$ as elements
\begin{equation*}
  \beta = (\beta_\omega)_\omega \in \oplus_{\omega \in \Omega} \; H^1(K_1,k) \quad\text{and}\quad  \alpha = (\alpha_\omega)_\omega \in \oplus_{\omega \in \Omega} \; H^1(I,k) \ .
\end{equation*}
For $\omega\in \Omega$, we have $(\beta, \alpha)\tau_{\omega}=(\beta_{\omega'\omega},\alpha_{\omega'\omega} )_{\omega'}$
and the formula
\begin{equation}\label{f:explicit1}
  (\beta,\alpha)\tau_{s_0} = (\mathrm{res}^I_{K_1}(\alpha_{\omega^{-1}}) + \sum_{\tilde\omega \in \Omega} u_{s_0 ,s_0}(\tilde\omega\omega)^*(\beta_{\tilde\omega}) , \mathrm{cores}^{K_1}_I (\beta_{\omega^{-1}}))_\omega \ .
\end{equation}

\begin{lemma}\phantomsection\label{quadratic}
\begin{itemize}
  \item[i.] For $2 \leq n < m$ and $(\alpha,\beta) \in D^\pm_{n-1} \oplus D^\pm_n$ we have:
  \begin{itemize}
    \item[a)] Suppose that $\beta\tau_{s_{\epsilon(n)}} = 0$ and $(\alpha,\beta)\tau_{s_{\epsilon(n+1)}} = 0$; if $\mathfrak{F} = \mathbb{Q}_p$ then $\alpha \tau_{s_{\epsilon(n+1)}} = \beta = 0$;
    \item[b)] suppose that $(\alpha,\beta)\tau_\omega = (\alpha,\beta)$ for any $\omega \in \Omega$ and that $(\alpha,\beta)\tau_{s_{\epsilon(n+1)}} = 0$; then $\alpha \tau_{s_{\epsilon(n+1)}} = \beta$.
  \end{itemize}
  \item[ii.] For $m \geq 1$ and $(\beta,\alpha) \in D^-_1 \oplus D^+_1$ we have:
  \begin{itemize}
    \item[a)] Suppose that $(\beta,\alpha)\tau_{s_i} = 0$ for $i = 0,1$; if $\mathfrak{F} = \mathbb{Q}_p$ then $(\beta,\alpha) = 0$;
    \item[b)] suppose that $(\beta,\alpha)\tau_\omega = (\beta,\alpha)$ for any $\omega \in \Omega$, that $(\beta,\alpha)\tau_{s_0} = 0$, and that  $(\beta,\alpha)\tau_{w_n} = 0$ for some $n \geq 1$; then $(\beta,\alpha) = 0$.
  \end{itemize}
\end{itemize}
\end{lemma}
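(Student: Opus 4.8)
The plan is to exploit the explicit cohomological formulas \eqref{f:explicit} and \eqref{f:explicit1} and reduce each assertion to a concrete statement about restriction and corestriction maps between the congruence subgroups $I_n^\pm$ of $I$, which in turn is a statement about group cohomology $H^1(-,k)$ of these pro-$p$ groups. For part i.a), write $\beta = (\beta_\omega)_\omega$, $\alpha = (\alpha_\omega)_\omega$ as in \S\ref{sec:prelilemm}. The hypothesis $\beta\tau_{s_{\epsilon(n)}} = 0$ together with Lemma \ref{length-inj}.i (which says $\tau_{s_{\epsilon(n)}} : D_n^\pm \to D_{n+1}^\pm$ is restriction, hence injective on a suitable subspace) should force a constraint on each $\beta_\omega$; more precisely $\tau_{s_{\epsilon(n)}}$ on $D_n^+$ is, via \eqref{d:s+}, the restriction map $\mathrm{res}^{I_{n-1}^+}_{I_n^+}$, and for $\mathfrak{F} = \mathbb{Q}_p$ the index $[I_{n-1}^+ : I_n^+] = p$ makes this map injective on $H^1(-,k)$ precisely because the quotient is cyclic of order $p$ (so $H^1$ of the quotient injects, and restriction is injective on the inflation-free part); one concludes $\beta = 0$. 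Then from the second component of \eqref{f:explicit} with $\beta = 0$ one gets $\mathrm{res}^{I_{n-2}^+}_{I_{n-1}^+}(\alpha_{\omega^{-1}}) = 0$, i.e. $\alpha\tau_{s_{\epsilon(n+1)}} = 0$ as claimed. The analogous reasoning for $D_n^-$ uses $I_n^-$ in place of $I_n^+$, and the case $n = 1$ in part ii.a) runs the same way using \eqref{f:explicit1} and the groups $K_1 \subseteq I$.

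For part i.b) and ii.b), the field-independent statements, the idea is different: one does not need injectivity of restriction but only the interplay between restriction, corestriction, and the conjugation action of the finite cyclic group $\Omega$. Under the hypothesis $(\alpha,\beta)\tau_\omega = (\alpha,\beta)$, the tuples $\beta_\omega$ (resp. $\alpha_\omega$) transform into each other under the $\Omega$-shift, so $\beta$ is determined by $\beta_1$ and $\alpha$ by $\alpha_1$. Feeding this into the vanishing of the first component of $(\alpha,\beta)\tau_{s_{\epsilon(n+1)}}$ in \eqref{f:explicit}, namely $\mathrm{cores}^{I_{n-1}^+}_{I_{n-2}^+}(\beta_{\omega^{-1}}) = 0$, together with the vanishing of the second component, should let one solve for $\beta$ in terms of $\alpha$: the second component reads $\mathrm{res}^{I_{n-2}^+}_{I_{n-1}^+}(\alpha_{\omega^{-1}}) + \sum_{\tilde\omega} u_{w_n,s_{\epsilon(n+1)}}(\tilde\omega\omega)^*(\beta_{\tilde\omega}) = 0$, and because the $u^*$-conjugation action is by the unipotent group $I \cap vIv^{-1}/I\cap wIw^{-1}$ one can average over $\Omega$ (using a trace/averaging argument; note $|\Omega| = q-1$ is prime to $p$) to extract $\alpha\tau_{s_{\epsilon(n+1)}} = \beta$, which is exactly the stated identity $\alpha\tau_{s_{\epsilon(n+1)}} = \beta$. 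For ii.b) the extra hypothesis $(\beta,\alpha)\tau_{w_n} = 0$ combined with $(\beta,\alpha)\tau_{s_0} = 0$ is used to propagate the vanishing all the way down a reduced word and force $\alpha = \beta = 0$: applying $\tau_{w_n}$ repeatedly and tracking supports via Lemma \ref{tau-grading} collapses everything onto $D_1$, and on $D_1$ the two relations $\tau_{s_0}$-vanishing and $\tau_{w_n}$-vanishing leave no room.

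The main obstacle I anticipate is the bookkeeping in part b): one must identify precisely the conjugation maps $u_{w_n,s_{\epsilon(n+1)}}(\omega)^*$ on $H^1(I_{n-1}^+,k)$ (or $H^1(I_n^-,k)$), show that their "sum over $\Omega$" composed with restriction behaves like corestriction up to the unit $-e_1$, and then invert the resulting linear relation. Concretely this requires knowing $H^1$ of the relevant one-parameter pro-$p$ groups well enough to see that the $\Omega$-average of a conjugate of $\beta_1$ differs from $\beta_1$ by something in the image of inflation, which is where the group-theoretic computations of the Appendix \S\ref{sec:App} will be needed. The field-independent assertions b) should otherwise be purely formal consequences of \eqref{f:explicit}, \eqref{f:explicit1}, and the $p\nmid|\Omega|$ averaging; the field-dependent assertions a) hinge entirely on the cyclicity of order $p$ of the successive quotients $I_{n-1}^+/I_n^+$ when $\mathfrak{F} = \mathbb{Q}_p$, which fails as soon as $[\mathfrak{F}:\mathbb{Q}_p] > 1$ since then the quotient has order $q > p$ and restriction on $H^1$ acquires a kernel.
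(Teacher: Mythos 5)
There is a genuine gap in your treatment of part i.a). You claim that, for $\mathfrak{F}=\mathbb{Q}_p$, the hypothesis $\beta\tau_{s_{\epsilon(n)}}=0$ alone forces $\beta=0$ because the restriction map $H^1(I^+_{n-1},k)\to H^1(I^+_n,k)$ is supposedly injective when the quotient $I^+_{n-1}/I^+_n$ is cyclic of order $p$. This is not true: in characteristic $p$, restriction on $H^1$ always kills the image of inflation from the quotient, which is nonzero here. (In fact your reasoning, if valid, would make the second hypothesis $(\alpha,\beta)\tau_{s_{\epsilon(n+1)}}=0$ superfluous, contrary to the lemma statement.) The actual argument in the paper needs both constraints: $\beta_\omega|I^+_n=0$ (from $\tau_{s_{\epsilon(n)}}$) and $\mathrm{cores}^{I^+_{n-1}}_{I^+_{n-2}}(\beta_\omega)=0$ (from $\tau_{s_{\epsilon(n+1)}}$). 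Dualizing, $\mathrm{cores}$ corresponds to the transfer $U(\mathfrak{M}^{n-1})\to U(\mathfrak{M}^n)$, which by Huppert (Lemma IV.2.1) is the $q$-th power map; hence $\mathrm{cores}(\beta_\omega)=0$ makes $\beta_\omega$ vanish on $U(q\mathfrak{M}^{n-1})$. Combining with $\beta_\omega|I^+_n=0$ kills $\beta_\omega$ on $I^+_nU(q\mathfrak{M}^{n-1})$, and this equals $I^+_{n-1}$ precisely when $q\mathfrak{M}^{n-1}=\mathfrak{M}^n$, i.e.\ precisely when $\mathfrak{F}=\mathbb{Q}_p$. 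That equality of ideals is the decisive point, and your argument never reaches it.

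Two further remarks. In part i.b) you attribute the averaging step to $|\Omega|=q-1$ being prime to $p$, but the relevant identity is actually the norm relation
\[
\sum_{\omega\in\Omega} u_{w_n,s_{\epsilon(n+1)}}(\omega)^*(\beta_1) \;=\; -\beta_1+\mathrm{res}^{I^+_{n-2}}_{I^+_{n-1}}\circ\mathrm{cores}^{I^+_{n-1}}_{I^+_{n-2}}(\beta_1)
\]
coming from \cite{NSW} Prop.\ 1.5.6, applied because the $u_{w_n,s_{\epsilon(n+1)}}(\omega)^*$ together with the identity exhaust the conjugation action of the $p$-group $I^+_{n-2}/I^+_{n-1}$; the $\mathrm{cores}$-vanishing then gives $\sum_\omega u^*(\beta_1)=-\beta_1$ and hence $\mathrm{res}(\alpha_1)=\beta_1$ directly, with no prime-to-$p$ averaging anywhere. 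Finally, in part ii.b) your description of collapsing onto $D_1$ is too vague to constitute a proof; what is actually needed is the elementary Iwahori-factorization step: from $\mathrm{res}^I_{K_1}(\alpha_1)=\beta_1$, the $\tau_{w_n}$-vanishing constraints $\beta_1|I^-_n=0$ and $\alpha_1|I^+_{n-1}=0$ give $\beta_1$ vanishing on $I^-_n(K_1\cap I^+_{n-1})=K_1$, hence $\beta_1=0$, and then $\alpha_1$ vanishing on $I^+_{n-1}K_1=I$, hence $\alpha_1=0$. You need to actually exhibit these factorizations.
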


Recall that for any $H$-module $M$ we let $M(\chi)$ denote the $\chi$-eigenspace in $M$. The next result provides the statement which was needed in the proof of Lemma \ref{-1}.

\begin{corollary}\label{coro:case0}
Let $m\geq 2$. Then $H^1(I,\mathbf{X}^{K_m})^1(\chi_{triv}) = 0$.
\end{corollary}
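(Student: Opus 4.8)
The plan is to show that the $\chi_{triv}$-eigenspace of $H^1(I,\mathbf{X}^{K_m})^1$ is forced to live in a range of graded pieces $D_n$ that it cannot occupy, exactly as in the proof of Proposition~\ref{tau-annihil}, but now refined by the parity constraints coming from the explicit formulas \eqref{f:explicit} and \eqref{f:explicit1}. First I would recall that an element $x \in H^1(I,\mathbf{X}^{K_m})^1(\chi_{triv})$ satisfies $x\tau_{s_0} = x\tau_{s_1} = 0$ and $x\tau_\omega = x$ for all $\omega \in \Omega$. By Corollary~\ref{length-support} we have $x \in C_{m-1}$, and the strategy is to peel off the top graded piece $D_{m-1}$ by using the relation $x\tau_{s_{\epsilon(m-1)}} = 0$ together with Lemma~\ref{tau-grading}.i and the structure of the action on $D^{\pm}_{m-1}$.

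The key step is an inductive descent. Writing $x = x_1 + \cdots + x_{m-1}$ with $x_n \in D_n$, I would use $x\tau_{s_i} = 0$ for the appropriate $i$ (namely $i = \epsilon(m)$ and $i = \epsilon(m-1)$) to relate $x_{m-1}$ and $x_{m-2}$ via part b) of Lemma~\ref{quadratic}.i: since $x$ is $\Omega$-fixed and killed by $\tau_{s_{\epsilon(n+1)}}$ in the relevant range, one gets $x_{n-1}\tau_{s_{\epsilon(n+1)}} = x_n$ for each $n$, so that $x$ is determined by its bottom component $x_1 \in D_1 = D_1^- \oplus D_1^+$. Then the condition $x\tau_{s_0} = 0$ together with $x$ being $\Omega$-fixed and killed by a suitable $\tau_{w_n}$ (which holds because $x$ is annihilated by $\tau_{v(m)}$, hence by Proposition~\ref{tau-annihil} and Lemma~\ref{modulo}, the relevant products of generators act as zero) puts us in the situation of Lemma~\ref{quadratic}.ii.b, forcing $x_1 = 0$ and hence $x = 0$. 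Alternatively — and this is probably cleaner — I would invoke Lemma~\ref{eigenspace-iso}: for $\epsilon(m) = i$ the natural map $H^1(I,\mathbf{X}^{K_m})^1(\chi_i) \to H^1(I,\mathbf{X}^{K_{m+1}})^1(\chi_i)$ is an isomorphism, which is not quite $\chi_{triv}$; so instead I would use Remark~\ref{case-sign} as the model and run the same annihilation argument, noting that $\chi_{triv}(\tau_{s_0}) = \chi_{triv}(\tau_{s_1}) = 0$ makes $\chi_{triv}$ behave under $\tau_{v(m)}$ the way $\chi_{sign}$ does under the dual.

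Concretely, the cleanest route: observe that $\chi_{triv}$ is annihilated by every $\tau_{s_i}$, hence by $\tau_w$ for every $w \neq 1$ in $\widetilde W/\Omega$; in particular $\chi_{triv}\tau_{v(m)} = 0$ automatically, so Proposition~\ref{tau-annihil} gives no information directly. Instead I would argue that any nonzero $x$ in the $\chi_{triv}$-eigenspace, living in $C_{m-1}$, has a nonzero component in some $D_n$ with $n \leq m-1$, and then use the explicit formula \eqref{f:explicit} (resp. \eqref{f:explicit1} when $n=1$) to derive a contradiction: the vanishing of $x\tau_{s_{\epsilon(n+1)}}$ combined with injectivity of $\tau_{s_{\epsilon(n)}}$ on the relevant graded piece (Lemma~\ref{length-inj}.i, via Corollary~\ref{length-iso}, since we are below level $m$) forces the corestriction and restriction terms in \eqref{f:explicit} to cancel, and a rank count on $H^1(I^+_{n-1},k)$ versus $H^1(I^+_{n-2},k)$ shows this is impossible unless the component vanishes. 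Propagating this upward from $n = m-1$ down to $n = 1$ kills all of $x$.

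The main obstacle I anticipate is the base case $n = 1$, where the two groups appearing are $K_1$ and $I$ (not two congruence subgroups of the same "type"), so the symmetry used in the generic step breaks and one must use \eqref{f:explicit1} and the precise behavior of $\mathrm{res}^I_{K_1}$, $\mathrm{cores}^{K_1}_I$, and the conjugation maps $u_{s_0,s_0}(\omega)^*$ on $H^1(K_1,k)$ — this is presumably why Lemma~\ref{quadratic}.ii is stated separately and why the group-theoretic computations of the Appendix are invoked. I expect the actual proof in the paper to be short, citing Lemma~\ref{quadratic} (whose proof is deferred), so my write-up would reduce to: set up $x$ in the $\chi_{triv}$-eigenspace, note it is $\Omega$-fixed and $\tau_{s_0}$-, $\tau_{s_1}$-torsion, reduce by Corollary~\ref{length-support} to $C_{m-1}$, and apply Lemma~\ref{quadratic}.i.b and ii.b repeatedly to descend to $0$.
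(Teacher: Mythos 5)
Your concluding sentence --- reduce to $C_{m-1}$ via Corollary~\ref{length-support}, then apply Lemma~\ref{quadratic}.i.b) and ii.b) to descend to zero --- is exactly the paper's argument, so your overall outline is right. The gap lies in the middle of your proposal, at the very point you flag as the ``main obstacle'': you never supply a valid derivation of the hypothesis $(\beta,\alpha)\tau_{w_n}=0$ that Lemma~\ref{quadratic}.ii.b) requires. Your second paragraph claims to obtain it from Proposition~\ref{tau-annihil} and Lemma~\ref{modulo}, but, as you yourself observe in the next paragraph, $x\tau_{v(m)}=0$ is automatic for a $\chi_{triv}$-eigenvector (each $\tau_{s_i}$ already kills $x$), so Proposition~\ref{tau-annihil} is vacuous here; and your replacement suggestion, a rank count on $H^1(I^{\pm}_n,k)$, is neither the paper's route nor supplied with details. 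Without that third hypothesis, ii.b) does not fire and the bottom component is never killed.

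The missing mechanism is that the chain from i.b) is itself what delivers the $\tau_w$-annihilation. Successive applications of Lemma~\ref{quadratic}.i.b) give $\alpha^\pm_2=\alpha^\pm_1\tau_{s_{\epsilon(3)}}$, $\alpha^\pm_3=\alpha^\pm_2\tau_{s_{\epsilon(4)}}$, \ldots, $\alpha^\pm_{m-1}=\alpha^\pm_{m-2}\tau_{s_{\epsilon(m)}}$, so $\alpha^\pm_{m-1}=\alpha^\pm_1\tau_{s_{\epsilon(3)}}\cdots\tau_{s_{\epsilon(m)}}$. On the other hand Corollary~\ref{length-support} together with Lemma~\ref{tau-grading} forces $\alpha^\pm_{m-1}\tau_{s_{\epsilon(m-1)}}=0$, since otherwise the product would land in $D_m$, which is excluded. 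Combining these two facts yields $\alpha^\pm_1\tau_{s_{\epsilon(3)}}\cdots\tau_{s_{\epsilon(m)}}\tau_{s_{\epsilon(m-1)}}=0$, which by the braid relation is $\alpha^\pm_1\tau_w=0$ for an alternating word $w$ of exactly the shape required by Lemma~\ref{quadratic}.ii.b). That is the input you were missing; once $\alpha^\pm_1=0$ is obtained, the chain forces all $\alpha^\pm_n=0$ and hence $x=0$.
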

\begin{proof}
Let $x \in H^1(I,\mathbf{X}^{K_m})^1$ be a $\chi_{triv}$-eigenvector. We then have $x\tau_{s_i} = 0$ for $i = 0,1$. By Cor.\ \ref{length-support} we may write
\begin{equation*}
  x = \alpha^-_{m-1} + \ldots + \alpha^-_1 + \alpha^+_1 + \ldots + \alpha^+_{m-1} \qquad\text{with $\alpha^\pm_n \in D^\pm_n$},
\end{equation*}
and Cor.\ \ref{length-support} and Lemma \ref{tau-grading} together show that
\begin{equation*}
  \alpha^\pm_{m-1} \tau_{s_{\epsilon(m-1)}} = 0 \ .
\end{equation*}
Furthermore, our assumptions together with Lemma \ref{length-inj}.ii imply that
\begin{equation*}
  (\alpha^\pm_{n-1} + \alpha^\pm_n)\tau_{s_{\epsilon(n+1)}} = 0 \quad\text{for any $2 \leq n \leq m-1$ and} \quad (\alpha^-_1 + \alpha^+_1)\tau_{s_0} = 0 \ .
  \end{equation*}
Therefore, applying Lemma \ref{quadratic}.i.b) gives
\begin{align*}
  \alpha^\pm_{m-1} & = \alpha^\pm_{m-2} \tau_{s_{\epsilon(m)}} \\
  \alpha^\pm_{m-2} & = \alpha^\pm_{m-3} \tau_{s_{\epsilon(m-1)}}  \\
  & \ \ \vdots \\
  \alpha^\pm_2 & = \alpha^\pm_1 \tau_{_{\epsilon(3)}} \ .
\end{align*}
It follows in particular that $\alpha^\pm_1 \tau_{w_{m-1}} = \alpha^\pm_1 \tau_{s_{\epsilon(3)}} \ldots \tau_{s_{\epsilon(m+1)}} = 0$. Then  Lemma \ref{quadratic}.ii.b) applies and gives $\alpha^\pm_1 = 0$.
\end{proof}

\begin{proof}[Proof of Lemma \ref{quadratic}]
i. The other case being completely analogous we only discuss the case $(\alpha,\beta) \in D^+_{n-1} \oplus D^+_n$. As above we view $(\alpha,\beta)$ as elements $\alpha = (\alpha_\omega)_\omega \in \oplus_{\omega \in \Omega} \; H^1(I^+_{n-2},k)$ and $\beta = (\beta_\omega)_\omega \in \oplus_{\omega \in \Omega} \; H^1(I^+_{n-1},k)$.

a) By the commutativity of \eqref{d:s+} the assumption that $\beta\tau_{s_{\epsilon(n)}} = 0$ means that $\beta_\omega | I^+_n = 0$, i.e., that
\begin{equation*}
  \beta_\omega \in \Hom(I^+_{n-1}/I^+_n,k) \qquad\text{for any $\omega \in \Omega$}.
\end{equation*}
On the other hand, the assumption that $(\alpha,\beta)\tau_{s_{\epsilon(n+1)}} = 0$ by \eqref{f:explicit} means that
\begin{equation*}
  \mathrm{cores}^{I^+_{n-1}}_{I^+_{n-2}} (\beta_{\omega}) = 0 \quad\text{and}\quad \mathrm{res}^{I^+_{n-2}}_{I^+_{n-1}}(\alpha_{\omega}) = - \sum_{\tilde\omega \in \Omega} u_{w_n,s_{\epsilon(n+1)}}(\tilde\omega \omega^{-1})^*(\beta_{\tilde\omega})
\end{equation*}
for any $\omega \in \Omega$. For any $i \geq 0$ let $U(\mathfrak{M}^i) := \left(
\begin{smallmatrix}
1 & 0 \\
\mathfrak{M}^i & 1
\end{smallmatrix}
\right)$, and $U(\{0\}) := \{1\}$. Since $I^+_{n-2} = U(\mathfrak{M}^{n-1}) I^+_{n-1}$ we may apply \cite{NSW} Cor.\ 1.5.8 and obtain the commutative diagram
\begin{equation*}
  \xymatrix{
    H^1(I^+_{n-1},k) \ar[d]^{\mathrm{res}} \ar[r]^-{\mathrm{cores}} & H^1(I^+_{n-2},k) \ar[d]^{\mathrm{res}} \\
    H^1(U(\mathfrak{M}^n),k) \ar[r]^-{\mathrm{cores}} & H^1(U(\mathfrak{M}^{n-1}),k).   }
\end{equation*}
The lower horizontal arrow is dual to the transfer map $U(\mathfrak{M}^{n-1}) \longrightarrow U(\mathfrak{M}^n)$. The latter, by \cite{Hup} Lemma IV.2.1, coincides with the $q$th power map $g \longmapsto g^q$. It follows that any $\alpha' \in H^1(U(\mathfrak{M}^n),k)$ such that $\mathrm{cores}(\alpha') = 0$ vanishes on $U(\mathfrak{M}^{n-1})^q = U(q\mathfrak{M}^{n-1})$. This, in particular, applies to $\beta_\omega | U(\mathfrak{M}^n)$. On the other hand, $\beta_\omega$ vanishes on $I^+_n$. We conclude that $\beta_\omega$ vanishes on $I^+_n U(q\mathfrak{M}^{n-1})$. If $\mathfrak{F} = \mathbb{Q}_p$ then $q\mathfrak{M}^{n-1} = \mathfrak{M}^n$, hence $I^+_n U(q\mathfrak{M}^{n-1}) = I^+_{n-1}$, and therefore $\beta_\omega = 0$ and $\mathrm{res}^{I^+_{n-2}}_{I^+_{n-1}}(\alpha_{\omega}) = 0$ for any $\omega \in \Omega$.

b) Because of the commutativity of \eqref{d:omega} the first assumption means that $\alpha_\omega = \alpha_1$ and $\beta_\omega = \beta_1$. Hence, using \eqref{f:explicit}, the second assumption can be rewritten as
\begin{equation*}
  \mathrm{cores}^{I^+_{n-1}}_{I^+_{n-2}} (\beta_1) = 0 \quad\text{and}\quad \mathrm{res}^{I^+_{n-2}}_{I^+_{n-1}}(\alpha_1) = - \sum_{\omega \in \Omega} u_{w_n,s_{\epsilon(n+1)}}(\omega)^*(\beta_1) \ .
\end{equation*}
Due to our observation (cf.\ comments after \eqref{d:s-omega}) that the $ u_{w_n,s_{\epsilon(n+1)}}(\omega)^*$ together with the identity describe the natural action of $I^+_{n-2}/I^+_{n-1}$ on $H^1(I^+_{n-1},k)$ we have (cf.\ \cite{NSW} Prop.\ 1.5.6)
\begin{equation*}
  \sum_{\omega \in \Omega} u_{w_n,s_{\epsilon(n+1)}}(\omega)^*(\beta_1) = - \beta_1 + \mathrm{res}^{I^+_{n-2}}_{I^+_{n-1}} \circ \mathrm{cores}^{I^+_{n-1}}_{I^+_{n-2}} (\beta_1) \ .
\end{equation*}
Together with the above equations this obviously implies $\mathrm{res}^{I^+_{n-2}}_{I^+_{n-1}}(\alpha_1) = \beta_1$.

ii. By exactly the same arguments as for i., but using \eqref{f:explicit1}, we obtain
\begin{alignat*}{2}
         \mathrm{res}^I_{K_1}(\alpha_\omega) = \beta_\omega & = 0 & \qquad & \text{in case a)}, \\
         \mathrm{res}^I_{K_1}(\alpha_\omega) = \beta_\omega & & & \text{in case b)}.
\end{alignat*}
The assumption that $(\beta,\alpha)\tau_{w_n} = 0$ means that $\beta_\omega | I^-_n = 0$ and $\alpha_\omega | I^+_{n-1} = 0$. It follows that $\beta_\omega | I^-_n (K_1 \cap I^+_{n-1}) = 0$. But $I^-_n (K_1 \cap I^+_{n-1}) = K_1$ and hence $\beta = 0$ in both cases. Then $\alpha_\omega | I^+_{n-1} K_1 = 0$. But $I^+_{n-1} K_1 = I$ and hence $\alpha = 0$.
\end{proof}

For later use we record the following fact which was shown in the course of the proof of Lemma  \ref{quadratic}.i.a).

\begin{remark}\label{kernels-intersection}
If $\mathfrak{F} = \mathbb{Q}_p$ then, for $i \geq 1$, we have
\begin{equation*}
  \ker \big( H^1(I^\pm_i,k) \xrightarrow{\mathrm{cores}} H^1(I^\pm_{i-1},k) \big) \cap \ker \big( H^1(I^\pm_i,k) \xrightarrow{\mathrm{res}} H^1(I^\pm_{i+1},k) \big) = 0 \ .
\end{equation*}
\end{remark}

\begin{lemma}\phantomsection\label{quadratic-cores}
\begin{itemize}
  \item[i.] Let $2 \leq n < m$ and suppose that $(\alpha,\beta) \in D^\pm_{n-1} \oplus D^\pm_n$ satisfies $(\alpha,\beta)\tau_\omega = (\alpha,\beta)$ for any $\omega \in \Omega$; if we view $(\alpha,\beta)$, under the Shapiro isomorphism, as an element $((\alpha_\omega,\beta_\omega))_\omega \in \oplus_{\omega \in \Omega} \big(H^1(I^+_{n-2},k) \oplus H^1(I^+_{n-1},k) \big)$, resp.\ $((\alpha_\omega,\beta_\omega))_\omega \in \oplus_{\omega \in \Omega} \big(H^1(I^-_{n-1},k) \oplus H^1(I^-_n,k) \big)$, then we have
      \begin{equation*}
        (\alpha,\beta)\tau_{s_{\epsilon(n+1)}} = -(\alpha,\beta) \quad\text{if and only if}\quad \alpha_1 = -\mathrm{cores}^{I^+_{n-1}}_{I^+_{n-2}} (\beta_1)\ , \text{resp.}\ -\mathrm{cores}^{I^-_n}_{I^-_{n-1}} (\beta_1) \ .
      \end{equation*}
  \item[ii.] Let $m\geq 1$ and suppose that $(\beta,\alpha) \in D^-_1 \oplus D^+_1$ satisfies $(\beta,\alpha)\tau_\omega = (\beta,\alpha)$ for any $\omega \in \Omega$; viewed, under the Shapiro isomorphism, as an element $((\beta_\omega,\alpha_\omega))_\omega \in \oplus_{\omega \in \Omega} \big(H^1(I^-_1,k) \oplus H^1(I,k) \big)$ we have
      \begin{equation*}
        (\beta,\alpha)\tau_{s_0} = -(\beta,\alpha) \quad\text{if and only if}\quad \alpha_1 = -\mathrm{cores}^{I^-_1}_I (\beta_1) \ .
      \end{equation*}
\end{itemize}
\end{lemma}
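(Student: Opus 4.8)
We propose to deduce Lemma \ref{quadratic-cores} directly from the explicit descriptions of the operators $\tau_\omega$ and $\tau_s$ obtained in \S\ref{sec:explicit}, together with the cohomological identity already exploited in the proof of Lemma \ref{quadratic}.i.b). We carry out part i.\ for $(\alpha,\beta)\in D^+_{n-1}\oplus D^+_n$; the case $D^-_{n-1}\oplus D^-_n$ is verbatim the same after replacing $I^+_\bullet$ by $I^-_\bullet$ (with the corestriction then going $H^1(I^-_n,k)\to H^1(I^-_{n-1},k)$), and part ii.\ is the identical computation using \eqref{f:explicit1} in place of \eqref{f:explicit}, with the pair $(w,v)=(s_0,1)$ and the finite group $I/I^-_1$ in the role of $I^+_{n-2}/I^+_{n-1}$; there one only needs to note, for $m=1$, that $K_m(I\cap s_0Is_0^{-1})=K_1$ and $K_mI=I$, so that the Shapiro identifications $H^1(I,\mathbf{X}(s_0\omega)^{K_m})\cong H^1(I^-_1,k)$ and $H^1(I,\mathbf{X}(\omega)^{K_m})\cong H^1(I,k)$ are the stated ones.

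The first step is to unwind the $\Omega$-invariance. By \eqref{d:omega} and the identity $(\alpha,\beta)\tau_\omega=(\alpha_{\tilde\omega\omega},\beta_{\tilde\omega\omega})_{\tilde\omega}$ recalled in \S\ref{sec:prelilemm}, the hypothesis $(\alpha,\beta)\tau_\omega=(\alpha,\beta)$ for all $\omega\in\Omega$ says precisely that, under the Shapiro identifications $H^1(I,\mathbf{X}(w_{n-1}\omega)^{K_m})\cong H^1(I^+_{n-2},k)$ and $H^1(I,\mathbf{X}(w_n\omega)^{K_m})\cong H^1(I^+_{n-1},k)$, the families $(\alpha_\omega)_\omega$ and $(\beta_\omega)_\omega$ are constant, say $\alpha_\omega=\alpha_1$ and $\beta_\omega=\beta_1$. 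Substituting this into \eqref{f:explicit} and reindexing the sum (using that $\tilde\omega\mapsto\tilde\omega\omega$ permutes $\Omega$ and $\beta$ is constant), every component of $(\alpha,\beta)\tau_{s_{\epsilon(n+1)}}$ equals the pair
\begin{equation*}
  \bigl(\ \mathrm{cores}^{I^+_{n-1}}_{I^+_{n-2}}(\beta_1)\ ,\ \ \mathrm{res}^{I^+_{n-2}}_{I^+_{n-1}}(\alpha_1)+\sum_{\tilde\omega\in\Omega}u_{w_n,s_{\epsilon(n+1)}}(\tilde\omega)^*(\beta_1)\ \bigr)\ .
\end{equation*}
Since $n<m$, the maps $u_{w_n,s_{\epsilon(n+1)}}(\tilde\omega)^*$ ($\tilde\omega\in\Omega$) together with the identity realize the natural action of $I^+_{n-2}/I^+_{n-1}$ on $H^1(I^+_{n-1},k)$ — the observation recorded after \eqref{d:s-omega} — so \cite{NSW} Prop.\ 1.5.6 yields, exactly as in the proof of Lemma \ref{quadratic}.i.b),
\begin{equation*}
  \sum_{\tilde\omega\in\Omega}u_{w_n,s_{\epsilon(n+1)}}(\tilde\omega)^*(\beta_1)=-\beta_1+\mathrm{res}^{I^+_{n-2}}_{I^+_{n-1}}\circ\mathrm{cores}^{I^+_{n-1}}_{I^+_{n-2}}(\beta_1)\ .
\end{equation*}

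Combining the last two displays, the equality $(\alpha,\beta)\tau_{s_{\epsilon(n+1)}}=-(\alpha,\beta)$ becomes the system
\begin{equation*}
  \mathrm{cores}^{I^+_{n-1}}_{I^+_{n-2}}(\beta_1)=-\alpha_1\ ,\qquad \mathrm{res}^{I^+_{n-2}}_{I^+_{n-1}}\bigl(\alpha_1+\mathrm{cores}^{I^+_{n-1}}_{I^+_{n-2}}(\beta_1)\bigr)=0\ .
\end{equation*}
The second equation is an immediate consequence of the first, so the system collapses to $\alpha_1=-\mathrm{cores}^{I^+_{n-1}}_{I^+_{n-2}}(\beta_1)$; conversely, granting this equality the first equation holds and the second component of the pair above simplifies to $\mathrm{res}^{I^+_{n-2}}_{I^+_{n-1}}(\alpha_1)-\beta_1+\mathrm{res}^{I^+_{n-2}}_{I^+_{n-1}}\circ\mathrm{cores}^{I^+_{n-1}}_{I^+_{n-2}}(\beta_1)=-\beta_1$, so that $(\alpha,\beta)\tau_{s_{\epsilon(n+1)}}=-(\alpha,\beta)$. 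This is the asserted equivalence.

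I do not foresee a genuine obstacle: the argument is essentially the same bookkeeping as in the proof of Lemma \ref{quadratic}. The only points requiring care are tracking the congruence subgroups attached to $w_{n-1},w_n,s_0w_{n-1},s_0w_n$ through \eqref{f:wnIn}, making sure the reflection $s_{\epsilon(n+1)}$ is the one for which \eqref{f:explicit} was derived (so that one is in the regime $K_m\subseteq I\cap w_nIw_n^{-1}$, valid because $n<m$), and, in part ii., checking the $m=1$ boundary case as indicated above.
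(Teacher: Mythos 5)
Your proposal is correct and follows essentially the same route as the paper: both reduce to the formula for $(\alpha,\beta)\tau_{s_{\epsilon(n+1)}}$ obtained by the reasoning of Lemma \ref{quadratic}.i.b), namely $(\mathrm{cores}^{I^\pm_{n-1}}_{I^\pm_{n-2}}(\beta_1),\ \mathrm{res}^{I^\pm_{n-2}}_{I^\pm_{n-1}}(\alpha_1)-\beta_1+\mathrm{res}\circ\mathrm{cores}(\beta_1))_\omega$, and read off the eigenvector condition. Your added bookkeeping of the forward and backward implications is sound but is not a deviation from the paper's argument.
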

\begin{proof}
The other cases being entirely analogous we only give the argument for $(\alpha,\beta) \in D^\pm_{n-1} \oplus D^\pm_n$. Going back to the reasoning in the proof of Lemma \ref{quadratic}.i.b) we see that $\alpha_\omega = \alpha_1$ and $\beta_\omega = \beta_1$ for any $\omega \in \Omega$ and that
\begin{equation*}
  (\alpha,\beta)\tau_{s_{\epsilon(n+1)}} = (\mathrm{cores}^{I^\pm_{n-1}}_{I^\pm_{n-2}}(\beta_1), \mathrm{res}^{I^\pm_{n-2}}_{I^\pm_{n-1}}(\alpha_1) - \beta_1 + \mathrm{res}^{I^\pm_{n-2}}_{I^\pm_{n-1}} \circ \mathrm{cores}^{I^\pm_{n-1}}_{I^\pm_{n-2}}(\beta_1))_\omega \ .
\end{equation*}
\end{proof}

\subsubsection{The case $\mathfrak{F} = \mathbb{Q}_p$}

\begin{proposition}\label{case0Qp}
Let $m \geq 2$. Suppose that $\mathfrak{F} = \mathbb{Q}_p$.
Then $H^1(I,\mathbf{X}^{K_m})^1(\chi) = 0$ for any character $\chi$ such that $\chi(\tau_{s_i}) = 0$ for $i = 0,1$.
\end{proposition}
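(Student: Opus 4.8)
The plan is to reduce the statement about $\chi$-eigenvectors in $H^1(I,\mathbf{X}^{K_m})^1$, for characters $\chi$ with $\chi(\tau_{s_0})=\chi(\tau_{s_1})=0$, to the vanishing results already packaged in Lemma \ref{quadratic} (parts i.a) and ii.a)), exactly as was done for the character $\chi_{triv}$ in Corollary \ref{coro:case0}. Indeed, the characters $\chi$ with $\chi(\tau_{s_i})=0$ for $i=0,1$ are precisely those whose restriction to $\Omega$ is a character $\lambda$ with $\chi(\tau_\omega)=\lambda(\omega)$, and which kill both $\tau_{s_0}$ and $\tau_{s_1}$; for $\lambda=1$ this is $\chi_{triv}$ itself, but for $\lambda\neq 1$ these are exactly the supersingular characters $\chi\neq\chi_0,\chi_1$ of \S\ref{sec:charH}. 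The key point is that the argument of Corollary \ref{coro:case0} only used $x\tau_{s_0}=x\tau_{s_1}=0$ together with $\mathfrak{F}=\mathbb{Q}_p$, and never used $x\tau_\omega=x$; so it should go through verbatim for any such $\chi$.

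First I would fix $m\geq 2$ and let $x\in H^1(I,\mathbf{X}^{K_m})^1(\chi)$, so that $x\tau_{s_i}=0$ for $i=0,1$. By Corollary \ref{length-support} we may decompose
\begin{equation*}
  x = \alpha^-_{m-1} + \ldots + \alpha^-_1 + \alpha^+_1 + \ldots + \alpha^+_{m-1} \qquad\text{with } \alpha^\pm_n\in D^\pm_n,
\end{equation*}
and Corollary \ref{length-support} together with Lemma \ref{tau-grading} force $\alpha^\pm_{m-1}\tau_{s_{\epsilon(m-1)}}=0$. Next, the relations $x\tau_{s_{\epsilon(n+1)}}=0$ combined with the grading behaviour in Lemma \ref{length-inj}.ii give, for each $2\leq n\leq m-1$, that $(\alpha^\pm_{n-1}+\alpha^\pm_n)\tau_{s_{\epsilon(n+1)}}=0$, and likewise $(\alpha^-_1+\alpha^+_1)\tau_{s_0}=0$. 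Applying Lemma \ref{quadratic}.i.a) successively (using that $\alpha^\pm_n\tau_{s_{\epsilon(n)}}=0$ for the top index, which propagates downward), I would deduce $\beta=0$ at each stage, i.e. $\alpha^\pm_n=0$ for $n\geq 2$ and $\alpha^\pm_1\tau_{s_{\epsilon(3)}}=0$; iterating, $\alpha^\pm_1\tau_{w_{m-1}}=\alpha^\pm_1\tau_{s_{\epsilon(3)}}\cdots\tau_{s_{\epsilon(m+1)}}=0$. Then Lemma \ref{quadratic}.ii.a) (again only the hypothesis $(\beta,\alpha)\tau_{s_i}=0$ for $i=0,1$ is needed, valid since $x\tau_{s_i}=0$) gives $\alpha^\pm_1=0$, hence $x=0$.

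The only subtlety — and the one place worth checking carefully rather than copying blindly — is that in Corollary \ref{coro:case0} one used Lemma \ref{quadratic}.i.b) (which requires the $\Omega$-invariance $(\alpha,\beta)\tau_\omega=(\alpha,\beta)$), whereas here I propose to use Lemma \ref{quadratic}.i.a), whose hypotheses are $\beta\tau_{s_{\epsilon(n)}}=0$ and $(\alpha,\beta)\tau_{s_{\epsilon(n+1)}}=0$ and whose conclusion (under $\mathfrak{F}=\mathbb{Q}_p$) is $\alpha\tau_{s_{\epsilon(n+1)}}=\beta=0$. One must verify that the descending chain of relations really does supply the hypothesis $\beta\tau_{s_{\epsilon(n)}}=0$ at each step: for the top term this is $\alpha^\pm_{m-1}\tau_{s_{\epsilon(m-1)}}=0$ established above, and once $\alpha^\pm_n=0$ is known for indices $>n$, the relation $(\alpha^\pm_{n-1}+\alpha^\pm_n)\tau_{s_{\epsilon(n+1)}}=0$ reads $\alpha^\pm_{n-1}\tau_{s_{\epsilon(n+1)}}+\alpha^\pm_n\tau_{s_{\epsilon(n+1)}}=0$, and $\alpha^\pm_n\tau_{s_{\epsilon(n)}}=0$ comes from the vanishing of $\alpha^\pm_{n+1}$ via the same relation one step up. So the induction is self-sustaining, and I expect no genuine obstacle — the proof is essentially Corollary \ref{coro:case0} with $\chi_{triv}$ replaced by an arbitrary $\chi$ killing $\tau_{s_0}$ and $\tau_{s_1}$, observing that $\Omega$-invariance was never an essential input for that line of reasoning.
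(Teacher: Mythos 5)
Your proposal is correct and matches the paper's own proof essentially line for line: the same decomposition $x=\sum\alpha^\pm_n$ via Corollary \ref{length-support}, the same top relation $\alpha^\pm_{m-1}\tau_{s_{\epsilon(m-1)}}=0$, the same descending induction using Lemma \ref{quadratic}.i.a), and the same final appeal to Lemma \ref{quadratic}.ii.a). The only thing you added that the paper does not is the intermediate observation $\alpha^\pm_1\tau_{w_{m-1}}=0$; this is a leftover from the Corollary \ref{coro:case0} argument (which uses Lemma \ref{quadratic}.ii.b), whose hypotheses include $(\beta,\alpha)\tau_{w_n}=0$) and is superfluous here, since Lemma \ref{quadratic}.ii.a) only needs $(\beta,\alpha)\tau_{s_i}=0$ for $i=0,1$, both of which you already have: the first is given directly, and the second is the output $\alpha\tau_{s_{\epsilon(3)}}=0$ of the $n=2$ application of part i.a).
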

\begin{proof}
Let $\chi$ be any character such that $\chi(\tau_{s_i}) = 0$ for $i = 0,1$, and let $x \in H^1(I,\mathbf{X}^{K_m})^1$ be a $\chi$-eigenvector. We then have $x\tau_{s_i} = 0$ for $i = 0,1$. Just like in the proof of Cor.\ \ref{coro:case0} we may write
\begin{equation*}
  x = \alpha^-_{m-1} + \ldots + \alpha^-_1 + \alpha^+_1 + \ldots + \alpha^+_{m-1} \qquad\text{with $\alpha^\pm_n \in D^\pm_n$},
\end{equation*}
and Cor.\ \ref{length-support} and Lemma \ref{tau-grading} together show that
$  \alpha^\pm_{m-1} \tau_{s_{\epsilon(m-1)}} = 0 \ .$
Furthermore, our assumptions together with Lemma \ref{length-inj}.ii imply that
\begin{equation*}
  (\alpha^\pm_{n-1} + \alpha^\pm_n)\tau_{s_{\epsilon(n+1)}} = 0 \quad\text{for any $2 \leq n \leq m-1$ and} \quad (\alpha^-_1 + \alpha^+_1)\tau_{s_0} = 0 \ .
\end{equation*}
By applying Lemma \ref{quadratic}.i.a) inductively, we obtain $\alpha^-_{m-1} = \ldots = \alpha^-_2 = \alpha^+_2 = \ldots = \alpha^+_{m-1} = 0$, and then Lemma \ref{quadratic}.ii.a) gives $\alpha^-_1 = \alpha^+_1 = 0$.
\end{proof}
\begin{proposition}\phantomsection\label{case-0Qp}
 If $\mathfrak{F} = \mathbb{Q}_p$ then $H^1(I,\mathbf{X}^{K_2})^1(\chi_0) = 0$.
\end{proposition}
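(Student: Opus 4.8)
The plan is to mimic the reductions carried out in the proofs of Cor.\ \ref{coro:case0} and Prop.\ \ref{case0Qp}, the new feature being that now $\tau_{s_0}$ acts by $-1$ (not $0$) on the eigenvector; the hypothesis $\mathfrak F=\mathbb Q_p$ will only be used at the very end, through an explicit transfer computation.

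Let $x\in H^1(I,\X^{K_2})^1(\chi_0)$. Since $m=2$, Cor.\ \ref{length-support} gives $x\in C_1=D_1=D_1^-\oplus D_1^+$, so I write $x=(\beta,\alpha)$ with $\beta\in D_1^-$ and $\alpha\in D_1^+$. As $\chi_0(\tau_{s_0})=-1$, the character of $\Omega$ attached to $\chi_0$ is trivial, hence $x\tau_\omega=x$ for all $\omega$; under the Shapiro identifications (using $I\cap s_0Is_0^{-1}=I_1^-=K_1$, $I\cap\omega I\omega^{-1}=I$, and $K_2\subseteq K_1$, $K_2\subseteq I$) we have $\beta_\omega\in H^1(K_1,k)$, $\alpha_\omega\in H^1(I,k)$, and the diagram \eqref{d:omega} forces $\beta_\omega=\beta_1$, $\alpha_\omega=\alpha_1$ for every $\omega$. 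Next, from $x\tau_{s_1}=0$: since $\ell(\omega s_1)=\ell(\omega)+1$ and $\ell(s_0\omega s_1)=\ell(s_0\omega)+1$ ($\omega s_1$ and $s_0\omega s_1$ map to $s_1$ and $\theta=s_0s_1$ in $W$, of lengths $1$ and $2$), and since $K_2I_1^+=I_1^+$ and $K_2I_2^-=I_2^-$ by \eqref{f:cap}, the commutative diagram \eqref{d:s+} shows that the $\tau_{s_1}$-components are the restriction maps $\mathrm{res}^I_{I_1^+}$ on $\alpha_1$ and $\mathrm{res}^{K_1}_{I_2^-}$ on $\beta_1$ (componentwise over the $\Omega$-grading), so $\mathrm{res}^I_{I_1^+}(\alpha_1)=0$ and $\mathrm{res}^{K_1}_{I_2^-}(\beta_1)=0$. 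Finally, since $x\tau_\omega=x$ and $x\tau_{s_0}=-x$, Lemma \ref{quadratic-cores}.ii (with $I_1^-=K_1$, $I_0^-=I$) gives $\alpha_1=-\mathrm{cores}^{K_1}_I(\beta_1)$.

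Now comes the key computation; write $u_t^+:=\left(\begin{smallmatrix}1&t\\0&1\end{smallmatrix}\right)$. Because $\mathfrak F=\mathbb Q_p$ we may take $\pi=p$, so that $e,u_1^+,(u_1^+)^2,\dots,(u_1^+)^{p-1}$ form a transversal of $K_1$ in $I$ (recall $I/K_1=\overline{\mathbf N}(\mathbb F_p)$ is cyclic, generated by the image of $u_1^+$) and $(u_1^+)^p=u_\pi^+\in K_1$. Identifying $\mathrm{cores}^{K_1}_I$ with precomposition by the transfer $\mathrm{Ver}\colon I^{\mathrm{ab}}\to K_1^{\mathrm{ab}}$, the cosets above yield $\mathrm{Ver}(u_1^+)\equiv(u_1^+)^p=u_\pi^+\pmod{[K_1,K_1]}$. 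Evaluating $\alpha_1=-\mathrm{cores}^{K_1}_I(\beta_1)$ at $u_1^+\in I_1^+$ and using $\mathrm{res}^I_{I_1^+}(\alpha_1)=0$ we get $0=\alpha_1(u_1^+)=-\beta_1(\mathrm{Ver}(u_1^+))=-\beta_1(u_\pi^+)$. On the other hand a direct matrix check shows that $I_2^-$ is normal in $K_1$ with cyclic quotient of order $p$ generated by the image of $u_\pi^+$ (the diagonal and lower unipotent parts of $K_1$ lie in $I_2^-$, while the upper unipotent part $\left(\begin{smallmatrix}1&\mathfrak M\\0&1\end{smallmatrix}\right)$ of $K_1$ meets $I_2^-$ in $\left(\begin{smallmatrix}1&\mathfrak M^2\\0&1\end{smallmatrix}\right)$). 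Hence the homomorphism $\beta_1$, which vanishes on $I_2^-$, is determined by $\beta_1(u_\pi^+)$; so $\beta_1=0$, then $\alpha_1=-\mathrm{cores}^{K_1}_I(\beta_1)=0$, and therefore $x=(\beta,\alpha)=0$.

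I expect the main obstacle to be the transfer identity $\mathrm{Ver}(u_1^+)=u_\pi^+$: this is precisely the place where one uses that $q=p$ and that $p$ is a uniformizer, i.e.\ that $\mathfrak F=\mathbb Q_p$. For $\mathfrak F\neq\mathbb Q_p$ the element $(u_1^+)^q$ lies already in $\left(\begin{smallmatrix}1&\mathfrak M^2\\0&1\end{smallmatrix}\right)\subseteq K_2$, on which $\beta_1$ vanishes automatically, so the constraint becomes vacuous — in agreement with the fact (Thm.\ \ref{maintheosocle}.2) that then $H^1(I,\X^{K_2})^1(\chi_0)\neq0$. The only other delicate point is the bookkeeping in the second paragraph, namely verifying via \eqref{f:cap} that the relevant $K_2$-saturations $K_2(I\cap wIw^{-1})$ are indeed the groups $I$, $K_1$, $I_1^+$, $I_2^-$, and that the $\Omega$-grading is compatible with the maps of \S\ref{sec:explicit} so that $x\tau_{s_1}=0$ really does annihilate each $\alpha_\omega$ and $\beta_\omega$ separately.
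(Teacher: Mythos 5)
Your proof is correct. The first two thirds of your argument — using Cor.~\ref{length-support}, the $\chi_0$-conditions $x\tau_\omega=x$, $x\tau_{s_1}=0$, $x\tau_{s_0}=-x$, Shapiro's lemma, and Lemma~\ref{quadratic-cores}.ii to reduce to a pair $(\beta_1,\alpha_1)\in H^1(K_1,k)\oplus H^1(I,k)$ with $\mathrm{res}^{K_1}_{I_2^-}(\beta_1)=0$, $\mathrm{res}^I_{I_1^+}(\alpha_1)=0$, and $\alpha_1 = -\mathrm{cores}^{K_1}_I(\beta_1)$ — match the paper's proof step for step. The divergence is at the final step. The paper rewrites the eigenspace as $\ker\bigl(\mathrm{res}^I_{I_1^+}\circ\mathrm{cores}^{K_1}_I\bigr)\cap\ker\bigl(\mathrm{res}^{K_1}_{I_2^-}\bigr)$ and invokes Cor.~\ref{images-directsum} from the Appendix, which in turn depends on the general Frattini-quotient and transfer descriptions of Prop.~\ref{frattini-1} and Lemma~\ref{transfer}. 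You instead close the argument by evaluating $\alpha_1=-\mathrm{cores}^{K_1}_I(\beta_1)$ at the single element $u_1^+\in I_1^+$: since $I/K_1$ is cyclic of order $q=p$ generated by the class of $u_1^+$, the transfer sends $u_1^+$ to $(u_1^+)^p=u_\pi^+$, and since $K_1/I_2^-$ is also cyclic of order $p$ generated by the image of $u_\pi^+$, the equation $\beta_1(u_\pi^+)=0$ forces $\beta_1=0$ and hence $\alpha_1=0$. This inlines exactly the fragment of the Appendix computation that is needed here and makes the role of $\mathfrak{F}=\mathbb{Q}_p$ completely transparent: for $\mathfrak{F}\neq\mathbb{Q}_p$ one has $q\in\mathfrak{M}^2$, so the transfer image lands in $K_2$ and the constraint becomes vacuous, in accordance with Prop.~\ref{case-0}. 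Both routes are valid; yours is shorter and self-contained at this spot, while the paper's formulation via Cor.~\ref{images-directsum} is packaged for repeated use, most notably in the induction of Prop.~\ref{case-0-1} where the cyclicity argument is not available at higher levels.
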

\begin{proof}
As a consequence of Cor.\ \ref{length-support} the eigenspace $H^1(I,\mathbf{X}^{K_2})^1(\chi_0)$ coincides with the subspace of all $x \in \oplus_{\omega \in \Omega} \big( H^1(I,\mathbf{X}(s_0\omega)^{K_2}) \oplus H^1(I,\mathbf{X}(\omega)^{K_2}) \big)$ such that
\begin{equation*}
  x\tau_\omega = x \ \text{for any $\omega \in \Omega$},\ x\tau_{s_1} = 0,\ \text{and}\ x\tau_{s_0} = -x \ .
\end{equation*}
The first condition implies that $H^1(I,\mathbf{X}^{K_2})^1(\chi_0)$ is isomorphic to the subspace of all $y \in H^1(I,\mathbf{X}(s_0)^{K_2}) \oplus H^1(I,\mathbf{X}(1)^{K_2})$ such that $y\tau_{s_1} = 0$ and $y\tau_{s_0} = -y$. Under the Shapiro isomorphism
\begin{equation*}
  H^1(I,\mathbf{X}(s_0)^{K_2}) \oplus H^1(I,\mathbf{X}(1)^{K_2}) \cong H^1(K_1,k) \oplus H^1(I,k) \ ,
\end{equation*}
with $y$ corresponding to $(\beta,\alpha)$, the second condition, by \eqref{d:s+}, becomes the requirement that $\beta | I^-_2 = 0$ and $\alpha | I^+_1 = 0$, and the third condition, by Lemma \ref{quadratic-cores}, becomes the requirement that $\mathrm{cores}^{K_1}_I (\beta) = - \alpha$. Hence $H^1(I,\mathbf{X}^{K_2})^1(\chi_0)$ is isomorphic to the group
\begin{equation*}
  \{(\beta,\alpha) \in \Hom(K_1/I_2^-,k) \oplus \Hom(I/I_1^+,k) : \mathrm{cores}^{K_1}_I (\beta) = - \alpha \}
\end{equation*}
that is to say to the group
\begin{equation*}
  \{\beta \in \Hom(I_1^-/I_2^-,k) :\:  \mathrm{res}^{I}_{I_1^+}\circ \mathrm{cores}^{I_1^-}_I (\beta) =0 \} = \ker( \mathrm{res}^I_{I^+_1} \circ \mathrm{cores}^{I^-_1}_{I}) \cap \ker( \mathrm{res}^{I^-_1}_{I^-_2}) \ .
\end{equation*}
 If $\mathfrak F = \mathbb Q_p$, then the right hand intersection vanishes by Cor.\ \ref{images-directsum} in the Appendix.
\end{proof}

\begin{proposition}\label{case-0-1}
If $\mathfrak{F} = \mathbb{Q}_p$ then $H^1(I,\mathbf{X}^{K_m})^1(\chi_i) = 0$ for any $m \geq 2$ and $i = 0,1$.
\end{proposition}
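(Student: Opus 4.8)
The plan is to reduce everything to two cases and then run a cohomological descent modeled on the proof of Prop.\ \ref{case-0Qp} (which is the case $m=2$, $i=0$). First I would observe that it suffices to prove $H^1(I,\mathbf{X}^{K_m})^1(\chi_0) = 0$ for even $m$ and $H^1(I,\mathbf{X}^{K_m})^1(\chi_1) = 0$ for odd $m$: indeed, by Lemma \ref{eigenspace-iso} applied at level $m-1$ (where $\epsilon(m-1) = 0$ for odd $m$, resp.\ $\epsilon(m-1) = 1$ for even $m$) the remaining eigenspace at level $m$ is isomorphic to the one at level $m-1$, and the case $m = 1$, $i = 1$ is contained in $H^1(I,\mathbf{X}^{K_1})^1 = 0$. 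Moreover the two cases are mirror images of one another under interchanging $s_0$ and $s_1$ (and, through Shapiro's lemma, $I^+_j$ with $I^-_j$; cf.\ Remarks \ref{remark:N} and \ref{remark:N2}), so I would write out only the case of $\chi_0$.

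So fix an even $m \geq 2$ and let $x \in H^1(I,\mathbf{X}^{K_m})^1$ be a $\chi_0$-eigenvector: $x\tau_\omega = x$ for all $\omega \in \Omega$, $x\tau_{s_1} = 0$, $x\tau_{s_0} = -x$. By Cor.\ \ref{length-support} one may write $x = \sum_{n=1}^{m-1}(\alpha^-_n + \alpha^+_n)$ with $\alpha^\pm_n \in D^\pm_n$, and since $x$ is $\Omega$-invariant it is already determined by its components at $\omega = 1$; under the Shapiro isomorphisms recalled in \S\ref{sec:explicit} these become classes $\alpha_n \in H^1(I^+_{n-1},k)$ and $\beta_n \in H^1(I^-_n,k)$. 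Using the commutative diagrams \eqref{d:omega}, \eqref{d:s+}, \eqref{d:s-0}, \eqref{d:s-omega} and the explicit formulas \eqref{f:explicit}, \eqref{f:explicit1}, the two relations $x\tau_{s_1} = 0$ and $x\tau_{s_0} = -x$ translate into a coupled system of identities between the restriction and corestriction maps among the groups $H^1(I^\pm_j,k)$.

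The main work is then to show this system forces all $\alpha_n$ and $\beta_n$ to vanish. As in Prop.\ \ref{case0Qp}, Cor.\ \ref{length-support} and Lemma \ref{tau-grading} give $\alpha^\pm_{m-1}\tau_{s_{\epsilon(m-1)}} = 0$ at the top, and I would descend in $n$. The new feature, compared with Prop.\ \ref{case0Qp}, is that $x$ is no longer killed by both $\tau_{s_0}$ and $\tau_{s_1}$: at the ``$\tau_{s_1}$-steps'' (those pairs $(\alpha^\pm_{n-1},\alpha^\pm_n)$ where $D^\pm_{n-1}$ is carried into $D^\pm_n$ by $\tau_{s_1}$, which acts by $0$) Lemma \ref{quadratic}.i.a) applies as before — this is where $\mathfrak{F} = \mathbb{Q}_p$ is used, via the vanishing $\ker(\mathrm{cores}) \cap \ker(\mathrm{res}) = 0$ of Remark \ref{kernels-intersection} — whereas at the ``$\tau_{s_0}$-steps'' (where the carrying generator $\tau_{s_0}$ acts by $-1$) one has instead the relation of Lemma \ref{quadratic-cores}.i, expressing the lower class as minus a corestriction of the upper one. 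The plan is to thread these two kinds of step so that, feeding the corestriction relations into the neighbouring $\tau_{s_1}$-steps and invoking Remark \ref{kernels-intersection} repeatedly, each class is pinned down to $0$; at the bottom, the remaining $D^-_1 \oplus D^+_1$ contribution is disposed of exactly as in Prop.\ \ref{case-0Qp}, i.e.\ by Lemma \ref{quadratic-cores}.ii together with the Appendix identity Cor.\ \ref{images-directsum}. The case of $\chi_1$ for odd $m$ then follows by interchanging $s_0$ and $s_1$ throughout.

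The step I expect to be the real obstacle is precisely the orchestration of this descent: unlike in Prop.\ \ref{case0Qp}, the $-1$-eigenvalue of $\tau_{s_0}$ breaks the clean alternating induction, and one must verify that after each $\tau_{s_0}$-step the resulting corestriction relation, combined with the $\tau_{s_1}$-constraints, still produces a class lying in an intersection $\ker(\mathrm{cores}) \cap \ker(\mathrm{res})$ to which the $\mathbb{Q}_p$-vanishing can be applied — together with the usual care about the parity of $m$ and the boundary behaviour near $D_1$ (where $I^-_1 = K_1$).
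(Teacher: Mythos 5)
Your plan — reduce via Lemma \ref{eigenspace-iso} to the two matching-parity cases and then run a single descent through all of $C_{m-1}$, rather than inducting on $m$ — is a valid reorganization of the same computation the paper performs inside its inductive step, and it rests on the identical tools (Lemma \ref{quadratic}.i.b), Lemma \ref{quadratic-cores}, Cor.\ \ref{images-directsum}, Remark \ref{kernels-intersection}). The orchestration you worry about does close up for $\chi_0$ and even $m$: in your notation $\alpha_n \in H^1(I^+_{n-1},k)$, $\beta_n \in H^1(I^-_n,k)$, the $\tau_{s_0}$-blocks give $\alpha_{2j} = -\mathrm{cores}(\alpha_{2j+1})$, $\beta_{2j} = -\mathrm{cores}(\beta_{2j+1})$ together with $\alpha_1 = -\mathrm{cores}^{K_1}_I(\beta_1)$ at $D_1$, the $\tau_{s_1}$-blocks give $\mathrm{res}(\alpha_{2j-1}) = \alpha_{2j}$, $\mathrm{res}(\beta_{2j-1}) = \beta_{2j}$ and $\mathrm{res}(\alpha_{m-1}) = \mathrm{res}(\beta_{m-1}) = 0$ at the top; feeding the first family into the second and applying Cor.\ \ref{images-directsum} annihilates every even-indexed class, Remark \ref{kernels-intersection} then annihilates the odd-indexed ones from the top down, and the residual pair $(\beta_1,\alpha_1)$ dies by the second assertion of Cor.\ \ref{images-directsum}, exactly as in Prop.\ \ref{case-0Qp}.

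The genuine gap is the ``mirror image'' claim. Conjugation by $N$ does interchange $s_0 \leftrightarrow s_1$ and $I^+_j \leftrightarrow I^-_j$, but it does \emph{not} normalize $K_m$: one finds
\begin{equation*}
N^{-1}K_m N = \left(\begin{smallmatrix} 1+\mathfrak{M}^m & \mathfrak{M}^{m-1} \\ \mathfrak{M}^{m+1} & 1+\mathfrak{M}^m \end{smallmatrix}\right) \neq K_m,
\end{equation*}
so there is no induced isomorphism of $H^1(I,\mathbf{X}^{K_m})^1$ exchanging the $\chi_0$- and $\chi_1$-eigenspaces. Indeed the paper shows these eigenspaces behave quite differently: $H^1(I,\mathbf{X}^{K_2})^1(\chi_1) = 0$ for every $\mathfrak{F}$ (Remark \ref{case-sign}.ii), while $H^1(I,\mathbf{X}^{K_2})^1(\chi_0) \cong \Hom(K_1/I^-_2,k) \neq 0$ once $\mathfrak{F} \neq \mathbb{Q}_p$ (Prop.\ \ref{case-0}). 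The obstruction is concentrated at $D_1$: because $D^+_1$ carries the length-$0$ double cosets, it is always $\tau_{s_0}$ — never $\tau_{s_1}$ — that preserves $D_1$, so the bottom rung of your ladder is intrinsically asymmetric under $s_0 \leftrightarrow s_1$. For $\chi_0$ the $D_1$-relation is $\alpha_1 = -\mathrm{cores}^{K_1}_I(\beta_1)$ (Lemma \ref{quadratic-cores}.ii, eigenvalue $-1$); for $\chi_1$ it becomes $\mathrm{cores}^{K_1}_I(\beta_1) = 0$ and $\mathrm{res}^I_{K_1}(\alpha_1) = \beta_1$ (eigenvalue $0$), while the adjacent $(D_1,D_2)$ $\tau_{s_1}$-block supplies $\alpha_1 = -\mathrm{cores}^{I^+_1}_I(\alpha_2)$ and $\beta_1 = -\mathrm{cores}^{I^-_2}_{I^-_1}(\beta_2)$. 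This is a genuinely different boundary configuration, so the odd-$m$, $\chi_1$ descent must be written out separately; it does terminate (again via the second assertion of Cor.\ \ref{images-directsum}), but not by a notational swap from the $\chi_0$ case. This is precisely the reason the paper's inductive proof isolates the subcase $m=2$ (i.e.\ $m+1=3$, $i=1$) at the end.
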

\begin{proof}
We will prove this by induction with respect to $m$. The case $m = 2$ is settled by Remark \ref{case-sign}.ii and Prop.\ \ref{case-0Qp}. Suppose therefore that the assertion holds for some $m \geq 2$. By Lemma \ref{inductive} it suffices to show that $H^1(I,\mathbf{X}^{K_{m+1}})^1(\chi_i) \subseteq \oplus_{\ell(w) + \sigma(w) < m} H^1(I,\mathbf{X}(w)^{K_{m+1}})$. If $\epsilon(m) = i$ this is immediate from Lemma \ref{eigenspace-iso}. We therefore suppose for the rest of the proof that $\epsilon(m+1) = i$. Using Cor.\ \ref{length-support} and the Shapiro isomorphism we view an element $x \in H^1(I,\mathbf{X}^{K_{m+1}})^1(\chi_i)$ as
\begin{equation*}
  x = \sum_{\omega \in \Omega} (\alpha^-_{m,\omega} +\ldots + \alpha^-_{1,\omega} + \alpha^+_{1,\omega} + \ldots + \alpha^+_{m,\omega})
\end{equation*}
with $\alpha^-_{n,\omega} \in H^1(I^-_n,k)$ and $\alpha^+_{n,\omega} \in H^1(I^+_{n-1},k)$. Since $\chi_i(\tau_\omega) = 1$ we have $\alpha^\pm_{n,\omega} = \alpha^\pm_{n,1}$. This reduces us to showing that $\alpha^-_{m,1} = 0$ and $\alpha^+_{m,1} = 0$. There are two cases to distinguish.

First suppose that $m \geq 3$. We discuss $\alpha^+_{m,1}$ leaving the analogous argument for $\alpha^-_{m,1}$ to the reader.  Using Lemma \ref{length-inj}.ii we then have
\begin{align*}
   & (\alpha^+_{m-1,1},\alpha^+_{m,1})\tau_{s_i} = - (\alpha^+_{m-1,1},\alpha^+_{m,1}) \ , \\
   & (\alpha^+_{m-2,1},\alpha^+_{m-1,1})\tau_{s_{1-i}} = 0 \ ,  \\
   & \alpha^+_{m,1} \tau_{s_{1-i}} = 0 \ .
\end{align*}
The first identity, by Lemma \ref{quadratic-cores}.i, implies that $\alpha^+_{m-1,1} = - \mathrm{cores}^{I^+_{m-1}}_{I^+_{m-2}} (\alpha^+_{m,1})$. The second and third identities, by Lemma \ref{quadratic}.i.b) (and its proof), imply that $\alpha^+_{m-1,1} = \mathrm{res}^{I^+_{m-3}}_{I^+_{m-2}} (\alpha^+_{m-2,1})$ and $\mathrm{res}^{I^+_{m-1}}_{I^+_m} (\alpha^+_{m,1}) = 0$, respectively. It therefore follows from Cor.\ \ref{images-directsum} that $\alpha^+_{m-1,1} = 0$. Hence $\mathrm{cores}^{I^+_{m-1}}_{I^+_{m-2}} (\alpha^+_{m,1}) = 0$ and $\mathrm{res}^{I^+_{m-1}}_{I^+_m} (\alpha^+_{m,1}) = 0$. Using Remark \ref{kernels-intersection} we deduce that $\alpha^+_{m,1} = 0$.

Finally let $m = 2$ (and $i = 1$). This time we have
\begin{align*}
   & (\alpha^\pm_{1,1},\alpha^\pm_{2,1})\tau_{s_1} = - (\alpha^\pm_{1,1},\alpha^\pm_{2,1}) \ , \\
   & (\alpha^-_{1,1},\alpha^+_{1,1})\tau_{s_0} = 0 \ ,  \\
   & \alpha^\pm_{2,1} \tau_{s_0} = 0
\end{align*}
and hence
\begin{align*}
   & \alpha^-_{1,1} = \mathrm{res}^I_{I^-_1} (\alpha^+_{1,1}) \ ,\ \alpha^+_{1,1} = - \mathrm{cores}^{I^+_1}_{I} (\alpha^+_{2,1}) \ ,\ \mathrm{res}^{I^+_1}_{I^+_2} (\alpha^+_{2,1}) = 0 \ , \\
   & \alpha^-_{1,1} = - \mathrm{cores}^{I^-_2}_{I^-_1} (\alpha^-_{2,1}) \ ,\ \mathrm{res}^{I^-_2}_{I^-_3} (\alpha^-_{2,1}) = 0 \ .
\end{align*}
The two first identities together with Cor.\ \ref{images-directsum} imply that $\alpha^-_{1,1} = 0$. Then it follows from the second line of identities, by Remark \ref{kernels-intersection}, that $\alpha^-_{2,1} = 0$. Moreover, $\alpha^+_{2,1}$ lies in the intersection $\ker( \mathrm{res}^I_{I^-_1} \circ \mathrm{cores}^{I^+_1}_{I}) \cap \ker( \mathrm{res}^{I^+_1}_{I^+_2})$, which is zero by Cor.\ \ref{images-directsum} in the Appendix.
\end{proof}

\begin{corollary}\label{no-character}
If $\mathfrak{F} = \mathbb{Q}_p$ then we have $H^1(I,\mathbf{X}^{K_m})^1(\chi) = 0$ for any $m \geq 2$ and any character $\chi$ of $H$.
\end{corollary}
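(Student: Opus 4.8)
The plan is to deduce Corollary \ref{no-character} from the preceding propositions by a simple case analysis on the character $\chi$. Recall that the characters of $H$ fall into two disjoint groups: those whose restriction $\lambda$ to $\Omega$ is nontrivial, for which necessarily $\chi(\tau_{s_0}) = \chi(\tau_{s_1}) = 0$; and the four characters with $\lambda = 1$, namely $\chi_{triv}$, $\chi_{sign}$, $\chi_0$, $\chi_1$. So every character of $H$ either satisfies $\chi(\tau_{s_i}) = 0$ for both $i = 0,1$, or equals $\chi_{sign}$, $\chi_0$, or $\chi_1$. This is the observation that organizes the whole argument.

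First I would dispose of the generic case: if $\chi(\tau_{s_0}) = \chi(\tau_{s_1}) = 0$, then Prop.\ \ref{case0Qp} gives $H^1(I,\mathbf{X}^{K_m})^1(\chi) = 0$ for all $m \geq 2$ directly. This single invocation covers $\chi_{triv}$ as well (as already recorded in Cor.\ \ref{coro:case0}), and all characters with $\lambda \neq 1$. Next, the character $\chi_{sign}$ is handled by Remark \ref{case-sign}.i, which gives $H^1(I,\mathbf{X}^{K_m})^1(\chi_{sign}) = 0$ for any $m \geq 2$. Finally, the two remaining characters $\chi_0$ and $\chi_1$ are exactly the content of Prop.\ \ref{case-0-1}, which asserts $H^1(I,\mathbf{X}^{K_m})^1(\chi_i) = 0$ for all $m \geq 2$ and $i = 0,1$ under the hypothesis $\mathfrak{F} = \mathbb{Q}_p$. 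Assembling these three inputs exhausts all characters of $H$ and yields the claim.

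There is essentially no obstacle here: the corollary is a bookkeeping statement collecting Prop.\ \ref{case0Qp}, Remark \ref{case-sign}.i, and Prop.\ \ref{case-0-1}. The only point requiring a moment's care is verifying that the trichotomy on characters is exhaustive, which follows from the quadratic relation \eqref{f:quad}: $\chi(\tau_{s_i})^2 = -\chi(e_1)\chi(\tau_{s_i})$, so $\chi(\tau_{s_i}) \in \{0, -\chi(e_1)\}$, and $\chi(e_1) = -\sum_{\omega} \lambda(\omega)$ is nonzero precisely when $\lambda = 1$ (in which case it equals $-|\Omega| \cdot 1_k$, nonzero since $|\Omega|$ is prime to $p$); hence $\chi(\tau_{s_i}) \neq 0$ for some $i$ forces $\lambda = 1$, leaving only the four explicit characters. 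With that in hand the proof is a three-line citation.

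\begin{proof}
Let $\chi$ be a character of $H$ and set $\lambda := \chi|_\Omega$. By the quadratic relation \eqref{f:quad} we have $\chi(\tau_{s_i}) \in \{0,-\chi(e_1)\}$, and $\chi(e_1) = -\sum_{\omega \in \Omega} \lambda(\omega)$ vanishes unless $\lambda$ is trivial. Hence either $\chi(\tau_{s_0}) = \chi(\tau_{s_1}) = 0$, or $\chi \in \{\chi_{sign}, \chi_0, \chi_1\}$ (the character $\chi_{triv}$ being subsumed under the first case). If $\chi(\tau_{s_0}) = \chi(\tau_{s_1}) = 0$ then $H^1(I,\mathbf{X}^{K_m})^1(\chi) = 0$ for all $m \geq 2$ by Prop.\ \ref{case0Qp}. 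If $\chi = \chi_{sign}$ the vanishing is Remark \ref{case-sign}.i, and if $\chi \in \{\chi_0,\chi_1\}$ it is Prop.\ \ref{case-0-1}. This covers all characters of $H$.
\end{proof}
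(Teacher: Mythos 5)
Your proof is correct and is essentially the paper's own argument: the paper proves the corollary by citing exactly Prop.\ \ref{case0Qp}, Remark \ref{case-sign}.i, and Prop.\ \ref{case-0-1}. Your extra paragraph verifying that the trichotomy on characters is exhaustive simply spells out what the paper establishes in \S\ref{sec:charH} just before defining the four characters with $\lambda = 1$.
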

\begin{proof}
Prop.\ \ref{case0Qp}, Remark \ref{case-sign}.i, and Prop.\ \ref{case-0-1}.
\end{proof}

\begin{corollary}\label{Zm-zero'}
If $\mathfrak{F} = \mathbb{Q}_p$ then $H^1(I,\mathbf{X}^{K_m})^1 = 0$ for any $m \geq 1$.
\end{corollary}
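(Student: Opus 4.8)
The plan is to deduce $H^1(I,\mathbf{X}^{K_m})^1 = 0$ from the vanishing of every character eigenspace (Cor.\ \ref{no-character}) together with the structural fact that $H^1(I,\mathbf{X}^{K_m})^1$ is a finite length supersingular $H$-module all of whose simple submodules are, in the case $\mathfrak F = \mathbb Q_p$, one dimensional.

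First I would dispose of the case $m = 1$, which is already recorded immediately after Cor.\ \ref{length-support}: since no $w \in \widetilde W$ satisfies $\ell(w) + \sigma(w) < 1$, that corollary forces $H^1(I,\mathbf{X}^{K_1})^1 = 0$ (alternatively one can invoke $Z_1 = 0$ from Prop.\ \ref{Z1} via Cor.\ \ref{Zm-dual}). So it remains to treat $m \geq 2$, and here I would argue by contradiction. Suppose $M := H^1(I,\mathbf{X}^{K_m})^1 \neq 0$. By Lemma \ref{mod-AG}.iii the $H$-module $M$ is finite dimensional over $k$, hence of finite length, so it has a nonzero simple submodule $S$. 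By Cor.\ \ref{supersingular} the module $M$ is supersingular, i.e.\ annihilated by a power of $\zeta$ (Lemma \ref{defi:supersing}); hence $S$ is as well, and since $S$ is simple and $\zeta$ is central, $\zeta$ itself annihilates $S$, so $S$ is supersingular.

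The key point is now that $\mathfrak F = \mathbb Q_p$ forces $q = p$, so $\mathbb F_q = \mathbb F_p \subseteq k$ automatically (the prime field is contained in $k$ since $\operatorname{char} k = p$). Lemma \ref{onedim} therefore applies and shows that the simple supersingular module $S$ is one dimensional, i.e.\ $H$ acts on $S$ through a character $\chi$ of $H$. But then $0 \neq S \subseteq M(\chi) = H^1(I,\mathbf{X}^{K_m})^1(\chi)$, contradicting Cor.\ \ref{no-character}. Hence $M = 0$, as claimed. I do not expect a genuine obstacle in this argument; the only subtlety worth spelling out is the (automatic) inclusion $\mathbb F_p \subseteq k$, which is precisely what makes Lemma \ref{onedim} and Prop.\ \ref{charH} available without any extra hypothesis on $k$. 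Once this corollary is established, Cor.\ \ref{Zm-dual} immediately yields $Z_m = \Ext^1_H(H^1(I,\mathbf{X}^{K_m})^1,H) = 0$ for all $m \geq 1$, recovering Cor.\ \ref{Zm-zero}.1.
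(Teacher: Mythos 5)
Your proof is correct and follows essentially the same strategy as the paper's: show that the socle of $H^1(I,\mathbf{X}^{K_m})^1$ consists of supersingular characters (via Cor.\ \ref{supersingular} and Lemma \ref{onedim}) and then conclude from Cor.\ \ref{no-character} that no such character can occur. The one improvement is your observation that for $\mathfrak F = \mathbb Q_p$ one has $q = p$ so that $\mathbb F_q = \mathbb F_p \subseteq k$ holds automatically, which lets you invoke Lemma \ref{onedim} directly and dispense with the base extension to $\overline k$ that the paper's proof performs before appealing to the proof of Lemma \ref{onedim}.
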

\begin{proof}
The case $m = 1$ is known from Prop.\ \ref{Z1}. By base extension we may assume that $k$ is algebraically closed. In Prop.\ \ref{zeta} we have seen that the central element $\zeta^{m-1}$ annihilates $H^1(I,\mathbf{X}^{K_m})^1$. By the proof of Lemma \ref{onedim} all simple modules of the algebra $H/\zeta H$ and hence of the algebra $H/\zeta^{m-1}H$ are one dimensional. But according to Cor.\ \ref{no-character} the socle of $H^1(I,\mathbf{X}^{K_m})^1$ does not contain any one dimensional submodule. It follows that $H^1(I,\mathbf{X}^{K_m})^1 = 0$. (Alternatively one may use Cor.\ \ref{supersingular}  together with \cite{Oll3} Prop.\ 5.11. The former says that the socle of $H^1(I,\mathbf{X}^{K_m})^1$ is supersingular and the latter that every simple supersingular $H$-module is one dimensional, since the group $\mathbf{SL_2}$ is semisimple and simply connected.)
\end{proof}

\subsubsection{The case $\mathfrak{F} \neq \mathbb{Q}_p$, $p\neq 2$.}

In this section we always \textbf{suppose} that $\mathfrak{F} \neq \mathbb{Q}_p$. Among the supersingular characters of $H$, we already know from Remark \ref{case-sign}.ii that $\chi_1$ does not occur in the socle of $H^1(I,\mathbf{X}^{K_2})^1$.

\begin{proposition}\phantomsection\label{case-0}
Let $\mathfrak{F} \neq \mathbb{Q}_p$; then the projection map to $H^1(I,\mathbf{X}(s_0)^{K_2})$ induces an isomorphism
\begin{equation*}
     H^1(I,\mathbf{X}^{K_2})^1e_1 = H^1(I,\mathbf{X}^{K_2})^1(\chi_0) \cong \Hom(K_1/I_2^-,k) \ ;
\end{equation*}
in particular, this eigenspace has $k$-dimension $f$.
\end{proposition}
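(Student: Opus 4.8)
The plan is to identify $H^1(I,\mathbf{X}^{K_2})^1e_1$ first with the $\chi_0$-eigenspace of $H^1(I,\mathbf{X}^{K_2})^1$, then to cut this eigenspace down, via Shapiro's lemma and the action formulas of \S\ref{sec:explicit}, to an explicit space of $1$-cochains; the hypothesis $\mathfrak F\neq\mathbb Q_p$ enters only at the very last step.

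Write $M:=H^1(I,\mathbf{X}^{K_2})$ and let $M^1$ be its largest finite dimensional submodule (it is finite dimensional by Prop.\ \ref{coh-formula} and Lemma \ref{mod-AG}.iii). Since $v(2)=s_1$ and $\tau_{v(2)}=\tau_{s_1}$, Prop.\ \ref{tau-annihil} gives $M^1\tau_{s_1}=0$, and Lemma \ref{-1} applied with $m=2$ (so that $\zeta^{m-2}=1$) says that $\tau_{s_0}$ acts as $-1$ on $M^1e_1$. As $\tau_\omega$ acts as the identity on $M^1e_1$ and the elements $\tau_\omega,\tau_{s_0},\tau_{s_1}$ generate $H$, every element of $M^1e_1$ is a $\chi_0$-eigenvector; the reverse inclusion being clear, $M^1e_1=M^1(\chi_0)$.

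Next I would pin down the ambient space. By Cor.\ \ref{length-support}, $M^1\subseteq D_1$, and $e_1$ cuts out the $\tau_\omega$-invariants, so $M^1e_1\subseteq D_1e_1$. Using the Shapiro isomorphisms of \S\ref{sec:explicit} together with \eqref{f:cap} (note $I\cap s_0Is_0^{-1}=I_1^-=K_1$ and $I\cap\omega I\omega^{-1}=I$) this identifies $D_1e_1$ with $H^1(K_1,k)\oplus H^1(I,k)$, which is finite dimensional because $K_1$ and $I$ are (topologically) finitely generated pro-$p$ groups. Set
\[
  E:=\{\,x\in D_1e_1 : x\tau_{s_1}=0,\ x\tau_{s_0}=-x\,\}.
\]
Then $E$ is an $H$-submodule of $M$ on which $H$ acts through $\chi_0$ (here one uses $D_1\tau_{s_0}\subseteq D_1$, Lemma \ref{length-inj}.ii), and it is finite dimensional; hence $E\subseteq M^1$, and being $\chi_0$-isotypic $E\subseteq M^1(\chi_0)$. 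Conversely every element of $M^1(\chi_0)=M^1e_1$ satisfies the two displayed relations (Prop.\ \ref{tau-annihil} and Lemma \ref{-1} again), so $M^1(\chi_0)\subseteq E$. Therefore $M^1e_1=M^1(\chi_0)=E$, and it remains to compute $E$. Writing an element of $D_1e_1$ as a pair $(\beta,\alpha)\in H^1(K_1,k)\oplus H^1(I,k)$ via its $\omega=1$ components, the commutativity of \eqref{d:s+} — applicable since $\ell(s_0s_1)=\ell(s_0)+1$ and $\ell(s_1)=1$, and using \eqref{f:cap} to identify $K_2(I\cap s_0s_1I(s_0s_1)^{-1})=I_2^-$ and $K_2(I\cap s_1Is_1^{-1})=I_1^+$ — shows that $x\tau_{s_1}=0$ amounts to $\mathrm{res}^{K_1}_{I_2^-}(\beta)=0$ and $\mathrm{res}^I_{I_1^+}(\alpha)=0$, i.e.\ $\beta\in\Hom(K_1/I_2^-,k)$ and $\alpha\in\Hom(I/I_1^+,k)$; and $x\tau_{s_0}=-x$, by \eqref{f:explicit1} and Lemma \ref{quadratic-cores}.ii, amounts to $\alpha=-\mathrm{cores}^{K_1}_I(\beta)$. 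Consequently the projection of $E$ to the component $H^1(I,\mathbf{X}(s_0)^{K_2})\cong H^1(K_1,k)$ is injective, with image
\[
  \{\,\beta\in\Hom(K_1/I_2^-,k) : \mathrm{res}^I_{I_1^+}\circ\mathrm{cores}^{K_1}_I(\beta)=0\,\}.
\]
To conclude one shows this constraint is vacuous, i.e.\ that $\mathrm{res}^I_{I_1^+}\circ\mathrm{cores}^{K_1}_I$ vanishes on $\Hom(K_1/I_2^-,k)$, whence the projection is an isomorphism onto $\Hom(K_1/I_2^-,k)$; finally $K_1/I_2^-\cong\mathfrak M/\mathfrak M^2\cong\mathbb F_q$ via $g\mapsto g_{12}$, so $\dim_k\Hom(K_1/I_2^-,k)=f$.

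The main obstacle is precisely this last group cohomological statement. It amounts to analysing the corestriction $H^1(K_1,k)\to H^1(I,k)$ — equivalently the transfer on abelianizations $I^{\mathrm{ab}}\to K_1^{\mathrm{ab}}$ — and then restricting back to $I_1^+$; as in the $\mathbb Q_p$ computations (cf.\ the proof of Prop.\ \ref{case-0Qp}, where the very same composite turns out to be \emph{injective} on $\Hom(K_1/I_2^-,k)$) this reduces to the description of power maps in the relevant unipotent congruence subgroups, where the gap between $q\mathfrak M^{\,n}$ and $\mathfrak M^{\,n+1}$ — which closes up exactly when $\mathfrak F=\mathbb Q_p$ — is what supplies the room forcing the composite to vanish. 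I would carry out this computation in the Appendix \S\ref{sec:App}; everything else above is bookkeeping with Shapiro's lemma and the formulas of \S\ref{sec:explicit}.
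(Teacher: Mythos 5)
Your approach is essentially the paper's. The reduction you carry out (identifying $H^1(I,\mathbf{X}^{K_2})^1e_1$ with $H^1(I,\mathbf{X}^{K_2})^1(\chi_0)$ via Prop.\ \ref{tau-annihil} and Lemma \ref{-1}; restricting to $D_1 e_1$ via Cor.\ \ref{length-support}; translating $x\tau_{s_1}=0$ and $x\tau_{s_0}=-x$ through Shapiro into $\beta\in\Hom(K_1/I_2^-,k)$ with $\mathrm{res}^I_{I_1^+}\circ\mathrm{cores}^{K_1}_I(\beta)=0$) is exactly what the paper does, except the paper abbreviates by citing the identical reduction already written out in the proof of Prop.\ \ref{case-0Qp}. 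Your two-way inclusion argument for $M^1(\chi_0)=E$ makes the \enquote{coincides with} claim of that earlier proof fully explicit, which is sound (the finite dimensional $\chi_0$-isotypic subspace $E$ lands in $M^1$ by Lemma \ref{mod-AG}.iii).

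The one step you leave unproven — that $\mathrm{res}^I_{I_1^+}\circ\mathrm{cores}^{K_1}_I$ vanishes on $\Hom(K_1/I_2^-,k)$ — is precisely what Prop.\ \ref{Transfer} and Remark \ref{Transfer-q-small} in the Appendix supply, and you correctly point to the $q\mathfrak{M}^n$ versus $\mathfrak{M}^{n+1}$ gap as the mechanism. Two remarks worth making here. First, you are right to phrase the required vanishing as a statement on $\Hom(K_1/I_2^-,k)=\ker(\mathrm{res}^{I_1^-}_{I_2^-})$ rather than on all of $H^1(K_1,k)$: for $p=2$ the full composite is \emph{not} zero, and only the restricted statement (Remark \ref{Transfer-q-small}.iii) holds. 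Second, the Appendix proof is not uniform — it splits into $p\neq 2,\, q\neq 3$ (Prop.\ \ref{Transfer}), $p\neq 2,\, q=3$ (Remark \ref{Transfer-q-small}.i), and $p=2$ (Remark \ref{Transfer-q-small}.iii), each needing a separate transfer calculation — so \enquote{the gap between $q\mathfrak{M}^n$ and $\mathfrak{M}^{n+1}$} captures the main case but you would still have to handle the small-$q$ corrections to make this complete.
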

\begin{proof}
Just like in the proof of Prop.\ \ref{case-0Qp}, $H^1(I,\mathbf{X}^{K_2})^1(\chi_0)$ is isomorphic to the group
\begin{equation*}
  \{\beta \in \Hom(I_1^-/I_2^-,k) :\:  \mathrm{res}^{I}_{I_1^+}\circ \mathrm{cores}^{I_1^-}_I (\beta) =0 \} = \ker( \mathrm{res}^I_{I^+_1} \circ \mathrm{cores}^{I^-_1}_{I}) \cap \ker( \mathrm{res}^{I^-_1}_{I^-_2}) \ .
\end{equation*}
We now use our computations in the Appendix. If $\mathfrak F \neq \mathbb Q_p$ and  $p\neq 2$, the map
$\mathrm{res}^{I}_{I_1^+}\circ \mathrm{cores}^{I_1^-}_I$ is trivial by Prop.\ \ref{Transfer} and Remark \ref{Transfer-q-small}.i. If $\mathfrak F \neq \mathbb Q_p$ and  $p=2$, the map $\mathrm{res}^{I}_{I_1^+}\circ \mathrm{cores}^{I_1^-}_I$ is  trivial on $\ker( \mathrm{res}^{I^-_1}_{I^-_2})$ by Remark \ref{Transfer-q-small}.iii. Hence in both cases $H^1(I,\mathbf{X}^{K_2})^1(\chi_0)$ is isomorphic to the group $\Hom(I_1^-/I_2^-,k)$.

Finally, since $\chi_0(\tau_\omega) = 1$ for any $\omega \in \Omega$ we have $H^1(I,\mathbf{X}^{K_2})^1(\chi_0) \subseteq H^1(I,\mathbf{X}^{K_2})^1e_1$.
But, by Prop.\ \ref{tau-annihil} and Lemma \ref{-1}, we know that $\tau_{s_1}$ and $\tau_{s_0}$ act by $0$ and $-1$, respectively, on  $H^1(I,\mathbf{X}^{K_2})^1e_1$. Hence this inclusion is  an equality.
\end{proof}


\begin{proposition}\label{nosmall-q}
Suppose that $\mathbb{F}_q \subseteq k$ and $p \ne 2$; for any supersingular $\chi \neq \chi_0, \chi_1$ we have:
\begin{itemize}
  \item[i.] If $q \neq 3$, then
  \begin{equation*}
  \dim_k H^1(I,\mathbf{X}^{K_2})^1(\chi) =
  \begin{cases}
  f-1 & \text{if $z \mapsto \chi(\tau_{\omega_z})^{-1}$ is a Galois automorphism of
         $\mathbb{F}_q$}, \\
  f & \text{otherwise};
  \end{cases}
\end{equation*}
  \item[ii.] if $q = p \neq 3$ and $\chi$ is the unique supersingular character
  such that  $z \mapsto \chi(\tau_{\omega_z})^{-1}$ is the identity map on
  $\mathbb{F}_p ^\times$, then
\begin{equation*}
    \dim_k H^1(I,\mathbf{X}^{K_3})^1(\chi) = 2f = 2 \ ;
\end{equation*}
  \item[iii.] if $q = p = 3$, then $z \mapsto \chi(\tau_{\omega_z})^{-1}$ is the identity on $\mathbb F_p^\times$ and
   \begin{equation*}
  \dim_k H^1(I,\mathbf{X}^{K_2})^1(\chi) = 0 \:\textrm{ and }\:\: \dim_k H^1(I,\mathbf{X}^{K_3})^1(\chi) = f = 1 \ .
\end{equation*}
\end{itemize}
\end{proposition}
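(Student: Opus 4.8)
The strategy is to compute each eigenspace $H^1(I,\mathbf{X}^{K_m})^1(\chi)$ explicitly by reducing it, via Cor.\ \ref{length-support}, Lemma \ref{eigenspace-iso}, and the Shapiro-isomorphism formulas of \S\ref{sec:explicit}, to an intersection of kernels of cohomological restriction and corestriction maps between the congruence groups $I^\pm_i$. This is exactly what was already done for $\chi_0$ in Prop.\ \ref{case-0}; the present cases $\chi\neq\chi_0,\chi_1$ and the higher level $m=3$ require the same mechanism but with $\lambda=\chi|_\Omega\neq 1$, so that $\tau_{s_0}^2$ and $\tau_{s_1}^2$ both act as $0$ (rather than as $-e_1$) on the relevant eigenspaces, which simplifies the quadratic relations. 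The first step is therefore to set up, for a supersingular character $\chi$ with $\chi(\tau_{\omega_z})=\lambda(\omega_z)$ nontrivial and $\chi(\zeta)=0$, the precise list of $w\in\widetilde{W}$ with $\ell(w)+\sigma(w)<m$ contributing to the $\chi$-eigenspace, using that $\chi(\tau_\omega)=\lambda(\omega)$ forces all $\omega$-components to be determined by the $\omega=1$ component (as in the proof of Lemma \ref{quadratic}.i.b)).

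For part (iii) ($q=p=3$), I would first note that the number of supersingular characters $\chi\neq\chi_0,\chi_1$ equals $q-3=0$ when combined with the count of all characters — wait, more precisely when $q=3$ there are $q-1=2$ characters with $\lambda\neq 1$, and these, together with the parity constraint, must be handled directly; the claim that $z\mapsto\chi(\tau_{\omega_z})^{-1}$ is the identity on $\mathbb{F}_3^\times$ is forced because $\mathbb{F}_3^\times$ has only the identity automorphism and $\lambda\neq 1$ means $\lambda$ is the inversion-or-identity character, which for $|\mathbb{F}_3^\times|=2$ collapses to the unique nontrivial character. Then $H^1(I,\mathbf{X}^{K_2})^1(\chi)=0$ follows because for $m=2$ only $w\in\Omega\cup s_0\Omega$ contribute and the relevant intersection of kernels — an analogue of $\ker(\mathrm{res}^{I}_{I^+_1}\circ\mathrm{cores}^{I^-_1}_I)\cap\ker(\mathrm{res}^{I^-_1}_{I^-_2})$ but with the $\tau_{s_0}^2=0$ condition replacing $\tau_{s_0}^2=-e_1$ — vanishes by the Appendix transfer computations (Prop.\ \ref{Transfer} and Remark \ref{Transfer-q-small}); the computation $\dim_k H^1(I,\mathbf{X}^{K_3})^1(\chi)=f=1$ then comes from going one level deeper to $w$ of length $\leq 2$, where the contributing space is $\mathrm{Hom}(I^\pm_1/I^\pm_2,k)$ cut out by one corestriction condition, of dimension $f$.

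For parts (i) and (ii), the heart is the $m=2$ computation: by Cor.\ \ref{length-support} and the $\lambda\neq 1$ version of the eigenspace analysis, $H^1(I,\mathbf{X}^{K_2})^1(\chi)$ becomes isomorphic to a subgroup of $\mathrm{Hom}(I^\pm_1/I^\pm_2,k)\cong\mathrm{Hom}(\mathfrak{M}/\mathfrak{M}^2,k)$, cut out by the vanishing of a single map built from $u_{s_0,s_0}(\omega)^*$ twisted by the character $\lambda$; concretely one gets the space of $k$-linear forms on $\mathfrak{M}/\mathfrak{M}^2\cong\mathbb{F}_q$ annihilating a certain $\lambda$-twisted trace, and this has dimension $f$ unless $z\mapsto\lambda(\omega_z)^{-1}$ is itself a field automorphism of $\mathbb{F}_q$ — in which case the twisted-trace condition becomes a single nontrivial linear equation and the dimension drops to $f-1$. (The exclusion $q=3$ in (i) is because for $q=3$ the nontrivial character already equals its own inverse and the "Galois automorphism" alternative and the generic case coincide; this is precisely the boundary case pushed into (iii).) For (ii), since level $2$ produces fewer supersingular constituents than predicted by the socle count in Thm.\ \ref{maintheosocle}(2), the missing multiplicity must appear at level $3$: one repeats the reduction for $m=3$, now with contributing $w$ of length $\leq 2$, and using Remark \ref{kernels-intersection}-type vanishing together with the Appendix computations of $\ker(\mathrm{cores})\cap\ker(\mathrm{res})$ one gets that the $\chi$-eigenspace in $H^1(I,\mathbf{X}^{K_3})^1$ with $z\mapsto\chi(\tau_{\omega_z})^{-1}=\mathrm{id}$ on $\mathbb{F}_p^\times$ has dimension $2f=2$.

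**Main obstacle.** The hardest part is the explicit evaluation of the $\lambda$-twisted corestriction/restriction compositions on $H^1$ of the pro-$p$ congruence groups $I^\pm_i$ — i.e.\ identifying exactly when the relevant linear form $\sum_{\omega\in\Omega}\lambda(\omega)\,u_{w,s}(\omega)^*$ is zero, nonzero-but-of-rank-one, etc. This is where the dichotomy "$z\mapsto\chi(\tau_{\omega_z})^{-1}$ is a field automorphism of $\mathbb{F}_q$" enters, and it requires a careful transfer-map computation in the semidirect product $(1+\mathfrak{m})\ltimes R$ with $R=\mathfrak{O}/\mathfrak{M}^m$, generalizing the $\lambda=1$ calculation sketched in the commented-out endnote after Prop.\ \ref{case-0} and relying on the Appendix results (Prop.\ \ref{Transfer}, Remark \ref{Transfer-q-small}, Cor.\ \ref{images-directsum}). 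The bookkeeping of which $w$ contribute at level $3$ and how the $\tau_{s_{\epsilon(m)}}$-action threads them together (via Lemma \ref{tau-grading} and Lemma \ref{quadratic}) is the other delicate point, but it is structurally parallel to the $\mathfrak{F}=\mathbb{Q}_p$ arguments in Prop.s \ref{case-0Qp}--\ref{case-0-1}.
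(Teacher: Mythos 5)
Your plan is essentially the paper's own approach: reduce to $w$ with $\ell(w)+\sigma(w)<m$ via Cor.\ \ref{length-support}, translate the eigenvector conditions through the Shapiro formulas of \S\ref{sec:explicit} into (co)restriction equations on $H^1(I^\pm_i,k)$, feed them into the Appendix transfer computations, and extract the Galois-automorphism dichotomy in (i) from the orthogonality relations applied to the $\lambda$-twisted sum $\sum_z\lambda(z)\beta(z)$ once one chooses the basis of $\Hom(\mathbb{F}_q,k)$ given by the Frobenius powers. Three small corrections to your sketch: your passing appeal to ``the socle count in Thm.\ \ref{maintheosocle}(2)'' to motivate part (ii) is circular, since that theorem is what Prop.\ \ref{nosmall-q} is proving, so it must be dropped from the argument (the actual mechanism is that $\sum_z z^{-1}=\sum_z z^{-2}=0$ for $q\neq 2,3$, forcing $(\alpha^-_{1,1},\alpha^+_{1,1})$ to land in the trivial space $H^1(I,\mathbf{X}^{K_2})^1(\chi)$ from part (i)); ``Remark \ref{kernels-intersection}-type vanishing'' is not available here because that remark is proved only for $\mathfrak{F}=\mathbb{Q}_p$, and the paper instead uses Prop.\ \ref{Transfer} / Remark \ref{Transfer-q-small}; and the parenthetical count ``$q-3=0$'' in your discussion of (iii) is wrong (for $q=3$ there is exactly one supersingular $\chi$ with $\chi|_\Omega\neq 1$, the one attached to the unique nontrivial character of $\Omega\cong\mathbb{F}_3^\times$), though you implicitly recover from this a sentence later.
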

\begin{proof}
Under our assumptions we have $\chi(\tau_{s_0}) = \chi(\tau_{s_1}) = 0$. We put $\lambda(z) := \chi(\tau_{\omega_z})$ and note that, as a consequence of the assumptions, $\lambda$ is a nontrivial character of $(\mathfrak{O}/\mathfrak{M})^\times$.

i. We view an element in $x \in H^1(I,\mathbf{X}^{K_2})^1(\chi)$ as $(\beta_\omega,\alpha_\omega)_\omega \in \oplus_\omega (H^1(K_1,k) \oplus H^1(I,k))$. Then $\beta_\omega = \chi(\tau_\omega)\beta_1$ and $\alpha_\omega = \chi(\tau_\omega)\alpha_1$ for any $\omega \in \Omega$. Using \eqref{f:explicit1}, the vanishing of $x\tau_{s_0}$ translates into
\begin{equation*}
  \mathrm{cores}^{K_1}_I(\beta_1) = 0 \quad\text{and}\quad  \mathrm{res}^I_{K_1}(\alpha_1) = - \sum_{\omega \in \Omega} u_{s_0,s_0}(\omega)^*(\beta_\omega) = - \sum_{\omega \in \Omega} \chi(\tau_\omega) u_{s_0,s_0}(\omega)^*(\beta_1) \ .
\end{equation*}
By Prop.\ \ref{Transfer} the left identity is always  satisfied. On the other hand, by Remark \ref{normal}.ii, the $u_{s_0,s_0}(\omega)^*$ describe the natural conjugation action by $I/K_1$ (omitting the identity element) on $H^1(K_1,k)$. Hence the right identity can be rewritten as
\begin{equation*}
  \mathrm{res}^I_{K_1}(\alpha_1) = - \sum_{z \in (\mathfrak{O}/\mathfrak{M})^\times} \lambda(z)
  \left(\begin{smallmatrix}
  1 & [z] \\
  0 & 1
  \end{smallmatrix}\right)^*(\beta_1) \ .
\end{equation*}
Finally the vanishing of $x\tau_{s_1}$ translates into $\mathrm{res}^{K_1}_{I^-_2}(\beta_1) = 0$ and $\mathrm{res}^I_{I^+_1}(\alpha_1) = 0$. Since $K_1/K_1 \cap I^+_1 \cong I/I^+_1$ we therefore see that $H^1(I,\mathbf{X}^{K_2})^1(\chi)$ is isomorphic to the group of all
\begin{equation*}
  \beta_1\in \Hom(K_1/I^-_2, k) \ \text{such that}\ \sum_{z \in (\mathfrak{O}/\mathfrak{M})^\times} \lambda(z)
  \left(\begin{smallmatrix}
  1 & [z] \\
  0 & 1
  \end{smallmatrix}\right)^*(\beta_1) \  \text{is trivial on }K_1\cap I_1^+ \ .
\end{equation*}
By dualizing the isomorphism $(K_1)^{ab} \cong \mathfrak{M}/ \mathfrak{M}^2 \times (1 + \mathfrak{M})/(1 + \mathfrak{M}^2) \times \mathfrak{M}/\mathfrak{M}^2$ from Prop.\ \eqref{abelianization}.ii we view an element $\gamma \in H^1(K_1,k)$ as a triple
\begin{equation*}
  (\alpha, \delta, \beta) \in \Hom(\mathfrak{M}/ \mathfrak{M}^2,k) \times \Hom((1 + \mathfrak{M})/(1 + \mathfrak{M}^2),k) \times \Hom(\mathfrak{M}/ \mathfrak{M}^2,k) \ .
\end{equation*}
We have:
\begin{itemize}
  \item[--] $\gamma | I^-_2 = 0$ if and only if $\alpha = 0$ and $\delta = 0$.
  \item[--] $\gamma | I^+_1 = 0$ if and only if $\delta = 0$ and $\beta = 0$.
  \item[--] $\left(\begin{smallmatrix}
  1 & [z] \\
  0 & 1
  \end{smallmatrix}\right)^*(0,0,\beta) = (-\beta(z^2 .),-2\beta(z(. - 1)), \beta)$ (cf.\ Remark \ref{conjugation}.ii in the Appendix).
\end{itemize}
The above $\beta_1$ therefore corresponds to a triple $(0,0,\beta)$ such that $2 \sum_{z \in (\mathfrak{O}/\mathfrak{M})^\times} \lambda(z) \beta(zy) = 0 = \sum_{z \in (\mathfrak{O}/\mathfrak{M})^\times} \lambda(z) \beta(y)$ for any $y \in \mathfrak{M}/\mathfrak{M}^2$. Since $\lambda$ is nontrivial the vanishing of the second sum is automatic. Since $p \neq 2$ the factor $2$ in the first sum can be omitted. Moreover, $\sum_{z \in (\mathfrak{O}/\mathfrak{M})^\times} \lambda(z) \beta(z\pi y) = \sum_{z \in (\mathfrak{O}/\mathfrak{M})^\times} \lambda(y^{-1})\lambda(z) \beta(\pi z)$ for any $y \in (\mathfrak{O}/\mathfrak{M})^\times$. Replacing $\beta$ by $\beta' := \beta(\pi .)$ we conclude that
\begin{equation*}
  H^1(I,\mathbf{X}^{K_2})^1(\chi) \cong \{\beta' \in \Hom(\mathbb{F}_q,k) : \sum_{z \in \mathbb{F}_q^\times} \lambda(z)\beta'(z) = 0\}.
\end{equation*}
By our assumption that $\mathbb{F}_q \subseteq k$ the Galois automorphisms $\sigma$ of $\mathbb{F}_q$, viewed as homomorphisms into $k$, form a $k$-basis of $\Hom(\mathbb{F}_q,k)$. The orthogonality relations imply
\begin{equation*}
  \sum_{z \in \mathbb{F}_q^\times} \lambda(z)\sigma(z) =
  \begin{cases}
  -1 & \text{if $\lambda^{-1} = \sigma$}, \\
  0 & \text{otherwise}.
  \end{cases}
\end{equation*}

ii. Note that $\chi(\tau_{\omega_z}) = \lambda(z)=z^{-1}=z^{p-2}$ for all $z\in\mathbb F_p^\times$. We view an element in $x \in H^1(I,\mathbf{X}^{K_3})^1(\chi)$ as
\begin{equation*}
  (\alpha_{2,\omega}^-, \alpha_{1,\omega}^- ,\alpha_{1,\omega}^+, \alpha_{2,\omega}^+)_\omega \in \oplus_\omega (H^1(I_2^-,k)\oplus H^1(I_1^-,k) \oplus H^1(I,k)\oplus H^1(I_1^+,k) ) \ .
\end{equation*}
Then $\alpha_{i,\omega}^\pm = \chi(\tau_\omega)\alpha_{i,1}^\pm$ for any $\omega \in \Omega$. The vanishing of $x\tau_{s_0} $ and of $x\tau_{s_1}$ translates into
\begin{equation*}
(\alpha_{1,1}^- ,\alpha_{1,1}^+)\tau_{s_0} = 0 \ , \
\alpha_{2,1}^\pm\tau_{s_0}=0 \ , \ (\alpha_{1,1}^\pm,\alpha_{2,1}^\pm)\tau_{s_1} = 0 \ .
\end{equation*}
The second equation means that
\begin{equation*}
  \mathrm{res}^{I^-_{2}}_{I^-_3} (\alpha^-_{2,1}) = 0 \quad\text{and}\quad \mathrm{res}^{I^+_1}_{I^+_2} (\alpha^+_{2,1}) = 0 \ .
\end{equation*}
Since the corestriction  map $\mathrm{cores}^{I_2^-}_{I_1^-}$ is trivial by Prop.\ \ref{Transfer} in the Appendix the equation $(\alpha_{1,1}^-,\alpha_{2,1}^-)\tau_{s_1} = 0$ can be rewritten as (compare \eqref{f:explicit} and use Remark \ref{normal}.ii)
\begin{equation}\label{f:equation}
  \mathrm{res}^{I_1^-}_{I^-_2} (\alpha^-_{1,1})=-
\sum_{\omega \in \Omega} u_{s_0s_1,s_1}(\omega)^*(\alpha_{2,\omega}^-) = - \sum_{z \in (\mathfrak{O}/\mathfrak{M})^\times} \lambda(z)
  \left(\begin{smallmatrix}
  1 & -\pi[z]^{-1}\\
  0 & 1
  \end{smallmatrix}\right)^*(\alpha_{2,1}^-) \ .
\end{equation}
We are going to show that the right hand side is zero. As before, using Prop.\ \ref{abelianization}.ii, we view $\alpha^-_{2,1}$ as a triple
\begin{equation*}
  (\alpha, \delta, \beta) \in \Hom(\mathfrak{M}/ \mathfrak{M}^2,k) \times \Hom((1 + \mathfrak{M})/(1 + \mathfrak{M}^3),k) \times \Hom(\mathfrak{M}^2/ \mathfrak{M}^3,k) \ .
\end{equation*}
The fact that $\mathrm{res}^{I^-_{2}}_{I^-_3} (\alpha^-_{2,1}) = 0$ means that $\alpha = 0$ and $\gamma = 0$. We compute
\begin{equation*}
  \left(\begin{smallmatrix}
  1 & -\pi [z]^{-1} \\
  0 & 1
  \end{smallmatrix}\right)^*(0,0,\beta) = (0,z^{-1}\beta(\pi(1 - (.)^{-2})), \beta)
\end{equation*}
(observe that the map $(1 + \mathfrak{M})/(1 + \mathfrak{M}^3) \longrightarrow \mathfrak{M}^2/ \mathfrak{M}^3$ sending $t$ to $\pi(1-t^{-2})$ is a homomorphism of groups) and hence get
\begin{equation*}
  \sum_{z \in (\mathfrak{O}/\mathfrak{M})^\times} \lambda(z) \left(\begin{smallmatrix}
  1 & -\pi [z]^{-1} \\
  0 & 1
  \end{smallmatrix}\right)^*(\alpha^-_{2,1}) = (0, (\sum_z z^{-2}) \beta(\pi(1 - (.)^{-2})), (\sum_z z^{-1}) \beta) \ .
\end{equation*}
But $\sum_z z^{-2} = \sum_z z^{-1} = 0$ since $q \neq 2,3$. This shows that $\mathrm{res}^{I_1^-}_{I^-_2} (\alpha^-_{1,1})=0$, i.e., that $\alpha^-_{1,1}\tau_{s_1}=0$. Likewise, the equation $(\alpha_{1,1}^+,\alpha_{2,1}^+)\tau_{s_1} = 0$ implies that $\alpha^+_{1,1}\tau_{s_1}=0$. Therefore, $(\alpha_{1,\omega}^-,\alpha_{1,\omega}^+)_\omega\in H^1(I,\mathbf{X}^{K_2})^1(\chi)$, which is trivial according to i. We have proved that mapping $x$ to $(\alpha^-_{2,1},\alpha^+_{2,1})$ induces an isomorphism
\begin{equation*}
  H^1(I,\mathbf{X}^{K_3})^1(\chi) \cong \Hom(I^-_{2}/I^-_3,k) \oplus \Hom(I^+_{1}/I^+_{2},k) \ .
\end{equation*}
The right hand side has dimension $2$.

iii. Note that in this case we necessarily have $\lambda = \id_{\mathbb{F}_3^\times}$. We begin the argument exactly as in the proof of ii. with identical notations. So $H^1(I,\mathbf{X}^{K_3})^1(\chi)$ is isomorphic to the group of all
\begin{equation*}
  (\alpha_{2,1}^-, \alpha_{1,1}^- ,\alpha_{1,1}^+, \alpha_{2,1}^+) \in H^1(I_2^-/I^-_3,k) \oplus H^1(I_1^-,k) \oplus H^1(I,k)\oplus H^1(I_1^+/I^+_2,k)
\end{equation*}
such that $(\alpha_{1,1}^- ,\alpha_{1,1}^+)\tau_{s_0} = 0$ and $(\alpha_{1,1}^\pm,\alpha_{2,1}^\pm)\tau_{s_1} = 0$. The equation $(\alpha_{1,1}^+,\alpha_{2,1}^+)\tau_{s_1} = 0$, by \eqref{f:explicit}, means that $\mathrm{cores}_{I}^{I_1^+}(\alpha_{2,1}^+) = 0$ and that $\alpha_{2,1}^+$ determines $\alpha_{1,1}^+ | I^+_1$, But, since $\alpha_{2,1}^+ \in H^1(I^+_1/I^+_2,k)$, the former together with Remark \ref{Transfer-q-small}.ii in the Appendix implies $\alpha_{2,1}^+ = 0$ and hence that $\alpha_{1,1}^+ | I^+_1 = 0$. We conclude that $H^1(I,\mathbf{X}^{K_3})^1(\chi)$ is isomorphic to the group of all
\begin{equation*}
  (\alpha_{2,1}^-, \alpha_{1,1}^- ,\alpha_{1,1}^+) \in H^1(I_2^-/I^-_3,k) \oplus H^1(I_1^-,k) \oplus H^1(I/I^+_1,k)
\end{equation*}
such that $(\alpha_{1,1}^- ,\alpha_{1,1}^+)\tau_{s_0} = 0$ and $(\alpha_{1,1}^-,\alpha_{2,1}^-)\tau_{s_1} = 0$. But the equation $(\alpha_{1,1}^-,\alpha_{2,1}^-)\tau_{s_1} = 0$ now only means, in view of \eqref{f:equation}, that
\begin{align}\label{f:equations}
  & \alpha^-_{1,1} |
  \left(\begin{smallmatrix}
  1 & 0 \\
  \mathfrak{M} & 1
  \end{smallmatrix}\right) = 0 \ , \
  \alpha^-_{1,1} |
  \left(\begin{smallmatrix}
  1 & \mathfrak{M}^2 \\
  0 & 1
  \end{smallmatrix}\right) = 0 \ , \\
  & \alpha^-_{1,1} \big(
  \left(\begin{smallmatrix}
  t & 0 \\
  0 & t^{-1}
  \end{smallmatrix}\right) \big) =
  \alpha^-_{2,1} \big(
  \left(\begin{smallmatrix}
  1 & \pi (t^{-2} - 1) \\
  0 & 1
  \end{smallmatrix}\right) \big) \quad\text{for any $t \in 1 + \mathfrak{M}$}. \nonumber
\end{align}
On the other hand, by \eqref{f:explicit1}, the equation $(\alpha_{1,1}^- ,\alpha_{1,1}^+)\tau_{s_0} = 0$ means that $\mathrm{cores}^{I^-_1}_I(\alpha^-_{1,1}) = 0$ and that
\begin{equation}\label{f:equations2}
  \mathrm{res}^I_{I^-_1}(\alpha^+_{1,1}) = - \sum_{z \in (\mathfrak{O}/\mathfrak{M})^\times} \chi(\tau_{\omega_z})
  \left(\begin{smallmatrix}
  1 & [z] \\
  0 & 1
  \end{smallmatrix}\right)^*(\alpha^-_{1,1})
  = - \left(\begin{smallmatrix}
  1 & 1 \\
  0 & 1
  \end{smallmatrix}\right)^*(\alpha^-_{1,1}) +
  \left(\begin{smallmatrix}
  1 & -1 \\
  0 & 1
  \end{smallmatrix}\right)^*(\alpha^-_{1,1}) \ .
\end{equation}
By the proof of Prop.\ \ref{Transfer} (compare also the proof of Remark \ref{Transfer-q-small}.ii) in the Appendix the image of the transfer map $I^{ab} \xrightarrow{\mathrm{tr}} (I^-_1)^{ab}$ is isomorphic to $\{0\} \times \{1\} \times \mathfrak{M}/\mathfrak{M}^2$. Hence the identity $\mathrm{cores}^{I^-_1}_I(\alpha^-_{1,1}) = 0$ is equivalent to $\alpha^-_{1,1} |
\left(\begin{smallmatrix}
  1 & \mathfrak{M} \\
  0 & 1
\end{smallmatrix}\right) = 0$. In view of \eqref{f:equations} we obtain that $\alpha^-_{1,1}$ is the unique map given by
\begin{align*}
  & \alpha^-_{1,1} |
  \left(\begin{smallmatrix}
  1 & 0 \\
  \mathfrak{M} & 1
  \end{smallmatrix}\right) = 0 \ , \
  \alpha^-_{1,1} |
  \left(\begin{smallmatrix}
  1 & \mathfrak{M} \\
  0 & 1
  \end{smallmatrix}\right) = 0 \ , \\
  & \alpha^-_{1,1} \big(
  \left(\begin{smallmatrix}
  t & 0 \\
  0 & t^{-1}
  \end{smallmatrix}\right) \big) =
  \alpha^-_{2,1} \big(
  \left(\begin{smallmatrix}
  1 & \pi (t^{-2} - 1) \\
  0 & 1
  \end{smallmatrix}\right) \big) \quad\text{for any $t \in 1 + \mathfrak{M}$}.
\end{align*}
Finally, an explicit computation based upon these equations
shows that the right hand side of \eqref{f:equations2} vanishes on $I^-_1 \cap I^+_1$, i.e., that
\begin{equation*}
  \alpha^-_{1,1} \big( \left(\begin{smallmatrix}
  1 & 1 \\
  0 & 1
  \end{smallmatrix}\right) g
  \left(\begin{smallmatrix}
  1 & -1 \\
  0 & 1
  \end{smallmatrix}\right) \big) =
  \alpha^-_{1,1} \big( \left(\begin{smallmatrix}
  1 & -1 \\
  0 & 1
  \end{smallmatrix}\right) g
  \left(\begin{smallmatrix}
  1 & 1 \\
  0 & 1
  \end{smallmatrix}\right) \big)
  \qquad\text{for any $g \in I^-_1 \cap I^+_1$}.
\end{equation*}
Since, by the normality of $I^\pm_1$ in $I$, we have $I/I^-_1 \cap I^+_1 = I^-_1/I^-_1 \cap I^+_1 \times I^+_1/I^-_1 \cap I^+_1$, this then implies that, given $\alpha^-_{1,1}$, there is a unique $\alpha^+_{1,1} \in H^1(I/I^+_1,k)$ such that the identity \eqref{f:equations2} holds.
Altogether we have proved that the map sending $x$ to $\alpha^-_{2,1}$ induces an isomorphism
\begin{equation*}
  H^1(I,\mathbf{X}^{K_3})^1(\chi) \xrightarrow{\;\cong\;}
 \Hom(I^-_2/I^-_3,k)  \cong k \ .
\end{equation*}
If the element $x \in H^1(I,\mathbf{X}^{K_3})^1(\chi)$ comes from $H^1(I,\mathbf{X}^{K_2})^1(\chi)$ (cf.\ Lemma \ref{inductive}) then its $\alpha^-_{2,1}$-component is zero. It follows that $H^1(I,\mathbf{X}^{K_2})^1(\chi) = 0$.
\end{proof}

\begin{proposition}\label{nosmall-q-chi1}
For any $m\geq 1$ and $\epsilon(m) = i$ we have
\begin{equation*}
\dim_k H^1(I,\mathbf{X}^{K_m})^1(\chi_i) = \dim_k H^1(I,\mathbf{X}^{K_{m+1}})^1(\chi_i) =(m-1)f \ .
\end{equation*}
\end{proposition}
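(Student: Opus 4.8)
The plan is to combine an induction on $m$ with the two reduction lemmas already available. First, the equality $\dim_k H^1(I,\mathbf{X}^{K_m})^1(\chi_i) = \dim_k H^1(I,\mathbf{X}^{K_{m+1}})^1(\chi_i)$ is precisely Lemma \ref{eigenspace-iso} (for $\epsilon(m)=i$ and $j=1$), so what remains is the value $(m-1)f$. I would prove this by induction on $m$, with $i=\epsilon(m)$ determined by $m$. The base cases are $m=1$, where $H^1(I,\mathbf{X}^{K_1})^1=0$ (the remark after Cor.\ \ref{length-support}), so the dimension is $0=(1-1)f$; and $m=2$ (so $i=0$), where Prop.\ \ref{case-0} gives $H^1(I,\mathbf{X}^{K_2})^1(\chi_0)\cong \Hom(K_1/I_2^-,k)$ of dimension $f=(2-1)f$. (There is no case $m=2$, $i=1$, but Remark \ref{case-sign}.ii together with Lemma \ref{eigenspace-iso} at $m=1$ confirms $\dim_k H^1(I,\mathbf{X}^{K_2})^1(\chi_1)=0$, consistently.)

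For the inductive step I would pass from level $m$ to level $m+2$ (which has the same parity, hence the same active character $\chi_i$), showing that for $m\geq 1$ with $\epsilon(m)=i$
$$ \dim_k H^1(I,\mathbf{X}^{K_{m+2}})^1(\chi_i) = \dim_k H^1(I,\mathbf{X}^{K_m})^1(\chi_i) + 2f \ . $$
By Lemma \ref{inductive}.i there is an injection $H^1(I,\mathbf{X}^{K_{m+1}})^1(\chi_i)\hookrightarrow H^1(I,\mathbf{X}^{K_{m+2}})^1(\chi_i)$, and its source has dimension $\dim_k H^1(I,\mathbf{X}^{K_m})^1(\chi_i)$ by Lemma \ref{eigenspace-iso}. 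By Cor.\ \ref{length-support} the target lies in $C_{m+1}$, and by Lemma \ref{inductive}.ii the cokernel maps injectively, via the projection onto the top graded piece $D_{m+1}=D_{m+1}^-\oplus D_{m+1}^+$, into $D_{m+1}$; so I must show this image has dimension exactly $2f$. Writing an eigenvector $x$ as $\sum_n(\alpha_n^-+\alpha_n^+)$ and using the Shapiro isomorphisms of \S\ref{sec:prelilemm} together with $\tau_\omega$-invariance, the relation $x\tau_{s_{1-i}}=0$ (where $s_{1-i}=s_{\epsilon(m+1)}$ raises the grading $D_{m+1}\to D_{m+2}$) forces, by the commutativity of \eqref{d:s+}, the top component into $\Hom(I_m^+/I_{m+1}^+,k)\oplus\Hom(I_{m+1}^-/I_{m+2}^-,k)$, a space of dimension $2f$ — this is the upper bound. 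For the matching lower bound I would construct, for each $z$ in this $2f$-dimensional space, an eigenvector with top component $z$ by solving, by descending recursion on $n$, the equations imposed by $x\tau_{s_i}=-x$ and $x\tau_{s_{1-i}}=0$: at each step the relevant pair $(\alpha_{n-1}^\pm,\alpha_n^\pm)$ is tied together by Lemma \ref{quadratic-cores} (the eigenvalue $-1$ case, expressing $\alpha_{n-1}^\pm$ as a corestriction of $\alpha_n^\pm$) and by Lemma \ref{quadratic}.i.b (the eigenvalue $0$ case, expressing one piece as a restriction of its neighbour), so that all lower components are pinned down by the top one.

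The main obstacle I anticipate is the very bottom of this recursion: at the graded piece $D_1$ the operator $\tau_{s_0}$ behaves differently — it preserves $D_1$, cf.\ Lemma \ref{length-inj}.ii — so the last relation, of the shape occurring in the proofs of Prop.\ \ref{case-0} and Prop.\ \ref{nosmall-q} (a vanishing condition $\mathrm{res}\circ\mathrm{cores}(\cdot)=0$), is a genuine constraint that must be checked to hold automatically. This is exactly where $\mathfrak F\neq\mathbb Q_p$ enters: by the transfer/corestriction computations of the Appendix (Prop.\ \ref{Transfer} and Remark \ref{Transfer-q-small}) the corestriction maps $H^1(I_j^{\pm},k)\to H^1(I_{j-1}^{\pm},k)$ in play are trivial (or trivial on the relevant kernels), so the recursion runs to completion without loss, yielding the full $2f$ and hence $\dim_k H^1(I,\mathbf{X}^{K_{m+2}})^1(\chi_i)=(m-1)f+2f=((m+2)-1)f$. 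An alternative, more computational route — generalizing verbatim the treatment of $H^1(I,\mathbf{X}^{K_3})^1(\chi)$ in Prop.\ \ref{nosmall-q} — would describe $H^1(I,\mathbf{X}^{K_m})^1(\chi_i)$ directly as a direct sum of $m-1$ spaces each isomorphic to some $\Hom(I_j^{\pm}/I_{j+1}^{\pm},k)$; I expect the bookkeeping there to be heavier, but the same Appendix inputs to be the decisive ingredient.
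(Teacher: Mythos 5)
Your proposal follows the paper's own proof quite closely: both proceed by induction in steps of two, with the same base cases ($m=1$ from $H^1(I,\mathbf{X}^{K_1})^1=0$, $m=2$ from Prop.~\ref{case-0} and Lemma~\ref{eigenspace-iso}) and with the Appendix transfer computations (Prop.~\ref{Transfer}, Remark~\ref{Transfer-q-small}) as the decisive input at each step, and your upper bound on the cokernel is precisely the top-component constraint the paper isolates. However, the claim that the descending recursion ``pins down all lower components by the top one'' cannot be right, and this is worth fixing: if it were true, the eigenspace would have dimension exactly $2f$, contradicting the answer $(m-1)f$. Tracing the two kinds of constraints more carefully, the eigenvalue-$(-1)$ pairs (Lemma~\ref{quadratic-cores}) force $\alpha_{m-2}^\pm=-\mathrm{cores}(\alpha_{m-1}^\pm)=0$, $\alpha_{m-4}^\pm=0$, and so on, while the eigenvalue-$0$ pairs (Lemma~\ref{quadratic}.i.b) do \emph{not} determine $\alpha_{n-1}$ from $\alpha_n$: since $\alpha_n=0$ they only impose $\mathrm{res}(\alpha_{n-1}^\pm)=0$. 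So the components $\alpha_{m-3},\alpha_{m-5},\ldots$ remain free modulo a restriction condition, and it is exactly these that carry the remaining $(m-3)f$ dimensions, matching the level-$(m-2)$ eigenspace via Lemma~\ref{inductive}.ii. The paper packages this as the short exact sequence
$$0\longrightarrow H^1(I,\mathbf{X}^{K_{m-2}})^1(\chi_{\epsilon(m-2)})\longrightarrow H^1(I,\mathbf{X}^{K_m})^1(\chi_{\epsilon(m)})\longrightarrow k^{2f}\longrightarrow 0\ ,$$
which is the clean way to express the same induction; your cokernel dimension count becomes correct once you replace ``pinned down'' by ``forced to zero on alternate levels, with the rest ranging over the lower eigenspace.'' You also correctly flag that the very bottom ($D_1$, where $\tau_{s_0}$ preserves the graded piece rather than lowering it) needs separate care; the paper spells this out at $m=3$, splitting further into the cases $q\neq 3,\,p\neq 2$ versus $q=3$ or $p=2$ where Prop.~\ref{Transfer} does not apply directly and Remark~\ref{Transfer-q-small} must be used instead.
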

\begin{proof}
The case $m = 1$ follows from the triviality of $H^1(I,\mathbf{X}^{K_{1}})^1$ and Remark \ref{case-sign}.ii. The case $m = 2$ follows from Prop.\ \ref{case-0}.ii and Lemma \ref{eigenspace-iso}. We therefore assume in the following that $m \geq 3$. The left equality then is Lemma \ref{eigenspace-iso}. It remains to establish that $H^1(I,\mathbf{X}^{K_m})^1(\chi_i)$ has dimension $(m-1)f$. Using Cor.\ \ref{length-support} and the Shapiro isomorphism we view an element $x \in H^1(I,\mathbf{X}^{K_m})^1(\chi_i)$ as
\begin{equation*}
  x = \sum_{\omega \in \Omega} (\alpha^-_{m-1,\omega} +\ldots + \alpha^-_{1,\omega} + \alpha^+_{1,\omega} + \ldots + \alpha^+_{m-1,\omega})
\end{equation*}
with $\alpha^-_{n,\omega} \in H^1(I^-_n,k)$ and $\alpha^+_{n,\omega} \in H^1(I^+_{n-1},k)$. Since $\chi_i(\tau_\omega) = 1$ we have $\alpha^\pm_{n,\omega} = \alpha^\pm_{n,1}$. Using Lemma \ref{length-inj}.ii we see that
\begin{equation*}
    (\alpha^\pm_{m-2,1},\alpha^+_{m-1,1})\tau_{s_i} = - (\alpha^\pm_{m-2,1},\alpha^\pm_{m-1,1}) \quad \text{and} \quad
    \alpha^\pm_{m-1,1} \tau_{s_{1-i}} = 0 \ .
\end{equation*}
By Lemma \ref{quadratic-cores}.i and \eqref{d:s+} these conditions are equivalent to
\begin{align*}
& \alpha^+_{m-2,1} = - \mathrm{cores}^{I^+_{m-2}}_{I^+_{m-3}} (\alpha^+_{m-1,1}) \ ,\ \alpha^-_{m-2,1} = - \mathrm{cores}^{I^-_{m-1}}_{I^-_{m-2}} (\alpha^-_{m-1,1}) \ , \\
& \mathrm{res}^{I^+_{m-2}}_{I^+_{m-1}} (\alpha^+_{m-1,1}) = 0 \ ,\ \mathrm{res}^{I^-_{m-1}}_{I^-_m} (\alpha^-_{m-1,1}) = 0 \ .
\end{align*}
Invoking Prop.\ \ref{Transfer} and Remark \ref{Transfer-q-small}.iii in the Appendix  we see that they are further equivalent to
\begin{multline*}
   \qquad \alpha^\pm_{m-2,1}  = 0 \ , \ \mathrm{res}^{I^+_{m-2}}_{I^+_{m-1}} (\alpha^+_{m-1,1}) = 0 \ ,\ \mathrm{res}^{I^-_{m-1}}_{I^-_m} (\alpha^-_{m-1,1}) = 0 \\
   \text{if either $q \neq 3 , p \neq 2$ or $q = 3 , m \geq 4$ or $p = 2 , m \geq 4$},
\end{multline*}
respectively
\begin{multline*}
  \qquad \alpha^+_{1,1} = - \mathrm{cores}^{I^+_{1}}_I (\alpha^+_{2,1}) \ , \ \alpha^-_{1,1}  = 0 \ , \ \mathrm{res}^{I^+_{1}}_{I^+_{2}} (\alpha^+_{2,1}) = 0 \ , \ \mathrm{res}^{I^-_{2}}_{I^-_3} (\alpha^-_{2,1}) = 0  \\
  \text{if either $q = 3 , m = 3$ or $p = 2 , m = 3$}.
\end{multline*}

We first consider the case $q \neq 3$ and $p \neq 2$. If $m = 3$ then, for $x$ to be an eigenvector for $\chi_1$, there is one additional condition which is $(\alpha^-_{1,1},\alpha^+_{1,1})\tau_{s_0} = 0$. But it is a consequence of the above conditions. Hence we obtain that the map sending $x$ to $(\alpha^-_{2,1},\alpha^+_{2,1})$ induces an isomorphism
\begin{equation*}
  H^1(I,\mathbf{X}^{K_3})^1(\chi_1) \xrightarrow{\cong}
 \Hom(I^-_2/I^-_3,k) \oplus \Hom(I^+_1/I^+_2) \cong k^f \oplus k^f \ .
\end{equation*}
For $m \geq 4$ we deduce, in view of Lemma \ref{inductive}.ii, that $x - \sum_\omega (\alpha^-_{m-1,\omega} + \alpha^+_{m-1,\omega})$ comes from an eigenvector for $\chi_i$ in $H^1(I,\mathbf{X}^{K_{m-2}})^1$. We therefore have established the exact sequence
\begin{multline*}
0 \longrightarrow H^1(I,\mathbf{X}^{K_{m-2}})^1(\chi_{\epsilon(m-2)}) \longrightarrow H^1(I,\mathbf{X}^{K_m})^1(\chi_{\epsilon(m)}) \longrightarrow \\
\Hom(I^-_{m-1}/I^-_m,k)\oplus \Hom(I^+_{m-2}/I^+_{m-1},k) \cong k^{2f} \longrightarrow 0 \ .
\end{multline*}
Our claim now follows by induction.

Now suppose that $q=3$ or $p=2$. If $m=3$ we this time have $ \mathrm{res}^{I^+_{1}}_{I^+_{2}} (\alpha^+_{2,1}) = 0$ and
$\alpha^+_{1,1} = - \mathrm{cores}^{I^+_{1}}_{I} (\alpha^+_{2,1})$. It therefore follows from Remark \ref{Transfer-q-small}.i/iii in the Appendix that  $\alpha_{1,1}^+$ is trivial on $I_1^-$. Hence the condition $(\alpha^-_{1,1},\alpha^+_{1,1})\tau_{s_0} = \alpha^+_{1,1}\tau_{s_0}=0$ is automatically satisfied. We again obtain that the map sending $x$ to $(\alpha^-_{2,1},\alpha^+_{2,1})$ induces an isomorphism
\begin{equation*}
  H^1(I,\mathbf{X}^{K_3})^1(\chi_1) \xrightarrow{\cong}
\Hom(I^-_2/I^-_3,k) \oplus \Hom(I^+_1/I^+_2) \cong k^f \oplus k^f \ \ ,
\end{equation*}
and we conclude by induction as in the previous case.
\end{proof}

\subsubsection{The case $\mathfrak F\neq \mathbb Q_p$,  and $q=2$ or $q=3$\label{sec:smallq}}
\begin{proposition}\label{small-q}
For $q = 2$ or $3$ we have:
\begin{itemize}
  \item[i.] $H^1(I,\mathbf{X}^{K_2})^1 = H^1(I,\mathbf{X}^{K_2})^1(\chi_0)$ has $k$-dimension $f$;
  \item[ii.] $Z_2 = Z_2(\chi_1)$ has $k$-dimension $f$.
\end{itemize}
\end{proposition}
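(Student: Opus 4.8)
The plan is to determine the right $H$-module $M:=H^1(I,\mathbf{X}^{K_2})^1$ completely in the cases $q\in\{2,3\}$ — where necessarily $p=q$, $f=1$, and in particular $\mathbb{F}_q\subseteq k$ — and then to read off ii.\ from i.\ via the duality \eqref{f:dual1}.

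For i.\ I would combine three facts that are already available: $M$ is finite dimensional over $k$ (Prop.\ \ref{coh-formula}, Lemma \ref{mod-AG}.iii), it is supersingular (Cor.\ \ref{supersingular}), and $M\tau_{s_1}=0$ because $\tau_{v(2)}=\tau_{s_1}$ (Prop.\ \ref{tau-annihil}). Since $M=M^1$ is in particular a submodule of $H^1(I,\mathbf{X}^{K_2})$, the relation $M\tau_{s_1}=0$ forces $M\tau_w=0$ for every $w$ in whose reduced expression $s_1$ occurs: writing $\tau_w=\tau_a\tau_{s_1}\tau_b$ with lengths adding (braid relations) and using $M\tau_a\subseteq M$. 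Hence $M$ is a module over $H/\mathfrak{s}_1$ in the notation of Remark \ref{modulesS1}. I would then decompose $M=\prod_{\gamma\in\Gamma}e_\gamma M$ with the central idempotents of \S\ref{sec:idempo} (legitimate since $\mathbb{F}_q\subseteq k$) and inspect each block via the ring decomposition $H/\mathfrak{s}_1=\prod_\gamma A_\gamma$ of Remark \ref{modulesS1}.

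On the block $\gamma=\{1\}$ the algebra $A_{\{1\}}\cong k\times k$ is semisimple, so $e_1M$ is a semisimple $H$-module, a direct sum of copies of the two one-dimensional $A_{\{1\}}$-modules, namely $\chi_{triv}$ and $\chi_0$. As $\chi_{triv}$ is not supersingular — equivalently $H^1(I,\mathbf{X}^{K_2})^1(\chi_{triv})=0$ by Cor.\ \ref{coro:case0} — we get $e_1M=M(\chi_0)$. For $q=2$ this is all of $M$, since $\Gamma=\{\{1\}\}$. For $q=3$ there is one further block $\gamma=\{\lambda_0\}$ with $\lambda_0=\mathrm{id}_{\mathbb{F}_3^\times}$; here $e_{\lambda_0}M$ is a finite dimensional supersingular module, so its socle is a direct sum of simple supersingular modules, which by Lemma \ref{onedim} are characters, on which $e_{\lambda_0}$ acts as the identity — forcing them to equal the unique character $\chi$ of $H$ with $\chi|_\Omega=\lambda_0$ (and this $\chi$ is supersingular). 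Since $H^1(I,\mathbf{X}^{K_2})^1(\chi)=0$ by Prop.\ \ref{nosmall-q}.iii, the socle of $e_{\lambda_0}M$ vanishes, hence $e_{\lambda_0}M=0$. In every case $M=e_1M=H^1(I,\mathbf{X}^{K_2})^1(\chi_0)$, and Prop.\ \ref{case-0} (which applies since $\mathfrak{F}\neq\mathbb{Q}_p$) identifies this eigenspace with $\Hom(K_1/I_2^-,k)$, of $k$-dimension $f$. For ii., \eqref{f:dual1} together with i.\ gives $Z_2\cong\Hom_k(\upiota^*H^1(I,\mathbf{X}^{K_2})^1,k)\cong\Hom_k((\upiota^*\chi_0)^{\oplus f},k)$; from $\chi_0(\tau_\omega)=1$ one gets $\chi_0(e_1)=-(q-1)=1$ in $k$, so $\chi_0(\upiota(\tau_{s_0}))=\chi_0(-e_1-\tau_{s_0})=-1-(-1)=0$ and $\chi_0(\upiota(\tau_{s_1}))=-1-0=-1$, i.e.\ $\upiota^*\chi_0=\chi_1$, whence $Z_2\cong\chi_1^{\oplus f}$ as left $H$-modules and $Z_2=Z_2(\chi_1)$ has $k$-dimension $f$.

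Nothing here is deep; the point that needs care is excluding the "wrong" simple modules inside $M$ — $\chi_1$ (ruled out by $M\tau_{s_1}=0$, which is exactly why the reduction to an $H/\mathfrak{s}_1$-module is needed), $\chi_{triv}$ (ruled out by supersingularity), and, only when $q=3$, the lone remaining supersingular character (ruled out by the separate Appendix-based input Prop.\ \ref{nosmall-q}.iii). The structural feature that makes i.\ essentially automatic is the semisimplicity of the $\gamma=\{1\}$ block of $H/\mathfrak{s}_1$, which prevents $e_1M$ from carrying a non-split self-extension of $\chi_0$.
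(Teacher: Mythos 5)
Your argument is correct and follows essentially the same route as the paper's own proof: both reduce to the action of $H/\mathfrak{s}_1$ (you derive $M\mathfrak{s}_1=0$ from Prop.\ \ref{tau-annihil} with $v(2)=s_1$; the paper cites Lemma \ref{modulo}.i, which is the same statement), both then use the block decomposition $H/\mathfrak{s}_1=\prod_\gamma A_\gamma$ of Remark \ref{modulesS1} together with supersingularity and Prop.\ \ref{nosmall-q}.iii to kill everything except the $\chi_0$-isotypic part, and both invoke Prop.\ \ref{case-0} for the dimension and the duality $\upiota^*\chi_0=\chi_1$ for part ii. The only cosmetic difference is that you spell out the $H/\mathfrak{s}_1$-module claim and the socle argument for the $\gamma\neq\{1\}$ block in a bit more detail, and you appeal to the formula \eqref{f:dual1} directly for ii.\ rather than re-citing \cite{OS} Cor.\ 6.12.
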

\begin{proof}
i. By Lemma \ref{modulo}.i, $H^1(I,\mathbf{X}^{K_2})^1$ is an $H/\mathfrak{s}_1$-module so that Remark \ref{modulesS1} applies
and we may decompose
\begin{equation*}
  H^1(I,\mathbf{X}^{K_2})^1 = \prod_{\gamma \in \Gamma} H^1(I,\mathbf{X}^{K_2})^1_\gamma
\end{equation*}
such that $H/\mathfrak{s}_1$ acts on the factor $H^1(I,\mathbf{X}^{K_2})^1_\gamma$ through its projection to $A_\gamma$ (notation in  Remark \ref{modulesS1}). The factors $H^1(I,\mathbf{X}^{K_2})^1_\gamma$ for $\gamma \neq \{1\}$ have a socle which can only contain characters which induce a nontrivial character of $\Omega$. There are no such characters for $q = 2$; if $q = 3$ then no such character occurs in $H^1(I,\mathbf{X}^{K_2})^1$ by Prop.\ \ref{nosmall-q}.iii. Therefore all these factors vanish, i.e., we have
\begin{equation*}
  H^1(I,\mathbf{X}^{K_2})^1 = H^1(I,\mathbf{X}^{K_2})^1_{\{1\}} \ .
\end{equation*}
Moreover, $A_{\{1\}}$ is a semisimple $k$-algebra whose only supersingular character is $\chi_0$. It follows that
\begin{equation*}
  H^1(I,\mathbf{X}^{K_2})^1 = H^1(I,\mathbf{X}^{K_2})^1_{\{1\}} = H^1(I,\mathbf{X}^{K_2})^1(\chi_0) \ .
\end{equation*}
According to Prop.\ \ref{case-0}.ii the right hand side has $k$-dimension $f$.

ii. By Cor.\ \ref{Zm-dual} and i. we have
\begin{equation*}
  Z_2 = \Ext^1_H(H^1(I,\mathbf{X}^{K_2})^1,H) \cong \Ext^1_H(\chi_0,H) \oplus \ldots \oplus \Ext^1_H(\chi_0,H)
\end{equation*}
with $f$ summands on the right hand side. As we have recalled earlier we know from \cite{OS} Thm.\ 0.2 that $H$ is Auslander-Gorenstein of self-injective dimension equal to $1$. Hence \cite{OS} Cor.\ 6.12 implies that $\Ext^1_H(\chi_0,H) = \Hom_k(\upiota^*(\chi_0),k)$, where $\upiota$ is the involutive automorphism of $H$ from \S\ref{involution}. It remains to note that $\upiota^*(\chi_0) = \chi_1$.
\end{proof}

\subsection{\label{sec:fullspaces}On the $H$-module structure of $H^j(I,\mathbf{X}^{K_m})$}
\subsubsection{\label{sec:m2}Case $m\geq 2$: the ``trivial'' part of the cohomology.}

As before we fix an $m \geq 2$. We filter $H^j(I,\mathbf{X}^{K_m})$ as a $k$-vector space by
\begin{equation*}
  \Fil^i H^j(I,\mathbf{X}^{K_m}) = \oplus_{w \in \widetilde{W}, \ell(w) + \sigma(w) > i} \; H^j(I,\mathbf{X}(w)^{K_m}) \ .
\end{equation*}
This is a descending filtration with $\Fil^0 H^j(I,\mathbf{X}^{K_m}) = H^j(I,\mathbf{X}^{K_m})$. If $\mathfrak{F}$ has characteristic zero then each graded piece of the associated graded vector space is finite dimensional.

\medskip

For $i = 0,1$ we have the two algebra homomorphisms
\begin{align*}
  X_i : \qquad\qquad H & \longrightarrow k[\Omega] \\
  \tau_\omega, \tau_{s_i},  \tau_{s_{1-i}} & \longmapsto \omega, -e_1, 0 \ .
\end{align*}
They induce $H$-module structures on the finite dimensional vector space $k[\Omega]$ in both of which the central element $\zeta$ introduced in Section \ref{sec:supersing} acts trivially. Therefore these two $H$-modules are supersingular.

\begin{lemma}\label{filtration}
For any $i \geq m$ we have:
\begin{itemize}
  \item[i.] $\Fil^i H^j(I,\mathbf{X}^{K_m})$ is an $H$-submodule of $H^j(I,\mathbf{X}^{K_m})$;
  \item[ii.] the natural map
  \begin{align*}
    \big( H^j(I,\mathbf{X}(s_0w_{i+1})^{K_m}) \oplus H^j(I,\mathbf{X}(w_{i+1})^{K_m}) \big) \otimes_k k[\Omega] & \xrightarrow{\; \cong \;} \gr^i H^j(I,\mathbf{X}^{K_m}) \\
    x \otimes \omega & \longmapsto x\tau_\omega
  \end{align*}
  is an isomorphism of $H$-modules, where $H$ acts on the left hand side only on the second factor through the homomorphism $X_{\epsilon(i)}$.
\end{itemize}
\end{lemma}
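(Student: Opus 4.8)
The idea is to exploit the explicit description, from \S\ref{sec:explicit}, of how $\tau_\omega$, $\tau_{s_0}$, $\tau_{s_1}$ move the pieces $H^j(I,\mathbf{X}(w)^{K_m})$ around, combined with the length estimates of Lemma \ref{length-inj}. The filtration index $i$ is chosen $\geq m$, and the crucial consequence of this is that for every $w$ with $\ell(w)+\sigma(w)\geq i\geq m$ we have $K_m\subseteq I\cap wIw^{-1}$ (cf.\ the remark after \eqref{f:components-b} that $\ell(w)+\sigma(w)\leq m$ is equivalent to $K_m\cap wIw^{-1}=K_m\cap wsI(ws)^{-1}$, and the finer statement $K_m\subseteq I\cap wIw^{-1}$ iff $\ell(w)+\sigma(w)\leq m$); I would first record this reduction precisely, since it is what trivializes all the group cohomology: when $\ell(w)+\sigma(w)>m$, the restriction maps in \eqref{d:s+} and \eqref{d:s-0} become isomorphisms and the conjugation maps $u_{w,s}(\omega)^*$ in \eqref{d:s-omega} become the identity.

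For part i., I would show $\Fil^i H^j(I,\mathbf{X}^{K_m})\tau_g\subseteq \Fil^i H^j(I,\mathbf{X}^{K_m})$ for $g$ running over the algebra generators $\tau_\omega,\tau_{s_0},\tau_{s_1}$. For $\tau_\omega$ this is Lemma \ref{length-inj}.i together with $\ell(w\omega)+\sigma(w\omega)=\ell(w)+\sigma(w)$. For $\tau_s$ with $\ell(ws)=\ell(w)+1$ it is again Lemma \ref{length-inj}.i (length goes up). The only case to watch is $\tau_s$ with $\ell(ws)=\ell(w)-1$: by Lemma \ref{length-inj}.ii, $\mathbf{X}(w)\tau_s\subseteq \mathbf{X}(ws)\oplus\bigoplus_\omega \mathbf{X}(w\omega)$, and the $\mathbf{X}(w\omega)$-components keep $\ell+\sigma$ fixed, so I only need that the $\mathbf{X}(ws)$-component lands in $\Fil^i$; but $\ell(ws)+\sigma(ws)=\ell(w)+\sigma(w)-1\geq i-1\geq m-1$, so generically it is still $>i-1$, and in the borderline case $\ell(w)+\sigma(w)=i+1$ one has $\ell(ws)+\sigma(ws)=i$, which is \emph{not} $>i$ — here I invoke the first bullet after \eqref{f:components-b}: when $K_m\cap wIw^{-1}\subsetneq K_m\cap wsI(ws)^{-1}$ (equivalently $\ell(w)+\sigma(w)>m$, which holds since $i\geq m$), the map $\tau_{w,s,0}$ is zero on $\mathbf{X}(w)^{K_m}$. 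So the $\mathbf{X}(ws)$-component vanishes and we stay in $\Fil^i$. This gives i.

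For part ii., $\gr^i H^j(I,\mathbf{X}^{K_m})=D_{i+1}$ in the notation of \S\ref{sec:defiDn}, and $D_{i+1}=D_{i+1}^-\oplus D_{i+1}^+=\bigoplus_\omega H^j(I,\mathbf{X}(s_0w_{i+1}\omega)^{K_m})\oplus\bigoplus_\omega H^j(I,\mathbf{X}(w_{i+1}\omega)^{K_m})$, which as a $k$-vector space is exactly $\big(H^j(I,\mathbf{X}(s_0w_{i+1})^{K_m})\oplus H^j(I,\mathbf{X}(w_{i+1})^{K_m})\big)\otimes_k k[\Omega]$ via $x\otimes\omega\mapsto x\tau_\omega$; the commutativity of \eqref{d:omega} (the $\tau_\omega$ act by the obvious permutation of the summands, which matches multiplication in $k[\Omega]$ on the second factor) shows this is a $k[\Omega]$-module isomorphism. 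It remains to identify the action of $\tau_{s_0}$ and $\tau_{s_1}$ on $D_{i+1}$ modulo $\Fil^{i+1}$. One of $s_0,s_1$ — namely $s_{\epsilon(i+1)}=s_{\epsilon(i)}$ up to the parity bookkeeping; I will match conventions carefully — raises length on $w_{i+1}\omega$ and $s_0w_{i+1}\omega$ by \eqref{f:length1}, hence pushes $D_{i+1}$ into $D_{i+2}\subseteq\Fil^{i+1}$, so it acts as $0$ on the graded piece; this is the generator that $X_{\epsilon(i)}$ sends to $0$. The other one, $s_{\epsilon(i)}$ (the one lowering length by \eqref{f:length2}), maps $\mathbf{X}(w_{i+1}\omega)$ into $\mathbf{X}(w_{i+1}\omega s)\oplus\bigoplus_{\omega'}\mathbf{X}(w_{i+1}\omega\omega')$; since $\ell(w_{i+1}\omega s)+\sigma=i$, that first summand sits in $\Fil^i$ but the point is whether it is in $\Fil^{i+1}$ — it is not, but by the zero-map bullet after \eqref{f:components-b} (valid since $i\geq m$) the component $\tau_{w,s,0}$ is zero on $\mathbf{X}(w)^{K_m}$, so modulo $\Fil^{i+1}$ only the $\tau_{w,s,\omega'}$ survive, and by the first bullet after \eqref{d:s-omega} (again $\ell(w)+\sigma(w)=i+1>m$) each $\tau_{w,s,\omega'}$ equals $\tau_{\omega'}$. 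Summing over $\omega'\in\Omega$ and using \eqref{f:quad}/\eqref{f:supports0}–\eqref{f:supports1}, $\tau_{s_{\epsilon(i)}}$ acts on $D_{i+1}^\pm$ as $x\tau_s=\sum_{\omega'}x\tau_{\omega'}=-x e_1$, i.e.\ exactly as $X_{\epsilon(i)}(\tau_{s_{\epsilon(i)}})=-e_1$ on the $k[\Omega]$-factor. That pins down all three generators, giving the asserted $H$-module isomorphism.

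\textbf{Main obstacle.} The genuine content is entirely in the $\ell(ws)=\ell(w)-1$ case — both for closure of the filtration and for the graded action — where one must carefully check that the ``diagonal'' component $\tau_{w,s,0}: \mathbf{X}(w)^{K_m}\to\mathbf{X}(ws)^{K_m}$ vanishes and that the conjugation maps degenerate to the identity. Both follow from the explicit dictionary in \S\ref{sec:explicit} once the inequality $\ell(w)+\sigma(w)\geq i\geq m$ is in hand, so the work is bookkeeping with parities $\epsilon(n)$, the elements $w_n$, and matching the two homomorphisms $X_0,X_1$ to the two choices of which reflection raises versus lowers length; I expect no serious difficulty beyond getting these indices consistent.
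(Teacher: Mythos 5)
Your proof is correct and follows essentially the same route as the paper: identify the action of $\tau_\omega$ on $\gr^i$ with multiplication in $k[\Omega]$ via the commutativity of \eqref{d:omega}; use the length formulas \eqref{f:length1}--\eqref{f:length2} to see that the length-raising generator $\tau_{s_{1-\epsilon(i)}}$ pushes $D_{i+1}$ into $D_{i+2}$ and hence kills the graded piece; and, for the length-lowering generator $\tau_{s_{\epsilon(i)}}$, use the two bullet points in \S\ref{sec:explicit} (vanishing of $\tau_{w,s,0}$ on $\mathbf{X}(w)^{K_m}$ and $u_{w,s}(\omega)^*=\id$, both valid precisely because $\ell(w)+\sigma(w)=i+1>m$) to conclude it acts as $\sum_\omega\tau_\omega=-e_1$. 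One small wobble in your exposition: the aside ``$s_{\epsilon(i+1)}=s_{\epsilon(i)}$ up to the parity bookkeeping'' is not right as stated (in fact $\epsilon(i+1)=1-\epsilon(i)$), but you immediately hedge and the identification you actually use two sentences later --- $s_{\epsilon(i)}$ lowers length on $D_{i+1}$ (by \eqref{f:length2} applied with $n=i+1$, since $\epsilon(i+2)=\epsilon(i)$), $s_{1-\epsilon(i)}$ raises it --- is the correct one and matches the paper's use of $X_{\epsilon(i)}$.
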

\begin{proof}
It immediately follows from \eqref{d:omega} that $\Fil^i H^j(I,\mathbf{X}^{K_m})$ is a $k[\Omega]$-submodule and that the map in ii. is an isomorphism of $k[\Omega]$-modules. Let $w \in \widetilde{W}$ be such that $\ell(w) + \sigma(w) = i + 1$. Lemma \ref{tau-grading}.i implies that $H^j(I,\mathbf{X}(w)^{K_m}) \tau_{s_{1-\epsilon(i)}} \subseteq \Fil^{i+1} H^j(I,\mathbf{X}^{K_m})$. In particular, $\tau_{s_{1-\epsilon(i)}}$ annihilates $\gr^i H^j(I,\mathbf{X}^{K_m})$. Finally, the discussion in section \ref{sec:explicit} before \eqref{d:s-0} and after \eqref{d:s-omega} implies (it is only here that the assumption $i \geq m$ is used) that $\tau_{s_{\epsilon(i)}} | H^j(I,\mathbf{X}(w)^{K_m}) = -e_1 | H^j(I,\mathbf{X}(w)^{K_m})$.

\end{proof}

\begin{proposition}\label{filtered-iso}
For any $i \geq m$ the map
\begin{equation*}
  \xymatrix{
    \big( H^j(I,\mathbf{X}(s_0w_{i+1})^{K_m}) \oplus H^j(I,\mathbf{X}(w_{i+1})^{K_m}) \big) \otimes_k H/(\tau_{s_{\epsilon(i)}} + e_1)H \ar[d]^{x \otimes \tau  \longmapsto x\tau}_{\cong} \\
    \Fil^i H^j(I,\mathbf{X}^{K_m})   }
\end{equation*}
is an isomorphism of $H$-modules, where on the source $H$ acts through multiplication on the second factor.
\end{proposition}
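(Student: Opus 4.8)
\textbf{Proof plan for Proposition \ref{filtered-iso}.}

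The plan is to upgrade the graded statement of Lemma \ref{filtration}.ii into a filtered statement. First I would observe that by Lemma \ref{filtration}.i the space $\Fil^i H^j(I,\mathbf{X}^{K_m})$ is indeed an $H$-submodule for $i \geq m$, so the target of the proposed map makes sense, and the filtration is exhausting from level $i = m$ downward since $\Fil^0 = H^j(I,\mathbf{X}^{K_m})$. The crucial structural input is that, as established in the proof of Lemma \ref{filtration} (the discussion before \eqref{d:s-0} and after \eqref{d:s-omega}, which is available precisely because $i \geq m$), the operator $\tau_{s_{\epsilon(i)}}$ acts on $H^j(I,\mathbf{X}(w)^{K_m})$ for each $w$ with $\ell(w) + \sigma(w) > i$ as $-e_1$, while $\tau_{s_{1-\epsilon(i)}}$ sends each such piece into the next filtration step. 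So the quotient module $H/(\tau_{s_{\epsilon(i)}} + e_1)H$ is the natural ambient algebra through which the action on the associated graded factors, and the proposed map is well defined: indeed on $H^j(I,\mathbf{X}(s_0 w_{i+1})^{K_m}) \oplus H^j(I,\mathbf{X}(w_{i+1})^{K_m})$ the operator $\tau_{s_{\epsilon(i)}} + e_1$ acts as zero after landing in $\Fil^{i+1}$, but one must be careful: it acts as zero only after projecting to $\gr^i$. The cleaner way is to note that every element of $H^j(I,\mathbf{X}(w_{i+1})^{K_m})$ and of $H^j(I,\mathbf{X}(s_0w_{i+1})^{K_m})$ is, by the quadratic relation applied via Shapiro's description, actually annihilated by $(\tau_{s_{\epsilon(i)}} + e_1)$ itself once we are at depth $\geq m$; this is what the identity $\tau_{s_{\epsilon(i)}}\vert H^j(I,\mathbf{X}(w)^{K_m}) = -e_1\vert H^j(I,\mathbf{X}(w)^{K_m})$ from Lemma \ref{filtration}'s proof asserts verbatim for all $w$ with $\ell(w)+\sigma(w) > i$, in particular for $\ell(w)+\sigma(w) = i+1$. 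Hence the map factors through $H/(\tau_{s_{\epsilon(i)}} + e_1)H$ on the second tensor factor and is a homomorphism of $H$-modules.

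Next I would prove bijectivity by a filtration argument. Both source and target carry the filtration by $H$-submodules: on the target it is $\Fil^{i'} H^j(I,\mathbf{X}^{K_m})$ for $i' \geq i$, and on the source one takes the preimage, or equivalently the filtration induced by the right ideal filtration of $H/(\tau_{s_{\epsilon(i)}} + e_1)H$ by the spaces spanned by $\tau_v$ with $\ell(v) + \text{(appropriate shift)} \geq i' - i$. The map is evidently compatible with these filtrations, because applying $\tau_v$ of length $\ell$ to a class supported in $\gr^{i'}$ lands in $\Fil^{i'+\ell'}$ with $\ell'$ governed by Lemma \ref{length-inj} and Lemma \ref{tau-grading}. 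On the associated graded pieces the map becomes, level by level, the isomorphism of Lemma \ref{filtration}.ii applied at level $i' \geq i$: indeed $\gr^{i'}$ of $H/(\tau_{s_{\epsilon(i)}}+e_1)H$, tensored with $H^j(I,\mathbf{X}(s_0w_{i+1})^{K_m}) \oplus H^j(I,\mathbf{X}(w_{i+1})^{K_m})$, maps isomorphically onto $\gr^{i'} H^j(I,\mathbf{X}^{K_m})$ because the $\tau_v$'s reaching from depth $i+1$ to depth $i'+1$ identify $k[\Omega]$-freely with the cohomology of $\mathbf{X}(s_0 w_{i'+1})$ and $\mathbf{X}(w_{i'+1})$ via the restriction/corestriction isomorphisms of Corollary \ref{length-iso} (all the relevant $\tau_v$ act by isomorphisms once $\ell(w)+\sigma(w) \geq m$). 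A standard five-lemma / successive-approximation argument along the (finite, when $\mathfrak F$ has characteristic zero; and filtered-complete in general since cohomology commutes with the relevant limits) filtration then yields that the map is an isomorphism.

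The main obstacle I anticipate is the bookkeeping in identifying $\gr^{i'}$ of the quotient algebra $H/(\tau_{s_{\epsilon(i)}} + e_1)H$ with the correct copies of $k[\Omega]$ and matching them against $\gr^{i'} H^j(I,\mathbf{X}^{K_m}) = \big(H^j(I,\mathbf{X}(s_0 w_{i'+1})^{K_m}) \oplus H^j(I,\mathbf{X}(w_{i'+1})^{K_m})\big) \otimes_k k[\Omega]$. Concretely one has to check that the basis elements $\tau_v$ of $H$ with $v$ of the form $w_{i'+1} w_{i+1}^{-1}$ times an $\Omega$-element (and similarly on the $s_0$-side) survive in $H/(\tau_{s_{\epsilon(i)}}+e_1)H$ and act by the isomorphisms of Corollary \ref{length-iso}, which is exactly the content of Lemma \ref{tau-grading}.iii together with the length computations \eqref{f:length1}–\eqref{f:length2}; the possible overlap/degeneration at the bottom level $i' = i$ (where $w_{i+1}$ meets $\Omega$) needs the hypothesis $i \geq m \geq 2$ to be handled, just as in Corollary \ref{length-iso}. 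Once this combinatorial identification is in place, the isomorphism on each graded piece is immediate from Lemma \ref{filtration}.ii and Corollary \ref{length-iso}, and the filtered isomorphism follows formally.
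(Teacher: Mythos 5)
Your well-definedness argument is correct and matches the paper's: the key is precisely the identity $\tau_{s_{\epsilon(i)}}\vert H^j(I,\mathbf{X}(w)^{K_m}) = -e_1$, valid for $\ell(w)+\sigma(w) > m$ as established in the proof of Lemma \ref{filtration}, and this applies verbatim to the two summands $H^j(I,\mathbf{X}(s_0 w_{i+1})^{K_m})$ and $H^j(I,\mathbf{X}(w_{i+1})^{K_m})$.

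For bijectivity your filtered-graded route is genuinely different from the paper's, and as written it has real gaps in exactly the places you flag. The paper instead works with an explicit $k$-linear complement to the ideal: it shows that
$\bigoplus_{w,\,\ell(s_{\epsilon(i)}w)=\ell(w)+1}\; k\,\tau_w$
is a vector-space complement to $(\tau_{s_{\epsilon(i)}}+e_1)H$ inside $H$. The inclusion ``$\cap = 0$'' is because left multiplication by $\tau_{s_{\epsilon(i)}}$ is injective on the first summand and zero on the second, and the inclusion ``$+ = H$'' comes from the identity
$\tau_{s_{\epsilon(i)}}\tau_v = (\tau_{s_{\epsilon(i)}}+e_1)\tau_v + \sum_{\omega\in\Omega}\tau_{\omega v}$,
which rewrites any $\tau_{w'}$ with $w' = s_{\epsilon(i)}v$, $\ell(w')=\ell(v)+1$, as an element of the complement modulo the ideal. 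Once this complement is in hand, the source of the map is identified with $\bigl(H^j(\cdots s_0w_{i+1})\oplus H^j(\cdots w_{i+1})\bigr)\otimes_k \bigl(\oplus_{w:\ldots}k\tau_w\bigr)$, and Corollary \ref{length-iso} then gives, for each such $w$, a linear isomorphism onto the summand $H^j(I,\mathbf{X}(s_0w_{i+1}w)^{K_m})\oplus H^j(I,\mathbf{X}(w_{i+1}w)^{K_m})$; ranging over $w$ exhausts $\Fil^i H^j(I,\mathbf{X}^{K_m})$. This is an honest graded bijection, so there is no filtered-limit or completeness argument to run at all.

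Concretely, what your approach is missing and what the paper's handles cleanly: (1) you never identify a basis or grading of $H/(\tau_{s_{\epsilon(i)}}+e_1)H$, so the claim that your source filtration ``is induced by the right ideal filtration'' and that the graded pieces are copies of $k[\Omega]$ remains unproved (this is exactly the complement computation above); (2) because you only know the map is filtered and not graded, your surjectivity step via successive approximation does not obviously terminate --- $f(v)$ can have components in arbitrarily high filtration degrees even when the target class $w$ has bounded support, so the ``converges because cohomology commutes with the relevant limits'' assertion needs an actual justification that is absent. The paper's approach produces a graded isomorphism directly and thereby avoids both issues; if you wish to keep a filtered framework, the minimal repair is to first establish the explicit complement (the identity above), which upgrades your filtered map to a graded one and makes the rest formal.
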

\begin{proof}
In the proof of Lemma \ref{filtration} we have seen that  $H^j(I,\mathbf{X}(s_0w_{i+1})^{K_m}) \oplus H^j(I,\mathbf{X}(w_{i+1})^{K_m})$ is annihilated by $\tau_{s_{\epsilon(i)}} + e_1$. Hence the asserted map is a well defined homomorphism of $H$-modules. Cor.\ \ref{length-iso} implies that $x \otimes \tau \longmapsto x\tau$ restricts to a linear isomorphism from
\begin{equation*}
  \big( H^j(I,\mathbf{X}(s_0w_{i+1})^{K_m}) \oplus H^j(I,\mathbf{X}(w_{i+1})^{K_m}) \big) \otimes_k \big( \oplus_{w, \ell(s_{\epsilon(i)}w) = \ell(w) + 1} k\tau_w \big)
\end{equation*}
onto $\Fil^i H^j(I,\mathbf{X}^{K_m})$. We have
\begin{equation*}
  \big( \oplus_{w, \ell(s_{\epsilon(i)}w) = \ell(w) + 1} k\tau_w \big) \cap (\tau_{s_{\epsilon(i)}} + e_1)H = 0
\end{equation*}
since multiplying from the left by $\tau_{s_{\epsilon(i)}}$ is injective on the first summand and is zero on the second. It remains to show that
\begin{equation*}
  \big( \oplus_{w, \ell(s_{\epsilon(i)}w) = \ell(w) + 1} k\tau_w \big) + (\tau_{s_{\epsilon(i)}} + e_1)H = H \ .
\end{equation*}
For this consider any $w' = s_{\epsilon(i)} v$ be such that $\ell(w') = \ell(v) +1$. Then $\tau_{w'} = \tau_{s_{\epsilon(i)}} \tau_v = (\tau_{s_{\epsilon(i)}} + e_1)\tau_v + \sum_{\omega \in \Omega} \tau_{\omega v}$, where $\ell(s_{\epsilon(i)}\omega v) = \ell(\omega v) + 1$ for any $\omega \in \Omega$.
\end{proof}

We note that the above proof also shows that the map $H/(\tau_{s_{\epsilon(i)}} + e_1)H \xrightarrow[\cong]{\tau_{s_{\epsilon(i)}} \cdot} \tau_{s_{\epsilon(i)}}H$ is an isomorphism of right $H$-modules.

\subsubsection{Case $m=1$: on the structure of $H^1(I,\mathbf{X}^{K_1})$ as an $H$-module}\label{m1full}

Recall that by Prop.\ \ref{Z1} and by \eqref{f:dual2},
we know that $H^1(I,\mathbf{X}^{K_1})$ does not contain any nonzero finite dimensional submodule. We are going to prove the following proposition.

\begin{proposition}\label{propH1m=1}
Suppose that $\mathbb F_q\subseteq k$; then we have:
\begin{itemize}
  \item[i.]  If $\mathfrak{F} \neq \mathbb Q_p$ and $p\neq 2$,  $q \neq 3$, then
\begin{equation*}
    H^1(I,\X^{K_1})\cong  H^f\oplus (\tau_{s_0}H)^{2f}\oplus ((\tau_{s_0}+e_1)H)^{f}  \cong H^f \oplus (H/(\tau_{s_0}+e_1)H)^{2f}\oplus (H/\tau_{s_0}H)^{f}
\end{equation*}
       as right $H$-modules.
  \item[ii.] If $\mathfrak{F} =\mathbb Q_p$, or $\mathfrak{F} \neq \mathbb Q_p$ and $p=q=3$, then
\begin{equation*}
    H^1(I,\X^{K_1})\cong  H^2\oplus \tau_{s_0}H \cong H^2 \oplus H/(\tau_{s_0}+e_1)H
\end{equation*}
       as right $H$-modules.
\end{itemize}
\end{proposition}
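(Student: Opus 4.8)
The plan is to combine the filtration results of \S\ref{sec:m2} (suitably adapted to $m=1$) with the socle computations of Theorem \ref{maintheosocle} and the Gorenstein/Auslander-Gorenstein structure theory from \S\ref{cohoformu}. First I would set up the analogue of the filtration $\Fil^i H^1(I,\mathbf{X}^{K_1})$ indexed by $\ell(w)+\sigma(w)>i$; by the discussion before \eqref{d:s-0} and after \eqref{d:s-omega}, for $i\geq 1$ the operator $\tau_{s_{\epsilon(i)}}$ acts as $-e_1$ on the graded piece $\gr^i$, so the arguments of Lemma \ref{filtration} and Prop.\ \ref{filtered-iso} go through verbatim and give, for every $i\geq 1$, an isomorphism of right $H$-modules
\begin{equation*}
  \Fil^i H^1(I,\mathbf{X}^{K_1}) \cong \big( H^1(I,\mathbf{X}(s_0w_{i+1})^{K_1}) \oplus H^1(I,\mathbf{X}(w_{i+1})^{K_1}) \big) \otimes_k \tau_{s_{\epsilon(i)}} H \ .
\end{equation*}
Since for $m=1$ every $w$ with $\ell(w)+\sigma(w)\geq 1$ already satisfies $K_1\subseteq I\cap wIw^{-1}$ (because $\sigma(w)\geq 0$ and one uses Remark \ref{fix}), one checks that the $H^1(I,\mathbf{X}(w)^{K_1})$ are all isomorphic as $k$-vector spaces to $H^1(I,k)$ for $\ell(w)\geq 1$, resp.\ to $H^1(K_1,k)$ for $\ell(w)=0$, and their dimensions are $f$ and $3f$ respectively (using Prop.\ \ref{abelianization}). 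The upshot is that $\Fil^1 H^1(I,\mathbf{X}^{K_1})$ is a direct sum of $2f$ copies of $\tau_{s_0}H\cong H/(\tau_{s_0}+e_1)H$ and $2f$ copies of $\tau_{s_1}H\cong H/(\tau_{s_1}+e_1)H$; but $\tau_{s_1}H$ and $\tau_{s_0}H$ are isomorphic as right $H$-modules after twisting by the automorphism coming from conjugation by the matrix $N$ of Remark \ref{remark:N} (which exchanges $s_0$ and $s_1$ and preserves $I$), so one can describe $\Fil^1$ uniformly.

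Next I would analyze the quotient $H^1(I,\mathbf{X}^{K_1})/\Fil^1 H^1(I,\mathbf{X}^{K_1}) = \gr^0 H^1(I,\mathbf{X}^{K_1}) = D_1 = \oplus_{\omega}\big(H^1(I,\mathbf{X}(s_0\omega)^{K_1})\oplus H^1(I,\mathbf{X}(\omega)^{K_1})\big)$, on which $\tau_{s_0}$ no longer acts as $-e_1$. This piece is governed by the explicit formula \eqref{f:explicit1} for the $\tau_{s_0}$-action together with the group-cohomological input from the Appendix (Prop.\ \ref{Transfer}, Remark \ref{Transfer-q-small}, Cor.\ \ref{images-directsum}), exactly as in the proofs of Prop.s \ref{case-0Qp}, \ref{case-0}, \ref{small-q}. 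The point is to show that the extension $0\to\Fil^1\to H^1(I,\mathbf{X}^{K_1})\to\gr^0\to 0$, after possibly rearranging, splits off a free part $H^f$ (resp.\ $H^2$) and leaves behind the torsion-type summands $(\tau_{s_0}H)^{2f}$ and $((\tau_{s_0}+e_1)H)^f$ (resp.\ $\tau_{s_0}H$). Concretely I would use the $\ast$-acyclicity of $H^1(I,\mathbf{X}^{K_1})$ (Thm.\ \ref{level-acyclic}, Cor.\ \ref{X-acyclic}, or directly Prop.\ \ref{coh-formula}) together with the fact, from Cor.\ \ref{Zm-dual}, that $\Ext^1_H(H^1(I,\mathbf{X}^{K_1}),H)=Z_1=0$; so $H^1(I,\mathbf{X}^{K_1})$ is a maximal Cohen–Macaulay module with no nonzero finite-dimensional submodule, hence (via \cite{Buc}) reflexive, which severely constrains its indecomposable summands. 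One knows $\tau_{s_0}H$, $(\tau_{s_0}+e_1)H$ and $H$ are exactly the relevant reflexive indecomposables appearing, and their multiplicities are then pinned down by computing $\dim_k \Hom_H(\chi,H^1(I,\mathbf{X}^{K_1}))$ and $\dim_k \Hom_H(H^1(I,\mathbf{X}^{K_1}),\chi)$ for the four characters $\chi_{triv},\chi_{sign},\chi_0,\chi_1$, i.e.\ the socle and cosocle of $H^1(I,\mathbf{X}^{K_1})$ at the supersingular characters. By the case analysis the relevant socles are controlled exactly by $H^1(I,\mathbf{X}^{K_2})^1$ via the exact sequences of Lemma \ref{inductive} and the dimension formulas of Thm.\ \ref{maintheosocle} (it is precisely here that the case distinction $\mathfrak{F}=\mathbb{Q}_p$ or $p=q=3$ versus the generic case enters, through Prop.\ \ref{small-q}.i, Prop.\ \ref{nosmall-q}, Prop.\ \ref{nosmall-q-chi1}).

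Finally I would assemble the multiplicities. Counting $k$-dimensions of the graded pieces gives the total "size", counting $\Hom_H(-,\chi_{triv})$ and $\Hom_H(-,\chi_{sign})$ isolates the free summands $H$ (since $H$ surjects onto both $\chi_{triv}$ and $\chi_{sign}$ while $\tau_{s_0}H$ and $(\tau_{s_0}+e_1)H$ surject onto only one of them each), and counting socle multiplicities at $\chi_0,\chi_1$ isolates $\tau_{s_0}H$ versus $(\tau_{s_0}+e_1)H$ (their socles being $\chi_0$ and $\chi_1$ in some order — this I would read off from Remark \ref{modulesS1} and the structure of $e_1 H/\mathfrak{s}_1 e_1$, or from the identification $\Ext^1_H(\chi_i,H)=\Hom_k(\upiota^*\chi_i,k)$ and $\upiota^*\chi_0=\chi_1$ already used in the proof of Prop.\ \ref{small-q}.ii). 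The main obstacle I anticipate is the splitting step: showing that the short exact sequence $0\to\Fil^1\to H^1(I,\mathbf{X}^{K_1})\to\gr^0\to 0$ actually decomposes as claimed rather than producing genuinely non-split reflexive extensions. I expect this to be forced by reflexivity together with $Z_1=0$ (so there are no "hidden" finite-dimensional pieces and no $\Ext^1$ obstruction against $H$), reducing the problem to a finite computation over the finite-dimensional algebra $H/\mathfrak{s}_1$ on the relevant $e_1$-isotypic part, where the decomposition into simple and projective summands is explicit by Remark \ref{modulesS1}; but making this rigorous uniformly in the two cases is the delicate point.
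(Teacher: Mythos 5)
Your proposal takes a genuinely different route from the paper, and I do not think it can be made to work as stated.

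The paper's proof of Prop.\ \ref{propH1m=1} (in \S\ref{m1full}) rests entirely on the reduction
\begin{equation*}
  H^1(I,\mathbf{X}^{K_1}) \;=\; H^1(I,\mathbf{X}_{x_0}) \otimes_{H_{x_0}} H \ ,
\end{equation*}
obtained from Prop.\ \ref{firstlevel}, Prop.\ \ref{H-free}.i, and the fact that $H^1(I,-)$ commutes with direct sums. This reduces the problem to computing the right $H_{x_0}$-module structure of the \emph{finite-dimensional} module $H^1(I,\mathbf{X}_{x_0})$ over the \emph{finite-dimensional} algebra $H_{x_0}$, where Nakayama's theorem on generalized uniserial rings gives a complete classification: every $H_{x_0}$-module is a direct sum of simples and PIMs (see the discussion before Lemma \ref{lemmaPIM}). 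One then identifies the multiplicities by eigenspace dimension counts (Lemmas \ref{dim}, \ref{e1}, \ref{r-s}) and tensors up; flat base change sends $H_{x_0}$, $\tau_{s_0}H_{x_0}$, $(\tau_{s_0}+e_1)H_{x_0}$ to $H$, $\tau_{s_0}H$, $(\tau_{s_0}+e_1)H$. You never invoke this reduction, which is the linchpin of the actual proof.

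Your filtration argument has several concrete problems. First, the dimensions and the dichotomy are wrong: for $m=1$ the split is governed by $\sigma(w)$, not $\ell(w)$. If $wC$ is dominant (any $\ell(w)$) then $\mathbf{X}(w)^{K_1}=\ind_I^I(1)=k$, so $H^1(I,\mathbf{X}(w)^{K_1})=H^1(I,k)$, which has dimension $2f$ (Prop.\ \ref{abelianization}.ii), not $f$; if $wC$ is antidominant with $\ell(w)\geq 1$ then $\mathbf{X}(w)^{K_1}=\ind_{K_1}^I(1)$ and $H^1(K_1,k)$ has dimension $3f$. Second, the adapted Prop.\ \ref{filtered-iso} with $i=1$ (so $\epsilon(1)=1$ and $w_2=s_1$) gives
\begin{equation*}
  \Fil^1 H^1(I,\mathbf{X}^{K_1}) \;\cong\; \bigl(H^1(K_1,k)\oplus H^1(I,k)\bigr)\otimes_k H/(\tau_{s_1}+e_1)H \;\cong\; \bigl(\tau_{s_1}H\bigr)^{5f} \ ,
\end{equation*}
a direct sum of copies of $\tau_{s_1}H$ alone — not the mix of $\tau_{s_0}H$ and $\tau_{s_1}H$ you describe. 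And $\tau_{s_1}H$ does not appear as a summand of the stated answer $H^f\oplus(\tau_{s_0}H)^{2f}\oplus((\tau_{s_0}+e_1)H)^f$. So the extension $0\to\Fil^1\to H^1(I,\mathbf{X}^{K_1})\to\gr^0\to 0$ is genuinely twisted: the $\Fil^1$ piece must be woven into the $H^f$ and $\tau_{s_0}$-type summands, and ``possibly rearranging'' will not do this.

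Third, and most seriously, the splitting step — which you correctly flag as the delicate point — is not established and I do not believe reflexivity plus $Z_1=0$ provides enough leverage. Reflexive (equivalently $\ast$-acyclic finitely generated, equivalently MCM) modules over the Auslander-Gorenstein algebra $H$ are not classified, and $H$ is not generalized uniserial; the assertion that ``one knows $\tau_{s_0}H$, $(\tau_{s_0}+e_1)H$ and $H$ are exactly the relevant reflexive indecomposables'' is precisely the content that needs proof. The paper avoids this entirely by working over $H_{x_0}$, where the module classification is a theorem, and the decomposition is pinned down by straightforward dimension counts of $e_\lambda$-eigenspaces (using the cohomological restriction/corestriction computations from the Appendix) rather than by any argument about reflexive $H$-modules.
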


By Prop.s \ref{firstlevel}  and \ref{H-free}.i and because the cohomology $H^1(I,.)$ commutes with arbitrary direct sums of discrete $I$-modules (cf.\ \cite{Se1} I.2.2 Prop.\ 8), we have
\begin{equation*}
  H^1(I,\mathbf{X}^{K_1}) = H^1(I,\mathbf{X}_{x_0}) \otimes_{H_{x_0}} H \ .
\end{equation*}
as right $H$-modules. We are therefore reduced to computing the right $H_{x_0}$-module structure of $H^1(I,\mathbf{X}_{x_0})$, which the rest of this section is devoted to. Prop.\  \ref{propH1m=1} is a direct corollary of Cor.\ \ref{corH1m=1}. To prove the latter, we start by proving results about $H_{x_0}$-modules.

Suppose that $\mathbb F_q\subseteq k$. We begin by recalling what we already know from the proof of Lemma  \ref{simples} and the paragraph  thereafter:
\begin{enumerate}
  \item The simple $H_{x_0}$-modules are the characters $\chi_{triv}^0$, $\chi_{sign}^0$, and $\chi_\lambda^0$ for all $\lambda\in \hat\Omega \setminus \{1\}$.
  \item The principal indecomposable right $H_{x_0}$-modules are $\chi_{triv}^0$, $\chi_{sign}^0$, and $e_\lambda H_{x_0}$ for all $\lambda\in \hat\Omega \setminus \{1\}$. The latter has a unique composition series of length two with submodule $\chi_{\lambda^{-1}}^0$ and factor module $\chi_\lambda^0$ (and hence is a projective cover of $\chi_\lambda^0$).
  \item Every right $H_{x_0}$-module is a direct sum of simple modules and modules isomorphic to $e_\lambda H_{x_0}$ for some $\lambda\in \hat\Omega \setminus \{1\}$.
\end{enumerate}

The following result is immediate.

\begin{lemma}\label{lemmaPIM}
Suppose that $\mathbb F_q\subseteq k$. The only projective simple $H_{x_0}$-modules are $\chi_{triv}^0$ and $\chi_{sign}^0$.  Given a
principal indecomposable right $H_{x_0}$-module $M$ nonisomorphic to $ \chi_{sign}^0$, the socle of $M$ coincides with the kernel of the operator
$\tau_{s_0}$ acting on $M$.
\end{lemma}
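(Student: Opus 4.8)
The statement has two parts; both are essentially a matter of reading off the structure theory of $H_{x_0}$ recalled just above, together with the explicit description of the principal indecomposable modules. For the first part, the projective simple $H_{x_0}$-modules are determined by intersecting the list of simple modules (item 1: $\chi_{triv}^0$, $\chi_{sign}^0$, and the $\chi_\lambda^0$ for $\lambda \ne 1$) with the list of principal indecomposable modules (item 2): a simple module is projective if and only if it coincides with its own projective cover, so $\chi_{triv}^0$ and $\chi_{sign}^0$ are projective while each $\chi_\lambda^0$ with $\lambda \ne 1$ is a proper quotient of $e_\lambda H_{x_0}$ and hence not projective. I would state this in one sentence.

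For the second part I would argue case by case through the classification in item 2. A principal indecomposable $M$ is either $\chi_{triv}^0$, or $\chi_{sign}^0$ (excluded by hypothesis), or $e_\lambda H_{x_0}$ for some $\lambda \in \widehat\Omega \setminus \{1\}$. In the case $M = \chi_{triv}^0$: this is the character on which $\tau_{s_0}$ acts by $0$ (recall $\chi_{triv}(\tau_{s_0}) = 0$ from \S\ref{sec:charH}), so the kernel of $\tau_{s_0}$ is all of $M$, which equals its own socle since $M$ is simple. In the case $M = e_\lambda H_{x_0}$: by item 2 the socle is the unique simple submodule $\chi_{\lambda^{-1}}^0$, which is one-dimensional. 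On the other hand $\tau_{s_0}$ maps $e_\lambda H_{x_0}$ onto $\tau_{s_0} e_\lambda H_{x_0} \cong \chi_{\lambda^{-1}}^0$ (this is precisely the description of the nontrivial quotient map recalled in the discussion preceding Lemma~\ref{lemmaPIM}, realized via left multiplication by $\tau_{s_0}$, and its image is the socle); since $M$ has length $2$ and the image of $\tau_{s_0}$ is one-dimensional, the kernel of $\tau_{s_0}$ is also one-dimensional, and it contains the socle because $\tau_{s_0}$ annihilates $\chi_{\lambda^{-1}}^0$ — indeed $\chi_{\lambda^{-1}}^0$ is the character with $\tau_{s_0} \mapsto 0$ by \eqref{f:quad} since $\lambda^{-1} \ne 1$. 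Hence the kernel of $\tau_{s_0}$ equals the socle in both cases, which gives the claim.

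The only thing requiring a touch of care is the explicit identification, for $M = e_\lambda H_{x_0}$, of $\ker(\tau_{s_0}\,\cdot) \subseteq M$ with the image $\tau_{s_0} e_\lambda H_{x_0}$: one checks using $e_\lambda \tau_\omega = \lambda(\omega) e_\lambda$ and the quadratic relation $\tau_{s_0}^2 = -e_1 \tau_{s_0}$ (so $\tau_{s_0}^2 e_\lambda = -e_1 e_\lambda \tau_{s_0} = 0$ when $\lambda \ne 1$) that $\tau_{s_0}$ annihilates its own image; then the dimension count $\dim \ker = \dim M - \dim \operatorname{im} = 2 - 1 = 1 = \dim(\text{socle})$ forces equality since the socle is contained in the kernel. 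I do not anticipate a genuine obstacle here — the work is entirely bookkeeping against the already-established structure of $H_{x_0}$ — which is consistent with the statement being flagged as ``immediate''.
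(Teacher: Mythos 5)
Your overall strategy is the right one: the paper offers no proof (it calls the lemma ``immediate''), and the intended argument is exactly to read the two claims off the classification in items~1--3 before the lemma, using a length-two dimension count for $M = e_\lambda H_{x_0}$. Your treatment of the first sentence and of the case $M = \chi^0_{triv}$ is fine. But your case $M = e_\lambda H_{x_0}$ contains a genuine confusion that should be fixed even though the final numbers come out right.

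First, ``the operator $\tau_{s_0}$ acting on $M$'' for a \emph{right} module $M$ means right multiplication $m\mapsto m\tau_{s_0}$ -- this is how the lemma is used in the proof of Lemma~\ref{r-s}, where the operator $H^1(I,\mathbf{X}_{x_0})e_\lambda \xrightarrow{\cdot\,\tau_{s_0}} H^1(I,\mathbf{X}_{x_0})e_{\lambda^{-1}}$ is explicitly right multiplication. You instead invoke the map ``realized via left multiplication by $\tau_{s_0}$'', i.e.\ $m\mapsto \tau_{s_0}m$, which is the quotient map from the paper's item~2, not the module action. The two happen to have the same kernel $ke_\lambda\tau_{s_0}$ in $e_\lambda H_{x_0}$ (both send $e_\lambda$ to a nonzero element and kill $e_\lambda\tau_{s_0}$, because $\tau_{s_0}^2 e_\lambda = -e_1\tau_{s_0}e_\lambda = -e_1 e_{\lambda^{-1}}\tau_{s_0} = 0$ for $\lambda\neq 1$), so your dimension count is not wrong, but the coincidence deserves either a remark or, better, simply replacing the operator by the one the lemma actually refers to.

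Second, you write $\tau_{s_0}e_\lambda H_{x_0}\cong\chi^0_{\lambda^{-1}}$ and ``its image is the socle''. Both assertions contradict the very passage you are citing: the paper states the nontrivial \emph{quotient} of $e_\lambda H_{x_0}$ is $\chi^0_\lambda$ (not $\chi^0_{\lambda^{-1}}$), identified with $\tau_{s_0}e_\lambda H_{x_0}$ via left multiplication by $\tau_{s_0}$; moreover $\tau_{s_0}e_\lambda H_{x_0}= e_{\lambda^{-1}}\tau_{s_0}H_{x_0}$, which is not even a subspace of $e_\lambda H_{x_0}$, so it cannot be its socle. The socle is $e_\lambda\tau_{s_0}H_{x_0}\cong\chi^0_{\lambda^{-1}}$. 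The clean way to run the argument is therefore: right multiplication by $\tau_{s_0}$ on $e_\lambda H_{x_0}$ has image $e_\lambda H_{x_0}\tau_{s_0}=k e_\lambda\tau_{s_0}$, the socle, which is one-dimensional; hence the kernel is one-dimensional; it contains the socle because $\tau_{s_0}$ acts by $\chi^0_{\lambda^{-1}}(\tau_{s_0})=0$ on it; and equality follows. After this correction the proof matches the intended ``immediate'' argument.
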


\begin{lemma}\label{sumchar}
We have the exact sequences of right $H_{x_0}$-modules
\begin{equation*}
    0 \longrightarrow (\tau_{s_0}+e_1)H_{x_0} \xrightarrow{\;\subseteq\;} H_{x_0} \xrightarrow{\tau_{s_0}\, .}  \tau_{s_0}H_{x_0}   \longrightarrow  0
\end{equation*}
and
\begin{equation*}
0 \longrightarrow \tau_{s_0}H_{x_0} \xrightarrow{\;\subseteq\;} H_{x_0} \xrightarrow{(\tau_{s_0}+e_1)\, .}  (\tau_{s_0}+e_1)H_{x_0} \longrightarrow 0 \ .
\end{equation*}
Furthermore,
if $\mathbb F_q\subseteq k$, then  $\tau_{s_0}H_{x_0}$ (resp.\ $ (\tau_{s_0}+e_1)H_{x_0}  $) is, as a right $H_{x_0}$-module,  the sum of all $q$ characters of $H_{x_0}$  except for the character $\chi_{triv}^0$ (resp.\ $\chi_{sign}^0$).
\end{lemma}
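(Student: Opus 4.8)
The plan is to reduce the whole statement to the explicit finite–dimensional algebra $H_{x_0}$, which carries the $k$–basis $\{\tau_\omega,\ \tau_\omega\tau_{s_0} : \omega\in\Omega\}$ (the products being $\tau_{\omega s_0}$ by the braid relation \eqref{f:braid}), so that $\dim_k H_{x_0}=2(q-1)$. Three elementary facts will be used repeatedly: the quadratic relation $\tau_{s_0}^2=-e_1\tau_{s_0}$ from \eqref{f:quad}; the conjugation relation $\tau_{s_0}\tau_\omega=\tau_{\omega^{-1}}\tau_{s_0}$ for $\omega\in\Omega$, which is a braid relation (compare Remark \ref{modulesS1}); and $\tau_\omega e_1=e_1=e_1\tau_\omega$. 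In particular $\tau_{s_0}$ commutes with $e_1$, whence
\begin{equation*}
  \tau_{s_0}(\tau_{s_0}+e_1)=(\tau_{s_0}+e_1)\tau_{s_0}=0 \ .
\end{equation*}
Thus left multiplication by $\tau_{s_0}$ annihilates $(\tau_{s_0}+e_1)H_{x_0}$ and left multiplication by $\tau_{s_0}+e_1$ annihilates $\tau_{s_0}H_{x_0}$; both displayed sequences are therefore complexes whose second map is surjective by construction and whose first map is injective, and only exactness in the middle remains.

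To get exactness I would compute the two principal right ideals on the basis above. One finds $\tau_{s_0}\tau_\omega=\tau_{\omega^{-1}}\tau_{s_0}$ and $\tau_{s_0}(\tau_\omega\tau_{s_0})=-e_1\tau_{s_0}$, so $\tau_{s_0}H_{x_0}=\sum_{\omega\in\Omega}k\,\tau_\omega\tau_{s_0}$ has $k$–dimension $q-1$; and $(\tau_{s_0}+e_1)\tau_\omega=\tau_{\omega^{-1}}\tau_{s_0}+e_1$, $(\tau_{s_0}+e_1)(\tau_\omega\tau_{s_0})=0$, where the $q-1$ elements $\tau_{\omega^{-1}}\tau_{s_0}+e_1$ are linearly independent (look at their $\tau_{s_0}$–components), so $(\tau_{s_0}+e_1)H_{x_0}$ also has dimension $q-1$. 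By rank–nullity the kernels of left multiplication by $\tau_{s_0}$ and by $\tau_{s_0}+e_1$ each have dimension $2(q-1)-(q-1)=q-1$, and since they contain $(\tau_{s_0}+e_1)H_{x_0}$, resp.\ $\tau_{s_0}H_{x_0}$, they coincide with them. This proves both exact sequences, with no hypothesis on $k$.

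For the final assertion I would assume $\mathbb{F}_q\subseteq k$ and use the orthogonal idempotent decomposition $1=\sum_{\nu\in\widehat\Omega}e_\nu$ of $k[\Omega]\subseteq H_{x_0}$ from \S\ref{sec:idempo}, together with $e_\nu\tau_\rho=\nu(\rho)e_\nu$ and $e_\nu\tau_{s_0}=\tau_{s_0}e_{\nu^{-1}}$ (again from $\tau_{s_0}\tau_\omega=\tau_{\omega^{-1}}\tau_{s_0}$). Then $\tau_{s_0}H_{x_0}=\sum_\nu k\,\tau_{s_0}e_\nu$ and $(\tau_{s_0}+e_1)H_{x_0}=\sum_\nu k\,(\tau_{s_0}+e_1)e_\nu$; each summand is nonzero, one–dimensional, and lies in the $e_\nu$–isotypic part for right multiplication by $k[\Omega]$, so both sums are direct. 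On $k\,\tau_{s_0}e_\nu$ the element $\tau_\rho$ acts by $\nu(\rho)$, while $(\tau_{s_0}e_\nu)\tau_{s_0}=\tau_{s_0}^2e_{\nu^{-1}}=-(e_1e_\nu)\tau_{s_0}$ equals $0$ if $\nu\neq 1$ and $-\tau_{s_0}e_1$ if $\nu=1$; hence $k\,\tau_{s_0}e_\nu\cong\chi_\nu^0$ for $\nu\neq 1$ and $\cong\chi_{sign}^0$ for $\nu=1$, so $\tau_{s_0}H_{x_0}\cong\chi_{sign}^0\oplus\bigoplus_{\nu\neq 1}\chi_\nu^0$. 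On $k\,(\tau_{s_0}+e_1)e_\nu$ one again has $\tau_\rho$ acting by $\nu(\rho)$, while $\tau_{s_0}$ acts by $0$ for every $\nu$ since $((\tau_{s_0}+e_1)e_\nu)\tau_{s_0}=(\tau_{s_0}+e_1)\tau_{s_0}e_{\nu^{-1}}=0$; hence $k\,(\tau_{s_0}+e_1)e_\nu\cong\chi_\nu^0$ for $\nu\neq 1$ and $\cong\chi_{triv}^0$ for $\nu=1$, so $(\tau_{s_0}+e_1)H_{x_0}\cong\chi_{triv}^0\oplus\bigoplus_{\nu\neq 1}\chi_\nu^0$. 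Since the characters of $H_{x_0}$ are exactly $\chi_{triv}^0$, $\chi_{sign}^0$, and the $\chi_\nu^0$ with $\nu\neq 1$, these are precisely the two asserted sums.

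There is no genuine obstacle here; the computations are routine, and the only thing to watch is the bookkeeping with the non–central idempotents $e_\nu$ and with the conjugation $\tau_{s_0}\tau_\omega=\tau_{\omega^{-1}}\tau_{s_0}$, i.e.\ correctly tracking which character $\chi_\nu^0$ lands in which one–dimensional summand of $\tau_{s_0}H_{x_0}$ and of $(\tau_{s_0}+e_1)H_{x_0}$.
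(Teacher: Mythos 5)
Your proof is correct, and all the individual computations check out, but you take a more computational route than the paper. For the two exact sequences, the paper invokes the argument from the proof of Prop.\ \ref{filtered-iso} (which gives the isomorphism $H_{x_0}/(\tau_{s_0}+e_1)H_{x_0}\xrightarrow{\tau_{s_0}\cdot}\tau_{s_0}H_{x_0}$) for the first, and then observes that the involution $\upiota$ of \S\ref{involution} restricts to $H_{x_0}$ and swaps $\tau_{s_0}\leftrightarrow -(\tau_{s_0}+e_1)$, so the second sequence is the image of the first under $\upiota$ and comes for free. Likewise for the character decomposition the paper quotes the proof of Lemma~\ref{simples} for $\tau_{s_0}H_{x_0}$ and then applies $\upiota$ (which swaps $\chi_{triv}^0\leftrightarrow\chi_{sign}^0$ and fixes each $\chi_\lambda^0$) to obtain the other decomposition. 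You instead verify everything directly: a dimension count via the explicit $k$-basis $\{\tau_\omega,\tau_\omega\tau_{s_0}\}$ for the exact sequences, and a hands-on computation with the idempotents $e_\nu$ for both ideals. Your argument is self-contained and doesn't require the reader to unwind Prop.\ \ref{filtered-iso} or Lemma~\ref{simples}, which is a virtue, but it does roughly twice the computational work; the paper's use of the symmetry $\upiota$ halves the bookkeeping and makes it clear why the two ideals and the two character lists are mirror images of each other.
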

\begin{proof}
For the first exact sequence, the argument is the same as in the proof of Prop.\ \ref{filtered-iso} (see the remark thereafter). The involution $\upiota$ of $H$ defined in \S\ref{involution} restricts  to an automorphism of $H_{x_0}$ and exchanges
$\tau_{s_0}$ and $-(\tau_{s_0}+e_1)$. The exactness of the second complex follows.

Now if $\mathbb F_q\subseteq k$, then  consider as in \S\ref{sec:idempo} the  family of idempotents  $\{e_\lambda\}_{\lambda\in \hat\Omega}$ in $H_{x_0}$.   As a right $H_{x_0}$-module,  $\tau_{s_0}H_{x_0}=\oplus_{\lambda\in\hat\Omega} \tau_{s_0}e_\lambda H_{x_0}$.
We saw in the proof of Lemma \ref{simples} that $\tau_{s_0}e_1H_{x_0}=\tau_{s_0}H_{x_0}e_1$ supports the character $\chi_{sign}^0$ and that for $\lambda\neq 1$, the $H_{x_0}$-module $\tau_{s_0}e_\lambda H_{x_0}$ is isomorphic to $\chi_\lambda^0$. We thus obtain the asserted decomposition of   $\tau_{s_0}H_{x_0}$. Applying the involution $\upiota$, which exchanges $\chi_{sign}^0$ and $\chi_{triv}^0$, then gives the decomposition of   $(\tau_{s_0}+e_1)H_{x_0}$.
\end{proof}

Since $K = {{\mathbf G}_{x_0}^\circ(\mathfrak O)}$ is the disjoint union of all $I\omega s_0I$ and $I\omega I=I\omega$ for   $\omega \in \Omega$, the
representation of $I$ on  $\mathbf{X}_{x_0}= \ind_I^K(1)$  is isomorphic to the direct sum
\begin{equation*}
  \bigoplus_{\omega\in \Omega} \mathbf{X}(\omega s_0) \oplus \mathbf{X}(\omega)
\end{equation*}
and hence
\begin{equation*}
  H^1(I,\mathbf{X}_{x_0}) = \bigoplus_{\omega\in \Omega} \ H^1(I,\mathbf{X}(\omega s_0)) \oplus  H^1(I,\mathbf{X}(\omega))
\end{equation*}
with the notation of \S\ref{sec:I-decomp-X}. Following the same convention as before in the calculations, we will denote an element
of  $H^1(I,\mathbf{X}_{x_0})$ by $(\beta_\omega, \alpha_\omega)_\omega$ according to the  above decomposition.
Via the Shapiro isomorphism
\begin{equation*}
H^1(I,\mathbf{X}(\omega s_0)) \xrightarrow{\; \cong \;} H^1(I,\ind _{I \cap s_0Is_0^{-1})}^I(1)) \xrightarrow{\; \cong \;} H^1(K_1,k)
\end{equation*}
and since
\begin{equation*}
H^1(I,\mathbf{X}(\omega)) \xrightarrow{\; \cong \;} H^1(I,\ind _{I}^I(1)) \xrightarrow{\; \cong \;} H^1(I,k) \ ,
\end{equation*}
we see $\beta_\omega$ and $\alpha_\omega$ as  elements of respectively $H^1(K_1, k)$  and $H^1(I, k)$. Since $I$ and $K_1$ are finitely generated pro-$p$ groups (see the App.\ \ref{sec:App} or \cite{Lub} Thm.\ 1), the spaces $H^1(I,k) = H^1(I,\mathbb{F}_p) \otimes_{\mathbb{F}_p} k$  and  $H^1(K_1,k)= H^1(K_1,\mathbb{F}_p) \otimes_{\mathbb{F}_p} k$ are finite dimensional, and therefore $H^1(I,\mathbf{X}_{x_0})$ is finite dimensional.

Note that the space  $ H^1(I,\mathbf{X}_{x_0})$ coincides with the space $D_1$ when $j=m=1$ introduced in \S\ref{sec:defiDn}. Therefore, the observations of  \S\ref{sec:explicit}  and \S\ref{sec:prelilemm} do apply in the current context. In particular, given $\lambda\in \hat\Omega$, the space $ H^1(I,\mathbf{X}_{x_0}) e_\lambda$ is the subspace of all elements $(\beta, \alpha)=(\beta_\omega,\alpha_\omega)_\omega\in H^1(I,\mathbf{X}_{x_0})$ such that $(\beta,\alpha)\tau_\omega=\lambda(\omega)(\beta,\alpha)$ that is to say  $\beta_\omega=\lambda(\omega)\beta_1$ and  $\alpha_\omega=\lambda(\omega)\alpha_1$   for any $\omega\in \Omega$.   We deduce:

\begin{lemma}\label{dim}
$\dim_k H^1(I,\mathbf{X}_{x_0})e_\lambda = \dim_k H^1(K_1, k) + \dim_k H^1(I, k)$.
\end{lemma}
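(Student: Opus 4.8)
The plan is to read off the dimension directly from the explicit model already set up. Recall that as an $I$-representation $\mathbf{X}_{x_0} = \bigoplus_{\omega\in\Omega}\big(\mathbf{X}(\omega s_0)\oplus\mathbf{X}(\omega)\big)$, so that $H^1(I,\mathbf{X}_{x_0}) = \bigoplus_{\omega\in\Omega}\big(H^1(I,\mathbf{X}(\omega s_0))\oplus H^1(I,\mathbf{X}(\omega))\big)$, and via the Shapiro isomorphisms $H^1(I,\mathbf{X}(\omega s_0))\cong H^1(K_1,k)$ and $H^1(I,\mathbf{X}(\omega))\cong H^1(I,k)$ a general element is a tuple $(\beta_\omega,\alpha_\omega)_{\omega\in\Omega}$. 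First I would invoke the already-recorded fact that $\tau_{\omega_0}$, for $\omega_0\in\Omega$, carries the $\omega$-summand isomorphically onto the $\omega\omega_0$-summand compatibly with the Shapiro identifications (commutativity of \eqref{d:omega}), so that $H^1(I,\mathbf{X}_{x_0})e_\lambda$ is, as already noted just before the lemma, precisely the set of tuples with $\beta_\omega=\lambda(\omega)\beta_1$ and $\alpha_\omega=\lambda(\omega)\alpha_1$ for all $\omega\in\Omega$.

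Then the map $(\beta_\omega,\alpha_\omega)_\omega\mapsto(\beta_1,\alpha_1)$ from $H^1(I,\mathbf{X}_{x_0})e_\lambda$ to $H^1(K_1,k)\oplus H^1(I,k)$ is a $k$-linear isomorphism: it is injective because an eigentuple is determined by its value at $\omega=1$, and surjective because, $\lambda$ being a group homomorphism, the tuple defined by $\beta_\omega:=\lambda(\omega)\beta_1$ and $\alpha_\omega:=\lambda(\omega)\alpha_1$ does satisfy the required relations and hence lies in $H^1(I,\mathbf{X}_{x_0})e_\lambda$. Comparing dimensions gives the claim.

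There is no genuine obstacle here: the statement is a transcription of the structure already established in \S\ref{sec:I-decomp-X}, \S\ref{sec:explicit} and the paragraph preceding the lemma, and the only point worth spelling out is the well-definedness of the section $(\beta_1,\alpha_1)\mapsto(\lambda(\omega)\beta_1,\lambda(\omega)\alpha_1)_\omega$, which is immediate from multiplicativity of $\lambda$. As a consistency check one may note that summing over the $q-1$ characters $\lambda\in\widehat\Omega$ and using $k[\Omega]=\prod_\lambda ke_\lambda$ recovers $\dim_k H^1(I,\mathbf{X}_{x_0})=(q-1)\big(\dim_k H^1(K_1,k)+\dim_k H^1(I,k)\big)$, as it must from the direct sum decomposition.
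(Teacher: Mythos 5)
Your proof is correct and is precisely the argument the paper leaves implicit: the lemma is stated with no separate proof, being ``deduced'' from the immediately preceding observation that an element of $H^1(I,\mathbf{X}_{x_0})e_\lambda$ is a tuple $(\beta_\omega,\alpha_\omega)_\omega$ with $\beta_\omega=\lambda(\omega)\beta_1$, $\alpha_\omega=\lambda(\omega)\alpha_1$. Spelling out that $(\beta_\omega,\alpha_\omega)_\omega\mapsto(\beta_1,\alpha_1)$ is a linear isomorphism onto $H^1(K_1,k)\oplus H^1(I,k)$, as you do, is exactly what is intended.
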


\begin{lemma}\label{e1}
Suppose that $\mathbb{F}_q \subseteq k$. The right $H_{x_0}$-module $H^1(I,\mathbf{X}_{x_0})e_1$ is the direct sum of the two eigenspaces  $H^1(I,\mathbf{X}_{x_0})(\chi^0_{triv})$ and  $H^1(I,\mathbf{X}_{x_0})(\chi^0_{sign})$. The former has dimension $\dim_k H^1(I,k)$, the latter has dimension $\dim_k H^1(K_1,k)$.
In particular, if $\mathfrak{F} \neq \mathbb{Q}_p$ with $p \neq 2$, or if $\mathfrak{F} = \mathbb{Q}_p$, then
\begin{equation*}
  H^1(I,\mathbf{X}_{x_0})e_1 \cong (\chi^0_{triv})^{2f} \oplus (\chi^0_{sign})^{3f}\cong (\chi^0_{sign})^f \oplus (H_{x_0}e_1)^{2f} \ .
\end{equation*}
\end{lemma}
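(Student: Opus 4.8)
\textbf{Proof proposal for Lemma \ref{e1}.}

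The plan is to analyse the $e_1$-part of $H^1(I,\mathbf{X}_{x_0})$ via the action of $\tau_{s_0}$, which is the only nontrivial standard generator of $H_{x_0}$ once we have restricted to $e_1$. Since $\mathbb F_q\subseteq k$ and $q-1$ is prime to $p$, the idempotent $e_1$ decomposes $H^1(I,\mathbf{X}_{x_0})e_1$ as a module over the subalgebra $e_1 H_{x_0} = k e_1 \oplus k\tau_{s_0}e_1$. The quadratic relation \eqref{f:quad} gives $(\tau_{s_0}e_1)^2 = -\tau_{s_0}e_1$, so $\tau_{s_0}$ acts on $H^1(I,\mathbf{X}_{x_0})e_1$ as an operator with $t^2+t=0$; because $p\neq 2$ is not needed here — $0$ and $-1$ are automatically distinct in any field — this operator is diagonalizable with eigenvalues $0$ and $-1$. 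Hence $H^1(I,\mathbf{X}_{x_0})e_1$ splits as the direct sum of the kernel of $\tau_{s_0}$ (which is exactly the $\chi^0_{triv}$-eigenspace, since $\chi^0_{triv}(\tau_{s_0})=0$) and the $(-1)$-eigenspace of $\tau_{s_0}$ (which is the $\chi^0_{sign}$-eigenspace, since $\chi^0_{sign}(\tau_{s_0})=-1$). This gives the first assertion.

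Next I would compute the two dimensions. For an element $(\beta_\omega,\alpha_\omega)_\omega$ with $e_1$-isotypic $\Omega$-action we have $\beta_\omega=\beta_1$, $\alpha_\omega=\alpha_1$ for all $\omega$, so the datum reduces to a pair $(\beta_1,\alpha_1)\in H^1(K_1,k)\oplus H^1(I,k)$. By the formula \eqref{f:explicit1} (applied with all $\lambda(\omega)=1$) together with Prop.\ \ref{Transfer} in the Appendix, which makes the corestriction $\mathrm{cores}^{K_1}_I$ vanish, the condition $(\beta,\alpha)\tau_{s_0}=0$ becomes simply $\mathrm{res}^I_{K_1}(\alpha_1)=\beta_1$ together with $\mathrm{cores}^{K_1}_I(\beta_1)=0$, the latter being automatic; this identifies $H^1(I,\mathbf{X}_{x_0})(\chi^0_{triv})$ with $H^1(I,k)$ via $\alpha_1$, whence $\dim_k = \dim_k H^1(I,k)$. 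Similarly, by Lemma \ref{quadratic-cores}.ii the condition $(\beta,\alpha)\tau_{s_0}=-(\beta,\alpha)$ forces $\alpha_1=-\mathrm{cores}^{K_1}_I(\beta_1)$, so $\beta_1$ is a free parameter and $H^1(I,\mathbf{X}_{x_0})(\chi^0_{sign})\cong H^1(K_1,k)$, giving $\dim_k = \dim_k H^1(K_1,k)$.

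Finally, for the last sentence I would invoke the explicit computation of the abelianizations carried out in the Appendix: by Prop.\ \ref{abelianization} (and Cor.\ \ref{Frattini}) one has $\dim_k H^1(I,k) = 2f$ in the relevant cases ($\mathfrak F\neq\mathbb Q_p$ with $p\neq 2$, or $\mathfrak F=\mathbb Q_p$) and $\dim_k H^1(K_1,k) = 3f$. Plugging these in gives $H^1(I,\mathbf{X}_{x_0})e_1 \cong (\chi^0_{triv})^{2f}\oplus(\chi^0_{sign})^{3f}$ as right $H_{x_0}$-modules. For the alternative description, recall from Lemma \ref{sumchar} that $H_{x_0}e_1 \cong \tau_{s_0}H_{x_0}e_1 \oplus (\tau_{s_0}+e_1)H_{x_0}e_1 \cong \chi^0_{sign}\oplus\chi^0_{triv}$, so $(\chi^0_{triv})^{2f}\oplus(\chi^0_{sign})^{3f} \cong (\chi^0_{sign})^f\oplus(\chi^0_{triv}\oplus\chi^0_{sign})^{2f} \cong (\chi^0_{sign})^f\oplus(H_{x_0}e_1)^{2f}$, as claimed. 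The only real content, and the one step I expect to require genuine care rather than bookkeeping, is the vanishing of the corestriction maps from the Appendix (Prop.\ \ref{Transfer}), which is what collapses the two eigenspace conditions to the simple form used above; everything else is linear algebra and the dimension count already done in the Appendix.
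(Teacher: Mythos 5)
Your approach is correct in spirit and is in fact the ``alternative'' proof the authors sketched in a commented-out endnote, but it is genuinely different from the proof they chose for the paper. The paper's argument is module-theoretic: it observes that the idempotent $e_1'' = e_1(\tau_{s_0}+1)$ cuts out the one-dimensional trivial $K$-representation inside $\mathbf{X}_{x_0}$ (since $\mathrm{char}_I e_1'' = -\mathrm{char}_K$), which directly gives $H^1(I,\mathbf{X}_{x_0})(\chi^0_{triv}) \cong H^1(I,k)$, and then deduces the $\chi^0_{sign}$-dimension by subtracting from the total computed in Lemma~\ref{dim}. Your route instead works at the level of Shapiro cocycles via \eqref{f:explicit1} and Lemma~\ref{quadratic-cores}. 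Both are legitimate; yours is heavier on bookkeeping, theirs exploits a clean structural fact about $\mathbf{X}_{x_0}e_1''$.

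There is, however, a genuine gap in your dimension count for $H^1(I,\mathbf{X}_{x_0})(\chi^0_{triv})$. You invoke Prop.~\ref{Transfer} to conclude that $\mathrm{cores}^{K_1}_I$ vanishes. That proposition is only proved under the hypotheses $\mathfrak{F}\neq\mathbb{Q}_p$, $p\neq 2$, and for the map out of $I = I^-_0$ also $q\neq 3$. But the lemma's dimension identity is asserted without those hypotheses, and the ``in particular'' sentence explicitly includes $\mathfrak{F}=\mathbb{Q}_p$ --- a case where $\mathrm{cores}^{K_1}_I$ is \emph{not} the zero map (cf.\ Lemma~\ref{transfer}.ii). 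So as written your argument does not prove what the lemma claims. What you actually need here is much weaker and holds in all cases: the composite $\mathrm{cores}^{K_1}_I\circ\mathrm{res}^I_{K_1}$ is multiplication by the index $[I:K_1] = q$, which is $0$ in $k$. Combined with the identity $\sum_{\omega}u_{s_0,s_0}(\omega)^*(\beta_1) = -\beta_1 + \mathrm{res}^I_{K_1}\circ\mathrm{cores}^{K_1}_I(\beta_1)$, this shows that an arbitrary $\alpha_1\in H^1(I,k)$ together with $\beta_1 := \mathrm{res}^I_{K_1}(\alpha_1)$ automatically satisfies both eigenvector conditions, and conversely every solution arises this way --- so the $\chi^0_{triv}$-eigenspace is parameterized by $\alpha_1$ and has dimension $\dim_k H^1(I,k)$ for all $\mathfrak{F}$. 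Replacing your appeal to Prop.~\ref{Transfer} by this index argument closes the gap; the rest of the proof (the semisimplicity of $\tau_{s_0}$ on $e_1$-types, the $\chi^0_{sign}$-count via Lemma~\ref{quadratic-cores}.ii, and the final reshuffling using Lemma~\ref{sumchar}) is sound.
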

\begin{proof} The eigenspace decomposition of $ H^1(I,\mathbf{X}_{x_0})e_1$ follows from the points 1.-3. before Lemma \ref{lemmaPIM}. Now remark  that  any idempotent $e \in H_{x_0}$ induces naturally an isomorphism  $H^1(I,\mathbf{X}_{x_0})e \cong H^1(I,\mathbf{X}_{x_0}e)$.
We apply this to the idempotents $e'_1:= -e_1\tau_{s_0}$  and $e''_1:= e_1(\tau_{s_0}+1)$  which, by \cite{OS} Proof of Prop.\ 6.19, allows us to study
\begin{equation*}
  H^1(I,\mathbf{X}_{x_0})(\chi^0_{sign}) = H^1(I,\mathbf{X}_{x_0})e'_1 \cong H^1(I,\mathbf{X}_{x_0}e'_1)
\end{equation*}
and
\begin{equation*}
  H^1(I,\mathbf{X}_{x_0})(\chi^0_{triv})=H^1(I,\mathbf{X}_{x_0})e''_1 \cong H^1(I,\mathbf{X}_{x_0}e''_1) \ .
\end{equation*}
The representation $\mathbf{X}_{x_0} e''_1$ of $K$ is generated by $\mathrm{char}_I e_1''$. One easily checks that $\mathrm{char}_I e_1''$ is equal to $- \mathrm{char}_K$. Therefore, the representation of $K$ on the space $\mathbf{X}_{x_0} e''_1$ is the one dimensional trivial representation. This proves that $H^1(I,\mathbf{X}_{x_0})(\chi^0_{triv}) \cong H^1(I,\mathbf{X}_{x_0}e''_1)$ has dimension $\dim_k H^1(I,k)$. Therefore, by Lemma \ref{dim}, $H^1(I,\mathbf{X}_{x_0})(\chi^0_{sign})$ must have dimension $\dim_k H^1(K_1,k)$.

The explicit calculation of the dimensions is made in Prop.\ \ref{abelianization}.ii, Cor.\ \ref{Frattini}, and Prop.\ \ref{frattini-1}.i.  The left displayed isomorphism follows. The right hand one holds since $\chi^0_{triv} \oplus \chi^0_{sign} \cong H_{x_0}e_1$ by the points 1.-2. before Lemma \ref{lemmaPIM}.
\end{proof}

\begin{lemma}
\label{r-s} Suppose that $\mathbb{F}_q \subseteq k$. For $\gamma\neq \{1\}$ an orbit in $\hat\Omega$, the structure of
$H^1(I,\mathbf{X}_{x_0})e_\gamma$ as a right   $H_{x_0}$-module is  given by the following isomorphisms:
\begin{itemize}
   \item Let $\gamma:=\{\lambda, \lambda^{-1}\}$ be an orbit with cardinality $2$.
If  $\mathfrak{F} \neq \mathbb{Q}_p$ and $p \neq 2$, $q \neq 3$, then
\begin{equation*}
  H^1(I,\mathbf{X}_{x_0})e_\gamma \cong  (\chi^0_\lambda \oplus  {\chi^0_{\lambda^{-1}}})^{3f}\oplus (e_\lambda H_{x_0} \oplus e_{\lambda^{-1}} H_{x_0})^f \cong
 (\chi^0_\lambda \oplus  {\chi^0_{\lambda^{-1}}})^{3f}\oplus ({H_{x_0} e_\gamma})^f \ .
\end{equation*}
If  $\mathfrak{F} =\mathbb{Q}_p$ or  if $\mathfrak{F} \neq \mathbb{Q}_p$ and $p=q=3$, then
\begin{equation*}
  H^1(I,\mathbf{X}_{x_0})e_\gamma \cong  \chi^0_\lambda \oplus  {\chi^0_{\lambda^{-1}}}\oplus (e_\lambda H_{x_0} \oplus e_{\lambda^{-1}} H_{x_0})^2 \cong
\chi^0_\lambda \oplus  {\chi^0_{\lambda^{-1}}}\oplus ({H_{x_0} e_\gamma})^2 \ .
\end{equation*}
   \item  Let $\{{\lambda_0 = \lambda_0^{-1}}\}$ be the  unique nontrivial orbit of cardinality $1$.
If  $\mathfrak{F} \neq \mathbb{Q}_p$ and $p \neq 2$, $q \neq 3$, then
\begin{equation*}
  H^1(I,\mathbf{X}_{x_0})e_{{\lambda_0}} \cong (\chi^0_{\lambda_0})^{3f}\oplus ({H_{x_0} e_{{\lambda_0}}})^f \ .
\end{equation*}
If  $\mathfrak{F} = \mathbb{Q}_p$ or  if $\mathfrak{F} \neq \mathbb{Q}_p$ and $p=q=3$, then
\begin{equation*}
  H^1(I,\mathbf{X}_{x_0})e_{{\lambda_0}} \cong \chi^0_{\lambda_0} \oplus (H_{x_0} e_{{\lambda_0}})^2 \ .
\end{equation*}
\end{itemize}
\end{lemma}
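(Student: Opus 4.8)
The plan is to combine the classification of right $H_{x_0}$-modules recalled before Lemma \ref{lemmaPIM} with a computation of the $\Omega$-eigenspaces of $H^1(I,\mathbf{X}_{x_0})$ and of the operator $\tau_{s_0}$ between them.

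Fix a nontrivial orbit $\gamma$. By that classification $H^1(I,\mathbf{X}_{x_0})e_\gamma$ is a direct sum of copies of the indecomposables on which $e_\gamma$ acts as the identity: the simple modules $\chi^0_\lambda,\chi^0_{\lambda^{-1}}$ and the projectives $e_\lambda H_{x_0},e_{\lambda^{-1}}H_{x_0}$ when $\gamma=\{\lambda,\lambda^{-1}\}$ has cardinality $2$, and $\chi^0_{\lambda_0},e_{\lambda_0}H_{x_0}$ when $\gamma=\{\lambda_0\}$. Write $V_\mu:=H^1(I,\mathbf{X}_{x_0})e_\mu$ for the $e_\mu$-eigenspace of $\Omega$. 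On the $e_\gamma$-summand $\tau_{s_0}^2=-e_1\tau_{s_0}$ vanishes, so $\tau_{s_0}$ gives maps $V_\lambda\to V_{\lambda^{-1}}$ and $V_{\lambda^{-1}}\to V_\lambda$ with both composites zero. Since $\tau_{s_0}$ annihilates each $\chi^0_\mu$ and carries the $e_\mu$-line of $e_\mu H_{x_0}$ isomorphically onto its $e_{\mu^{-1}}$-line (killing the latter), the elementary theory of such pairs of maps shows that the multiplicity of $e_\lambda H_{x_0}$ (resp.\ $e_{\lambda^{-1}}H_{x_0}$) in $H^1(I,\mathbf{X}_{x_0})e_\gamma$ equals $\operatorname{rank}(\tau_{s_0}\colon V_\lambda\to V_{\lambda^{-1}})$ (resp.\ $\operatorname{rank}(\tau_{s_0}\colon V_{\lambda^{-1}}\to V_\lambda)$), after which the multiplicities of $\chi^0_\lambda$ and $\chi^0_{\lambda^{-1}}$ are determined by $\dim_k V_\lambda=\dim_k V_{\lambda^{-1}}=\dim_k H^1(K_1,k)+\dim_k H^1(I,k)$ (Lemma \ref{dim}). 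It therefore suffices to compute these two ranks and to see that they equal $f$ in the generic case and $2$ in the case $\mathfrak F=\mathbb Q_p$ or $p=q=3$.

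To compute $\operatorname{rank}(\tau_{s_0}\colon V_\lambda\to V_{\lambda^{-1}})$, I would use the Shapiro identification $V_\lambda\cong H^1(K_1,k)\oplus H^1(I,k)$ from \S\ref{sec:explicit}--\S\ref{sec:prelilemm}, under which formula \eqref{f:explicit1} expresses $\tau_{s_0}$ as the block map $(\beta,\alpha)\mapsto\bigl(T_\lambda(\beta)+\mathrm{res}^I_{K_1}(\alpha),\ \mathrm{cores}^{K_1}_I(\beta)\bigr)$, where $T_\lambda:=\sum_{z\in\mathbb F_q^\times}\lambda(z)\,u_{s_0,s_0}(\omega_z)^\ast$ is the $\lambda$-twisted norm for the conjugation action of $I/K_1$ on $H^1(K_1,k)$ (Remark \ref{normal}.ii). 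This block map is then evaluated explicitly: bases for $H^1(I,k)$ and $H^1(K_1,k)$ come from the abelianizations in Prop.\ \ref{abelianization}, Cor.\ \ref{Frattini} and Prop.\ \ref{frattini-1}; $T_\lambda$ is computed from the coordinate description of $u_{s_0,s_0}(\omega_z)^\ast$ in Remark \ref{conjugation} together with the orthogonality relations for characters of $\mathbb F_q^\times$; and $\mathrm{res}^I_{K_1}$, $\mathrm{cores}^{K_1}_I$ are controlled by Prop.\ \ref{Transfer}, Remark \ref{Transfer-q-small} and Cor.\ \ref{images-directsum}/Remark \ref{kernels-intersection} — in particular $\mathrm{cores}^{K_1}_I$ vanishes when $\mathfrak F\neq\mathbb Q_p$ and $q$ is not too small, which is the origin of the case distinction. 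The two ranks agree because $T_\lambda$ and $T_{\lambda^{-1}}$ are interchanged by $z\mapsto z^{-1}$ in the defining sums, and the kernels $\ker(\tau_{s_0}\mid V_\mu)=H^1(I,\mathbf{X}_{x_0})(\chi^0_\mu)$ are exactly the eigenspaces already analysed in the proofs of Prop.\ \ref{case-0} and Prop.\ \ref{nosmall-q}.i (with the $\tau_{s_1}$-equation and the $M^1$-condition removed), so those computations can be reused.

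The main obstacle is this explicit rank computation over the nonabelian pro-$p$ groups $I$ and $K_1$: one must pin down how the twisted norm $T_\lambda$ interacts with the images of restriction and corestriction on $H^1(K_1,k)$, and treat separately the exceptional cases $q=2,3$ (where the character sums $\sum_{z\in\mathbb F_q^\times}\lambda(z)z^{\pm1}$ and $\sum_z z^{\pm2}$ fail to vanish), $p=2$, and $\mathfrak F=\mathbb Q_p$ (where transfer maps are nonzero). All the needed inputs are the Appendix results of \S\ref{sec:App}, so the difficulty is combinatorial bookkeeping rather than anything conceptual; the dimension count from Lemma \ref{dim} then assembles the four (resp.\ two) multiplicities into the stated decompositions.
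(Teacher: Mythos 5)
Your proposal is correct and follows essentially the same route as the paper's proof: decompose $H^1(I,\mathbf{X}_{x_0})e_\gamma$ into the indecomposables $\chi^0_\lambda,\chi^0_{\lambda^{-1}},e_\lambda H_{x_0},e_{\lambda^{-1}}H_{x_0}$, then determine the multiplicities from $\dim_k V_\mu$ (Lemma \ref{dim}) together with the rank (equivalently the kernel dimension $d_\mu$, which is what the paper actually computes) of $\tau_{s_0}\colon V_\lambda\to V_{\lambda^{-1}}$, evaluated via the Shapiro identification and the explicit formula \eqref{f:explicit1} combined with the Appendix results on abelianizations and transfer maps. The only cosmetic difference is that you track the rank while the paper tracks the nullity $d_\lambda=r+s$ and deduces $s,s',r,r'$ from $d_\lambda=d_{\lambda^{-1}}$ and $\dim V_\lambda=r+s+s'$; these carry the same information.
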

\begin{proof}
Let $\gamma:=\{\lambda, \lambda^{-1}\}$ be an orbit with cardinality  $2$.  By point 3. before Lemma \ref{lemmaPIM} the  finite dimensional $H_{x_0}$-module $H^1(I,\mathbf{X}_{x_0})e_\gamma$ is isomorphic to a direct sum of $r$ copies of $\chi^0_\lambda$, of $r'$ copies of $\chi^0_{\lambda^{-1}}$, of $s$ copies of $e_{\lambda^{-1}} H_{x_0}$ and  of  $s'$ copies of $e_\lambda H_{x_0}$, where $r,r',s,s' \geq 0$.
By Lemma \ref{lemmaPIM}, the dimension $d_\lambda$ of the kernel of the operator  $H^1(I,\mathbf{X}_{x_0})e_\lambda \xrightarrow{\cdot \,\tau_{s_0}} H^1(I,\mathbf{X}_{x_0})e_{\lambda^{-1}}$ is equal to $r+s$. Furthermore, $r+s+s'$ is equal to the dimension of the space $H^1(I,\mathbf{X}_{x_0})e_\lambda$.

As recalled above, we write an element in $H^1(I,\mathbf{X}_{x_0})e_\lambda$  as $(\beta, \alpha)=(\lambda(\omega)\beta_1,\lambda(\omega)\alpha_1)_\omega$ with $\beta_\omega\in H^1(K_1,1)$, $\alpha_\omega\in H^1(I,k)$. Suppose that $(\beta,\alpha)\tau_{s_0}=0$.
Using \eqref{f:explicit1}, this translates into
\begin{equation}\label{ts0=0}
  \mathrm{cores}^{K_1}_I(\beta_1) = 0 \quad\text{and}\quad  \mathrm{res}^I_{K_1}(\alpha_1) = - \sum_{\omega \in \Omega} u_{s_0,s_0}(\omega)^*(\beta_\omega) = - \sum_{\omega \in \Omega} \lambda(\omega) u_{s_0,s_0}(\omega)^*(\beta_1) \ .
\end{equation}
 \begin{itemize}
\item Suppose $\mathfrak{F} \neq \mathbb Q_p$ and $p\neq 2$.
First recall that $\dim_k H^1(I, k) = 2f$ and  $\dim_k H^1(K_1, k) = 3f$ by Prop.\ \ref{abelianization}.ii and Cor.\ \ref{Frattini}, respectively.  Hence $\dim_k H^1(I,\mathbf{X}_{x_0})e_\lambda = 5f$ by Lemma \ref{dim}.

Suppose that $q\neq 3$, then by Prop.\ \ref{Transfer}, the first equality in \eqref{ts0=0} is always satisfied.
Furthermore,  using Prop.\ \ref{abelianization} and the second equality, $\alpha_1$ is completely determined by $\beta_1$ and  by its restriction to $(\begin{smallmatrix} 1& \mathfrak O\\ 0&1 \end{smallmatrix})\bmod (\begin{smallmatrix} 1& \mathfrak M\\ 0&1 \end{smallmatrix})$. This proves that $d_\lambda= \dim_k H^1(K_1, k) + f= 4f$. Likewise, $d_{\lambda^{-1}}= 4f$. We have $r+s= r'+s'=4f$. Furthermore, $r+s+s'= 5f$ so  $s=s'=f$ and $r=r'=3f$.

If $p=q=3$, then by the proof of Prop.\ \ref{Transfer}, the first equality  in \eqref{ts0=0}
means that $\beta_1$ is trivial on $(\begin{smallmatrix} 1&  \mathfrak M\\ 0&1 \end{smallmatrix})$.
On the other hand,  $\alpha_1$ is completely determined by $\beta_1$ and  by its restriction to $(\begin{smallmatrix} 1& \mathfrak O\\ 0&1 \end{smallmatrix})\bmod (\begin{smallmatrix} 1& \mathfrak M\\ 0&1 \end{smallmatrix})$. This proves that $d_\lambda= (\dim_kH^1(K_1, k) -1)+ 1= 3$. Likewise, $d_{\lambda^{-1}}= 3$. We have $r+s= r'+s' =3$ and $r+s+s'= 5$ so  $s=s'=2$ and $r=r'=1$.

\item Suppose that $\mathfrak{F} = \mathbb Q_p$. By Prop.\ \ref{frattini-1}, we have $\dim_k H^1(I, k) =2$ and $\dim_k H^1(K_1,k) =3$. By Lemma \ref{transfer}.ii, the first equality in \eqref{ts0=0} means that $\beta_1$ is trivial on $\overline{u^+(p)}$ if $p\neq 2$, respectively $\overline{u^+(2)}+ \overline{d_2}$ if $p=2$. So $\beta_1$ is determined by its values at $\overline{u^-(p)}$ and $\overline{d_p}$. As above, the second equality  in \eqref{ts0=0} means that   $\alpha_1$ is completely determined by $\beta_1$ and by its value at  $\overline{u^+(1)}$. This proves that $d_\lambda= 2+ 1= 3$. Likewise, $d_{\lambda^{-1}}= 3$. We have $r+s= r'+s'=3$ and  $r+s+s'= 5$ so  $s=s'=2$ and $r=r'=1$.
\end{itemize}

Let  $\gamma:=\{\lambda_0\}$ be the nontrivial orbit with cardinality $1$. The finite dimensional $H_{x_0}$-module $H^1(I,\mathbf{X}_{x_0})e_{\lambda_0}$ is a direct sum of $r$ copies of $\chi^0_{\lambda_0}$ and of  $s$ copies of $e_{\lambda_0} H_{x_0}$. It has dimension $r+2s$. The dimension $d_{\lambda_0}$ of the kernel of the map $H^1(I,\mathbf{X}_{x_0})e_{\lambda_0} \xrightarrow{\cdot \tau_{s_0}} H^1(I,\mathbf{X}_{x_0})e_{{\lambda_0}}$
is equal to $r+s$. A discussion as above gives the result announced in the lemma.
\end{proof}

\begin{corollary}\label{corH1m=1}
Suppose that $\mathbb{F}_q \subseteq k$. If $\mathfrak{F} \neq \mathbb{Q}_p$ and $p \neq 2$,  $q \neq 3$, then we have isomorphisms of right $H_{x_0}$-modules  \begin{align*}
H^1(I,\mathbf{X}_{x_0}) & \cong  H_{x_0}^f\oplus (\tau_{s_0}H_{x_0})^{2f}\oplus ((\tau_{s_0}+e_1)H_{x_0})^{f} \\
 & \cong H_{x_0}^f \oplus (H_{x_0}/(\tau_{s_0}+e_1)H_{x_0})^{2f} \oplus (H_{x_0}/\tau_{s_0}H_{x_0})^{f} \ .
\end{align*}
If $\mathfrak{F} = \mathbb{Q}_p$, or $\mathfrak{F} \neq \mathbb{Q}_p$ and $p=q=3$, then we have isomorphisms of right $H_{x_0}$-modules
\begin{equation*}
H^1(I,\mathbf{X}_{x_0})\cong  H_{x_0}^2\oplus \tau_{s_0}H_{x_0} \cong H_{x_0}^2 \oplus H_{x_0}/(\tau_{s_0}+e_1)H_{x_0} \ .
\end{equation*}
\end{corollary}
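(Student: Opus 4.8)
The plan is to deduce Corollary \ref{corH1m=1} by reassembling the blockwise descriptions of Lemmas \ref{e1} and \ref{r-s} along the central idempotents $e_\gamma$, and then rewriting the resulting direct sum of characters by means of Lemma \ref{sumchar}; Proposition \ref{propH1m=1} will then follow at once by applying the (exact, by Prop.\ \ref{H-free}.i) functor $-\otimes_{H_{x_0}}H$ and recalling $H^1(I,\mathbf{X}^{K_1})=H^1(I,\mathbf{X}_{x_0})\otimes_{H_{x_0}}H$. First I would note that since every $e_\gamma$, $\gamma\in\Gamma$, lies in $H_{x_0}$ and is central in $H$ (hence in $H_{x_0}$) with $\sum_{\gamma}e_\gamma=1$, there is a ring decomposition $H_{x_0}=\bigoplus_{\gamma\in\Gamma}H_{x_0}e_\gamma$ (the analogue inside $H_{x_0}$ of \eqref{f:idempotents}) and a corresponding decomposition $H^1(I,\mathbf{X}_{x_0})=\bigoplus_{\gamma\in\Gamma}H^1(I,\mathbf{X}_{x_0})e_\gamma$ of right $H_{x_0}$-modules. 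Into this I would substitute Lemma \ref{e1} for the block $\gamma=\{1\}$ and Lemma \ref{r-s} for each block $\gamma\neq\{1\}$ (the orbits of cardinality $2$ and, when it exists, the unique nontrivial orbit of cardinality $1$); in every case the block is a direct sum of copies of the principal indecomposable module $H_{x_0}e_\gamma$ and of copies of simple characters of $H_{x_0}$.

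The next step is pure bookkeeping. In the case $\mathfrak{F}\neq\mathbb{Q}_p$, $p\neq 2$, $q\neq 3$ the principal indecomposable summand $H_{x_0}e_\gamma$ occurs with multiplicity $2f$ if $\gamma=\{1\}$ and $f$ otherwise; using $\bigoplus_\gamma H_{x_0}e_\gamma=H_{x_0}$ to peel off $H_{x_0}^f$ leaves a surplus $(H_{x_0}e_1)^f\cong(\chi^0_{triv})^f\oplus(\chi^0_{sign})^f$ (by points 1.--2.\ before Lemma \ref{lemmaPIM}). Adding this surplus to the simple characters produced by the blocks — namely $(\chi^0_{sign})^f$ from the $e_1$-block and, from the blocks $\gamma\neq\{1\}$, the terms which together run over $\bigoplus_{\lambda\in\widehat{\Omega}\setminus\{1\}}(\chi^0_\lambda)^{3f}$ — one obtains, up to $H_{x_0}^f$, the module $(\chi^0_{triv})^f\oplus(\chi^0_{sign})^{2f}\oplus\bigoplus_{\lambda\neq 1}(\chi^0_\lambda)^{3f}$. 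In the case $\mathfrak{F}=\mathbb{Q}_p$ or $p=q=3$ (so $f=1$) the multiplicity of every $H_{x_0}e_\gamma$ is $2$, which combines to $H_{x_0}^2$, the only surplus now being the simple characters $\chi^0_{sign}\oplus\bigoplus_{\lambda\neq 1}\chi^0_\lambda$.

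Finally I would match the leftover sum of characters with the modules in the statement by comparing isotypic multiplicities — legitimate since a finite dimensional right $H_{x_0}$-module is determined up to isomorphism by the multiplicities of its simple and principal indecomposable constituents (points 1.--3.\ before Lemma \ref{lemmaPIM}). By Lemma \ref{sumchar}, $\tau_{s_0}H_{x_0}$ is the sum of all $q$ characters of $H_{x_0}$ except $\chi^0_{triv}$, i.e.\ $\chi^0_{sign}\oplus\bigoplus_{\lambda\neq 1}\chi^0_\lambda$, and $(\tau_{s_0}+e_1)H_{x_0}$ is the sum of all $q$ characters except $\chi^0_{sign}$, i.e.\ $\chi^0_{triv}\oplus\bigoplus_{\lambda\neq 1}\chi^0_\lambda$. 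Hence in the second case the leftover equals $\tau_{s_0}H_{x_0}$, and in the first case the multiplicities ($\chi^0_{sign}$: $2f$, $\chi^0_{triv}$: $f$, each $\chi^0_\lambda$: $2f+f=3f$) show that the leftover equals $(\tau_{s_0}H_{x_0})^{2f}\oplus((\tau_{s_0}+e_1)H_{x_0})^f$. This gives the first isomorphism in each case; the second (``quotient'') form then follows by substituting the isomorphisms $\tau_{s_0}H_{x_0}\cong H_{x_0}/(\tau_{s_0}+e_1)H_{x_0}$ and $(\tau_{s_0}+e_1)H_{x_0}\cong H_{x_0}/\tau_{s_0}H_{x_0}$ read off from the two short exact sequences of Lemma \ref{sumchar}.

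There is no serious obstacle: the mathematical content has been shifted into Lemmas \ref{e1}, \ref{r-s} and \ref{sumchar}. The only point demanding a little care is that the ``extra'' simple summands are produced one block at a time and therefore a priori sit inside the various $H_{x_0}e_\gamma$, so recombining them into the single modules $\tau_{s_0}H_{x_0}$ and $(\tau_{s_0}+e_1)H_{x_0}$ genuinely relies on the multiplicity argument above together with the character-sum descriptions in Lemma \ref{sumchar}; and one should double-check that the degenerate cases ($q=3$, or $q=2$ with $\mathfrak{F}=\mathbb{Q}_p$), where Lemma \ref{r-s} and Lemma \ref{e1} give smaller multiplicities, are consistent with the stated formulas — they are, because there $f=1$.
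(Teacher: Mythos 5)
Your proof is correct and takes essentially the same approach as the paper's, which simply cites Lemmas \ref{e1}, \ref{r-s} and \ref{sumchar} as a one-line justification. Your expansion — decomposing along the $e_\gamma$, peeling off $H_{x_0}^f$ (or $H_{x_0}^2$), noting the leftover is semisimple so that matching isotypic multiplicities via Lemma \ref{sumchar} suffices, and checking the degenerate $q=2,3$ cases — is exactly the bookkeeping the authors left implicit.
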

\begin{proof}
This is a consequence of Lemmas \ref{e1} and \ref{r-s}, together with Lemma \ref{sumchar}.
\end{proof}

\subsection{Appendix: computation of abelian  quotients and transfer maps in the case of $\mathbf{SL_2}(\mathfrak F)$}\label{sec:App}

For any pro-$p$ group $\Delta$, which we denote here multiplicatively, we let
\begin{itemize}
\item $[\Delta,\Delta]$ denote the closed subgroup generated by all commutators and $\Delta^{ab} := \Delta/[\Delta,\Delta]$ the maximal abelian pro-$p$ factor group, and
\item  $\Phi(\Delta)$ denote the Frattini subgroup of $\Delta$, i.e., the closed subgroup generated by all commutators and all $p$-th powers. We put $\Delta_\Phi := \Delta/\Phi(\Delta)$. As it is an $\mathbb{F}_p$-vector space we write the latter group additively. Its relevance for us comes from the standard formula $H^1(\Delta,k) = \Hom(\Delta_\Phi,k)$.
\end{itemize}

For $u \in \Delta$, we will denote by $\overline{u}$ its coset in the quotient group $\Delta_\Phi$ or in $\Delta^{ab}$ depending on the context.

Let $\Gamma$ be a pro-$p$-group and $\Delta$ an open subgroup. The transfer map  $\Gamma^{ab}\rightarrow  \Delta^{ab}$ can be computed as follows (see for example \cite{Bro} III.2 Ex.\ 2): Let $\{\gamma_i\}_{i\in \mathcal I}$ be a system of representatives of the right cosets $\Delta\backslash \Gamma$. Let $g \in \Gamma$. For any $i\in \mathcal I$, there is a unique $j(g,i)\in \mathcal I$ such that $\Delta \gamma_i g= \Delta \gamma_{j(g, i)}$. Then the transfer map $\Gamma^{ab} \rightarrow  \Delta^{ab}$ is given by
\begin{equation}\label{rappel:transfer}
   \overline{g} \mapsto \overline{\prod_{i\in \mathcal I} \gamma_i g \gamma_{j(g,i)}^{-1}} \ .
\end{equation}
It factors through a map $\Gamma_\Phi \rightarrow \Delta_\Phi$.

\subsubsection{Computation of abelian  quotients \label{sec:Frattini}}
Unless otherwise mentioned,   the locally compact nonarchimedean field $\mathfrak F$ is arbitrary.

\begin{proposition}\label{abelianization}

Suppose that $p \neq 2$. Let  $i \geq 0$.  We have:
\begin{itemize}
  \item[i.] $[I^+_i,I^+_i] =
  \left(\begin{smallmatrix}
   1+\mathfrak{M}^{i+1} & \mathfrak{M} \\
   \mathfrak{M}^{i+2} & 1+\mathfrak{M}^{i+1}
   \end{smallmatrix}\right)$ and  $[I^-_i,I^-_i] =
   \left(\begin{smallmatrix}
   1+\mathfrak{M}^{i+1} & \mathfrak{M}^{i+1} \\
   \mathfrak{M}^{2} & 1+\mathfrak{M}^{i+1}
   \end{smallmatrix}\right)$;
  \item[ii.] the maps
\begin{align*}
   \mathfrak{M}^{i+1}/\mathfrak{M}^{i+2} \times (1+\mathfrak{M})/(1+\mathfrak{M}^{i+1}) \times  \mathfrak{O}/\mathfrak{M} & \xrightarrow{\; \cong \;} (I^+_i)^{ab}  \\
  (c,t,b) & \longmapsto
     \left(\begin{smallmatrix}
       1 & 0 \\
       c & 1
     \end{smallmatrix}\right)
     \left(\begin{smallmatrix}
       t & 0 \\
       0 &  t^{-1}
     \end{smallmatrix}\right)
     \left(\begin{smallmatrix}
       1 & b \\
       0 & 1
     \end{smallmatrix}\right) =
     \left(\begin{smallmatrix}
       t & tb \\
       tc & tbc +t^{-1}
\end{smallmatrix}\right)
\end{align*}
and
\begin{align*}
   \mathfrak{M}/\mathfrak{M}^2 \times (1+\mathfrak{M})/(1+\mathfrak{M}^{i+1}) \times  \mathfrak{M}^i/\mathfrak{M}^{i+1} & \xrightarrow{\; \cong \;} (I^-_i)^{ab}  \\
  (c,t,b) & \longmapsto
  \left(\begin{smallmatrix}
       1 & 0 \\
       c & 1
     \end{smallmatrix}\right)
     \left(\begin{smallmatrix}
       t & 0 \\
       0 &  t^{-1}
     \end{smallmatrix}\right)
     \left(\begin{smallmatrix}
       1 & b \\
       0 & 1
     \end{smallmatrix}\right) =
     \left(\begin{smallmatrix}
       t & tb \\
       tc & tbc +t^{-1}
\end{smallmatrix}\right)
\end{align*}
are isomorphisms of groups.
\end{itemize}
\end{proposition}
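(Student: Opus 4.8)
The plan is to reduce everything to the group $I_i^+$ and then to identify $[I_i^+,I_i^+]$ with the matrix group
\begin{equation*}
  C_i^+ := \left(\begin{smallmatrix}1+\mathfrak{M}^{i+1} & \mathfrak{M} \\ \mathfrak{M}^{i+2} & 1+\mathfrak{M}^{i+1}\end{smallmatrix}\right)
\end{equation*}
by proving two inclusions, part (ii) falling out of the easy one. The reduction is Remark \ref{remark:N2}: conjugation by $N$ carries $I_i^+$ isomorphically onto $I_i^-$, interchanges the upper and lower unipotent subgroups (rescaling the off-diagonal entries by $\pi^{\pm1}$) and fixes the diagonal torus; hence it transports the formula for $[I_i^+,I_i^+]$ to the one for $[I_i^-,I_i^-]$ and the first isomorphism in (ii) to the second. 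Throughout I write $u^+(x) = \left(\begin{smallmatrix}1&x\\0&1\end{smallmatrix}\right)$, $u^-(x) = \left(\begin{smallmatrix}1&0\\x&1\end{smallmatrix}\right)$, $h(t) = \left(\begin{smallmatrix}t&0\\0&t^{-1}\end{smallmatrix}\right)$, and I record the elementary factorization $\left(\begin{smallmatrix}a&b\\c&d\end{smallmatrix}\right) = u^-(c/a)\,h(a)\,u^+(b/a)$ valid whenever $a$ is a unit; after an entrywise check that $C_i^+$ is a subgroup, this gives $I_i^+ = u^-(\mathfrak{M}^{i+1})\,h(1+\mathfrak{M})\,u^+(\mathfrak{O})$ and $C_i^+ = u^-(\mathfrak{M}^{i+2})\,h(1+\mathfrak{M}^{i+1})\,u^+(\mathfrak{M})$ as set-products.

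For the inclusion $[I_i^+,I_i^+]\subseteq C_i^+$ --- and simultaneously for (ii) --- I would check directly (entrywise) that $C_i^+$ is normalized by $I_i^+$ and that $I_i^+/C_i^+$ is abelian. Writing a general element of $I_i^+$ as $u^-(c)h(t)u^+(b)$ with $(c,t,b)\in\mathfrak{M}^{i+1}\times(1+\mathfrak{M})\times\mathfrak{O}$, its coset modulo $C_i^+$ depends only on $(c\bmod\mathfrak{M}^{i+2},\,t\bmod(1+\mathfrak{M}^{i+1}),\,b\bmod\mathfrak{M})$. Multiplying two such elements and moving all $u^-$-factors to the left and all $u^+$-factors to the right by means of the exact relations $u^+(b)h(t') = h(t')u^+(t'^{-2}b)$ and $h(t)u^-(c') = u^-(t^{-2}c')h(t)$, together with $[u^+(\mathfrak{O}),u^-(\mathfrak{M}^{i+1})]\subseteq C_i^+$ (immediate from the explicit commutator written below), one finds that modulo $C_i^+$ the product is $u^-(c+c')\,h(tt')\,u^+(b+b')$. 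Thus $I_i^+/C_i^+\cong\mathfrak{M}^{i+1}/\mathfrak{M}^{i+2}\times(1+\mathfrak{M})/(1+\mathfrak{M}^{i+1})\times\mathfrak{O}/\mathfrak{M}$ as a group, which yields at once $[I_i^+,I_i^+]\subseteq C_i^+$ and the parametrization of $(I_i^+)^{ab}$ in the form claimed in (ii); the $I_i^-$-statement in (ii) then follows by transport along $N$.

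For the reverse inclusion $C_i^+\subseteq[I_i^+,I_i^+]$ I would use three families of commutators. From $h(t)u^+(x)h(t)^{-1} = u^+(t^2x)$ we get $[h(t),u^+(x)] = u^+((t^2-1)x)$; since $p\neq2$ the element $t+1$ is a unit, so $t^2-1=(t-1)(t+1)$ ranges over $\mathfrak{M}$ as $t$ ranges over $1+\mathfrak{M}$, whence $u^+(\mathfrak{M})\subseteq[I_i^+,I_i^+]$. Symmetrically $[h(t),u^-(x)] = u^-((t^{-2}-1)x)$ with $x\in\mathfrak{M}^{i+1}$ gives $u^-(\mathfrak{M}^{i+2})\subseteq[I_i^+,I_i^+]$. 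For the torus layer one computes, for $x\in\mathfrak{O}$ and $y\in\mathfrak{M}^{i+1}$,
\begin{equation*}
  [u^+(x),u^-(y)] = \left(\begin{smallmatrix}1+xy+x^2y^2 & -x^2y \\ xy^2 & 1-xy\end{smallmatrix}\right),
\end{equation*}
whose off-diagonal entries already lie in $u^+(\mathfrak{M})$, resp.\ $u^-(\mathfrak{M}^{i+2})$, hence in $[I_i^+,I_i^+]$; stripping them off by left/right multiplication and letting $x$ be a unit, $y$ a generator of $\mathfrak{M}^{i+1}$, produces diagonal elements $h(t)$ with $t\equiv1+xy\bmod\mathfrak{M}^{i+2}$, and a successive-approximation argument along the filtration $\{1+\mathfrak{M}^{i+1+n}\}_{n\geq0}$ then captures all of $h(1+\mathfrak{M}^{i+1})$. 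Since $C_i^+ = u^-(\mathfrak{M}^{i+2})\,h(1+\mathfrak{M}^{i+1})\,u^+(\mathfrak{M})$ is the product of three subsets now all inside the subgroup $[I_i^+,I_i^+]$, this finishes (i).

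I expect this last inclusion to be the only genuine difficulty. The commutators $[u^+(x),u^-(y)]$ are not diagonal, so recovering the torus part $h(1+\mathfrak{M}^{i+1})$ forces one to argue modulo the unipotent pieces already shown to be commutators and then to close up by an $\mathfrak{M}$-adic approximation, with careful tracking of the exponents of $\mathfrak{M}$ (these off-by-one shifts are the main place where one can slip). The hypothesis $p\neq2$ enters precisely here, and in the identity $[h(t),u^\pm(x)] = u^\pm((t^{\pm2}-1)x)$: it guarantees that squaring on $1+\mathfrak{M}$ is surjective on each graded quotient, so that the torus commutators reach depth exactly $\mathfrak{M}^{i+1}$ (for $p=2$ one level is lost, which is why the statement excludes that case); everything else is routine matrix manipulation.
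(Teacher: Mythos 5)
Your proof is correct and follows essentially the same route as the paper's: both reduce one of the groups $I^\pm_i$ to the other by conjugating with $N$, use the Iwahori factorization, verify the same three families of commutators (torus against each unipotent, and the two unipotents against each other), and recover the torus layer by an $\mathfrak{M}$-adic approximation. The only cosmetic differences are that you work with $I^+_i$ where the paper treats $I^-_i$, and you spell out the successive approximation that the paper leaves as ``more or less obvious.''
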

\begin{proof}
 By Remark \ref{remark:N2} we only need to treat the case of $I^-_i$. Let  $J^-_i :=
\left(\begin{smallmatrix}
   1+\mathfrak{M}^{i+1}  & \mathfrak{M}^{i+1} \\
   \mathfrak{M}^2 & 1+\mathfrak{M}^{i+1}
\end{smallmatrix}\right)$.
This is a normal subgroup of $I^-_i$ which has the Iwahori factorization
$J^-_i =
\left(\begin{smallmatrix}
  1 & 0 \\
  \mathfrak{M}^2 & 1
\end{smallmatrix}\right)  T^{i+1}
\left(\begin{smallmatrix}
  1 & \mathfrak{M}^{i+1} \\
  0 & 1
\end{smallmatrix}\right)$. Recall also the Iwahori factorization
$I^-_i =
\left(\begin{smallmatrix}
  1 & 0 \\
  \mathfrak{M} & 1
\end{smallmatrix}\right)  T^1
\left(\begin{smallmatrix}
  1 & \mathfrak{M}^i \\
  0 & 1
\end{smallmatrix}\right)$. The formulas
\begin{align*}
   [\left(\begin{smallmatrix}
  t & 0 \\
  0 & t^{-1}
  \end{smallmatrix}\right),
  \left(\begin{smallmatrix}
  1 & \mathfrak{M}^i \\
  0 & 1
  \end{smallmatrix}\right) ] & =
  \left(\begin{smallmatrix}
  1 & (t^2 - 1)\mathfrak{M}^i \\
  0 & 1
  \end{smallmatrix}\right) =
  \left(\begin{smallmatrix}
  1 & \mathfrak{M}^{i+1} \\
  0 & 1
  \end{smallmatrix}\right) \ , \\
  [\left(\begin{smallmatrix}
  t & 0 \\
  0 & t^{-1}
  \end{smallmatrix}\right),
  \left(\begin{smallmatrix}
  1 & 0 \\
  \mathfrak{M} & 1
  \end{smallmatrix}\right) ] & =
  \left(\begin{smallmatrix}
  1 & 0 \\
  (t^{-2} - 1)\mathfrak{M} & 1
  \end{smallmatrix}\right) =
   \left(\begin{smallmatrix}
  1 & 0 \\
  \mathfrak{M}^2 & 1
  \end{smallmatrix}\right) \ , \\
  [\left(\begin{smallmatrix}
  1 & 0 \\
  \pi c & 1
  \end{smallmatrix}\right),
  \left(\begin{smallmatrix}
  1 & \pi^i b \\
  0 & 1
  \end{smallmatrix}\right) ] & =
  \left(\begin{smallmatrix}
  1 - \pi^{i+1} bc & \pi^{2i+1} b^2 c \\
  - \pi^{i+2} bc^2 & 1 + \pi^{i+1} bc + \pi^{2i+2} b^2c^2
  \end{smallmatrix}\right) \in
  \left(\begin{smallmatrix}
  1 & 0 \\
  \mathfrak{M}^2 & 1
  \end{smallmatrix}\right)
  \left(\begin{smallmatrix}
  1 - \pi^{i+1} bc & 0 \\
  0 & (1 - \pi^{i+1} bc)^{-1}
  \end{smallmatrix}\right)
  \left(\begin{smallmatrix}
  1 & \mathfrak{M}^{2i+1} \\
  0 & 1
  \end{smallmatrix}\right)
\end{align*}
for $t \in 1+\mathfrak{M}$ and $b,c \in \mathfrak{O}$
then show that $J^-_i = [I^-_i,I^-_i]$ and that the map in question is a homomorphism of groups. Its bijectivity more or less is obvious.
\end{proof}

\begin{corollary}\label{Frattini}
For $p \neq 2$ the map
\begin{align*}
  (K_1)_\Phi = K_1/K_2 & \xrightarrow{\; \cong \;} \mathfrak{sl}_2(\mathbb{F}_q) \\
  1 + \pi A & \longmapsto A \bmod \mathfrak{M}
\end{align*}
is an isomorphism of groups, where $\mathfrak{sl}_2(\mathbb{F}_q)$ denotes the additive group of all $2 \times 2$-matrices over $\mathfrak{O}/\mathfrak{M} \cong \mathbb{F}_q$ of trace zero.
\end{corollary}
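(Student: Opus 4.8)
The plan is to reduce the statement to Proposition~\ref{abelianization}. First I would observe that, with the conventions of \S\ref{rootdatum}, \eqref{defiIn} and \S\ref{sec:I-decomp-X}, one has the equalities $K_1 = I^-_1 = \left(\begin{smallmatrix} 1+\mathfrak{M} & \mathfrak{M} \\ \mathfrak{M} & 1+\mathfrak{M}\end{smallmatrix}\right)$ and $K_2 = \left(\begin{smallmatrix} 1+\mathfrak{M}^2 & \mathfrak{M}^2 \\ \mathfrak{M}^2 & 1+\mathfrak{M}^2\end{smallmatrix}\right)$. Proposition~\ref{abelianization}.i applied with $i = 1$ then gives $[K_1,K_1] = [I^-_1, I^-_1] = K_2$, and Proposition~\ref{abelianization}.ii applied with $i=1$ identifies the abelianization $(K_1)^{ab} = K_1/K_2$ with the product $\mathfrak{M}/\mathfrak{M}^2 \times (1+\mathfrak{M})/(1+\mathfrak{M}^2) \times \mathfrak{M}/\mathfrak{M}^2$. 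Each of the three factors is a one-dimensional $\mathbb{F}_q$-vector space — for the middle one via $1 + x \mapsto x \bmod \mathfrak{M}^2$ — so $K_1/K_2$ is an elementary abelian $p$-group of order $q^3$.

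Next I would note that, since $K_1/K_2$ has exponent $p$, every $p$-th power of an element of $K_1$ already lies in $K_2$; combined with $[K_1,K_1] = K_2$ this yields $\Phi(K_1) \subseteq K_2$, while the inclusion $\Phi(K_1) \supseteq [K_1,K_1] = K_2$ is automatic. Hence $\Phi(K_1) = K_2$, so that $(K_1)_\Phi = K_1/K_2$.

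Finally I would identify $K_1/K_2$ with $\mathfrak{sl}_2(\mathbb{F}_q)$ through the stated map. Writing $g = 1 + \pi A$ with $A$ a $2\times 2$ matrix over $\mathfrak{O}$, the condition $\det g = 1$ reads $\pi\operatorname{tr}(A) + \pi^2\det(A) = 0$, whence $\operatorname{tr}(A) \in \mathfrak{M}$ and $A \bmod \mathfrak{M} \in \mathfrak{sl}_2(\mathbb{F}_q)$. The identity $(1 + \pi A)(1 + \pi B) = 1 + \pi(A+B) + \pi^2 AB$ shows that $g \mapsto A \bmod \mathfrak{M}$ is a group homomorphism $K_1 \to \mathfrak{sl}_2(\mathbb{F}_q)$, and its kernel is $\{g \in K_1 : g \equiv 1 \bmod \mathfrak{M}^2\} = (1 + \mathfrak{M}^2 \mathrm{M}_2(\mathfrak{O})) \cap \mathbf{SL_2}(\mathfrak{O}) = K_2$. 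This produces an injection $K_1/K_2 \hookrightarrow \mathfrak{sl}_2(\mathbb{F}_q)$, which must be bijective since both sides have cardinality $q^3$.

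I do not expect a genuine obstacle here: the substantive computation is already carried out in Proposition~\ref{abelianization}. The two points that call for a little attention are the bookkeeping that identifies $K_1$ and $K_2$ with $I^-_1$ and its commutator subgroup, and the role of the hypothesis $p \neq 2$, which is exactly what is needed to invoke Proposition~\ref{abelianization} (and is genuinely necessary, the analogous description of $(K_1)_\Phi$ failing for $p = 2$).
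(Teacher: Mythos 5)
Your argument is correct and follows exactly the route the paper intends: the paper gives Cor.~\ref{Frattini} with no separate proof, treating it as an immediate consequence of Prop.~\ref{abelianization}, and you supply the implicit bookkeeping --- the identification $K_1 = I_1^-$ and $K_2 = [I_1^-,I_1^-]$ from Prop.~\ref{abelianization}.i, the exponent-$p$ argument giving $\Phi(K_1)=K_2$, and the matrix computation plus cardinality count for the isomorphism with $\mathfrak{sl}_2(\mathbb{F}_q)$. No gaps.
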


We suppose until the end of this paragraph that  $\mathfrak F=\mathbb Q_p$. In this case we are able to treat  the case $p=2$.
 We introduce, for any $y \in \mathbb{Q}_p$, the matrices
\begin{equation*}
    u_-(y) := \left(\begin{smallmatrix}
1  & 0 \\
y & 1
\end{smallmatrix}\right)
\qquad\text{and}\qquad
u_+(y) := \left(\begin{smallmatrix}
1  & y \\
0 & 1
\end{smallmatrix}\right) \ .
\end{equation*}
We also need the diagonal matrices
\begin{equation*}
  d_p := \left(\begin{smallmatrix}
1 + p  & 0 \\
0 & (1 + p)^{-1}
\end{smallmatrix}\right)
\quad\text{for any $p$ and}\quad
d_{2,-1} := \left(\begin{smallmatrix}
-1  & 0 \\
0 & -1
\end{smallmatrix}\right), \
d_{2,5} := \left(\begin{smallmatrix}
5  & 0 \\
0 & 1/5
\end{smallmatrix}\right)
\quad\text{for $p = 2$}
\end{equation*}
in $\mathbf{SL_2}(\mathbb{Q}_p)$. For $n \geq 1$ let $T^n \subseteq T^1$ denote the subgroup of all matrices whose diagonal entries are congruent to one modulo $p^n$. Then $(T^1)_\Phi = \mathbb{F}_p \overline{d_p}$, resp.\ $= \mathbb{F}_2 \overline{d_{2,-1}} \oplus \mathbb{F}_2 \overline{d_{2,5}}$, if $p \neq 2$, resp.\ $p=2$, and always $T^1/T^2 = \mathbb{F}_p \overline{d_p}$. Recall that we denote by $\overline{?}$ the coset of $?$ in the respective quotient group.

\begin{proposition}\label{frattini-1}  Suppose that $\mathfrak F=\mathbb Q_p$. Then,

\begin{itemize}
  \item[i.]  $I/\Phi(I) \cong \mathbb{F}_p^2$ is generated by the cosets of $u_-(p)$ and $u_+(1)$.

  \end{itemize}
Now let $i\geq 1$. If $p \neq 2$, or $p = 2$ and $i = 1$, we have:
\begin{itemize}
  \item[ii.] $(I^+_i)_\Phi \cong \mathbb{F}_p^3$ is generated by the cosets of $u_-(p^{i+1})$, $d_p$, and $u_+(1)$;
  \item[iii.] $(I^-_i)_\Phi \cong \mathbb{F}_p^3$ is generated by the cosets of $u_-(p)$, $d_p$, and $u_+(p^i)$.
\end{itemize}
If $p = 2$ and $i \geq2$ then the above Frattini quotients are four dimensional with generators as above replacing $d_p$ by $d_{2,-1}$ and $d_{2,5}$.

\end{proposition}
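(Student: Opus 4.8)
The plan is to regard the Frattini quotient as the maximal elementary abelian quotient, $\Delta_\Phi = \Delta/\overline{\Delta^p[\Delta,\Delta]} = \Delta^{ab}\otimes_{\mathbb Z}\mathbb F_p$, and to feed into this the Iwahori factorizations of $I$, $I^+_i$, $I^-_i$ together with the Burnside basis theorem for pro-$p$ groups. First I would record the general bookkeeping over $\mathfrak F = \mathbb Q_p$: each element is uniquely a product $u_-(c)\,\mathrm{diag}(t,t^{-1})\,u_+(b)$ with $c,t,b$ in the appropriate ranges, the unipotent subgroups $U_-$ and $U_+$ are procyclic (topologically generated by $u_-(p)$ and $u_+(1)$ for $I$, by $u_-(p^{i+1})$ and $u_+(1)$ for $I^+_i$, by $u_-(p)$ and $u_+(p^i)$ for $I^-_i$), and $T^1$ is topologically generated by $d_p$ if $p\neq 2$ and by $d_{2,-1},d_{2,5}$ if $p=2$, with $\Phi(T^1)=T^2$, resp.\ $T^3$ (as recalled just before the statement). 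Since $U_-\cdot T^1\cdot U_+$ exhausts the group, these elements topologically generate it, hence their images generate $\Delta_\Phi$; this bounds $\dim_{\mathbb F_p}\Delta_\Phi$ above by $2$ for $\Delta=I$ (the torus factor disappears in the abelianization), by $3$ if $p\neq 2$, and by $4$ if $p=2$.

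For $p\neq 2$ the statement then drops out of Prop.\ \ref{abelianization}.ii, which is stated for all $i\geq 0$. Tensoring the displayed isomorphisms with $\mathbb F_p$ turns each outer factor into $\mathbb F_p$ and the middle factor $(1+\mathfrak M)/(1+\mathfrak M^{i+1})\cong\mathbb Z/p^i$ into $\mathbb F_p$ when $i\geq 1$ and into $0$ when $i=0$; hence $\Delta_\Phi\cong\mathbb F_p^3$ for $i\geq 1$ and $\cong\mathbb F_p^2$ for $\Delta=I=I^\pm_0$. The explicit map $(c,t,b)\mapsto u_-(c)\,\mathrm{diag}(t,t^{-1})\,u_+(b)$ exhibits the cosets of $u_-(p^{i+1})$ (resp.\ $u_-(p)$), of $d_p$ — because $1+p$ topologically generates $(1+\mathfrak M)/(1+\mathfrak M^{i+1})$ over $\mathbb Q_p$ — and of $u_+(1)$ (resp.\ $u_+(p^i)$) as the asserted generators. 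This gives parts i, ii, iii when $p\neq 2$.

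The case $p=2$, excluded from Prop.\ \ref{abelianization}, is where the work lies, and I would combine lower and upper bounds. For the lower bounds: for every $i\geq 1$ the maps $\chi_+\colon I^+_i\to\mathfrak O/\mathfrak M$, $g\mapsto g_{12}$; $\chi_-\colon I^+_i\to\mathfrak M^{i+1}/\mathfrak M^{i+2}$, $g\mapsto g_{21}$; and $\chi_T\colon I^+_i\to(1+\mathfrak M)/(1+\mathfrak M^{i+1})$, $g\mapsto g_{11}$, are homomorphisms by a one-line computation with the matrix entries of $I^+_i$; composing with reduction modulo $2$ and using that the $2$-rank of $(1+2\mathbb Z_2)/(1+2^{i+1}\mathbb Z_2)$ is $1$ for $i=1$ and $2$ for $i\geq 2$, they separate the candidate generators and give $\dim(I^+_i)_\Phi\geq 3$ for $i=1$ and $\geq 4$ for $i\geq 2$; likewise $\chi_+$ and $\chi_+$ composed with conjugation by the matrix $N$ of Remark \ref{remark:N} (which swaps $u_+(1)$ and $u_-(2)$) give $\dim I_\Phi\geq 2$. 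For the upper bounds: when $i\geq 2$ the bound $4$ from the first paragraph already matches, so $(I^+_i)_\Phi\cong\mathbb F_2^4$ with the stated generators. When $i=1$, resp.\ $\Delta=I$, I would cut the bound $4$, resp.\ $3$, down using the commutators $[u_+(1),u_-(4)]$, resp.\ $[u_+(1),u_-(2)]$ and $[u_+(1),u_-(-2)]$, which lie in the Frattini subgroup; their Iwahori factorizations $u_-(c)\,\mathrm{diag}(t,t^{-1})\,u_+(b)$ have $c,b$ deep enough that $u_-(c),u_+(b)$ already lie in $\Phi$ (using $u_+(\mathfrak M)=\overline{\langle u_+(1)^2\rangle}$, $u_-(\mathfrak M^2)=\overline{\langle u_-(2)^2\rangle}$, $u_-(\mathfrak M^3)=\overline{\langle u_-(4)^2\rangle}$, and $T^3=\Phi(T^1)\subseteq\Phi$), while $t\equiv 5$, resp.\ $t\equiv -1$ and $t\equiv -5$, modulo $T^3$; hence $\overline{d_{2,5}}=0$ in $(I^+_1)_\Phi$, resp.\ $\overline{d_{2,-1}}=\overline{d_{2,5}}=0$ in $I_\Phi$, leaving $3$, resp.\ $2$, generators and matching the lower bounds (note $\overline{d_2}=\overline{d_{2,-1}}$ in $(I^+_1)_\Phi$). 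Finally, part iii for $p=2$ I would deduce from part ii by conjugating with $N$: Remark \ref{remark:N2} gives $I^+_i=N^{-1}I^-_iN$, and conjugation by $N$ carries $u_-(\mathfrak M),T^1,u_+(\mathfrak M^i)$ to $u_+(\mathfrak O),T^1,u_-(\mathfrak M^{i+1})$ and the listed generators of $(I^-_i)_\Phi$ to those of $(I^+_i)_\Phi$, since $d_{2,-1},d_{2,5},d_2$ are fixed modulo $T^3$ by $t\mapsto t^{-1}$.

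The step I expect to be delicate is not any single computation but the organization of the $p=2$ discussion. One must check carefully that the entry-wise reductions are honest homomorphisms on the relevant congruence-type subgroups, keep straight that $\Phi(T^1)=T^3$ rather than $T^2$, verify that the commutators chosen for $\Delta=I$ and $\Delta=I^+_1$ land deep enough in $U_-$ and $U_+$ to be absorbed into $\Phi$ while still detecting the right part of the torus, and track the transition from a single torus generator $d_2$ when $i\leq 1$ to the pair $d_{2,-1},d_{2,5}$ when $i\geq 2$ through the $2$-rank of $(1+2\mathbb Z_2)/(1+2^{i+1}\mathbb Z_2)$.
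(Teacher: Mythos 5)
Your proposal is correct, and for $p \neq 2$ it takes exactly the route the paper itself flags at the start of its proof (``the result can be drawn directly from Prop.\ \ref{abelianization}'') before opting instead for a uniform argument that includes $p = 2$. Where your approach genuinely differs is the $p=2$ case. The paper's strategy is to exhibit $\Phi(I^+_i)$ as an explicit normal subgroup — $I^+_{i+1}\cap K_2 u_+(p)^{\mathbb{Z}_p}$ (resp.\ $K_3$ for $i\geq 2$) — by checking it is contained in $\Phi$, is normal, has abelian quotient, and has the right index; the generators are then read off from the Iwahori factorization. You instead sandwich the dimension: the entry-wise characters $\chi_+(g)=g_{12}$, $\chi_-(g)=g_{21}$, $\chi_T(g)=g_{11}$ (and, for $\Delta=I$, $\chi_+$ twisted by $\mathrm{Ad}(N)$) give a surjection onto an elementary abelian group whose $\mathbb F_2$-rank provides the lower bound, and the same commutator formula the paper uses (together with $[u_+(1),u_-(\pm 2)]$ for $\Delta = I$) absorbs the torus generators into $\Phi$ and gives the matching upper bound. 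Your lower-bound certification is a useful independent sanity check that the paper does not provide explicitly; the paper's approach in exchange yields a concrete description of $\Phi$ itself, which is more information than the statement requires. One small exposition slip: in the third paragraph you say you would ``cut the bound $4$, resp.\ $3$, down'' for $i=1$, resp.\ $\Delta=I$, but for $p=2$ the Iwahori factorization of $I$ also gives four topological generators ($u_-(2)$, $d_{2,-1}$, $d_{2,5}$, $u_+(1)$), so the bound being cut down for $\Delta=I$ is $4$ as well (cut by the two commutators, one for each torus generator). This does not affect the correctness of the argument, and the rest of the computations (the torus components $21\equiv 5$, $7\equiv -1$, $3\equiv -5 \bmod 8$, and the depth of the unipotent parts $c\in\mathfrak M^4$, $b\in\mathfrak M^2$) check out.
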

\begin{proof}
 If $p \neq 2$, then the result can be drawn directly from Prop.\ \ref{abelianization} and the fact that the $p$-adic logarithm induces an isomorphism $1+p\mathbb{Z}_p \cong p\mathbb{Z}_p$. We give here a proof that involves the case $p=2$.

i. (The statement was pointed out to us by R.\ Greenberg. But our argument is different from his.)
By the Iwahori factorization the multiplication map induces a set theoretic bijection
\begin{equation*}
    u_-(p)^{\mathbb{Z}_p} \cdot T^1 \cdot u_+(1)^{\mathbb{Z}_p} = I \ .
\end{equation*}
Obviously $u_-(p)^{p\mathbb{Z}_p}$ and $u_+(1)^{p\mathbb{Z}_p}$ are contained in $\Phi(I)$. Moreover, for any $y \in \mathbb{Z}_p$ we have the identity
\begin{equation*}
    \left(\begin{smallmatrix}
1+py  & 0 \\
0 & (1+py)^{-1}
\end{smallmatrix}\right)
= [u_+(1),u_-(py)] \cdot \big(u_-(py) u_+(1)^{p\frac{y}{1+py}} u_-(py)^{-1} \big) \cdot u_-(p)^{-py^2}
\end{equation*}
The first factor is a commutator. The second, resp.\ third, factor is conjugate to an element visibly in $\Phi(I)$, resp.\ visibly lies in $\Phi(I)$. Hence $T^1$ is contained in $\Phi(I)$. We conclude that the normal subgroup $I \cap K_1 = u_-(p)^{p\mathbb{Z}_p}  T^1  u_+(1)^{p\mathbb{Z}_p}$, which is of index $p^2$ in $I$, is contained in $\Phi(I)$. It follows that the $\mathbb{F}_p$-vector space $I/\Phi(I)$ has dimension $\leq 2$ and is generated by the asserted elements. But its dimension cannot be $\leq 1$ since the pro-$p$ group $I$ is not commutative.

ii. and iii. By Remark \ref{remark:N2} we only need to give the argument for $I^+_i$. Being the intersection of two Iwahori subgroups belonging to the same apartment, the group $I^+_i$ has the Iwahori factorization
\begin{equation*}
  I^+_i = u_-(p^{i+1})^{\mathbb{Z}_p} T^1 u_+(1)^{\mathbb{Z}_p} \ .
\end{equation*}
Obviously $u_-(p^{i+1})^{p\mathbb{Z}_p}$, $(T^1)^p$, and $u_+(1)^{p\mathbb{Z}_p}$ are contained in $\Phi(I^+_i)$. We have $(T^1)^p = T^2$ for $p \neq 2$ whereas $(T^1)^2 = T^3$ for $p = 2$. The commutator formula
\begin{align}\label{f:commutator}
  [u_-(p^{i+1}), u_+(1)] & = \left(\begin{smallmatrix}
         1 - p^{i+1} & p^{i+1} \\
         -p^{2i+2} & 1 + p^{i+1} + p^{2i+2}
      \end{smallmatrix}\right) \\
                  & = \left(\begin{smallmatrix}
         1 & 0 \\
         -\frac{p^{2i+2}}{1-p^{i+1}} & 1
      \end{smallmatrix}\right)
      \left(\begin{smallmatrix}
         1 - p^{i+1} & 0 \\
         0 & (1 - p^{i+1})^{-1}
      \end{smallmatrix}\right)
      \left(\begin{smallmatrix}
         1 & \frac{p^{i+1}}{1-p^{i+1}} \\
         0 & 1
      \end{smallmatrix}\right)    \nonumber
\end{align}
shows that $T^2 \subseteq \Phi(I^+_i)$ even if $p = 2$ and $i = 1$. Suppose first that $p \neq 2$ or $i = 1$. Then the subgroup $I^+_{i+1} \cap K_2 u_+(p)^{\mathbb{Z}_p} = u_-(p^{i+2})^{\mathbb{Z}_p} T^2 u_+(p)^{\mathbb{Z}_p}$ is contained in $\Phi(I^+_i)$. We claim that $I^+_{i+1} \cap K_2 u_+(p)^{\mathbb{Z}_p}$ is normal in $I^+_i$. We know from Remark \ref{normal}.i that $I^+_i$ normalizes $I^+_{i+1}$. We will show that, in fact, $I^+_i$ also normalizes $K_2 u_+(p)^{\mathbb{Z}_p}$. Obviously $I^+_i$ normalizes $K_2$, and $T^1 u_+(1)^{\mathbb{Z}_p}$ normalizes $u_+(p)^{\mathbb{Z}_p}$. By the above Iwahori factorization it therefore remains to compute
\begin{equation*}
  u_-(p^{i+1}) u_+(p)  u_-(-p^{i+1}) =
    \left(\begin{smallmatrix}
        1 - p^{i+2}  & p^{i+3} \\
        -p^{2i+3} & 1 + p^{i+2} + p^{2i+4}
\end{smallmatrix}\right) u_+(p) \in K_2 u_+(p) \ .
\end{equation*}
Next we claim that the factor group $I^+_i/I^+_{i+1} \cap K_2 u_+(p)^{\mathbb{Z}_p}$ is abelian. This follows from the Iwahori factorization of $I^+_i$ and  the formulas \eqref{f:commutator} and
\begin{align*}
   & [\left(\begin{smallmatrix}
         t & 0 \\
         0 & t^{-1}
      \end{smallmatrix}\right), u_+(1)^{\mathbb{Z}_p}] = u_+(t^2 - 1)^{\mathbb{Z}_p} \ , \\
   & [\left(\begin{smallmatrix}
         t & 0 \\
         0 & t^{-1}
      \end{smallmatrix}\right), u_-(p^{i+1})^{\mathbb{Z}_p}] = u_-((t^{-2} - 1)p^{i+1})^{\mathbb{Z}_p} \ ,
\end{align*}
which show that the elements $u_-(p^{i+1})$, $d_p$, and $u_+(1)$ of $I^+_i$ commute modulo $I^+_{i+1} \cap K_2 u_+(p)^{\mathbb{Z}_p}$. Since all three have order $p$ modulo $I^+_{i+1} \cap K_2 u_+(p)^{\mathbb{Z}_p}$ it follows that $I^+_{i+1} \cap K_2 u_+(p)^{\mathbb{Z}_p} = \Phi(I^+_i)$ and that $(I^+_i)_\Phi$ is generated as asserted. By considering the filtration
\begin{equation*}
  I^+_i \supseteq I^+_i \cap K_1 \supseteq I^+_i \cap K_2 u_+(p)^{\mathbb{Z}_p} \supseteq I^+_{i+1} \cap K_2 u_+(p)^{\mathbb{Z}_p}
\end{equation*}
one easily checks that the Frattini quotient $(I^+_i)_\Phi$ has order $p^3$. In the case $p=2$ and $i \geq2$ the arguments are the same except that one has to work with $K_3$ instead of $K_2$.
\end{proof}

\subsubsection{Transfer maps when $\mathfrak F\neq \mathbb  Q_p$.}

\begin{proposition}\label{Transfer}
Let $p \neq 2$ and $\mathfrak{F} \neq \mathbb{Q}_p$; if either $i \geq 1$ or $i = 0$ and $q \neq 3$ then the transfer maps $(I^+_i)_\Phi \longrightarrow (I^+_{i+1})_\Phi$ and $(I^-_i)_\Phi \longrightarrow (I^-_{i+1})_\Phi$ are the zero maps.
\end{proposition}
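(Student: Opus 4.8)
The plan is to reduce to the lower triangular groups $I^-_i$ and then compute the transfer explicitly on abelianizations. By Remark~\ref{remark:N2} conjugation by $N$ in $\mathbf{GL_2}(\mathfrak F)$ is a group isomorphism sending the pair $(I^-_i,I^-_{i+1})$ onto $(I^+_i,I^+_{i+1})$, and transfer maps commute with isomorphisms; hence it suffices to prove that the transfer $(I^-_i)_\Phi\longrightarrow(I^-_{i+1})_\Phi$ is zero. This map is induced by the transfer $V\colon(I^-_i)^{ab}\to(I^-_{i+1})^{ab}$, so it is enough to check that $V$ carries each of the three generating families of $(I^-_i)^{ab}$ produced in Proposition~\ref{abelianization}.ii — the $u_-(c)$ with $c\in\mathfrak M$, the diagonal $h_t:=\left(\begin{smallmatrix}t&0\\0&t^{-1}\end{smallmatrix}\right)$ with $t\in1+\mathfrak M$, and the $u_+(b):=\left(\begin{smallmatrix}1&b\\0&1\end{smallmatrix}\right)$ with $b\in\mathfrak M^i$ (resp.\ $b\in\mathfrak O$ for $i=0$) — to $0$ in $(I^-_{i+1})_\Phi$, i.e.\ to an element represented by a matrix in $\Phi(I^-_{i+1})=[I^-_{i+1},I^-_{i+1}]\,(I^-_{i+1})^p$, the commutator part being known from Proposition~\ref{abelianization}.i.

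To compute $V$ I would use formula~\eqref{rappel:transfer} with the representatives $\{u_+(\pi^i[z]):z\in\mathfrak O/\mathfrak M\}$ of $I^-_{i+1}\backslash I^-_i$; that these form a complete irredundant set follows from the Iwahori factorizations $I^-_i=u_-(\mathfrak M)\,T^1\,u_+(\mathfrak M^i)$, $I^-_{i+1}=u_-(\mathfrak M)\,T^1\,u_+(\mathfrak M^{i+1})$, and when $q=p^f$ with $f>1$ one groups them by choosing inside $I^-_i/I^-_{i+1}\cong\mathfrak M^i/\mathfrak M^{i+1}$ a complement to $\langle\overline g\rangle$ spanned by further $u_+(\pi^i[z])$'s, so that a chosen $u_+$-generator commutes with all the representatives in its own complement. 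The place where the hypothesis $\mathfrak F\neq\mathbb Q_p$ enters is simply that then $q\in\mathfrak M^2$ (with $q=0$ in equal characteristic). The three computations, carried out in $(I^-_{i+1})^{ab}$, then read: $V(\overline{u_+(b)})=\overline{u_+(qb)}$ with $qb\in\mathfrak M^{i+2}$, hence in $[I^-_{i+1},I^-_{i+1}]$; $V(\overline{h_t})=\overline{h_t^{\,q}}\cdot\overline{u_+\!\big((t^{-2}-1)\pi^i\textstyle\sum_z[z]\big)}$, the second factor lying in $u_+(\mathfrak M^{i+2})\subseteq[I^-_{i+1},I^-_{i+1}]$ because $\sum_z[z]\in\mathfrak M$ (here $q>2$), while $h_t^{\,q}=\big(h_{t^{p^{f-1}}}\big)^p\in(I^-_{i+1})^p$; and $V(\overline{u_-(c)})=\overline{\prod_z u_+(\pi^i[z])\,u_-(c)\,u_+(-\pi^i[z])}$, where decomposing each factor $\left(\begin{smallmatrix}1+\pi^i[z]c&-\pi^{2i}[z]^2c\\c&1-\pi^i[z]c\end{smallmatrix}\right)$ in the coordinates of Proposition~\ref{abelianization}.ii makes the torus component $\prod_z(1+\pi^i[z]c)\in1+\mathfrak M^{i+2}$ and the lower unipotent component $qc\in\mathfrak M^3$ both trivial, and leaves an upper unipotent component which vanishes automatically for $i\geq1$ and equals $-c\sum_z[z]^2=-c\sum_z[z^2]$ for $i=0$.

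The remaining term $-c\sum_z[z^2]$ reduces modulo $\mathfrak M$ to $-\bar c\,\sum_{z\in\mathbb F_q}z^2$, and $\sum_{z\in\mathbb F_q}z^2=0$ as soon as $p\neq2$ and $q\neq3$ (the sum equals $-1$ exactly when $q-1\mid2$). Hence in all cases permitted by the hypotheses this term lies in $\mathfrak M^2$, so $V(\overline{u_-(c)})=0$ in $(I^-_{i+1})_\Phi$, and combining the three vanishings proves the proposition. Conversely, for $q=3$ one has $\sum_{z\in\mathbb F_3}z^2=2\neq0$, so the transfer is genuinely nonzero there; this is why the case $i=0$, $q=3$ is excluded (and is instead treated in Remark~\ref{Transfer-q-small}).

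I expect the crux to be the third computation, for $u_-(c)$. There one must simultaneously keep track of the filtration degrees of all matrix entries, of the single extra power of $\pi$ gained from the character sums $\sum_z[z]$ and $\sum_z[z^2]$, and of the precise description of $[I^-_{i+1},I^-_{i+1}]$; the careful passage between the matrix picture and the coordinate description of $(I^-_{i+1})^{ab}$ coming from Proposition~\ref{abelianization}.ii, together with the organization of the transfer sum when $f>1$, is the only genuinely delicate part of the argument.
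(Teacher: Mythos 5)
Your argument is correct and follows essentially the same route as the paper's: reduce to $I^-_i$ by conjugating with $N$, take the coset representatives $u_+(\pi^i b)$ for $b$ running over a set of representatives of $\mathfrak O/\mathfrak M$, compute the transfer of each Iwahori-factor generator in the coordinates of Proposition~\ref{abelianization}.ii, and then invoke the vanishing of the character sums $\sum_{z\in\mathbb F_q^\times}z$ and $\sum_{z\in\mathbb F_q^\times}z^2$ together with $q\in\mathfrak M^2$ when $\mathfrak F\neq\mathbb Q_p$. The only cosmetic difference is that you re-derive the fact that the transfer on the $u_+$-part is multiplication by $q$ via an explicit choice of complement, where the paper simply cites the analogous step from Lemma~\ref{quadratic}.i.a) (using \cite{NSW} Cor.\ 1.5.8 and \cite{Hup} Lemma IV.2.1).
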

\begin{proof}
Again we only need to treat the case of $I^-_i$. By the Iwahori factorization of $I^-_i$ it suffices to compute the transfer of elements of the form
$\left(\begin{smallmatrix}
       1 & 0 \\
       \pi v & 1
       \end{smallmatrix}\right)
       $,
$\left(\begin{smallmatrix}
       t & 0 \\
       0 & t^{-1}
       \end{smallmatrix}\right)$,
and
$\left(\begin{smallmatrix}
       1 & \pi^i u \\
       0 & 1
       \end{smallmatrix}\right)
$ in $I^-_i$. Let $S \subseteq \mathfrak{O}$  be a set of representatives for the cosets in $\mathfrak{O}/\mathfrak{M}$. Then the matrices
$\left(\begin{smallmatrix}
  1 & \pi^i b \\
  0 & 1
\end{smallmatrix}\right)$, for $b \in S$, form a set of representatives for the cosets in $I^-_{i+1}\backslash I^-_i$. Since
$\left(\begin{smallmatrix}
       1 & 0 \\
       \pi v & 1
       \end{smallmatrix}\right)
        \in I^-_{i+1}$, which is normal in $I^-_i$, we have (see \eqref{rappel:transfer})
\begin{equation*}
  \mathrm{tr}(
\left(\begin{smallmatrix}
1 & 0 \\
\pi v & 1
\end{smallmatrix}\right) ) \equiv
\prod_{b \in S}
\left(\begin{smallmatrix}
1 & \pi^i b \\
0 & 1
\end{smallmatrix}\right)
\left(\begin{smallmatrix}
1 & 0 \\
\pi v & 1
\end{smallmatrix}\right)
\left(\begin{smallmatrix}
1 & -\pi^i b \\
0 & 1
\end{smallmatrix}\right)  \equiv \prod_{b \in S}
\left(\begin{smallmatrix}
1+b\pi^{i+1} v & -b^2 \pi^{2i+1} v \\
\pi v & 1-b\pi^{i+1} v
\end{smallmatrix}\right) \mod [I^-_{i+1},I^-_{i+1}]
\end{equation*}
which, under the isomorphism $(I^-_{i+1})^{ab} \cong \mathfrak{M}/\mathfrak{M}^2 \times (1+\mathfrak{M}/1+\mathfrak{M}^{i+2}) \times  \mathfrak{M}^{i+1}/\mathfrak{M}^{i+2} $ of Prop.\ \ref{abelianization}.ii, corresponds to the element
\begin{align*}
  & (q\pi v \bmod \mathfrak M^2,\: \prod_b (1+ b \pi^{i+1} v) \bmod 1 + \mathfrak{M}^{i+2}, \:- \pi^{2i+1}v \sum_b  b^2 \bmod \mathfrak{M}^{i+2})  \\
  & = (0,\: 1+ \pi^{i+1} v \sum_b b \bmod 1 + \mathfrak{M}^{i+2}, \:- \pi^{2i+1}v \sum_b  b^2 \bmod \mathfrak{M}^{i+2}) \ .
\end{align*}
View $b \longmapsto b$ and $b \longmapsto b^2$ as  $\mathbb{F}_q$-valued characters of the group $\mathbb{F}_q^\times$ of order prime to $p$. By the orthogonality relation for characters the sum $\sum_{b \in \mathbb{F}_q^\times} b$, resp.\  $\sum_{b \in \mathbb{F}_q^\times} b^2$,  vanishes if and only if the respective character is nontrivial if and only if $q \neq 2$, resp.\ $q \neq 2,3$. Since we assume $p \neq 2$ the second component is zero whereas the last component is zero if either $i \geq 1$ or $i = 0$ and $q \neq 3$.

For $t\in 1+\mathfrak{M}$, the element
$\left(\begin{smallmatrix}
       t & 0 \\
       0 & t^{-1}
\end{smallmatrix}\right)$ again lies in $I^-_i$ so that we have
\begin{equation*}
  \mathrm{tr}( \left(\begin{smallmatrix}
t & 0 \\
0 & t^{-1}
\end{smallmatrix}\right) ) \equiv
\prod_{b \in S}
\left(\begin{smallmatrix}
1 & \pi^ib \\
0 & 1
\end{smallmatrix}\right)
\left(\begin{smallmatrix}
t & 0 \\
0 & t^{-1}
\end{smallmatrix}\right)
\left(\begin{smallmatrix}
1 & -\pi^ib \\
0 & 1
\end{smallmatrix}\right)  \equiv \prod_{b \in S}
\left(\begin{smallmatrix}
t & \pi^ib(t^{-1}-t) \\
0 & t^{-1}
\end{smallmatrix}\right) \mod [I^-_{i+1},I^-_{i+1}]
\end{equation*}
which corresponds to
\begin{equation*}
  (0,\: t^q \bmod 1 + \mathfrak{M}^{i+2} , \:\pi^i(t^{-2} - 1) \sum_b  b \bmod \mathfrak{M}^{i+2}) \ .
\end{equation*}
The second component is a $p$th power and therefore maps to $\Phi(I^-_{i+1})$. The last component is zero since $q \neq 2$.

For
$\mathrm{tr}( \left(\begin{smallmatrix}
1 & \pi^i u \\
0 & 1
\end{smallmatrix}\right) )$ it suffices, by the same argument as the one in the proof of Lemma \ref{quadratic}.i.a), to compute the image of $\pi^i u$ under the transfer map $\mathrm{tr} : \mathfrak{M}^i \longrightarrow \mathfrak{M}^{i+1}$, which has image $q\mathfrak{M}^i$. But, by our assumption that $\mathfrak{F} \neq \mathbb{Q}_p$, we have $q\mathfrak{M}^i \subseteq \mathfrak{M}^{i+2}$. Therefore
$\mathrm{tr}(
\left(\begin{smallmatrix}
1 & \pi^i u \\
0 & 1
\end{smallmatrix}\right) ) \equiv 0 \mod [I^-_{i+1},I^-_{i+1}]$ in view of Prop.\ \ref{abelianization}.i.
\end{proof}

\begin{remark}\label{Transfer-q-small}
For $\mathfrak{F} \neq \mathbb{Q}_p$ we have:
\begin{itemize}
\item[i.]  If $q=3$, then the
the composite maps
\begin{equation*}
    H^1(I^\pm_1,k) \xrightarrow{\mathrm{cores}} H^1(I,k) \xrightarrow{\;\mathrm{res}\;}   H^1(I^\mp_1,k)
\end{equation*}
are trivial;
\item[ii.] if $q=2$ or $3$, then the composite map
\begin{equation*}
  H^1(I^\pm_1/I^\pm_2,k) \xrightarrow{\;\mathrm{inf}\;} H^1(I^\pm_1,k) \xrightarrow{\mathrm{cores}} H^1(I,k)
\end{equation*} is injective;
\item[iii.] if $p=2$ then the composite maps
\begin{equation*}
  H^1(I^\pm_1/I^\pm_2,k) \xrightarrow{\;\mathrm{inf}\;}H^1(I^\pm_1,k) \xrightarrow{\mathrm{cores}} H^1(I,k) \xrightarrow{\;\mathrm{res}\;}   H^1(I^\mp_1,k)
\end{equation*}
and
\begin{equation*}
  H^1(I^\pm_{i+1}/I^\pm_{i+2},k) \xrightarrow{\;\mathrm{inf}\;}H^1(I^\pm_{i+1},k) \xrightarrow{\mathrm{cores}} H^1(I^\pm_i,k) \qquad\text{for $i\geq 1$}
\end{equation*}
are trivial.
\end{itemize}
\end{remark}
\begin{proof}
We check some of these statements in their dual version for Frattini quotients and invoke Remark \ref{remark:N2} for the remaining ones.

i. From the proof of Prop.\ \ref{Transfer}, we see that when $q=3$ the composite map $$(I_1^+)_\Phi\rightarrow I_\Phi \xrightarrow{\;\mathrm{tr}\;} (I_1^-)_\Phi$$ is trivial, where  $(I_1^+)_\Phi\rightarrow I_\Phi$ is the natural map induced by the inclusion.

ii.  The reason for this result is the fact that, for $q = 2,3$, the transfer map followed by the projection
\begin{equation*}
  I_\Phi \xrightarrow{\;\mathrm{tr}\;} (I_1^-)_\Phi \xrightarrow{\;\pr\;} I_1^-/I^-_2
\end{equation*}
is surjective. To see this we will compute the transfer of the elements
$\left(\begin{smallmatrix}
1 & 0 \\
c & 1
\end{smallmatrix}\right) \in I$ with $c \in \mathfrak{M}$. It is convenient to use the identification
\begin{align*}
  I^-_1/I^-_2 & \xrightarrow{\;\cong\;} \mathfrak{M}/\mathfrak{M}^2 \\
  \left(\begin{smallmatrix}
  a & b \\
  c & d
  \end{smallmatrix}\right) & \longmapsto b \ .
\end{align*}
In the proof of Prop.\ \ref{Transfer} we have seen that the image of $\left(\begin{smallmatrix}
1 & 0 \\
c & 1
\end{smallmatrix}\right)$ under this composite map is $-c(\sum_{b \in \mathfrak{O}/\mathfrak{M}} b^2) \in \mathfrak{M}/\mathfrak{M}^2$ and that $\sum_{b \in \mathfrak{O}/\mathfrak{M}} b^2 \neq 0$  if and only if $q = 2$ or $3$.

iii. Suppose that $p=2$ and  consider for $i\geq 0$
the composite map
\begin{equation*}
  (I^-_i)_\Phi \xrightarrow{\;\mathrm{tr}\;} (I^-_{i+1})_\Phi \xrightarrow{\;\pr\;} I^-_{i+1}/I^-_{i+2} \ .
\end{equation*}
Using the identification
\begin{align*}
  I^-_{i+1}/I^-_{i+2} & \xrightarrow{\;\cong\;} \mathfrak{M}^{i+1}/\mathfrak{M}^{i+2} \\
  \left(\begin{smallmatrix}
  a & b \\
  c & d
  \end{smallmatrix}\right) & \longmapsto b
\end{align*}
we deduce from the proof of Prop.\ \ref{Transfer} that:
\begin{itemize}
  \item[--] For $t\in 1+\mathfrak M$,  the image of
$\left(\begin{smallmatrix}
t & 0 \\
0 & t^{-1}
\end{smallmatrix}\right)$
is $\pi^i(t^{-2} - 1) \sum_{b\in\mathfrak O/\mathfrak M}  b \equiv 0 \bmod \mathfrak M^{i+2}$ since $t^{-2} -1 \equiv 2(t^{-1} - 1) \equiv 0 \bmod \mathfrak{M}^2$.
  \item[--] The image of
 $\left(\begin{smallmatrix}
1 &  \mathfrak{ M }^ i\\
0 & 1
\end{smallmatrix}\right)$ is trivial since $\mathfrak F\neq \mathbb Q_p$.
  \item[--] For $v\in \mathfrak O$, the image of
$\left(\begin{smallmatrix}
1 & 0 \\
\pi v & 1
\end{smallmatrix}\right)$
is $- \pi^{2i+1}v \sum_{b\in\mathfrak O/\mathfrak M}  b^2 \bmod \mathfrak M^{i+2}$, which is zero if either $i \geq 1$ or $i=0$ and $v \in \mathfrak{M}$.
\end{itemize}
Therefore, the composite map above is trivial if $i\geq 1$. If $i=0$, then its restriction to the image of $(I^+_1)_\Phi \rightarrow I_\Phi$ is trivial.
\end{proof}

\begin{remark}\label{conjugation}
Let $p \neq 2$; under the identifications $I_\Phi \cong \mathfrak{M}/\mathfrak{M}^2 \times \mathfrak{O}/\mathfrak{M}$ and $(K_1)_\Phi \cong \mathfrak{sl}_2(\mathbb{F}_q)$ from Prop.\ \ref{abelianization} and Cor.\ \ref{Frattini} we have:
\begin{itemize}
  \item[i.] The canonical map $(K_1)_\Phi \longrightarrow I_\Phi$ is given by
  $\left(\begin{smallmatrix}
  \bar{a} & \bar{b} \\
  \bar{c} & \bar{d}
  \end{smallmatrix}\right) \longmapsto (\pi\bar{c},0)$;
  \item[ii.] the conjugation action of $I/K_1 \cong \mathfrak{O}/\mathfrak{M}$ on $(K_1)_\Phi$ is given by
     \begin{equation*}
       (\bar{u},
       \left(\begin{smallmatrix}
       \bar{a} & \bar{b} \\
       \bar{c} & \bar{d}
       \end{smallmatrix}\right) ) \longmapsto
       \left(\begin{smallmatrix}
       \bar{a} & \bar{b} \\
       \bar{c} & \bar{d}
       \end{smallmatrix}\right) + \bar{u}
       \left(\begin{smallmatrix}
       \bar{c} & 2\bar{d} \\
       0 & -\bar{c}
       \end{smallmatrix}\right) + \bar{u}^2
       \left(\begin{smallmatrix}
       0 & -\bar{c} \\
       0 & 0
       \end{smallmatrix}\right) \ .
     \end{equation*}
\end{itemize}
\end{remark}
\begin{proof}
i. is obvious, and ii. is a simple computation.
\end{proof}


\subsubsection{Transfer maps for  $\mathfrak{F}= \mathbb Q_p$}

In this subsection we always \textbf{assume} that $\mathfrak{F} = \mathbb{Q}_p$.

\begin{lemma}\phantomsection\label{transfer}
\begin{itemize}
  \item[i.] $\mathrm{tr} : I_\Phi = (I^+_0)_\Phi \longrightarrow (I^+_1)_\Phi$ satisfies
   \begin{align*}
     \overline{u_-(p)} & \longmapsto \overline{u_-(p^2)} \, , \\
     \overline{u_+(1)} & \longmapsto
                 \begin{cases}
                 0 & \text{if $p \neq 2,3$}, \\
                 \overline{u_-(p^2)}  & \text{if $p = 3$}, \\
                 \overline{u_-(p^2)} + \overline{d_2} & \text{if $p = 2$}.
                 \end{cases}
   \end{align*}
  \item[ii.] $\mathrm{tr} : I_\Phi = (I^-_0)_\Phi \longrightarrow (I^-_1)_\Phi = (K_1)_\Phi$ satisfies
   \begin{align*}
     \overline{u_-(p)} & \longmapsto
                  \begin{cases}
                 0 & \text{if $p \neq 2,3$}, \\
                 \overline{u_+(p)} & \text{if $p = 3$}, \\
                 \overline{u_+(p)} + \overline{d_2} & \text{if $p = 2$},
                 \end{cases}  \\
     \overline{u_+(1)} & \longmapsto \overline{u_+(p)} \, .
   \end{align*}
  \item[iii.] $\mathrm{tr} : (I^+_i)_\Phi \longrightarrow (I^+_{i+1})_\Phi$, for $i \geq 1$, satisfies
   \begin{align*}
      \overline{u_-(p^{i+1})} & \longmapsto \overline{u_-(p^{i+2})} \, , \\
      \overline{u_+(1)} & \longmapsto 0 \, , \\
      \overline{d_p} & \longmapsto 0\, ,\ \text{resp.}\ \overline{d_{2,-1}} \mapsto 0 \, , \overline{d_{2,5}} \mapsto 0 \ \text{if $p=2$}.
   \end{align*}
  \item[iv.] $\mathrm{tr} : (I^-_i)_\Phi \longrightarrow (I^-_{i+1})_\Phi$, for $i \geq 1$, satisfies
   \begin{align*}
     \overline{u_-(p)} & \longmapsto 0 \, \ \\
     \overline{u_+(p^i)} & \longmapsto \overline{u_+(p^{i+1})} \, , \\
     \overline{d_p} & \longmapsto 0\, ,\ \text{resp.}\ \overline{d_{2,-1}} \mapsto 0 \, , \overline{d_{2,5}} \mapsto 0 \ \text{if $p=2$}.
  \end{align*}
\end{itemize}
\end{lemma}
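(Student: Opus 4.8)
The plan is to evaluate all four transfer maps directly from the explicit formula \eqref{rappel:transfer}, choosing in each case a system of coset representatives adapted to the Iwahori factorization of the source group, and reading off the resulting class in the target from Prop.\ \ref{abelianization} and Cor.\ \ref{Frattini} when $p$ is odd, and directly in the Frattini quotient (via Prop.\ \ref{frattini-1}) when $p=2$; recall that the transfer factors through a map $\Gamma_\Phi\to\Delta_\Phi$, so it is enough to track everything modulo the Frattini subgroup of the target.

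In each of the four inclusions the subgroup has index $p$. For $(I^+_i,I^+_{i+1})$ (resp.\ $(I^-_i,I^-_{i+1})$) one may take as representatives of $I^+_{i+1}\backslash I^+_i$ (resp.\ $I^-_{i+1}\backslash I^-_i$) the matrices $u_-(p^{i+1}b)$ (resp.\ $u_+(p^i b)$) with $b$ running over $\{0,\dots,p-1\}$, the convention $p^0=1$ covering the cases $(I,I^+_1)$ and $(I,I^-_1=K_1)$. For the ``contracting'' generator — namely $u_-(p^{i+1})$ in cases (i) and (iii) and $u_+(p^i)$ in cases (ii) and (iv) — right multiplication by this generator merely cyclically permutes the representatives, with one wrap-around term equal to $u_-(p^{i+2})$, resp.\ $u_+(p^{i+1})$; this yields at once the first line of each of (i)--(iv).

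For the remaining generators $g$ (the ``expanding'' unipotent $u_+(1)$ in the $I^+$-cases and $u_-(p)$ in the $I^-$-cases, and, when $i\ge1$, the diagonal generators $d_p$, resp.\ $d_{2,-1}$, $d_{2,5}$), a short matrix computation shows that $\gamma_b g\gamma_b^{-1}$ again lies in the subgroup, so that $j(g,b)=b$ and $\mathrm{tr}(\bar g)=\overline{\prod_{b=0}^{p-1}\gamma_b g\gamma_b^{-1}}$; here one uses that $I^+_1$ and $I^-_1$ are normal in $I$ and, more generally, that $I^\pm_{i+1}$ is normal in $I^\pm_i$ (Remark \ref{normal}.i together with \eqref{f:wnIn}). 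One then Iwahori-factors each conjugate $\gamma_b g\gamma_b^{-1}$ and records its class in the abelianization of the target via the isomorphisms of Prop.\ \ref{abelianization} (for $p$ odd), or, when $p=2$, directly in the Frattini quotient using Prop.\ \ref{frattini-1}. Summing over $b$, the answer is governed entirely by the residues modulo $p$ of $\sum_{b=0}^{p-1}b=\tfrac{p(p-1)}{2}$ and $\sum_{b=0}^{p-1}b^2=\tfrac{(p-1)p(2p-1)}{6}$: the first vanishes in $\mathbb{F}_p$ exactly when $p$ is odd (it equals $1$ for $p=2$), and the second vanishes in $\mathbb{F}_p$ exactly when $p\ge5$ (it equals $1$, resp.\ $5$, for $p=2$, resp.\ $p=3$). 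The surviving $\sum b^2$-contribution lands in the lower-left coordinate $\mathfrak{M}^2/\mathfrak{M}^3$, producing the exceptional $\overline{u_-(p^2)}$ in (i) and $\overline{u_+(p)}$ in (ii) precisely for $p=2,3$; the surviving $\sum b$-contribution lands in the diagonal coordinate, producing the $\overline{d_2}$ term for $p=2$. For $i\ge1$ (cases (iii), (iv)) every such extra contribution carries a power of $p$ already too large to escape the Frattini subgroup of the target, so $u_+(1)$ (resp.\ $u_-(p)$) and all diagonal generators map to $0$; for the latter one uses that $(1+p)^{-2}-1$ and $5^{-2}-1$ have $p$-adic valuation $\ge2$ and $\ge3$, while $(-1)^{-2}-1=0$.

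Finally, the $I^+$- and $I^-$-cases are interchanged by conjugation with the matrix $N$ of Remark \ref{remark:N}: by Remark \ref{remark:N2} one has $I^+_i=N^{-1}I^-_iN$, conjugation by $N$ intertwines the corresponding transfer maps, and it sends $u_-(p^j)\mapsto u_+(p^{j-1})$, $u_+(p^j)\mapsto u_-(p^{j+1})$, $d_p\mapsto d_p^{-1}$, $d_{2,-1}\mapsto d_{2,-1}$, $d_{2,5}\mapsto d_{2,5}^{-1}$; so it suffices to carry out the computation for one group in each pair. I expect the main obstacle to be the bookkeeping at $p=2$: Prop.\ \ref{abelianization} is not available there, so one must argue modulo $\Phi$ throughout and keep the two diagonal generators $d_{2,-1}$, $d_{2,5}$ apart when $i\ge2$; and one must be careful about exactly which coset each product $\gamma_b g$ falls into, which is where the normality statements are used.
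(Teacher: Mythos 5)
Your proposal follows the same line of attack as the paper's proof: apply the explicit transfer formula \eqref{rappel:transfer} with the coset representatives $u_-(bp^{i+1})$, resp.\ $u_+(bp^i)$, coming from the Iwahori factorization, Iwahori-factor each product $\gamma_b g\,\gamma_{j(g,b)}^{-1}$, and read off the class in the Frattini quotient of the target using Prop.\ \ref{abelianization} (or Prop.\ \ref{frattini-1} when $p=2$). Your extra observation that the normality of $I^\pm_{i+1}$ in $I^\pm_i$, together with $g\in I^\pm_{i+1}$ for the non-contracting generators, forces $j(g,b)=b$, is a tidy shortcut (the paper reads this off from the explicit matrix identities instead), and your use of $N$-conjugation to reduce the $I^-$-cases to the $I^+$-cases is a genuine streamlining: the paper simply ``leaves the other cases to the reader,'' so your argument fills that gap economically. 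On the ``contracting'' generator the cyclic permutation argument with a single wrap-around term is exactly right.

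However, your justification for $\mathrm{tr}(\overline{d_p})=0$ when $i\ge1$ and $p$ is odd contains an arithmetic error. You assert that $(1+p)^{-2}-1$ has $p$-adic valuation $\ge 2$; but $(1+p)^{-2}-1=\dfrac{1-(1+p)^2}{(1+p)^2}=\dfrac{-p(2+p)}{(1+p)^2}$ has valuation exactly $1$ for every odd $p$. Consequently each conjugate $\gamma_b d_p \gamma_b^{-1}$ contributes a nontrivial class in $u_\mp(p^{i+2})$ (the lower/upper unipotent coordinate of the target), not one lying already in the Frattini subgroup. The cancellation comes, as in the paper's proof, from summing over $b$: the coefficient of $\overline{u_\mp(p^{i+2})}$ is a multiple of $\sum_{b=0}^{p-1}b=\tfrac{p(p-1)}{2}$, which vanishes in $\mathbb{F}_p$ for $p$ odd, while the $p$-fold repetition of $\overline{d_p}$ itself contributes $p\,\overline{d_p}=0$. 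You already invoke the congruence $\sum b\equiv 0\pmod p$ elsewhere, so this is fixable by replacing the incorrect valuation claim with that argument; your valuations for $5^{-2}-1$ (namely $3$) and for $(-1)^{-2}-1$ (namely $+\infty$) in the $p=2$ case are correct as stated.
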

\begin{proof}
These are straightforward computations. We only give the details for iii. and leave the other cases to the reader. In fact, already
in the proof of Lemma \ref{quadratic}.i.a) we have seen that $\mathrm{tr}(\overline{u_-(p^{i+1})}) = \overline{u_-(p^{i+1})^p} = \overline{u_-(p^{i+2})}$. For the other generators we start from the actual definition of the transfer map where we use the elements $u_-(jp^{i+1})$ for $j = 0, \ldots, p-1$ as coset representatives for $I^+_{i+1}$ in $I^+_i$. We recall (see \eqref{rappel:transfer}): For a given $g \in I^+_i$ and any $0 \leq j \leq p-1$ let $0 \leq j_g \leq p-1$ be the unique integer and $g(j) \in I^+_{i+1}$ be the unique element such that
\begin{equation*}
  u_-(jp^{i+1})g = g(j) u_-(j_g p^{i+1}) \ ;
\end{equation*}
then
\begin{equation*}
  \mathrm{tr}(g) = \prod_{j=0}^{p-1} g(j) \mod [I^+_{i+1},I^+_{i+1}] \ .
\end{equation*}
First let $d = \left(\begin{smallmatrix}
         1+pa & 0 \\
         0 & (1+pa)^{-1}
      \end{smallmatrix}\right)$
be a diagonal matrix. We then have
\begin{equation*}
  u_-(jp^{i+1}) d = d u_-((1+pa)^2 j p^{i+1}) = d u_-(2ajp^{i+2}) u_-(a^2jp^{i+3}) u_-(jp^{i+1})
\end{equation*}
and hence $j_d = j$ and $d(j) = d u_-(2ajp^{i+2}) u_-(a^2jp^{i+3}) \in d u_-(2ajp^{i+2}) \Phi(I^+_{i+1})$. It follows that
\begin{equation*}
  \mathrm{tr}(\overline{d_p}) = \sum_{j=0}^{p-1} \big(\overline{d_p} + 2j \overline{u_-(p^{i+2})} \big) = p \big( \overline{d_p} + 2\tfrac{(p-1)}{2} \overline{u_-(p^{i+2})} \big) = 0 \ ,
\end{equation*}
as well as $ \mathrm{tr}(\overline{d_{2,-1}}) =  \mathrm{tr}(\overline{d_{2,5}}) = 0$ if $p=2$.

Finally, for $u_+(1)$ we have
\begin{equation*}
  u_-(jp^{i+1}) u_+(1) =
          \left(\begin{smallmatrix}
         1 - jp^{i+1} & 1 \\
         -j^2 p^{2i+2} & 1 + jp^{i+1}
      \end{smallmatrix}\right)  u_-(jp^{i+1})
\end{equation*}
and hence $j_{u_+(1)} = j$ and
\begin{align*}
  u_+(1)(j) & =
     \left(\begin{smallmatrix}
         1 - jp^{i+1} & 1 \\
         -j^2 p^{2i+2} & 1 + jp^{i+1}
      \end{smallmatrix}\right)
    =  \left(\begin{smallmatrix}
         1 & 0 \\
         - \frac{j^2 p^{2i+2}}{1-jp^{i+1}} & 1
      \end{smallmatrix}\right)
      \left(\begin{smallmatrix}
         1 - jp^{i+1} & 0 \\
         0 & \frac{1}{1 - jp^{i+1}}
      \end{smallmatrix}\right)
      \left(\begin{smallmatrix}
         1 & \frac{1}{1-jp^{i+1}} \\
         0 & 1
      \end{smallmatrix}\right) \\
    & \in u_+(1) \Phi(I^+_{i+1}) \ .
\end{align*}
It follows that $\mathrm{tr}(\overline{u_+(1)}) = p \overline{u_+(1)} = 0$.

We point out that in case $i = 0$, which is part of the statement in i., we get
\begin{align*}
  u_+(1)(j) & =
  \left(\begin{smallmatrix}
         1 & 0 \\
         - \frac{j^2 p^2}{1-jp} & 1
      \end{smallmatrix}\right)
      \left(\begin{smallmatrix}
         1 - jp & 0 \\
         0 & \frac{1}{1 - jp}
      \end{smallmatrix}\right)
      \left(\begin{smallmatrix}
         1 & \frac{1}{1-jp} \\
         0 & 1
      \end{smallmatrix}\right) \\
    & \in u_-(-j^2p^2)
        \left(\begin{smallmatrix}
         1 - jp & 0 \\
         0 & \frac{1}{1 - jp}
      \end{smallmatrix}\right)
          u_+(1) \Phi(I^+_1) \ .
\end{align*}
We deduce that
\begin{equation*}
  \mathrm{tr}(\overline{u_+(1)}) = - (\sum_{j=1}^{p-1} j^2) \overline{u_-(p^2)} + \sum_{j=0}^{p-1} \overline{
                                          \left(\begin{smallmatrix}
                                                      1 - jp & 0 \\
                                                           0 & \frac{1}{1 - jp}
                                          \end{smallmatrix}\right)}
                                          + p \overline{u_+(1)} \in (I^+_1)_\Phi \ .
\end{equation*}
The last summand is zero. The middle sum is over all elements in $T^1/T^2 \cong \mathbb{F}_p$ and hence is zero, resp.\ $\overline{d_2}$, for $p \neq 2$, resp.\ $p = 2$. The coefficient of the first summand is equal to $-\tfrac{1}{6}(2(p-1)+1)p(p-1)$ which is an integer divisible by $p$ if $p \neq 2,3$, is equal to $-5$ if $p = 3$, and is equal to $-1$ if $p = 2$.
\end{proof}

\begin{corollary}\label{images-directsum}
For $i \geq 1$ we have
\begin{equation*}
  \im \big(H^1(I^\pm_{i-1},k) \xrightarrow{\mathrm{res}} H^1(I^\pm_i,k)\big) \cap \im \big(H^1(I^\pm_{i+1},k) \xrightarrow{\mathrm{cores}} H^1(I^\pm_i,k)\big) = 0 \ .
\end{equation*}
Moreover,
\begin{equation*}
  \ker\big(  H^1(I^\pm_{1},k) \xrightarrow{\mathrm{cores}} H^1(I,k) \xrightarrow{\mathrm{res}} H^1(I^\mp_1,k)\big) \cap  \ker \big(H^1(I^\pm_1,k) \xrightarrow{\mathrm{res}} H^1(I^\pm_2,k)\big) = 0 \ .
\end{equation*}
\end{corollary}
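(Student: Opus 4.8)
The plan is to rephrase both assertions as statements about the finite-dimensional $\mathbb F_p$-vector spaces $\Delta_\Phi$ via the identification $H^1(\Delta,k)=\Hom(\Delta_\Phi,k)$, and then to read them off from the explicit generators of the Frattini quotients (Prop.\ \ref{frattini-1}) and the explicit transfer maps (Lemma \ref{transfer}). Recall that for an open subgroup $J$ of a pro-$p$ group $\Delta$ the restriction $\mathrm{res}\colon H^1(\Delta,k)\to H^1(J,k)$ is the transpose of the natural map $\nu\colon J_\Phi\to\Delta_\Phi$, while $\mathrm{cores}\colon H^1(J,k)\to H^1(\Delta,k)$ is the transpose of the transfer $\mathrm{tr}\colon\Delta_\Phi\to J_\Phi$ (see \S\ref{sec:App}). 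Using $\im(f^{\ast})=(\ker f)^{\perp}$, $\ker(f^{\ast})=(\im f)^{\perp}$ and $A^{\perp}\cap B^{\perp}=(A+B)^{\perp}$ for finite-dimensional spaces, the first intersection in the statement equals $\big(\ker\nu+\ker\mathrm{tr}\big)^{\perp}$, where $\nu\colon(I^{\pm}_i)_\Phi\to(I^{\pm}_{i-1})_\Phi$ is natural and $\mathrm{tr}\colon(I^{\pm}_i)_\Phi\to(I^{\pm}_{i+1})_\Phi$ is the transfer; and the second equals $\big(\im\psi+\im\nu'\big)^{\perp}$, where $\psi$ is the composite $(I^{\mp}_1)_\Phi\xrightarrow{\nu}I_\Phi\xrightarrow{\mathrm{tr}}(I^{\pm}_1)_\Phi$ of the natural map with the transfer of Lemma \ref{transfer}.i, and $\nu'\colon(I^{\pm}_2)_\Phi\to(I^{\pm}_1)_\Phi$ is natural. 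Thus it suffices to prove $\ker\nu+\ker\mathrm{tr}=(I^{\pm}_i)_\Phi$ in the first case and $\im\psi+\im\nu'=(I^{\pm}_1)_\Phi$ in the second. Since conjugation by the matrix $N$ of Remark \ref{remark:N2} carries $I^{-}_j$ to $I^{+}_j$, normalizes $I$, and is compatible with natural maps and transfers, it is enough to treat the upper-triangular groups.

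For the first identity the point is that $\ker\nu$ contains $\overline{u_-(p^{i+1})}$, since $u_-(p^{i+1})=u_-(p^i)^{p}$ is a $p$-th power of an element of $I^{+}_{i-1}$ and hence lies in $\Phi(I^{+}_{i-1})$; and that, by Lemma \ref{transfer}.iii, $\ker\mathrm{tr}$ contains $\overline{u_+(1)}$ together with all the diagonal generators $\overline{d_p}$ (resp.\ $\overline{d_{2,-1}}$ and $\overline{d_{2,5}}$ when $p=2$ and $i\ge 2$). Since the elements $\overline{u_-(p^{i+1})}$, $\overline{u_+(1)}$ and $\overline{d_p}$ (resp.\ with $\overline{d_{2,-1}},\overline{d_{2,5}}$) form a generating set of $(I^{+}_i)_\Phi$ by Prop.\ \ref{frattini-1}.ii, it follows that $\ker\nu+\ker\mathrm{tr}=(I^{+}_i)_\Phi$.

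For the second identity I will observe that $\im\psi$ contains $\overline{u_-(p^{2})}$: the natural map sends the generator $\overline{u_-(p)}$ of $(I^{-}_1)_\Phi=(K_1)_\Phi$ to the generator $\overline{u_-(p)}$ of $I_\Phi$, and the transfer of Lemma \ref{transfer}.i sends this to $\overline{u_-(p^{2})}\in(I^{+}_1)_\Phi$, for every $p$. On the other hand $\im\nu'$ contains $\overline{u_+(1)}$ and the diagonal generator of $(I^{+}_1)_\Phi$: the natural map kills $\overline{u_-(p^{3})}=\overline{u_-(p^{2})^{p}}$ and fixes $\overline{u_+(1)}$, while for the diagonal part one checks that $\overline{d_p}\mapsto\overline{d_p}$ if $p\ne 2$, and that for $p=2$ one has $d_{2,-1}\in T^1\setminus T^2$, which maps onto the nonzero diagonal class of $(I^{+}_1)_\Phi$ (whereas $d_{2,5}\in T^2\subseteq\Phi(I^{+}_1)$ maps to $0$). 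Since $\overline{u_-(p^{2})}$, $\overline{d_p}$ and $\overline{u_+(1)}$ generate $(I^{+}_1)_\Phi$ by Prop.\ \ref{frattini-1}.ii, we get $\im\psi+\im\nu'=(I^{+}_1)_\Phi$, as required.

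The only genuinely fiddly part is the behaviour of the diagonal generators under the natural maps $(I^{\pm}_i)_\Phi\to(I^{\pm}_{i-1})_\Phi$ in the exceptional case $p=2$, and at the transitional index $i=1$, where $d_p\in T^1\subseteq\Phi(I)$ so that the natural map to $I_\Phi$ kills the diagonal generator; but these are the same computations already recorded in \S\ref{sec:App}, so everything reduces to a direct combination of Prop.\ \ref{frattini-1} with Lemma \ref{transfer} and no new calculation is needed.
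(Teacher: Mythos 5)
Your proposal is correct and follows exactly the same route as the paper: dualize both intersection statements to identities in the Frattini quotients $(I^\pm_i)_\Phi$, and then verify them by tracking the explicit generators from Prop.~\ref{frattini-1} through the natural maps and the transfers of Lemma~\ref{transfer}. The paper merely asserts that the resulting Frattini-quotient identities ``can be checked'' from those two results, whereas you spell out the check, including the small case distinctions at $p=2$; the substance is the same.
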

\begin{proof}
Using Prop.\ \ref{frattini-1} and  Lemma \ref{transfer} it is easy to see that the canonical map $(I^\pm_i)_\Phi \longrightarrow (I^\pm_{i-1})_\Phi$ has kernel containing $\mathbb{F}_p \overline{u_-(p^{i+1})}$ and $\mathbb{F}_p \overline{u_+(p^i)}$, respectively. Lemma \ref{transfer} then implies that
\begin{equation*}
  (I^\pm_i)_\Phi = \ker \big((I^\pm_i)_\Phi \longrightarrow (I^\pm_{i-1})_\Phi \big) + \ker \big((I^\pm_i)_\Phi \xrightarrow{\;\mathrm{tr}\;} (I^\pm_{i+1})_\Phi \big) \ .
\end{equation*}
Dualizing this equation gives the first identity. The second identity, again by duality, is equivalent to
\begin{equation*}
  (I^\pm_1)_\Phi = \im \big( (I^\mp_1)_\Phi \xrightarrow{} I_\Phi \xrightarrow{\mathrm{tr}} (I^\pm_1)_\Phi \big) \oplus \im \big( (I^\pm_2)_\Phi \xrightarrow{} (I^\pm_1)_\Phi \big) \ ,
\end{equation*}
which can be checked using Prop.\  \ref{frattini-1} and Lemma \ref{transfer}.
\end{proof}

\newpage

\addcontentsline{toc}{part}{\large References}

\noindent Rachel Ollivier\\
Mathematics Department\\
The University of British Columbia\\
1984 Mathematics Road\\
Vancouver, BC V6T 1Z2, Canada\\
ollivier@math.ubc.ca\\
http://www.math.ubc.ca/$\sim$ollivier\\

\noindent Peter Schneider\\
Mathematisches Institut\\
Westf\"alische Wilhelms-Universit\"at M\"unster\\
Einsteinstr.\ 62\\
D-48149 M\"unster, Germany\\
pschnei@uni-muenster.de\\
http://www.uni-muenster.de/math/u/schneider\\

\end{document}